\title{Instantons and L-space surgeries}
\author[John A. Baldwin]{John A. Baldwin}
\address{Department of Mathematics \\ Boston College}
\email{john.baldwin@bc.edu}
\author[Steven Sivek]{Steven Sivek}
\address{Department of Mathematics\\Imperial College London}
\email{s.sivek@imperial.ac.uk}
\newtheorem {theorem}{Theorem}
\newtheorem {lemma}[theorem]{Lemma}
\newtheorem {proposition}[theorem]{Proposition}
\newtheorem {corollary}[theorem]{Corollary}
\newtheorem {question}[theorem]{Question}
\numberwithin{equation}{section}
\numberwithin{theorem}{section}
\theoremstyle{definition}
\newtheorem{definition}[theorem]{Definition}
\newtheorem{remark}[theorem]{Remark}
\newtheorem*{remark*}{Remark}
\newtheorem{example}[theorem]{Example}
\newlist{pcases}{enumerate}{1}
\setlist[pcases]{
  label=\bf{Case~\arabic*:}\protect\thiscase.~,
  ref=\arabic*,
  align=left,
  labelsep=0pt,
  leftmargin=0pt,
  labelwidth=0pt,
  parsep=0pt
}
\newcommand{\case}[1][]{%
  \if\relax\detokenize{#1}\relax
    \def\thiscase{}%
  \else
    \def\thiscase{~#1}%
  \fi
  \item
}
\newcommand{\Z}{\mathbb{Z}}
\newcommand{\R}{\mathbb{R}}
\newcommand{\C}{\mathbb{C}}
\newcommand{\CP}{\mathbb{CP}}
\newcommand{\Q}{\mathbb{Q}}
\newcommand{\spc}{\operatorname{Spin}^c}
\newcommand{\ssm}{\smallsetminus}
\newcommand{\ltb}{\mathit{tb}}
\newcommand{\lsl}{\mathit{sl}}
\newcommand{\maxsl}{\overline{\lsl}}
\newcommand{\rank}{\operatorname{rank}}
\newcommand{\Hom}{\operatorname{Hom}}
\newcommand{\Img}{\operatorname{Im}}
\newcommand{\ind}{\operatorname{ind}}
\newcommand\bA{\mathbb{A}}
\newcommand\cB{\mathcal{B}}
\newcommand\cC{\mathcal{C}}
\newcommand{\sD}{\mathscr{D}}
\newcommand{\cE}{\mathcal{E}}
\newcommand{\cG}{\mathcal{G}}
\newcommand{\sK}{\mathscr{K}}
\newcommand{\cN}{\mathcal{N}}
\newcommand{\cS}{\mathcal{S}}
\newcommand{\cptwo}{\overline{\CP}^2}
\newcommand\ttb{\mathit{tb}}
\newcommand\ssl{\mathit{sl}}
\newcommand\hfk{\mathit{HFK}}
\newcommand\hfkhat{\widehat{\hfk}}
\newcommand\SHI{\mathit{SHI}}
\newcommand\SHItfun{\textbf{\textup{\underline{SHI}}}} 
\newcommand\iinvt{\Theta} 
\newcommand\fiinvt{{\iinvt^\#}} 
\DeclareFontFamily{U}{mathx}{\hyphenchar\font45}
\DeclareFontShape{U}{mathx}{m}{n}{
      <5> <6> <7> <8> <9> <10>
      <10.95> <12> <14.4> <17.28> <20.74> <24.88>
      mathx10
      }{}
\DeclareSymbolFont{mathx}{U}{mathx}{m}{n}
\DeclareMathAccent{\widecheck}{0}{mathx}{"71}
\newcommand{\inr}{\operatorname{int}}
\newcommand{\img}{\operatorname{Im}}
\newcommand{\hfhat}{\widehat{\mathit{HF}}}
\newcommand{\ad}{\operatorname{ad}}
\newcommand{\id}{\operatorname{id}}
\newcommand{\tr}{\operatorname{tr}}
\newcommand{\pt}{\mathrm{pt}}
\newcommand{\PD}{\mathit{PD}}
\newcommand{\godd}{\mathrm{odd}}
\newcommand{\geven}{\mathrm{even}}
\newcommand{\mirror}[1]{\overline{#1}}
\newcommand{\torsion}{\mathrm{torsion}}
\newcommand{\Airr}{\tilde{A}}
\newcommand{\std}{\mathrm{std}}
\tikzset{every picture/.style=thick}
\begin{document}

\begin{abstract}
We prove that instanton L-space knots are fibered and strongly quasipositive. Our proof differs conceptually from   proofs of the analogous result in Heegaard Floer homology, and includes a new decomposition theorem for cobordism maps in framed instanton Floer homology akin to the $\textrm{Spin}^c$ decompositions of cobordism maps in other Floer homology theories. As our main application, we prove (modulo a mild nondegeneracy condition) that for $r$ a positive rational number and $K$ a nontrivial knot in the $3$-sphere, there exists an irreducible homomorphism
\[\pi_1(S^3_r(K)) \to SU(2)\]
unless $r \geq 2g(K)-1$ and $K$ is both fibered and strongly quasipositive, broadly generalizing results of Kronheimer and Mrowka. We also answer a  question of theirs from 2004,  proving that there is always an irreducible homomorphism from the fundamental group of 4-surgery on a nontrivial knot to $SU(2)$. In another application, we show that a slight enhancement of the A-polynomial detects infinitely many torus knots, including the trefoil.\end{abstract}

\maketitle

\section{Introduction} \label{sec:introduction}

The most important invariant of a 3-manifold is its fundamental group. One of the most fruitful approaches to understanding the fundamental group is to study its homomorphisms into simpler groups. $SU(2)$ is an especially convenient choice  because it is one of the simplest nonabelian Lie groups, and because gauge theory provides powerful tools for studying $SU(2)$ representations of $3$-manifold groups.
Given a $3$-manifold $Y$,  we will therefore be interested in the representation variety
\begin{equation}\label{eqn:variety}R(Y) = \Hom(\pi_1(Y),SU(2)).\end{equation} If this variety is to tell us anything about the $3$-manifold not captured by its first homology,  it must contain elements with nonabelian image. Let us introduce the following terminology for when this is \emph{not} the case.
\begin{definition}
A 3-manifold $Y$ is \emph{$SU(2)$-abelian} if every $\rho\in R(Y)$ has abelian image.\footnote{If $b_1(Y)=0$ then  $\rho\in R(Y)$ has abelian image iff it has cyclic image.}
\end{definition} 

$SU(2)$-abelian manifolds can be thought of as the simplest manifolds from the perspective of the variety \eqref{eqn:variety}. A basic question when studying representation varieties  is whether they contain irreducible homomorphisms; since a homomorphism into $SU(2)$ is reducible iff it is abelian, this question admits the following satisfying answer for the varieties studied here.

\begin{remark}$R(Y)$ contains an irreducible iff $Y$ is \emph{not} $SU(2)$-abelian.
\end{remark}

Little is known as to which 3-manifolds are $SU(2)$-abelian, even among Dehn surgeries on knots in $S^3$.
One of the first major advances in this direction was Kronheimer and Mrowka's landmark proof of the Property P conjecture \cite{km-p}, in which they showed that $S^3_1(K)$ is not $SU(2)$-abelian (and hence not a homotopy 3-sphere) for any nontrivial knot $K$.  They then substantially strengthened this result as follows.
\begin{theorem}[\cite{km-su2}] \label{thm:km-su2}
Suppose $K\subset S^3$ is a nontrivial knot. Then $S^3_r(K)$ is not $SU(2)$-abelian for any rational number $r$ with $|r| \leq 2$.
\end{theorem}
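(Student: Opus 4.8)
The plan is to prove Theorem~\ref{thm:km-su2} by combining the structure theory of $SU(2)$ representation varieties developed via instanton Floer homology with the decomposition and fiberedness results established earlier in this paper. First I would reduce to the case $r > 0$: if $r < 0$, then $S^3_r(K) = -S^3_{-r}(\mirror{K})$ as oriented manifolds, and since $\mirror{K}$ is nontrivial whenever $K$ is, and $R(Y) = R(-Y)$, it suffices to handle positive $r$ with $r \le 2$; the case $r = 0$ follows from the fact that $b_1(S^3_0(K)) = 1$ together with the existence of a nonabelian $SU(2)$ (indeed $SO(3)$) representation coming from the nontriviality of the Alexander polynomial or from $\pi_1$ surjecting onto a nonabelian group, so the substantive content is $0 < r \le 2$.

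Next I would invoke the main results advertised in the abstract. The key dichotomy is that for $0 < r$, either $S^3_r(K)$ admits an irreducible $SU(2)$ representation, or else $r \ge 2g(K) - 1$ and $K$ is fibered and strongly quasipositive. So to prove the theorem it is enough to show that the exceptional case cannot occur when $0 < r \le 2$ and $K$ is nontrivial: that is, one must rule out $2g(K) - 1 \le r \le 2$ for such $K$. Since $K$ is nontrivial we have $g(K) \ge 1$, hence $2g(K) - 1 \ge 1$, which leaves the window $1 \le r \le 2$ and forces $g(K) = 1$ (because $g(K) \ge 2$ would give $2g(K)-1 \ge 3 > 2$). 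Thus the only potential obstruction is a genus-one, fibered, strongly quasipositive knot $K$ with $r$ in the range $[1,2]$; the genus-one fibered knots are exactly the trefoils and the figure-eight, and of these only the right-handed trefoil is strongly quasipositive, so one is reduced to analyzing $S^3_r(T_{2,3})$ for $1 \le r \le 2$.

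Then I would handle the trefoil directly: for $K = T_{2,3}$ and $0 < r \le 4$ one knows that $S^3_r(K)$ is a Seifert fibered space (for $r \ne 0$) whose fundamental group is not $SU(2)$-abelian except possibly for a short list of lens-space or small-order cases, and in fact $S^3_r(T_{2,3})$ for $r \in [1,2]$ is never a lens space (the lens-space surgeries on the trefoil are $r = 1, 2, 3, 4, 5, 6, 7$ in the relevant normalization... actually $1,2,3,5$ up to orientation) so one must check each integer and rational point. The cleanest approach is to note that Theorem~\ref{thm:km-su2} is already known by Kronheimer--Mrowka for this range via their instanton techniques, or alternatively to observe that the only $SU(2)$-abelian surgeries on the trefoil among $0 < r \le 2$ would have to be lens spaces $L(p,q)$ with $p \le 2$, i.e. $S^3$ or $\mathbb{RP}^3$, and $S^3_1(T_{2,3})$ and $S^3_2(T_{2,3})$ are the Poincar\'e sphere and a prism manifold respectively, both of which have nonabelian (finite) fundamental group admitting irreducible $SU(2)$ representations.

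The main obstacle I expect is the endpoint $r = 2g(K) - 1$ itself, where the dichotomy's inequality is not strict, and more generally making the reduction to the trefoil fully rigorous rather than hand-waving through "genus-one fibered SQP knots"; this requires the classification of fibered genus-one knots and a careful check that strong quasipositivity excludes the figure-eight and the left-handed trefoil, followed by an explicit (and classical) verification that small surgeries on the right-handed trefoil are not $SU(2)$-abelian. Everything else — the orientation-reversal reduction, the $r=0$ case, the genus bound — is routine given the results quoted from the body of the paper.
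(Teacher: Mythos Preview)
This theorem is not proved in the paper; it is cited from Kronheimer--Mrowka \cite{km-su2} as background. The paper explicitly remarks (after Theorem~\ref{thm:main-surgery}) that its own methods recover Theorem~\ref{thm:km-su2} only for slopes $r$ with prime-power numerator, not for all rational $r$ with $|r|\le 2$.

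Your proposal has a genuine gap at exactly this point. The dichotomy you invoke is Theorem~\ref{thm:main-surgery}, which carries the hypothesis that $\Delta_K(\zeta^2)\neq 0$ for every $m$th root of unity $\zeta$, where $r=m/n$. Without this nondegeneracy, the passage from ``$S^3_r(K)$ is $SU(2)$-abelian'' to ``$S^3_r(K)$ is an instanton L-space'' (Proposition~\ref{prop:l-space-su2-cyclic}) is not available, and the rest of the argument does not get started. For instance, nothing in the paper rules out an $SU(2)$-abelian $S^3_{6/5}(K)$ for a hypothetical nontrivial $K$ with $\Delta_K(e^{2\pi i/3})=0$. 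So your reduction to the trefoil only goes through for prime-power numerators, which is precisely what the paper's remark already says.

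A secondary issue: when you reach the trefoil case and write ``note that Theorem~\ref{thm:km-su2} is already known by Kronheimer--Mrowka for this range,'' you are assuming the theorem you set out to prove. The paper's actual treatment of the trefoil (Proposition~\ref{prop:cyclic-2-3}) avoids this circularity, but again only covers prime-power-numerator slopes in $(0,3)$, using Lin's distance bound between $SU(2)$-abelian slopes together with the known lens-space surgeries. Your sketch of the integer cases $r=1,2$ is fine (Poincar\'e sphere and the explicit representation with $\rho(\mu)=e^{\pi i/4}$, $\rho(\lambda)=e^{3\pi i/2}$), but the aside about lens-space slopes on the trefoil is garbled: the lens-space surgeries on $T_{2,3}$ are the integers $n\ge 5$, not $1,2,3,5$.
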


It is  natural to ask whether Theorem~\ref{thm:km-su2}   holds for  other values of $r$. Bearing in mind that $5$-surgery on the right-handed trefoil is a lens space, and hence $SU(2)$-abelian, Kronheimer and Mrowka posed this question for the next two integer values of $r$.

\begin{question}[\cite{km-su2}] \label{q:km-su2}
Suppose $K\subset S^3$ is a nontrivial knot. Can $S^3_3(K)$ or $S^3_4(K)$  be  $SU(2)$-abelian?
\end{question}

In this paper, we use instanton Floer homology to prove the following broad generalization (modulo a mild nondegeneracy condition) of Theorem \ref{thm:km-su2}. In particular, this generalization establishes a link between the genus, the smooth slice genus, and the $SU(2)$ representation varieties of Dehn surgeries on a knot. 

\begin{theorem} \label{thm:main-surgery}
Suppose $K \subset S^3$ is a nontrivial knot, and $r=\frac{m}{n} > 0$ is a rational number such that \[\Delta_K(\zeta^2) \neq 0\] for any $m$th root of unity $\zeta$. Then $S^3_r(K)$ is not $SU(2)$-abelian unless \[r \geq 2g(K)-1\] and $K$ is both fibered and strongly quasipositive.
\end{theorem}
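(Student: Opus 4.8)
The plan is to contrapose: assume $S^3_r(K)$ is $SU(2)$-abelian, and deduce that $r \ge 2g(K)-1$ and that $K$ is fibered and strongly quasipositive. The engine should be the relationship between $SU(2)$ representation varieties of Dehn surgeries and framed instanton Floer homology $I^\#$. Recall (from the work of Kronheimer--Mrowka and its refinements) that if $S^3_r(K)$ is $SU(2)$-abelian then, roughly, the framed instanton homology $I^\#(S^3_r(K))$ has the smallest rank permitted by its Euler characteristic, i.e.\ it behaves like that of a lens space of the same order; in other words $S^3_r(K)$ is an instanton L-space. So the first step is: \emph{$SU(2)$-abelian $\Rightarrow$ instanton L-space surgery}. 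This uses the nondegeneracy hypothesis $\Delta_K(\zeta^2)\ne 0$ for $m$th roots of unity $\zeta$ in an essential way — it guarantees that the relevant $SU(2)$ character variety computation (via the Alexander polynomial / holonomy perturbations, as in Kronheimer--Mrowka's $|r|\le 2$ argument) is clean and that the reducibles are cut out transversally, so that the absence of irreducibles forces the homological bound to be sharp.

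The second step is to understand which knots admit instanton L-space surgeries. Here I would invoke the surgery exact triangle in framed instanton homology together with the large-surgery formula relating $I^\#(S^3_N(K))$ for $N \gg 0$ to the instanton knot homology $\KHI(K)$ (the minus-type or hat-type module). A standard mapping-cone / exact-triangle argument — exactly parallel to Ozsváth--Szabó and Rasmussen in Heegaard Floer, and to Hedden--Watson's characterization — shows that if $S^3_r(K)$ is an instanton L-space for some $r>0$ then $r \ge 2g_I(K)-1$ where $g_I(K)$ is the instanton genus, and $\dim \KHI(K) = 1$ in each of the top and bottom gradings, with the graded ranks of $\KHI(K)$ forced into the same ``staircase'' pattern as for Heegaard Floer L-space knots. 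Since instanton knot homology detects the Seifert genus (Kronheimer--Mrowka) and detects fiberedness (Ghosh--Li, Baldwin--Sivek), the rank-one condition in the top grading immediately yields that $K$ is fibered and that $g_I(K)=g(K)$, giving $r \ge 2g(K)-1$.

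The third step is to upgrade ``fibered'' to ``fibered and strongly quasipositive.'' For this I would use the contact-geometric invariant in $\SHI$/$\KHI$: a fibered knot is strongly quasipositive exactly when the open book supporting the tight contact structure has the monodromy in the image of the positive half of the mapping class group, and this is detected by the nonvanishing of the instanton contact invariant together with the extremal behavior of $\KHI(K)$ — the same mechanism by which one shows, in Heegaard Floer, that an L-space knot is strongly quasipositive (Hedden, and Ozsváth--Szabó). Concretely, the rank-one top grading of $\KHI(K)$ together with the surgery formula pins down the contact class of the binding's complement, and strong quasipositivity follows from the characterization of fibered SQP knots via the fractional Dehn twist coefficient / the sign of the monodromy. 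The key input is the new cobordism-map decomposition theorem advertised in the abstract, which provides the $\mathrm{Spin}^c$-like refinement of the instanton surgery maps needed to run the staircase argument grading-by-grading.

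I expect the \textbf{main obstacle} to be the second step: establishing the instanton large-surgery formula and the attendant mapping-cone description with enough grading control to force the staircase structure. Heegaard Floer has a robust bordered/mapping-cone package that makes the analogous deduction routine; in instanton homology no such package exists, so one must extract the needed structural statement directly from the surgery exact triangle and the grading (the $\mathbb{Z}/2$ or $\mathbb{Z}/4$ grading, plus the Alexander-type filtration on $\KHI$), and this is precisely where the new decomposition theorem for cobordism maps does the heavy lifting. A secondary subtlety is that instanton homology only has a relative $\mathbb{Z}/4$ grading, so ``L-space'' must be phrased purely in terms of total rank versus $|H_1|$, and one must check that this weaker notion still suffices to drive the rank-one conclusion — which it does, because $\dim I^\#(Y) \ge |H_1(Y;\mathbb Z)|$ with equality characterizing the L-space case.
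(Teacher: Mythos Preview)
Your high-level architecture is right: the paper's proof of the stated theorem is exactly the two-step reduction you describe --- first, the Alexander-polynomial hypothesis plus $SU(2)$-abelianness forces $S^3_r(K)$ to be an instanton L-space (this is quoted as \cite[Corollary~4.8]{bs-stein}), and second, one invokes Theorem~\ref{thm:main-surgery-reduction} to get fiberedness, strong quasipositivity, and $r\ge 2g(K)-1$. So at the level of the theorem as stated, you have the correct outline.

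The substantive divergence is in how you propose to prove the black box, Theorem~\ref{thm:main-surgery-reduction}. You invoke a large-surgery formula relating $I^\#(S^3_N(K))$ to $\KHI(K)$ and a mapping-cone description, then appeal to $\KHI$ detecting genus and fiberedness. This is a genuine gap: no such large-surgery formula or mapping-cone package exists for framed instanton homology, and the paper says so explicitly (see the paragraph following the statement of Theorem~\ref{thm:main-surgery-reduction}). The paper's route is different and avoids $\KHI$ entirely. It works with $I^\#(S^3_0(K),\mu)$ and its eigenspace decomposition: the cobordism decomposition (Theorem~\ref{thm:main-cobordism-decomposition}) is used not to build a mapping cone but to track, via the blow-up formula and adjunction, how the elements $y_i = I^\#(X_0,\nu_0;t_{0,i})(\mathbf{1})$ propagate under the composition $I^\#(W_n,\omega_n)\circ\cdots\circ I^\#(W_1,\omega_1)$, showing that the kernel of this composition is spanned by the $y_i$ (Lemma~\ref{lem:ker-vn}). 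Fiberedness then comes from a separate result, Theorem~\ref{thm:odd-dim-g-1}, that $\dim I^\#_\godd(S^3_0(K),\mu;s_{g-1})=1$ iff $K$ is fibered --- proved via excision and Ni's sutured arguments, not via $\KHI$.

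Your SQP step is also off target. The paper cannot simply read strong quasipositivity from a rank-one top grading plus a contact class, because (as explained in \S\ref{ssec:proof}) the instanton setting lacks the knot-filtration description of the contact invariant that makes this work in Heegaard Floer. Instead the paper uses a cabling trick: $K_{p,q}$ is also an instanton L-space knot for $p/q$ large, it deplumbs as $K_{1,q}\ast T_{p,q}$, and one shows that the nontriviality of the relevant cobordism map for the cable forces a variant of the contact class for $\xi_{K_{1,2}}\cong\xi_K$ to be nonzero, hence $\xi_K$ is tight. This is the genuinely new idea you are missing.
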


\begin{remark}
As alluded to above, the assumption on the Alexander polynomial in Theorem \ref{thm:main-surgery} is equivalent to a certain nondegeneracy condition (see \S\ref{ssec:instantonLspace}); we remark that it always holds when $m$ is a prime power. Note that rationals with prime power numerators  are dense in the reals (see Remark \ref{rmk:dense}). 
\end{remark}

\begin{remark}
The right-handed trefoil is the only fibered, strongly quasipositive knot of genus 1. Since $r$-surgery on this trefoil is not $SU(2)$-abelian for any  $r\in[0,2]$, by Proposition \ref{prop:cyclic-2-3}, Theorem \ref{thm:main-surgery} recovers Theorem \ref{thm:km-su2} for $r$ with prime power numerators; e.g., for $r=1,2$.
\end{remark}

We use the techniques behind Theorem \ref{thm:main-surgery} in combination with   results  of Klassen \cite{klassen} and Lin \cite{lin} 
to give the following nearly complete answer to (a  more general version of) Question~\ref{q:km-su2}. 
\begin{theorem} \label{thm:main-34-surgery}
Suppose $K\subset S^3$ is a nontrivial knot of genus $g$.  Then
\begin{itemize}
\item $S^3_4(K)$ is not $SU(2)$-abelian,
\item $S^3_r(K)$ is not $SU(2)$-abelian for any $r=\frac{m}{n}\in (2,3)$ with $m$ a prime power,  and
\item $S^3_3(K)$ is not $SU(2)$-abelian unless $K$ is fibered and strongly quasipositive and $g=2$.
\end{itemize}
\end{theorem}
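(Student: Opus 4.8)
The plan is to deduce this theorem from Theorem \ref{thm:main-surgery} by analyzing the exceptional cases that theorem allows, using the input from Klassen's and Lin's work on $SU(2)$ representations to rule them out. For $S^3_4(K)$: by Theorem \ref{thm:main-surgery}, if $S^3_4(K)$ were $SU(2)$-abelian then $r=4 \geq 2g(K)-1$ and $K$ is fibered and strongly quasipositive, so $g(K) \leq 2$. (The nondegeneracy hypothesis is automatic here since $m=4$ is a prime power, so $\Delta_K(\zeta^2)\neq 0$ for all fourth roots of unity $\zeta$; indeed $\zeta^2 \in \{1,-1\}$, and $\Delta_K(1)=\pm 1 \neq 0$ while $\Delta_K(-1) \neq 0$ for fibered $K$ — or more simply one invokes the prime-power remark.) So $g(K) \in \{1,2\}$. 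If $g(K)=1$ then $K$ is the right-handed trefoil by the remark following Theorem \ref{thm:main-surgery}, and $S^3_4(T_{2,3})$ is a known non-$SU(2)$-abelian Seifert fibered space (this can be cited from Klassen \cite{klassen} or checked directly from its Seifert data). If $g(K)=2$, I would invoke a result of Lin \cite{lin} (combined with Klassen \cite{klassen}) controlling irreducible $SU(2)$ representations of $S^3_r(K)$ when $r$ lies in a suitable range relative to $2g(K)-1 = 3$; here $4 > 3$, and the key point will be that a fibered strongly quasipositive knot of genus $2$ has $\Delta_K$ of a constrained form, enough to force the existence of an irreducible by a representation-variety dimension or signature-type count.

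For $r = \frac{m}{n} \in (2,3)$ with $m$ a prime power: the nondegeneracy hypothesis of Theorem \ref{thm:main-surgery} holds (prime power numerator), so if $S^3_r(K)$ is $SU(2)$-abelian then $r \geq 2g(K)-1$, forcing $2g(K)-1 < 3$, i.e. $g(K)=1$, so $K = T_{2,3}$ again. Then I must show $S^3_r(T_{2,3})$ is not $SU(2)$-abelian for every $r \in (2,3)$. This is exactly the range handled by Klassen's explicit computation of the $SU(2)$ character variety of torus knot surgeries \cite{klassen}: for $2 < r < 3$ the surgery $S^3_r(T_{2,3})$ is a small Seifert fibered space admitting irreducible representations, so one cites or reproduces that computation.

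For $S^3_3(K)$: apply Theorem \ref{thm:main-surgery} with $r=3$. If $S^3_3(K)$ is $SU(2)$-abelian then (nondegeneracy: $m=3$ is prime, automatic) we get $3 \geq 2g(K)-1$ and $K$ fibered and strongly quasipositive, so $g(K) \leq 2$. If $g(K)=1$ then $K = T_{2,3}$, but $S^3_3(T_{2,3})$ is not $SU(2)$-abelian by Klassen \cite{klassen} (it is a Seifert fibered space with base orbifold admitting irreducibles), contradiction. Hence $g(K)=2$, which is precisely the sole exception left in the statement; so the dichotomy is: either $S^3_3(K)$ is not $SU(2)$-abelian, or $K$ is fibered, strongly quasipositive, and $g(K)=2$.

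The main obstacle will be the genus-$2$ case of $4$-surgery: unlike the genus-$1$ case, $g(K)=2$ admits infinitely many fibered strongly quasipositive knots (e.g. cables and Whitehead-type families), so ruling out $SU(2)$-abelianness of $S^3_4(K)$ for all of them cannot proceed by enumeration. The argument there must be a uniform one, extracting from $r = 4 > 2g(K)-1 = 3$ a genuine gap that the non-$SU(2)$-abelian criteria of Klassen and Lin can exploit — this is where the real content beyond Theorem \ref{thm:main-surgery} lies, and I expect it to rest on Lin's theorem producing irreducible $SU(2)$ representations of $S^3_r(K)$ for $r$ strictly larger than $2g(K)-1$ together with a careful analysis of the reducible locus coming from roots of $\Delta_K$. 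The remaining cases reduce cleanly to the single knot $T_{2,3}$ and are dispatched by citing the classical surgery computations.
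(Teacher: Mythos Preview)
Your reduction via Theorem~\ref{thm:main-surgery} to the cases $g(K)\in\{1,2\}$ is correct, and your handling of $r=3$ and of the genus-$1$ (trefoil) subcases is essentially the paper's approach. The paper disposes of $S^3_4(T_{2,3})$ via Klassen's binary dihedral count (Lemma~\ref{lem:4-surgery-det-1}: if $\det(K)>1$ then $S^3_4(K)$ is not $SU(2)$-abelian), and of $S^3_r(T_{2,3})$ for $r\in(2,3)$ via Lin's bound on the distance between two $SU(2)$-abelian slopes rather than a direct character-variety computation; your citations are slightly off but the trefoil case is not where the difficulty lies.

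The genuine gap is the $g=2$, $r=4$ case, where your proposal is only a hope. Lin's result, as used in the paper, bounds the distance between two $SU(2)$-abelian surgery slopes; it does not produce irreducibles for every $r>2g(K)-1$, and no ``signature-type count'' is available here. The paper's argument is Floer-theoretic and goes as follows. First, Klassen's lemma gives $\det(K)=1$. Since $K$ is fibered of genus~$2$, this forces $\Delta_K(t)=at^2+(1-2a)+at^{-2}$ with $a=\pm1$. Lim's theorem (Theorem~\ref{thm:lim}) then computes the Euler characteristics of the $\mu(\hat\Sigma)$-eigenspaces of $I_*(S^3_0(K))_\mu$, and combining with the symmetry of Lemma~\ref{lem:symmetry} and Corollary~\ref{cor:connected-sum-low-genus} one obtains $\dim I^\#(S^3_0(K),\mu)\geq 8$. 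On the other hand, since $K$ is a genus-$2$ instanton L-space knot which is not the trefoil, Propositions~\ref{prop:2-surgery-l-space} and~\ref{prop:first-l-space-slope} force $\dim I^\#_\godd(S^3_0(K),\mu)=3$, hence $\dim I^\#(S^3_0(K),\mu)=6$. This contradiction is the missing idea: the obstruction comes from comparing two independent computations of $\dim I^\#(S^3_0(K),\mu)$, one via the Alexander polynomial and one via the L-space slope characterization, not from any direct representation-theoretic input beyond Klassen's determinant constraint.
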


\begin{remark}
We expect that $S^3_3(K)$ is not $SU(2)$-abelian for \emph{any} nontrivial knot but do not know how to prove this at present.
\end{remark}

We highlight two additional applications of our work below before describing the gauge-theoretic results underpinning these theorems.

First, in \cite{sivek-zentner}, Sivek and Zentner  explored the question of which knots in $S^3$ are \emph{$SU(2)$-averse}, meaning that they are nontrivial and have infinitely many $SU(2)$-abelian surgeries. These include torus knots, which have infinitely many lens space surgeries \cite{moser}, and conjecturally nothing else.  Below, we provide strong new restrictions on $SU(2)$-averse knots and their \emph{limit slopes}, defined for an $SU(2)$-averse knot as the unique accumulation point of its   $SU(2)$-abelian surgery slopes.
\begin{theorem} \label{thm:main-su2-averse}
Suppose $K \subset S^3$ is an $SU(2)$-averse knot. Then $K$ is fibered, and either $K$ or its mirror is strongly quasipositive with limit slope strictly greater than $2g(K)-1$.
\end{theorem}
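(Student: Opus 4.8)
The plan is to leverage Theorem~\ref{thm:main-surgery}, which already links the genus, slice genus, and $SU(2)$ representation varieties of Dehn surgeries, together with the definition of an $SU(2)$-averse knot as having infinitely many $SU(2)$-abelian surgery slopes accumulating at a unique limit slope. First I would observe that if $K$ is $SU(2)$-averse, then it has infinitely many $SU(2)$-abelian surgeries $S^3_{r_i}(K)$ with slopes $r_i$; by work of Sivek--Zentner \cite{sivek-zentner} the set of $SU(2)$-abelian slopes of a nontrivial knot lies on one side of zero (say positive, after possibly replacing $K$ by its mirror) and converges to the limit slope, which I will call $r_\infty > 0$. Since there are infinitely many such slopes $r_i = m_i/n_i$ accumulating at $r_\infty$, I can extract a subsequence with $m_i \to \infty$. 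The key point is then to arrange that the nondegeneracy hypothesis $\Delta_K(\zeta^2) \neq 0$ for all $m_i$th roots of unity $\zeta$ holds for infinitely many of these slopes: because $\Delta_K$ is a fixed nonzero Laurent polynomial with finitely many roots, only finitely many roots of unity $\zeta$ can satisfy $\Delta_K(\zeta^2)=0$, so only finitely many integers occur as the orders of such $\zeta$. Hence for all but finitely many $i$, no $m_i$th root of unity $\zeta$ has $\Delta_K(\zeta^2)=0$ --- provided I choose the $r_i$ in lowest terms and use that the bad numerators are bounded. (More carefully: if $\Delta_K(\zeta^2)=0$ then $\zeta$ is a root of unity of some bounded order $d$, and $m_i$ being divisible by $d$ for infinitely many $i$ with $m_i/n_i$ converging does not immediately fail, so I would instead argue that among the infinitely many slopes there are infinitely many whose numerator avoids every such fixed $d$; this uses that the slopes accumulate at a single finite value, forcing the numerators to grow and, by a counting/density argument as in Remark~\ref{rmk:dense}, to include infinitely many coprime to any fixed finite set of moduli.)

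Next I would apply Theorem~\ref{thm:main-surgery} to each of these infinitely many good slopes $r_i$. Since $S^3_{r_i}(K)$ is $SU(2)$-abelian and $K$ is nontrivial, the theorem forces $r_i \geq 2g(K)-1$ and $K$ to be fibered and strongly quasipositive. In particular, $K$ (or its mirror, depending on the sign normalization chosen above) is fibered and strongly quasipositive, and every good slope satisfies $r_i \geq 2g(K)-1$.

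Finally I would promote the weak inequality $r_\infty \geq 2g(K)-1$ to the strict inequality claimed in the theorem. Here I would argue as follows: if the limit slope were exactly $2g(K)-1$, then infinitely many $SU(2)$-abelian slopes $r_i$ would satisfy $2g(K)-1 \leq r_i$ and $r_i \to 2g(K)-1$; but the slopes $r_i = m_i/n_i$ with $m_i \to \infty$ approach $2g(K)-1$ from above, which I would like to show is impossible. The cleanest route is to invoke a sharper surgery result: for $r$ sufficiently close to (but not below) $2g(K)-1$, the surgered manifold $S^3_r(K)$ should still be detected as non-$SU(2)$-abelian unless $r$ is bounded away from $2g(K)-1$, because the relevant instanton Floer estimates (the ones powering Theorem~\ref{thm:main-surgery}) are strict when $r = 2g(K)-1$ exactly --- i.e., $S^3_{2g(K)-1}(K)$ is itself never $SU(2)$-abelian for a nontrivial $K$. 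Combined with the fact that accumulation at $r_\infty$ from above with growing numerators would force slopes arbitrarily close to, but strictly above, $2g(K)-1$, and that only finitely many standard-form rationals lie in any interval $(2g(K)-1, 2g(K)-1+\epsilon)$ with bounded numerator, I conclude $r_\infty > 2g(K)-1$ strictly.

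The main obstacle I anticipate is the last step: separating the limit slope strictly from $2g(K)-1$. The weak inequality is essentially immediate from Theorem~\ref{thm:main-surgery}, but strictness requires genuinely new input --- either a boundary-case strengthening of the surgery theorem at the slope $r = 2g(K)-1$, or a finiteness argument showing that only finitely many $SU(2)$-abelian slopes can cluster from above at $2g(K)-1$ (which would contradict $SU(2)$-aversion). I expect the intended proof handles this by recalling that $S^3_{2g(K)-1}(K)$ is detected via the framed instanton homology of a genus-detecting surgery being of rank one only in the lens space case, so a fibered strongly quasipositive knot with limit slope exactly $2g-1$ would have to be a torus knot with a very restrictive surgery description, and then ruling that out directly. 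Getting the precise form of this boundary argument right --- and confirming it does not secretly require the nondegeneracy hypothesis at the limiting numerator --- is where I would expect to spend the most effort.
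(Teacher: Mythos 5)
Your proposal takes a genuinely different route than the paper, and the route has two real gaps.

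The paper's proof is short and avoids the nondegeneracy issue entirely: by \cite[Theorem~1.1]{sivek-zentner} (cited in the paper as Theorem~\ref{thm:main-averse}), if $K$ is $SU(2)$-averse with positive limit slope $r$, then $S^3_n(K)$ is an instanton L-space for the \emph{integer} $n = \lceil r\rceil - 1 > 0$. Theorem~\ref{thm:main-surgery-reduction} then says the set of positive instanton L-space slopes for a nontrivial knot is exactly $[2g(K)-1,\infty)\cap\Q$, so $n \geq 2g(K)-1$, and since $n < r$ always, strictness is immediate. The whole argument goes through instanton L-spaces and never needs the Alexander-polynomial hypothesis of Theorem~\ref{thm:main-surgery}.

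Your route instead tries to apply Theorem~\ref{thm:main-surgery} directly to the $SU(2)$-abelian slopes, which requires the nondegeneracy hypothesis $\Delta_K(\zeta^2)\neq 0$. The density argument you sketch for securing this at infinitely many slopes does not hold up. If $\Delta_K(\zeta^2)=0$ for some root of unity of order $d$, the condition fails at slope $m/n$ precisely when $d \mid m$; and there is no a priori obstruction to \emph{every} numerator among the $SU(2)$-abelian slopes being divisible by a fixed bad $d$ (e.g.\ all numerators even). Slopes $m_i/n_i \to r_\infty$ in lowest terms with $d\mid m_i$ for all $i$ are perfectly possible, so neither Remark~\ref{rmk:dense} nor convergence of the slopes rescues the claim. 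You would need additional structural input about the set of $SU(2)$-abelian slopes that you have not supplied.

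The strictness step is also not correct as stated. You appeal to the claim that $S^3_{2g(K)-1}(K)$ is never $SU(2)$-abelian for nontrivial $K$, but this is unsubstantiated: the paper itself explicitly leaves open whether $S^3_3(K)$ can be $SU(2)$-abelian for a fibered strongly quasipositive knot of genus $2$ (see the remark following Theorem~\ref{thm:main-34-surgery}), and in fact $S^3_{2g(K)-1}(K)$ is always an instanton L-space for an instanton L-space knot by Proposition~\ref{prop:first-l-space-slope}, so there is no Floer-theoretic obstruction of the kind you hope for. Moreover, even granting that claim, it would not directly yield $r_\infty > 2g(K)-1$: the $SU(2)$-abelian slopes could accumulate at $2g(K)-1$ from above without $2g(K)-1$ itself being an $SU(2)$-abelian slope. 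The strictness in the paper comes from a completely different place, namely the elementary inequality $r > \lceil r\rceil - 1$ together with the integer $\lceil r\rceil - 1$ being an L-space slope. You correctly identified strictness as the hard step, but the resolution you speculate about is not the right one.
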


Finally, one of the first applications of Theorem~\ref{thm:km-su2} was a proof by Dunfield--Garoufalidis \cite{dunfield-garoufalidis} and Boyer--Zhang \cite{boyer-zhang} that the A-polynomial \cite{ccgls} detects the unknot.  The A-polynomial \[A_K(M,L) \in \Z[M^{\pm1},L^{\pm1}]\] is defined in terms of the $SL(2,\C)$ character variety of $\pi_1(S^3\setminus K)$, and the key idea behind these proofs is that Theorem~\ref{thm:km-su2} implicitly provides, for a nontrivial knot $K$, infinitely many representations
\[ \pi_1(S^3\setminus K) \to SU(2) \hookrightarrow SL(2,\C) \]
whose characters are distinct, even after restricting to the peripheral subgroup.

More recently, Ni and Zhang proved \cite{ni-zhang} that $\Airr_K(M,L)$ together with knot Floer homology detects all torus knots, where the former is a slight enhancement of the A-polynomial whose definition omits the curve of reducible characters in the $SL(2,\C)$ character variety (see \S\ref{sec:a-polynomial}). Neither the $A$-polynomial nor its enhancement (nor knot Floer homology) alone can  detect \emph{all} torus knots. Nevertheless, we use Theorem \ref{thm:main-su2-averse}, together with an argument inspired by the proofs of  the unknot detection result above, to prove that this enhanced $A$-polynomial  suffices in infinitely many cases, as follows.

\begin{theorem} \label{thm:main-a-poly}
Suppose $K$ is either a trefoil or a torus knot $T_{p,q}$ where $p$ and $q$ are distinct odd primes.  Then \[\Airr_{J}(M,L) = \Airr_{K}(M,L)\] iff $J$ is isotopic to $K$.
\end{theorem}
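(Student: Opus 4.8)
The plan is to leverage Theorem~\ref{thm:main-su2-averse} to pin down $J$ as a torus knot, and then use the known structure of the enhanced $A$-polynomial of torus knots to identify which one.

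\textbf{Step 1: $J$ is $SU(2)$-averse.} First I would observe that $\Airr_K$ records, via its Newton polygon (equivalently, via the slopes of the sides of the polygon, which by the boundary-slopes interpretation of the $A$-polynomial correspond to boundary slopes of essential surfaces) and via the presence of particular factors, enough information about the $SU(2)$ character variety of the knot complement to detect $SU(2)$-aversion. More precisely, I would argue that $K = T_{p,q}$ or the trefoil is $SU(2)$-averse with a specific limit slope $pq$ (the slope of the reducible-filling lens space surgeries, coming from Moser \cite{moser}), and that $SU(2)$-aversion together with the limit slope can be read off from $\Airr$: an $SU(2)$-abelian surgery slope $m/n$ forces the $SU(2)$ representations of the surgered manifold to be abelian, which constrains how the curve components of the $SU(2)\subset SL(2,\C)$ character variety meet the peripheral torus, and this is visible in $\Airr$ since $\Airr$ by definition retains exactly the non-reducible characters. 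Hence $\Airr_J = \Airr_K$ forces $J$ to be $SU(2)$-averse with the same limit slope $pq$.

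\textbf{Step 2: Apply Theorem~\ref{thm:main-su2-averse}.} Since $J$ is $SU(2)$-averse, Theorem~\ref{thm:main-su2-averse} tells us $J$ is fibered, and $J$ or its mirror $\mirror{J}$ is strongly quasipositive with limit slope strictly greater than $2g(J)-1$. Combining the limit slope equality (limit slope $=pq$ for $J$, matching $K$) with the genus — which I would also extract from $\Airr_J$, since the width of the Newton polygon of the $A$-polynomial bounds $2g$, and for these torus knots equality is attained — we get $pq = 2g(J)+\epsilon$ with a controlled error, and in fact $g(J) = g(K) = \frac{(p-1)(q-1)}{2}$ once the genus is forced to match. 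Now I invoke the classification input: a fibered, strongly quasipositive (up to mirror) knot whose complement has the boundary-slope data of $T_{p,q}$ and which is $SU(2)$-averse with limit slope $pq$ must, by the structure theory of $SU(2)$-averse knots from \cite{sivek-zentner} together with the genus and fiberedness constraints, be a torus knot; and matching the genus $\frac{(p-1)(q-1)}{2}$ and the limit slope $pq$ forces $J = T_{p,q}$ (the sign of the torus knot is fixed by strong quasipositivity of $J$ rather than $\mirror{J}$, which in turn is detected by $\Airr_J$ since the $A$-polynomials of a knot and its mirror differ by the substitution $L \mapsto L^{-1}$, breaking the symmetry except in degenerate cases). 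For the trefoil one argues identically with $(p,q)=(2,3)$, noting the right-handed trefoil is the unique genus-$1$ fibered strongly quasipositive knot (as remarked after Theorem~\ref{thm:main-surgery}).

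\textbf{Main obstacle.} The hard part will be Step~1: showing that $\Airr$ alone — without knot Floer homology, which Ni--Zhang \cite{ni-zhang} needed — detects $SU(2)$-aversion and the limit slope. The enhanced $A$-polynomial is an $SL(2,\C)$ object, so one must carefully translate between the $SU(2)$ representation theory that feeds Theorem~\ref{thm:main-su2-averse} and the $SL(2,\C)$ character-variety data visible in $\Airr$. The key technical point, in the spirit of the Dunfield--Garoufalidis/Boyer--Zhang unknot-detection argument, is that an $SU(2)$-abelian filling slope is detected by the failure of a certain curve component of the $SL(2,\C)$ character variety to contribute a nonconstant $A$-polynomial factor through that slope; making this precise and robust under the hypothesis $\Airr_J = \Airr_K$ (so that the \emph{same} filling slopes are $SU(2)$-abelian, not merely that $J$ is $SU(2)$-averse for \emph{some} slopes) is where the real work lies. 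Once that dictionary is in place, Steps~2 and~3 are essentially bookkeeping with genus, limit slope, and mirror symmetry.
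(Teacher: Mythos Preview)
Your Step~1 is essentially correct and is formalized in the paper as Proposition~\ref{prop:thin-implies-averse}: since $\Airr_{T_{p,q}}(M,L)$ divides $M^{2pq}L^2-1$, the equality $\Airr_J=\Airr_K$ forces the Newton polygon of $\Airr_J$ to lie on a line of slope $pq$ (``$pq$-thinness''), and this in turn forces $J$ to be $SU(2)$-averse with limit slope $pq$, hence fibered with $pq>2g(J)-1$ by Theorem~\ref{thm:main-su2-averse}. Contrary to your assessment, this is not the hard step.

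The real gap is in your Step~2. You cannot extract $g(J)$ from the Newton polygon of $\Airr_J$ in any direct way, and there is no ``classification input'' or structure theorem from \cite{sivek-zentner} that says a fibered, strongly quasipositive, $SU(2)$-averse knot with prescribed limit slope must be a torus knot. What you actually have to rule out are \emph{satellite} knots (hyperbolic knots are excluded because they are never $r$-thin, by Proposition~\ref{prop:hyperbolic-thick}). The paper handles this via Lemma~\ref{lem:torus-impostor-satellite}: if $J=P(C)$ is $r$-thin and not a torus knot, then the companion $C$ is $\tfrac{r}{w^2}$-thin and $P(U)$ is $r$-thin (if nontrivial), with $g(J)=w\cdot g(C)+g(P(U))$. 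One then runs an inductive/minimality argument. For $r=pq$ with $p,q$ distinct odd primes, arithmetic forces $w=1$ (Proposition~\ref{prop:thinness-values}), whence $C$ and $P(U)$ are themselves torus knots of the form $T_{a,b}$ with $ab=pq$, so necessarily $T_{p,q}$; the genus formula then gives $2g(J)-1\geq (p-1)(q-1)-1\geq pq$ once $p,q\geq 3$, contradicting $pq>2g(J)-1$. The trefoil requires a separate argument (Theorem~\ref{thm:apoly-trefoil}) using Lim's Euler characteristic formula and an instanton $L$-space dimension count, since the genus inequality above fails when $r=6$. None of this structure appears in your outline.
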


\begin{remark}We prove that $\Airr_K(M,L)$  detects many other torus knots as well; see Corollary~\ref{cor:torus-knot-detection} for a more complete list.
\end{remark}

\subsection{Instanton L-spaces}
\label{ssec:instantonLspace}
The theorems above rely on  new results about \emph{framed instanton homology},  defined by Kronheimer and Mrowka in \cite{km-yaft} and   developed further by Scaduto in \cite{scaduto}.  This theory associates to a closed, oriented 3-manifold $Y$ and a closed, embedded multicurve $\lambda \subset Y$ a  $\Z/2\Z$-graded $\C$-module \[I^\#(Y,\lambda) = I_{\textrm{odd}}^\#(Y,\lambda)\oplus I_{\textrm{even}}^\#(Y,\lambda).\]
This module depends, up to isomorphism, only on $Y$ and the homology class \[[\lambda] \in H_1(Y;\Z/2\Z).\] 
Let us write  $I^\#(Y)$ to denote $I^\#(Y,\lambda)$ with $[\lambda]=0$.  Scaduto proved \cite{scaduto} that if $b_1(Y) = 0$ then $I^\#(Y)$ has Euler characteristic $|H_1(Y;\Z)|$, which implies that \[\dim I^\#(Y) \geq |H_1(Y;\Z)|.\] This  inspires the following terminology, by analogy with Heegaard Floer homology.

\begin{definition} \label{def:main-l-space}
A rational homology 3-sphere $Y$ is an \emph{instanton L-space} if \[\dim I^\#(Y) = |H_1(Y;\Z)|.\] A knot $K\subset S^3$ is  an \emph{instanton L-space knot} if $S^3_r(K)$ is an instanton L-space for some rational number $r>0$.
\end{definition}

\begin{remark} \label{rmk:oddgradinglspace}
It follows easily from the discussion above that a rational homology $3$-sphere $Y$ is an instanton L-space iff $ I_{\godd}^\#(Y) = 0$.
\end{remark}

Our main Floer-theoretic result is the following.

\begin{theorem} \label{thm:main-surgery-reduction}
Suppose $K\subset S^3$ is a nontrivial instanton L-space knot.  Then
\begin{itemize}
\item $K$ is fibered and strongly quasipositive, and 
\item $S^3_r(K)$ is an instanton L-space for a rational number  $r$ iff $r \geq 2g(K)-1$.
\end{itemize}
\end{theorem}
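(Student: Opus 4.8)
The plan is to prove Theorem~\ref{thm:main-surgery-reduction} by porting the Heegaard Floer strategy (Ozsv\'ath--Szab\'o, Ni, Hedden, etc.) to the instanton setting, using instanton knot homology $\KHI(S^3,K)$ and the surgery formulas that relate it to $I^\#$ of surgeries. First I would recall that $\KHI(S^3,K)$ carries an Alexander grading whose graded Euler characteristic is $\Delta_K(t)$ and which detects the genus (Kronheimer--Mrowka) and fibredness (Baldwin--Sivek, Ghosh--Li). The key input is a mapping-cone / surgery exact triangle expressing $I^\#(S^3_{p/q}(K))$ in terms of the filtered complex computing $\KHI$; concretely, for large $n$ one expects $\dim I^\#(S^3_n(K)) = n + \text{(correction terms)}$, with equality to $n=|H_1|$ exactly when the ``reduced'' part of $\KHI$ in each Alexander grading behaves like that of an L-space knot, i.e.\ $\dim\KHI(S^3,K,j)\le 1$ for all $j$. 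So Step~1 is: establish that $K$ being an instanton L-space knot forces $\dim\KHI(S^3,K,j) \le 1$ for every Alexander grading $j$, and that the nonzero gradings form an interval symmetric about $0$.

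Step~2: from $\dim\KHI(S^3,K,j)\le 1$ with top nonzero grading $g(K)$, deduce that $\KHI$ has total dimension equal to the number of nonzero coefficients of $\Delta_K$, that these coefficients are all $\pm1$, and alternate in sign — exactly the Ozsv\'ath--Szab\'o characterization of L-space knot Alexander polynomials. Fibredness then follows because the top Alexander grading has rank exactly $1$, which is the instanton fibredness detection theorem. Step~3: deduce strong quasipositivity. Here I would invoke the relationship between the top of the knot homology, the contact invariant, and the tau-type concordance invariant in instanton theory (Baldwin--Sivek): an L-space knot has $\tau^\sharp(K) = g(K)$ (via the large-surgery formula and positivity of the relevant cobordism maps), and a fibred knot whose open book supports the tight contact structure with nonvanishing invariant and with $\tau^\sharp = g$ is strongly quasipositive by the fibred analogue of Hedden's theorem. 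This is where the new ``$\spc$-type decomposition'' of framed instanton cobordism maps advertised in the abstract does the real work, since it is what lets one extract the graded/positivity information from $I^\#$ that in Heegaard Floer comes for free from the $\spc$-grading.

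Step~4: the surgery-slope characterization. Given that $K$ is an L-space knot with the above $\Delta_K$, run the mapping-cone formula for $I^\#(S^3_{p/q}(K))$: the complex is a sum of copies of $I^\#(S^3)$ and of the filtered pieces of $\KHI$, connected by the maps $v_s, h_s$; L-space-ness of the surgery is equivalent to all these vertical/horizontal maps being surjective in the appropriate range, and a now-standard computation (identical in form to Ozsv\'ath--Szab\'o's $\hfp$ argument, or to Rasmussen/Gibbons) shows this holds for $p/q$ iff $p/q \ge 2g(K)-1$. The monotonicity ``if it holds for one positive $r$ then it holds for all $r\ge 2g-1$'' also falls out of this. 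The main obstacle — and the reason this is a theorem rather than a routine transcription — is that instanton Floer homology has no $\spc$ decomposition a priori, so Steps~1, 3, and 4 all require first proving the decomposition theorem for $I^\#$ cobordism maps and then checking that the resulting pieces assemble into a genuine mapping-cone formula with the positivity properties needed; verifying exactness and the surjectivity criteria without the algebraic scaffolding of a $\ZZ$-grading is the technical heart of the argument. A secondary subtlety is that $\KHI$ is only known to detect the genus and fibredness over $\C$, so all dimension counts must be done carefully over $\C$ and compared against the Euler characteristic constraint $\dim I^\#(Y)\ge |H_1(Y;\ZZ)|$.
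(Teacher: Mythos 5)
Your proposal hinges on a mapping-cone/large-surgery formula expressing $I^\#(S^3_{p/q}(K))$ in terms of a filtered complex computing $\KHI(S^3,K)$, together with maps $v_s,h_s$ whose surjectivity governs L-space-ness. No such formula exists in instanton Floer homology; the paper emphasizes this exact point as the reason a new approach is needed. In Heegaard Floer, the mapping-cone formula is proved via holomorphic-disk counts on specific diagrams and relies on the $\Z\oplus\Z$-filtered knot complex $CFK^\infty$, the large-surgery formula, and $\mathrm{Spin}^c$ decompositions of closed $3$-manifold invariants --- none of which have instanton analogues. In particular there is no $\Z$-graded (Maslov) filtration on $\KHI$ from which to extract a $\tau^\sharp$ via large surgeries, and no way to formulate the maps $v_s,h_s$; so Steps~1, 3, and~4 as you outline them cannot get off the ground. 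This is a genuine gap, not a technical detail to be filled in: the decomposition theorem for cobordism maps (Theorem~\ref{thm:main-cobordism-decomposition}) does not, by itself, assemble into a mapping-cone formula, and the paper never attempts such a formula.

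The paper's actual argument avoids $\KHI$ and mapping cones entirely. For fiberedness, it works directly with the Alexander summands $I^\#_{\godd}(S^3_0(K),\mu;s_i)$ of zero-surgery, and the elements $y_i := I^\#(X_0,\nu_0;t_{0,i})(\mathbf{1})$ they contain. Using only the $(\infty,n,n+1)$ surgery exact triangle (note: \emph{not} the $(\infty,0,n)$ triangle) and the blow-up formula plus adjunction from Theorem~\ref{thm:main-cobordism-decomposition} applied to the factorizations $X_k \cup W_{k+1} \cong X_{k+1}\#\cptwo$, it shows (Lemma~\ref{lem:ker-vn}) that the kernel of the composite $I^\#(V_n,\bar\omega_n)\colon I^\#(S^3_0(K),\mu) \to I^\#(S^3_n(K))$ lies in the span of the $y_i$. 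If $S^3_n(K)$ is an L-space then $I^\#_{\godd}(S^3_n(K))=0$, forcing each odd summand of zero-surgery to be at most $\C\cdot y_i$; combined with the zero-surgery fiberedness detection result (Theorem~\ref{thm:odd-dim-g-1}), this gives fiberedness. Strong quasipositivity is then proved by cabling and deplumbing, \emph{not} via a $\tau^\sharp=g$ argument: one observes $K_{p,q}$ is an L-space knot for $p/q>2g-1$, uses the Murasugi sum $K_{p,q}=K_{1,q}\ast T_{p,q}$ and the nonvanishing of a contact-class variant $\fiinvt$ built from the cobordism $V_{\cB,\phi}$ to conclude $\xi_{K_{1,2}}\cong\xi_K$ is tight, and then applies the Hedden/Baker--Etnyre--van Horn-Morris criterion. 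Finally, the bound $N\geq 2g-1$ on the first L-space slope comes from the Lidman--Pinz\'on-Caicedo--Scaduto argument via $\maxsl(K)=2g(K)-1$ and counting linearly independent Stein-fillable contact invariants in $I^\#(-S^3_{-n}(K))$ --- again with no mapping cone.
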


Let us explain how Theorem \ref{thm:main-surgery-reduction} bears on the results claimed in the previous section.

The connection with $SU(2)$ representations comes from the  principle that $I^\#(Y)$ should be the Morse--Bott homology of a Chern--Simons functional with critical set  \[\textrm{Crit}(\mathit{CS})\cong R(Y).\] This heuristic holds true so long as the elements in $R(Y)$ are nondegenerate in the Morse--Bott sense.  In \cite{bs-stein}, we observed that if $Y$ is $SU(2)$-abelian and $b_1(Y)=0$, then  \[H_*(R(Y);\C)\cong\C^{|H_1(Y;\Z)|},\] which then implies that $Y$ is an instanton $L$-space if all elements of $R(Y)$ are  nondegenerate. For $SU(2)$-abelian Dehn surgeries, this nondegeneracy is equivalent to the condition on the Alexander polynomial in Theorem~\ref{thm:main-surgery}, by work of Boyer and Nicas \cite{boyer-nicas}; see \cite[\S4]{bs-stein}.

With this discussion in the background, let us now prove Theorems \ref{thm:main-surgery} and \ref{thm:main-su2-averse} assuming Theorem~\ref{thm:main-surgery-reduction}.

\begin{proof}[Proof of Theorem \ref{thm:main-surgery}]Suppose  $K\subset S^3$ is a nontrivial knot, and $r=\frac{m}{n} > 0$ is a rational  number such that $\Delta_K(\zeta^2) \neq 0$ for any $m$th root of unity $\zeta$. Suppose $S^3_r(K)$ is $SU(2)$-abelian. Then $S^3_r(K)$ is an instanton L-space, by \cite[Corollary~4.8]{bs-stein}. Theorem \ref{thm:main-surgery-reduction} then implies that $K$ is  fibered and strongly quasipositive, and $r\geq 2g(K)-1$. 
\end{proof}

\begin{proof}[Proof of Theorem \ref{thm:main-su2-averse}] Suppose $K\subset S^3$ is   $SU(2)$-averse  with limit slope $r>0$. Then $K$ has an instanton L-space surgery of slope $\lceil r \rceil - 1 > 0$, by \cite[Theorem~1.1]{sivek-zentner}. As $K$ is nontrivial by definition, Theorem \ref{thm:main-surgery-reduction} then implies that $K$ is fibered and strongly quasipositive, and  \[r>\lceil r \rceil - 1\geq 2g(K)-1.\] 
 If $r < 0$, then we apply the same argument to the mirror $\mirror{K}$, whose limit slope is $-r > 0$. \end{proof}

Theorem \ref{thm:main-surgery-reduction} will be unsurprising to those familiar with Heegaard Floer homology, given the conjectural isomorphism \cite[Conjecture~7.24]{km-excision} \[I^\#(Y)\cong_{\mathrm{conj.}}\hfhat(Y)\otimes \C,\] and the fact that nontrivial knots with positive  Heegaard Floer L-space surgeries are already known to be fibered \cite{osz-lens,ni-hfk} and strongly quasipositive \cite{hedden-positivity}, with L-space surgery slopes comprising \cite{osz-rational} \[[2g(K){-}1,\infty)\cap \Q.\]  On the other hand, all proofs in the literature of the fiberedness result \emph{alone} use some subset  of: the $\Z\oplus\Z$-filtered Heegaard Floer complex associated to a knot, the large  surgery formula, the $(\infty,0,n)$-surgery exact triangle for $n>1$, and, in all cases, the $\textrm{Spin}^c$ decomposition of the Heegaard Floer groups of rational homology spheres. None of this structure is   available in framed instanton homology. 

Our proof of Theorem~\ref{thm:main-surgery-reduction} is thus, by necessity, largely novel (and can  even be translated to  Heegaard Floer and monopole Floer homology\footnote{The analogue of Theorem~\ref{thm:main-surgery-reduction} is  known to hold in monopole Floer homology, but only by appeal to the isomorphism between monopole Floer homology and Heegaard Floer homology.} to give new, conceptually simpler proofs of the analogous theorems in those settings). One  of the   ingredients  is a new decomposition theorem for  cobordism maps in framed instanton homology, analogous  to the $\textrm{Spin}^c$ decompositions of cobordism maps in Heegaard and monopole Floer homology, described in the next section. We expect  this decomposition result to have  other applications as well.

\subsection{A decomposition of framed instanton homology}

The framed instanton homology of a $3$-manifold comes equipped with a collection of commuting operators
\[ \mu: H_2(Y;\Z) \to \mathrm{End}(I^\#(Y,\lambda)) \]
such that   the eigenvalues of $\mu([\Sigma])$ are even integers between $2-2g(\Sigma)$ and $2g(\Sigma)-2$ for a surface $\Sigma\subset Y$.  Following Kronheimer and Mrowka \cite[Corollary~7.6]{km-excision}, this gives rise to an eigenspace decomposition
\[ I^\#(Y,\lambda) = \bigoplus_{s:H_2(Y;\Z)\to 2\Z} I^\#(Y,\lambda;s), \]
where each summand $I^\#(Y,\lambda;s)$ is the simultaneous generalized $s(h)$-eigenspace of $\mu(h)$ for all $h \in H_2(Y;\Z)$.  Only finitely many of these summands are nonzero.

A smooth cobordism
\[ (X,\nu): (Y_0,\lambda_0) \to (Y_1,\lambda_1) \]
 induces a homomorphism 
\begin{equation}\label{eqn:cobmap} I^\#(X,\nu): I^\#(Y_0,\lambda_0) \to I^\#(Y_1,\lambda_1). \end{equation}
We extend the eigenspace decomposition discussed above  to cobordism maps, in a way which mirrors the $\spc$ decomposition of cobordism maps in, say, the \emph{hat} flavor of Heegaard Floer homology.  Our main theorem in this vein is Theorem \ref{thm:main-cobordism-decomposition} below. In stating it, we will use the following notation: 
given an inclusion $i: M\hookrightarrow N$ of two manifolds and a homomorphism $s: H_2(N;\Z) \to \Z$, we  write 
\[ s|_M = s \circ i_*: H_2(M;\Z) \to H_2(N;\Z) \to \Z \]
for the restriction of $s$ to $M$.

\begin{theorem} \label{thm:main-cobordism-decomposition}
Let $(X,\nu): (Y_0,\lambda_0) \to (Y_1,\lambda_1)$ be a cobordism with $b_1(X) = 0$.  Then there is a natural decomposition of the cobordism map \eqref{eqn:cobmap} into a sum
\[ I^\#(X,\nu) = \sum_{s: H_2(X;\Z) \to \Z} I^\#(X,\nu;s) \]
of maps of the form
\[ I^\#(X,\nu;s): I^\#(Y_0,\lambda_0;s|_{Y_0}) \to I^\#(Y_1,\lambda_1;s|_{Y_1}) \]
with the following properties.
\begin{enumerate}
\item \label{i:finite-support} $I^\#(X,\nu;s) = 0$ for all but finitely many $s$.
\item \label{i:adjunction} If $I^\#(X,\nu;s)$ is nonzero, then $s(h)+h\cdot h\equiv 0\pmod{2}$ for all $h \in H_2(X;\Z)$, and $s$ satisfies an adjunction inequality
\[ |s([\Sigma])| + [\Sigma] \cdot [\Sigma] \leq 2g(\Sigma)-2 \]
for every smoothly embedded, connected, orientable surface $\Sigma \subset X$ of genus at least $1$ and positive self-intersection.
\item \label{i:composition-law} If $(X,\nu)$ is a composition of two cobordisms
\[ (Y_0,\lambda_0) \xrightarrow{(X_{01},\nu_{01})} (Y_1,\lambda_1) \xrightarrow{(X_{12},\nu_{12})} (Y_2,\lambda_2) \]
where $b_1(X_{01}) = b_1(X_{12}) = 0$, then we have a composition law
\[ I^\#(X_{12},\nu_{12};s_{12}) \circ I^\#(X_{01},\nu_{01};s_{01}) = \sum_{\substack{s: H_2(X;\Z)\to\Z \\ s|_{X_{01}}=s_{01}\\ s|_{X_{12}}=s_{12}}} I^\#(X,\nu;s) \]
for all $s_{01}: H_2(X_{01};\Z) \to \Z$ and $s_{12}: H_2(X_{12};\Z) \to \Z$.
\item \label{i:blowup-formula} If $\tilde{X} = X \# \cptwo$ denotes the blow-up of $X$, with $e$ the exceptional sphere and $E$ its Poincar\'e dual, then
\[ I^\#(\tilde{X},\nu;s+kE) = \begin{cases} \frac{1}{2}I^\#(X,\nu;s) & k=\pm1 \\ 0 & k \neq \pm 1 \end{cases} \]
and similarly
\[ I^\#(\tilde{X},\nu+e;s+kE) = \begin{cases} \hphantom{-}\frac{1}{2}I^\#(X,\nu;s) & k=-1 \\ -\frac{1}{2}I^\#(X,\nu;s) & k=+1 \\ \hphantom{-}0 & k \neq \pm 1 \end{cases} \]
for all $s: H_2(X;\Z)\to \Z$ and all $k \in \Z$.  
\item \label{i:sign-change} $I^\#(X,\nu+\alpha;s) = (-1)^{\frac{1}{2}(s(\alpha)+\alpha\cdot\alpha)+\nu\cdot\alpha} I^\#(X,\nu;s)$ for all $\alpha \in H_2(X;\Z)$.
\end{enumerate}
\end{theorem}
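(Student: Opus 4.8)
The plan is to construct the decomposition by first handling the case of \emph{closed} $4$-manifolds and then bootstrapping to cobordisms, exploiting the fact that the operator $\mu$ on $I^\#$ arises from evaluating the universal $SO(3)$ bundle against $2$-dimensional homology classes, exactly as in Kronheimer--Mrowka's development of the eigenspace decomposition. First I would recall that $I^\#(Y,\lambda)$ is defined as an instanton Floer group of $Y \# T^3$ (or the relevant auxiliary closure), and that a cobordism $(X,\nu)$ induces its map by counting ASD connections on the corresponding cobordism with a $U(1)$-reduction along $\nu$; the operators $\mu(h)$ for $h \in H_2$ commute because the corresponding $\mu$-classes can be supported on disjoint surfaces. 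The key input is that for a closed pair $(X,\nu)$ the relative Donaldson-type invariant already decomposes according to the ``energy'' or first Pontryagin number of the connection, and the $\mu$-eigenvalue on the incoming and outgoing ends is pinned down by the dimension formula together with the fact that a class $h \in H_2(X)$ restricting to $h|_{Y_i}$ acts on the Floer end by $\mu(h|_{Y_i})$. Thus I would \emph{define} $I^\#(X,\nu;s)$ as the part of the cobordism map that intertwines the $s|_{Y_0}$-eigenspace upstairs with the $s|_{Y_1}$-eigenspace downstairs \emph{and} is cut out by a fixed value of the relevant characteristic number; the content of the theorem is then that this is well defined, finitely supported, and has the stated naturality.

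The order of the main steps: (1) establish \eqref{i:finite-support} and the basic well-definedness by a compactness/dimension argument — only finitely many values of $p_1$ (equivalently, of the topological energy) contribute to a cobordism map of fixed degree, and $s$ is determined by $p_1$ together with the intersection form via the index formula, giving the mod-$2$ constraint $s(h)+h\cdot h\equiv 0$ in \eqref{i:adjunction}; (2) prove the composition law \eqref{i:composition-law} by the usual neck-stretching/gluing argument, organizing broken trajectories by the class $s \in H_2(X;\Z)$ that glues $s_{01}$ and $s_{12}$ (here one uses Mayer--Vietoris for $X = X_{01}\cup_{Y_1} X_{12}$, and $b_1(X_{01})=b_1(X_{12})=0$ to control the gluing of homology classes); (3) deduce the blow-up formula \eqref{i:blowup-formula} from the standard instanton blow-up formula — the exceptional sphere contributes an extra $U(1)$ factor, the $\frac12$ is the familiar normalization from $I^\#$ being built on $\#T^3$ or from the two-to-one behavior of the connected-sum theorem, and the sign/$k=\pm1$ dichotomy is the statement that only $\pm E$ appears, paralleling the $\spc$ blow-up formula in Heegaard Floer; (4) derive the sign-change formula \eqref{i:sign-change} by tracking how adding $\alpha$ to the reducing surface $\nu$ changes the orientation of moduli spaces — this is a Chern--Simons/spectral-flow computation giving the exponent $\tfrac12(s(\alpha)+\alpha\cdot\alpha)+\nu\cdot\alpha$; (5) finally prove the adjunction inequality in \eqref{i:adjunction} by puncturing $X$ along an embedded surface $\Sigma$ of positive self-intersection, applying the composition law to split off a neighborhood $\nu(\Sigma)$ (a disk bundle over $\Sigma$), and invoking the known eigenvalue bound $|\mu([\Sigma])| \le 2g(\Sigma)-2$ on the closed end together with the relation between $s([\Sigma])$, $[\Sigma]\cdot[\Sigma]$, and the $\mu$-eigenvalue on that piece — essentially the same mechanism by which one proves the adjunction inequality for Donaldson invariants.

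The hard part will be step (2), the composition law, and more specifically making sure the bookkeeping of homology classes is honest: one must show that every broken trajectory of the relevant index arises from a \emph{unique} $s \in H_2(X;\Z)$ with the prescribed restrictions, and that the gluing map is degree-one compatible with these decompositions. The subtlety is that $H_2(X;\Z)$ need not surject onto $H_2(X_{01};\Z)\oplus H_2(X_{12};\Z)$ nor inject, so one needs the Mayer--Vietoris sequence of the decomposition $X = X_{01} \cup_{Y_1} X_{12}$ together with the hypothesis $b_1(X_{01})=b_1(X_{12})=0$ (which kills the relevant $H_1$ terms and forces $H_2(Y_1;\Z)$ to be torsion, so that the eigenvalue function $s|_{Y_1}: H_2(Y_1;\Z)\to 2\Z$ is actually constrained) to see that the fiber product over $H_2(Y_1;\Z)$ in the statement is the right indexing set. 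A secondary difficulty is pinning down the exact normalization constants and signs in \eqref{i:blowup-formula} and \eqref{i:sign-change}; these are forced once one fixes conventions, but getting them consistent with Scaduto's conventions for $I^\#$ and with the $\mu$-action requires care, and I would expect to verify them on the model cases of $\mathbb{CP}^2$, $S^2\times S^2$, and the trace of a surgery before asserting the general formula.
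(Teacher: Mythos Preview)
Your approach has a genuine gap at the very first step: the proposed \emph{definition} of $I^\#(X,\nu;s)$ does not make sense. You suggest defining it as the piece of the cobordism map ``cut out by a fixed value of the relevant characteristic number'' together with the eigenspace restriction on the ends. But for $SO(3)$ instantons there is no moduli-theoretic label on connections valued in $\Hom(H_2(X;\Z),\Z)$ --- the only topological invariant of an ASD connection on a fixed bundle is a single integer (the instanton number, or equivalently $p_1$), not a homomorphism on all of $H_2(X)$. This is precisely the difference from Seiberg--Witten or Heegaard Floer theory, where the $\spc$ structure supplies such a label for free. Restricting to eigenspaces on the two ends only pins down $s|_{Y_0}$ and $s|_{Y_1}$; the values of $s$ on classes in $H_2(X)$ not coming from the boundary (e.g.\ the class of a closed surface with nonzero self-intersection) are simply not visible from either the energy or the boundary eigenvalues. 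So your step~(1) claim that ``$s$ is determined by $p_1$ together with the intersection form via the index formula'' is false, and the rest of the plan (gluing by $s$, adjunction by splitting off a neighborhood) cannot get started.

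The paper's route is quite different and is worth knowing. Rather than decomposing moduli spaces, they work with the formal power series
\[
\sD_X^\nu(h) = D_{X,\nu}\!\left(-\otimes\Big(1+\tfrac{x}{2}\Big)e^h\right), \qquad h\in H_2(X;\R),
\]
and prove a \emph{structure theorem} (their Theorem~\ref{thm:structure-theorem}) showing that this series has the shape $e^{Q(h)/2}\sum_j a_j e^{K_j(h)}$ for finitely many ``basic classes'' $K_j:H_2(X;\Z)\to\Z$ and homomorphism-valued coefficients $a_j$. The map $I^\#(X,\nu;s)$ is then \emph{defined} to be $\tfrac12 a_j$ when $s=K_j$ and zero otherwise. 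The structure theorem is proved by adapting Mu\~noz's argument: a blow-up formula plus his Fukaya--Floer computation for $\Sigma\times S^1$ handles the case $b_2^+(X)>0$, and the case $b_2^+(X)=0$ is reduced to the positive case by post-composing with the trace of $1$-surgery on $T_{2,5}$, which they show induces an injective map on $I^\#$. Properties \eqref{i:finite-support}, \eqref{i:adjunction}, \eqref{i:blowup-formula}, \eqref{i:sign-change} then fall out of the structure theorem and blow-up formula directly; the eigenspace compatibility and the composition law \eqref{i:composition-law} are proved separately (Propositions~\ref{prop:eigenspace-restriction} and \ref{prop:composition-law}) by applying suitable differential operators in the $t_i$-variables to isolate individual exponential terms. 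In short, the decomposition is extracted \emph{algebraically} from the generating series, not geometrically from the moduli spaces.
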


We remark that property~\eqref{i:sign-change} requires $\alpha$ to be a class in $H_2(X)$ rather than in $H_2(X,\partial X)$, so that $\nu+\alpha$ still restricts to $Y_0$ and to $Y_1$ as $\lambda_0$ and $\lambda_1$ respectively and hence $I^\#(X,\nu+\alpha)$ is still a map of the form $I^\#(Y_0,\lambda_0) \to I^\#(Y_1,\lambda_1)$.

The key to our proof of this theorem is a cobordism analogue of Kronheimer and Mrowka's structure theorem \cite{km-structure} for the Donaldson invariants of closed 4-manifolds. Briefly, one can extend the cobordism map \eqref{eqn:cobmap}  to maps
 \[ D_{X,\nu}: I^\#(Y_0,\lambda_0) \otimes \bA(X) \to I^\#(Y_1,\lambda_1), \] where $\bA(X)$ is the graded algebra \[ \bA(X) = \mathrm{Sym}(H_0(X;\R) \oplus H_2(X;\R)), \] such that \[I^\#(X,\nu) = D_{X,\nu}(-\otimes 1).\]  Letting $x=[\pt]$ in $H_0(X;\Z)$,  one defines a formal power series 
 \[ \sD_X^\nu(h) = D_{X,\nu}\left(-\otimes\left(e^h + \tfrac{1}{2}xe^h\right)\right) \]
for each $h \in H_2(X;\R)$, following \cite{km-structure}, and we prove that this series can be expressed as a finite sum 
\[ e^{Q_X(h)/2} \sum_{j=1}^r a_{j} e^{K_j(h)}, \]
where the $K_j$ are \emph{basic classes}, which we think of as homomorphisms $H_2(X;\Z)\to\Z$. This has exactly the same form as Kronheimer and Mrowka's original structure theorem, except that the coefficients are now homomorphisms
\[ a_{j}: I^\#(Y_0,\lambda_0) \to I^\#(Y_1,\lambda_1) \] instead of rational numbers.
Up to scaling, we define $I^\#(X,\nu;K_j)$ to be the map $a_{j}$.

To prove this structure theorem for cobordisms, we adapt   Mu\~noz's \cite{munoz-basic} alternative proof of  Kronheimer and Mrowka's structure theorem for closed $4$-manifolds. The proofs of that theorem  in both \cite{km-structure} and \cite{munoz-basic}  require that the  $4$-manifold contains a surface of positive self-intersection. The cobordism analogue in the case $b_2^+(X)=0$  therefore requires a new argument (see \S\ref{ssec:structure-general}).

\subsection{On the proof of Theorem \ref{thm:main-surgery-reduction}}\label{ssec:proof}

We provide below a detailed sketch of the proof of our main Floer-theoretic result, Theorem \ref{thm:main-surgery-reduction}.

Our proof that instanton L-space knots are fibered has two main components. The first is the result that the framed instanton  homology of $0$-surgery on a knot detects fiberedness. For this, suppose $\Sigma$ is a genus-minimizing Seifert surface for a nontrivial knot $K\subset S^3$   with meridian $\mu$. Let $\hat\Sigma$  denote the capped-off surface in $S^3_0(K)$, and let \[s_i: H_2(S^3_0(K)) \to 2\Z\] be the homomorphism defined by $s_i([\hat\Sigma])=2i$, for each $i\in \Z$. We prove the following, largely using a combination of  arguments due to Kronheimer and Mrowka \cite{km-excision} and Ni \cite{ni-hm}; this result may be of independent interest.

\begin{theorem} \label{thm:odd-dim-g-1}
If $K \subset S^3$ is a nontrivial knot as above, then
\[ \dim I^\#_\godd(S^3_0(K),\mu;s_{g(K)-1}) \geq 1, \]
with equality iff $K$ is fibered.
\end{theorem}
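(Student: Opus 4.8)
The plan is to relate $I^\#_\godd(S^3_0(K),\mu;s_{g-1})$ to the sutured instanton homology of the knot complement cut along $\Sigma$, where the fiberedness detection is already understood. Concretely, I would use the surgery/excision machinery from Kronheimer--Mrowka \cite{km-excision}: framed instanton homology of $S^3_0(K)$ with a meridional curve $\mu$, decomposed along the eigenvalue $2(g-1)$ of $\mu([\hat\Sigma])$, should be computed by a closure of the sutured manifold obtained from $S^3\ssm N(K)$ with the surface $\Sigma$ as a "decorating" surface. The operator $\mu([\hat\Sigma])$ acting on the closure has top eigenvalue $2g-2$ on the summand coming from the genus-minimizing Seifert surface, and the corresponding eigenspace is identified (up to grading shift) with $\KHI(S^3,K)$ in its top Alexander grading, i.e.\ with $\SHI(S^3\ssm N(K),\Sigma)$ for the sutured manifold cut along $\Sigma$.

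The key steps, in order: (i) set up the precise closure of $(S^3\ssm N(K),\gamma)$ realizing $I^\#(S^3_0(K),\mu)$, tracking the class $[\hat\Sigma]$ and the $\Z/2$-homology class of $\mu$; (ii) invoke the eigenspace decomposition of Theorem~\ref{thm:main-cobordism-decomposition} (or rather its $3$-manifold precursor from \cite{km-excision}) together with an adjunction-type bound to show that the $s_{g-1}$-summand is supported in a single $\Z/2$-grading pattern and is isomorphic to the top sutured instanton group of the knot complement decomposed along $\Sigma$; (iii) identify that top group with $\SHI$ of the complementary sutured manifold $M' = (S^3 \ssm N(K))$ cut along $\Sigma$, via the sutured decomposition theorem of Kronheimer--Mrowka together with Ni-style arguments \cite{ni-hm}; (iv) apply the nonvanishing theorem $\SHI(M',\gamma')\neq 0$ (so the dimension is at least $1$) and the characterization that this group has dimension exactly $1$ if and only if $M'$ is a product sutured manifold, i.e.\ iff $K$ is fibered. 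The parity bookkeeping needed to land in $I^\#_\godd$ specifically — as opposed to $I^\#_\geven$ — comes from the grading conventions relating $\SHI$ closures to framed instanton homology, and would be pinned down by comparing with the known case of the unknot or a model fibered example.

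The main obstacle I expect is step (ii)–(iii): showing that passing to the extreme eigenspace of $\mu([\hat\Sigma])$ really does recover the sutured group of the complement decomposed along $\Sigma$, with the correct $\Z/2$-grading. In Heegaard Floer homology this is the content of the large-surgery formula plus Ni's work identifying $\hfhat$ of $0$-surgery in the top $\spc$-grading with $\hfk$ in the top Alexander grading; in the instanton setting one has no $\spc$ structure and only a coarser eigenvalue decomposition, so the argument must proceed through closures and the behavior of the $\mu$-operator under the excision cobordisms, controlling how eigenspaces split. Handling the grading shifts carefully — and ensuring that the ``extra'' contributions from non-top eigenspaces or from the ambiguity in choosing a closure genus do not interfere — is where the real work lies; this is presumably why the authors cite a \emph{combination} of techniques from \cite{km-excision} and \cite{ni-hm} rather than a single clean reference. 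The final equivalence with fiberedness is then a direct appeal to the sutured instanton analogue of Ni's fiberedness detection, which should follow from Juh\'asz-type sutured decomposition results transported to the instanton category.
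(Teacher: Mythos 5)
Your high-level strategy (use the Kronheimer--Mrowka excision machinery to relate the top $\mu(\hat\Sigma)$-eigenspace of a closed $3$-manifold's instanton homology to a sutured group of the Seifert-surface complement, then invoke the instanton analogue of Ni's fiberedness detection) is broadly the paper's philosophy, but there are two substantive gaps in the plan as written.

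First, your parity bookkeeping is not just delicate — your proposed strategy for resolving it would fail. You envision pinning down whether $I^\#(S^3_0(K),\mu;s_{g-1})$ lands in the odd or even grading by ``comparing with the known case of the unknot or a model fibered example,'' but the summand is not concentrated in one parity. What the paper actually proves (Corollary~\ref{cor:odd-even-top}, via Propositions~\ref{prop:sum-with-s1xs2} and \ref{prop:s1xs2-surgery-triangle}) is an isomorphism
\[
I_*\big(Y\#T^3\,\big|\,R\big)_{\lambda^\#}\;\cong\; I_*(Y|R)_\lambda\otimes(\C_0\oplus\C_1),
\]
homogeneous with respect to the $\Z/2\Z$-grading, so that the framed top-eigenspace group splits \emph{equally} between even and odd, and each half is isomorphic to the unframed group $I_*(S^3_0(K)|\hat\Sigma)_\mu$. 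Establishing this requires first passing from $\#T^3$ to $\#(S^1\times S^2)$ by excision, then using the surgery exact triangle together with a dimension-count for ASD connections on the blow-up cobordism to show one of the three maps vanishes. Model computations won't see this because the claim is not ``it lives in one grading'' but rather ``it is symmetric between the two.''

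Second, you treat the $\SHI$ fiberedness detection as a known black box (``direct appeal to the sutured instanton analogue of Ni's fiberedness detection''), but in the instanton setting it is \emph{not} available without an extra hypothesis. The product-detection theorem of Kronheimer--Mrowka (and hence the converse to Theorem~\ref{thm:fibered-has-rank-1}) requires that $Y\ssm N(R)$ be a \emph{homology product}. In Heegaard Floer and monopole Floer homology this is automatic when the rank is $1$, because the Turaev torsion of $Y$ is the Euler characteristic of the Floer groups; no such statement is known for instanton homology, and the paper has an explicit remark to this effect after Theorem~\ref{thm:fibered}. The paper closes this gap specifically for $0$-surgery on a knot by invoking Lim's theorem (Theorem~\ref{thm:lim}), which identifies the Alexander polynomial of $K$ with a signed sum of dimensions of $\mu(\hat\Sigma)$-eigenspaces: rank $1$ in the top eigenspace forces $\Delta_K$ to be monic of degree $g(K)$, which gives the homology product condition via Ghiggini's lemma. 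Your proposal has no mechanism for establishing the homology product condition, so step (iv) of your outline does not follow from the sutured machinery alone; you would be silently using an input that the instanton theory does not provide for free.
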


The second component, which is conceptually novel, uses only the surgery exact triangle and formal properties of the decomposition of cobordism maps given by Theorem~\ref{thm:main-cobordism-decomposition},  as follows.  For each $k \geq 0$, there is  a surgery exact triangle
\[ \dots \to I^\#(S^3) \xrightarrow{I^\#(X_k,\nu_k)} I^\#(S^3_k(K)) \xrightarrow{I^\#(W_{k+1},\omega_{k+1})} I^\#(S^3_{k+1}(K)) \to \dots, \]
where we use $I^\#(S^3_0(K),\mu)$ instead of  $I^\#(S^3_0(K))$ when $k=0$. 
 Here, $X_k$ is the trace of $k$-surgery on $K$, and  $W_{k+1}$ is the trace of $-1$-surgery on a meridian of $K$ in $S^3_k(K)$, so  that  \begin{equation}\label{eqn:XW} X_k\cup_{S^3_k(K)} W_{k+1}\cong X_{k+1}\#\cptwo.\end{equation} We apply \eqref{eqn:XW} inductively, using  the blow-up formula and adjunction inequality  in Theorem \ref{thm:main-cobordism-decomposition}, to  eventually prove (Lemma~\ref{lem:ker-vn}) that, for any $n> 0$, the kernel of the composition
 \begin{equation} \label{eq:composition-w}
I^\#(S^3_0(K),\mu) \xrightarrow{I^\#(W_1,\omega_1)} I^\#(S^3_1(K)) \xrightarrow{I^\#(W_2,\omega_2)} \dots \xrightarrow{I^\#(W_n,\omega_n)} I^\#(S^3_n(K))
\end{equation}
 lies in the subspace of $I^\#(S^3_0(K),\mu)$ spanned by the  elements 
\[ y_i = I^\#(X_0,\nu_0; t_{0,i})({\mathbf 1}) \in I^\#(S^3_0(K),\mu; s_i), \]
for \[i=1-g(K),2-g(K),\dots,g(K)-1,\] where each homomorphism \[t_{0,i}: H_2(X_0) \to \Z\] is  defined by $t_{0,i}([\hat\Sigma])=2i$, and ${\mathbf 1}$ generates $I^\#(S^3) \cong \C.$ Noting that the composition  \eqref{eq:composition-w} preserves the $\Z/2\Z$-grading, it follows that if $S^3_n(K)$ is an instanton L-space then \begin{equation} \label{eq:main-top-eigenspace}
\dim I^\#_\godd(S^3_0(K),\mu; s_{g(K)-1}) = 1,
\end{equation}
which then implies that $K$ is fibered by Theorem \ref{thm:odd-dim-g-1}. Indeed, if \eqref{eq:main-top-eigenspace} does not hold, then the group  \[I^\#_\godd(S^3_0(K),\mu; s_{g(K)-1})\]  contains an element which is not a multiple of $y_{g(K)-1}$, which is thus sent by the composition \eqref{eq:composition-w}  to a nonzero element of \[I^\#_\godd(S^3_n(K)) = 0\] (see Remark \ref{rmk:oddgradinglspace}), a contradiction.  The same reasoning shows that $y_{g(K)-1}\neq 0$ when $K$ is an instanton L-space knot.

For strong quasipositivity, suppose $K$ is an instanton L-space knot. Since $K$ is fibered, it supports some contact structure $\xi_K$ on $S^3$. Then $K$ is strongly quasipositive iff $\xi_K$ is tight  \cite{hedden-positivity,baader-ishikawa}. To prove that this contact structure is tight, we first  recall from above that $y_{g(K)-1}\neq 0$. By duality, this implies that the map
\begin{equation} \label{eq:main-sqp-map}
I_{\textrm{even}}^\#(-S^3_0(K),\mu;s_{1-g(K)}) \cong \C \to I^\#(-S^3)
\end{equation}
induced by  turning $X_0$  upside down has nonzero image. The image of the analogous map in  Heegaard Floer homology defines the Heegaard Floer contact invariant $c(\xi_K)$ \cite{osz-contact}, and  $c(\xi_K) \neq 0$ implies that $\xi_K$ is tight. Unfortunately, we cannot  prove  that the nontriviality of \eqref{eq:main-sqp-map} implies the tightness of $\xi_K$  in general (the Heegaard Floer proof ultimately uses the description of $c(\xi_K)$ in terms of the knot Floer filtration for $K$, for which there is no analogue in the instanton setting). 

Our solution involves cabling. 
Namely, for  coprime integers $p$ and $q$ with $q\geq 2$ and \[\frac{p}{q}>2g(K)-1,\] a simple argument shows that the $(p,q)$-cable $K_{p,q}$   is also an instanton L-space knot, which implies as above that the map 
\begin{equation*} \label{eq:main-sqp-map-cable}
I_{}^\#(-S^3_0(K_{p,q}),\mu;s_{1-g(K_{p,q})})  \to I^\#(-S^3)
\end{equation*}
is nonzero. Using the  fact that such cables can be deplumbed, we  show that the nontriviality of this map implies that a slight variant of the contact class we defined in \cite{bs-instanton} is nonzero for the contact structure $\xi_{K_{1,q}}$ corresponding to the cable $K_{1,q}$. We  ultimately deduce from this that  $\xi_{K_{1,q}}$ is tight (we only work this out concretely in  the case $q=2$, which suffices for our application). Noting that $ \xi_K\cong\xi_{K_{1,q}} $ for any positive $q$, we conclude that $\xi_K$ is tight as well.

Finally, our characterization of the  L-space surgery slopes has two parts. First, we proved in \cite[Theorem~4.20]{bs-stein} that if the set of positive instanton L-space slopes for  a nontrivial knot $K$ is nonempty, then it has the form
\[ [N, \infty) \cap \Q\]  for some positive integer $N$, and we show in Proposition \ref{prop:first-l-space-slope} that \begin{equation}\label{eqn:Nbound}N\leq 2g(K)-1.\end{equation} To prove that this  inequality  is actually an \emph{equality}, we then employ an observation due to  Lidman, Pinz\'on-Caicedo, and Scaduto in  \cite{lpcs}. There, they use the $\Z/2\Z$-grading of our   contact invariant \cite{bs-instanton} together with our main technical result in \cite{bs-stein} to prove, under the assumptions that $K$ is a nontrivial instanton L-space knot with  maximal self-linking  number \begin{equation}\label{eqn:maxsllspace}\maxsl(K)=2g(K)-1,\end{equation} that \begin{equation}\label{eqn:Nequality}N\geq 2g(K)-1\end{equation} (this argument is actually a relatively easy part of their more general work in  \cite{lpcs}; we will therefore explain it in full in \S\ref{ssec:lspaceslopes}).  Our proof that  instanton L-space knots are strongly quasipositive is independent of their work; therefore, since strongly quasipositive knots  satisfy \eqref{eqn:maxsllspace},  the bounds \eqref{eqn:Nbound} and \eqref{eqn:Nequality} and hence the equality \[N=2g(K)-1\]  hold for \emph{all} nontrivial instanton L-space knots, as desired.

\begin{remark} 
Some of the arguments in our proof of Theorem \ref{thm:main-surgery-reduction} invoke the Giroux correspondence \cite{giroux-icm} between contact structures and open books. This correspondence is  generally accepted to be true, but since a complete proof  has yet to appear in the literature, we have chosen to indicate below which of our results depend on it.

We use the Giroux correspondence  to prove the strong quasipositivity claim in Theorem~\ref{thm:main-surgery-reduction} (and therefore in Theorems \ref{thm:main-surgery} and \ref{thm:main-su2-averse} as well), which is  then used to prove the claim in Theorem~\ref{thm:main-surgery-reduction} that positive instanton L-space slopes $r$ satisfy the bound \begin{equation*}\label{eqn:rbound}r \geq 2g(K)-1.\end{equation*} This bound is required for  the $3$-  and $4$-surgery cases of Theorem~\ref{thm:main-34-surgery}, and in Theorem~\ref{thm:main-a-poly} to prove that $\Airr_K(M,L)$ detects any of the claimed torus knots other than the trefoils, which are handled separately by Theorem~\ref{thm:apoly-trefoil}.
We remark that Theorem~\ref{thm:l-space-knots-are-fibered} and Proposition~\ref{prop:2-surgery-l-space} do not rely on this  correspondence. These assert that an instanton L-space knot  is fibered with genus equal to its smooth slice genus; and that if $S^3_1(K)$ or $S^3_2(K)$ is an instanton L-space, then $K$ is either the unknot or the right-handed trefoil.
\end{remark}

\subsection{Organization}

 We provide  some background on instanton Floer homology in \S\ref{sec:background}. The material in \S\ref{sec:fibered} and \S\ref{sec:compare-top-eigenspaces} is devoted to proving  the fiberedness detection result, Theorem \ref{thm:odd-dim-g-1}.  In \S\ref{sec:I-sharp}, we prove the structure theorem for cobordism maps on framed instanton homology, which we then use   in \S\ref{sec:eigenspace} for our decomposition result, Theorem~\ref{thm:main-cobordism-decomposition}.
In \S\ref{sec:L-spaces}, we prove that instanton L-space knots are fibered with Seifert genus equal to smooth slice genus (Theorem~\ref{thm:l-space-knots-are-fibered}).  We  prove in \S\ref{sec:sqp} that instanton L-space knots are   strongly quasipositive (Theorem \ref{thm:sqp}) and   complete the characterization of instanton L-space surgery slopes (Theorem \ref{thm:bound}), finishing the proof of   Theorem~\ref{thm:main-surgery-reduction}.
We conclude with several applications of Theorem~\ref{thm:main-surgery-reduction}. We have already shown  that this theorem  implies Theorems \ref{thm:main-surgery} and \ref{thm:main-su2-averse}.  In \S\ref{sec:small-surgeries}, we investigate $SU(2)$-abelian surgeries with small surgery slope,   and prove Theorem \ref{thm:main-34-surgery}. We study the A-polynomials of torus knots in \S\ref{sec:a-polynomial},  and prove Theorem~\ref{thm:main-a-poly}.

\subsection{Acknowledgments}

We thank Ken Baker, John Etnyre, Matt Hedden, Jen Hom, and Tom Mrowka for helpful conversations.  We thank Tye Lidman, Juanita Pinz\'on-Caicedo, and Chris Scaduto for the same, and also for discussing their work \cite{lpcs} with us while it was still in progress.  We also thank the referee for useful feedback.  JAB was supported by NSF CAREER Grant DMS-1454865.

\section{Background } \label{sec:background}

In this section, we provide    background on instanton Floer homology and establish some notational conventions. Much of our discussion below is adapted from \cite{km-excision} and \cite{scaduto}.
\subsection{Conventions} All manifolds in this paper are smooth, oriented, and compact, and all submanifolds are smoothly and properly embedded. 

\subsection{Instanton Floer homology} 
\label{ssec:instanton-floer}Let $(Y,\lambda)$ be an \emph{admissible pair}, meaning that $Y$ is a closed, connected $3$-manifold, and $\lambda\subset Y$ is a multicurve which intersects some  surface   in an odd number of points. We associate  to this pair:
\begin{itemize}
\item a Hermitian line bundle $w\to Y$ with $c_1(w)$ Poincar{\'e} dual to $\lambda$, and
\item a $U(2)$ bundle $E\to Y$ equipped with an isomorphism $\theta:\wedge^2 E \to w$.
\end{itemize}
The \emph{instanton Floer homology} $I_*(Y)_\lambda$ is roughly the   Morse homology, with $\C$-coefficients, of the Chern--Simons functional on the space \[\cB = \cC/\cG\] of $SO(3)$-connections on $\ad(E)$ modulo determinant-1 gauge transformations (the automorphisms of $E$ which respect $\theta$), as in \cite[\S5.6]{donaldson-book}. This   group has a relative $\Z/8\Z$-grading, which reduces to a canonical  $\Z/2\Z$-grading \cite{froyshov,donaldson-book}.

\begin{remark}
\label{rmk:isomorphism-homology}
Up to isomorphism, the group $I_*(Y)_\lambda$ depends only on  $Y$ and the homology class \[[\lambda]\in H_1(Y;\Z/2\Z)\] of the multicurve $\lambda$.
\end{remark}

Each homology class $h \in H_k(Y;\R)$ gives rise to a  class $\mu(h)\in H^{4-k}(\cB)$, and therefore to an endomorphism  \[\mu(h): I_*(Y)_\lambda \to I_{*+k-4}(Y)_\lambda\]  of degree $k-4$, as   in \cite{donaldson-kronheimer}. These endomorphisms are additive in the sense that \[\mu(h_1+h_2) = \mu(h_1)+\mu(h_2)\] for  $h_1,h_2 \in H_k(Y;\R)$, and   endomorphisms associated to   even-dimensional   classes commute.  
This  implies that for any collection of even-dimensional  classes, $I_*(Y)_\lambda$ is the  direct sum of the simultaneous generalized eigenspaces  of the associated operators.

\begin{remark}Going forward, we will often blur the distinction between closed submanifolds of $Y$ and the homology classes they represent, and  we will   use \emph{eigenspace} to mean \emph{generalized eigenspace}   unless  stated otherwise.
\end{remark}

After choosing an absolute lift of the relative $\Z/8\Z$-grading on $I_*(Y)_\lambda$, we can write an element of this group as \[v=(v_0,v_1,v_2,v_3,v_4,v_5,v_6,v_7),\] where $v_i$ is in grading $i\in\Z/8\Z$. Let \[\phi:I_*(Y)_\lambda \to I_*(Y)_\lambda\] be the map defined for $v$ as above by \begin{equation}\label{eq:eigenspace-isomorphism}\phi(v)=(v_0,v_1,iv_2,iv_3,-v_4,-v_5,-iv_6,-iv_7).\end{equation} This map gives rise to  isomorphisms between eigenspaces, as below.

\begin{lemma}
\label{lem:symmetry} For any $h\in H_2(Y;\R)$ and $m,n\in \C$, the map $\phi$ defines an isomorphism from  the  $(m,n)$-eigenspace of the operators $\mu(h),\mu(\pt)$ on $I_*(Y)_\lambda$ to the  $(im,-n)$-eigenspace.
\end{lemma}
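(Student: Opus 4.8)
The plan is to prove Lemma~\ref{lem:symmetry} by computing how $\phi$ conjugates the operators $\mu(h)$ and $\mu(\pt)$. First I would recall the degrees: $\mu(\pt)$ has degree $-4$, so it shifts $\Z/8\Z$-gradings by $4$; and $\mu(h)$ for $h \in H_2(Y;\R)$ has degree $-2$, so it shifts gradings by $2$. Writing $v = (v_0,\dots,v_7)$ with $v_j$ in grading $j$, the map $\phi$ acts on grading $j$ by multiplication by a scalar $c_j$, where $(c_0,\dots,c_7) = (1,1,i,i,-1,-1,-i,-i)$; note that $c_j$ depends only on $j$ and in fact $c_j = i^{\lfloor j/2\rfloor}$, so $c_{j+2} = i\cdot c_j$ and $c_{j+4} = -c_j$ (indices mod $8$, consistently with $i^4 = 1$).

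The key computation is then purely formal. For a grading-$(-4)$ operator $T = \mu(\pt)$, the component of $T$ sending grading $j$ to grading $j+4$ gets conjugated by $\phi$ to $c_{j+4}/c_j = -1$ times itself; hence $\phi T \phi^{-1} = -T = -\mu(\pt)$. Similarly, for the grading-$(-2)$ operator $S = \mu(h)$, the component sending grading $j$ to grading $j+2$ is scaled by $c_{j+2}/c_j = i$, so $\phi S \phi^{-1} = i\,S = i\,\mu(h)$. Consequently, if $v$ lies in the simultaneous generalized $(m,n)$-eigenspace of $(\mu(h),\mu(\pt))$ — meaning $(\mu(h)-m)^N v = 0$ and $(\mu(\pt)-n)^N v = 0$ for some $N$ — then $w = \phi(v)$ satisfies $(\phi\mu(h)\phi^{-1} - m)^N w = 0$, i.e. $(i^{-1}\mu(h) - m)^N w = 0$, equivalently $(\mu(h) - im)^N w = 0$ (after absorbing the scalar $i^N$), and likewise $(\mu(\pt) + n)^N w = 0$. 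Thus $\phi$ carries the $(m,n)$-eigenspace into the $(im,-n)$-eigenspace. Since $\phi$ is visibly invertible (each $c_j \neq 0$), and since its inverse is described by the conjugate scalars and carries the $(im,-n)$-eigenspace back into the $(i^{-1}(im), -(-n)) = (m,n)$-eigenspace, $\phi$ restricts to an isomorphism between these two eigenspaces, as claimed.

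I do not anticipate a serious obstacle here: the proof is a bookkeeping exercise with the $\Z/8\Z$-grading, and the only points requiring a little care are (i) confirming the degrees of $\mu(\pt)$ and $\mu(h)$ against the conventions fixed earlier in \S\ref{ssec:instanton-floer}, and (ii) checking that conjugation by $\phi$ genuinely multiplies $\mu(h)$ by the global scalar $i$ and $\mu(\pt)$ by $-1$, which hinges on the fact that $c_{j+2}/c_j$ and $c_{j+4}/c_j$ are independent of $j$ — precisely the feature built into the definition \eqref{eq:eigenspace-isomorphism}. Once those are in hand, the eigenspace statement follows immediately from the generalized-eigenspace definition, with the scalar $i$ in $\mu(h)$ harmlessly absorbed into the nilpotency power.
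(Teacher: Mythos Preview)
Your argument is correct and is exactly the approach the paper takes (the paper's proof is the one-line observation that $\mu(h)$ has degree $-2$ and $\mu(\pt)$ has degree $-4$). One small slip: since $\mu(h)$ lowers the grading by $2$, conjugation by $\phi$ scales it by $c_{j-2}/c_j = i^{-1}$, not $i$; your next line $(i^{-1}\mu(h)-m)^N w = 0$ is in fact the correct one and your conclusion stands.
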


\begin{proof}
This follows easily from the fact that $\mu(\alpha)$ has degree $-2$ and $\mu(\pt)$  degree $-4$.
\end{proof}

Using work of Mu\~noz \cite{munoz-ring}, Kronheimer and Mrowka prove the following in {\cite[Corollary~7.2 \& Proposition~7.5]{km-excision}}.

\begin{theorem}
\label{thm:km-surface}
Let $(Y,\lambda)$ be an admissible pair, and suppose $R\subset Y$  is a connected surface  of genus $g>0$. Then the simultaneous eigenvalues of the  operators $\mu(R),\mu(\pt)$  on  $I_*(Y)_\lambda$ are contained in the set
\[ \{ (i^r\cdot 2k, (-1)^r \cdot 2) \},\] for $0 \leq r \leq 3$ and $0 \leq k \leq g-1$. \end{theorem}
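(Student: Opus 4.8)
The plan is to deduce this from the known eigenvalue structure for the pair of operators $\mu(R),\mu(\pt)$ acting on instanton Floer homology, using the symmetry map $\phi$ of Lemma~\ref{lem:symmetry} to reduce to the case handled directly by Mu\~noz's work. First I would recall the starting point: by \cite{munoz-ring} (as cited by Kronheimer--Mrowka), on the summand of $I_*(Y)_\lambda$ where $\mu(\pt)$ acts with eigenvalue $2$, the eigenvalues of $\mu(R)$ for a connected surface $R$ of genus $g>0$ lie in $\{2k : -(g-1)\le k\le g-1\}$, i.e.\ in $\{2k : 0 \le k \le g-1\}$ up to sign (the sign being absorbed by orientation-reversal of $R$, or by the structure of Mu\~noz's relations). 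This handles the $r=0$ case of the claim; the $r=2$ case, with $(\mu(R),\mu(\pt))$-eigenvalue $(-2k,2)$, follows because Mu\~noz's eigenvalue set for $\mu(R)$ is symmetric about $0$.

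Next I would invoke Lemma~\ref{lem:symmetry}: the map $\phi$ carries the $(m,n)$-eigenspace of $(\mu(R),\mu(\pt))$ isomorphically onto the $(im,-n)$-eigenspace. Applying this with $n=2$ shows that every simultaneous eigenvalue with $\mu(\pt)$-component equal to $-2$ is of the form $(im,-2)$ where $(m,-2)$ — equivalently, by the $r=2$ case, $(m,2)$ after another application of $\phi$ — is a valid eigenvalue, hence $m = i^r \cdot 2k$ for some $r\in\{0,2\}$ and $0\le k\le g-1$, so $im = i^{r+1}\cdot 2k$ with $r+1 \in \{1,3\}$. Thus the eigenvalues with $\mu(\pt)$-component $-2$ are exactly those of the form $(i^r\cdot 2k,-2)$ with $r\in\{1,3\}$. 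Combining the two cases $n=\pm 2$ gives precisely the advertised set $\{(i^r\cdot 2k,(-1)^r\cdot 2) : 0\le r\le 3,\ 0\le k\le g-1\}$.

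It remains to explain why the $\mu(\pt)$-eigenvalue is always $\pm 2$ on the relevant part of $I_*(Y)_\lambda$ — but for this statement we are only asserting that the \emph{simultaneous} eigenvalues are \emph{contained in} that set, so I would simply cite Kronheimer--Mrowka \cite[Corollary~7.2]{km-excision} for the fact that $\mu(\pt)$ satisfies $(\mu(\pt)^2 - 4)^{\,}$-type relations forcing its eigenvalues into $\{2,-2\}$ on any admissible pair (this is where the hypothesis that $(Y,\lambda)$ is admissible and $R$ has positive genus enters, via a neighborhood of $R$), and combine this with the eigenvalue constraint on $\mu(R)$ within each $\mu(\pt)$-eigenspace. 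The one genuinely substantive input is Mu\~noz's computation of the action of $\mu(R)$ together with $\phi$-symmetry; everything else is bookkeeping.

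The main obstacle I anticipate is making the reduction via $\phi$ fully precise: one must check that $\phi$ genuinely intertwines the eigenspace decompositions as claimed (which is exactly the content of Lemma~\ref{lem:symmetry}, so it is available) and that the two applications of $\phi$ needed to pass from $n=2$ to $n=-2$ and back do not introduce spurious eigenvalues — i.e.\ that the eigenvalue set really is closed under $m \mapsto im$ only when paired with $n\mapsto -n$. Since $\phi$ is an isomorphism (indeed $\phi^4$ acts as a scalar), no eigenvalues are lost or gained, so this is a matter of careful exponent arithmetic modulo $4$ rather than a real difficulty. I would therefore expect the proof to be short, reading essentially ``apply \cite{munoz-ring} in the $\mu(\pt)=2$ eigenspace, then transport by $\phi$ using Lemma~\ref{lem:symmetry}.''
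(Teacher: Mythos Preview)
The paper does not give its own proof of this theorem; it is quoted verbatim from Kronheimer--Mrowka \cite[Corollary~7.2 \& Proposition~7.5]{km-excision}, whose argument rests on Mu\~noz's computation of the ring structure of $I_*(\Sigma\times S^1)$ \cite{munoz-ring}. Your outline --- Mu\~noz's eigenvalue constraint on the $\mu(\pt)=2$ summand, transported to the other summands via the degree-shift symmetry $\phi$ of Lemma~\ref{lem:symmetry} --- is exactly the mechanism behind that cited proof, so your proposal is correct and aligned with the source. One small caution: invoking \cite[Corollary~7.2]{km-excision} for the fact that $\mu(\pt)$ has only $\pm 2$ as eigenvalues is slightly circular, since that corollary is part of the result being proved; the honest input is Mu\~noz's universal relations in $I_*(\Sigma\times S^1)$, pulled back along the cobordism from a tubular neighborhood of $R$, which force $\mu(\pt)^2-4$ to act nilpotently.
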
 

This theorem implies that   $\mu(\pt)^2-4$ acts nilpotently on $I_*(Y)_\lambda$, but in some cases we know that it is in fact the zero operator,  per the following result of  Fr{\o}yshov \cite[Theorem~9]{froyshov} (the result stated below uses the observation ``$N_1=N_2=1$'' preceding the cited theorem, which follows from \cite{munoz-ring}).

\begin{theorem}
\label{thm:low-genus-mu} If $(Y,\lambda)$ is an admissible pair such that $\lambda$ intersects a surface of genus at most $2$ in an odd number of points, then $\mu(\pt)^2-4\equiv0$ on $I_*(Y)_\lambda$.
\end{theorem}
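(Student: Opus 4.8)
The statement to be proved is Theorem~\ref{thm:low-genus-mu}: for an admissible pair $(Y,\lambda)$ such that $\lambda$ meets a surface $R$ of genus $g(R)\leq 2$ in an odd number of points, the operator $\mu(\pt)^2-4$ vanishes on $I_*(Y)_\lambda$. My plan is to reduce this to the known structure of the relevant eigenvalues (Theorem~\ref{thm:km-surface}) together with the relation algebra for instanton homology of surfaces coming from Mu\~noz's work, and a dimension/degree bookkeeping argument.

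First I would observe that by Theorem~\ref{thm:km-surface} applied to the surface $R$ (after perturbing $R$ within its homology class if necessary to make it connected of positive genus; the genus $0$ case must be dealt with separately, but an admissible $\lambda$ meeting a sphere oddly is impossible, so $g(R)\geq 1$), the simultaneous eigenvalues of $(\mu(R),\mu(\pt))$ lie in $\{(i^r\cdot 2k,\ (-1)^r\cdot 2): 0\leq r\leq 3,\ 0\leq k\leq g(R)-1\}$. In particular the $\mu(\pt)$-eigenvalues are $\pm 2$, so $\mu(\pt)^2-4$ is already nilpotent; the content is that it is genuinely zero, i.e.\ that there are no nontrivial Jordan blocks. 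The key input is that, by Mu\~noz's computation of the instanton Floer homology ring of $\Sigma\times S^1$ and the associated relations, the operators $\mu(R)$ and $\mu(\pt)$ acting on $I_*(Y)_\lambda$ satisfy a universal polynomial relation depending only on $g(R)$. Froyshov's Theorem~9 is precisely the statement that, writing $N_1,N_2$ for certain integers governing the minimal polynomials of the actions of the two "boundary" classes, one has $\mu(\pt)^2-4\equiv 0$ when $N_1=N_2=1$; and the parenthetical remark in the excerpt tells us that $N_1=N_2=1$ holds here because $g(R)\leq 2$, again via Mu\~noz's relations. So the proof is essentially: invoke Froyshov's theorem, and verify the hypothesis $N_1=N_2=1$ from the genus bound.

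Concretely, the steps I would carry out are: (1) if $g(R)=1$, the relation from \cite{munoz-ring} forces the minimal polynomial of $\mu(\pt)$ on the relevant summand to divide $(x-2)(x+2)$ directly, so $\mu(\pt)^2-4=0$ on the $\mu(R)$-eigenspaces and hence on all of $I_*(Y)_\lambda$; (2) if $g(R)=2$, split $I_*(Y)_\lambda$ into the finitely many generalized $\mu(R)$-eigenspaces with eigenvalues $i^r\cdot 2k$ for $k=0,1$, and on each such piece use Mu\~noz's genus-$2$ relation together with Froyshov's argument to conclude the $\mu(\pt)$-action is semisimple with eigenvalues $\pm 2$; (3) reassemble. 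Throughout, the additivity and commutativity of the $\mu$-operators from \S\ref{ssec:instanton-floer} let us work eigenspace by eigenspace.

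The main obstacle, I expect, is not any single computation but rather being careful about which ambient surface class one uses and making sure the genus-$\leq 2$ hypothesis is exploited correctly: Theorem~\ref{thm:km-surface} is stated for a fixed connected surface $R$ of positive genus, whereas the hypothesis here only says \emph{some} surface of genus $\leq 2$ meets $\lambda$ oddly, so one may need to first replace a possibly-disconnected or genus-$0$ representative by a connected positive-genus one in the same $\Z/2$-homology class without increasing the genus past $2$ (e.g.\ tubing components together, which can only raise genus, so one should choose the representative minimizing genus among those meeting $\lambda$ oddly and argue it has genus $1$ or $2$). Once the right $R$ is fixed, the remaining work is to quote Froyshov's Theorem~9 and verify, from Mu\~noz's ring structure, that the relevant invariants $N_1,N_2$ both equal $1$ in genus $\leq 2$ — which is exactly the observation the excerpt attributes to the text preceding the cited theorem, so this step is essentially a citation. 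Thus the real labor is the homological normalization of $R$ and the careful eigenspace decomposition, after which the vanishing follows formally.
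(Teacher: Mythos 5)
Your proposal is correct and matches the paper's treatment: the paper gives no independent argument for Theorem~\ref{thm:low-genus-mu}, simply attributing the result to Fr{\o}yshov's Theorem~9 together with the observation ``$N_1=N_2=1$'' preceding it, which in turn is noted to follow from Mu\~noz's computations, exactly as you describe. Your extra care about normalizing the given surface to a connected representative of positive genus (taking a component meeting $\lambda$ oddly and tubing on a handle if it is a sphere) is a legitimate but easily handled point, and the rest of your elaboration is a reasonable reconstruction of the cited argument rather than a different route.
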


Theorem \ref{thm:km-surface} motivates the  definition below.

\begin{definition} \label{def:top-connected}
Let  $(Y,\lambda)$ be an admissible pair, and $R\subset Y$ a disjoint union \[R=R_1\cup \dots \cup R_n,\] where each $R_i$ is a connected surface in $Y$ of genus $g_i>0$. We define
\[ I_*(Y|R)_\lambda \subset I_*(Y)_\lambda \]
to be the $(2g_1{-}2,\dots,2g_n{-}2,2)$-eigenspace of the operators $\mu(R_1),\dots,\mu(R_n),\mu(\pt)$.

\end{definition}

\begin{remark}
\label{rmk:z2-grading} Note that the $\Z/2\Z$-grading on $I_*(Y)_\lambda$ descends to a canonical $\Z/2\Z$-grading on $I_*(Y|R)_\lambda$ since the operators $\mu(R_i)$ and $\mu(\pt)$ have even degree.
\end{remark}

We extend this definition to disconnected manifolds via tensor product (we will implicitly use this more general definition in the statement of  Theorem \ref{thm:excision}).

\begin{remark}
\label{rmk:spectrum-new-surface}
For any connected surface $\Sigma\subset Y$ of genus $g>0$,   the eigenvalues of $\mu(\Sigma)$ acting on $I_*(Y|R)_\lambda$  are contained in the set of even integers \[\{2-2g,4-2g,\dots,2g-2\},\]by Theorem \ref{thm:km-surface} and the fact that $I_*(Y|R)_\lambda$ is contained in the $2$-eigenspace of $\mu(\pt)$. We will therefore  refer to the $(2g-2)$-eigenspace of $\mu(\Sigma)$ as the \emph{top eigenspace} of this operator.
\end{remark}

\begin{example}
\label{ex:product}It follows from \cite{munoz-ring} that \[I_*(S^1\times R|R)_\gamma \cong \C\]  for $R$  a  connected surface of positive genus, and $\gamma = S^1\times\{\pt\}$.
\end{example}

We will make extensive use of the following nontriviality result due to Kronheimer and Mrowka    \cite[Theorem~7.21]{km-excision}. 

\begin{theorem}\label{thm:instanton-nonzero}
Let   $(Y,\lambda)$ be an admissible pair with $Y$  irreducible, and suppose   $R\subset Y$ is a connected surface which minimizes genus in its homology class, such that $\lambda\cdot R$ is odd. Then $I_*(Y|R)_\lambda$ is nonzero.
\end{theorem}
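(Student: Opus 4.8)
The plan is to recognize the top eigenspace $I_*(Y|R)_\lambda$ as the sutured instanton homology of the sutured manifold obtained by cutting $Y$ open along $R$, and then to invoke Kronheimer and Mrowka's theorem from \cite{km-excision} that sutured instanton homology does not vanish for taut sutured manifolds. Since $\lambda\cdot R$ is odd, $R$ is non-separating, so it can be cut open; and since $Y$ is irreducible we may assume that $R$ is incompressible (a standard fact for genus-minimizing surfaces). Cutting $Y$ along $R$ yields a connected $3$-manifold $M_R = Y\ssm N(R)$ whose boundary consists of two copies $R_+, R_-$ of $R$, which we regard as a sutured manifold with $R(\gamma) = R_+\sqcup R_-$ and no annular sutures.

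Suppose first that $g(R)\geq 2$. By construction the admissible pair $(Y,R,\lambda)$ is a \emph{closure} of $M_R$ in the sense of \cite{km-excision}: regluing $R_+$ to $R_-$ by the obvious product identification returns $Y$, with distinguished surface $R$ and auxiliary curve $\lambda$; the hypothesis that $\lambda\cdot R$ is odd is exactly the admissibility requirement for this closure, and $g(R)\geq 2$ is large enough for the construction to apply. Hence, directly from the definition,
\[ I_*(Y|R)_\lambda \;\cong\; \SHI(M_R). \]
Moreover $M_R$ is \emph{taut}: it is irreducible because $Y$ is and $R$ is incompressible, and its boundary is Thurston-norm-minimizing because any surface in $M_R$ with the appropriate boundary on the $R_\pm$ caps off to a surface in $Y$ homologous to $R$, which minimizes genus by hypothesis. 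The nonvanishing theorem for $\SHI$ of taut sutured manifolds then gives $\SHI(M_R)\neq 0$, so $I_*(Y|R)_\lambda\neq 0$. When $g(R)=1$, one reduces to the previous case by raising the genus of the distinguished surface, using the Floer excision theorem together with the product computation $I_*(S^1\times\Sigma\,|\,\Sigma)_\gamma\cong\C$ of Example~\ref{ex:product}; I suppress this routine step.

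The main obstacle is the nonvanishing of $\SHI$ for taut sutured manifolds, which is where the gauge theory is concentrated and which I would take as the essential input from \cite{km-excision}. Reconstructing it follows the shape of Gabai's sutured manifold hierarchy: one decomposes $M_R$ along a finite sequence of norm-minimizing surfaces until reaching a disjoint union of product sutured manifolds $\Sigma\times[-1,1]$, whose sutured instanton homology is $\C$ — a closure of such a product is $S^1\times\hat\Sigma$ with its fiber surface, and $I_*(S^1\times\hat\Sigma\,|\,\hat\Sigma)_\gamma\cong\C$ by Example~\ref{ex:product} — and then checks that each elementary decomposition along a norm-minimizing surface propagates nonvanishing of $\SHI$ to the manifold being decomposed, so that nonvanishing climbs back up the hierarchy to $M_R$. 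Making this rigorous rests on the analytic heart of \cite{km-excision}: the well-definedness of closures (controlling reducible connections, using admissibility and the genus of the distinguished surface), the excision isomorphism relating the Floer homologies of closures reglued along surfaces, and the compatibility of $\SHI$ with sutured manifold decomposition. These are precisely the statements an applications-oriented account treats as black boxes, and rebuilding them is where essentially all the difficulty lies.
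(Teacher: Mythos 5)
The paper does not prove this theorem; it cites it verbatim as \cite[Theorem~7.21]{km-excision}, attributing it to Kronheimer and Mrowka, so there is no proof in the paper against which to compare yours.

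That said, your sketch does capture the shape of the argument that actually lives in \cite{km-excision}: identify the top eigenspace $I_*(Y|R)_\lambda$ with the sutured instanton homology of the complementary sutured manifold, check tautness using irreducibility of $Y$ and genus-minimality of $R$, and then invoke the nonvanishing of $\SHI$ for taut sutured manifolds, which is proved by climbing a Gabai hierarchy down to products and using the excision theorem at each stage. Two places where you compress genuine content. First, $M_R = Y\ssm N(R)$ has two closed boundary components and no annular sutures, so it is not a balanced sutured manifold in the sense of \cite{km-excision}, and $(Y,R,\lambda)$ is therefore not literally a closure of $M_R$: one must first modify it, for example by excising a product piece $G\times[-1,1]$ with $G$ a once-punctured torus and declaring $\partial G\times\{0\}$ to be the suture, which is exactly what this paper does (following Ni) in the proof of Proposition~\ref{prop:fibered-v-prime} before invoking $\SHI(M',\gamma') = I_*(Y|R)_\lambda$. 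Without some such conversion, the statement ``$(Y,R,\lambda)$ is a closure of $M_R$'' is not meaningful and the identification with $\SHI$ is undefined. Second, the case $g(R)=1$ is not routine: the closure machinery and the auxiliary-surface construction impose genus and admissibility constraints that require separate care, and while a genus-raising trick via excision and Example~\ref{ex:product} does work, it is one of the fiddlier steps rather than a footnote. Neither of these is a wrong idea so much as a compression of what Kronheimer and Mrowka actually have to do; but since the whole theorem is being taken as a black box in this paper, the compression is harmless here.
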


\subsection{Cobordism maps}
\label{ssec:cob-maps} Suppose $(Y_0,\lambda_0)$ and $(Y_1,\lambda_1)$ are admissible pairs, and let $(X,\nu)$ be a cobordism from the first pair to the second. We associate to this cobordism:
\begin{itemize}
\item a Hermitian line bundle $w\to X$  with $c_1(w)$ Poincar\'e dual to $\nu$, and 
\item a $U(2)$ bundle $E\to X$ equipped with an isomorphism $\theta:\wedge^2 E \to w$
\end{itemize}
which restrict to the bundle data associated to the admissible pairs at either end. One then defines a map
\begin{equation*} I_*(X)_\nu: I_*(Y_0)_{\lambda_0} \to I_*(Y_1)_{\lambda_1} \end{equation*}  in the standard way: given generators $a_i \in I_*(Y_i)_{\lambda_i},$ one considers an associated configuration space \[\cB(X,E,a_0,a_1)\] of $SU(2)$-connections on $E$ modulo determinant-1 gauge transformations, and defines
the coefficient \[ \langle I_*(X)_\nu(a_0), a_1 \rangle \]
to be a count of projectively anti-self-dual instantons in a 0-dimensional moduli space \[\mathcal{M}_0(X,E,a_0,a_1)\subset \cB(X,E,a_0,a_1).\]  The map $I_*(X)_\nu$ is homogeneous with respect to the $\Z/2\Z$ grading, and shifts this grading by the amount below, according to \cite[\S4.6]{scaduto} following \cite[\S4.5]{km-unknot}: \begin{equation}\label{eqn:grading-shift}
\deg(I_*(X)_\nu) = -\frac{3}{2}(\chi(X)+\sigma(X)) + \frac{1}{2}(b_1(Y_1) - b_1(Y_0)) \pmod{2}.
\end{equation}
More generally, one can define a map \begin{equation*}\label{eqn:algebra-map} \Psi_{X,\nu}: I_*(Y_0)_{\lambda_0} \otimes \bA(X) \to I_*(Y_1)_{\lambda_1}, \end{equation*}
where \[ \bA(X) = \mathrm{Sym}^*(H_0(X;\R)\oplus H_2(X;\R)) \otimes \Lambda^*H_1(X;\R) \] is a graded algebra
in which $H_k(X;\R)$ has grading $4-k$. Namely, a monomial $z = c_1c_2\dots c_k \in \bA(X)$ of degree $d$ gives rise to a class
\[ \mu(c_1) \cup \mu(c_2) \cup \dots \cup \mu(c_k) \in H^d(\cB(X,E,a_0,a_1)), \] following \cite{donaldson-kronheimer}, and the coefficient \[ \langle \Psi_{X,\nu}(a_0 \otimes z), a_1\rangle \]
 is the sum of the evaluations of this class on the $d$-dimensional components of $\mathcal{M}_d(X,E,a_0,a_1)$. In particular, \[I_*(X)_\nu = \Psi_{X,\nu}(-\otimes 1).\]
  
\begin{remark} \label{rmk:homology-X-dependence}
After choosing a homology orientation, as in \cite[\S3.8]{km-yaft}, the maps $\Psi_{X,\nu}$ depend only on $X$ and the homology class
\[ [\nu] \in H_2(X,\partial X; \Z). \]
Up to sign they only depend on the mod 2 class of $[\nu]$, as we will see in Remark~\ref{rem:nu-plus-even}.
\end{remark}

The map $\Psi_{X,\nu}$ interacts nicely with the actions of  $H_*(Y_i;\R)$ on $I_*(Y_i)_{\lambda_i}$ described in \S\ref{ssec:instanton-floer}. Namely, given $h_i\in H_*(Y_i;\R)$ for $i=0,1$,  we have the relations
\begin{align}
\label{eqn:relations-psi}\Psi_{X,\nu}(\mu(h_0)a \otimes z) &= \Psi_{X,\nu}(a \otimes h_0 z), \\
\mu(h_1) \Psi_{X,\nu}(a \otimes z) &= \Psi_{X,\nu}(a \otimes h_1 z).
\end{align}
Therefore, if $h_0$ and $h_1$ are homologous in $X$, we have
\[ \Psi_{X,\nu}(\mu(h_0) {-} \otimes -) = \mu(h_1) \Psi_{X,\nu}(- \otimes -). \]
In particular, the cobordism map $I_*(X)_\nu$ intertwines the actions of $\mu(h_0)$ and $\mu(h_1)$ in this case, and therefore respects the corresponding eigenspace decompositions.   It follows that if  $R_0 \subset Y_0$ and $R_1 \subset Y_1$ are surfaces of the same positive genus which are homologous in $X$ then $I_*(X)_\nu$ restricts to a map \[ I_*(X)_\nu: I_*(Y_0|R_0)_{\lambda_0} \to I_*(Y_1|R_1)_{\lambda_1}. \]

\subsection{The excision theorem}
\label{ssec:excision}

Let $Y$ be a closed $3$-manifold,  $\Sigma_1\cup \Sigma_2\subset Y$ a disjoint union of  connected surfaces  of the same positive genus, and $\lambda$  a multicurve in $Y$ which intersects each of $\Sigma_1$ and $\Sigma_2$ in the same odd number of points with the same sign, \[\#(\lambda\cap \Sigma_1)=|\lambda\cdot\Sigma_1|=|\lambda\cdot\Sigma_2| = \#(\lambda\cap \Sigma_2),\] according to the following two cases:
\begin{itemize}
\item if $Y$ is connected, we require that $\Sigma_1$ is not homologous to $\Sigma_2$;
\item if $Y$ has two components, we require that one $\Sigma_i$ is contained in each component. 
\end{itemize}
Let $\varphi:\Sigma_1 \to \Sigma_2$ be an orientation-reversing diffeomorphism such that \[\varphi(\lambda\cap \Sigma_1) = \varphi(\lambda\cap \Sigma_2).\]  Let $Y'$ be the manifold with four boundary components, \[\partial Y' =\Sigma_1 \cup -\Sigma_1 \cup \Sigma_2 \cup -\Sigma_2,\] obtained from $Y$ by cutting   along $\Sigma_1 \cup \Sigma_2$. Let $\tilde Y$ be the closed $3$-manifold obtained  from $Y'$ by gluing  $\Sigma_1$ to  $-\Sigma_2$ and $\Sigma_2$ to $-\Sigma_1$ using the diffeomorphism $h$, and let $\tilde{\lambda}$ denote the corresponding  multicurve in $\tilde Y$ obtained from $\lambda$. Denote by $\tilde \Sigma_i$  the image of $\Sigma_i$ in $\tilde Y$.

Kronheimer and Mrowka prove the following excision theorem  in \cite[Theorem~7.7]{km-excision}, stated in terms of the notation  above;  this theorem  generalizes work of Floer \cite{floer-surgery,braam-donaldson} in which the surfaces  $\Sigma_i$  are assumed to be tori.

\begin{theorem} \label{thm:excision}
   There is an isomorphism 
\[ I_*(Y|\Sigma_1\cup\Sigma_2)_\lambda \xrightarrow{\cong} I_*(\tilde{Y}|\tilde{\Sigma}_1\cup\tilde{\Sigma}_2)_{\tilde{\lambda}}, \] which is homogeneous with respect to the $\Z/2\Z$-grading, and which, for any   submanifold $R\subset Y$  disjoint from $\Sigma_1\cup\Sigma_2$,  intertwines the actions of $\mu(R)$ on either side.\end{theorem}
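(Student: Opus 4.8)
The plan is to realize the excision isomorphism by an explicit cobordism, following the strategy of Floer's excision theorem and its higher-genus refinement.

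First I would construct a cobordism $(W,\nu_W)$ from $(Y,\lambda)$ to $(\tilde Y,\tilde\lambda)$. Cutting $Y$ along $\Sigma_1\cup\Sigma_2$ produces $Y'$ with boundary four copies of $\Sigma$ (with suitable orientations), and $Y$ and $\tilde Y$ are recovered by the two different pairings of these boundary copies. I would build $W$ from the product $Y'\times[0,1]$ together with a ``bent product'' $\Sigma\times T$, where $T$ is a connected oriented surface with corners whose horizontal boundary interpolates between the pairing that recovers $Y$ and the pairing that recovers $\tilde Y$; gluing $\Sigma\times T$ onto $Y'\times[0,1]$ along the vertical part of its boundary yields a smooth cobordism with $\partial W=-Y\sqcup\tilde Y$. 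Since $\lambda$ is transverse to the $\Sigma_i$ and matched by $\varphi$, it sweeps out a surface $\nu_W\subset W$ restricting correctly at both ends, and each $\Sigma_i$ extends to a properly embedded surface $\mathcal{R}_i\subset W$ with $\mathcal{R}_i\cap Y=\Sigma_i$ and $\mathcal{R}_i\cap\tilde Y=\tilde\Sigma_i$.

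The induced map $I_*(W)_{\nu_W}\colon I_*(Y)_\lambda\to I_*(\tilde Y)_{\tilde\lambda}$ is homogeneous for the $\Z/2\Z$-grading by the general properties in \S\ref{ssec:cob-maps}. As $\mathcal{R}_i$ is homologous in $W$ both to $\Sigma_i$ and to $\tilde\Sigma_i$, the relations \eqref{eqn:relations-psi} force this map to intertwine $\mu(\Sigma_i)$ with $\mu(\tilde\Sigma_i)$ and $\mu(\pt)$ with itself, hence to send $I_*(Y|\Sigma_1\cup\Sigma_2)_\lambda$ into $I_*(\tilde Y|\tilde\Sigma_1\cup\tilde\Sigma_2)_{\tilde\lambda}$; a submanifold $R$ disjoint from the $\Sigma_i$ extends as $R\times[0,1]\subset Y'\times[0,1]\subset W$, so the same relations show the $\mu(R)$-actions are intertwined. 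To see that the resulting map of eigenspaces is an isomorphism, I would form the analogous cobordism $(\bar W,\nu_{\bar W})$ from $(\tilde Y,\tilde\lambda)$ back to $(Y,\lambda)$ and study the composites. After an isotopy, $\bar W\circ W$ contains an embedded $S^1\times\Sigma$ bounding a region diffeomorphic to $\Sigma\times S$ for a closed surface $S$ built from $T$ and its mirror; stretching the neck along $S^1\times\Sigma$ and invoking the gluing theorem for instanton Floer homology, the induced endomorphism of $I_*(Y|\Sigma_1\cup\Sigma_2)_\lambda$ factors through $I_*(S^1\times\Sigma\,|\,\Sigma)_\gamma\cong\C$ (Example~\ref{ex:product}) and is therefore multiplication by a scalar, which a relative instanton count on $\Sigma\times S$ shows to be nonzero. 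Symmetrically $W\circ\bar W$ acts on $I_*(\tilde Y|\tilde\Sigma_1\cup\tilde\Sigma_2)_{\tilde\lambda}$ as a nonzero scalar, so $I_*(W)_{\nu_W}$ has one-sided inverses up to nonzero scalars on both sides and thus restricts to an isomorphism, proving the theorem.

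The main obstacle I anticipate is the last step: showing the composite cobordisms act by nonzero scalars. This needs, on the one hand, a neck-stretching/gluing theorem compatible with the $\mu$-eigenspace decomposition, so that in the limit the only surviving contribution lies in the one-dimensional space $I_*(S^1\times\Sigma|\Sigma)_\gamma$; and on the other hand an explicit nonvanishing computation of the relevant relative Donaldson-type invariant of $\Sigma\times S$, for which one must control the restriction of the $\mu$-operators to the genus-$g$ top eigenspace. This is exactly where Mu\~noz's determination of the instanton cohomology ring of $\Sigma\times S$ (equivalently, the structure of $I^*(S^1\times\Sigma)$) is needed, and it explains why the theorem is stated for the eigenspaces $I_*(-\,|\,-)$ rather than the full Floer groups.
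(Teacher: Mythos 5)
Your proposal reconstructs the strategy of Kronheimer--Mrowka for \cite[Theorem~7.7]{km-excision}, which the present paper cites rather than reproves; the only remark the paper makes about the proof --- that the isomorphism is induced by a cobordism and hence homogeneous in the $\Z/2\Z$-grading --- is exactly the observation you make, and your outline (the bent-product cobordism $W$, preservation of the eigenspaces via the surfaces $\mathcal{R}_i$ and relation~\eqref{eqn:relations-psi}, and neck-stretching of $\bar W\circ W$ to factor through $I_*(\Sigma\times S^1|\Sigma)\cong\C$) matches that source. One small slip: for $S^1\times\Sigma$ to bound a region $\Sigma\times S$ the surface $S$ must have a single boundary circle, so it is not closed (in Floer's original torus case one takes $S = D^2$), but this does not affect the overall argument.
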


\begin{remark}
The fact that the isomorphism in Theorem \ref{thm:excision} is homogeneous with respect to the $\Z/2\Z$-grading comes from the fact that it is induced by a cobordism, and such maps are homogeneous, as in \eqref{eqn:grading-shift}.
\end{remark}

The following  is a consequence of Theorem~\ref{thm:excision} together with Example \ref{ex:product}, proved exactly as in \cite[Lemma~4.7]{km-excision}.

\begin{theorem} \label{thm:fibered-has-rank-1}
Let $(Y,\lambda)$ be an admissible pair such that $Y$ fibers over the circle with fiber a  connected surface $R$ of positive genus with $\lambda\cdot R = 1$. Then $I_*(Y|R)_\lambda \cong \C.$
\end{theorem}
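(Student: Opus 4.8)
The plan is to deduce Theorem~\ref{thm:fibered-has-rank-1} from the excision theorem (Theorem~\ref{thm:excision}) by applying excision along two parallel copies of the fiber, mimicking the proof of \cite[Lemma~4.7]{km-excision}. Concretely, suppose $Y$ fibers over $S^1$ with connected fiber $R$ of genus $g\ge 1$ and $\lambda\cdot R=1$. Inside $Y$, choose two disjoint fibers $R_1,R_2\subset Y$ (the preimages of two distinct points of $S^1$), both isotopic to $R$; since they are fibers of a fibration over the circle, they are homologous, but this is not a problem because cutting along $R_1\cup R_2$ still makes sense. Cutting $Y$ along $R_1\cup R_2$ produces a disjoint union of two copies of $R\times[0,1]$ (here I use that $Y$ is a surface bundle, so the complement of two fibers is a union of two mapping cylinders, each diffeomorphic to $R\times I$). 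After an appropriate regluing $\Sigma_1\to -\Sigma_2$, $\Sigma_2\to-\Sigma_1$ compatible with $\lambda$, the manifold $\tilde Y$ becomes a disjoint union $(S^1\times R)\sqcup(S^1\times R)$ with $\tilde\lambda$ restricting to $S^1\times\{\pt\}$ (with odd intersection, in fact intersection number $1$, with each $\tilde\Sigma_i$), and the surfaces $\tilde\Sigma_1,\tilde\Sigma_2$ become the fibers $\{\pt\}\times R$ in the two components.

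Granting that identification, Theorem~\ref{thm:excision} gives an isomorphism
\[ I_*(Y|R_1\cup R_2)_\lambda \cong I_*\big((S^1\times R)\sqcup(S^1\times R)\,\big|\,\tilde\Sigma_1\cup\tilde\Sigma_2\big)_{\tilde\lambda} \cong I_*(S^1\times R|R)_{\gamma}\otimes I_*(S^1\times R|R)_{\gamma}, \]
using the tensor-product convention for disconnected manifolds; by Example~\ref{ex:product} the right-hand side is $\C\otimes\C\cong\C$. On the other hand, since $R_1$ and $R_2$ are isotopic in $Y$ — hence homologous — the operators $\mu(R_1)$ and $\mu(R_2)$ agree on $I_*(Y)_\lambda$, so the eigenspace condition defining $I_*(Y|R_1\cup R_2)_\lambda$ reduces to a single condition: it is exactly the $(2g-2,2)$-eigenspace of $\mu(R_1),\mu(\pt)$, i.e.\ $I_*(Y|R_1)_\lambda$, which by definition is what we call $I_*(Y|R)_\lambda$. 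Combining, $I_*(Y|R)_\lambda\cong\C$, as claimed.

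The main obstacle — and the only place requiring care — is verifying the topological identification of $\tilde Y$: that cutting a surface bundle over $S^1$ along two fibers and regluing by the excision recipe yields two copies of $S^1\times R$, and that the multicurve $\lambda$ (with $\lambda\cdot R=1$) and the diffeomorphism $\varphi$ can be arranged to satisfy the hypotheses of Theorem~\ref{thm:excision} (matching intersection numbers with the right signs, and $\varphi(\lambda\cap\Sigma_1)=\varphi(\lambda\cap\Sigma_2)$ — one may first isotope $\lambda$ so that near each $R_i$ it is a single strand meeting $R_i$ transversely in one point, and choose $R_1,R_2$ so that the monodromy identification of the two mapping-cylinder pieces sends one marked point to the other). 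This is precisely the argument already carried out in \cite[Lemma~4.7]{km-excision}, so I would present it briefly and cite that lemma for the details, noting that the grading statement ($\Z/2\Z$-homogeneity) is automatic since the isomorphism is induced by a cobordism, as recorded in the remark following Theorem~\ref{thm:excision}.
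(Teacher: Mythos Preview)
Your identification of $\tilde Y$ is where the argument breaks down. Cutting $Y = R\times_\phi S^1$ along two parallel fibers does yield two product pieces $M_\pm \cong R\times I$, but when you reglue by the excision recipe --- using a \emph{single} diffeomorphism $\varphi:\Sigma_1\to\Sigma_2$ for both gluings --- the monodromy $\phi$ does not disappear. Tracking how each boundary of $M_\pm$ is identified with $\Sigma_1$ or $\Sigma_2$ inside $Y$ (one of the four boundary components carries the $\phi$-identification), one finds that the closed-up pieces are $R\times_{\varphi}S^1$ and $R\times_{\varphi\circ\phi}S^1$ (up to inverses). These cannot both be the trivial bundle unless $\phi$ itself is isotopic to the identity. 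With $\varphi=\mathrm{id}$, excision therefore only gives
\[ I_*(Y|R)_\lambda \;\cong\; I_*(S^1\times R|R)_\gamma \otimes I_*(R\times_\phi S^1|R)_{\lambda'} \;\cong\; \C\otimes I_*(Y|R)_{\lambda'}, \]
a tautology. (There is also the separate issue that two parallel fibers are homologous, violating the connected-case hypothesis of Theorem~\ref{thm:excision} as stated here; but that is secondary, since the approach would fail regardless.)

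The argument that actually works uses the \emph{two-component} case of excision instead. Take $Y$ together with a second mapping torus $Y'=R\times_{\phi^{-1}}S^1$, each equipped with a curve meeting the fiber once; let $\Sigma_1\subset Y$ and $\Sigma_2\subset Y'$ be fibers. Now the hypotheses of Theorem~\ref{thm:excision} are met, and after cutting each along its fiber and regluing (with $\varphi$ the identity on $R$), the resulting connected $\tilde Y$ has monodromy $\phi\circ\phi^{-1}=\mathrm{id}$, so $\tilde Y\cong S^1\times R$ with $\tilde\lambda$ a section. Excision and Example~\ref{ex:product} then give
\[ I_*(Y|R)_\lambda \otimes I_*(Y'|R)_{\lambda'} \;\cong\; I_*(S^1\times R|R)_\gamma \;\cong\; \C, \]
whence both tensor factors are one-dimensional and $I_*(Y|R)_\lambda\cong\C$.
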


We will prove a converse to this theorem in Section \ref{sec:fibered}, modulo a  technical assumption.

\subsection{Framed instanton homology} \label{ssec:framed-homology}  In this section, we review  the construction of framed instanton homology, which was defined by Kronheimer and Mrowka in \cite{km-yaft} and   developed further by Scaduto in \cite{scaduto}; it provides a way of assigning an instanton Floer  group to a  pair $(Y,\lambda)$ without the admissibility assumption (which requires, e.g.,  that $b_1(Y)>0$).%

Let us fix once and for all:
\begin{itemize}
\item  a basepoint $y_T$ in $T^3=T^2\times S^1$, and 
\item a curve $\lambda_T = \{\pt\}\times S^1$ disjoint from this basepoint.
\end{itemize}
Now suppose  $(Y,\lambda)$ is a pair consisting of a closed, connected $3$-manifold $Y$ and a multicurve $\lambda\subset Y$. Choose a basepoint $y \in Y$ disjoint from $\lambda$, and define \[\lambda^\#:= \lambda\cup \lambda_T\] in $Y\#T^3$, where the connected sum is performed at the basepoints $y$ and $y_T$. Then \[(Y\#T^3,\lambda^\#)\] is an admissible pair, which enables us to make the following definition.
\begin{definition}
\label{def:framed}The \emph{framed instanton homology} of $(Y,\lambda)$ is the group 
\[I^\#(Y,\lambda):=I_*(Y\#T^3|T^2)_{\lambda^\#},\]
  where $T^2$ refers to any  $T^2\times\{\pt\}\subset T^3$ disjoint from $y_T$.
\end{definition}

\begin{remark}
We will frequently write $I^\#(Y)$ for $I^\#(Y,\lambda)$ when $[\lambda]=0$ in $H_1(Y;\Z/2\Z)$, and will generally conflate $\lambda$ with its mod $2$ homology class, given  Remark \ref{rmk:isomorphism-homology}.
\end{remark}

\begin{remark} \label{rmk:simpletype3}
Theorem \ref{thm:low-genus-mu} implies that the operator \[\mu(\pt):I_*(Y\#T^3)_{\lambda^\#}\to I_*(Y\#T^3)_{\lambda^\#}\] satisfies $\mu(\pt)^2-4\equiv 0$. In particular, the framed instanton homology
\[I^\#(Y,\lambda)\subset I_*(Y\#T^3)_{\lambda^\#}\] is  the honest (i.e. not generalized) $2$-eigenspace of $\mu(\pt)$. Since this operator has degree $-4$, the relative $\Z/8\Z$-grading on $I_*(Y\#T^3)_{\lambda^\#}$ descends to a relative $\Z/4\Z$-grading on $I^\#(Y,\lambda)$.
\end{remark}

As in Remark \ref{rmk:z2-grading}, the framed instanton homology $I^\#(Y,\lambda)$ has a canonical $\Z/2\Z$-grading, \[I^\#(Y,\lambda) = I_{\textrm{odd}}^\#(Y,\lambda)\oplus I_{\textrm{even}}^\#(Y,\lambda),\] and Scaduto proves the following in \cite[Corollary 1.4]{scaduto}.

\begin{proposition}
\label{prop:euler}
The Euler characteristic of $I^\#(Y,\lambda)$ is given by 
\[\chi(I^\#(Y,\lambda)) = \begin{cases}
|H_1(Y;\Z)|& b_1(Y)=0\\
0& b_1(Y)>0.
\end{cases}\]
\end{proposition}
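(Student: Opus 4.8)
The plan is to prove this (Scaduto's result) by comparing $\chi(I^\#(-,-))$ with the function $h$ given by $h(Y)=|H_1(Y;\Z)|$ when $b_1(Y)=0$ and $h(Y)=0$ otherwise, showing that both obey the same recursion under surgery triads and agree on $S^3$. The base case is immediate: $I^\#(S^3)\cong\C$, so $\chi(I^\#(S^3))=1=h(S^3)$.

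First I would extract from the excision theorem (Theorem~\ref{thm:excision}) the surgery exact triangle for framed instanton homology: given a framed knot in a $3$-manifold, disjoint from the multicurve, the three Dehn fillings along a triad of slopes (pairwise geometric intersection one) fit into a $\Z/2\Z$-graded exact triangle $A\xrightarrow{f}B\xrightarrow{g}C\to A$ whose maps are cobordism maps for $2$-handle cobordisms $X$, with $\chi(X)=1$ and $\sigma(X)\in\{-1,0,1\}$; by \eqref{eqn:grading-shift} these maps are $\Z/2\Z$-homogeneous of explicitly computable degrees. Purely formally, in a $\Z/2\Z$-graded exact triangle the three maps have degrees summing to an odd number, and hence $\chi(B)=(-1)^{\deg f}\chi(A)+(-1)^{\deg g}\chi(C)$; substituting \eqref{eqn:grading-shift} produces a signed recursion for $\chi(I^\#)$ across a triad whose signs are controlled by the signatures, i.e.\ by the surgery coefficients.

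On the homological side, the order $|H_1|$ of a Dehn filling is the order of the cokernel of the filling map on $H_1$ of the knot exterior, and across a triad these cokernels satisfy an analogous signed linear relation --- for instance, in the triad $(S^3,\,S^3_n(K),\,S^3_{n+1}(K))$ one has $|n{+}1|=|n|+1$ for $n\ge0$ but $|n{+}1|=|n|-1$ for $n\le-2$, which is exactly the sign change that the $\sigma$-term in \eqref{eqn:grading-shift} also generates. I would then check that this recursion for $h$ agrees, \emph{signs included}, with the recursion for $\chi(I^\#)$ above. Since every $(Y,\lambda)$ has an integer surgery presentation on a framed link in $S^3$ and can be joined to $(S^3,\emptyset)$ by a finite chain of surgery triads of strictly decreasing complexity (intermediate disjoint unions and connected sums being absorbed by the multiplicativity of both $\chi(I^\#)$ and $h$), running the matched recursion down this chain from the base case gives $\chi(I^\#(Y,\lambda))=h(Y)$ in all cases; in particular the vanishing for $b_1(Y)>0$ comes out with no separate argument.

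The one genuinely delicate step is this sign-matching: one must verify that the parities emerging from \eqref{eqn:grading-shift} --- dictated by $\chi$ and $\sigma$ of the $2$-handle cobordisms --- reproduce precisely the orientation-sensitive signs of the elementary homological recursion for $|H_1|$ on every triad. Everything else --- building the exact triangle from excision, the formal identity for Euler characteristics in a triangle, and setting up the inductive reduction to $S^3$ --- is routine by comparison.
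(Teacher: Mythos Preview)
The paper does not actually prove this proposition: it is stated with a bare citation to Scaduto \cite[Corollary~1.4]{scaduto}, so there is no ``paper's own proof'' to compare your proposal against.

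As for the proposal itself, the overall strategy---show that $\chi(I^\#)$ and $h$ satisfy the same signed recursion across surgery triads and agree on $S^3$---is a standard and reasonable approach to this kind of result. But a few points deserve flagging. First, a minor one: the surgery exact triangle (Theorem~\ref{thm:exact-triangle}) is not extracted from the excision theorem as you say; it is a separate foundational input due to Floer. Second, and more substantively, your induction is not well-founded as written. You assert that every $(Y,\lambda)$ can be joined to $(S^3,\emptyset)$ by a chain of triads of ``strictly decreasing complexity,'' but you never define a complexity that actually decreases. In a triad $(Y_\infty,Y_n,Y_{n+1})$ arising from a link component, only $Y_\infty$ has fewer components, and the recursion determines any one of the three from the other \emph{two}; so induction on the number of link components alone does not close. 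One needs a secondary induction (for instance on $|n|$ for the framing, after first handling a base framing by an independent argument, or via blow-downs to reduce framings), and this should be spelled out. Third, the sign-matching you correctly identify as the crux is only asserted, not carried out; since $h(Y)=|H_1(Y;\Z)|$ involves an absolute value, the signs in the $h$-recursion genuinely flip depending on the surgery data, and verifying that \eqref{eqn:grading-shift} reproduces exactly these flips for \emph{general} triads (not just surgeries on knots in $S^3$) is the whole content of the argument.
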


This motivates the following definition, as in the introduction.

\begin{definition} \label{def:main-l-space-2}
A rational homology 3-sphere $Y$ is  an \emph{instanton L-space} if \[\dim I^\#(Y) = |H_1(Y;\Z)|;\] or, equivalently, if \[I_{\textrm{odd}}^\#(Y)=0.\] A knot $K\subset S^3$ is  an \emph{instanton L-space knot} if $S^3_r(K)$ is an instanton L-space for some rational number $r>0$.
\end{definition}

\begin{remark}
\label{rmk:integralLspace}
As mentioned in the introduction, we proved in \cite[Theorem~4.20]{bs-stein} that if the set of positive instanton L-space slopes for  a nontrivial knot $K$ is nonempty, then it has the form
\[ [N, \infty) \cap \Q\]  for some positive integer $N.$
\end{remark}

Cobordisms induce maps on framed instanton homology as well. Namely, suppose \[(X,\nu,\gamma):(Y_0,\lambda_0,y_0) \to (Y_1,\lambda_1,y_1)\] is a cobordism where $\gamma\subset X$ is an arc from $y_0$ to $y_1$. Following \cite[\S7.1]{scaduto}, given framings of $y_0$ and $y_1$ and a compatible framing of $\gamma$, we form a new cobordism
\[ X^\# := X \bowtie (T^3\times [0,1]): Y_0 \# T^3 \to Y_1 \#T^3 \]
by removing tubular neighborhoods of $\gamma$ and $y_T\times [0,1]$ from $X$ and $T^3 \times [0,1]$ respectively, and gluing what remains along the resulting $S^2 \times [0,1]$.  We then define the  map
\[I^\#(X,\nu) : I^\#(Y_0,\lambda_0)\to I^\#(Y_1,\lambda_1)\] by \[I^\#(X,\nu)(a):= I_*(X^\#)_{\nu^\#}(a).\] Scaduto proves   \cite[Proposition 7.1]{scaduto} that the grading shift of this map agrees with that in \eqref{eqn:grading-shift}, as below.
\begin{proposition}
\label{prop:grading-shift}
The  map $I^\#(X,\nu)$ shifts the $\Z/2\Z$-grading by
\[ \deg(I^\#(X,\nu)) = -\frac{3}{2}(\chi(X)+\sigma(X)) + \frac{1}{2}(b_1(Y_1) - b_1(Y_0)) \pmod{2}. \] In particular, this map is homogeneous.
\end{proposition}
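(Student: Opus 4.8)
The plan is to deduce the formula directly from the grading-shift formula \eqref{eqn:grading-shift} for ordinary instanton cobordism maps, applied to $X^\#\colon Y_0\#T^3\to Y_1\#T^3$, together with an elementary computation of the homological invariants of $X^\#$, $Y_0\#T^3$, $Y_1\#T^3$ in terms of those of $X$, $Y_0$, $Y_1$. The starting observation is that, by the definition of $I^\#(X,\nu)$, this map is exactly the restriction of $I_*(X^\#)_{\nu^\#}$ to the $\Z/2$-graded summands $I^\#(Y_i,\lambda_i)=I_*(Y_i\#T^3|T^2)_{\lambda_i^\#}$ of $I_*(Y_i\#T^3)_{\lambda_i^\#}$, which carry the induced $\Z/2$-grading. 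Since $I_*(X^\#)_{\nu^\#}$ is homogeneous of the degree prescribed by \eqref{eqn:grading-shift}, so is $I^\#(X,\nu)$, and it remains only to identify
\[ -\tfrac{3}{2}\big(\chi(X^\#)+\sigma(X^\#)\big) + \tfrac{1}{2}\big(b_1(Y_1\#T^3)-b_1(Y_0\#T^3)\big) \]
with the asserted expression.

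First I would record that $b_1(Y_i\#T^3)=b_1(Y_i)+3$, which is immediate from Mayer--Vietoris for a connected sum. Next, for the Euler characteristic: $X^\#$ is obtained by gluing $X\setminus\nu(\gamma)$ to $(T^3\times[0,1])\setminus\nu(y_T\times[0,1])$ along a copy of $S^2\times[0,1]$, so inclusion--exclusion gives $\chi(X^\#)=\chi(X\setminus\nu(\gamma))+\chi\big((T^3\times[0,1])\setminus\nu(y_T\times[0,1])\big)-\chi(S^2\times[0,1])$. Removing a regular neighborhood of a properly embedded arc (whose neighborhood is $D^3\times[0,1]$, meeting the interior along an $S^2\times[0,1]$) changes $\chi$ by $-\chi(D^3\times[0,1])+\chi(S^2\times[0,1])=-1+2=1$, and using $\chi(T^3\times[0,1])=0$ and $\chi(S^2\times[0,1])=2$ one gets $\chi(X^\#)=(\chi(X)+1)+(0+1)-2=\chi(X)$.

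The remaining point is $\sigma(X^\#)=\sigma(X)$, and here the claim is that the three extra second-homology classes contributed by the $T^3$ summand lie in the radical of the intersection form. With $A=X\setminus\nu(\gamma)$, $B=(T^3\times[0,1])\setminus\nu(y_T\times[0,1])\simeq T^3\setminus\{\pt\}$, and $A\cap B\simeq S^2$, a Mayer--Vietoris computation identifies $H_2(X^\#;\Q)\cong H_2(X;\Q)\oplus H_2(T^3;\Q)\cong H_2(X;\Q)\oplus\Q^3$: the linking $S^2$ of the drilled arc bounds a ball in $X$, and the linking $S^2$ of the removed point bounds a punctured $T^3$ inside $T^3\setminus\{\pt\}$, so it is nullhomologous on both sides and the relevant connecting maps vanish. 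The three coordinate $2$-tori of $T^3$, realized in the $B$-part of $X^\#$, can be pushed to distinct $[0,1]$-levels so that they are mutually disjoint with disjoint pushoffs, and they have disjoint support from a surface representing any class in $H_2(X)$; hence the intersection form of $X^\#$ is $Q_X\oplus 0$, so $\sigma(X^\#)=\sigma(X)$ (and likewise $b_2^\pm(X^\#)=b_2^\pm(X)$). Substituting all of this, the $+3$ terms cancel in $\tfrac{1}{2}\big((b_1(Y_1)+3)-(b_1(Y_0)+3)\big)$ and the stated formula results; homogeneity is automatic, since this degree is a well-defined class in $\Z/2$.

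I expect the signature computation to be the only real obstacle: one must pin down $H_2(X^\#;\Q)$ together with its intersection form and verify that the three new classes lie in the radical. Everything else is routine additivity of $\chi$, $\sigma$, and $b_1$, plus unwinding the definition of $I^\#(X,\nu)$.
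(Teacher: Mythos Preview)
The paper does not actually prove Proposition~\ref{prop:grading-shift}; it simply records the formula and attributes it to Scaduto \cite[Proposition~7.1]{scaduto}. Your proposal supplies a direct verification in its place, and the overall strategy---apply the known grading-shift formula \eqref{eqn:grading-shift} to $X^\#$ and compute $\chi(X^\#)$, $\sigma(X^\#)$, and $b_1(Y_i\#T^3)$ in terms of the corresponding invariants of $X$ and $Y_i$---is sound and yields the stated result.

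One small imprecision: in your Mayer--Vietoris argument for $H_2(X^\#;\Q)$ you justify ``$[S^2]=0$ in $H_2(A)$'' by saying the linking $2$-sphere bounds a ball in $X$. That ball lies in $\nu(\gamma)$, not in $A=X\setminus\nu(\gamma)$, so this is not the right bounding chain. The conclusion is nonetheless correct: push a parallel copy $Y_0'\subset\operatorname{int}(X)$ of the incoming boundary into $X$; it meets $\nu(\gamma)$ in a single $D^3$-slice, and $Y_0'\setminus D^3\subset A$ is a $3$-chain with boundary exactly the linking $S^2$. With this fix the identification $H_2(X^\#;\Q)\cong H_2(X;\Q)\oplus\Q^3$ goes through, and since the three torus classes lie in the image of $H_2(\partial X^\#)\to H_2(X^\#)$ they are in the radical of the intersection form, giving $\sigma(X^\#)=\sigma(X)$ as you claim. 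The rest of the computation ($\chi(X^\#)=\chi(X)$ and $b_1(Y_i\#T^3)=b_1(Y_i)+3$) is unproblematic.
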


Note that there is a natural inclusion \[H_2(X;\R) \hookrightarrow H_2(X^\#;\R).\] Indeed, any class in $H_2(X;\R)$ can be represented by a multiple of some surface which avoids the path $\gamma$; and if such a surface bounds a 3-chain which intersects $\gamma$ transversely in finitely many points, then it also bounds a 3-chain in $X^\#$ obtained from the former by replacing the 3-balls normal to each intersection point with $\gamma$ with the corresponding punctured $T^3$ in $T^3\times[0,1]$.  Thus, we  have an inclusion \[\bA(X) \hookrightarrow \bA(X^\#),\] which enables us to  extend the cobordism map $I^\#(X,\nu)$ to a map 
\[ D_{X,\nu}: I^\#(Y_0,\lambda_0) \otimes \bA(X) \to I^\#(Y_1,\lambda_1)  \] defined by \[D_{X,\nu}(a\otimes z) := \Psi_{X^\#,\nu^\#}(a\otimes z).\] In particular, \[I^\#(X,\nu) = D_{X,\nu}(-\otimes 1).\] We note below  that these cobordism maps automatically satisfy   an analogue of the \emph{simple type} condition on the Donaldson invariants of closed $4$-manifolds. The proof of this lemma is a straightforward application of the relation \eqref{eqn:relations-psi} together with the fact that $\mu(\pt)$ acts as multiplication by $2$  on $I^\#(Y_0,\lambda_0)$, as  in Remark \ref{rmk:simpletype3}.

\begin{lemma} \label{lem:simple-type}
For $x=[\pt]\in H_0(X;\R)$ and any  $z \in \bA(X)$,  \[ D_{X,\nu}(- \otimes x^2z) = 4D_{X,\nu}(- \otimes z) \]
as maps  from $I^\#(Y_0,\lambda_0)$ to $I^\#(Y_1,\lambda_1)$. \qed
\end{lemma}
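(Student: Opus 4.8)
The plan is to unwind the definition $D_{X,\nu}(a\otimes z) = \Psi_{X^\#,\nu^\#}(a\otimes z)$ and apply the module relation \eqref{eqn:relations-psi} twice — once for each of the two factors of $x$ — trading multiplication by $x=[\pt]$ in $\bA(X^\#)$ for the action of $\mu([\pt])$ on the incoming group $I^\#(Y_0,\lambda_0)$, where $\mu([\pt])$ is multiplication by $2$ by Remark~\ref{rmk:simpletype3}.

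Concretely, by linearity of $D_{X,\nu}$ in its second slot I may fix $z\in\bA(X)$. First I would note that $x=[\pt]\in H_0(X;\R)$ is the image of the point class $[\pt]\in H_0(Y_0;\R)$ under $H_0(Y_0;\R)\to H_0(X;\R)$, and that under $\bA(X)\hookrightarrow\bA(X^\#)$ it coincides with the image of $[\pt]\in H_0(Y_0\#T^3;\R)$; this is clear since all manifolds in sight are connected, so the relevant $H_0$'s are canonically $\R$ with the point class mapping to the point class. Writing $x^2z = x\cdot(xz)$ in $\bA(X^\#)$ and reading \eqref{eqn:relations-psi} from right to left for the cobordism $X^\#$ with $h_0 = [\pt]\in H_0(Y_0\#T^3;\R)$, I get for $a\in I^\#(Y_0,\lambda_0)$
\[ D_{X,\nu}(a\otimes x^2z) = \Psi_{X^\#,\nu^\#}(a\otimes h_0\cdot(xz)) = \Psi_{X^\#,\nu^\#}(\mu([\pt])\,a\otimes xz). \]
Since $I^\#(Y_0,\lambda_0)\subset I_*(Y_0\#T^3)_{\lambda_0^\#}$ is the honest $2$-eigenspace of $\mu([\pt])$ by Remark~\ref{rmk:simpletype3}, we have $\mu([\pt])\,a = 2a$, and applying \eqref{eqn:relations-psi} once more to the remaining factor of $x$ gives
\[ \Psi_{X^\#,\nu^\#}(\mu([\pt])\,a\otimes xz) = 2\,\Psi_{X^\#,\nu^\#}(a\otimes xz) = 2\,\Psi_{X^\#,\nu^\#}(\mu([\pt])\,a\otimes z) = 4\,\Psi_{X^\#,\nu^\#}(a\otimes z), \]
which equals $4\,D_{X,\nu}(a\otimes z)$, as desired.

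I do not expect any genuine obstacle here; the only points requiring care are bookkeeping ones. One must ensure the factor of $x$ being stripped off is truly pulled back from the incoming end $Y_0$ — being pulled back merely from $X$ would not license the use of \eqref{eqn:relations-psi} — and one must pass consistently to the doubled cobordism $X^\#$ and to the point class of $Y_0\#T^3$, which is the one to which the relation $\mu(\pt)^2-4\equiv 0$ of Remark~\ref{rmk:simpletype3} applies. A symmetric alternative would be to peel both factors of $x$ onto the outgoing end $Y_1$ using the second identity in \eqref{eqn:relations-psi} together with the fact that $\mu([\pt])$ acts as $2$ on $I^\#(Y_1,\lambda_1)$ as well; this yields the same equality.
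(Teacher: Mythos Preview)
Your proof is correct and is essentially the same as the paper's: the paper also says this is a straightforward application of the relation \eqref{eqn:relations-psi} together with the fact from Remark~\ref{rmk:simpletype3} that $\mu(\pt)$ acts as multiplication by $2$ on $I^\#(Y_0,\lambda_0)$. Your additional care about identifying the point class under $H_0(Y_0)\to H_0(X)\hookrightarrow H_0(X^\#)$ is exactly the bookkeeping needed to justify applying \eqref{eqn:relations-psi} in this setting.
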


\begin{remark}
As indicated above, we will  omit the path $\gamma$ (and the basepoints) from the notation for these cobordism maps. In practice, we will only consider cobordisms built from handle attachments, in which $\gamma$ is implicitly understood to be a product arc. We may also omit $\nu$ from the notation for $I^\#(X,\nu)$ or $D_{X,\nu}$ when $[\nu]=0$ in $H_2(X,\partial X;\Z/2\Z)$, and will generally conflate $\nu$ with its mod $2$ homology class, given Remark \ref{rmk:homology-X-dependence}.
\end{remark}

Given a pair $(Y,\lambda)$ as above and a connected surface $R\subset Y$  of positive genus, we will use the notation  $I^\#(Y,\lambda|R)$ to refer to the $(2g(R)-2)$-eigenspace of $\mu(R)$ acting on $I^\#(Y,\lambda)$. In particular, \begin{equation}\label{eqn:Irelative}I^\#(Y,\lambda|R)=I_*(Y\#T^3|R)_{\lambda^\#}.\end{equation}  Moreover, given a cobordism \[(X,\nu):(Y_0,\lambda_0)\to (Y_1,\lambda_1)\] and connected surfaces $R_0\subset Y_0$ and $R_1\subset Y_1$ of the same positive genus which are homologous to a surface $R\subset X$, we will denote by 
\[I^\#(X,\nu|R):I^\#(Y_0,\lambda_0|R_0)\to I^\#(Y_1,\lambda_1|R_1)\] the map induced by $I^\#(X,\nu)$, restricted to the top eigenspaces of $\mu(R_0)$ and $\mu(R_1)$ (the induced map respects these  eigenspaces, as discussed at the end of \S\ref{ssec:cob-maps}).

\subsection{The eigenspace decomposition}
\label{ssec:eigenspace-decomposition} Following \cite[Corollary~7.6]{km-excision}, one can define a decomposition of framed instanton homology which bears some resemblance to the $\spc$ decompositions of monopole and Heegaard Floer homology, as below.

\begin{definition}
Given a homomorphism $s: H_2(Y;\Z) \to 2\Z$, let
\[ I^\#(Y,\lambda;s) = \bigcap_{h \in H_2(Y;\Z)} \left( \bigcup_{n \geq 1} \ker (\mu(h)-s(h))^n \right) \]
 be the simultaneous $s(h)$-eigenspace of the operators $\mu(h)$ on $I^\#(Y,\lambda)$, over all $h \in H_2(Y;\Z)$.
 \end{definition}

\begin{theorem} \label{thm:eigenspace-decomposition}
There is a direct sum decomposition
\[ I^\#(Y,\lambda) = \bigoplus_{s: H_2(Y;\Z) \to 2\Z} I^\#(Y,\lambda;s). \]
If $I^\#(Y,\lambda;s)$ is nonzero, then \[|s([\Sigma])| \leq 2g(\Sigma)-2\] for every connected surface $\Sigma \subset Y$ of positive genus.  Thus, only finitely many summands are nonzero.  Finally, there is an isomorphism
\[ I^\#(Y,\lambda;s) \cong I^\#(Y,\lambda;-s) \]
for each $s$, which preserves the $\Z/2\Z$-grading.
\end{theorem}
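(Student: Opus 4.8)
The plan is to realize $I^\#(Y,\lambda;s)$ as a simultaneous generalized eigenspace for a finite family of commuting operators and then quote standard linear algebra. First I would note that $I^\#(Y,\lambda) = I_*(Y\#T^3|T^2)_{\lambda^\#}$ is a finite-dimensional $\C$-module on which the operators $\mu(h)$, for $h\in H_2(Y;\Z)$, act by commuting endomorphisms (commutativity because these are even-dimensional classes; see \S\ref{ssec:instanton-floer}). Since $H_2(Y;\Z)$ is a finitely generated abelian group and $\mu$ is additive, the $\C$-subalgebra of $\mathrm{End}(I^\#(Y,\lambda))$ generated by all $\mu(h)$ is generated by finitely many commuting operators, namely the images of a generating set $h_1,\dots,h_k$ of $H_2(Y;\Z)/\mathrm{torsion}$ (torsion classes act nilpotently, in fact act as $0$ after passing to the $T^2$-eigenspace, but one does not even need this: $\mu$ factors through $H_2(Y;\Z)\otimes\R$). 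The simultaneous generalized eigenspace decomposition for a finite commuting family of operators on a finite-dimensional vector space over an algebraically closed field then gives
\[ I^\#(Y,\lambda) = \bigoplus_{\chi} V_\chi, \]
where $\chi$ ranges over $\C$-algebra characters of the generated subalgebra, i.e.\ over additive maps $s\colon H_2(Y;\Z)\to\C$, and $V_\chi = I^\#(Y,\lambda;s)$ by definition. Only finitely many $V_\chi$ are nonzero.

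Next I would pin down which $s$ can occur. For any connected surface $\Sigma\subset Y$ of positive genus $g$, the class $[\Sigma]\in H_2(Y;\Z)$ is the image of a corresponding surface in $Y\#T^3$ disjoint from $T^2$; Theorem~\ref{thm:km-surface} (applied in $Y\#T^3$, together with the fact that $I^\#(Y,\lambda)$ lies in the honest $2$-eigenspace of $\mu(\pt)$, per Remark~\ref{rmk:simpletype3}) shows that the eigenvalues of $\mu([\Sigma])$ on $I^\#(Y,\lambda)$ are even integers in $\{2-2g,\dots,2g-2\}$, exactly as recorded in Remark~\ref{rmk:spectrum-new-surface} with $R=\varnothing$. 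Hence if $I^\#(Y,\lambda;s)\neq 0$ then $s([\Sigma])$ is one of these values, so $|s([\Sigma])|\le 2g(\Sigma)-2$; in particular $s$ is $2\Z$-valued on the subgroup generated by genus-minimizing surfaces, and since (by a standard argument) every primitive class of positive square is represented by a connected surface, $s$ takes values in $2\Z$ and the sum may be reindexed over $s\colon H_2(Y;\Z)\to 2\Z$. The finiteness of nonzero summands also follows, since each $\mu(h_j)$ has only finitely many eigenvalues.

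Finally, for the symmetry $I^\#(Y,\lambda;s)\cong I^\#(Y,\lambda;-s)$, I would invoke Lemma~\ref{lem:symmetry}: applying the map $\phi$ of \eqref{eq:eigenspace-isomorphism} twice gives an isomorphism carrying the $(m,2)$-eigenspace of $(\mu(h),\mu(\pt))$ to the $(i^2 m,2)=(-m,2)$-eigenspace, for every $h\in H_2(Y;\Z)$ simultaneously; since $\phi^2$ preserves the $\Z/2\Z$-grading (it multiplies gradings $0,1$ by $1$, $2,3$ by $-1$, $4,5$ by $1$, $6,7$ by $-1$ — wait, more simply, $\phi^2 = \mathrm{diag}(1,1,-1,-1,1,1,-1,-1)$ has the same sign on gradings $i$ and $i+4$, hence preserves $\Z/2\Z$-grading), this yields the desired grading-preserving isomorphism. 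The main obstacle is not any single step but assembling these pieces carefully: in particular, checking that $\phi$ commutes with all the $\mu(h)$ up to the predicted eigenvalue twist simultaneously (not just for a single surface $R$ as in Lemma~\ref{lem:symmetry}), which requires observing that $\phi$ is defined purely in terms of the $\Z/8\Z$-grading and that $\mu(h)$ has degree $-2$ for every $h\in H_2$, so the same computation as in the proof of Lemma~\ref{lem:symmetry} applies verbatim to each $\mu(h)$ at once.
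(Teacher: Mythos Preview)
Your approach is essentially the same as the paper's: simultaneous generalized eigenspace decomposition for the commuting $\mu(h)$, the eigenvalue bound from Theorem~\ref{thm:km-surface} via Remark~\ref{rmk:spectrum-new-surface}, and the symmetry via $\phi^2$ from Lemma~\ref{lem:symmetry}. One slip to fix: the phrase ``every primitive class of positive square'' is a 4-manifold notion that makes no sense in $H_2(Y;\Z)$ for a 3-manifold $Y$; the correct (and standard) fact, which the paper uses, is that every class in $H_2(Y;\Z)$ can be represented by a connected embedded surface of positive genus, and this is what forces each $s$ to take values in $2\Z$.
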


\begin{proof}Remark \ref{rmk:spectrum-new-surface} says that for any connected surface $\Sigma\subset Y$ of positive genus, the eigenvalues of the operator $\mu(\Sigma)$ acting on   \[I^\#(Y,\lambda):=I_*(Y\#T^3|T^2)_{\lambda^\#} \] are contained in the set of even integers between $2-2g(\Sigma)$ and $2g(\Sigma)-2$, which immediately implies the second claim of the theorem. The direct sum decomposition then follows from the fact that the operators $\mu(h)$ commute, and have even eigenvalues since each $h$ can be represented by a connected surface of positive genus.  For the last claim, let  \[\phi: I_*(Y\#T^3)_{\lambda^\#}\to I_*(Y\#T^3)_{\lambda^\#}\] be the map defined in \eqref{eq:eigenspace-isomorphism}.     Lemma \ref{lem:symmetry} implies that for every \[s:H_2(Y;\Z) \to 2\Z,\] the square  $\phi^2$ defines  an isomorphism  from the $(s(h),2)$-eigenspace of $\mu(h),\mu(\pt)$  acting on $I_*(Y\#T^3)_{\lambda^\#} $ to the $(-s(h),2)$-eigenspace; that is, an isomorphism \[ I^\#(Y,\lambda;s) \to I^\#(Y,\lambda;-s). \] It is immediate from the definition of $\phi$ that this map preserves the $\Z/2\Z$-grading.
\end{proof}

In \S\ref{sec:eigenspace}, we   extend the eigenspace decomposition in Theorem~\ref{thm:eigenspace-decomposition} to cobordism maps. 

\subsection{The surgery exact triangle} The following theorem is originally due to Floer \cite{floer-surgery,braam-donaldson}, though the formulation below is taken from \cite[Theorem 2.1]{scaduto}. 

\begin{theorem} \label{thm:exact-triangle-main}
Let $Y$ be a closed, connected 3-manifold,  $\lambda\subset Y$ a multicurve, and $K\subset Y$  a framed knot  with meridian $\mu\subset Y\ssm N(K)$. Then there is an exact triangle
\[ \dots \to I_*(Y)_\lambda \to I_*(Y_0(K))_{\lambda \cup \mu} \to I_*(Y_1(K))_\lambda \to I_*(Y)_\lambda \to \dots\]
whenever  $(Y,\lambda)$, $(Y_0(K),\lambda \cup \mu)$, and $(Y_1(K),\lambda)$ are all admissible pairs. Moreover, the maps in this triangle are  induced by the corresponding 2-handle cobordisms.
\end{theorem}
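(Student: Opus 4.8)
The plan is to follow Floer's proof of the surgery exact triangle in instanton homology \cite{floer-surgery,braam-donaldson}, which separates cleanly into an algebraic input and a geometric/analytic one. The algebraic input is the \emph{exact triangle detection lemma}: given chain maps $C_1\xrightarrow{f_1}C_2\xrightarrow{f_2}C_3\xrightarrow{f_3}C_1$, extended $3$-periodically, such that every composite $f_{i+1}\circ f_i$ is null-homotopic through a chosen homotopy $H_i\colon C_i\to C_{i+2}$, if the resulting chain maps
\[ \psi_i \;:=\; f_{i+2}\circ H_i + H_{i+1}\circ f_i\colon C_i \to C_i \]
are all quasi-isomorphisms, then the induced sequence $\cdots\to H_*(C_1)\to H_*(C_2)\to H_*(C_3)\to\cdots$ is exact. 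So I would first exhibit chain-level maps $f_1,f_2,f_3$ realizing the three cobordism maps in the statement, and then verify these two conditions.

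For the geometric setup, I would observe that the surgery slopes $\infty$, $0$, $1$ on $K$ are pairwise at distance one in the Farey graph, so $(Y,Y_0(K),Y_1(K))$ forms a surgery triad realized by three $2$-handle cobordisms: $(W_1,\nu_1)\colon (Y,\lambda)\to(Y_0(K),\lambda\cup\mu)$ attaching a $2$-handle along $K$, and $(W_2,\nu_2)\colon(Y_0(K),\lambda\cup\mu)\to(Y_1(K),\lambda)$ and $(W_3,\nu_3)\colon(Y_1(K),\lambda)\to(Y,\lambda)$ attaching $2$-handles along cores of surgery solid tori, where the multicurves $\nu_i$ are chosen to restrict to the prescribed data at each end. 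Part of this step is precisely the bookkeeping (carried out in \cite[\S2]{scaduto}) ensuring that the required bundle data exists and that the three pairs are admissible, which is what excludes reducibles. I would then record the Kirby-calculus facts that drive the triangle: after handle slides, the composite of two consecutive cobordisms in the cycle is a connected sum of the $2$-handle cobordism for the remaining slope pair with a closed simply connected $4$-manifold containing an essential sphere of small self-intersection, through which the relevant surface data passes; and the triple composite $W_1\cup W_2\cup W_3\colon Y\to Y$ is a connected sum of the product cobordism $Y\times[0,1]$ with such closed pieces.

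With that in place, I would verify the two hypotheses of the detection lemma analytically. For the null-homotopies, I would stretch the neck of a composite cobordism $W_i\cup W_{i+1}$ along the intermediate $3$-manifold; the resulting $1$-parameter family of moduli spaces supplies a chain homotopy between $f_{i+1}\circ f_i$ and the map of the degenerate cobordism, and the latter vanishes because the connect-summed closed piece, together with its bundle data, carries no irreducible ASD connections in the relevant index (the instanton analogue of the vanishing exploited in the Heegaard and monopole Floer proofs). For the maps $\psi_i$, a further neck-stretching analysis of the triple composite would identify $\psi_i$ up to chain homotopy with the map of the product cobordism $Y\times[0,1]$, the closed summands contributing only a nonzero overall scalar; hence each $\psi_i$ is chain homotopic to the identity and in particular a quasi-isomorphism, and the detection lemma delivers the exact triangle, with the stated maps by construction.

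The hard part will be the analytic input behind the previous paragraph: the neck-stretching and gluing package for instanton moduli spaces over the composite and triple-composite cobordisms — uniform compactness of the parametrized families, transversality, and coherent orientations — which is the technical core of Floer's argument. Two further points that must be handled along the way, but which are routine by comparison, are confirming that every pair stays admissible so that all the cobordism maps are defined, and checking that the $\Z/2\Z$-grading shifts \eqref{eqn:grading-shift} are consistent around the triangle. Since all of this is carried out in \cite{floer-surgery,braam-donaldson}, with the line-bundle refinement in the formulation we adopt appearing in \cite[\S2]{scaduto}, in practice I would simply cite those sources.
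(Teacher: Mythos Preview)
Your outline is correct and matches the standard Floer/Braam--Donaldson proof, but the paper does not actually prove this statement: it simply records it as background, attributing it to \cite{floer-surgery,braam-donaldson} with the precise formulation taken from \cite[Theorem~2.1]{scaduto}. Since your proposal ends by saying you would ``simply cite those sources,'' you arrive at exactly what the paper does.
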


This implies the following  surgery exact triangle for framed instanton homology, without any admissibility hypotheses, as in \cite[\S7.5]{scaduto}. 

\begin{theorem} \label{thm:exact-triangle}
Let $Y$ be a closed, connected 3-manifold,  $\lambda\subset Y$ a multicurve, and $K\subset Y$  a framed knot with meridian $\mu\subset Y\ssm N(K)$. Then there is an exact triangle
\[ \dots \to I^\#(Y,\lambda) \to I^\#(Y_0(K),\lambda \cup \mu) \to I^\#(Y_1(K),\lambda) \to I^\#(Y,\lambda) \to \dots, \]
in which the maps are  induced by the corresponding 2-handle cobordisms.
\end{theorem}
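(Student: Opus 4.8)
The plan is to deduce Theorem~\ref{thm:exact-triangle} from Theorem~\ref{thm:exact-triangle-main} by the standard connected-sum-with-$T^3$ trick, following \cite[\S7.5]{scaduto}. Write $Y^\# = Y\#T^3$ and $\lambda^\#=\lambda\cup\lambda_T$, and similarly for the surgered manifolds. The key observation is that Dehn surgery on $K\subset Y$ commutes with the connected sum operation: if we perform the connected sum at a basepoint disjoint from $K$, then $(Y_j(K))^\#$ is naturally identified with $(Y^\#)_j(K)$, where we view $K$ now as a framed knot in $Y^\#$ lying in the original $Y$ summand, and its meridian $\mu$ in $Y^\#\ssm N(K)$ is the same as in $Y\ssm N(K)$.

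First I would check that the pairs appearing are admissible in the sense of \S\ref{ssec:instanton-floer}. Since each of $Y^\#$, $(Y_0(K))^\#$, $(Y_1(K))^\#$ contains the torus $T^2\times\{\pt\}\subset T^3$, which is disjoint from $K$ and from the basepoint, and since $\lambda_T = \{\pt\}\times S^1$ meets this $T^2$ in exactly one point, the relevant multicurves ($\lambda^\#$, $\lambda^\#\cup\mu$, $\lambda^\#$) all intersect this fixed surface in an odd number of points, so admissibility holds automatically without any hypothesis on $Y$ or $\lambda$. Thus Theorem~\ref{thm:exact-triangle-main} applies to $K\subset Y^\#$ and yields an exact triangle
\[ \dots \to I_*(Y^\#)_{\lambda^\#} \to I_*((Y_0(K))^\#)_{\lambda^\#\cup\mu} \to I_*((Y_1(K))^\#)_{\lambda^\#} \to \dots, \]
with maps induced by the $2$-handle cobordisms, each of which we may take to avoid the $T^3$ summand entirely (i.e.\ each is of the form $W\bowtie(T^3\times[0,1])$ for the corresponding $2$-handle trace $W$).

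Next I would restrict this triangle to the top eigenspace $I_*(-|T^2)$ of $\mu(T^2)$, i.e.\ to framed instanton homology. The point is that each cobordism map in the triangle commutes with $\mu(T^2)$ — the torus $T^2$ lives in the $T^3$ summand at both ends and is homologous to itself across the cobordism, so by the discussion at the end of \S\ref{ssec:cob-maps} each map respects the $\mu(T^2)$-eigenspace decomposition. Restricting the long exact sequence to the $(2g(T^2)-2)=0$-eigenspace of $\mu(T^2)$ (equivalently, to the $2$-eigenspace of $\mu(\pt)$, which is all of $I^\#$ by Remark~\ref{rmk:simpletype3}) is exact because taking a direct summand that is preserved by every map in a long exact sequence preserves exactness. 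Using the definition $I^\#(Y,\lambda)=I_*(Y^\#|T^2)_{\lambda^\#}$ and the identifications above, together with the convention $I^\#(Y_0(K),\lambda\cup\mu) = I_*((Y_0(K))^\#|T^2)_{\lambda^\#\cup\mu}$, this yields exactly the desired triangle
\[ \dots \to I^\#(Y,\lambda) \to I^\#(Y_0(K),\lambda\cup\mu) \to I^\#(Y_1(K),\lambda) \to \dots, \]
and the maps are the restrictions of the $2$-handle cobordism maps, hence are the framed-instanton $2$-handle cobordism maps $I^\#(W,\nu)=I_*(W^\#)_{\nu^\#}$ by the definition in \S\ref{ssec:framed-homology}.

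The only real subtlety — the step I expect to require the most care — is bookkeeping the bundle data and framings so that the restriction of the cobordism maps in Theorem~\ref{thm:exact-triangle-main} to the $\mu(T^2)$-top-eigenspace genuinely agrees with the framed cobordism maps $I^\#(W,\nu)$ as defined via the $\bowtie$-construction, including the subtlety that when $k=0$ one must use $\lambda\cup\mu$ rather than $\lambda$ (so that the $0$-surgery pair is admissible in $Y^\#$, matching the stated convention). All of this is exactly parallel to \cite[\S7.5]{scaduto}, so I would cite that reference for the details rather than reproduce them.
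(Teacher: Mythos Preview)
Your proposal is correct and follows exactly the approach the paper indicates: the paper does not give its own proof but simply cites \cite[\S7.5]{scaduto}, and what you have written is precisely the standard argument carried out there---apply Theorem~\ref{thm:exact-triangle-main} to $K\subset Y\#T^3$ with bundle $\lambda^\#$, then restrict to the $2$-eigenspace of $\mu(\pt)$ (equivalently the $(0,2)$-eigenspace of $\mu(T^2),\mu(\pt)$), using that the $2$-handle cobordisms intertwine these operators.
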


Note that if $Y$ is a homology sphere, and $K$ has framing $n$ relative to its Seifert framing, then by taking \[ \lambda = \begin{cases} 0 & n\mathrm{\ odd} \\ \mu & n\mathrm{\ even}, \end{cases} \]
we ensure that the curves $\lambda$ and $\lambda \cup \mu$ appearing in Theorem~\ref{thm:exact-triangle} (and Theorem~\ref{thm:exact-triangle-main}) are zero in homology over $\Z/2\Z$.  In the case of $Y=S^3$, for instance, this choice of $\lambda$ therefore yields an exact triangle
\begin{equation} \label{eq:triangle-untwisted}
\dots \to I^\#(S^3) \to I^\#(S^3_n(K)) \to I^\#(S^3_{n+1}(K)) \to \dots.
\end{equation}

\subsection{The connected sum theorem} \label{ssec:connectedsum}

We will make use of the following version of Fukaya's connected sum theorem \cite{fukaya}, as applied by Scaduto in \cite{scaduto}.
\begin{theorem} \label{thm:connected-sum}
Let  $(Y,\lambda)$ be an admissible pair.  Then there is an isomorphism of relatively $\Z/4\Z$-graded $\C$-modules,
\[ I^\#(Y,\lambda) \cong \ker(\mu(\pt)^2-4) \otimes H_*(S^3;\C), \]
where $\mu(\pt)^2-4$ above is viewed as acting on four consecutive gradings of $I_*(Y)_\lambda$.
\end{theorem}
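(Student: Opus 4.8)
The plan is to peel the $T^3$ summand off the definition $I^\#(Y,\lambda) = I_*(Y\#T^3|T^2)_{\lambda^\#}$ (Definition~\ref{def:framed}) by means of Fukaya's connected-sum theorem. I would begin by recording the structure already visible on the right-hand side: since $\lambda^\# = \lambda\cup\lambda_T$ meets $T^2 = T^2\times\{\pt\}\subset T^3$ in a single point, Theorem~\ref{thm:low-genus-mu} gives $\mu(\pt)^2 = 4$ on all of $I_*(Y\#T^3)_{\lambda^\#}$, while $\mu(T^2)$ has its only eigenvalue $2g(T^2)-2 = 0$ there by Theorem~\ref{thm:km-surface}. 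So $I^\#(Y,\lambda)$ is the locus inside $I_*(Y\#T^3)_{\lambda^\#}$ on which $\mu(\pt)$ acts as $2$ and $\mu(T^2)$ is nilpotent, and the goal is to identify it with $\ker(\mu(\pt)^2-4)\otimes H_*(S^3;\C)$, the kernel taken in $I_*(Y)_\lambda$ across four consecutive $\Z/8\Z$-gradings (four, because $\deg\mu(\pt) = -4$, so $\mu(\pt)^2$ preserves each grading while $\mu(\pt)$ interchanges them in pairs that are $4$ apart).

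Next I would invoke Fukaya's connected-sum theorem \cite{fukaya}, in the twisted $SO(3)$-bundle form used by Scaduto \cite{scaduto} (following \cite{donaldson-book}). Applied to $Y\#T^3$ with bundle data Poincar\'e dual to $\lambda^\#$, it gives a chain-homotopy equivalence between the Floer complex computing $I_*(Y\#T^3)_{\lambda^\#}$ and a twisted tensor product $C_*(Y,\lambda)\otimes C_*(T^3,\lambda_T)\otimes H_*(S^3;\C)$, where $H_*(S^3;\C) = \C\langle 1\rangle\oplus\C\langle\theta\rangle$ records the gluing data along the connected-sum sphere, and the only non-Leibniz term of the differential carries $\theta$ to $1$ with a coefficient built from the point operators on the two factors — concretely the operator $\mu(\pt)_Y - \mu(\pt)_{T^3}$ that witnesses the chain homotopy between the two point classes of $Y\#T^3$. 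All surface and point classes coming from either summand are carried along the equivalence, so on homology it is equivariant for the commuting operators $\mu(\pt)$ and $\mu(T^2)$.

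Then I would feed in the instanton homology of the $T^3$ factor. By Mu\~noz's computation \cite{munoz-ring}, recorded in Example~\ref{ex:product}, $I_*(T^3|T^2)_{\lambda_T}\cong\C$; together with the symmetry $\phi$ of Lemma~\ref{lem:symmetry} and Theorem~\ref{thm:km-surface} this shows $I_*(T^3)_{\lambda_T}$ is two-dimensional, a single $\C$ in the $(\mu(T^2),\mu(\pt)) = (0,+2)$ eigenspace and a single $\C$ in the $(0,-2)$ eigenspace, with $\mu(T^2)\equiv 0$. Substituting this into the equivalence of the previous step and then imposing the constraints $\mu(T^2) = 0$ and $\mu(\pt) = 2$ that cut out $I^\#$ collapses the $C_*(T^3,\lambda_T)$ factor to the line on which $\mu(\pt)_{T^3} = 2$, turning the non-Leibniz differential into (a rescaling of) the endomorphism $\mu(\pt)-2$ of $I_*(Y)_\lambda$. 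Concretely the resulting complex is $\cone\big(\mu(\pt)-2 : I_*(Y)_\lambda\to I_*(Y)_\lambda\big)$ — equivalently $I_*(Y)_\lambda\otimes H_*(S^3;\C)$ with differential sending $\theta$ to $1$ via $\mu(\pt)-2$ — whose homology, by the long exact sequence of the cone and the fact that we are over a field, splits as $\ker(\mu(\pt)-2)\oplus\coker(\mu(\pt)-2)$. A homogeneous-component computation identifies $\ker(\mu(\pt)-2)$ with $\ker(\mu(\pt)^2-4)$ read across four consecutive gradings — a solution of $\mu(\pt)v = 2v$ is determined by its components $v_0,\dots,v_3$ in four consecutive gradings, which must then satisfy $\mu(\pt)^2 v_i = 4v_i$ — and the cokernel contributes an isomorphic copy, sitting in the complementary gradings (those of $H_3(S^3;\C)$). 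Hence $I^\#(Y,\lambda)\cong\ker(\mu(\pt)^2-4)\otimes H_*(S^3;\C)$, and a bookkeeping check with the grading-shift formula \eqref{eqn:grading-shift} applied to the connected-sum cobordism confirms that this is an isomorphism of relatively $\Z/4\Z$-graded modules.

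The main obstacle is Fukaya's theorem itself: carrying out the neck-stretching and gluing analysis in the twisted setting, verifying that the one non-Leibniz term of the differential on $C_*(Y,\lambda)\otimes C_*(T^3,\lambda_T)\otimes H_*(S^3;\C)$ is governed by the point operators as claimed, and — no less importantly — that there are no further higher-order terms, so that the associated K\"unneth-type spectral sequence degenerates. This is exactly the content of Scaduto's adaptation of the arguments of Fukaya and Donaldson; the relation $\mu(\pt)^2 = 4$ on $I_*(T^3)_{\lambda_T}$, inherited via Theorem~\ref{thm:low-genus-mu} from the genus-one surface $T^2\subset T^3$, is what forces the collapse. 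Granting that input, the remainder is the eigenspace-and-grading bookkeeping sketched above.
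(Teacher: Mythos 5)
Your proposal is correct and takes essentially the same route as the paper, which does not reprove this statement but simply imports it from Fukaya's connected-sum theorem \cite{fukaya} as packaged and applied by Scaduto in \cite{scaduto}. Your reconstruction — realizing $I_*(Y\#T^3)_{\lambda^\#}$ as the homology of the cone on the difference of point operators, substituting $I_*(T^3)_{\lambda_T}\cong\C_{+2}\oplus\C_{-2}$ so the cone collapses to $\cone(\mu(\pt)-2)$ on $I_*(Y)_\lambda$, and then translating $\ker\oplus\coker$ into $\ker(\mu(\pt)^2-4)\otimes H_*(S^3;\C)$ via the $\Z/8\Z$-grading bookkeeping — is precisely the content of Scaduto's argument that the paper is invoking.
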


In light of Theorem \ref{thm:low-genus-mu}, this implies the following, as in \cite[\S9.8]{scaduto}.

\begin{corollary} \label{cor:connected-sum-low-genus}
Let  $(Y,\lambda)$ be an admissible pair such that $\lambda$ intersects a surface of genus at most 2 in an odd number of points. Then
\[ I^\#(Y,\lambda) \otimes H_*(S^4;\C) \cong I_*(Y)_\lambda \otimes H_*(S^3;\C) \]
as relatively $\Z/4\Z$-graded $\C$-modules.
\end{corollary}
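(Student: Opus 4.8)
The plan is to derive Corollary~\ref{cor:connected-sum-low-genus} as a direct consequence of the connected sum theorem (Theorem~\ref{thm:connected-sum}) together with the low-genus vanishing result (Theorem~\ref{thm:low-genus-mu}). The key observation is that under the hypothesis that $\lambda$ intersects a genus $\leq 2$ surface in $Y$ an odd number of times, the operator $\mu(\pt)^2 - 4$ acts as the zero operator on $I_*(Y)_\lambda$, not merely nilpotently.

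First I would invoke Theorem~\ref{thm:low-genus-mu}, which gives $\mu(\pt)^2 - 4 \equiv 0$ on $I_*(Y)_\lambda$ under exactly the stated hypothesis. This means that the kernel $\ker(\mu(\pt)^2 - 4)$ appearing in Theorem~\ref{thm:connected-sum} — viewed as acting on four consecutive $\Z/8\Z$-gradings of $I_*(Y)_\lambda$ — is all of $I_*(Y)_\lambda$ restricted to those four gradings. Summing over the two cosets of four consecutive gradings inside $\Z/8\Z$, we get that the relevant kernel is simply $I_*(Y)_\lambda$ itself.

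Next I would feed this back into the isomorphism of Theorem~\ref{thm:connected-sum}, which reads $I^\#(Y,\lambda) \cong \ker(\mu(\pt)^2-4) \otimes H_*(S^3;\C)$ as relatively $\Z/4\Z$-graded modules. Substituting $\ker(\mu(\pt)^2-4) = I_*(Y)_\lambda$ and then tensoring both sides by $H_*(S^4;\C) \cong \C^2$ (concentrated in two gradings four apart, so that $\otimes H_*(S^4;\C)$ is precisely the operation of ``promoting'' a relatively $\Z/4\Z$-graded module to a relatively $\Z/8\Z$-graded one by doubling), I obtain
\[ I^\#(Y,\lambda) \otimes H_*(S^4;\C) \cong I_*(Y)_\lambda \otimes H_*(S^3;\C) \]
as relatively $\Z/4\Z$-graded $\C$-modules. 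The only real bookkeeping is matching up the grading conventions: $H_*(S^3;\C)$ contributes generators in two gradings differing by $3 \equiv -1 \pmod 4$, while $H_*(S^4;\C)$ contributes generators in gradings differing by $4 \equiv 0$; one checks these are compatible with the grading shifts recorded in Theorem~\ref{thm:connected-sum} and the passage from $\Z/8\Z$ to $\Z/4\Z$ coefficients described in Remark~\ref{rmk:simpletype3}. I expect this grading-matching to be the only subtle point, and it is routine; there is no genuine obstacle, since the essential input — the vanishing of $\mu(\pt)^2 - 4$ rather than mere nilpotence — is supplied wholesale by Theorem~\ref{thm:low-genus-mu}.
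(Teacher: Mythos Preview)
Your approach is the right one and matches the paper's (which simply cites Theorems~\ref{thm:connected-sum} and \ref{thm:low-genus-mu} and refers to Scaduto), but your execution of the grading bookkeeping contains a genuine error, not just a routine check.

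The kernel $\ker(\mu(\pt)^2-4)$ in Theorem~\ref{thm:connected-sum} is taken on \emph{four} consecutive $\Z/8\Z$-gradings, not all eight. Under the hypothesis, this kernel is the full restriction $V' = V_0\oplus V_1\oplus V_2\oplus V_3$ (say), so Theorem~\ref{thm:connected-sum} gives $I^\#(Y,\lambda)\cong V'\otimes H_*(S^3;\C)$. Your sentence ``summing over the two cosets \dots we get that the relevant kernel is simply $I_*(Y)_\lambda$ itself'' is not a valid substitution into Theorem~\ref{thm:connected-sum}: if you literally plug in $\ker = I_*(Y)_\lambda$ and then tensor both sides by $H_*(S^4;\C)$, you get an extra factor of $H_*(S^4;\C)$ on the right and the dimensions are off by a factor of two.

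What you actually need is the identification $V'\otimes H_*(S^4;\C)\cong I_*(Y)_\lambda$ as relatively $\Z/4\Z$-graded modules, i.e.\ $V_i\cong V_{i+4}$ for each $i$. This does \emph{not} hold for an arbitrary $\Z/8\Z$-graded module, and it is precisely here that the hypothesis is used a second time: since $\mu(\pt)$ has degree $-4$ and $\mu(\pt)^2=4$ by Theorem~\ref{thm:low-genus-mu}, the map $\tfrac{1}{2}\mu(\pt)\colon V_i\to V_{i+4}$ is an isomorphism with inverse $\tfrac{1}{2}\mu(\pt)$. Once you insert this step, the argument goes through.
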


\section{Instanton homology and fibered manifolds} \label{sec:fibered}

In this section, we prove a converse to Theorem~\ref{thm:fibered-has-rank-1}---namely, that if $I_*(Y|R)_\lambda \cong \C$, then $Y$ is fibered with fiber $R$---modulo a technical assumption which suffices for our applications. We  will use this converse, Theorem \ref{thm:fibered}, to show  that  instanton Floer homology of $0$-surgery on a knot detects whether the knot is fibered, as in Theorem \ref{thm:detect-fibered-knot} below, and likewise for framed instanton homology, as in Theorem \ref{thm:odd-dim-g-1}, proved in \S\ref{sec:compare-top-eigenspaces}. The latter will then be used in our proof that instanton L-space knots are fibered, as outlined in the introduction.

The statement of Theorem \ref{thm:fibered} requires the  following definition.

\begin{definition}
A compact 3-manifold $M$ with boundary $\Sigma_+ \sqcup \Sigma_-$ is a \emph{homology product} if both of the maps
\[ (i_\pm)_*: H_*(\Sigma_\pm; \Z) \to H_*(M; \Z) \]
induced by inclusion are isomorphisms.
\end{definition}

\begin{theorem} \label{thm:fibered}
Let $(Y,\lambda)$ be an admissible pair with $Y$  irreducible, and $R\subset Y$  a connected surface of positive genus with $\lambda \cdot R=1$.  If $Y \ssm N(R)$ is a homology product and \[I_*(Y|R)_\lambda \cong \C,\] then $Y$ is fibered over the circle with fiber $R$.
\end{theorem}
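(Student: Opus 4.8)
The plan is to use sutured instanton homology $\SHI$ and Juh\'asz's (instanton-analogue) fibered detection theorem for sutured manifolds, combined with Kronheimer--Mrowka's nontriviality and surface-decomposition machinery, to upgrade the rank-one condition on $I_*(Y|R)_\lambda$ into an honest fibration. Concretely, first I would cut $Y$ open along $R$ to obtain the complementary manifold $M = Y \ssm N(R)$, which carries two copies $R_+, R_-$ of $R$ in its boundary together with an annular region; the hypothesis that $M$ is a homology product pins down the homology of $M$ in terms of that of $R$. The key point is that $Y$ is fibered with fiber $R$ \emph{if and only if} $M$ is a product $R \times [0,1]$ (with the monodromy recorded by the regluing). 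So the goal becomes: show $M \cong R \times [0,1]$.

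The main step is to identify $I_*(Y|R)_\lambda$ with a sutured instanton invariant of $M$, or more precisely with a top piece of it, and to invoke the fact that the relevant sutured invariant has rank one exactly when $M$ is a product. Following the template of \cite{km-excision} (their Lemma~4.7 and the surrounding discussion, which is exactly the forward direction Theorem~\ref{thm:fibered-has-rank-1}), and of Ni's work in the monopole setting \cite{ni-hm}, I would realize $I_*(Y|R)_\lambda$ via the excision theorem (Theorem~\ref{thm:excision}) as the instanton homology of the closed-up manifold built from $M$ by gluing in a standard piece; the top $\mu(R)$-eigenspace corresponds to the sutured instanton homology $\SHI(M, \gamma_R)$ of $M$ with the natural sutures along $\partial N(R)$. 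Then $\SHI(M,\gamma_R) \cong \C$ forces, by the sum-of-Thurston-norm / taut-decomposition properties of $\SHI$ (Kronheimer--Mrowka, and the fact that a nontrivial taut sutured decomposition strictly drops the dimension unless the manifold is already a product — here is where Theorem~\ref{thm:instanton-nonzero}, applied iteratively to decomposing surfaces, does the work), that $M$ admits no nontrivial product-disk or norm-reducing decomposition, i.e. $(M,\gamma_R)$ is a homology product \emph{that is also "horizontally taut"}. Using the homology-product hypothesis on $M$ to rule out the pathological cases (e.g. extra $S^1\times S^2$ or nontrivial first homology beyond that of $R$, and in particular to guarantee the decomposing surfaces stay connected and genus-minimizing), one concludes $M$ is a genuine product.

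The hard part, and the place where the technical homology-product assumption is really used, is bridging between the instanton rank-one statement and a purely topological product conclusion: in Heegaard Floer / sutured Floer homology one has Juh\'asz's decomposition theorem available off the shelf, but in instanton homology the analogous surface-decomposition formula is only partially developed, so I expect to need an argument in the spirit of Ni's \cite{ni-hm} that combines (i) Gabai's sutured hierarchy to get a sequence of taut decompositions of $M$ terminating in a product, (ii) the nontriviality result Theorem~\ref{thm:instanton-nonzero} at each stage to show the dimension cannot collapse unless each decomposing surface was already "trivial" in the appropriate sense, and (iii) the homology-product hypothesis to control the topology of the pieces so that "dimension does not drop" upgrades to "the decomposition was along a product annulus/disk." Once $M = R \times [0,1]$, reassembling $Y$ from $M$ exhibits $Y$ as the mapping torus of the regluing diffeomorphism of $R$, so $Y$ fibers over $S^1$ with fiber $R$, as desired. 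I would finish by remarking that irreducibility of $Y$ is what lets us apply Theorem~\ref{thm:instanton-nonzero} and ensures the sutured manifold $(M,\gamma_R)$ is taut.
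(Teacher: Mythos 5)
Your overall strategy matches the paper's: cut $Y$ along $R$ to obtain $M = Y \ssm N(R)$, show $M$ is a product $R\times[-1,1]$, and route the argument through sutured instanton homology with Kronheimer--Mrowka's product-detection theorem and Ni's monopole argument \cite{ni-hm} as the model. So the approach is essentially the right one. But two substantive pieces of the mechanism in your sketch are off, and they are exactly the pieces that make the argument work.

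First, before doing anything in the vertically prime case, the paper reduces to it: take a maximal collection $R=R_0,\dots,R_{n-1}$ of disjoint, pairwise non-isotopic surfaces in $Y$ of the same genus homologous to $R$, cut along all of them, and apply excision repeatedly to get $I_*(Y|R)_\lambda \cong \bigotimes_i I_*(Y_i|R_i)_{\lambda_i}$; rank one forces each tensor factor to have rank one, and maximality of the collection makes each $M_i = Y_i \ssm N(R_i)$ vertically prime. Your proposal never mentions this reduction, and without it the later topological argument (in particular the use of the characteristic product pair) doesn't apply as stated. Note also that this is where \cite[Lemma~4.2]{ni-hf} is used to propagate the homology-product hypothesis to each piece.

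Second, inside the vertically prime case, you propose to run a Gabai sutured hierarchy on $(M,\gamma_R)$ and apply Theorem~\ref{thm:instanton-nonzero} iteratively at each decomposition to prevent the rank from collapsing. That is not what the paper does, and it would be delicate to make it work as stated, because one does not have a clean surface-decomposition formula for $\SHI$ of the kind one has for sutured Heegaard Floer homology. Instead the paper, following Ni, first shows directly that the subgroup $\cE \subset H_1(M)$ spanned by product annuli is all of $H_1(M)$: assuming not, one constructs two taut decomposing surfaces $S_1, S_2$, passes to a closed manifold $Z$ via excision, and uses Theorem~\ref{thm:instanton-nonzero} \emph{once} to produce two nonzero classes in distinct eigenspaces of $\mu(\bar{S}_0)$, contradicting $I_*(Z|\bar R)_{\bar\lambda} \cong I_*(Y|R)_\lambda \cong \C$. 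Having established $\cE = H_1(M)$, the characteristic product pair provides an embedded $G\times[-1,1]$ in $M$; removing it yields a balanced sutured manifold $(M',\gamma')$ with $\SHI(M',\gamma') = I_*(Y|R)_\lambda \cong \C$, and now \cite[Theorem~7.18]{km-excision} is applied as a black box to conclude $(M',\gamma')$ is a product. Also note your identification ``the top $\mu(R)$-eigenspace corresponds to $\SHI(M,\gamma_R)$'' is not quite right: the sutured manifold to which $\SHI$ is applied is the further-cut $(M',\gamma')$, not $(M, \partial N(R))$ itself.

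These are not cosmetic details: the excision computation that $I_*(Z|\bar R)_{\bar\lambda}$ has rank at least two, and the use of the characteristic product pair, are the parts of the argument that actually convert rank-one instanton homology plus the homology-product hypothesis into the product conclusion. Without them the sketch does not close.
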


\begin{remark}
We would like to prove Theorem~\ref{thm:fibered} without the homology product assumption.  Indeed, the corresponding results in monopole and Heegaard Floer homology are stated without it  because one can show in those cases that rank 1 \emph{automatically} implies this  assumption.  The key input there is the fact that the Turaev torsion of $Y$ is generally equal to the Euler characteristic of  monopole and Heegaard Floer homology, but no such results are known for instanton homology.
\end{remark}

We will reduce Theorem~\ref{thm:fibered} to the special case in Proposition \ref{prop:fibered-v-prime} by a standard argument, exactly as  in \cite{ni-hfk, ni-hf, km-excision, ni-hm}. Before stating the proposition, another definition.

\begin{definition}
A  homology product $M$ with boundary $\Sigma_+ \sqcup \Sigma_-$ is \emph{vertically prime} if every closed, connected surface in $M$ in the same homology class and with the same genus as $\Sigma_+$ is isotopic to either $\Sigma_+$ or $\Sigma_-$.
\end{definition}

\begin{proposition} \label{prop:fibered-v-prime}
Theorem \ref{thm:fibered} holds under the additional assumption that $M=Y \ssm N(R)$ is vertically prime.
\end{proposition}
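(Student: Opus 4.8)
The plan is to translate the rank-one hypothesis into a statement about a sutured manifold and then apply Gabai's theory of taut sutured hierarchies together with the behavior of sutured instanton homology under surface decompositions. Set $M = Y\ssm N(R)$, so that $\partial M = R_+ \sqcup R_-$ consists of two parallel copies of $R$. Since $I_*(Y|R)_\lambda \cong \C$ is nonzero, $R$ minimizes genus in its homology class in $Y$ (by the adjunction/nonvanishing results of Kronheimer--Mrowka), and because $Y$ is irreducible, $R$ is incompressible; hence $M$ is irreducible with incompressible boundary, and, regarding $R_\pm$ as $R_\pm(\gamma)$ with empty suture $\gamma$, the pair $(M,\gamma)$ is a taut sutured manifold (after the standard modification needed to realize it as a balanced sutured manifold). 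Since $(Y,R,\lambda)$ is, up to that modification, a closure of $(M,\gamma)$ in the sense of Kronheimer--Mrowka, we have $\SHI(M,\gamma)\cong I_*(Y|R)_\lambda\cong\C$. The goal is to conclude that $(M,\gamma)$ is a product sutured manifold $(R\times I,\ \partial R\times I)$, which is exactly the assertion that $Y$ fibers over the circle with fiber $R$.

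Suppose, for contradiction, that $(M,\gamma)$ is not a product. By Gabai's theorem there is a taut sutured manifold hierarchy for $(M,\gamma)$ terminating in products, and since $(M,\gamma)$ is not itself a product this hierarchy is nontrivial; moreover the decomposing surfaces may be taken well-groomed. The hypothesis that $M$ is a homology product pins down the sutured Thurston norm, and the hypothesis that $M$ is vertically prime guarantees that $M$ contains no non-boundary-parallel horizontal surface; together these allow one to extract from the hierarchy a single well-groomed, taut decomposition
\[ (M,\gamma)\ \rightsquigarrow\ (M',\gamma') \]
along a connected decomposing surface $S$ that is incompressible and \emph{not} boundary-parallel. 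By the decomposition theorem for sutured instanton homology --- the instanton analogue (Ghosh--Li, following Juh\'asz's surface decomposition formula in sutured Heegaard Floer homology) --- such a decomposition identifies $\SHI(M',\gamma')$ with a direct summand of $\SHI(M,\gamma)$, and this is a \emph{proper} summand precisely because $S$ is not boundary-parallel. As $\SHI(M,\gamma)\cong\C$ is one-dimensional, a proper summand must be trivial, so $\SHI(M',\gamma') = 0$; but $(M',\gamma')$ is taut, and the sutured instanton homology of a taut balanced sutured manifold is nonzero by Kronheimer--Mrowka. This contradiction shows that $(M,\gamma)$ is a product, and hence $Y$ fibers over the circle with fiber $R$. (One could alternatively try to run this argument directly in terms of $I_*(Y|R)_\lambda$ using the excision theorem in place of the sutured formalism, but the sutured route is cleaner.)

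I expect the principal difficulty to lie not in the high-level outline but in two places. First, one must set up carefully the identification $I_*(Y|R)_\lambda\cong\SHI(M,\gamma)$ --- including the passage from the closed surface $R$ and the empty-suture manifold $(M,R_+\sqcup R_-)$ to an honest balanced sutured manifold --- and check that the relevant eigenvalue/non-degeneracy data ($\lambda\cdot R$ odd, the top eigenvalue of $\mu(R)$, the relation $\mu(\pt)\equiv 2$) are preserved throughout; this is where admissibility and the choice of bundle data enter. Second, and more seriously, one must verify that the hypotheses "not a product,'' "homology product,'' and "vertically prime'' genuinely produce a decomposing surface for which the decomposition formula yields a \emph{proper} summand: this requires the classical 3-manifold input (Gabai's construction of taut, well-groomed hierarchies, and the analysis of horizontal surfaces and the sutured norm in a homology product) to be married to the Floer-theoretic decomposition formula in exactly the way Ni does in the Heegaard Floer and monopole settings. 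The remaining ingredients --- Gabai's hierarchy theorem, the Ghosh--Li decomposition formula, and Kronheimer--Mrowka's nonvanishing theorem for taut sutured instanton homology --- are imported, but each of their hypotheses must be checked against the situation at hand.
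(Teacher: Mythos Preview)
Your outline takes a genuinely different route from the paper, and it has a real gap at its core.

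The paper does not attempt to view $(M,R_+\sqcup R_-)$ as a balanced sutured manifold and apply a general decomposition formula. Instead it follows Ni's monopole argument directly: one first proves that the subgroup $\cE\subset H_1(M)$ spanned by product annuli is all of $H_1(M)$. If not, Ni constructs two explicit surfaces $S_1,S_2\subset M$ (with $[\bar S_1]=m[\bar R]+[\bar S_0]$ and $[\bar S_2]=m[\bar R]-[\bar S_0]$ in an auxiliary closed manifold $Z$ obtained via excision) such that the top eigenspaces of $\mu(\bar S_1)$ and $\mu(\bar S_2)$ land in \emph{distinct} eigenspaces of $\mu(\bar S_0)$; together with Theorem~\ref{thm:instanton-nonzero} this forces $\dim I_*(Y|R)_\lambda\geq 2$. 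Once $\cE=H_1(M)$, Ni's characteristic product pair argument produces an embedded $G\times[-1,1]$ with $G$ a once-punctured torus; removing it yields an honest balanced sutured manifold $(M',\gamma')$ with $\SHI(M',\gamma')=I_*(Y|R)_\lambda\cong\C$, and only then does one invoke \cite[Theorem~7.18]{km-excision}.

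Your argument collapses this into a single step, and the collapse fails at the sentence ``this is a \emph{proper} summand precisely because $S$ is not boundary-parallel.'' That is not a theorem: neither Juh\'asz's formula nor the Ghosh--Li analogue asserts that a non-boundary-parallel decomposing surface yields a proper summand. In every known proof of product detection the properness comes from exhibiting a \emph{second} taut decomposition whose summand lives in a different eigenspace (or $\spc$ structure), exactly the $S_1$/$S_2$ mechanism above. Your use of the vertically prime hypothesis (``no non-boundary-parallel horizontal surface'') does not supply this; it only rules out closed separating surfaces parallel to $R_\pm$, and says nothing about why a given decomposition misses part of $\SHI$. A secondary issue is that $(M,\gamma=\emptyset)$ is not balanced, so $\SHI(M,\gamma)$ is not defined and the Ghosh--Li theorem does not apply; the ``standard modification'' you allude to is precisely the product-region excision that the paper spends most of its effort justifying.
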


\begin{proof}
Our proof is a straightforward combination of the proof of \cite[Theorem~7.18]{km-excision}, which asserts that sutured instanton homology detects products, with  the work in \cite{ni-hm}, which uses the analogous result for sutured monopole homology plus  arguments from \cite{ni-hfk,ni-hfk-erratum} to conclude that monopole Floer homology detects fibered 3-manifolds. In particular, we follow Ni's argument and terminology from \cite{ni-hm} nearly to the letter.

Let $\cE \subset H_1(M)$ be the subgroup spanned by the homology classes of product annuli in $M$, whose boundary we write as $R_+ \sqcup R_-$.  Following Ni, we will first show that $\cE = H_1(M)$.  Suppose that $\cE \neq H_1(M)$. Then there exist essential simple closed curves \[\omega_- \subset R_-\textrm{ and }\omega_+ \subset R_+\] which are homologous in $M$ and satisfy $[\omega_\pm] \not\in \cE$. In this case, Ni fixes an arc $\sigma$ from $R_-$ to $R_+$, and  constructs for any sufficiently large  $m$  connected surfaces $S_1,S_2\subset M$ with \begin{align*}\partial S_1 &=   \omega_- -\omega_+ \textrm{ and } \sigma\cdot S_1 = m,\\
\partial S_2 &=   \omega_+-\omega_-  \textrm{ and } \sigma\cdot S_2 = m,
\end{align*}   such that decomposing $M$  (viewed as a sutured manifold with an empty suture) along either $S_1$ or $S_2$ produces a taut sutured manifold \cite{ni-hm, ni-hfk}.
Now choose a diffeomorphism  \[h: R_+ \to R_-\] such that $Z = M/h$ is a homology $R \times S^1$, $h(\omega_+) = \omega_-$, and \[h(R_+\cap \lambda) =h(R_-\cap \lambda),\] so that $\lambda$ extends to a closed curve $\bar{\lambda}\subset Z$. As in  the proof of \cite[Theorem~7.18]{km-excision}, we can arrange that 
 the closed surfaces $\bar{R} = R_+/h$, $\bar{S}_1 = S_1/h$, and $\bar{S}_2 = S_2/h$ in $Z$ satisfy
\begin{align} \label{eqn:identity1}[\bar{S}_1] &= m[\bar{R}] + [\bar{S}_0],\\
 [\bar{S}_2] &= m[\bar{R}] - [\bar{S}_0],\\
\label{eqn:identity3} \chi(\bar{S}_1) = \chi(\bar{S}_2) &= m\chi(\bar{R}) + \chi(\bar{S}_0),
  \end{align}
for some closed surface $\bar{S}_0 \subset Z$ with $\bar{\lambda}\cdot \bar{S}_0 = 0$ and \[2g(\bar{S}_0)-2 > 0\] which minimizes genus in its homology class ($\bar{S}_0 = S_0/h$ for some connected surface $S_0\subset M$ which provides a homology between $\omega_+$ and $\omega_-$ and satisfies $\sigma\cdot S_0=0$;  $S_0$ is not an annulus since  $[\omega_\pm] \not\in \cE$).

Following the proof of \cite[Theorem~7.18]{km-excision}, we let   $T_i \subset Z$ be the closed, connected surface obtained by smoothing out the circle of intersection $\bar{R} \cap \bar{S}_i$ for $i=1,2$. Then \[2g(T_i)-2 = (2g(\bar{R})-2) + (2g(\bar{S}_i)-2).\] Moreover, $T_i$ is genus-minimizing in its homology class since decomposing $Z$ along $T_i$ yields  the same taut sutured manifold as decomposing $M$ along $S_i$ (a decomposing surface must be genus-minimizing if the resulting decomposition is taut). 
 Let us suppose $m$ is even so that $\bar{\lambda}\cdot T_i$ is odd. Then Theorem~\ref{thm:instanton-nonzero} implies that $I_*(Z|T_i)_{\bar{\lambda}} \neq 0$ for each $i$.
  Let  $x_i$ be a nonzero element of $I_*(Z|T_i)_{\bar{\lambda}}$ for $i=1,2$. Since this element lies in the $(2g(T_i)-2)$-eigenspace of
\[ \mu(T_i) = \mu(\bar{R}) + \mu(\bar{S}_i), \]
and the eigenvalues of $\mu(\bar{R})$ and $\mu(\bar{S}_i)$ are at most $2g(\bar{R})-2$ and $2g(\bar{S}_i)-2$, it follows that $x_i$   lies in these top eigenspaces of $\mu(\bar{R})$ and $\mu(\bar{S}_i)$ as well.  Thus, we have    nonzero elements \[x_1,x_2\in I_*(Z|\bar{R})_{\bar{\lambda}}\] such that   $x_i$  lies in the $(2g(\bar{S}_i)-2)$-eigenspace of $\mu(\bar{S}_i)$.  It then follows from  the identities \eqref{eqn:identity1}-\eqref{eqn:identity3} that $x_1$ and $x_2$ also lie in the  top eigenspaces of $\mu(\bar{S}_0)$ and $-\mu(\bar{S}_0)$, respectively. But these eigenspaces are disjoint since $2g(\bar{S}_0)-2>0$.
Therefore, \[\dim I_*(Z|\bar{R})_{\bar{\lambda}} \geq 2.\] On the other hand, we have 
\[ I_*(Z|\bar{R})_{\bar{\lambda}} \cong I_*(Y|R)_\lambda \cong \C \]
by Theorem~\ref{thm:excision},  a contradiction.  We conclude that $\cE = H_1(M)$ after all.

In the terminology of  \cite[Corollary~6]{ni-hm}, the fact that  $\cE = H_1(M)$ implies that the map \[i_*: H_1(\Pi) \to H_1(M)\] is surjective, where  $(\Pi,\Psi)$ is the characteristic product pair for $(M,\partial M)$. According to Ni, we can therefore find an embedded $G \times [-1,1]$ inside $M$, where $G\subset R_+$ is a genus one surface with boundary obtained as the tubular neighborhood of two curves in $R_+$ intersecting in a single point. Let \[M' = M \ssm (\inr(G)\times [-1,1])\textrm{ and }\gamma' = \partial G \times \{0\} \subset \partial M'.\] Then $(M',\gamma')$ is a sutured manifold and a homology product. By definition, its sutured instanton homology is given by \[ \SHI(M',\gamma') := I_*(Y|R)_\lambda \cong \C. \] 
 $(M',\gamma')$ is taut since this module is nonzero, so  \cite[Theorem~7.18]{km-excision} asserts that $(M',\gamma')$  is a product sutured manifold.  In particular, \[M \cong R\times [-1,1],\] which implies that $Y$ is fibered with fiber $R$.
\end{proof}

As in \cite{ni-hm}, Theorem \ref{thm:fibered} follows easily from Proposition \ref{prop:fibered-v-prime} and excision.

\begin{proof}[Proof of Theorem~\ref{thm:fibered}] Suppose the hypotheses of the theorem are satisfied.
Take a maximal collection $R=R_0, R_1, \dots, R_{n-1}$ of disjoint, pairwise non-isotopic closed surfaces in $Y$ which have the same genus as $R$ and  are  homologous to $R$. Let us write
\[ Y \ssm (N(R_0) \sqcup N(R_1) \sqcup \dots \sqcup N(R_{n-1})) = M_0 \sqcup M_1 \sqcup \dots \sqcup M_{n-1}, \]
in which we have ordered the $R_i$ so that $\partial M_i = R_i \sqcup R_{i+1}$, interpreting the subscripts mod $n$.  For each $i$,   form a closed manifold $Y_i$ and curve $\lambda_i\subset Y_i$ from $M_i$ by taking a diffeomorphism $R_i \to R_{i+1}$ which identifies $\lambda\cap R_i$ with $\lambda\cap R_{i+1}$, and using this map to glue $R_i$ to $R_{i+1}$.  By repeated application of Theorem~\ref{thm:excision}, we have that
\[ I_*(Y|R)_\lambda \cong I_*(Y_0|R_0)_{\lambda_0} \otimes \dots \otimes I_*(Y_{n-1}|R_{n-1})_{\lambda_{n-1}}. \]
Then $I_*(Y|R)_\lambda \cong \C$ implies that $I_*(Y_i|R_i)_{\lambda_i} \cong \C$ for all $i$.  Moreover, the fact that $Y \ssm N(R)$ is a homology product implies that each  \[M_i \cong Y_i \ssm N(R_i)\] is as well, by \cite[Lemma~4.2]{ni-hf}. Finally, each $M_i$ is vertically prime since the collection $\{R_i\}$ is maximal, so Proposition~\ref{prop:fibered-v-prime} says that $M_i \cong R_i \times [-1,1]$ for all $i$.  It follows that $Y$ is fibered over $S^1$ with fiber $R$, as desired.
\end{proof}

The remainder of this section is devoted to proving Theorem \ref{thm:detect-fibered-knot}.

Suppose $K$ is a nontrivial knot in $S^3$, and let $\hat\Sigma \subset S^3_0(K)$ be the closed surface of genus $g=g(K)$ formed by capping off a genus-$g$ Seifert surface $\Sigma$ for $K$ with a disk. Let $\mu \subset S^3_0(K)$ be the image of the meridian of $K$ in the surgered manifold, with $\mu\cdot \hat\Sigma = 1$.  For each integer  \[j=1-g,2-g,\dots,g-1,\] let \[ I_*(S^3_0(K),\hat\Sigma,j)_\mu \subset I_*(S^3_0(K))_\mu \]
denote the  $(2j,2)$-eigenspace of the  operators $\mu(\hat\Sigma),\mu(\pt)$ (these eigenspaces are trivial for $|j| \geq g$ by Theorem \ref{thm:km-surface}).  Since the  $\Z/2\Z$-grading on $I_*(S^3_0(K))_\mu$ is fixed by $\mu(\hat\Sigma)$ and $\mu(\pt)$, each  $I_*(S^3_0(K),\hat\Sigma,j)_\mu$ inherits this grading and hence has a well-defined Euler characteristic.  Lim proved the following {\cite[Corollary~1.2]{lim}}, which says that these Euler characteristics are determined by the Alexander polynomial of $K$. 

\begin{theorem} \label{thm:lim}
Let $\Delta_K(t)$ be the Alexander polynomial of $K$, normalized so that \[\Delta_K(t) = \Delta_K(t^{-1})\textrm{ and }\Delta_K(1) = 1.\] Then 
\[ \frac{\Delta_K(t)-1}{t-2+t^{-1}} = \sum_{j=1-g}^{g-1} \chi(I_*(S^3_0(K),\hat\Sigma,j)_\mu) t^j. \]
\end{theorem}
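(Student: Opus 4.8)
The plan is to recognize this as Lim's computation of the instanton homology of $0$-surgery \cite{lim}: I would match the groups $I_*(S^3_0(K),\hat\Sigma,j)_\mu$ above with the summands appearing in \cite[Corollary~1.2]{lim} and then quote that result, so that the only genuine step is a translation of conventions. For orientation, here is the shape of the argument behind it. One chooses a holonomy perturbation of the Chern--Simons functional, supported in the knot complement, making all perturbed flat $SO(3)$ connections on $S^3_0(K)$ compatible with the twisted bundle (with $c_1(w)$ Poincar\'e dual to $\mu$) nondegenerate. These connections generate the chain complex, each carries a sign from the spectral-flow orientation, and $\chi(I_*(S^3_0(K))_\mu)$ is their signed count; the operator $\mu(\hat\Sigma)$ refines this complex into eigenspaces indexed by $j$, in the manner analogous to the $\mathrm{Spin}^c$-grading on $\hfhat(S^3_0(K))$, and $\chi(I_*(S^3_0(K),\hat\Sigma,j)_\mu)$ is the corresponding refined signed count.

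The heart of the matter is then to evaluate these refined counts. Here one uses a gluing/surgery argument in the spirit of Floer and Fr{\o}yshov, expressing the count of flat connections on $S^3_0(K)$ in terms of the $SU(2)$ character variety of the knot complement and the way the surgery condition cuts it out; by the Casson--Lin circle of ideas (as developed by Lin, Herald, and others) this is in turn governed by the equivariant, i.e. Tristram--Levine, signatures of $K$. The jumps of the signature function occur at the roots of $\Delta_K$ on the unit circle, and there is a classical identity that packages the resulting data into the generating function $\frac{\Delta_K(t)-1}{t-2+t^{-1}}$; tracking the $j$-grading through this identification yields the stated equality.

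I expect the main obstacle in reconstructing the proof to be bookkeeping rather than any new idea: reconciling the orientation conventions that sign instanton moduli spaces with the sign conventions in the signature formula, controlling the reducible and otherwise degenerate flat connections through the perturbation, and verifying that the $\mu(\hat\Sigma)$-eigenvalue grading corresponds exactly to the grading on the classical side. Since Lim carries all of this out, in practice I would simply invoke \cite[Corollary~1.2]{lim} after establishing the dictionary between the two setups.
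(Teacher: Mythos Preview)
Your proposal is correct and matches the paper's approach exactly: the paper simply states this as Lim's result and cites \cite[Corollary~1.2]{lim} without giving any proof, so invoking that citation after matching conventions is precisely what is done. Your additional sketch of the ideas behind Lim's argument goes beyond what the paper provides, but the core step---quote Lim---is the same.
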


We will use Lim's result in combination with Theorem \ref{thm:fibered} to prove the following fiberedness detection theorem, stated in terms of the notation above. 

\begin{theorem} \label{thm:detect-fibered-knot}
 The $\C$-module $I_*(S^3_0(K)|\hat\Sigma)_\mu$ is always nontrivial, and
\[ I_*(S^3_0(K)|\hat\Sigma)_\mu \cong \C \]
iff $K$ is fibered.
\end{theorem}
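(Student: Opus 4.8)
The plan is to combine the excision-based fiberedness criterion of Theorem~\ref{thm:fibered} with Lim's Euler characteristic formula (Theorem~\ref{thm:lim}) and the nontriviality result Theorem~\ref{thm:instanton-nonzero}. First I would dispense with the nontriviality claim: since $K$ is nontrivial, $\hat\Sigma$ is a genus-minimizing surface in $S^3_0(K)$ in its homology class (the genus of $\hat\Sigma$ equals the Seifert genus $g(K)$, and a lower-genus representative would contradict the fact that the Thurston norm of $[\hat\Sigma]$ is $2g(K)-2$), and $\mu\cdot\hat\Sigma = 1$ is odd. Also $S^3_0(K)$ is irreducible because $K$ is nontrivial (a reducible $0$-surgery would force $K$ to be a cable of something with a reducing sphere, but in fact $0$-surgery on a nontrivial knot in $S^3$ is always irreducible by Gabai). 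Hence Theorem~\ref{thm:instanton-nonzero} gives $I_*(S^3_0(K)|\hat\Sigma)_\mu \neq 0$.

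Next, the ``if'' direction: suppose $K$ is fibered with fiber $\Sigma$. Then $S^3_0(K)$ fibers over $S^1$ with fiber the closed surface $\hat\Sigma$ of positive genus, and $\mu\cdot\hat\Sigma = 1$, so Theorem~\ref{thm:fibered-has-rank-1} gives $I_*(S^3_0(K)|\hat\Sigma)_\mu \cong \C$ immediately.

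For the ``only if'' direction---the main content---I would assume $I_*(S^3_0(K)|\hat\Sigma)_\mu \cong \C$ and aim to apply Theorem~\ref{thm:fibered} with $Y = S^3_0(K)$, $\lambda = \mu$, and $R = \hat\Sigma$. The hypotheses to check are that $Y$ is irreducible (done above), that $\mu\cdot\hat\Sigma = 1$ (immediate), and---the one real obstacle---that $Y\ssm N(\hat\Sigma)$ is a homology product, i.e. that the complement $S^3\ssm N(K)$ cut along a genus-minimizing Seifert surface $\Sigma$ is a homology product between two copies of $\Sigma$. This is where I expect the difficulty to lie, and I would handle it via Lim's theorem: the hypothesis $I_*(S^3_0(K)|\hat\Sigma)_\mu \cong \C$ is the statement that the top eigenspace $I_*(S^3_0(K),\hat\Sigma,g-1)_\mu$ is one-dimensional, hence has Euler characteristic $\pm1$, so by Theorem~\ref{thm:lim} the leading coefficient of $\Delta_K(t)$ (after the division by $t-2+t^{-1}$, which shifts degrees down by one) forces $\Delta_K$ to be monic of degree $2g(K)$ in the appropriate normalization. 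A monic Alexander polynomial together with the fact that $\hat\Sigma$ is genus-minimizing is precisely the classical input (going back to Rapaport, and used in this form in \cite{ni-hfk,km-excision,ni-hm}) guaranteeing that the sutured complement $(S^3\ssm N(K))$ cut along $\Sigma$ has the homology of a product; concretely, the Seifert form is unimodular, which makes the inclusions $H_1(\Sigma)\to H_1(S^3\ssm N(K)\ssm\Sigma)$ isomorphisms. So monicity of $\Delta_K$ yields the homology-product hypothesis.

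With all hypotheses of Theorem~\ref{thm:fibered} verified, we conclude $S^3_0(K)$ is fibered with fiber $\hat\Sigma$, and therefore $K$ is a fibered knot with fiber $\Sigma$. This completes the ``only if'' direction and the proof. I would note that the argument is a direct adaptation of Ni's proof that knot Floer and monopole Floer detect fibered knots, with Theorem~\ref{thm:fibered} (whose proof in the excerpt already imports the homology-product hypothesis, rather than deducing it from Turaev torsion as in the Floer-homological settings) standing in for the corresponding detection theorem; the only place instanton-specific input enters beyond Theorem~\ref{thm:fibered} and Theorem~\ref{thm:instanton-nonzero} is Lim's computation of the relevant Euler characteristics, which is what lets us recover monicity of $\Delta_K$ from the rank-one hypothesis.
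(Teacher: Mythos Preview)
Your proposal is correct and follows essentially the same route as the paper: nontriviality via Gabai and Theorem~\ref{thm:instanton-nonzero}, the ``if'' direction via Theorem~\ref{thm:fibered-has-rank-1}, and the ``only if'' direction by using Lim's theorem to deduce that $\Delta_K$ is monic of degree $g(K)$, hence that $S^3_0(K)\ssm N(\hat\Sigma)$ is a homology product, and then invoking Theorem~\ref{thm:fibered}. The paper cites \cite[Lemma~4.10]{ghiggini} for the monic-$\Delta_K\Rightarrow$homology-product step where you invoke the unimodularity of the Seifert form directly, but these are the same fact.
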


\begin{proof}
Gabai \cite{gabai-foliations3} proved that $S^3_0(K)$ is irreducible and that $\hat\Sigma$ minimizes genus within its homology class iff its genus is equal to the Seifert genus $g=g(K)$, so the nontriviality is a consequence of Theorem~\ref{thm:instanton-nonzero}.  He  also proved in \cite{gabai-foliations3} that $K$ is fibered with fiber $\Sigma$ iff $S^3_0(K)$ is fibered with fiber $\hat\Sigma$.  We already know that if $S^3_0(K)$ is fibered then $I_*(S^3_0(K)|\hat\Sigma)_\mu \cong \C$, by Theorem~\ref{thm:fibered-has-rank-1}. Let us prove the converse.

Suppose $I_*(S^3_0(K)|\hat\Sigma)_\mu \cong \C$.  We  claim that $S^3_0(K) \ssm N(\hat\Sigma)$ is a homology product.  Indeed, the Euler characteristic 
\[ \chi(I_*(S^3_0(K),\hat\Sigma,g-1)_\mu) = \chi(I_*(S^3_0(K)|\hat\Sigma)_\mu) \]
is $\pm 1$, so Theorem~\ref{thm:lim} says that
\[ \Delta_K(t) = 1 + (t - 2 + t^{-1}) \cdot (\pm t^{g-1} + \dots) = \pm t^g + \dots, \]
where   the omitted terms have strictly lower degree. The Alexander polynomial $\Delta_K(t)$ is therefore monic of degree $g$. This implies that $S^3_0(K) \ssm N(\hat\Sigma)$ is a homology product by \cite[Lemma~4.10]{ghiggini}.  We may then conclude  from Theorem~\ref{thm:fibered}  that $S^3_0(K)$ is fibered with fiber $\hat\Sigma$, and hence that $K$ is fibered with fiber $\Sigma$.
\end{proof}

\section{Framed instanton homology and fibered knots} \label{sec:compare-top-eigenspaces}

The  main goal of this section is to prove Theorem \ref{thm:odd-dim-g-1}, which asserts that framed instanton homology of $0$-surgery on a  nontrivial knot detects whether the knot is fibered. Note that Theorem \ref{thm:odd-dim-g-1}  follows immediately from  Theorem \ref{thm:detect-fibered-knot} if we can prove that the relevant $\C$-modules in the two theorems have the same dimensions, \begin{equation}\label{eqn:isomorphismframed} \dim I^\#_\godd(S^3_0(K),\mu;s_{g-1})= \dim I_*(S^3_0(K)|\hat\Sigma)_\mu,\end{equation} where $g=g(K)=g(\hat\Sigma)$. Recall from \S\ref{ssec:proof} that for each $i\in \Z$, \[s_i:H_2(S^3_0(K))\to 2\Z\] is the homomorphism defined by $s_i([\hat\Sigma]) = 2i.$ So, from \S\ref{ssec:eigenspace-decomposition},   \[I^\#(S^3_0(K),\mu;s_{g-1})\] is, by definition,   the $(2g-2)$-eigenspace of $\mu(\hat\Sigma)$ acting on \begin{equation}\label{eqn:module-top}I^\#(S^3_0(K),\mu) := I_*(S^3_0(K)\#T^3|T^2)_{\mu^\#}.\end{equation} Since the module in \eqref{eqn:module-top} is precisely the $2$-eigenspace of $\mu(\pt)$ acting on \[I_*(S^3_0(K)\#T^3)_{\mu^\#},\] per Remark \ref{rmk:simpletype3}, the top eigenspace of $\mu(\hat\Sigma)$ acting on the module in \eqref{eqn:module-top} is the same as \[I_*(S^3_0(K)\#T^3|\hat\Sigma)_{\mu^\#},\] by Definition \ref{def:top-connected}. In conclusion, we have that \begin{equation}\label{eqn:isomorphism-framed-g-1}I^\#(S^3_0(K),\mu;s_{g-1})= I_*(S^3_0(K)\#T^3|\hat\Sigma)_{\mu^\#}.\end{equation} The equality in \eqref{eqn:isomorphismframed}, which proves Theorem \ref{thm:odd-dim-g-1}, will then follow if we can show that there is an isomorphism \begin{equation}\label{eqn:isomorphism-double} I_*(S^3_0(K)\#T^3|\hat\Sigma)_{\mu^\#}\cong I_*(S^3_0(K)|\hat\Sigma)_{\mu}\otimes (\C_0\oplus \C_1)\end{equation} which is homogeneous with respect to the  $\Z/2\Z$-grading, where $\C_i$ is a copy of $\C$ in grading $i$; indeed,  \eqref{eqn:isomorphism-framed-g-1} and \eqref{eqn:isomorphism-double}  will  imply that\begin{equation*}\label{eqn:dim-equality}\dim I^\#_\godd(S^3_0(K),\mu;s_{g-1})=\dim I^\#_\geven(S^3_0(K),\mu;s_{g-1})=\dim I_*(S^3_0(K)|\hat\Sigma)_\mu.\end{equation*}
Our focus below  will therefore be on proving the isomorphism in \eqref{eqn:isomorphism-double}, though we will work in greater generality for most of this section.

Indeed, rather than considering   $(S^3_0(K),\mu)$ and $\hat\Sigma$, we  let  $(Y,\lambda)$ be an arbitrary admissible pair, and $R\subset Y$ a connnected surface of positive genus with $\lambda\cdot R$ odd. We note exactly as above that
\begin{equation}\label{eqn:Y-pt} I_*(Y\#T^3|T^2)_{\lambda^\#} \end{equation}
is the 2-eigenspace of $\mu(\pt)$ acting on $I_*(Y\#T^3)_{\lambda^\#}$, which means that the top eigenspace of  $\mu(R)$ acting on the module in \eqref{eqn:Y-pt} is simply  
\[ I_*(Y\#T^3|R)_{\lambda^\#}. \] The first of our two main propositions is the following.

\begin{proposition} \label{prop:sum-with-s1xs2}Suppose $(Y,\lambda)$ is an  admissible pair, and $R\subset Y$ is a connected surface of positive genus with $\lambda\cdot R=1$. 
Then there is an isomorphism  
\[ I_*(Y\#T^3|R)_{\lambda^\#} \cong I_*(Y\#(S^1\times S^2)|R)_{\lambda},\]  which is homogeneous with respect to the $\Z/2\Z$-grading, where the connected sum with $S^1\times S^2$ is performed away from  $R$ and $\lambda$.
\end{proposition}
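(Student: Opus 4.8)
The plan is to realize the change of $3$-manifold $Y\#T^3 \rightsquigarrow Y\#(S^1\times S^2)$ as a sequence of Dehn surgeries supported inside the $T^3$-summand (together with the auxiliary torus $T^2$ and the curve $\lambda_T$), so that the surface $R\subset Y$ and the curve $\lambda\subset Y$ are untouched throughout, and then to track the effect on instanton homology via the surgery exact triangle. Because $R$ and $\lambda$ lie in the fixed region, every cobordism arising in this process contains a product $R\times[0,1]$ in its interior, so the induced maps intertwine the $\mu(R)$-actions at the two ends and restrict to maps of the top eigenspaces of $\mu(R)$ (as explained at the end of \S\ref{ssec:cob-maps}); likewise these maps are homogeneous for the $\Z/2\Z$-grading by \eqref{eqn:grading-shift}, which will give the homogeneity in the statement. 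Thus the task reduces to understanding how the homology changes under the purely local modification.

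For the local picture I would use that $T^3 = \partial(T^2\times D^2)$: the handle decomposition of the torus (a $0$-handle, two $1$-handles, a $2$-handle) writes $T^2\times D^2$ as $\natural^2(S^1\times D^3)$ with a single $2$-handle attached along a null-homologous commutator-type curve $J$ carrying its surface framing, so that $T^3$ is obtained from $\#^2(S^1\times S^2)$ by surgery on $J$, and $\#^2(S^1\times S^2)$ is in turn obtained from $S^1\times S^2$ by surgery on an unknot in a ball. Connect-summing this whole picture with $Y$ --- away from $R$, from $\lambda$, and from the basepoint used to form $\lambda^\#$ --- produces a short chain of surgery exact triangles (Theorem~\ref{thm:exact-triangle}) whose outer terms are $I^\#(Y\#T^3,\lambda^\#)$, $I^\#(Y\#\#^2(S^1\times S^2),\lambda)$, and $I^\#(Y\#(S^1\times S^2),\lambda)$. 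Restricting everything to the top eigenspace of $\mu(R)$ and identifying the third ($+1$-framed) term at each stage --- which I would compute or bound by noting that $R$ still minimizes genus in each new manifold and invoking the eigenvalue/adjunction constraints of Theorem~\ref{thm:km-surface}, or a direct dimension count using Proposition~\ref{prop:euler} and \eqref{eqn:grading-shift} --- lets one track the effect of each triangle on this eigenspace and obtain the desired isomorphism $I_*(Y\#T^3|R)_{\lambda^\#}\cong I_*(Y\#(S^1\times S^2)|R)_\lambda$.

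I expect the crux to be exactly this identification of the error terms: pinning down the surgery curves and framings precisely enough that the intermediate $3$-manifolds, and the pertinent $\mu(R)$-eigenspaces of their framed instanton homology, can be controlled, and confirming that $R$ remains genus-minimizing at each step so that these eigenspaces have the expected ranks. A cleaner alternative, which avoids the explicit surgeries, is to run a Fukaya-type connected-sum argument directly: after restricting to the honest $2$-eigenspace of $\mu(\mathrm{pt})$ and to the top eigenspace of $\mu(R)$ (with $R\subset Y$), the contribution of a summand $Z$ to $I^\#$ of $Y\#Z$ depends only on $Z$ and the bundle there, and one checks that the $T^3$-summand (bundle Poincar\'e dual to $\lambda_T$) and the $S^1\times S^2$-summand (trivial bundle) contribute the same rank-two graded factor $\C_0\oplus\C_1$ --- for $T^3$ because $I_*(T^3|T^2)_{\lambda_T}\cong\C$ (Theorem~\ref{thm:fibered-has-rank-1}) and $\mu(\mathrm{pt})^2\equiv 4$ there (Theorem~\ref{thm:low-genus-mu}), as in Scaduto's proof of Theorem~\ref{thm:connected-sum}. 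Either way, the genuinely new content is the bookkeeping around the $T^3$-versus-$S^1\times S^2$ comparison rather than any new geometric input.
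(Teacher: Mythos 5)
Your plan is plausible in outline but has a genuine gap where the real work would be, and it misses the key geometric idea of the paper's proof.

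Both of your proposed routes ultimately require you to pin down an ``error term'' that you leave unresolved. In the surgery-triangle version (a), you would indeed get a chain of two exact triangles, and the step $Y\#(S^1\times S^2) \leadsto Y\#\#^2(S^1\times S^2)$ is manageable (it is the familiar unknot triangle). But the step $Y\#\#^2(S^1\times S^2)\leadsto Y\#T^3$ replaces the manifold by surgery on the commutator curve $J$, and the third vertex of that triangle --- the $\pm 1$-surgery on $J$ inside $Y\#\#^2(S^1\times S^2)$, restricted to the top $\mu(R)$-eigenspace and with the correct bundle --- is not identified anywhere in your argument, nor is the connecting map shown to vanish. Your suggestion of bounding it by Theorem~\ref{thm:km-surface} or an Euler characteristic count does not force the rank to be zero, and without that you get only a long exact sequence, not the claimed isomorphism. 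The bundle bookkeeping is also delicate: $\lambda^\#$ on $Y\#T^3$ carries the nontrivial class $[\lambda_T]$, whereas $\lambda$ on $Y\#(S^1\times S^2)$ is trivial on the new summand, so you need to track exactly how the meridians of the surgery curves become $\lambda_T$ and then disappear; this is another place where a small sign or framing error would sink the argument. In the Fukaya-style version (b), the step you gesture at --- ``the contribution of a summand $Z$ depends only on $Z$ and its bundle, and $T^3$ with $\lambda_T$ contributes the same graded factor as $S^1\times S^2$ with the trivial bundle'' --- is a genuine connected-sum factoring statement that is not provided by Theorem~\ref{thm:connected-sum} or Corollary~\ref{cor:connected-sum-low-genus}. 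Those results compare $I^\#(Y,\lambda)$, which by definition already has $T^3$ built in, with $I_*(Y)_\lambda$; they do not directly give you that the $T^3$- and $S^1\times S^2$-summands make identical contributions inside a fixed $\mu(R)$-eigenspace. (That conclusion is precisely Corollary~\ref{cor:odd-even-top}, i.e.\ the output of the proposition you are trying to prove together with Proposition~\ref{prop:s1xs2-surgery-triangle}, so invoking it here is circular.)

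The paper's argument avoids all of this by using excision (Theorem~\ref{thm:excision}), not surgery triangles, and --- crucially --- by \emph{cutting along $R$ itself} rather than keeping $R$ fixed. First, one interprets $I_*(Y\#T^3|T^2)_{\lambda^\#}$ as a sutured invariant $\SHI(Y\ssm B^3,S^1)$ and uses the closure-independence from \cite[\S7.4]{km-excision} to replace the $T^3$ summand by $R'\times S^1$ with $R'\cong R$; this is a composition of excisions along tori disjoint from $R$, so it commutes with $\mu(R)$ and is homogeneous. Restricting to the top $\mu(R)$-eigenspace then puts both $R$ and $R'$ in top eigenspaces simultaneously. Second, one applies Theorem~\ref{thm:excision} once more, this time cutting $Y\#(R'\times S^1)$ along $R\subset Y$ and $R'\times\{\pt\}$ and regluing $R$ to $R'$; the result is exactly $Y\#(S^1\times S^2)$, with the bundle curve $\lambda\cup\lambda_{R'}$ cut and reglued into $\lambda$. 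Finally, the two image surfaces $\tilde R_0,\tilde R_1$ differ in homology by a torus (a handled sphere from the connect-sum neck), whose $\mu$-operator is nilpotent, so their top generalized eigenspaces coincide. This gives the homogeneous isomorphism directly, with no long exact sequence whose third term must be controlled. That use of excision to cut along $R$ is the step your proposal is missing.
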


\begin{proof} Recall from the definition of framed instanton homology in \S\ref{ssec:framed-homology} that \[\lambda^\#=\lambda \cup \lambda_T,\] where $\lambda_T=\{\pt\}\times S^1$ is a curve dual to $T^2=T^2\times\{\pt\}$ in  $T^3=T^2\times S^1$.
We can interpret
\[ I_*(Y\#T^3|T^2)_{\lambda\cup \lambda_T} \]
as a version of the sutured instanton homology $\SHI(Y \ssm B^3,S^1)$ defined in \cite{km-excision}, albeit one in which $Y\ssm B^3$ is equipped with the nontrivial bundle specified by $\lambda$.  Fixing a surface $R' \cong R$, it thus follows exactly as in the proof of invariance of $\SHI$ in \cite[\S7.4]{km-excision} that
\begin{equation} \label{eq:sum-with-s1xs2-1}
I_*(Y \# T^3|T^2)_{\lambda\cup\lambda_T} \cong I_*(Y \# (R'\times S^1)|R')_{\lambda\cup\lambda_{R'}},
\end{equation}
where $\lambda_{R'}=\{\pt\}\times S^1$ is a curve dual to $R' = R'\times\{\pt\}$ in  $R' \times S^1$. The isomorphism \eqref{eq:sum-with-s1xs2-1} is obtained as a composition of excision isomorphisms along tori that are disjoint from $R$. It  therefore  intertwines 
the actions of $\mu(R)$  on either side, as in Theorem \ref{thm:excision}.  In particular,  this isomorphism  identifies the  $2j$-eigenspace of $\mu(R)$ acting on $I_*(Y\#T^3|T^2)_{\lambda\cup\lambda_T}$ with the  $2j$-eigenspace of $\mu(R)$ acting on
\[ I_*(Y\#(R'\times S^1)|R')_{\lambda\cup\lambda_{R'}}. \]
In the case $j=g(R)-1$, this isomorphism becomes
\begin{equation} \label{eq:sum-with-s1xs2-2}
I_*(Y\#T^3|R)_{\lambda\cup\lambda_T} \cong I_*(Y\#(R'\times S^1)|R\cup R')_{\lambda\cup\lambda_{R'}}.
\end{equation}
This isomorphism is homogeneous with respect to the $\Z/2\Z$-grading, as in Theorem \ref{thm:excision}.

To prove the proposition, we now apply excision once more. Namely, we cut $Y \# (R' \times S^1)$ open along $R$ and $R' \times \{\pt\}$, and then glue the two resulting $R$ components of the boundary to the two $R'$ components via some identification $R\cong R'$, as illustrated in Figure~\ref{fig:zero-surgery-excision}.
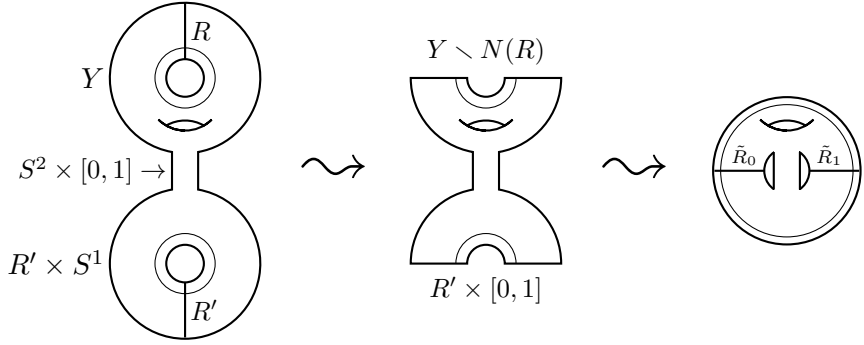
\begin{figure}
\begin{tikzpicture}[every node/.style={inner sep=0,outer sep=0}]
\path (-2,2) arc (90:260:1) node (A) {};
\draw (A) arc (260:-80:1) node (B) {} -- +(0,-0.5) arc (80:-90:1) node (C) {} arc (-90:-260:1) -- cycle node[midway,left,inner sep=2pt] {\small $S^2\times [0,1] \to$};
\draw (-2,1) circle (0.25);
\draw[thin] (-2,1) circle (0.4);
\draw (-2,0.3) arc (270:225:0.5) arc (225:315:0.5) arc (315:300:0.5) arc (60:120:0.5);
\draw (-2,1.25) -- (-2,2) node[midway,right,inner sep=2pt] {\small $R$};
\path (C) -- +(0,1) node (D) {};
\draw (D) circle (0.25);
\draw[thin] (D) circle (0.4);
\draw (C) -- +(0,0.75) node[midway,right,inner sep=2pt] {\small $R'$};
\node[left,inner sep=2pt] at (-3,1) {$Y$};
\node[left,inner sep=2pt] at ($(D)+(-1,0)$) {$R'\times S^1$};

\node (M) at ($(A)+(0,-0.25)$) {};
\node (N) at (0,0) {};
\node at (M -| N) {\Huge $\leadsto$};
\node (P) at (2,0) {};

\draw ($(A)+(4,0)$) arc (260:180:1) -- +(0.75,0) arc (180:360:0.25) -- +(0.75,0) arc (0:-80:1) -- +(0,-0.5) arc (80:0:1) -- +(-0.75,0) node (R) {} arc (0:180:0.25) -- +(-0.75,0) arc(180:100:1) -- cycle;
\draw (2,0.3) arc (270:225:0.5) arc (225:315:0.5) arc (315:300:0.5) arc (60:120:0.5);
\node[above,inner sep=5pt] at (2,1) {\small $Y \ssm N(R)$};
\node[below,inner sep=5pt] at (D -| P) {\small $R'\times [0,1]$};
\draw[thin] ($(2,1)+(0.4,0)$) arc (0:-180:0.4);
\draw[thin] ($(R -| P)+(0.4,0)$) arc (0:180:0.4);

\node (Q) at (4,0) {};
\node at (M -| Q) {\Huge $\leadsto$};

\node (E) at ($0.5*(-2,2)+0.5*(C)+(8,0)$) {};
\draw ($(E)+(0,0.98)$) arc (90:180:0.98) node (F) {} arc(180:360:0.98) node (G) {} arc(0:90:0.98);
\draw (F) -- +(0.68,0) node[pos=0.6,above,inner sep=2pt] {\tiny $\tilde R_0$} (G) -- +(-0.68,0) node[pos=0.6,above,inner sep=2pt] {\tiny $\tilde R_1$};
\draw (6,0.3) arc (270:225:0.5) arc (225:315:0.5) arc (315:300:0.5) arc (60:120:0.5);
\draw ($(A)+(8,0)$) -- +(0,-0.5);
\draw ($(B)+(8,0)$) -- +(0,-0.5);
\draw[thin] (E) circle (0.88);
\begin{scope}
\clip ($(A)+(7.5,0)$) rectangle ($(A)+(8,-0.5)$);
\draw (E) circle (0.3);
\end{scope}
\begin{scope}
\clip ($(B)+(8,0)$) rectangle ($(B)+(8.5,-0.5)$);
\draw (E) circle (0.3);
\end{scope}
\end{tikzpicture}
\caption{Relating $Y \# (R'\times S^1)$ to $Y \# (S^1\times S^2)$ by excision.  The thin curves are meant to represent $\lambda\cup\lambda_{R'}$ on the left, and $\lambda$ on the right.}
\label{fig:zero-surgery-excision}
\end{figure} We can assume this identification is such that the multicurve $\lambda \cup \lambda_T$ is cut and reglued to form $\lambda$ in the resulting manifold, which is $Y \# (S^1\times S^2)$. By Theorem~\ref{thm:excision}, there is an isomorphism
\begin{equation} \label{eq:sum-with-s1xs2-3}
I_*(Y\#(R'\times S^1)|R\cup R')_{\lambda\cup\lambda_{R'}} \cong I_*(Y\#(S^1\times S^2)|\tilde R_0 \cup \tilde R_1)_{\lambda},
\end{equation}
which is homogeneous with respect to the $\Z/2\Z$-grading, 
in which $\tilde R_0$ and $\tilde R_1$ are the images of $R$ and $R'$. 
We may identify one of these two surfaces---say $\tilde R_0$---with $R\subset Y\#(S^1\times S^2).$ Furthermore, note that $\tilde R_0$ and $\tilde R_1$ cobound a region in $Y \# (S^1\times S^2)$ with one of the $2$-spheres in the connected sum neck. Adding a trivial handle to this $2$-sphere, we obtain a torus $T$ in $Y \# (S^1\times S^2)$ such that \[[\tilde R_1] - [\tilde R_0] = [T]\] in homology. Since $\mu(T)$ is nilpotent, a generalized $(2g(R)-2)$-eigenvector for $\tilde R_0$ is also a generalized  $(2g(R)-2)$-eigenvector for $\tilde R_1$, and vice versa. We therefore have that \begin{equation} \label{eq:sum-with-s1xs2-4}
I_*(Y\#(S^1\times S^2)|\tilde R_0 \cup \tilde R_1)_{\lambda} \cong I_*(Y\#(S^1\times S^2)|\tilde R_0)_{\lambda}= I_*(Y\#(S^1\times S^2)|R)_{\lambda}.
\end{equation}
Combining \eqref{eq:sum-with-s1xs2-2}, \eqref{eq:sum-with-s1xs2-3}, and \eqref{eq:sum-with-s1xs2-4} then completes the proof of the proposition.
\end{proof}

Our second main proposition of this section is the following.
\begin{proposition} \label{prop:s1xs2-surgery-triangle} Suppose $(Y,\lambda)$ is an  admissible pair, and $R\subset Y$ is a connected surface of positive genus with $\lambda\cdot R$ odd. 
Then there is an isomorphism of $\Z/2\Z$-graded $\C$-modules
\[ I_*(Y\#(S^1\times S^2)|R)_{\lambda} \cong I_*(Y|R)_\lambda \otimes (\C_0 \oplus \C_1), \]
where  each $\C_i$ is a copy of $\C$ in grading $i$.
\end{proposition}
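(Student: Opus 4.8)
The plan is to derive this from the surgery exact triangle of Theorem~\ref{thm:exact-triangle-main}, applied to a small unknot $U$ sitting in a ball $B^3 \subset Y$ disjoint from $R$ and $\lambda$. The key is to choose the framing on $U$ carefully: take the framing obtained from the Seifert framing by subtracting a meridian, so that the trace of this surgery on $U$ is a blow-up. With this choice $Y_0(U) = Y$ — and the meridian $\mu$ of $U$ is null-homologous in $Y_0(U)$, since this surgery returns a ball — while $Y_1(U) = Y \# (S^1\times S^2)$; moreover Theorem~\ref{thm:exact-triangle-main} decorates $Y_1(U)$ with the curve $\lambda$ rather than $\lambda\cup\mu$, which is exactly what the statement requires. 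All three pairs are admissible (as $\lambda\cdot R$ is odd and $[\mu]=0$ in $H_1(Y;\Z/2\Z)$), so we obtain an exact triangle whose maps are induced by $2$-handle cobordisms supported near $B^3$; these contain $R\times[0,1]$ as a product region, hence commute with $\mu(R)$ and $\mu(\pt)$ (see the end of \S\ref{ssec:cob-maps}). Restricting each term to the top eigenspace of $\mu(R)$ and the $2$-eigenspace of $\mu(\pt)$ therefore yields an exact triangle
\[ \cdots \to I_*(Y|R)_\lambda \xrightarrow{B_1} I_*(Y|R)_{\lambda\cup\mu} \xrightarrow{B_2} I_*(Y\#(S^1\times S^2)|R)_\lambda \xrightarrow{B_3} I_*(Y|R)_\lambda \to \cdots, \]
in which $I_*(Y|R)_{\lambda\cup\mu} \cong I_*(Y|R)_\lambda$ by a grading-preserving isomorphism, since $\lambda\cup\mu$ and $\lambda$ represent the same class in $H_1(Y;\Z/2\Z)$ (Remark~\ref{rmk:isomorphism-homology}).

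The crux is to show $B_1 = 0$. The cobordism inducing $B_1$ is the trace of the chosen surgery on $U$, which — because $U$ is an unknot in a ball — is the blow-up $(Y\times[0,1]) \# \cptwo$ of the product cobordism; since a neighborhood of the exceptional sphere lies inside the surgered ball, away from $\lambda$, the bundle restricts trivially there, so this is the \emph{untwisted} blow-up. I would then invoke the standard vanishing of the instanton cobordism map associated to the untwisted blow-up of a product cobordism: the minimal-energy moduli space over $\cptwo\setminus(B^4\sqcup B^4)$ consists only of the reducible trivial connection, which contributes nothing, while any positive instanton number on the $\cptwo$ summand forces the complementary factor into a negative-dimensional moduli space. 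Hence $B_1 = 0$.

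Granting this, exactness collapses the triangle to a short exact sequence
\[ 0 \to I_*(Y|R)_{\lambda\cup\mu} \xrightarrow{B_2} I_*(Y\#(S^1\times S^2)|R)_\lambda \xrightarrow{B_3} I_*(Y|R)_\lambda \to 0, \]
which splits over $\C$. To see that the two summands sit in opposite $\Z/2\Z$-gradings I would compute the degree shifts from \eqref{eqn:grading-shift}: the cobordism for $B_2$ is $Y\times[0,1]$ with a $0$-framed $2$-handle on a ball-unknot, so $\chi=1$, $\sigma=0$, and $b_1$ increases by $1$, giving $\deg B_2 = -\tfrac32 + \tfrac12 \equiv 1 \pmod 2$; the cobordism for $B_3$ again has $\chi = 1$ and $b_1$ decreasing by $1$, while its signature (being $0$ or $\pm1$ for a single $2$-handle) must be even for $\deg B_3$ to be an integer, hence equals $0$, giving $\deg B_3 \equiv 0 \pmod 2$. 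Combining the splitting with $I_*(Y|R)_{\lambda\cup\mu}\cong I_*(Y|R)_\lambda$ then gives $I_*(Y\#(S^1\times S^2)|R)_\lambda \cong I_*(Y|R)_\lambda \otimes (\C_0\oplus\C_1)$, as desired. I expect the main obstacles to be, in order: pinning down the framing on $U$ so that the curve on $Y\#(S^1\times S^2)$ is $\lambda$ rather than $\lambda\cup\mu$ (without this there is no way to relate the two groups, as they correspond to different classes in $H_1(\,\cdot\,;\Z/2\Z)$), and establishing the vanishing $B_1=0$.
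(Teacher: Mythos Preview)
Your strategy—run the surgery exact triangle on a small unknot and show one of the three maps vanishes—matches the paper's. The gap is in your justification that $B_1=0$. You argue that the blow-up $(Y\times[0,1])\#\cptwo$ carries the \emph{trivial} bundle on the $\cptwo$ summand, since the exceptional sphere $e$ sits in a ball away from $\lambda$, and that the resulting reducible trivial connection ``contributes nothing.'' Both claims fail. First, the surface $\nu$ specifying the bundle is not $\lambda\times[0,1]$: it must restrict to $\lambda\cup\mu$ on the outgoing end, and in the exact-triangle construction this forces $\nu$ to include the cocore disk of the $2$-handle, which meets the core (and hence $e$) once, so $\nu\cdot e$ is odd. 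Second, and more fundamentally, an \emph{untwisted} blow-up of a product cobordism does not induce the zero map: gluing the trivial flat connection on punctured $\cptwo$ (formal index $-3$, stabilizer $SU(2)$) to an index-$0$ irreducible on $Y\times\R$ over the trivial flat connection on $S^3$ (stabilizer dimension $3$) yields an irreducible instanton of index $-3+0+3=0$, which \emph{does} contribute. The resulting map is essentially the identity—this is the degree-zero term of the blow-up formula.

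The paper's argument (for its map $H$ in the $(\infty,0,1)$ triangle, which is the same blow-up cobordism up to your cyclic relabeling) is that $\nu\cdot e$ is odd, citing \cite[\S7.6]{scaduto}. With the nontrivial bundle on $\cptwo$, every instanton there has stabilizer at most $S^1$ and hence index $\geq -1$, so gluing gives total index $\geq 0+(-1)+3=2$ and there are no index-$0$ instantons, whence $H=0$. Your shifted framing is a harmless variation, but the vanishing requires this twisted-bundle dimension count, not the one you gave.
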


\begin{proof}
We model our argument after the computation of $I^\#(S^1\times S^2)$ in \cite[\S7.6]{scaduto}.  Let $U$ denote an unknot in $Y$. We apply Floer's surgery exact triangle,   Theorem \ref{thm:exact-triangle-main}, for surgeries on $U$ with framings $\infty$, $0$, and $1$ to get an exact triangle
\begin{equation*} \label{eq:sum-s1xs2-triangle}
\dots \to I_*(Y)_\lambda \xrightarrow{F} I_*(Y\#(S^1\times S^2))_{\lambda} \xrightarrow{G} I_*(Y)_\lambda \xrightarrow{H} \dots
\end{equation*}
(where we take the $\lambda$ in that theorem to be the union of our $\lambda$ with the $\mu$ in the theorem).
The maps $F$, $G$, and $H$ are induced by $2$-handle cobordisms where the $2$-handles are attached away from $R$, so these maps intertwine  the actions of $\mu(R)$ on these modules.  The exact triangle above therefore restricts to an exact triangle \begin{equation} \label{eq:sum-s1xs2-triangle-R}
\dots \to I_*(Y|R)_\lambda \xrightarrow{F} I_*(Y\#(S^1\times S^2)|R)_{\lambda} \xrightarrow{G} I_*(Y|R)_\lambda \xrightarrow{H} \dots
\end{equation}
Since $F$ has odd degree while $G$ has even degree, by the formula \eqref{eqn:grading-shift}, the proposition will follow if we can show that $H=0$.

The map $H$ is induced by a cobordism
\[ (X,\nu): (Y,\lambda) \to (Y,\lambda) \]
 built by attaching a $(-1)$-framed 2-handle to $Y \times [0,1]$ along the unknot $U\times \{1\}$. In other words,
\[ X \cong (Y \times [0,1]) \# \cptwo. \]
Letting $e \in \cptwo$ denote the exceptional sphere, the pairing $\nu\cdot e$ was determined to be odd in \cite[\S7.6]{scaduto}.  But then the induced cobordism map $H$ must be zero by a dimension-counting argument, exactly as in \cite[\S 4]{braam-donaldson}.  Namely, by neck-stretching along $S^3$, we can realize an instanton $A$ on $X$ by gluing instantons $A_{Y\times[0,1]}$ and $A_{\cptwo}$ on either summand.  The former are irreducible and the latter have stabilizer at most $S^1$, since the bundle specified by $\nu$ is nontrivial on $\cptwo$; and the unique flat connection on $S^3$ has 3-dimensional stabilizer, so we have
\[ \ind(A) = \ind(A_{Y\times[0,1]}) + \ind(A_{\cptwo}) + 3 \geq 0 + (-1) + 3 = 2. \]
Thus, there are generically no index-0 instantons on $X$, which implies that $H$ is zero.
\end{proof}

Combining Propositions~\ref{prop:sum-with-s1xs2} and \ref{prop:s1xs2-surgery-triangle}, we have the following immediate corollary.

\begin{corollary}\label{cor:odd-even-top}
Suppose $(Y,\lambda)$ is an  admissible pair, and $R\subset Y$ is a connected surface of positive genus with $\lambda\cdot R=1$. Then  there is an isomorphism of $\C$-modules, \[I_*(Y\#T^3|R)_{\lambda^\#}\cong  I_*(Y|R)_\lambda\otimes (\C_0\oplus\C_1),\]  which is homogeneous with respect to the $\Z/2\Z$-grading.
\end{corollary}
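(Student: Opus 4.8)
The plan is to obtain the stated isomorphism simply by composing the two isomorphisms just established, so no new geometric input is required beyond Propositions~\ref{prop:sum-with-s1xs2} and \ref{prop:s1xs2-surgery-triangle}. First I would observe that the hypotheses of the corollary---that $(Y,\lambda)$ is admissible and $R \subset Y$ is a connected surface of positive genus with $\lambda \cdot R = 1$---are exactly the hypotheses of Proposition~\ref{prop:sum-with-s1xs2}; and since $\lambda \cdot R = 1$ is in particular odd, they also satisfy the weaker hypothesis of Proposition~\ref{prop:s1xs2-surgery-triangle}. Hence both propositions apply to $(Y,\lambda,R)$, with the connected sums with $S^1\times S^2$ taken away from $R$ and $\lambda$ as in those statements.

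Next, Proposition~\ref{prop:sum-with-s1xs2} provides an isomorphism
\[ I_*(Y\#T^3|R)_{\lambda^\#} \cong I_*(Y\#(S^1\times S^2)|R)_{\lambda}, \]
homogeneous with respect to the $\Z/2\Z$-grading, while Proposition~\ref{prop:s1xs2-surgery-triangle} provides an isomorphism of $\Z/2\Z$-graded $\C$-modules
\[ I_*(Y\#(S^1\times S^2)|R)_{\lambda} \cong I_*(Y|R)_\lambda \otimes (\C_0\oplus\C_1). \]
Composing these two maps gives the desired $\Z/2\Z$-graded isomorphism $I_*(Y\#T^3|R)_{\lambda^\#}\cong I_*(Y|R)_\lambda\otimes (\C_0\oplus\C_1)$, completing the argument.

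There is essentially no remaining obstacle: the substantive content has already been handled in proving the two propositions---the iterated excision argument relating $Y\#T^3$ to $Y\#(R'\times S^1)$ to $Y\#(S^1\times S^2)$ behind Proposition~\ref{prop:sum-with-s1xs2}, and the index/dimension count forcing the connecting map $H$ in the surgery exact triangle to vanish behind Proposition~\ref{prop:s1xs2-surgery-triangle}. The only thing to verify for the corollary itself is that the $\Z/2\Z$-gradings are respected under composition, which is immediate since each of the two isomorphisms is stated to be homogeneous with respect to that grading.
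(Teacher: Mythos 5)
Your proposal matches the paper's argument exactly: the paper itself states that the corollary follows immediately by combining Propositions~\ref{prop:sum-with-s1xs2} and \ref{prop:s1xs2-surgery-triangle}, which is precisely the composition you describe, with the grading homogeneity preserved because each constituent isomorphism is homogeneous.
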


\begin{proof}[Proof of Theorem \ref{thm:odd-dim-g-1}]Applying Corollary \ref{cor:odd-even-top} to the case in which $Y=S^3_0(K)$, $R=\hat\Sigma$, and $\lambda = \mu$, we obtain the isomorphism \eqref{eqn:isomorphism-double} from which Theorem \ref{thm:odd-dim-g-1} follows, as discussed at the beginning of this section.
\end{proof}

With respect to the notation  \[I^\#(Y,\lambda|R):=I_*(Y\#T^3|R)_{\lambda^\#}\] of  \eqref{eqn:Irelative}, Corollary \ref{cor:odd-even-top} together with Theorem \ref{thm:fibered-has-rank-1} immediately imply the following.

\begin{proposition}\label{prop:fiberedframed}
Let $(Y,\lambda)$ be an admissible pair such that $Y$ fibers over the circle with fiber a  connected surface $R$ of positive genus with $\lambda\cdot R = 1$. Then \[I^\#_\geven(Y,\lambda|R) \cong I^\#_\godd(Y,\lambda|R) \cong  \C.\] 
\end{proposition}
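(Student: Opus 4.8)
The plan is to combine the two results just established: Theorem~\ref{thm:fibered-has-rank-1}, which computes the ``top'' summand $I_*(Y|R)_\lambda$ of the unframed instanton homology of a fibered manifold, and Corollary~\ref{cor:odd-even-top}, which relates this summand to its framed counterpart $I^\#(Y,\lambda|R)$. There is no new gauge-theoretic input required; the proposition is a formal consequence of these two facts together with the bookkeeping of the $\Z/2\Z$-grading.

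First I would invoke Theorem~\ref{thm:fibered-has-rank-1}: since $(Y,\lambda)$ is admissible, $Y$ fibers over $S^1$ with fiber the connected positive-genus surface $R$, and $\lambda\cdot R = 1$, that theorem gives $I_*(Y|R)_\lambda\cong\C$. This is the only place where fiberedness genuinely enters. Next, recalling from \eqref{eqn:Irelative} that by definition $I^\#(Y,\lambda|R) = I_*(Y\#T^3|R)_{\lambda^\#}$, I would apply Corollary~\ref{cor:odd-even-top}---whose hypotheses ($(Y,\lambda)$ admissible, $R$ connected of positive genus, $\lambda\cdot R=1$) are exactly those of the present proposition---to obtain a $\Z/2\Z$-graded isomorphism
\[ I^\#(Y,\lambda|R) = I_*(Y\#T^3|R)_{\lambda^\#}\cong I_*(Y|R)_\lambda\otimes(\C_0\oplus\C_1). \]
Substituting $I_*(Y|R)_\lambda\cong\C$ and observing that tensoring a one-dimensional space sitting in a single $\Z/2\Z$-grading with $\C_0\oplus\C_1$ produces one summand in that grading and one in the opposite grading, I would conclude that $I^\#_\geven(Y,\lambda|R)$ and $I^\#_\godd(Y,\lambda|R)$ are each isomorphic to $\C$.

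I do not expect any real obstacle here. The single point requiring attention is tracking the $\Z/2\Z$-grading through the isomorphism of Corollary~\ref{cor:odd-even-top}, so as to be certain the two one-dimensional pieces land in opposite parities rather than both in the same one; but this is immediate from the fact that $\C_0\oplus\C_1$ has one generator in grading $0$ and one in grading $1$ and that the isomorphism is homogeneous. All the substantive work---the fiberedness computation and the chain of excision/surgery-triangle reductions from $\#T^3$ down through $\#(S^1\times S^2)$ that underlies Corollary~\ref{cor:odd-even-top}---has already been carried out, so the proof reduces to assembling these statements.
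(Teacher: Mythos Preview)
Your proposal is correct and matches the paper's approach exactly: the paper states that Proposition~\ref{prop:fiberedframed} follows immediately from Corollary~\ref{cor:odd-even-top} together with Theorem~\ref{thm:fibered-has-rank-1}, which is precisely the combination you describe.
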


\section{A structure theorem for cobordism maps} \label{sec:I-sharp}

Following \cite{km-structure}, we let $x=[\pt]$ in $H_0(X;\Z)$ and then define a formal power series by the formula
\[ \sD_X^\nu(h) = D_{X,\nu}\left(-\otimes\left(e^h + \tfrac{1}{2}xe^h\right)\right) \]
for each $h \in H_2(X;\R)$.  Our goal over the next several subsections is to prove the following structure theorem for the formal cobordism maps $\sD_X^\nu$.

\begin{theorem} \label{thm:structure-theorem}
Let $(X,\nu): (Y_0,\lambda_0) \to (Y_1,\lambda_1)$ be a cobordism with $b_1(X)=0$.  Then there is a finite collection (possibly empty) of \emph{basic classes}
\[ K_1,\dots,K_r: H_2(X;\Z) \to \Z, \]
each satisfying $K_j(\alpha) \equiv \alpha\cdot\alpha \pmod{2}$ for all $\alpha \in H_2(X;\Z)$; and nonzero elements
\[ a_1,\dots,a_r \in \Hom(I^\#(Y_0,\lambda_0), I^\#(Y_1,\lambda_1)) \]
with rational coefficients\footnote{Ultimately, framed instanton homology can be defined over $\Z$; what we mean here is that the $a_j$ have rational coefficients with respect to  rational bases for $I^\#(Y_0,\lambda_0)$ and $I^\#(Y_1,\lambda_1)$. We  remark that the results of this section hold over any field of characteristic zero.}, such that for any $\alpha \in H_2(X;\Z)$ we have
\[ \sD_X^{\nu+\alpha}(h) = e^{Q(h)/2} \sum_{j=1}^r (-1)^{\frac{1}{2}(K_j(\alpha)+\alpha\cdot\alpha)+\nu\cdot\alpha} \cdot a_j e^{K_j(h)}. \]
The basic classes satisfy an adjunction inequality
\[ |K_j([\Sigma])| + [\Sigma] \cdot [\Sigma] \leq 2g(\Sigma)-2 \]
for all smoothly embedded surfaces $\Sigma \subset X$ of genus $g(\Sigma) \geq 1$ and positive self-intersection.
\end{theorem}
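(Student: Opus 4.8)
The plan is to adapt Mu\~noz's proof \cite{munoz-basic} of the Kronheimer--Mrowka structure theorem for closed $4$-manifolds to the cobordism setting, replacing rational-valued Donaldson invariants with the $\Hom$-valued formal maps $\sD_X^\nu$. The two structural facts we need are already in hand: Lemma~\ref{lem:simple-type} gives the simple-type relation $D_{X,\nu}(-\otimes x^2 z) = 4 D_{X,\nu}(-\otimes z)$, and the relation \eqref{eqn:relations-psi} lets us commute classes in $\bA(X)$ past the cobordism map. Concretely, for a fixed class $\Sigma \subset X$ with $[\Sigma]\cdot[\Sigma]>0$ and genus $g\ge 1$, one considers the one-variable formal series obtained by setting $h = t\,[\Sigma]$ and studies how multiplication by $\mu([\Sigma])$ acts. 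The key algebraic input from \cite{munoz-ring, munoz-basic} is that, on the relevant eigenspace decomposition, $\mu([\Sigma])$ satisfies a universal polynomial relation; combined with simple type this forces $\sD_X^{\nu}(t[\Sigma])$ to be a finite $\C[t]$-linear (indeed finite exponential-sum) combination $e^{Q([\Sigma])t^2/2}\sum_j a_j(\Sigma)\, e^{K_j^\Sigma t}$ with the exponents $K_j^\Sigma$ bounded by the adjunction inequality $|K_j^\Sigma|+[\Sigma]\cdot[\Sigma]\le 2g-2$. This is exactly where the hypothesis of positive self-intersection enters, since the blow-up/adjunction machinery of \cite{km-structure, munoz-basic} needs an embedded surface of positive square to run.

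The next step is to promote this one-variable statement to a statement for all of $H_2(X;\R)$. Here one uses that $\sD_X^\nu$ is a formal power series whose restriction to \emph{every} line through a class of positive square is a finite exponential sum of the above shape, together with the fact that the quadratic form $Q$ on $H_2(X;\R)$ has enough directions of positive square when $b_2^+(X)>0$: a Vandermonde/interpolation argument, exactly as in \cite[\S3]{munoz-basic}, then shows the exponents assemble into finitely many \emph{linear} functionals $K_1,\dots,K_r : H_2(X;\Z)\to\Z$ and the coefficients into fixed elements $a_j \in \Hom(I^\#(Y_0,\lambda_0),I^\#(Y_1,\lambda_1))$. The parity condition $K_j(\alpha)\equiv\alpha\cdot\alpha\pmod 2$ and the $w_2$-type sign behavior under $\nu \mapsto \nu+\alpha$ come from tracking how the bundle data $\nu$ enters the moduli spaces; concretely one derives the identity $\sD_X^{\nu+\alpha}(h) = (-1)^{\nu\cdot\alpha}\,\sD_X^\nu(h+ \text{(PD correction)})$ and matches exponents, which simultaneously pins down the parity of each $K_j$ and produces the claimed sign $(-1)^{\frac12(K_j(\alpha)+\alpha\cdot\alpha)+\nu\cdot\alpha}$. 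The coefficients $a_j$ are nonzero by construction (we discard any vanishing terms).

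The main obstacle I anticipate is precisely the case excluded from the classical proofs, namely $b_2^+(X)=0$: then $H_2(X;\R)$ contains \emph{no} class of positive square, so the one-variable argument has nothing to run on and the interpolation step has no lines to interpolate along. The statement as written does include $b_1(X)=0$ cobordisms with $b_2^+(X)=0$, so this is not a corner case we can dismiss. I would handle it separately (this is flagged in the paper as \S\ref{ssec:structure-general}): when $b_2^+(X)=0$ one instead stabilizes, replacing $X$ by $X' = X \#(S^2\times S^2)$ or by a suitable blow-up, applies the structure theorem where it is already known, and then uses the composition law together with the blow-up formula — which in this setting one must \emph{also} establish directly by the neck-stretching/index computation of the type appearing in the proof of Proposition~\ref{prop:s1xs2-surgery-triangle} — to descend the basic-class decomposition back to $X$. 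The compatibility of the two constructions (and checking that the descended $K_j, a_j$ don't depend on the stabilization) is the delicate bookkeeping, but it is forced once one has the composition and blow-up formulas, which is why I would prove those first in the stable range and only then circle back to $b_2^+=0$.
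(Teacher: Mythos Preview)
Your outline for the case $b_2^+(X)>0$ is in the right spirit and roughly matches the paper's \S\ref{ssec:structure-positive}, which follows Mu\~noz closely (the key input being Lemma~\ref{lem:munoz-hff} from Fukaya--Floer homology, not just the $\mu$-map relations). The genuine gap is in your treatment of $b_2^+(X)=0$. Neither of your proposed stabilizations works: blowing up by $\cptwo$ leaves $b_2^+$ unchanged, so you are no better off; and taking connected sum with $S^2\times S^2$ (or $\CP^2$) introduces a sphere of square zero pairing nontrivially with another class, which forces the cobordism map---and hence the series $\sD$---to vanish by a standard neck-stretching argument, so there is nothing to descend. You also invoke the composition law, but that is Proposition~\ref{prop:composition-law}, proved \emph{using} the structure theorem, so appealing to it here is circular.

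The paper's actual mechanism is quite different and worth internalizing. Rather than modifying $X$ internally, one \emph{postcomposes} with a fixed cobordism $(Z,\nu_1)\colon Y_1\to Y_1\#S^3_1(T_{2,5})$, namely the trace of $1$-surgery on $T_{2,5}$ inside a ball in $Y_1$. This $Z$ has $b_2^+(Z)=1$ (the capped Seifert surface has self-intersection $+1$), so $X'=X\cup_{Y_1}Z$ satisfies $b_2^+(X')>0$ and the $b_2^+>0$ case applies to it. The point is that $I^\#(Z,\nu_1)$ is \emph{injective} (Proposition~\ref{prop:x1-t25-injective} and Lemma~\ref{lem:trace-t25-injective}, proved by a direct computation of $I^\#(S^3_k(T_{2,5}))$ for small $k$), so the elementary functoriality identity $\sK_{X'}^{\nu'}|_{H_2(X)} = I^\#(Z,\nu_1)\circ\sK_X^\nu$ lets you pull the finite-exponential-sum structure of $\sK_{X'}^{\nu'}$ back to $\sK_X^\nu$ via differential operators (Lemmas~\ref{lem:general-structure-exists}--\ref{lem:general-sign-change}). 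No composition law or blow-up descent is needed---only injectivity of a single concrete cobordism map.
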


Theorem~\ref{thm:structure-theorem} is a direct analogue of Kronheimer and Mrowka's structure theorem for the Donaldson invariants of closed 4-manifolds of simple type \cite{km-structure}, which was subsequently given different proofs by Fintushel and Stern \cite{fs-structure} and Mu\~noz \cite{munoz-basic}.  None of these proofs applies verbatim to an arbitrary cobordism: the proofs in \cite{km-structure,munoz-basic} require a smoothly embedded surface of positive self-intersection, while the one in \cite{fs-structure} requires $\pi_1(X) = 1$ and involves interaction between both $SO(3)$ and $SU(2)$ invariants.  We will prove Theorem \ref{thm:structure-theorem} by adapting Mu\~noz's proof, which is short and only requires two ingredients: a blow-up formula and a computation of Fukaya--Floer homology for products $\Sigma \times S^1$.  (In this paper we do not need to consider cobordisms where $\pi_1(X) \neq 1$, but we choose not to adapt Fintushel--Stern's proof because the greater generality will be useful elsewhere.)

Our proof of Theorem~\ref{thm:structure-theorem} is structured as follows.  We first verify  in \S\ref{ssec:blow-up} that Fintushel and Stern's blow-up formula \cite{fs-blowup} holds for  cobordism maps.  In \S\ref{ssec:structure-positive} we use this blow-up formula together with a modification of Mu\~noz's proof of the structure theorem in \cite{munoz-basic} to prove the structure theorem above in the case $b_2^+(X) > 0$.  In \S\ref{ssec:injective-cobordism}, we prove that the $2$-handle cobordism given by the  trace of $1$-surgery on the torus knot $T_{2,5}$ induces an injective map on $I^\#$, and  we use this in \S\ref{ssec:structure-general} to reduce the general case of Theorem~\ref{thm:structure-theorem} to the case where $b_2^+(X)$ is positive.

\begin{remark}\label{rmk:simpletypeb2}
Some of the results  which we  adapt from the closed $4$-manifold setting (like Fintushel--Stern's blow-up formula) require that the closed manifold have $b_2^+>0$ or $b_2^+>1$. These are  to rule out reducibles and to ensure that the Donaldson invariants are well-defined, respectively. The framed construction makes these constraints unnecessary for the cobordism maps studied here. Indeed, any ASD connection on a cobordism $X^\#$ limits at the ends $Y_i\#T^3$ to a flat connection on an admissible bundle, so it is automatically irreducible at the ends and hence on $X^\#$ itself. Furthermore, these maps are well-defined without any assumption on $b_2^+$. 

Some of these results also require the simple type assumption, but this is  automatically satisfied for the cobordisms $X^\#$ by Lemma \ref{lem:simple-type}.
\end{remark}

\subsection{The blow-up formula for cobordism maps} \label{ssec:blow-up}

The blow-up formula for  formal cobordism maps reads as follows.  
\begin{theorem} \label{thm:blowup-formula}
Let $(X,\nu): (Y_0,\lambda_0) \to (Y_1,\lambda_1)$ be a cobordism, and let $\tilde{X} = X \# \cptwo$ denote the blow-up of $X$ at a point.  Let $E$ denote the Poincar\'e dual of the class $e \in H_2(\tilde{X};\Z)$ of an exceptional sphere.  Then
\begin{align*}
\sD^\nu_{\tilde{X}} &= \sD^\nu_X \cdot e^{-E^2/2}\cosh E \\
\sD^{\nu+e}_{\tilde{X}} &= -\sD^\nu_X \cdot e^{-E^2/2}\sinh E
\end{align*}
as formal $\Hom(I^\#(Y_0,\lambda_0),I^\#(Y_1,\lambda_1)) $-valued functions on $H_2(X;\R)$.
\end{theorem}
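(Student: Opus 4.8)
The plan is to reduce the blow-up formula for cobordism maps to Fintushel and Stern's blow-up formula for closed $4$-manifolds \cite{fs-blowup} by a neck-stretching/gluing argument along the connected-sum sphere. First I would recall that the cobordism maps $D_{X,\nu}$ and hence $\sD_X^\nu$ are defined by counting projectively ASD instantons on the framed cobordism $X^\#$; blowing up $X$ at an interior point away from the basepoint path $\gamma$ produces $\tilde X^\# = X^\# \# \cptwo$, with the connected sum performed along a small $S^3$ in the interior of $X^\#$. The exceptional sphere $e$ lives entirely in this $\cptwo$ summand, so both the unchanged bundle $\nu^\#$ and the modified bundle $(\nu+e)^\#$ restrict to $X^\#$ in the same way, differing only by the choice of $U(2)$-bundle on the $\cptwo$ piece (trivial determinant versus determinant $e$). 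The point of the framed construction (Remark~\ref{rmk:simpletypeb2}) is that every ASD connection on $X^\#$ is automatically irreducible at the ends, so no $b_2^+>0$ or $b_2^+>1$ hypothesis is needed here, and the moduli-space dimension formulas and gluing theory go through exactly as in the closed case.

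The key steps, in order: (1) Set up the neck-stretching along the separating $S^3 \subset \tilde X^\#$, so that in the limit an instanton on $\tilde X^\#$ decomposes as a pair of instantons, one on $X^\#$ (with an end modeled on the flat $S^3$-connection) and one on $\overline{\CP}^2$ with one puncture; the unique flat connection on $S^3$ is regular, so the usual gluing/Mayer--Vietoris index count applies. (2) Invoke Fintushel--Stern's computation of the relevant $\overline{\CP}^2$-side contributions: inserting the class $e^h$ and the point class on the blow-up side reproduces exactly the universal power series $e^{-E^2/2}\cosh E$ in the determinant-trivial case and $-e^{-E^2/2}\sinh E$ in the case twisted by $e$, with $E^2 = e\cdot e = -1$. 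This is a purely local statement about ASD moduli on $\overline{\CP}^2$ that is insensitive to the rest of $X$, so it transplants directly. (3) Translate the gluing formula into the language of the extended maps $D_{X,\nu}(-\otimes z)$: because $H_*(X) \hookrightarrow H_*(\tilde X)$ and the only new homology is generated by $e$, every monomial in $\bA(\tilde X)$ is a product of one from $\bA(X)$ with a power of $e$, and the gluing argument shows $D_{\tilde X,\nu}(-\otimes z e^k) = D_{X,\nu}(-\otimes z)\cdot c_k$ for universal constants $c_k$ packaged by the above series. (4) Assemble the formal power series $\sD_{\tilde X}^\nu(h) = D_{\tilde X,\nu}(-\otimes(e^h+\tfrac12 x e^h))$, noting that $e$ and $h \in H_2(X;\R)$ pair trivially in $\tilde X$ so $e^{h}$ on $\tilde X$ factors as $e^h \cdot e^{0\cdot e}$, and read off the two claimed identities; the simple-type condition needed to make $\sD$ behave well is automatic by Lemma~\ref{lem:simple-type}.

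The main obstacle I expect is step~(1): carefully justifying the neck-stretching and gluing for the \emph{framed} cobordism maps, i.e.\ verifying that the compactness, transversality, and gluing theory for instantons on $X^\#$ with a cylindrical $S^3$-end behave as in the closed case and that the count of broken trajectories assembles correctly into the cobordism map without boundary contributions from the Floer ends $Y_i\#T^3$. In practice this should be a routine adaptation — the same neck-stretching was already used in the proof of Proposition~\ref{prop:s1xs2-surgery-triangle} and in \cite{scaduto} — but it is where the real analytic content lies. A secondary point to handle with care is the bookkeeping of homology orientations and signs, which accounts for the minus sign in the $(\nu+e)$-formula and must be matched against the sign conventions already fixed for $\Psi_{X,\nu}$; this is the part that, while not deep, is the most error-prone.
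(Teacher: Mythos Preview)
Your proposal and the paper's proof share the same underlying philosophy---transport Fintushel--Stern's local analysis from the closed setting to the framed cobordism setting---but they execute it quite differently.

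The paper does not do a direct neck-stretch along the connected-sum $S^3$. Instead it observes that Fintushel--Stern's blow-up formula in \cite{fs-blowup} is derived as a \emph{formal} consequence of a short list of concrete identities (their \S2) constraining $D_{X,\nu}$ in the presence of spheres of self-intersection $-2$ or $-3$, and relating $D_{X,\nu}$ to $D_{\tilde X,\nu}$ and $D_{\tilde X,\nu+e}$. Those identities come from analysis of ASD connections in a neighbourhood of such a sphere (so the relevant neck is a lens space or $\mathbb{RP}^3$, not $S^3$), and hence are local; they hold verbatim for the cobordism maps. Once one has the identities, the derivation of the final formula is pure algebra and requires only simple type, which is automatic here by Lemma~\ref{lem:simple-type}. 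This bypasses any direct $S^3$ gluing entirely.

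Your route via stretching along $S^3$ is in principle workable but noticeably harder, and your step~(2) has a gap: Fintushel--Stern do \emph{not} compute ``$\overline{\CP}^2$-side contributions'' as relative invariants arising from an $S^3$ gluing---their argument never passes through such a decomposition---so there is nothing of that form to simply invoke. If you follow your plan you would have to compute those contributions independently (e.g.\ by re-deriving them from the $-2$/$-3$-sphere identities, at which point you are essentially doing the paper's argument), and you would also have to contend with the $3$-dimensional stabiliser of the trivial flat connection on $S^3$, which makes the gluing across $S^3$ more delicate than your ``routine adaptation'' remark suggests. The paper's approach avoids both of these difficulties.
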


This was proven by Fintushel and Stern for the Donaldson invariants of closed $4$-manifolds in \cite{fs-blowup}. We explain briefly  why their argument carries over  to the case of cobordisms.

\begin{proof}[Proof of Theorem \ref{thm:blowup-formula}]
The blow-up formula of \cite{fs-blowup} is a formal consequence of a handful of concrete identities given in \cite[\S2]{fs-blowup}, which constrain the behavior of $D_{X,\nu}$ in the presence of a sphere of self-intersection $-2$ or $-3$, and which relate $D_{X,\nu}$ to $D_{\tilde{X},\nu}$ and $D_{\tilde{X},\nu+e}$.  These identities were originally proven in the setting of closed $4$-manifolds but are  local, obtained via an analysis of ASD connections in neighborhoods of such spheres, and therefore hold for cobordism maps, so the blow-up formula remains valid in this setting.  The formulas here match the special case \cite[Theorem~5.2]{fs-blowup}, whose proof requires only the assumption that the closed $4$-manifold have simple type, which is automatically satisfied by the cobordisms $X^\#$ as noted in Remark \ref{rmk:simpletypeb2} and proved in  Lemma~\ref{lem:simple-type}.\end{proof}

\subsection{The structure theorem for $b_2^+$ positive} \label{ssec:structure-positive}

In this subsection we prove the following.

\begin{proposition} \label{prop:structure-thm-positive}
Theorem~\ref{thm:structure-theorem} holds for cobordisms $X$ with $b_2^+(X) > 0$.
\end{proposition}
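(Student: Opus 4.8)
The plan is to adapt Mu\~noz's proof \cite{munoz-basic} of the structure theorem for closed $4$-manifolds to the setting of cobordism maps, using exactly the two ingredients Mu\~noz needs: the blow-up formula (now available as Theorem~\ref{thm:blowup-formula}) and a computation of the Fukaya--Floer-type homology of products $\Sigma\times S^1$. The role of the hypothesis $b_2^+(X)>0$ is to guarantee the existence of a smoothly embedded surface $\Sigma\subset X$ of positive self-intersection and genus at least $1$; after possibly stabilizing by increasing the genus and blowing up (which is controlled by Theorem~\ref{thm:blowup-formula}), one can assume $\Sigma$ has odd self-intersection, and this is the surface along which the key relations are extracted.

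First I would record the algebraic skeleton of Mu\~noz's argument. Writing $\sD^\nu_X(h)=D_{X,\nu}(-\otimes(e^h+\tfrac12 xe^h))$, the simple type condition (Lemma~\ref{lem:simple-type}) already tells us $D_{X,\nu}(-\otimes x^2 z)=4D_{X,\nu}(-\otimes z)$, so it makes sense to organize everything in terms of the commuting operators obtained by differentiating $\sD^\nu_X$ in the $H_2(X;\R)$-directions and in the point class $x$. The surface $\Sigma$ gives, via neck-stretching along $\Sigma\times S^1$, a relation among the operators $\mu(\Sigma)$ and $\mu(x)$ acting on $I^\#(Y_1,\lambda_1)$-valued functions: concretely, the Fukaya--Floer homology computation for $\Sigma\times S^1$ produces a polynomial identity in $\mu(\Sigma),\mu(x)$ that annihilates the cobordism maps, whose roots are exactly the candidate values $K(\Sigma)\in\{2-2g(\Sigma),\dots,2g(\Sigma)-2\}$ of the basic classes on $\Sigma$. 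Since $\mu(x)$ acts as the scalar $2$ on $I^\#$ (Remark~\ref{rmk:simpletype3}), this collapses $\mu(x)^2-4$ to zero and the polynomial in $\mu(\Sigma)$ factors into distinct linear factors, forcing $\sD^\nu_X(h)$ to decompose as $e^{Q(h)/2}\sum_j a_j e^{K_j(h)}$ on the span of $\Sigma$. One then varies $\Sigma$ over enough surfaces (using that $b_2^+>0$ lets us realize many classes, together with the blow-up formula to access classes of negative square by adding exceptional spheres) to pin down each $K_j$ as a genuine homomorphism $H_2(X;\Z)\to\Z$ and to get the full decomposition, not just its restriction to one subspace.

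Next, three bookkeeping points must be handled. (i) The parity statement $K_j(\alpha)\equiv\alpha\cdot\alpha\pmod 2$ and the sign rule $\sD^{\nu+\alpha}_X(h)=\sum_j(-1)^{\frac12(K_j(\alpha)+\alpha\cdot\alpha)+\nu\cdot\alpha}a_j e^{K_j(h)}e^{Q(h)/2}$ come from tracking how the bundle data, hence $D_{X,\nu}$, changes under $\nu\mapsto\nu+\alpha$; this is the cobordism version of \cite[\S2]{km-structure} and I would derive it from the dependence of $\Psi_{X^\#,\nu^\#}$ on $[\nu^\#]$ recorded in Remark~\ref{rmk:homology-X-dependence}, together with the blow-up computations. (ii) The adjunction inequality $|K_j([\Sigma])|+[\Sigma]\cdot[\Sigma]\le 2g(\Sigma)-2$ for embedded surfaces of genus $\ge 1$ and positive self-intersection is immediate once the basic classes are identified, since their values on such a $\Sigma$ are constrained to the eigenvalue window of $\mu(\Sigma)$ by Theorem~\ref{thm:km-surface} applied after neck-stretching. (iii) Finiteness of the collection $\{K_j\}$ follows because each must satisfy the adjunction bound on a fixed finite generating set of surfaces for $H_2(X;\Z)$, leaving only finitely many possibilities.

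The main obstacle, I expect, is the Fukaya--Floer computation for $\Sigma\times S^1$ in the framed cobordism context and the precise neck-stretching argument that converts it into an operator relation on $\sD^\nu_X$. In the closed case one works with Donaldson invariants of $X$ containing $\Sigma\times S^1$ as a separating hypersurface; here $\Sigma\subset X$ is a surface in a cobordism, and one must cut along its ``double'' $\Sigma\times S^1$ inside a tubular neighborhood and glue in the relative invariant of $\Sigma\times D^2$ (or of $\Sigma\times S^1$ with its Fukaya--Floer module structure). I would set this up using the already-established machinery: the excision-type identifications and the product computation $I_*(S^1\times R|R)_\gamma\cong\C$ of Example~\ref{ex:product}, suitably promoted to a Fukaya--Floer statement. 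Everything else — the formal manipulation of power series, the extraction of basic classes as roots, the passage between subspaces — is a direct transcription of \cite{munoz-basic} once this analytic input is in hand, and the blow-up formula of Theorem~\ref{thm:blowup-formula} supplies the only other nontrivial ingredient. I would defer the genuinely different case $b_2^+(X)=0$ to \S\ref{ssec:structure-general} as the paper indicates, since that requires the separate injectivity input about the trace of $1$-surgery on $T_{2,5}$.
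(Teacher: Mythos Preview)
Your broad strategy is correct---this is Mu\~noz's argument, and the paper follows it closely---but several of your technical details are off in ways that matter.

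The most important correction concerns the surface along which the Fukaya--Floer relation is extracted. You say one should take $\Sigma$ of positive, odd self-intersection. In fact the key input (the paper's Lemma~\ref{lem:munoz-hff}, which is Mu\~noz's Lemma~3.1) requires $\Sigma\cdot\Sigma=0$ and $\nu\cdot\Sigma$ odd, because only then is a tubular neighborhood of $\Sigma$ diffeomorphic to $\Sigma\times D^2$ with boundary $\Sigma\times S^1$, making the Fukaya--Floer homology $\mathit{HFF}(\Sigma\times S^1,S^1)$ relevant. The role of $b_2^+(X)>0$ is not to give you this $\Sigma$ directly, but to let you choose an integral basis $\Sigma_1,\dots,\Sigma_k$ of $H_2(X)/\mathrm{torsion}$ with every $\Sigma_i\cdot\Sigma_i>0$; you then blow up $\sum_i(\Sigma_i\cdot\Sigma_i)$ times to produce square-zero classes $S_i^j$ (proper transforms and their modifications) that span $H_2(\tilde X;\Q)$, and apply Lemma~\ref{lem:munoz-hff} to each $S_i^j$ in turn. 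This is how one pins down the decomposition on all of $H_2$, and it is a genuine step, not just ``varying $\Sigma$ over enough surfaces.''

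Your sketch of the adjunction inequality is also incorrect: Theorem~\ref{thm:km-surface} constrains eigenvalues of $\mu(\Sigma)$ on the Floer homology of a \emph{3-manifold} containing $\Sigma$, and does not by itself bound $K_j([\Sigma])$ for $\Sigma$ embedded in a 4-dimensional cobordism. The paper instead proves adjunction (Lemma~\ref{lem:adjunction}) by blowing up $N=\Sigma\cdot\Sigma$ times so that the proper transform $\tilde\Sigma$ has square zero, and then reading off $|\tilde K(\tilde\Sigma)|\le 2g-2$ from Lemma~\ref{lem:munoz-hff}; since the basic classes of the blow-up are $K\pm\sum E_j$, this gives $|K(\Sigma)|+\Sigma\cdot\Sigma\le 2g-2$. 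Similarly, the parity statement and the sign rule for $\nu\mapsto\nu+\alpha$ are not obtained by abstractly ``tracking bundle data,'' but by specific blow-up constructions (Lemmas~\ref{lem:basic-classes-w2} and~\ref{lem:structure-thm-signs}): one blows up $|\alpha\cdot\alpha|$ times, forms a square-zero class from $\alpha$ and exceptional spheres, and compares the coefficients predicted by Lemma~\ref{lem:munoz-hff} with those coming from the blow-up formula.
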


Our proof of Proposition~\ref{prop:structure-thm-positive} follows (with some minor adjustments) Mu\~noz's proof in \cite[\S3]{munoz-basic} of \cite[Theorem~1.2]{munoz-basic}, a general structure theorem for closed 4-manifolds $X$ with strong simple type.  The key input in this proof is the following lemma {\cite[Lemma~3.1]{munoz-basic}}. 

\begin{lemma} \label{lem:munoz-hff}
Let $(X,\nu): (Y_0,\lambda_0) \to (Y_1,\lambda_1)$ be a cobordism with $b_1(X)=0$. Let $\Sigma \subset X$ be a surface of genus $g \geq 1$, with $\Sigma\cdot\Sigma=0$ and $\nu\cdot\Sigma$ odd.  Then for any other class $D \in H_2(X)$ there are power series $f_{r,D}(t)$, with $1-g \leq r \leq g-1$, such that
\begin{align*}
\sD_X^\nu(tD+s\Sigma) &= e^{Q(tD+s\Sigma)/2}\sum_{r=1-g}^{g-1} f_{r,D}(t)e^{2rs} \\
\sD_X^{\nu+\Sigma}(tD+s\Sigma) &= e^{Q(tD+s\Sigma)/2}\sum_{r=1-g}^{g-1} (-1)^{r+1}f_{r,D}(t)e^{2rs}.
\end{align*}
\end{lemma}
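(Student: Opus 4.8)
The plan is to adapt Mu\~noz's proof of \cite[Lemma~3.1]{munoz-basic} to the cobordism setting, leveraging the two ingredients that are already available to us: the blow-up formula for cobordism maps (Theorem~\ref{thm:blowup-formula}) and the computation $I_*(S^1\times R|R)_\gamma \cong \C$ from Example~\ref{ex:product}, which encodes the Fukaya--Floer homology of the product $\Sigma \times S^1$. First I would reduce to a local model near $\Sigma$: since $\Sigma\cdot\Sigma = 0$, a tubular neighborhood of $\Sigma$ is diffeomorphic to $\Sigma \times D^2$, so $X$ contains a copy of $\Sigma \times S^1$ separating a neighborhood of $\Sigma$ from the rest of $X$. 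Stretching the neck along this $\Sigma \times S^1$ expresses the evaluation $D_{X,\nu}(-\otimes z e^{s\Sigma})$ in terms of relative invariants glued along the Fukaya--Floer homology $HFF_*(\Sigma \times S^1)$, exactly as in \cite{munoz-basic}; the condition $\nu\cdot\Sigma$ odd guarantees the relevant bundle is admissible over $\Sigma\times S^1$, so there are no reducibles to worry about.

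The heart of the argument is then the structure of the action of $\mu(\Sigma)$ and of the ``point class'' on $HFF_*(\Sigma\times S^1)$. By Theorem~\ref{thm:km-surface}, the eigenvalues of $\mu(\Sigma)$ on any relevant instanton group lie in $\{2-2g, 4-2g, \dots, 2g-2\}$, and the simple type condition (Lemma~\ref{lem:simple-type}, i.e. $\mu(\pt)^2 = 4$) pins the point-class action to $\pm 2$; combining these with the eigenvalue $2$ on $I_*(X^\#|\Sigma)$, the exponential $e^{s\Sigma}$ acting through $\mu(\Sigma)$ decomposes $\sD_X^\nu(tD+s\Sigma)$ into the finitely many eigencomponents indexed by $1-g \le r \le g-1$, producing the sum $\sum_r f_{r,D}(t) e^{2rs}$ after factoring out $e^{Q(tD+s\Sigma)/2}$ (the quadratic prefactor comes from the $e^{D^2 t^2/2}$-type normalization already built into $\sD$). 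The variant formula with $\nu+\Sigma$ in place of $\nu$ follows from tracking how changing the bundle by the Poincar\'e dual of $\Sigma$ twists the gluing: as in Mu\~noz's computation, this introduces the sign $(-1)^{r+1}$ on the $r$-th eigencomponent, which is the algebraic reflection of the fact that $\nu\cdot\Sigma$ is odd so $\Sigma$ represents a nontrivial $w_2$-class locally. I would check this sign by comparing with property~\eqref{i:sign-change}-type behavior already present in Theorem~\ref{thm:structure-theorem}'s statement and with the closed-manifold computation, rather than redoing the gauge theory from scratch.

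The main obstacle I anticipate is making the neck-stretching/gluing argument fully rigorous in the cobordism setting, where one must glue relative Donaldson-type invariants of $(X \setminus N(\Sigma), \text{ends})$ to those of $(\Sigma\times D^2, \text{end})$ along $\Sigma\times S^1$ while simultaneously keeping track of the maps to and from $I^\#(Y_0,\lambda_0)$ and $I^\#(Y_1,\lambda_1)$ at the cobordism ends. In the closed case this is standard, but here the relative invariant of the complement is itself an element of $\Hom(I^\#(Y_0,\lambda_0), I^\#(Y_1,\lambda_1)) \otimes HFF_*(\Sigma\times S^1)^\vee$, and one needs the gluing formula to respect the module structure over $\bA(X)$ so that the $e^h$ and $\tfrac12 x e^h$ insertions distribute correctly across the neck. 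I would handle this by working throughout with the framed cobordisms $X^\#$, where irreducibility at the ends (Remark~\ref{rmk:simpletypeb2}) removes the usual analytic headaches, and by citing the established gluing theory for instanton moduli spaces along $\Sigma\times S^1$ (the same input used in Example~\ref{ex:product} and in \cite{munoz-ring}) rather than re-deriving it. A secondary point to be careful about is the $b_1(X)=0$ hypothesis: it is used to ensure the quadratic form $Q$ is the only ``continuous'' part of $\sD_X^\nu$ so that the dependence on $s$ is genuinely a finite exponential sum, and I would note explicitly where it enters.
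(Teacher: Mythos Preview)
Your overall strategy---reduce to a neighborhood of $\Sigma$, stretch the neck along $\Sigma\times S^1$, and read off the $s$-dependence from the structure of the relative invariants---matches the paper's approach, which simply cites Mu\~noz \cite[Lemma~3.1]{munoz-basic} and observes that his argument applies verbatim in the cobordism setting.  However, there are two concrete points where your proposal misidentifies the ingredients.

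First, Example~\ref{ex:product} computes the \emph{ordinary} instanton Floer homology $I_*(S^1\times R|R)_\gamma \cong \C$, not the Fukaya--Floer homology $\mathit{HFF}(\Sigma\times S^1,S^1)$.  These are different objects: the latter is built to accommodate insertions of the form $e^{s\Sigma}$, which involve arbitrarily many $\mu$-classes, and its computation is a separate (and substantially harder) result of Mu\~noz \cite{munoz-hff}.  The key output of that computation, which the paper invokes explicitly, is that the relative invariant $\phi^\nu(X\setminus N(\Sigma),e^{tD})$ lies in a subspace of $\mathit{HFF}(\Sigma\times S^1,S^1)$ on which the series $\sD_X^\nu(tD+s\Sigma)$ is annihilated by the finite-order differential operator
\[ \prod_{r=1-g}^{g-1} \left(\frac{\partial}{\partial s} - (2r+t(D\cdot\Sigma))\right), \]
and this immediately yields the claimed finite exponential sum.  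Your eigenvalue heuristic (``$e^{s\Sigma}$ decomposes into eigencomponents of $\mu(\Sigma)$'') is the right intuition, but making it rigorous is precisely what the Fukaya--Floer machinery is for; the one-dimensionality of $I_*(S^1\times R|R)_\gamma$ alone does not suffice.  Also note that the blow-up formula (Theorem~\ref{thm:blowup-formula}) is not used in this lemma at all---it enters only in the subsequent steps of the structure theorem.

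Second, your account of where $b_1(X)=0$ enters is off.  It is not about controlling the ``continuous part'' of $\sD_X^\nu$.  Rather, Mu\~noz's argument requires the relative invariant to lie in the kernel of certain elements $\psi_i = \phi^\nu(N(\Sigma),\gamma_i e^{t\Delta})$ for $\gamma_i \in H_1(\Sigma)$, which in the closed case follows from the strong simple type hypothesis $D_{X,\nu}(\gamma z)=0$ for all $\gamma\in H_1(X)$.  In the cobordism setting this is not automatic, but the hypothesis $b_1(X)=0$ forces each $\gamma_i$ to be nullhomologous in $X$ (hence in $X^\#$), so that $D_{X,\nu}(\gamma_i z e^{tD}) = 0$ holds trivially.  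This is the substantive use of $b_1(X)=0$ and should be stated as such.
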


\begin{remark} \label{rem:munoz-3.1}
Mu\~noz states this lemma for closed $4$-manifolds in \cite[Lemma~3.1]{munoz-basic}, but it applies verbatim to the cobordism maps here. Indeed, the proof only uses the relations imposed by the relative invariants of a $\Sigma\times D^2$ neighborhood of $\Sigma\subset X$ in the Fukaya--Floer homology $\mathit{HFF}(\Sigma\times S^1,S^1)$. In particular, Mu\~noz uses his computation of this Fukaya--Floer homology from \cite{munoz-hff} to deduce that both series are annihilated by the operator
\[ \prod_{r=1-g}^{g-1} \left(\frac{\partial}{\partial s} - (2r+t(D\cdot\Sigma))\right), \] from which the lemma follows.

Mu\~noz's assumption of \emph{strong simple type} means that for any $\nu \subset X$ and $z \in \bA(X)$, the Donaldson invariants satisfy $D_{X,\nu}((x^2-4)z)=0$ and $D_{X,\nu}(\gamma z)=0$ for all $\gamma \in H_1(X)$.  In our situation, the first of these is Lemma~\ref{lem:simple-type}.  We avoid the second by requiring that $b_1(X)=0$: Mu\~noz needs strong simple type to assert that for $\Sigma$ as in the statement of the lemma, if we let $X_1 = X\setminus N(\Sigma)$ and $D_1 = D \cap X_1$, then the relative invariant
\[ \phi^\nu(X_1, e^{tD_1}) \in \mathit{HFF}(\Sigma\times S^1,S^1) \]
lies in the kernel of various elements denoted $\psi_i = \phi^\nu(N(\Sigma),\gamma_i e^{t\Delta})$, where $\gamma_i \in H_1(\Sigma)$.  As argued by Mu\~noz, this assertion follows from knowing that
\[ D_{X,\nu}(\gamma_i ze^{tD}) = D_{X,\nu+\Sigma}(\gamma_i ze^{tD}) = 0 \]
for all $z\in\bA(X)$.  But the condition $b_1(X)=0$ makes this automatic because $\gamma_i$ is nullhomologous in $X$, hence also in the larger cobordism $X^\#$ used to define $D_{X,\nu}$.  We should note that $X^\#$ may not itself have strong simple type since $H_1(X^\#) \neq 0$, but this does not matter because we are restricting our attention to surfaces which lie in $X$.
\end{remark}

We now prove Proposition~\ref{prop:structure-thm-positive} via a series of lemmas, corresponding to steps 2 through 5 of \cite[\S 3]{munoz-basic}.  (Step 1 is omitted because it establishes the strong simple type property, which is not necessary here because $H_1(X)=0$.)  Each of these is labeled with the corresponding step from \cite{munoz-basic}, and we indicate explicitly how our proofs differ from those in \cite{munoz-basic}.

Throughout this subsection we assume that
\[ (X,\nu): (Y_0,\lambda_0) \to (Y_1,\lambda_1) \]
is a cobordism with $b_1(X)=0$ and $b_2^+(X) > 0$.

\begin{lemma}[Step~2] \label{lem:structure-thm-for-nu}
For any cobordism $\nu: \lambda_0 \to \lambda_1$ in $X$, there are finitely many nonzero 
\[ a_{\nu,i} \in \Hom(I^\#(Y_0,\lambda_0), I^\#(Y_1,\lambda_1)) \]
and homomorphisms $K_{\nu,i}: H_2(X;\Z) \to \Z$ such that
\[ \sD_X^\nu(h) = e^{Q(h)/2} \sum_i a_{\nu,i} e^{K_{\nu,i}(h)} \]
for all $h \in H_2(X;\Z)$.
\end{lemma}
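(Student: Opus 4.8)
The plan is to carry over step~2 of Mu\~noz's argument from \cite[\S3]{munoz-basic}, whose only substantive inputs are Lemma~\ref{lem:munoz-hff} and the blow-up formula of Theorem~\ref{thm:blowup-formula}, both of which we have now established for cobordism maps (the hypothesis $b_1(X)=0$ is exactly what makes Lemma~\ref{lem:munoz-hff} applicable, and the simple type property it also uses is supplied by Lemma~\ref{lem:simple-type}). Write $G(h)=e^{-Q(h)/2}\sD_X^\nu(h)$, a $\Hom(I^\#(Y_0,\lambda_0),I^\#(Y_1,\lambda_1))$-valued analytic function on $H_2(X;\R)$ with rational coefficients. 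It is enough to show that $G$ is a finite sum $\sum_i a_i e^{L_i(h)}$ of exponentials of linear functionals $L_i$ that are integer-valued on $H_2(X;\Z)$, with each $a_i\neq 0$; discarding the zero terms then produces the $a_{\nu,i}$ and $K_{\nu,i}$.

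The strategy is to pin down $G$ by exhibiting constant-coefficient linear ODEs with simple roots in enough directions, and the source of those equations is Lemma~\ref{lem:munoz-hff}, which needs a smoothly embedded square-zero surface of genus $\geq 1$ with odd $\nu$-pairing. To build a spanning supply of such surfaces I would first replace $X$ by a blow-up $\hat X=X\#m\cptwo$ and $\nu$ by a class $\hat\nu$ obtained by adjoining a suitable sum of exceptional classes; this loses no generality, since Theorem~\ref{thm:blowup-formula} recovers $\sD_X^\nu$ from $\sD_{\hat X}^{\hat\nu}$ by restricting to $H_2(X;\R)\subset H_2(\hat X;\R)$ and differentiating in finitely many exceptional directions, and restriction and differentiation carry finite exponential sums with integral exponents to the same. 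Then, using $b_2^+(X)>0$, fix $c\in H_2(X;\Z)$ with $c\cdot c>0$; for each member $h_j$ of a basis of $H_2(X;\Z)/\mathrm{torsion}$ pick $N_j\gg 0$ so that $(N_jc+h_j)\cdot(N_jc+h_j)>0$; represent $c$ and each $N_jc+h_j$ by connected embedded surfaces, tubing to raise the genus to $\geq 1$; and blow up $\hat X$ along disjoint collections of points on these surfaces so that their proper transforms $\Sigma^{(0)},\Sigma^{(1)},\dots,\Sigma^{(b)}\subset\hat X$ all have self-intersection zero. Adjoining to $\hat\nu$ one exceptional class from the blow-up of each $\Sigma^{(k)}$ whose $\hat\nu$-pairing is still even makes $\hat\nu\cdot\Sigma^{(k)}$ odd for every $k$, and by construction the classes $[\Sigma^{(k)}]$ together with the exceptional classes $e_1,\dots,e_m$ of $\hat X$ span $H_2(\hat X;\R)$ and generate it modulo torsion.

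With this in hand, Lemma~\ref{lem:munoz-hff} applied to each $\Sigma^{(k)}$ (with $D$ ranging over $H_2(\hat X;\Z)$) shows that $G$ is annihilated by $\prod_{r=1-g_k}^{g_k-1}(\partial_{\Sigma^{(k)}}-2r)$, while Theorem~\ref{thm:blowup-formula} shows that $G$ restricts along each exceptional line to a constant multiple of $\cosh$ or $\sinh$ and hence is annihilated by $(\partial_{e_j}-1)(\partial_{e_j}+1)$. All of these operators have simple roots, so a standard separation-of-variables argument — diagonalizing one spanning direction at a time, with simplicity of the roots ruling out polynomial prefactors — shows that $G=\sum_i a_i e^{L_i}$ is a finite sum of pure exponentials. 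Integrality of the exponents falls out of the same inputs: Lemma~\ref{lem:munoz-hff} forces $L_i([\Sigma^{(k)}])\in 2\Z$ for each $k$, one has $L_i(e_j)\in\{\pm1\}$, and since $[\Sigma^{(k)}]\equiv c$ or $N_kc+h_k$ modulo exceptional classes, subtracting the (integral) exceptional contributions shows $L_i$ is integer-valued on $\{c\}\cup\{N_kc+h_k\}$, hence on $H_2(X;\Z)/\mathrm{torsion}$; thus each $L_i$ defines a homomorphism $H_2(X;\Z)\to\Z$. (Alternatively, integrality also follows from the fact that the operators $\mu(h)$ have integer eigenvalues, by Theorems~\ref{thm:km-surface} and~\ref{thm:eigenspace-decomposition}.)

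I expect the main obstacle to be the bookkeeping in the second paragraph: producing a single class $\hat\nu$ with odd pairing against an entire spanning family of square-zero surfaces. In Mu\~noz's closed-manifold setting one can lean on nondegeneracy of the intersection form and a single surface of large positive self-intersection, but a cobordism's intersection form on $H_2(X;\R)$ is typically degenerate — for instance it vanishes identically on $H_2(X_0;\Z)\cong\Z$ when $X_0$ is the trace of $0$-surgery — so the radical directions, precisely the ones that are square-zero from the start, must be incorporated by hand, and the parity adjustments $\nu\mapsto\nu+e$ must be arranged so as not to interfere with the identities coming from the blow-up formula (in particular the case $[\nu]\equiv 0\pmod 2$, where no surface in $X$ itself has odd $\nu$-pairing, genuinely requires the blow-up). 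Once this is organized, the remainder is formal and follows \cite{munoz-basic} essentially verbatim.
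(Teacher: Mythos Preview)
Your proposal is correct and follows essentially the same approach as the paper: reduce to a blow-up via Theorem~\ref{thm:blowup-formula}, use $b_2^+(X)>0$ to produce a spanning family of square-zero surfaces by taking proper transforms of surfaces of positive square, adjust $\hat\nu$ by exceptional classes to force odd pairing, and then apply Lemma~\ref{lem:munoz-hff} repeatedly. The only organizational difference is that the paper builds, for each basis element $\Sigma_i$ of positive square $n_i$, a whole family $S_i^0,\dots,S_i^{n_i}$ of square-zero classes (differing in the sign on one exceptional sphere) so that these classes alone span $H_2(\tilde X;\Q)$ and contain $2H_2(\tilde X;\Z)/\torsion$, whereas you use one proper transform per basis element and treat the exceptional directions separately via the blow-up formula; both arrangements yield the ODEs needed and the integrality of the exponents.
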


As originally stated, \cite[Step~2]{munoz-basic} only proves the corresponding property for a single $\nu$, but we will establish it for all $\nu$ simultaneously.

\begin{proof}[Proof of Lemma~\ref{lem:structure-thm-for-nu}]
Just as in \cite{munoz-basic}, it suffices to prove this for some blow-up of $X$ and some $\tilde\nu$ which is homologous to a sum of $\nu$ and some exceptional spheres, since if $\tilde{X}=X\#\CP^2$ has exceptional divisor $e$ then we have
\[
\sD_X^{\nu}(h) = \sD_{\tilde X}^{\nu}(h) \textrm{ and } \sD_X^{\nu}(h) = \left.-\tfrac{d}{dr}\sD_{\tilde X}^{\nu+e}(h+re)\right|_{r=0}
\]
by the blow-up formula of Theorem~\ref{thm:blowup-formula}.

We begin by finding a convenient basis of $H_2(X)$; the construction in \cite{munoz-basic} uses the fact that in the closed case, after blowing up we can arrange $Q_X = a(1) \oplus b(-1)$, but this is no longer true for $X$ a cobordism.  Instead, using the assumption that $b_2^+(X) > 0$, we let $\Sigma_1,\dots,\Sigma_k$ be any integral basis of $H_2(X)/\torsion$ with $\Sigma_1\cdot\Sigma_1 > 0$.  For $j \geq 2$, we replace each $\Sigma_j$ with $\Sigma_j + n\Sigma_1$ for a suitably large $n$, and then we still have an integral basis but now $\Sigma_j\cdot\Sigma_j > 0$ for all $j \geq 2$ as well.

Having done so, we take $n_i = \Sigma_i \cdot \Sigma_i$ for $1 \leq i \leq k$ and let $\tilde{X}$ be the blow-up of $X$ at $\sum_{i=1}^k n_i$ points, with
\[ e_{i,j} \textrm{ for } 1 \leq i \leq k,\ 1 \leq j \leq n_i \]
being the exceptional spheres.  For $1 \leq i \leq k$ we define a collection of homology classes by 
\begin{align*}
S_i^0 &= \Sigma_i - e_{i,1} - e_{i,2} - \dots - e_{i,n_i} \\
S_i^j &= \Sigma_i - e_{i,1} - \dots + e_{i,j} - \dots - e_{i,n_i}, \qquad 1 \leq j \leq n_i.
\end{align*}
These are integral classes of square zero which span $H_2(\tilde{X};\Q)$.  In fact, if we let
\[ H = \mathrm{span}_\Z(\{S_i^j \mid 1\leq i\leq k,\ 0\leq j\leq n_i \}) \subset H_2(\tilde{X};\Z)/\torsion, \]
then from the relations $2e_{i,j} = S_i^j - S_i^0$ for all $j \geq 1$, and
\[ 2\Sigma_i = 2S_i^0 + 2\sum_{j=1}^{n_i} e_{i,j} = (2-n_i)S_i^0 + 2 \sum_{j=1}^{n_i} S_i^j, \]
we deduce that 
\[ 2H_2(\tilde{X};\Z)/\torsion \subset H. \]
We also define a class $\tilde{\nu} \in H_2(\tilde{X},\partial\tilde{X})$ by 
\[
\tilde{\nu} = \nu + \sum_{i=1}^{k} c_i e_{i,1}, \textrm{ where }
c_i = \begin{cases} 0 & \nu\cdot\Sigma_i\mathrm{\ odd} \\ 1 & \nu\cdot\Sigma_i\mathrm{\ even}, \end{cases}
\]
so that for all $i$ and $j$ we have $\tilde{\nu} \cdot S_i^j \equiv 1 \pmod{2}$.

Now if each $S_i^j$ is represented by a smoothly embedded surface of genus $g_{i,j} \geq 1$, then just as in \cite{munoz-basic}, repeated application of Lemma~\ref{lem:munoz-hff} says that
\begin{equation} \label{eq:structure-for-nu-sum}
\sD_{\tilde{X}}^{\tilde{\nu}}\left(\sum_{i=1}^k\sum_{j=0}^{n_i} t_{i,j}S_i^j\right) = e^{Q(\sum t_{i,j}S_i^j)/2} \sum_{1-g_{i,j} \leq r_{i,j} \leq g_{i,j}-1} a_{\{r_{i,j}\}}e^{\sum 2r_{i,j}t_{i,j}}
\end{equation}
for some \[a_{\{r_{i,j}\}}: I^\#(Y_0,\lambda_0) \to I^\#(Y_1,\lambda_1).\]  The exponents $\sum 2r_{i,j} t_{i,j}$ are valued in $2\Z$ on all of $H$ and in particular on $2H_2(\tilde{X};\Z)/\torsion$, so they define homomorphisms $H_2(\tilde{X};\Z) \to \Z$.  Moreover, the $a_{\{r_{i,j}\}}$ can in fact be taken to have rational coefficients, because the same is true for each $D_{\tilde{X},\tilde{\nu}}(z)$ where $z$ has the form $h^d$ or $\frac{x}{2}h^d$ with $h \in H_2(\tilde{X};\Z)$.
\end{proof}

\begin{lemma}[Step~3] \label{lem:basic-classes-w2}
The basic classes $K_{\nu,\ell}$ of Lemma~\ref{lem:structure-thm-for-nu} satisfy
\[ K_{\nu,\ell}(h) + h\cdot h \equiv 0 \pmod{2} \]
for all $h \in H_2(X;\Z)$.
\end{lemma}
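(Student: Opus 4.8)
The plan is to verify the congruence on a convenient generating set of $H_2(X;\Z)$ and then extend it by linearity. Note first that $h \mapsto K_{\nu,\ell}(h)\bmod 2$ and $h \mapsto h\cdot h\bmod 2$ are both homomorphisms $H_2(X;\Z)\to\Z/2\Z$ — the second because $(h+h')\cdot(h+h')\equiv h\cdot h+h'\cdot h'\pmod 2$ — and both vanish on the torsion subgroup, since a homomorphism to $\Z$ kills torsion and $h\cdot h=0$ whenever $nh=0$ (as then $n^2(h\cdot h)=0$). Hence it suffices to prove $K_{\nu,\ell}(\Sigma)\equiv\Sigma\cdot\Sigma\pmod 2$ for a collection of classes $\Sigma$ whose images generate $H_2(X;\Z)/\torsion$.

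For this I would reuse the construction from the proof of Lemma~\ref{lem:structure-thm-for-nu}: an integral basis $\Sigma_1,\dots,\Sigma_k$ of $H_2(X;\Z)/\torsion$ with $n_i:=\Sigma_i\cdot\Sigma_i>0$ for all $i$ (available because $b_2^+(X)>0$), the blow-up $\tilde X$ at $\sum n_i$ points with exceptional classes $e_{i,j}$, the classes $S_i^j$ of square zero defined there, and the modified bundle class $\tilde\nu$ chosen so that $\tilde\nu\cdot S_i^j$ is odd. Two facts about $(\tilde X,\tilde\nu)$ combine to give what we want. First, iterating the blow-up formula of Theorem~\ref{thm:blowup-formula} shows that the basic classes of $(\tilde X,\tilde\nu)$ are exactly the functionals $K_{\nu,\ell}+\sum_{a,b}\epsilon_{a,b}E_{a,b}$, ranging over all sign choices $\epsilon_{a,b}\in\{\pm1\}$, where $E_{a,b}$ is Poincar\'e dual to $e_{a,b}$ and $K_{\nu,\ell}$ is extended by zero on the exceptional classes. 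Second, the computation in that proof — expressing $\sD^{\tilde\nu}_{\tilde X}(\sum_{i,j} t_{i,j}S_i^j)$ as $e^{Q/2}\sum a_{\{r_{i,j}\}}e^{\sum_{i,j} 2r_{i,j}t_{i,j}}$ with integer $r_{i,j}$, via repeated use of Lemma~\ref{lem:munoz-hff} — shows that every basic class $\tilde K$ of $(\tilde X,\tilde\nu)$ takes values in $2\Z$ on each $S_i^j$.

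Putting these together: a direct computation gives $E_{a,b}(S_i^j)=0$ for $a\neq i$ and $E_{i,b}(S_i^j)=\pm1$ for every $b$, so $\sum_{a,b}\epsilon_{a,b}E_{a,b}(S_i^j)$ is a sum of $n_i$ terms each equal to $\pm1$ and hence is $\equiv n_i\pmod 2$. Since $\tilde K(S_i^j)=K_{\nu,\ell}(\Sigma_i)+\sum_{a,b}\epsilon_{a,b}E_{a,b}(S_i^j)$ lies in $2\Z$, we obtain $K_{\nu,\ell}(\Sigma_i)\equiv n_i=\Sigma_i\cdot\Sigma_i\pmod 2$ for every $i$. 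The homomorphism $h\mapsto K_{\nu,\ell}(h)+h\cdot h\bmod 2$ therefore vanishes on each $\Sigma_i$ and on the torsion subgroup, hence on all of $H_2(X;\Z)$, which is the assertion of the lemma.

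I expect the work here to be routine bookkeeping rather than conceptual. The points to handle carefully are: that the functionals $K_{\nu,\ell}+\sum_{a,b}\epsilon_{a,b}E_{a,b}$ really exhaust the basic classes of $(\tilde X,\tilde\nu)$ regardless of which exceptional classes were added to form $\tilde\nu$ (true because $\cosh E$ and $-\sinh E$ both expand as $\tfrac12 e^{E}\pm\tfrac12 e^{-E}$, so modifying $\nu$ by exceptional classes only changes signs of coefficients, not the set of basic classes); the sign computation $E_{i,b}(S_i^j)=\pm1$; and the passage from ``the expansion of $\sD^{\tilde\nu}_{\tilde X}$ on the $\Q$-span of the $S_i^j$ has exponents valued in $2\Z$ on the $S_i^j$'' to ``each basic class is even on each $S_i^j$'', which is legitimate because the $S_i^j$ form a $\Q$-basis of $H_2(\tilde X;\Q)$ and exponentials of distinct linear functionals are linearly independent. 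There is no new analytic input beyond Lemma~\ref{lem:munoz-hff} and the blow-up formula.
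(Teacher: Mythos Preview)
Your proof is correct and follows essentially the same route as the paper: reduce by linearity mod 2 to the basis $\{\Sigma_i\}$, pass to the blow-up $\tilde X$ with the square-zero classes $S_i^j$, use that every basic class of $(\tilde X,\tilde\nu)$ is even on each $S_i^j$ (from equation~\eqref{eq:structure-for-nu-sum}), and then read off $K_{\nu,\ell}(\Sigma_i)\equiv n_i\pmod 2$ from a basic class of the form $K_{\nu,\ell}+\sum\epsilon_{a,b}E_{a,b}$. The paper is marginally more economical, using only $S_i^0$ and the single sign choice $\epsilon_{a,b}=+1$, but the argument is the same.
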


Our argument here differs from the one in \cite{munoz-basic}, because that one asserts that for any $x \in H_2(\tilde{X})$ there are some $i$ and $j$ such that $x \cdot S_i^j \neq 0$, and this need not be true when $X$ is a cobordism rather than a closed 4-manifold.

\begin{proof}[Proof of Lemma~\ref{lem:basic-classes-w2}]
Both terms are linear mod $2$ in $h$, so it suffices to prove the claim for the basis $\{\Sigma_1,\dots,\Sigma_k\}$ of $H_2(X;\Z)/\torsion$ which we used in the proof of Lemma~\ref{lem:structure-thm-for-nu}.  We borrow notation from that proof, blowing up $X$ to get $\tilde{X}$ and surfaces $S_i^j$ exactly as before.

We observe for each $i$ that if $K$ is any basic class of $(\tilde{X},\tilde{\nu})$, then equation~\eqref{eq:structure-for-nu-sum} says that $K(S_i^0)$ is even.  Letting $E_{i,j}$ denote the Poincar\'e duals of the exceptional spheres $e_{i,j}$ in $\tilde{X}$, the blow-up formula (Theorem~\ref{thm:blowup-formula}) says that
\[ K = K_{\nu,\ell} + \sum_{i=1}^k \sum_{j=1}^{n_i} E_{i,j} \]
is a basic class of $(\tilde{X},\tilde{\nu})$.  Since $S_i^0 = \Sigma_i - \sum_{j=1}^{n_i} e_{i,j}$ for all $i$, we have
\[ K(S_i^0) = K_{\nu,\ell}(\Sigma_i) + n_i = K_{\nu,\ell}(\Sigma_i) + \Sigma_i \cdot \Sigma_i \]
and hence this last expression is even for all $i$, as desired.
\end{proof}

\begin{lemma}[Step~4] \label{lem:structure-thm-signs}
Fix $\nu$ and let $a_{\nu,i}$ and $K_{\nu,i}$ be the coefficients and basic classes appearing in Lemma~\ref{lem:structure-thm-for-nu}.  For any class $\alpha \in H_2(X;\Z)$, we have
\[ \sD_X^{\nu+\alpha}(h) = e^{Q(h)/2} \sum_i (-1)^{\frac{1}{2}(K_{\nu,i}(\alpha)+\alpha\cdot \alpha)+\nu\cdot \alpha} \cdot a_{\nu,i} e^{K_{\nu,i}(h)}. \]
In other words, the basic classes $K_i = K_{\nu,i}$ do not depend on the particular choice of $\nu$, and their coefficients are related by
\[ a_{\nu+\alpha,i} = (-1)^{\frac{1}{2}(K_{\nu,i}(\alpha)+\alpha\cdot \alpha)+\nu\cdot \alpha} \cdot a_{\nu,i} \]
for all $i$.
\end{lemma}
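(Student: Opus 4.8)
The plan is to follow the structure of Step~4 in \cite{munoz-basic}, exploiting the machinery already in place: the blow-up formula of Theorem~\ref{thm:blowup-formula}, the Fukaya--Floer relations of Lemma~\ref{lem:munoz-hff}, and the $w_2$-adjustment trick from the proof of Lemma~\ref{lem:structure-thm-for-nu}. Fix once and for all a reference class $\nu$, and for $\alpha\in H_2(X;\Z)$ and a basic class $K$ of $(X,\nu)$ write $q_K(\alpha)=\tfrac12(K(\alpha)+\alpha\cdot\alpha)+\nu\cdot\alpha$, which is an integer by Lemma~\ref{lem:basic-classes-w2}. Call $\alpha$ \emph{good} if the set of basic classes of $(X,\nu+\alpha)$ agrees with that of $(X,\nu)$ and $a_{\nu+\alpha,K}=(-1)^{q_K(\alpha)}a_{\nu,K}$ for every such $K$; the lemma is exactly the assertion that every $\alpha$ is good. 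The first observation is purely formal: since $K(\alpha)+\alpha\cdot\alpha$ is even, $q_K$ is a quadratic refinement of the mod~$2$ intersection form, $q_K(\alpha+\beta)\equiv q_K(\alpha)+q_K(\beta)+\alpha\cdot\beta\pmod 2$, and composing the shifts $\nu\mapsto\nu+\alpha\mapsto\nu+\alpha+\beta$ contributes precisely the cross-term $\alpha\cdot\beta=(\nu+\alpha)\cdot\beta-\nu\cdot\beta$. A short computation then shows that the good classes form a subgroup of $H_2(X;\Z)$ and that the defining relation is insensitive to the choice of base class. It therefore suffices to exhibit a generating set of $H_2(X;\Z)/\mathrm{torsion}$ consisting of good classes and to dispatch torsion classes, for which $q_K(\alpha)$ reduces to $\nu\cdot\alpha$ and the relation is immediate from Remark~\ref{rmk:homology-X-dependence}.

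Next I would produce such a generating set. As in the proof of Lemma~\ref{lem:structure-thm-for-nu}, use $b_2^+(X)>0$ to fix an integral basis $\Sigma_1,\dots,\Sigma_k$ of $H_2(X;\Z)/\mathrm{torsion}$ with every $\Sigma_i\cdot\Sigma_i=n_i>0$, blow up $X$ at $\sum n_i$ points to obtain $\tilde X$ with exceptional spheres $e_{i,j}$, set $S_i^0=\Sigma_i-\sum_j e_{i,j}$ (so $(S_i^0)^2=0$), and let $\tilde\nu=\nu+\sum_i c_ie_{i,1}$ be chosen so that $\tilde\nu\cdot S_i^0$ is odd. Representing each $S_i^0$ by a smoothly embedded surface of genus at least $1$ (tubing on a trivial torus in a ball if necessary, which does not change the class), I would apply Lemma~\ref{lem:munoz-hff} to $(\tilde X,\tilde\nu)$ and $S_i^0$; comparing the $e^{2rs}$-graded pieces of $\sD_{\tilde X}^{\tilde\nu}$ and $\sD_{\tilde X}^{\tilde\nu+S_i^0}$ with their structure-theorem expansions (varying the auxiliary class $D$ over a basis of $H_2(\tilde X)$) yields $a_{\tilde\nu+S_i^0,\tilde K}=(-1)^{\frac12\tilde K(S_i^0)+1}a_{\tilde\nu,\tilde K}$ for every basic class $\tilde K$, which is exactly the good-class relation for the shift by $S_i^0$ in $\tilde X$ since $(S_i^0)^2=0$ and $\tilde\nu\cdot S_i^0$ is odd. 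Separately, the blow-up formula of Theorem~\ref{thm:blowup-formula} itself certifies that each exceptional class $e_{i,j}$ is good in $\tilde X$: expanding $\cosh E$ and $\sinh E$ and using $E^2=-1$, the comparison of $\sD_{\tilde X}^{\nu}=\sD_X^\nu\cdot\prod e^{-E^2/2}\cosh E$ with the analogous expression for $\nu+e_{i,j}$ multiplies the coefficient of a basic class $\tilde K$ by $\tilde K(e_{i,j})\in\{\pm1\}$, which equals $(-1)^{\frac12(\tilde K(e_{i,j})+e_{i,j}\cdot e_{i,j})}$. Since $\Sigma_i=S_i^0+\sum_j e_{i,j}$, the classes $\{S_i^0\}\cup\{e_{i,j}\}$ form an integral basis of $H_2(\tilde X;\Z)/\mathrm{torsion}$, so by the subgroup property every class there is good in $\tilde X$; in particular each $\Sigma_i$ is.

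Finally I would descend to $X$. Because $\Sigma_i$ pairs trivially with every $e_{i,j}$ and $E_{i,j}(\Sigma_i)=0$, the $\cosh/\sinh$ factors cancel in the ratio $\sD_{\tilde X}^{\nu+\Sigma_i}/\sD_{\tilde X}^{\nu}=\sD_X^{\nu+\Sigma_i}/\sD_X^{\nu}$, and the quantities $K(\Sigma_i)$, $\Sigma_i\cdot\Sigma_i$, $\nu\cdot\Sigma_i$ are the same whether computed in $X$ or in $\tilde X$; hence the good-class relation for $\Sigma_i$ in $\tilde X$ transports to the identical relation for $\Sigma_i$ in $X$. Thus $\{\Sigma_1,\dots,\Sigma_k\}$ is a set of good classes generating $H_2(X;\Z)/\mathrm{torsion}$, and combined with the subgroup property and the torsion case this shows every $\alpha\in H_2(X;\Z)$ is good, which is the lemma (including the $\nu$-independence of the basic classes, since the multiplier is always $\pm1$). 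I expect the main obstacle to be bookkeeping rather than conceptual: keeping track of how the $(-1)^{r+1}$ produced by Lemma~\ref{lem:munoz-hff} and the $e^{-E^2/2}\cosh E$ and $-e^{-E^2/2}\sinh E$ factors of the blow-up formula combine, across iterated blow-ups, to yield exactly $(-1)^{\frac12(K(\alpha)+\alpha\cdot\alpha)+\nu\cdot\alpha}$ with no leftover sign, where Lemma~\ref{lem:basic-classes-w2} is precisely what makes these half-integer exponents well-defined and is used throughout.
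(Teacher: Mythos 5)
Your argument is correct and uses the same core machinery as the paper---the blow-up formula, Lemma~\ref{lem:munoz-hff}, and the cocycle identity $q_{\nu+\alpha}(\beta)+q_\nu(\alpha)\equiv q_\nu(\alpha+\beta)\pmod 2$---so it is essentially Mu\~noz's Step~4, just reorganized.  The difference is in how the base case of the induction is carried out.  You package the cocycle identity as ``good classes form a subgroup,'' verify goodness directly for the square-zero classes $S_i^0$ (via Lemma~\ref{lem:munoz-hff}) and the exceptional classes $e_{i,j}$ (via the blow-up formula), and let the subgroup structure produce goodness of $\Sigma_i$.  The paper instead reduces directly to an arbitrary $\alpha$ with $\alpha\cdot\alpha>0$ and runs a case split on the parity of $\nu\cdot\alpha$, choosing $\Sigma=\alpha+\sum e_j$ or $\Sigma=\alpha-e_1+\sum_{j\geq2}e_j$ so that the relevant pairing is odd without having to change $\nu$.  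Your route makes the cocycle structure explicit---where the paper's passing remark that the exponent is ``linear mod~$2$'' is a shorthand for the same identity---at the mild cost of needing the base-change step that transports goodness of $S_i^0$ from $\tilde\nu$ to $\nu$; this requires $\tilde\nu-\nu$ (a sum of exceptional classes) to be good with the appropriate intermediate bases, which you have, but your phrase ``insensitive to the choice of base class'' should be read as a claim about bases differing by good classes, not an unconditional one.  Either formulation is a valid adaptation of \cite[Step~4]{munoz-basic} that avoids the identity $D_{X,\nu+2\alpha}=(-1)^{\alpha^2}D_{X,\nu}$.
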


Our argument here is nearly the same as in \cite{munoz-basic}, but with a different choice of signs in the definition of $\Sigma$ which avoids the need for the identity $D_{X,\nu+2\alpha} = (-1)^{\alpha^2}D_{X,\nu}$.

\begin{proof}[Proof of Lemma~\ref{lem:structure-thm-signs}]
We note that the exponent \[\frac{1}{2}(K_{\nu,i}(\alpha) + \alpha\cdot\alpha) + \nu\cdot\alpha\] is integral by Lemma~\ref{lem:basic-classes-w2}, and its reduction modulo 2 is linear in $\alpha$.  Since $H_2(X;\Z)/\torsion$ has an integral basis consisting of surfaces of positive self-intersection, as seen in the proof of Lemma~\ref{lem:structure-thm-for-nu}, it therefore suffices to prove the proposition when $N = \alpha\cdot \alpha$ is positive.  We will let $\tilde{X}$ be the $N$-fold blow-up of $X$, with exceptional spheres $e_1,\dots,e_N$ and $E_j = \PD(e_j)$ as usual.

Suppose first that $\nu\cdot \alpha$ is odd.  Letting $\Sigma = \alpha + \sum_j e_j$, so that $\nu\cdot\Sigma$ is odd and $\Sigma\cdot\Sigma = 0$, we have
\begin{align*}
\sD_{\tilde{X}}^{\nu} &= \sD_X^\nu \cdot e^{-\sum_j E_j^2/2} \prod_{j=1}^N \cosh E_j \\
\sD_{\tilde{X}}^{\nu+\Sigma} &= \sD_X^{\nu+\alpha} \cdot e^{-\sum_j E_j^2/2} \prod_{j=1}^N (-\sinh E_j)
\end{align*}
by the blow-up formula.  Lemma~\ref{lem:munoz-hff} tells us that $(\tilde{X},\nu)$ and $(\tilde{X},\nu+\Sigma)$ have the same basic classes, and these are
\[ K_{\nu,i} + \sum_{j=1}^N \sigma_j E_j \quad\mathrm{and}\quad K_{\nu+\alpha,i} + \sum_{j=1}^N \sigma_j E_j, \qquad \sigma_j \in \{\pm 1\} \]
respectively, so $(X,\nu)$ and $(X,\nu+\alpha)$ must have the same basic classes as well, say $K_{\nu,i}=K_{\nu+\alpha,i}$.  Moreover, the lemma says that the coefficient of $K = K_{\nu,i} + \sum_j E_j$ in $\sD_{\tilde{X}}^{\nu+\Sigma}$ is
\[ \frac{a_{\nu,i}}{2^N} \cdot (-1)^{\frac{1}{2}K(\Sigma) + 1} = \frac{a_{\nu,i}}{2^N} \cdot (-1)^{\frac{1}{2}(K_{\nu,i}(\alpha) - N) + 1}, \]
while by the blow-up formula it is $(-1)^N \cdot \frac{a_{\nu+\alpha,i}}{2^N}$.  Equating the two, and recalling that $N = \alpha\cdot\alpha$ and $\nu\cdot\alpha \equiv 1 \pmod{2}$, gives
\[ a_{\nu+\alpha,i} = \alpha_{\nu,i} \cdot (-1)^{\frac{1}{2}(K_{\nu,i}(\alpha) + \alpha\cdot\alpha) + \nu\cdot\alpha} \]
as claimed.

Now suppose instead that $\nu\cdot\alpha$ is even.  This time we take $\Sigma = \alpha - e_1 + \sum_{j=2}^N e_j$, so that $\Sigma\cdot\Sigma=0$ and $(\nu+e_1)\cdot\Sigma$ is odd.  Then the blow-up formula gives
\begin{align*}
\sD_{\tilde{X}}^{\nu+e_1} &= \sD_X^{\nu} \cdot e^{-\sum_j E_j^2/2}(-\sinh E_1)\prod_{j=2}^N \cosh E_j \\
\sD_{\tilde{X}}^{\nu+e_1+\Sigma} &= \sD_X^{\nu+\alpha} \cdot e^{-\sum_j E_j^2/2} (\cosh E_1) \prod_{j=2}^N (-\sinh E_j).
\end{align*}
Applying Lemma~\ref{lem:munoz-hff} again, we conclude exactly as before that $(X,\nu)$ and $(X,\nu+\alpha)$ have the same basic classes $K_{\nu,i} = K_{\nu+\alpha,i}$; and since $K=K_{\nu,i}+\sum_j E_j$ has coefficient $-a_{\nu,i}/2^N$ in $\sD_{\tilde{X}}^{\nu+e_1}$, the lemma also says that its coefficient in $\sD_{\tilde{X}}^{\nu+e_1+\Sigma}$ is
\[ -\frac{a_{\nu,i}}{2^N} \cdot (-1)^{\frac{1}{2}K(\Sigma) + 1} = 
\frac{a_{\nu,i}}{2^N} \cdot (-1)^{\frac{1}{2}(K_{\nu,i}(\alpha) - N+2)}, \]
while the blow-up formula gives this coefficient as $(-1)^{N-1} \cdot \frac{a_{\nu+\alpha,i}}{2^N}$.  Equating the two and using $N=\alpha\cdot\alpha$ and $\nu\cdot\alpha\equiv 0 \pmod{2}$ gives the desired conclusion.
\end{proof}

The last step is proved exactly as in \cite{munoz-basic}; we include the proof anyway for completeness.

\begin{lemma}[Step~5] \label{lem:adjunction}
Let $(X,\nu): (Y_0,\lambda_0) \to (Y_1,\lambda_1)$ be a cobordism with $b_1(X)=0$, and let $\Sigma \subset X$ be a smoothly embedded surface of genus $g \geq 1$ and positive self-intersection.  Then
\[ |K(\Sigma)| + \Sigma\cdot\Sigma \leq 2g(\Sigma)-2 \]
for every basic class $K$ of $X$.
\end{lemma}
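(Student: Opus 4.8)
The plan is to follow Mu\~noz's Step~5 from \cite{munoz-basic} essentially verbatim, reducing the claim to the zero-self-intersection case already recorded in Lemma~\ref{lem:munoz-hff} by passing to a sequence of blow-ups. We may assume $n := \Sigma\cdot\Sigma > 0$, so in particular $b_2^+(X) > 0$. First I would let $\tilde X$ be the $n$-fold blow-up of $X$ at $n$ points lying on $\Sigma$, with exceptional spheres $e_1,\dots,e_n$ and $E_j = \PD(e_j)$. The proper transform $\hat\Sigma \subset \tilde X$ is then a smoothly embedded surface of genus $g$ with $[\hat\Sigma] = [\Sigma] - e_1 - \dots - e_n$, so that $\hat\Sigma\cdot\hat\Sigma = n - n = 0$ and $e_j\cdot\hat\Sigma = 1$ for each $j$. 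Finally I would pick a cobordism class $\tilde\nu$ in $\tilde X$ restricting to $\lambda_0$ and $\lambda_1$ on the ends with $\tilde\nu\cdot\hat\Sigma$ odd; since $\nu\cdot\hat\Sigma = \nu\cdot\Sigma$, one can take $\tilde\nu = \nu$ if $\nu\cdot\Sigma$ is odd and $\tilde\nu = \nu + e_1$ otherwise.

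Next I would feed these surfaces into the two structural inputs. Applying the blow-up formula (Theorem~\ref{thm:blowup-formula}) $n$ times shows that if $K$ is any basic class of $(X,\nu)$, with nonzero coefficient $a$, then for every sign vector $\sigma = (\sigma_1,\dots,\sigma_n) \in \{\pm1\}^n$ the homomorphism $\tilde K_\sigma := K + \sum_{j=1}^n \sigma_j E_j$ (with $K$ extended over $\tilde X$ by $K(e_j) = 0$) is a basic class of $(\tilde X,\nu)$, with coefficient $\pm a/2^n \ne 0$; distinct pairs $(K,\sigma)$ give distinct classes $\tilde K_\sigma$, since $\sigma$ is recovered from the exceptional components and $K$ from the rest, so there is no cancellation. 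By Lemma~\ref{lem:structure-thm-signs} the set of basic classes is independent of the choice of cobordism class, so $\tilde K_\sigma$ is also a basic class of $(\tilde X, \tilde\nu)$. On the other hand, applying Lemma~\ref{lem:munoz-hff} with the surface $\hat\Sigma$ — genus $g \ge 1$, self-intersection $0$, and $\tilde\nu\cdot\hat\Sigma$ odd — shows that every basic class $\tilde K$ of $(\tilde X,\tilde\nu)$ satisfies $\tilde K(\hat\Sigma) = 2r$ for some integer $r$ with $1-g \le r \le g-1$, hence $|\tilde K(\hat\Sigma)| \le 2g-2$.

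Combining these, I would compute $\tilde K_\sigma(\hat\Sigma) = K(\hat\Sigma) + \sum_j \sigma_j\,(e_j\cdot\hat\Sigma) = K(\Sigma) + \sum_j \sigma_j$. Taking $\sigma_j = +1$ for all $j$ when $K(\Sigma) \ge 0$ and $\sigma_j = -1$ for all $j$ otherwise yields $|\tilde K_\sigma(\hat\Sigma)| = |K(\Sigma)| + n$. Since $\tilde K_\sigma$ is a basic class of $\tilde X$, the bound just established gives $|K(\Sigma)| + \Sigma\cdot\Sigma = |\tilde K_\sigma(\hat\Sigma)| \le 2g-2$, which is the desired inequality.

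I do not expect a genuine obstacle here: once Theorem~\ref{thm:blowup-formula} and Lemma~\ref{lem:munoz-hff} are available for cobordism maps — which they are — the argument is purely formal, exactly as in \cite{munoz-basic}. The only points needing a little care are the standard homological bookkeeping for the blow-up (the identities $\hat\Sigma\cdot\hat\Sigma = 0$, $e_j\cdot\hat\Sigma = 1$, and $g(\hat\Sigma) = g$) and the requirement that $\tilde\nu\cdot\hat\Sigma$ be odd before invoking Lemma~\ref{lem:munoz-hff}, which is precisely why Lemma~\ref{lem:structure-thm-signs} is used to decouple the list of basic classes from the choice of cobordism class.
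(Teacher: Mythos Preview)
Your proposal is correct and follows essentially the same route as the paper: blow up $\Sigma\cdot\Sigma$ times to make the proper transform square-zero, use the blow-up formula to see that $K \pm \sum_j E_j$ remain basic classes, and then apply Lemma~\ref{lem:munoz-hff} to bound their values on the proper transform. Your explicit invocation of Lemma~\ref{lem:structure-thm-signs} to pass between $(\tilde X,\nu)$ and $(\tilde X,\tilde\nu)$, and your check that the classes $\tilde K_\sigma$ do not cancel, are a bit more careful than the paper's presentation but match it in substance.
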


\begin{proof}
Let $\tilde{X}$ be the $N$-fold blow-up of $X$ with exceptional spheres $e_1,\dots,e_N$, where $N = \Sigma \cdot \Sigma > 0$; and let $\tilde{\Sigma} \subset \tilde{X}$ denote the proper transform $\Sigma - \sum_{j=1}^N e_j$.  We let $\tilde{\nu}$ be whichever of $\nu$ and $\nu+e_1$ satisfies $\tilde\nu\cdot\tilde\Sigma \equiv 1 \pmod{2}$.  Then $\tilde{\Sigma}\cdot\tilde{\Sigma} = 0$, so we can apply Lemma~\ref{lem:munoz-hff} to $\sD_{\tilde{X}}^{\tilde{\nu}}$ and $\tilde\Sigma$ to see that 
\[ |\tilde{K}(\tilde{\Sigma})| \leq 2g-2 \]
for every basic class $\tilde{K}$ of $\tilde{X}$.  These basic classes include $K \pm \sum_{j=1}^N E_j$, and since
\[ \left(K\pm \sum_{j=1}^N E_j\right)(\tilde{\Sigma}) = K(\Sigma) \pm N = K(\Sigma) \pm \Sigma \cdot \Sigma \]
we must have $|K(\Sigma)| + \Sigma\cdot\Sigma \leq 2g-2$ as claimed.
\end{proof}

Lemmas~\ref{lem:structure-thm-for-nu}, \ref{lem:basic-classes-w2}, \ref{lem:structure-thm-signs}, and \ref{lem:adjunction} collectively prove Proposition~\ref{prop:structure-thm-positive}. \hfill\qed

\subsection{Injective maps induced by trace cobordisms} \label{ssec:injective-cobordism} Recall that the trace of $n$-surgery on a framed knot $K\subset S^3$ is the cobordism \[X_n(K):S^3\to S^3_n(K)\] obtained from  $S^3 \times [0,1]$ by attaching an $n$-framed $2$-handle along $K\times\{1\}$.
In this section, we   prove   that the map on framed instanton homology induced by the trace of $1$-surgery on the torus knot $T_{2,5}$ is injective; see Proposition \ref{prop:x1-t25-injective}. We will use this in the next section to establish the general case of Theorem~\ref{thm:structure-theorem}, for cobordisms with $b_2^+(X)=0$.  

We start with some lemmas about the dimension of $I^\#$ for  surgeries on $T_{2,5}$.

\begin{lemma} \label{lem:t25-1}
$\dim I^\#(S^3_k(T_{2,5})) = k$ for all $k \geq 3$.
\end{lemma}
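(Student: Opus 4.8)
The plan is to prove the equivalent statement that $S^3_k(T_{2,5})$ is an instanton L-space for every integer $k\geq 3$. Granting this, Proposition~\ref{prop:euler} gives
\[ \chi(I^\#(S^3_k(T_{2,5}))) = |H_1(S^3_k(T_{2,5});\Z)| = k, \]
and instanton L-space-ness means $I^\#_{\godd}(S^3_k(T_{2,5}))=0$ (Remark~\ref{rmk:oddgradinglspace}), so $\dim I^\#(S^3_k(T_{2,5}))=k$ as claimed.

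I would first record that $T_{2,5}$ is an instanton L-space knot. By Moser's classification of surgeries on torus knots \cite{moser}, $S^3_9(T_{2,5})$ and $S^3_{11}(T_{2,5})$ are lens spaces, and every lens space $L$ is an instanton L-space, i.e.\ $\dim I^\#(L) = |H_1(L;\Z)|$ (this is implicit in \cite{scaduto}, and in any case follows by reducing to $I^\#(S^3)$ through the surgery exact triangle~\eqref{eq:triangle-untwisted} applied to the unknot). Hence, by \cite[Theorem~4.20]{bs-stein}, the set of positive instanton L-space slopes of $T_{2,5}$ has the form $[N,\infty)\cap\Q$ for some positive integer $N$, and $N\leq 9$ since $9$ lies in this set.

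Next I would reduce to a single base case. Applying the surgery exact triangle~\eqref{eq:triangle-untwisted} to $K=T_{2,5}$ with framing $k$ yields
\[ \dots \to I^\#(S^3) \to I^\#(S^3_k(T_{2,5})) \to I^\#(S^3_{k+1}(T_{2,5})) \to \dots, \]
and a rank count in this triangle gives
\[ \dim I^\#(S^3_{k+1}(T_{2,5})) = \dim I^\#(S^3_k(T_{2,5})) - 1 + 2m, \]
where $m\in\{0,1\}$ is the rank of the connecting homomorphism to $I^\#(S^3)\cong\C$. If $\dim I^\#(S^3_k(T_{2,5}))=k$, then since $\dim I^\#(S^3_{k+1}(T_{2,5}))\geq k+1$ by Proposition~\ref{prop:euler}, we must have $m=1$ and $\dim I^\#(S^3_{k+1}(T_{2,5}))=k+1$; equivalently, if $S^3_k(T_{2,5})$ is an instanton L-space then so is $S^3_{k+1}(T_{2,5})$. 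So it is enough to show $N\leq 3$, i.e.\ that $S^3_3(T_{2,5})$ is an instanton L-space.

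Finally, I would establish this base case $k=3$ directly: identify $S^3_3(T_{2,5})$ as a specific small Seifert fibered rational homology sphere and compute $I^\#$ of it, for instance by realizing it through a short chain of surgery exact triangles anchored at lens spaces, or by appealing to the explicit computations for torus-knot surgeries in \cite{bs-stein, sivek-zentner}. I expect this base case to be the main obstacle. Everything else is formal---the surgery exact triangle together with the Euler characteristic bound propagates instanton L-space-ness \emph{upward}, but it gives no way to propagate it downward, so pinning the first positive instanton L-space slope of $T_{2,5}$ to exactly $2g(T_{2,5})-1=3$, rather than to some integer between $3$ and $9$, requires genuine information about a small surgery.
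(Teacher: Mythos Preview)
Your proposal has a genuine gap: you never actually establish the base case $k=3$, and your closing paragraph identifies exactly the missing idea without finding it. You assert that the surgery exact triangle ``gives no way to propagate it downward,'' but this is precisely what the paper does, and the mechanism is the adjunction inequality of Lemma~\ref{lem:adjunction}.

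The trace cobordism $X_k(T_{2,5})$ contains a closed surface $\Sigma_k$ of genus $g(T_{2,5})=2$ and self-intersection $k$, built from a Seifert surface and the core of the $2$-handle. Since $b_2^+(X_k)=1$ for $k\geq 1$, the adjunction inequality (already proved at this point for $b_2^+>0$, so there is no circularity) forces every basic class $K$ to satisfy $|K(\Sigma_k)|+k\leq 2$, which is impossible once $k\geq 3$. Hence $I^\#(X_k(T_{2,5}),\nu_k)=0$ for all $k\geq 3$, and the exact triangle becomes a short exact sequence giving the \emph{equality}
\[ \dim I^\#(S^3_{k+1}(T_{2,5})) = \dim I^\#(S^3_k(T_{2,5})) + 1, \qquad k\geq 3. \]
Starting from the lens space surgery $\dim I^\#(S^3_9(T_{2,5}))=9$, this recursion runs both upward and downward, so $\dim I^\#(S^3_3(T_{2,5}))=3$ falls out immediately with no separate computation needed. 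Your proposed ad hoc identification of $S^3_3(T_{2,5})$ as a Seifert manifold and direct computation would work in principle, but it is unnecessary and you did not carry it out.
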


\begin{proof}
Observe that $9$-surgery on $T_{2,5}$ is a lens space of order 9 \cite{moser}, and hence
\[ \dim I^\#(S^3_9(T_{2,5})) = 9. \]
Using \eqref{eq:triangle-untwisted}, we have an exact triangle of the form
\[ \dots \to I^\#(S^3) \xrightarrow{I^\#(X_k(T_{2,5}),\nu_k)} I^\#(S^3_k(T_{2,5})) \to I^\#(S^3_{k+1}(T_{2,5})) \to \dots \]
for all $k \geq 1$.  Each $X_k(T_{2,5})$ contains a surface $\Sigma_k$ of genus $g(T_{2,5})=2$ and self-intersection $k$, built by gluing a Seifert surface for $T_{2,5}$ to a core of the $k$-framed $2$-handle, and when $k > 2$ then this forces $I^\#(X_k(T_{2,5}),\nu_k)$ to vanish by the adjunction inequality of Lemma~\ref{lem:adjunction}.  Thus
\[ \dim I^\#(S^3_{k+1}(T_{2,5})) = \dim I^\#(S^3_k(T_{2,5})) + 1 \]
for all $k \geq 3$, and the lemma follows by induction.
\end{proof}

\begin{lemma} \label{lem:t25-2}
$\dim I^\#(S^3_1(T_{2,5})) = 5$.
\end{lemma}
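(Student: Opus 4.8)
The plan is to trap $\dim I^\#(S^3_1(T_{2,5}))$ between two bounds: an upper bound obtained by propagating the surgery exact triangle downward from $S^3_3(T_{2,5})$, and a lower bound obtained from a direct computation of $\dim I^\#(S^3_0(T_{2,5}),\mu)$.

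For the upper bounds, I would feed $\dim I^\#(S^3_3(T_{2,5}))=3$ (Lemma~\ref{lem:t25-1}) and $\dim I^\#(S^3)=1$ into three consecutive instances of the exact triangle~\eqref{eq:triangle-untwisted}: the $n=2$ triangle gives $\dim I^\#(S^3_2(T_{2,5}))\le 4$, the $n=1$ triangle gives $\dim I^\#(S^3_1(T_{2,5}))\le 5$, and the $n=0$ triangle (with the $\mu$-decoration on the $0$-surgery, as in \S\ref{ssec:proof}) gives $\dim I^\#(S^3_0(T_{2,5}),\mu)\le 6$. For a matching lower bound $\dim I^\#(S^3_0(T_{2,5}),\mu)\ge 6$, I would argue as follows. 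Since $T_{2,5}$ is fibered of genus $2$, the manifold $S^3_0(T_{2,5})$ fibers over $S^1$ with fiber the capped-off surface $\hat\Sigma$ of genus $2$ and $\mu\cdot\hat\Sigma=1$, so Theorem~\ref{thm:fibered-has-rank-1} gives $\dim I_*(S^3_0(T_{2,5})\,|\,\hat\Sigma)_\mu = 1$; that is, the $(2,2)$-eigenspace of the operators $\mu(\hat\Sigma),\mu(\pt)$ on $I_*(S^3_0(T_{2,5}))_\mu$ is $1$-dimensional. By Theorem~\ref{thm:km-surface} the only possible simultaneous eigenvalues of $\mu(\hat\Sigma),\mu(\pt)$ on this group are $(0,\pm2)$, $(\pm2,2)$, and $(\pm2i,-2)$, and Lemma~\ref{lem:symmetry} identifies the four eigenspaces for $(2,2),(2i,-2),(-2,2),(-2i,-2)$ with one another (each therefore of dimension $1$) and identifies the $(0,2)$-eigenspace with the $(0,-2)$-eigenspace. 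The $(0,2)$-eigenspace is $I_*(S^3_0(T_{2,5}),\hat\Sigma,0)_\mu$, and applying Lim's Theorem~\ref{thm:lim} to $\Delta_{T_{2,5}}(t)=t^2-t+1-t^{-1}+t^{-2}$ gives $\tfrac{\Delta_{T_{2,5}}(t)-1}{t-2+t^{-1}}=t+1+t^{-1}$, so this eigenspace has Euler characteristic $1$ and hence dimension at least $1$; the same then holds for the $(0,-2)$-eigenspace. Summing the six eigenspaces gives $\dim I_*(S^3_0(T_{2,5}))_\mu\ge 6$, and Corollary~\ref{cor:connected-sum-low-genus} identifies this with $\dim I^\#(S^3_0(T_{2,5}),\mu)$.

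Combining the two bounds forces $\dim I^\#(S^3_0(T_{2,5}),\mu)=6$, and then the $n=0$ triangle $I^\#(S^3)\to I^\#(S^3_0(T_{2,5}),\mu)\to I^\#(S^3_1(T_{2,5}))\to\cdots$ yields $\dim I^\#(S^3_1(T_{2,5}))\ge 6-1=5$, which together with the upper bound $\le 5$ proves the lemma. The main obstacle I anticipate is the eigenspace bookkeeping for $I_*(S^3_0(T_{2,5}))_\mu$: one must remember to include the $\mu(\pt)=-2$ eigenspaces (which supply the extra factor of two needed to reach $6$ rather than $3$), and one must keep straight the distinction between the framed group $I^\#(S^3_0(T_{2,5}),\mu)$ and the unframed group $I_*(S^3_0(T_{2,5}))_\mu$, which Corollary~\ref{cor:connected-sum-low-genus} relates only after tensoring with $H_*(S^4)$ and $H_*(S^3)$ respectively.
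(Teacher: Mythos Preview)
Your argument is correct, and it takes a genuinely different route from the paper's proof. The paper computes $I^\#(S^3_1(T_{2,5}))$ directly by identifying $S^3_1(T_{2,5}) \cong -\Sigma(2,5,9)$ and then invoking external computations: Fintushel--Stern's calculation of $I_*(\Sigma(2,5,9))$, Fr{\o}yshov's determination of $h(\Sigma(2,5,9))=1$ to pin down the reduced groups $\hat{I}_*$, and finally Scaduto's corollary of the Fukaya connected sum theorem to pass to $I^\#$. Your approach instead sandwiches $\dim I^\#(S^3_1(T_{2,5}))$ between bounds coming from the surgery triangle and a direct computation of $\dim I^\#(S^3_0(T_{2,5}),\mu)$ via fiberedness, Lim's Euler characteristic formula, the eigenvalue symmetry of Lemma~\ref{lem:symmetry}, and Corollary~\ref{cor:connected-sum-low-genus}. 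This is more self-contained within the paper's own toolkit and avoids the external $\Sigma(2,5,9)$ inputs entirely; the tradeoff is that you lean on Lemma~\ref{lem:t25-1} (which is fine, since that lemma only uses the $b_2^+>0$ adjunction inequality and is proved independently). The paper's route has the minor advantage of giving the full $\Z/4\Z$-graded structure of $I^\#(\Sigma(2,5,9))$, but for the purposes of this section only the total dimension matters, and your argument delivers that cleanly.
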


\begin{proof}
We first note that
\[ S^3_1(T_{2,5}) \cong -\Sigma(2,5,9). \]
Since the dimension of $I^\#$ does not change upon reversing orientation, it suffices to prove that $I^\#(\Sigma(2,5,9))$ has dimension 5. 

To do so, we use Fintushel and Stern's computation from \cite[\S6]{fs-seifert} that, with coefficients in $\Z$,
\[ I_*(\Sigma(2,5,9)) \cong \Z^2_0 \oplus \Z^1_2 \oplus \Z^2_4 \oplus \Z^1_6, \]
where the subscripts denote the (absolute, in this case) $\Z/8\Z$ grading.  Fr{\o}yshov computed in \cite[Proposition~1]{froyshov-inequality}  that
\[ h(\Sigma(2,5,9)) = 1, \]
where $h$ is his invariant from \cite{froyshov}.  The $h$ invariant is defined by
\[ 2h(Y) = \chi(I_*(Y)) - \chi(\hat{I}_*(Y)) \]
for any homology 3-sphere $Y$, where $\hat{I}_*$ is reduced instanton homology \cite{froyshov}, and so 
\[ \chi(\hat{I}_*(\Sigma(2,5,9))) = 4. \]

Reduced instanton homology  satisfies
\[ \rank \hat{I}_q(Y) \leq \rank I_q(Y) \]
for all $q$, by definition, with equality if $q \equiv 2,3\pmod{4}$;  if $q \not\equiv 1\pmod{4}$ then $\hat{I}_q(Y)$ is a subgroup of $I_q(Y)$ (see \cite[\S3.3]{froyshov}).  Thus we have
\[ \hat{I}_*(\Sigma(2,5,9)) \cong V_0 \oplus \Z_2 \oplus V_4 \oplus \Z_6 \]
for some $V_0,V_4 \subset \Z^2$.  Moreover, there is a degree-4 endomorphism $u$ on $\hat{I}_*(Y)$ such that $u^2-64$ is nilpotent \cite[Theorem~10]{froyshov}, which implies that $u: V_0 \to V_4$ is an isomorphism.  From $\chi(\hat{I}_*(\Sigma(2,5,9))) = 4$ we conclude that
\[ \hat{I}_*(\Sigma(2,5,9)) \cong \Z_0 \oplus \Z_2 \oplus \Z_4 \oplus \Z_6 \]
is free of rank one in each even grading.

We now apply a corollary by Scaduto \cite[Corollary~1.5]{scaduto} of Fukaya's connected sum theorem, which says that since $Y = \Sigma(2,5,9)$ is $\pm1$-surgery on a genus-2 knot, we have an isomorphism with coefficients in $\C$,
\[ I^\#(Y) \cong H_*(\pt;\C) \oplus H_*(S^3;\C) \otimes \bigoplus_{j=0}^3 \hat{I}_j(Y) \]
as $\Z/4\Z$-graded $\C$-modules.  Thus, with $\C$-coefficients, we have
\[ I^\#(\Sigma(2,5,9)) \cong \C^2_0 \oplus \C_1 \oplus \C_2 \oplus \C_3, \]
which completes the proof.
\end{proof}

Finally, from \eqref{eq:triangle-untwisted} we have an exact triangle
\[ \dots \to I^\#(S^3) \to I^\#(S^3_2(T_{2,5})) \to I^\#(S^3_3(T_{2,5})) \to \dots, \]
which together with Lemma~\ref{lem:t25-1} implies that
\[ \dim I^\#(S^3_2(T_{2,5})) \leq 4. \]
Similarly, from Lemma~\ref{lem:t25-2} and the exact triangle
\begin{equation} \label{eq:t25-3}
\dots \to I^\#(S^3) \xrightarrow{I^\#(X_1(T_{2,5}),\nu_1)} I^\#(S^3_1(T_{2,5})) \to I^\#(S^3_2(T_{2,5})) \to \dots,
\end{equation}
we deduce that
\[ \dim I^\#(S^3_2(T_{2,5})) \geq 4, \]
hence the dimension must be equal to $4$.  Since equality holds iff the map $I^\#(X_1(T_{2,5}),\nu_1)$ is injective, we have proved the following.

\begin{proposition} \label{prop:x1-t25-injective}
The map $I^\#(X_1(T_{2,5}),\nu_1): I^\#(S^3) \to I^\#(S^3_1(T_{2,5}))$ in \eqref{eq:t25-3} is injective, where $X_1(T_{2,5})$ is the trace of 1-surgery on $T_{2,5}$. \qed
\end{proposition}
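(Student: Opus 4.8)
The plan is to determine $\dim I^\#(S^3_2(T_{2,5}))$ exactly by squeezing it between two bounds coming from two different instances of the surgery exact triangle, and then to read off injectivity of $I^\#(X_1(T_{2,5}),\nu_1)$ from exactness of \eqref{eq:t25-3}. The only general input is the elementary fact that in an exact triangle $A \xrightarrow{f} B \xrightarrow{g} C \xrightarrow{h} A \to \dots$ of finite-dimensional $\C$-modules one has $\dim B = \dim\img f + \dim\img g \le \dim A + \dim C$, with equality if and only if $f$ is injective (equivalently, since $\ker f = \img h$ by exactness at $A$, if and only if $h=0$).

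For the upper bound I would apply this to the triangle \eqref{eq:triangle-untwisted} with $n=2$, namely
\[ \dots \to I^\#(S^3) \to I^\#(S^3_2(T_{2,5})) \to I^\#(S^3_3(T_{2,5})) \to \dots, \]
using $\dim I^\#(S^3) = 1$ together with $\dim I^\#(S^3_3(T_{2,5})) = 3$ from Lemma~\ref{lem:t25-1}, to get $\dim I^\#(S^3_2(T_{2,5})) \le 4$. For the lower bound I would apply the same inequality to the triangle \eqref{eq:t25-3} itself, using $\dim I^\#(S^3_1(T_{2,5})) = 5$ from Lemma~\ref{lem:t25-2} and $\dim I^\#(S^3)=1$, to get $5 \le 1 + \dim I^\#(S^3_2(T_{2,5}))$, hence $\dim I^\#(S^3_2(T_{2,5})) \ge 4$. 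Combining, $\dim I^\#(S^3_2(T_{2,5})) = 4$, so the inequality applied to \eqref{eq:t25-3} is an equality, and therefore $I^\#(X_1(T_{2,5}),\nu_1)$ is injective.

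The real content is not in this last step but in the two dimension computations being cited: Lemma~\ref{lem:t25-1} is itself a short induction built on Moser's lens space surgery on $T_{2,5}$ and the adjunction inequality of Lemma~\ref{lem:adjunction}, while the genuinely hard ingredient is Lemma~\ref{lem:t25-2}, whose proof assembles Fintushel--Stern's computation of $I_*(\Sigma(2,5,9))$, Fr{\o}yshov's $h$-invariant calculation, and Scaduto's corollary of Fukaya's connected sum theorem. Granting those, the present argument is a purely formal two-sided squeeze, and the only point requiring care is that the upper bound on $\dim I^\#(S^3_2(T_{2,5}))$ must be extracted from a triangle other than \eqref{eq:t25-3} — otherwise the two estimates would be circular — which is precisely why the $n=2$ instance of \eqref{eq:triangle-untwisted} together with the $k=3$ case of Lemma~\ref{lem:t25-1} is brought in.
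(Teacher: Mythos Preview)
Your argument is correct and follows exactly the paper's own proof: an upper bound on $\dim I^\#(S^3_2(T_{2,5}))$ from the $n=2$ triangle and Lemma~\ref{lem:t25-1}, a lower bound from \eqref{eq:t25-3} and Lemma~\ref{lem:t25-2}, and injectivity read off from equality in the latter. The only addition is your explicit statement of the exact-triangle dimension inequality and its equality criterion, which the paper leaves implicit.
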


\subsection{The structure theorem in general} \label{ssec:structure-general}

We now deduce the general case of Theorem~\ref{thm:structure-theorem} from the case where $b_2^+(X)$ is positive.  In this subsection we take
\[ (X,\nu): (Y_0,\lambda_0) \to (Y_1,\lambda_1) \]
to be a cobordism with $b_1(X)=0$ but with no restrictions on $b_2^+(X)$.

We begin by taking a 3-ball in $Y_1$ which avoids $\lambda_1$, identifying a $T_{2,5}$ knot inside this ball, and letting
\[ Y'_1 = Y_1 \# S^3_1(T_{2,5}) \]
denote the result of $1$-surgery along this $T_{2,5}$.  The trace of this surgery is a cobordism
\[ (Z, \lambda_1 \times[0,1] \cup \nu_1): (Y_1,\lambda_1) \to (Y'_1,\lambda_1). \]
built by attaching a 2-handle to $Y_1\times[0,1]$ along $T_{2,5} \times \{1\}$, where $\nu_1$ is the cobordism of Proposition \ref{prop:x1-t25-injective} suitably interpreted.

\begin{lemma} \label{lem:trace-t25-injective}
The induced map
\[ I^\#(Z,\lambda_1\times[0,1] \cup \nu_1): I^\#(Y_1,\lambda_1) \to I^\#(Y'_1,\lambda_1) \]
is injective.
\end{lemma}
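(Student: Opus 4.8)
The plan is to realize $I^\#(Z,\lambda_1\times[0,1]\cup\nu_1)$ as a ``stabilized'' copy of the map $I^\#(X_1(T_{2,5}),\nu_1)$ shown to be injective in Proposition~\ref{prop:x1-t25-injective}, and then to conclude from the elementary fact that an injective $\C$-linear map remains injective after tensoring with the identity of a $\C$-vector space. First I would record that $Z$ is a \emph{local} cobordism: the knot $T_{2,5}$, the attaching region of the $2$-handle, the class $\nu_1$, and the arc joining the basepoints all lie inside the fixed ball $B^3\subset Y_1$. Consequently $(Z,\lambda_1\times[0,1]\cup\nu_1)$ is built from the product cobordism $(Y_1\times[0,1],\lambda_1\times[0,1])\colon(Y_1,\lambda_1)\to(Y_1,\lambda_1)$ and the trace cobordism $(X_1(T_{2,5}),\nu_1)\colon(S^3,0)\to(S^3_1(T_{2,5}),0)$ by the ``connected sum along arcs'' operation --- remove a product $B^3\times[0,1]$ from the former and a neighborhood of the arc $\gamma$ from the latter, and glue along the resulting $S^2\times[0,1]$ --- exactly the construction underlying $X^\#=X\bowtie(T^3\times[0,1])$ from \S\ref{ssec:framed-homology}, with $T^3\times[0,1]$ replaced by $X_1(T_{2,5})$.

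The crux is then to invoke the connected sum formula for framed instanton homology: a canonical isomorphism $I^\#(Y\#Y',\lambda\cup\lambda')\cong I^\#(Y,\lambda)\otimes I^\#(Y',\lambda')$ of relatively $\Z/4\Z$-graded $\C$-modules in which $I^\#(S^3)\cong\C$ is a unit (a consequence of Fukaya's connected sum theorem, cf.\ Theorem~\ref{thm:connected-sum}), together with its naturality under the local cobordism maps above --- so that a connected sum along arcs of two cobordisms induces the tensor product of their cobordism maps, the product cobordism $Y_1\times[0,1]$ inducing $\id_{I^\#(Y_1,\lambda_1)}$. Granting this, the previous paragraph gives
\[ I^\#(Z,\lambda_1\times[0,1]\cup\nu_1)=\id_{I^\#(Y_1,\lambda_1)}\otimes\, I^\#(X_1(T_{2,5}),\nu_1), \]
a map $I^\#(Y_1,\lambda_1)\otimes I^\#(S^3)\to I^\#(Y_1,\lambda_1)\otimes I^\#(S^3_1(T_{2,5}))$, which is injective because $I^\#(X_1(T_{2,5}),\nu_1)$ is, by Proposition~\ref{prop:x1-t25-injective}. (Alternatively, and isolating precisely the input needed: the surgery exact triangle of Theorem~\ref{thm:exact-triangle} applied to $T_{2,5}\subset B^3\subset Y_1$ with its $(+1)$-framing --- with middle term $I^\#(Y_1\#S^3_1(T_{2,5}),\lambda_1)$, since the meridian of $T_{2,5}$ is nullhomologous in the homology sphere $S^3_1(T_{2,5})$ --- has $I^\#(Z,\,\cdot\,)$ as one of its three maps, and combining the multiplicativity $\dim I^\#(Y_1\#M)=\dim I^\#(Y_1)\cdot\dim I^\#(M)$ with Lemmas~\ref{lem:t25-1} and~\ref{lem:t25-2} forces $\rank I^\#(Z,\,\cdot\,)=\dim I^\#(Y_1,\lambda_1)$ by exactness, i.e.\ injectivity.)

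The one step I expect to require genuine work is the naturality of the framed instanton connected sum isomorphism with respect to these local cobordism maps --- or, in the second formulation, the multiplicativity of $\dim I^\#$ under connected sum and the identification of $I^\#(Z,\,\cdot\,)$ with the relevant map in the exact triangle; once either is in place, the conclusion is purely formal.
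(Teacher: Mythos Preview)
Your proposal is correct and follows essentially the same route as the paper: identify $Z\cong(Y_1\times[0,1])\bowtie X_1(T_{2,5})$, invoke the K\"unneth isomorphism for $I^\#$ of connected sums and its naturality with respect to split cobordisms to write the map as $\id\otimes I^\#(X_1(T_{2,5}),\nu_1)$, and then apply Proposition~\ref{prop:x1-t25-injective}. The naturality step you flag as needing work is exactly what the paper uses, citing \cite[\S7.7]{scaduto} for the statement that the K\"unneth isomorphism is natural with respect to split cobordisms.
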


\begin{proof}
Letting $X_n(K): S^3 \to S^3_n(K)$ be the trace of $n$-surgery on $K$, as in \S\ref{ssec:injective-cobordism}, we have
\[ Z \cong (Y_1 \times [0,1]) \bowtie X_1(T_{2,5}). \]
The K\"unneth isomorphism
\[ I^\#(Y \# Y',\lambda+\lambda') \xrightarrow{\cong} I^\#(Y,\lambda) \otimes I^\#(Y',\lambda') \]
 is natural with respect to split cobordisms \cite[\S 7.7]{scaduto}, meaning in this case that the diagram
\[ \xymatrix{
I^\#(Y_1\#S^3) \ar[rrr]^-{I^\#((Y_1\times[0,1]) \bowtie X_1(T_{2,5}))} \ar[d]_{\cong} &&& I^\#(Y_1\# S^3_1(T_{2,5})) \ar[d]^{\cong} \\
I^\#(Y_1) \otimes I^\#(S^3) \ar[rrr]^-{\mathrm{Id}\otimes I^\#(X_1(T_{2,5}))} &&& I^\#(Y_1) \otimes I^\#(S^3_1(T_{2,5}))
} \]
commutes.  (We omit the various $\lambda_1$, $\lambda_1\times[0,1]$, and $\nu_1$ from the diagram for readability.)  The map $I^\#(X_1(T_{2,5}),\nu_1)$ is injective by Proposition~\ref{prop:x1-t25-injective}, so it follows that the top arrow $I^\#(Z,\lambda_1\times[0,1] \cup \nu_1)$ is injective as well.
\end{proof}

We now modify the cobordism $X$ by attaching $Z$ to get
\[ (X',\nu') = (X,\nu) \cup_{(Y_1,\lambda_1)} (Z,\lambda_1\times[0,1] \cup \nu_1): (Y_0,\lambda_0) \to (Y'_1,\lambda_1). \]
Then $b_1(X') = b_1(X) = 0$, and $H_2(X') \cong H_2(X) \oplus H_2(Z)$, where $H_2(Z) \cong \Z$ is generated by a surface $F$ built by gluing a Seifert surface for $T_{2,5}$ to the core of the 2-handle.  Let $\Sigma_1,\dots,\Sigma_k$ be an integral basis of $H_2(X;\Z)/\torsion$.  Since $F\cdot F = 1$, we have $b_2^+(X') > 0$, and hence by Proposition~\ref{prop:structure-thm-positive} there are finitely many basic classes
\[ K'_1,\dots,K'_r: H_2(X') \to \Z \]
and homomorphisms
\[ a'_1,\dots,a'_r: I^\#(Y_0,\lambda_0) \to I^\#(Y'_1,\lambda_1) \]
such that
\[ \sD_{X'}^{\nu'}(sF+t_1\Sigma_1+\dots+t_k\Sigma_k) = e^{Q(sF+\sum_i t_i\Sigma_i)/2} \sum_{j=1}^r a'_j e^{sK'_j(F)+\sum_i t_i K'_j(\Sigma_i)}. \]

For convenience, we will work with modified Donaldson series as in \cite{fs-structure}, namely 
\[ \sK_{X'}^{\nu'} = e^{-Q_{X'}/2} \sD_{X'}^{\nu'} \,\textrm{ and } \,\sK_X^\nu = e^{-Q_X/2} \sD_X^\nu, \]
so that 
\begin{equation} \label{eq:KXprime-structure}
\sK_{X'}^{\nu'}(sF+t_1\Sigma_1+\dots+t_k\Sigma_k) = \sum_{j=1}^r a'_j e^{sK'_j(F)+\sum_i t_i K'_j(\Sigma_i)}.
\end{equation}
Letting $S = t_1\Sigma_1+\dots+t_k\Sigma_k$, we observe that
\begin{align*}
\sK_{X'}^{\nu'}(S) &= e^{-Q_{X'}(S)/2} D_{X',\nu'}({-}\otimes(1+\tfrac{x}{2})e^S) \\
&= D_{Z,\lambda_1\times[0,1]\cup\nu_1}({-}\otimes 1) \circ e^{-Q_X(S)/2} D_{X,\nu}({-}\otimes(1+\tfrac{x}{2})e^S)
\end{align*}
or equivalently
\begin{equation} \label{eq:KXprime-composition}
\sK_{X'}^{\nu'}(t_1\Sigma_1+\dots+t_k\Sigma_k) = I^\#(Z,\lambda_1\times[0,1]\cup\nu_1) \circ \sK_X^\nu(t_1\Sigma_1+\dots+t_k\Sigma_k)
\end{equation}
as formal series in the variables $t_i$.

\begin{lemma} \label{lem:general-structure-exists}
For fixed $(X,\nu)$, there are finitely many basic classes $K_{\nu,i}: H_2(X;\Z) \to \Z$ and nonzero maps
\[ a_{\nu,i} \in \Hom(I^\#(Y_0,\lambda_0), I^\#(Y_1,\lambda_1)), \]
each with rational coefficients, such that
\[ \sD_X^\nu(h) = e^{Q(h)/2} \sum_i a_{\nu,i} e^{K_{\nu,i}(h)}. \]
\end{lemma}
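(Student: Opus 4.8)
The plan is to bootstrap from the $b_2^+>0$ case already established in Proposition~\ref{prop:structure-thm-positive}, using the enlarged cobordism $(X',\nu')=(X,\nu)\cup_{(Y_1,\lambda_1)}(Z,\lambda_1\times[0,1]\cup\nu_1)$ constructed above, which has $b_1(X')=0$ and $b_2^+(X')>0$ because $F\cdot F=1$. The two facts we will exploit are already in hand: the structure formula \eqref{eq:KXprime-structure} for $\sK_{X'}^{\nu'}$, and the factorization \eqref{eq:KXprime-composition} of $\sK_{X'}^{\nu'}$ restricted to $H_2(X;\R)$ (the $s=0$ slice) through $I^\#(Z,\lambda_1\times[0,1]\cup\nu_1)$, which by Lemma~\ref{lem:trace-t25-injective} is injective. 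The whole point is that an injective --- though not surjective --- map is enough to pull the structure formula back from $X'$ to $X$.

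Concretely, first I would set $s=0$ in \eqref{eq:KXprime-structure} and group the basic classes $K'_j$ of $(X',\nu')$ according to their restrictions $K'_j|_{H_2(X)}$, writing $\sK_{X'}^{\nu'}(\sum_i t_i\Sigma_i)=\sum_{\ell=1}^m b_\ell\,e^{L_\ell(\sum_i t_i\Sigma_i)}$ with $L_1,\dots,L_m\colon H_2(X;\Z)\to\Z$ distinct homomorphisms and each $b_\ell$ a rational-coefficient sum of the $a'_j$ (discarding any $\ell$ with $b_\ell=0$). Combined with \eqref{eq:KXprime-composition}, this says $\iota\circ\sK_X^\nu(\sum_i t_i\Sigma_i)=\sum_\ell b_\ell\,e^{L_\ell(\sum_i t_i\Sigma_i)}$ as formal power series, where $\iota=I^\#(Z,\lambda_1\times[0,1]\cup\nu_1)$. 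Next, since the $L_\ell$ are distinct, the points $(L_\ell(\Sigma_1),\dots,L_\ell(\Sigma_k))$ are distinct, so a Vandermonde argument gives multi-indices $\mathbf{d}^{(1)},\dots,\mathbf{d}^{(m)}$ making the matrix $M_{\ell p}=\prod_i\tfrac1{d^{(p)}_i!}L_\ell(\Sigma_i)^{d^{(p)}_i}$ invertible; comparing $t^{\mathbf{d}^{(p)}}$-coefficients and inverting $M$ exhibits each $b_\ell$ as a $\Q$-linear combination of the $\iota$-images of coefficients of $\sK_X^\nu$, hence $b_\ell\in\img(\iota)$. As $\iota$ is injective and $\Q$-linear, there is a unique (nonzero, rational-coefficient) $a_\ell$ with $\iota(a_\ell)=b_\ell$, and then injectivity of $\iota$ forces $\sK_X^\nu(\sum_i t_i\Sigma_i)=\sum_\ell a_\ell\,e^{L_\ell(\sum_i t_i\Sigma_i)}$ coefficient by coefficient; since $\{\Sigma_i\}$ is a basis of $H_2(X;\Z)/\torsion$ and both sides are functions on $H_2(X;\R)$, this is exactly the asserted formula $\sD_X^\nu(h)=e^{Q(h)/2}\sum_\ell a_\ell e^{L_\ell(h)}$. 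Finally, to see the $L_\ell$ really are basic classes, I would note each is a restriction of a basic class $K'_j$ of $(X',\nu')$, which satisfies the parity condition of Lemma~\ref{lem:basic-classes-w2}; since $H_2(X)$ and $H_2(Z)$ are orthogonal summands of $H_2(X')$ one has $\alpha\cdot_{X'}\alpha=\alpha\cdot_X\alpha$ for $\alpha\in H_2(X;\Z)$, so $L_\ell(\alpha)\equiv\alpha\cdot_X\alpha\pmod2$. Setting $a_{\nu,i}=a_i$ and $K_{\nu,i}=L_i$ then finishes.

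The main obstacle is the middle step: passing the structure formula through $\iota$, which is injective but not an isomorphism, so one cannot simply ``divide.'' The resolution is that the coefficients $b_\ell$ attached to the \emph{distinct} exponentials $e^{L_\ell}$ are individually recoverable from the composite series by the linear independence of characters (the Vandermonde argument), which then lets us check membership in $\img(\iota)$ term by term. Beyond that, the remaining points are bookkeeping --- that the surviving coefficients are nonzero (discard the zero ones), that rationality is preserved because $\iota$ is an injective $\Q$-linear map so $\iota^{-1}$ on its image sends rational vectors to rational vectors, and that the parity condition transfers from $X'$ to $X$ via orthogonality of the $H_2$-summands --- none of which should cause trouble.
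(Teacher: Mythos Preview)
Your proof is correct, but it takes a different route from the paper's. The paper defines, for each basis element $\Sigma_i$, the finite set $A_i=\{K'_j(\Sigma_i)\}$ and the differential operator $\delta_i=\prod_{c\in A_i}(\partial/\partial t_i - c)$; since $\delta_i$ annihilates $\sK_{X'}^{\nu'}$ by \eqref{eq:KXprime-structure}, setting $s=0$ and using \eqref{eq:KXprime-composition} gives $\iota\circ\delta_i\sK_X^\nu=0$, and injectivity of $\iota$ forces $\delta_i\sK_X^\nu=0$ for every $i$. Solving this constant-coefficient linear system yields $\sK_X^\nu=\sum_{\underline{c}\in A_1\times\cdots\times A_k} a_{\underline{c}}\,e^{\sum c_it_i}$ directly. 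Your approach instead groups the $K'_j$ by their restrictions to $H_2(X)$, then uses linear independence of distinct exponentials (Vandermonde) to isolate each coefficient $b_\ell$ as lying in $\img(\iota)$, and pulls back via injectivity.

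The paper's operator argument is a bit slicker: it applies injectivity once to the vanishing equation $\delta_i\sK_X^\nu=0$ rather than to each coefficient, and avoids the Vandermonde bookkeeping. Your argument, on the other hand, yields slightly more right away: it shows directly that every basic class of $(X,\nu)$ is the restriction of a basic class of $(X',\nu')$, which the paper establishes separately in the next lemma (Lemma~\ref{lem:general-restriction}). Your closing remark about the parity condition $L_\ell(\alpha)\equiv\alpha\cdot\alpha\pmod2$ is correct but extraneous to this lemma; the paper records that as the separate Lemma~\ref{lem:general-basic-w2}.
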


\begin{proof}
For each $i=1,\dots,k$, we define a finite set
\[ A_i = \{ K'_j(\Sigma_i) \mid 1 \leq j \leq r \} \subset \Z \]
and an operator
\[ \delta_i = \prod_{c \in A_i} \left(\frac{\partial}{\partial t_i} - c\right), \]
and it follows immediately from equation~\eqref{eq:KXprime-structure} that
\[ \delta_i \sK_{X'}^{\nu'}(sF+t_1\Sigma_1+\dots+t_k\Sigma_k) = 0. \]
Setting $s=0$, we deduce from equation~\eqref{eq:KXprime-composition} that
\[ I^\#(Z;\lambda_1\times[0,1] \cup\nu_1) \circ \delta_i \sK_{X}^{\nu}(t_1\Sigma_1+\dots+t_k\Sigma_k) = 0, \]
and Lemma~\ref{lem:trace-t25-injective} says that $I^\#(Z;\lambda_1\times[0,1]\cup\nu_1)$ is injective, so in fact
\[ \delta_i \sK_X^\nu(t_1\Sigma_1 + \dots + t_k\Sigma_k) = 0, \qquad i=1,\dots,k. \]
This means that $\sK_X^\nu = e^{-Q_X/2} \sD_K^\nu$ has the form
\[ \sK_X^\nu\left(\sum_{i=1}^k t_i\Sigma_i\right) = \sum_{\underline{c}=(c_1,\dots,c_k) \in A_1\times\dots\times A_k} a_{\underline{c}} e^{c_1t_1 + \dots + c_k t_k} \]
for some homomorphisms $a_{\underline{c}}$, which have rational coefficients just as in the case $b_2^+(X) > 0$, as desired.
\end{proof}

For the following lemma, given a homomorphism $K': H_2(X';\Z) \to \Z$, we will write $K'|_X$ to denote the composition
\[ H_2(X;\Z) \xrightarrow{i_*} H_2(X';\Z) \xrightarrow{K'} \Z, \]
in which $i: X \hookrightarrow X'$ is the obvious inclusion.

\begin{lemma} \label{lem:general-restriction}
If $K$ is a basic class for $(X,\nu)$, then there is a basic class $K'$ for $(X',\nu')$ such that $K = K'|_X$.
\end{lemma}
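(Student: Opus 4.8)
The plan is to read off the conclusion from the two identities already in hand: the exponential-sum form of $\sK_{X'}^{\nu'}$ given by the structure theorem in the case $b_2^+>0$ (Proposition~\ref{prop:structure-thm-positive}, recorded as \eqref{eq:KXprime-structure}), and the composition identity \eqref{eq:KXprime-composition} expressing $\sK_{X'}^{\nu'}$, restricted to the span of $\Sigma_1,\dots,\Sigma_k$, as $I^\#(Z,\lambda_1\times[0,1]\cup\nu_1)\circ\sK_X^\nu$.

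First I would set $s=0$ in \eqref{eq:KXprime-structure} and collect the basic classes $K'_1,\dots,K'_r$ of $(X',\nu')$ according to their restrictions $K'_j|_X\colon H_2(X;\Z)\to\Z$; since a homomorphism to $\Z$ is determined by its values on the integral basis $\Sigma_1,\dots,\Sigma_k$ of $H_2(X;\Z)/\torsion$, this groups the $K'_j$ by the tuples $(K'_j(\Sigma_1),\dots,K'_j(\Sigma_k))$. This rewrites $\sK_{X'}^{\nu'}(\sum_i t_i\Sigma_i)$ as $\sum_{\underline c}\bigl(\sum_{j\,:\,K'_j|_X=\underline c}a'_j\bigr)\,e^{\underline c(\sum_i t_i\Sigma_i)}$, the outer sum ranging over the finitely many homomorphisms $\underline c\colon H_2(X;\Z)\to\Z$ that occur.

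Next, combining \eqref{eq:KXprime-composition} with the exponential-sum expression $\sK_X^\nu(\sum_i t_i\Sigma_i)=\sum_{\underline c}a_{\underline c}\,e^{\underline c(\sum_i t_i\Sigma_i)}$ obtained in the proof of Lemma~\ref{lem:general-structure-exists}, the same series equals $\sum_{\underline c}\bigl(I^\#(Z,\lambda_1\times[0,1]\cup\nu_1)\circ a_{\underline c}\bigr)\,e^{\underline c(\sum_i t_i\Sigma_i)}$. Because the exponentials attached to distinct $\underline c$ are linearly independent as formal series in $t_1,\dots,t_k$, comparing coefficients gives
\[ I^\#(Z,\lambda_1\times[0,1]\cup\nu_1)\circ a_{\underline c} \;=\; \sum_{j\,:\,K'_j|_X=\underline c} a'_j \]
for every such $\underline c$.

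Finally I would conclude as follows. By the form of $\sK_X^\nu$ recalled above, a basic class $K$ of $(X,\nu)$ is one of the homomorphisms $\underline c$ whose coefficient $a_{\underline c}$ is nonzero. Taking $\underline c=K$, the left-hand side of the displayed identity is nonzero because $I^\#(Z,\lambda_1\times[0,1]\cup\nu_1)$ is injective by Lemma~\ref{lem:trace-t25-injective}; hence the right-hand side is nonzero, so the index set $\{\,j : K'_j|_X=K\,\}$ is nonempty, and any $K'_j$ in it is a basic class of $(X',\nu')$ with $K'_j|_X=K$. The only delicate point is the bookkeeping — keeping track of which homomorphisms on $H_2(X';\Z)$ restrict to a prescribed one on $H_2(X;\Z)$, together with the linear independence of the associated exponentials — but this is routine; all the substantive input (the structure theorem for $b_2^+>0$, the composition law \eqref{eq:KXprime-composition}, and the injectivity of $I^\#(Z,\lambda_1\times[0,1]\cup\nu_1)$) is already in place.
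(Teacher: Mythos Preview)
Your proposal is correct and takes essentially the same approach as the paper. The only cosmetic difference is that the paper extracts the $K$-coefficient via an explicit differential operator $\delta_K$ (as in \eqref{eq:delta-K}), whereas you compare coefficients directly by invoking linear independence of the exponentials $e^{\underline c(\sum_i t_i\Sigma_i)}$; these are equivalent ways of isolating the same term, and the substantive ingredients (the structure theorem for $X'$, the composition identity \eqref{eq:KXprime-composition}, and the injectivity from Lemma~\ref{lem:trace-t25-injective}) are identical.
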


\begin{proof}
We take $A_i = \{K'_j(\Sigma_i)\}$ for $i=1,\dots,k$ as before, and we recall from the proof of Lemma~\ref{lem:general-structure-exists} that every basic class of $(X,\nu)$ has the form
\[ K_{\nu,j}(t_1\Sigma_1 + \dots + t_k\Sigma_k) =  c_1t_1 + \dots + c_kt_k \]
for some integers $c_i \in A_i$.  We thus define an operator
\begin{equation} \label{eq:delta-K}
\delta_K = \prod_{i=1}^k \prod_{\substack{c \in A_i \\ c \neq K(\Sigma_i)}} \left( \frac{ \frac{\partial}{\partial t_i} - c } {K(\Sigma_i) - c} \right),
\end{equation}
and it follows that
\[ \delta_K e^{K_{\nu,j}(t_1\Sigma_1 + \dots + t_k\Sigma_k)} = \begin{cases} e^{K(t_1\Sigma_1+\dots+t_k\Sigma_k)} & K_{\nu,j} = K \\ 0 & K_{\nu,j} \neq K. \end{cases} \] 

In particular, if $K$ is a basic class of $(X,\nu)$ with coefficient $a\neq 0$ in $\sK_X^\nu$, then
\[ \delta_K \sK_X^\nu(t_1\Sigma_1+\dots+t_k\Sigma_k) = ae^{K(t_1\Sigma_1+\dots+t_k\Sigma_k)} \neq 0, \]
and so by Lemma~\ref{lem:trace-t25-injective} and equation~\eqref{eq:KXprime-composition} we see that
\[ \delta_K \sK_{X'}^{\nu'} (t_1\Sigma_1 + \dots + t_k\Sigma_k) = \sum_{j=1}^r a'_j \cdot \delta_K e^{K'_j(t_1\Sigma_1 + \dots + t_k\Sigma_k)} \]
is nonzero as well, where the equality follows from \eqref{eq:KXprime-structure}.  Each $K'_j(\Sigma_i)$ belongs to the set $A_i$, so if the $j$th term is nonzero for some fixed $j$ then again we must have $K'_j(\Sigma_i)=K(\Sigma_i)$ for all $i$, and so $K'_j|_X = K$.
\end{proof}

\begin{lemma} \label{lem:general-basic-w2}
The basic classes $K$ for $(X,\nu)$ satisfy $K(h) + h\cdot h \equiv 0 \pmod{2}$ for all $h \in H_2(X;\Z)$.
\end{lemma}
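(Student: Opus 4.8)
The plan is to reduce the congruence for $(X,\nu)$ to the already-established case of a cobordism with positive $b_2^+$, using the restriction statement of Lemma~\ref{lem:general-restriction}. First I would observe that both $K(h) \bmod 2$ and $(h\cdot h)\bmod 2$ are additive in $h \in H_2(X;\Z)$ — the former because $K$ is a homomorphism, the latter from $(h_1+h_2)\cdot(h_1+h_2) = h_1\cdot h_1 + h_2\cdot h_2 + 2(h_1\cdot h_2)$. Since $\Z$ is torsion-free, $K$ and the intersection form both factor through $H_2(X;\Z)/\torsion$, so it suffices to verify the congruence on the integral basis $\Sigma_1,\dots,\Sigma_k$ of $H_2(X;\Z)/\torsion$ fixed in the construction of $(X',\nu')$.

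Next, given a basic class $K$ of $(X,\nu)$ (as produced by Lemma~\ref{lem:general-structure-exists}), Lemma~\ref{lem:general-restriction} provides a basic class $K'$ of $(X',\nu')$ with $K = K'|_X = K'\circ i_*$, where $i\colon X \hookrightarrow X'$ is the inclusion. The cobordism $(X',\nu')$ satisfies $b_1(X')=0$, and the presence of the surface $F$ with $F\cdot F = 1$ forces $b_2^+(X') > 0$, so Lemma~\ref{lem:basic-classes-w2} applies to $(X',\nu')$ and gives $K'(h') + h'\cdot h' \equiv 0 \pmod 2$ for every $h' \in H_2(X';\Z)$. Applying this with $h' = i_*(\Sigma_j)$ yields $K(\Sigma_j) + i_*(\Sigma_j)\cdot i_*(\Sigma_j) \equiv 0 \pmod 2$ for each $j$.

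Finally I would note that the self-intersection of a class from $X$ is unchanged under $i_*$: for any $a,b \in H_2(X;\Z)$ one may represent $a$ and $b$ by transverse $2$-cycles in the interior of $X$ and count signed intersections there, and this count is unaffected by the codimension-$0$ inclusion into $X'$; equivalently, $H_2(X') \cong H_2(X) \oplus H_2(Z)$ and the intersection form respects this splitting, so $i_*(a)\cdot i_*(b) = a\cdot b$. In particular $i_*(\Sigma_j)\cdot i_*(\Sigma_j) = \Sigma_j\cdot\Sigma_j$, and combining with the previous paragraph gives $K(\Sigma_j) + \Sigma_j\cdot\Sigma_j \equiv 0 \pmod 2$ for all $j$; by the additivity noted at the outset this extends to all $h \in H_2(X;\Z)$, completing the proof. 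I do not anticipate a real obstacle here: the only point that needs a sentence of justification is the compatibility of the two intersection forms under $i_*$, which is immediate from the direct-sum decomposition $H_2(X') \cong H_2(X) \oplus H_2(Z)$ recorded just before the statement.
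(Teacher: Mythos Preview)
Your proof is correct and follows essentially the same approach as the paper: restrict a basic class $K$ of $(X,\nu)$ to a basic class $K'$ of $(X',\nu')$ via Lemma~\ref{lem:general-restriction}, then apply the already-proved case $b_2^+ > 0$ (the paper cites Proposition~\ref{prop:structure-thm-positive}, which contains Lemma~\ref{lem:basic-classes-w2}). The paper's proof is terser---it writes $K(h)+h\cdot h = K'(h)+h\cdot h$ in one line, leaving implicit the compatibility of intersection forms under $i_*$ and the reduction to a basis that you spell out.
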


\begin{proof}
Write $K = K'|_X$ for some basic class $K'$ on $X'$, which we can do by Lemma \ref{lem:general-restriction}.  Then since $b^+(X') > 0$, we may apply Proposition~\ref{prop:structure-thm-positive} to $X'$ to conclude that
\[ K(h) + h\cdot h = K'(h) + h\cdot h \equiv 0\pmod{2}. \qedhere \]
\end{proof}

\begin{lemma} \label{lem:general-sign-change}
Let $K_{\nu,i}$ be the basic classes for $(X,\nu)$, with coefficients $a_{\nu,i}$, so that
\[ \sD_X^\nu(h) = e^{Q(h)/2} \sum_i a_{\nu,i} e^{K_{\nu,i}(h)}. \]
Then for all $\alpha \in H_2(X;\Z)$, we have
\[ \sD_X^{\nu+\alpha}(h) = e^{Q(h)/2} \sum_i (-1)^{\frac{1}{2}(K_{\nu,i}(\alpha) + \alpha\cdot\alpha) + \nu\cdot\alpha} \cdot a_{\nu,i} e^{K_{\nu,i}(h)}. \]
In particular, the set of basic classes does not depend on $\nu$.
\end{lemma}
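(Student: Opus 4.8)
The plan is to reduce the statement to the case $b_2^+(X)>0$ already handled in Lemma~\ref{lem:structure-thm-signs}, by reusing the same auxiliary cobordism and injectivity statement that powered Lemmas~\ref{lem:general-structure-exists}, \ref{lem:general-restriction}, and \ref{lem:general-basic-w2}. So I keep all the notation from those proofs: $X' = X \cup_{(Y_1,\lambda_1)} Z$ with $\nu' = \nu\cup(\lambda_1\times[0,1]\cup\nu_1)$ and $b_2^+(X')>0$; an integral basis $\Sigma_1,\dots,\Sigma_k$ of $H_2(X;\Z)/\torsion$; and $K'_1,\dots,K'_r$ the basic classes of $(X',\nu')$ with nonzero coefficients $a'_j$. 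Since $\alpha\in H_2(X;\Z)$, it includes into $H_2(X';\Z)$, so $\nu'+\alpha$ is a legitimate cobordism from $(Y_0,\lambda_0)$ to $(Y'_1,\lambda_1)$ that restricts to $X$ as $\nu+\alpha$; and because $\alpha$ is supported in $X$, the composition law \eqref{eq:KXprime-composition} holds verbatim with $\nu,\nu'$ replaced by $\nu+\alpha,\nu'+\alpha$.

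First I would apply Lemma~\ref{lem:structure-thm-signs} to $(X',\nu')$ and the class $\alpha\in H_2(X';\Z)$, which says that $(X',\nu'+\alpha)$ has the same basic classes $K'_1,\dots,K'_r$, now with coefficients $(-1)^{\frac12(K'_j(\alpha)+\alpha\cdot\alpha)+\nu'\cdot\alpha}a'_j$. Next comes the elementary observation that, since $\alpha$ lies in $X$, we have $K'_j(\alpha) = (K'_j|_X)(\alpha)$, while $\nu'\cdot\alpha=\nu\cdot\alpha$ and $\alpha\cdot\alpha$ is the same computed in $X$ or in $X'$; hence the sign attached to $K'_j$ depends only on the restriction $K'_j|_X$ and equals $(-1)^{\frac12((K'_j|_X)(\alpha)+\alpha\cdot\alpha)+\nu\cdot\alpha}$, with integral exponent by Lemma~\ref{lem:general-basic-w2}. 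Setting $s=0$ in \eqref{eq:KXprime-structure} and composing as in \eqref{eq:KXprime-composition} now gives, for $S = t_1\Sigma_1+\dots+t_k\Sigma_k$,
\[ \sum_{j=1}^r a'_j e^{(K'_j|_X)(S)} = I^\#(Z,\lambda_1\times[0,1]\cup\nu_1)\circ\Big(\textstyle\sum_i a_{\nu,i}e^{K_{\nu,i}(S)}\Big), \]
and the same identity with $a'_j$ replaced by $(-1)^{\frac12((K'_j|_X)(\alpha)+\alpha\cdot\alpha)+\nu\cdot\alpha}a'_j$ on the left and $\nu$ by $\nu+\alpha$ on the right.

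I would then match coefficients of the exponentials $e^{K(S)}$ — equivalently, apply the operators $\delta_K$ of \eqref{eq:delta-K} — using that distinct basic classes of $(X,\nu)$ (resp.\ of $(X,\nu+\alpha)$) are distinct homomorphisms and hence give linearly independent exponentials in the $t_i$, and that every such basic class is of the form $K'_j|_X$ by Lemma~\ref{lem:general-restriction} (applied once to $(X,\nu)\hookrightarrow(X',\nu')$ and once to $(X,\nu+\alpha)\hookrightarrow(X',\nu'+\alpha)$, the latter having basic classes $K'_1,\dots,K'_r$ by the previous paragraph). For every homomorphism $K\colon H_2(X;\Z)\to\Z$ this yields
\[ I^\#(Z,\ldots)\circ a_{\nu+\alpha,K} = (-1)^{\frac12(K(\alpha)+\alpha\cdot\alpha)+\nu\cdot\alpha}\,\Big(\textstyle\sum_{j:\,K'_j|_X=K} a'_j\Big) = (-1)^{\frac12(K(\alpha)+\alpha\cdot\alpha)+\nu\cdot\alpha}\,I^\#(Z,\ldots)\circ a_{\nu,K}, \]
where $a_{\nu,K}$ and $a_{\nu+\alpha,K}$ are the coefficients of $e^{K(h)}$ in $e^{-Q_X/2}\sD_X^\nu$ and $e^{-Q_X/2}\sD_X^{\nu+\alpha}$ respectively (zero unless $K$ is a basic class). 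Cancelling the injective map $I^\#(Z,\ldots)$ (Lemma~\ref{lem:trace-t25-injective}) gives $a_{\nu+\alpha,K} = (-1)^{\frac12(K(\alpha)+\alpha\cdot\alpha)+\nu\cdot\alpha}a_{\nu,K}$ for all $K$; in particular $a_{\nu+\alpha,K}\neq 0\iff a_{\nu,K}\neq 0$, so the set of basic classes is independent of $\nu$, and specializing to $K=K_{\nu,i}$ and multiplying by $e^{Q_X(h)/2}$ recovers the stated formula for $\sD_X^{\nu+\alpha}(h)$.

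I expect the only real care to be needed in the bookkeeping, not in any new idea: checking the compatibility of the pairings and evaluations under $X\hookrightarrow X'$ (that $\nu'\cdot\alpha$, $\alpha\cdot\alpha$, $Q_{X'}|_{H_2(X)}$, and $K'_j(\alpha)$ all only see the part supported in $X$), the trivial-but-essential point — highlighted in the remark after Theorem~\ref{thm:main-cobordism-decomposition} — that $\nu+\alpha$ is a cobordism between $\lambda_0$ and $\lambda_1$ precisely because $\alpha\in H_2(X;\Z)$ and not merely $H_2(X,\partial X;\Z)$, and the verification that \eqref{eq:KXprime-composition} is undisturbed by the substitution $\nu\rightsquigarrow\nu+\alpha$, $\nu'\rightsquigarrow\nu'+\alpha$, which is immediate since it is a formal consequence of the splitting $X'=X\cup Z$.
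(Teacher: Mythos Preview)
Your proposal is correct and follows essentially the same approach as the paper: both reduce to Lemma~\ref{lem:structure-thm-signs} on the auxiliary cobordism $X'=X\cup Z$ with $b_2^+(X')>0$, use the composition identity \eqref{eq:KXprime-composition} (and its analogue for $\nu+\alpha$) together with the operators $\delta_K$ to isolate the coefficient of a single exponential, and then cancel the injective map $I^\#(Z,\lambda_1\times[0,1]\cup\nu_1)$ from Lemma~\ref{lem:trace-t25-injective}. Your explicit remark that the sign $(-1)^{\frac12(K'_j(\alpha)+\alpha\cdot\alpha)+\nu'\cdot\alpha}$ depends only on $K'_j|_X$ is exactly what the paper uses implicitly when it writes the right-hand side as $\sum_{K'_j|_X=K}(a'_{\nu'+\alpha,j}-\epsilon_\alpha a'_{\nu',j})e^{K'_j(S)}$ and declares each coefficient zero.
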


\begin{proof}
Fix $\alpha \in H_2(X;\Z)$ and let $K$ be a basic class for $(X,\nu)$, with coefficient $a_{\nu} \neq 0$ in $\sD_X^\nu$ and coefficient $a_{\nu+\alpha}$ (possibly zero) in $\sD_X^{\nu+\alpha}$.  We write \[\epsilon_\alpha = (-1)^{\frac{1}{2}(K(\alpha) + \alpha\cdot\alpha)+\nu\cdot\alpha},\] and set \[S=t_1\Sigma_1+\dots+t_k\Sigma_k\] for readability.

Recalling that $\sK_X^\nu = e^{-Q_X/2} \sD_X^\nu$ and likewise for $\sK_{X'}^{\nu'}$, we now observe by \eqref{eq:KXprime-composition} that
\[ I^\#(Z,\lambda_1\times[0,1]\cup\nu_1) \circ \delta_K (\sK_X^{\nu+\alpha}(S) - \epsilon_\alpha\sK_X^{\nu}(S)) = \delta_K (\sK_{X'}^{\nu'+\alpha}(S) - \epsilon_\alpha\sK_{X'}^{\nu'}(S)), \]
where $\delta_K$ is the operator defined in \eqref{eq:delta-K}.  On the right side, we have
\[ \delta_K \sK_{X'}^{\nu'+\alpha}(S) - \epsilon_\alpha\delta_K \sK_{X'}^{\nu'}(S) = \sum_{K'_j|_X=K} (a'_{\nu'+\alpha,j} - \epsilon_\alpha a'_{\nu',j})e^{K'_j(S)}, \]
and each of the coefficients $a'_{\nu'+\alpha,j} - \epsilon a'_{\nu',j}$ is zero by Lemma~\ref{lem:structure-thm-signs} and the fact that $K'_j(\alpha) = K(\alpha)$, so this vanishes.  On the left side, Lemma~\ref{lem:trace-t25-injective} says that $I^\#(Z,\lambda_1 \times [0,1]\cup\nu_1)$ is injective, so we deduce that
\[ \delta_K (\sK_X^{\nu+\alpha}(S) - \epsilon_\alpha\sK_X^{\nu}(S)) = a_{\nu+\alpha} e^{K(S)} - \epsilon_\alpha a_{\nu} e^{K(S)} \]
is zero as well.  Thus $a_{\nu+\alpha} = \epsilon_\alpha a_{\nu}$ and the lemma follows.
\end{proof}

\begin{proof}[Proof of Theorem~\ref{thm:structure-theorem}]
The case where $b_2^+(X) > 0$ is Proposition~\ref{prop:structure-thm-positive}.  If instead $b_2^+(X) = 0$ then the adjunction inequality is vacuously true, while the rest of the theorem is a combination of Lemmas~\ref{lem:general-structure-exists}, \ref{lem:general-basic-w2}, and \ref{lem:general-sign-change}.
\end{proof}

\section{A decomposition of cobordism maps} \label{sec:eigenspace}

The   goal of this section is to use the structure theorem of the previous section (Theorem \ref{thm:structure-theorem}) to prove Theorem \ref{thm:main-cobordism-decomposition}, which extends the eigenspace decomposition of framed instanton homology in Theorem~\ref{thm:eigenspace-decomposition} to a similar decomposition for cobordism maps,  akin to the $\textrm{Spin}^c$ decompositions of cobordism maps in Heegaard and monopole Floer homology. 

We will assume throughout this section that \[(X,\nu): (Y_0,\lambda_0) \to (Y_1,\lambda_1)\] is a cobordism with $b_1(X) = 0$.  Recall that Theorem \ref{thm:main-cobordism-decomposition} says that  there is a  decomposition of the induced  cobordism map as a sum
\[ I^\#(X,\nu) = \sum_{s: H_2(X;\Z) \to \Z} I^\#(X,\nu;s) \] of maps 
\[ I^\#(X,\nu;s): I^\#(Y_0,\lambda_0;s|_{Y_0}) \to I^\#(Y_1,\lambda_1;s|_{Y_1}) \]
which satisfy five properties. The proof of this theorem below will make reference to these properties as they are numbered in the theorem statement.

\begin{proof}[Proof of Theorem \ref{thm:main-cobordism-decomposition}]
By Theorem~\ref{thm:structure-theorem}, we can write
\begin{equation} \label{eq:structure-thm-for-decomposition}
\sD_X^\nu(h) = e^{Q(h)/2} \sum_{j=1}^r a_{\nu,j} e^{K_j(h)},
\end{equation}
where the $a_j$ are nonzero  homomorphisms $I^\#(Y_0,\lambda_0) \to I^\#(Y_1,\lambda_1)$ and the basic classes $K_j$ are elements of $\Hom(H_2(X;\Z),\Z)$.  Then for any homomorphism $s: H_2(X;\Z) \to \Z$, we define
\[ I^\#(X,\nu;s): I^\#(Y_0,\lambda_0) \to I^\#(Y_1,\lambda_1) \]
to be $\frac{1}{2}a_{\nu,j}$ if $K_j = s$ for some $j$, and $I^\#(X,\nu;s) = 0$ otherwise.

Properties~\eqref{i:finite-support}, \eqref{i:adjunction}, and \eqref{i:sign-change} are immediate from the definition of $I^\#(X,\nu;s)$ and Theorem~\ref{thm:structure-theorem}, since $I^\#(X,
\nu;s)$ is nonzero precisely when $s$ is a basic class.  Property~\eqref{i:blowup-formula} follows from the blow-up formula for $\sD_X^\nu$, Theorem~\ref{thm:blowup-formula}.  
It remains to verify the following:
\begin{itemize}
\item The identity $I^\#(X,\nu) = \sum_s I^\#(X,\nu;s)$;
\item The fact that each $I^\#(X,\nu;s)$ is zero on all $I^\#(Y_0,\lambda_0;s_0)$ except for $s_0 = s|_{Y_0}$, and that its image lies in $I^\#(Y_1,\lambda_1;s|_{Y_1})$;
\item Property~\eqref{i:composition-law}, the composition law.
\end{itemize}
For the first of these, we set $h=0$ in \eqref{eq:structure-thm-for-decomposition} to get
\begin{equation} \label{eq:evaluate-x/2-cobordism}
D_{X,\nu}\left({-} \otimes \left(1+\tfrac{x}{2}\right)\right) = \sum_{j=1}^r a_{\nu,j} = 2\left(\sum_{s:H_2(X;\Z)\to\Z} I^\#(X,\nu;s)\right).
\end{equation}
But $x=\mu(\pt)$ acts on $I^\#(Y_0,\lambda_0)$ as multiplication by $2$, as in Remark \ref{rmk:simpletype3}, so for all $a \in I^\#(Y_0,\lambda_0)$ we have
\[ D_{X,\nu}\left(a \otimes \left(1+\tfrac{x}{2}\right)\right) = D_{X,\nu}\left(\left(1+\tfrac{\mu(\pt)}{2}\right)a \otimes 1\right) = I^\#(X,\nu)(2a), \]
and hence the left side of \eqref{eq:evaluate-x/2-cobordism} is $2I^\#(X,\nu)$, establishing the claim.  The second and third items above are proved below as Proposition~\ref{prop:eigenspace-restriction} and Proposition~\ref{prop:composition-law}, respectively.
\end{proof}

\begin{remark}
If $I^\#(X,\nu;s)$ is nonzero, then $s|_{Y_0}$ must take values in $2\Z$, since for any class $h \in H_2(Y_0;\Z)$ we have
\[ s|_{Y_0}(h) = s((i_0)_*h) \equiv (i_0)_*h \cdot (i_0)_*h = h\cdot h=0 \pmod{2} \]
by property~\eqref{i:adjunction} of Theorem~\ref{thm:main-cobordism-decomposition}, which we have already proved.
\end{remark}

\begin{remark} \label{rem:nu-plus-even}
Properties \eqref{i:adjunction} and \eqref{i:sign-change} of Theorem~\ref{thm:main-cobordism-decomposition} imply that
\[ I^\#(X,\nu+2\alpha;s) = (-1)^{\alpha\cdot\alpha} I^\#(X,\nu;s) \]
for all $s$.  Indeed, both sides are zero unless $s(\alpha) \equiv \alpha\cdot\alpha \pmod{2}$, and if they are nonzero then by \eqref{i:sign-change} the exponent on the right should be
\[ \tfrac{1}{2}(s(2\alpha) + (2\alpha)\cdot(2\alpha)) + \nu\cdot(2\alpha) \equiv s(\alpha) \equiv \alpha\cdot\alpha \pmod{2}. \]
\end{remark}

\begin{proposition} \label{prop:eigenspace-restriction}
We have the following:\begin{itemize}
\item For any $s_0: H_2(Y_0;\Z) \to 2\Z$, the map $I^\#(X,\nu;s)$ is zero on $I^\#(Y_0,\lambda_0;s_0)$ unless $s_0=s|_{Y_0}$.
\item The image of $I^\#(X,\nu;s)$ lies in $I^\#(Y_1,\lambda_1;s|_{Y_1})$.
\end{itemize}
In other words, for each $s$ we can interpret $I^\#(X,\nu;s)$ as a map
\[ I^\#(X,\nu;s): I^\#(Y_0,\lambda_0;s|_{Y_0}) \to I^\#(Y_1,\lambda_1;s|_{Y_1}). \]
\end{proposition}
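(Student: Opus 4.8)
The plan is to deduce Proposition~\ref{prop:eigenspace-restriction} from the following sharper claim about the coefficients $a_{\nu,j}$ furnished by Theorem~\ref{thm:structure-theorem}: for every basic class $K_j$ and all classes $h_0\in H_2(Y_0;\Z)$, $h_1\in H_2(Y_1;\Z)$,
\[ a_{\nu,j}\circ\mu(h_0)=K_j\big((i_0)_*h_0\big)\,a_{\nu,j},\qquad \mu(h_1)\circ a_{\nu,j}=K_j\big((i_1)_*h_1\big)\,a_{\nu,j}. \]
Granting this, the proposition is immediate. Indeed $I^\#(X,\nu;s)$ is zero unless $s=K_j$ for some $j$, in which case it equals $\tfrac12 a_{\nu,j}$; if $a\in I^\#(Y_0,\lambda_0;s_0)$ with $s_0\neq s|_{Y_0}=K_j\circ(i_0)_*$, choose $h_0$ with $s_0(h_0)\neq K_j((i_0)_*h_0)$ and $N$ large enough that $(\mu(h_0)-s_0(h_0))^N$ kills $I^\#(Y_0,\lambda_0;s_0)$, and apply $a_{\nu,j}$ to $(\mu(h_0)-s_0(h_0))^N a=0$: iterating the first identity this gives $\big(K_j((i_0)_*h_0)-s_0(h_0)\big)^N a_{\nu,j}(a)=0$, hence $a_{\nu,j}(a)=0$. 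The second identity shows $\img(a_{\nu,j})\subseteq\ker(\mu(h_1)-K_j((i_1)_*h_1))$ for every $h_1$, so $\img(a_{\nu,j})\subseteq I^\#(Y_1,\lambda_1;K_j\circ(i_1)_*)$.

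To prove the claim I would reinterpret pre- and post-composition with $\mu$-operators as directional derivatives of $\sD_X^\nu$. The relations recorded in \S\ref{ssec:cob-maps} (namely \eqref{eqn:relations-psi}), together with the definition $D_{X,\nu}=\Psi_{X^\#,\nu^\#}$, give $D_{X,\nu}(\mu(h_0)a\otimes z)=D_{X,\nu}(a\otimes(i_0)_*h_0\cdot z)$ and $\mu(h_1)D_{X,\nu}(a\otimes z)=D_{X,\nu}(a\otimes(i_1)_*h_1\cdot z)$ for $z\in\bA(X)$. Taking $z=(1+\tfrac{x}{2})e^h$ and writing $(i_0)_*h_0\cdot e^h=\tfrac{\partial}{\partial t}\big|_{t=0}e^{h+t(i_0)_*h_0}$ inside $\bA(X)$, these become
\[ \sD_X^\nu(h)\circ\mu(h_0)=\partial_{(i_0)_*h_0}\sD_X^\nu(h),\qquad \mu(h_1)\circ\sD_X^\nu(h)=\partial_{(i_1)_*h_1}\sD_X^\nu(h), \]
as identities of $\Hom(I^\#(Y_0,\lambda_0),I^\#(Y_1,\lambda_1))$-valued functions on $H_2(X;\R)$, where $\partial_v$ denotes the directional derivative along $v$. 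Now substitute the structure-theorem formula $\sD_X^\nu(h)=e^{Q(h)/2}\sum_j a_{\nu,j}e^{K_j(h)}$. The crucial input is that a class pushed in from the boundary lies in the radical of the intersection form of $X$: since the composition $H_2(Y_0;\Z)\to H_2(X;\Z)\to H_2(X,\partial X;\Z)$ is zero, $(i_0)_*h_0\cdot h=0$ for all $h\in H_2(X;\R)$, so $Q$ is constant along the line $h+t(i_0)_*h_0$ and $\partial_{(i_0)_*h_0}e^{Q(h)/2}=0$; likewise for $(i_1)_*h_1$. Hence $\partial_{(i_0)_*h_0}\sD_X^\nu(h)=e^{Q(h)/2}\sum_j K_j((i_0)_*h_0)\,a_{\nu,j}e^{K_j(h)}$, whereas $\sD_X^\nu(h)\circ\mu(h_0)=e^{Q(h)/2}\sum_j\big(a_{\nu,j}\circ\mu(h_0)\big)e^{K_j(h)}$. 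Because the $K_j$ are distinct linear functionals, the exponentials $e^{K_j(h)}$ are linearly independent as functions of $h$ (restrict to a line on which the $K_j$ take distinct values), so comparing coefficients yields $a_{\nu,j}\circ\mu(h_0)=K_j((i_0)_*h_0)a_{\nu,j}$, and the second identity follows identically from the postcomposition relation.

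The only delicate point is the radical observation above: without it one would pick up a correction term equal to $(i_0)_*h_0\cdot h$ times $\sD_X^\nu(h)$, and evaluating at $h=0$ would only control the sum $\sum_j a_{\nu,j}=2\,I^\#(X,\nu)$ rather than the individual coefficients attached to single basic classes. Once that term is seen to vanish, the rest of the argument — rewriting $\bA(X)$-insertions as derivatives, linear independence of characters, and converting the intertwining identities into eigenspace statements — is purely formal. In particular no injectivity of $(i_0)_*$ or $(i_1)_*$ is needed, since the argument only ever uses $\mu(h_0)$ acting on $I^\#(Y_0,\lambda_0)$ and the functional $K_j\circ(i_0)_*$ on $H_2(Y_0;\Z)$.
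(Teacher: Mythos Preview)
Your proof is correct and follows essentially the same approach as the paper's. Both arguments differentiate the structure-theorem formula along classes pushed in from the boundary, use that such classes lie in the radical of the intersection form to kill the $e^{Q(h)/2}$ contribution, and then compare coefficients of the linearly independent exponentials $e^{K_j(h)}$; the paper carries this out by fixing a rational basis $\Sigma_1,\dots,\Sigma_k$ of $H_2(X)$ with $\Sigma_1\in H_2(Y_0)$ and applying $\big(\tfrac{\partial}{\partial t_1}-s_0(\Sigma_1)\big)^n$, arriving at exactly your intertwining identities in its equations \eqref{eq:mu-action-in-kernel} and \eqref{eq:mu-action-on-image}, while you package the same computation basis-free via directional derivatives and state the operator identities $a_{\nu,j}\circ\mu(h_0)=K_j((i_0)_*h_0)\,a_{\nu,j}$ upfront.
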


\begin{proof}
We fix a non-torsion class $\Sigma \in H_2(Y_0;\Z)$ and extend it to a rational basis $\Sigma_1 = \Sigma, \Sigma_2,\dots,\Sigma_k$ of $H_2(X;\Z)$.  (The inclusion $Y = \partial X \hookrightarrow X$ gives an injection $H_2(Y;\Q) \to H_2(X;\Q)$, by the long exact sequence of the pair $(X,Y)$ and the fact that $H_3(X,Y;\Q) \cong H^1(X;\Q) = 0$.) We then write
\[ \sD_X^\nu(t_1\Sigma_1 + \dots + t_k\Sigma_k) = e^{Q(\sum t_i\Sigma_i)/2} \sum_{j=1}^r a_j e^{K_j(t_1\Sigma_1+\dots+t_k\Sigma_k)} \]
by Theorem~\ref{thm:structure-theorem}, and observe that
\[ Q\left(\sum_{i=1}^k t_i\Sigma_i\right) = Q\left(\sum_{i=2}^k t_i\Sigma_i\right) \]
does not depend on $t_1$, since $H_2(Y)$ is in the kernel of the intersection pairing on $X$.

We now take an arbitrary element $z \in I^\#(Y_0,\lambda_0;s_0)$ and an integer $n \geq 1$, and we apply the operator $\left(\frac{\partial}{\partial t_1}-s_0(\Sigma_1)\right)^n$ to both sides of
\[ \sD_X^\nu(t_1\Sigma_1+\dots+t_k\Sigma_k)(z) = D_{X,\nu}\left(z\otimes \left(1+\tfrac{x}{2}\right)e^{t_1\Sigma_1+\dots+t_k\Sigma_k}\right). \]
On the left side, we have
\begin{multline} \label{eq:restriction-domain-left}
\left(\frac{\partial}{\partial t_1} - s_0(\Sigma_1)\right)^n \sD^\nu_X(t_1\Sigma_1+\dots+t_k\Sigma_k)(z) \\= e^{Q(\sum t_i\Sigma_i)/2} \sum_{j=1}^r a_j(z)\left(K_j(\Sigma_1)-s_0(\Sigma_1)\right)^n e^{K_j(t_1\Sigma_1+\dots+t_k\Sigma_k)}.
\end{multline}
On the right side, we have
\begin{multline*}
D_{X,\nu}\left(z\otimes \left(1+\tfrac{x}{2}\right)\left(\Sigma_1-s_0(\Sigma_1)\right)^ne^{t_1\Sigma_1+\dots+t_k\Sigma_k}\right) \\
= D_{X,\nu}\left((\mu(\Sigma_1)-s_0(\Sigma_1))^n z\otimes \left(1+\tfrac{x}{2}\right)e^{t_1\Sigma_1+\dots+t_k\Sigma_k}\right),
\end{multline*}
which is equivalently
\begin{multline} \label{eq:restriction-domain-right}
\big(\sD_X^\nu(t_1\Sigma_1+\dots+t_k\Sigma_k)\big)\big((\mu(\Sigma_1)-s_0(\Sigma_1))^nz\big) \\= e^{Q(\sum t_i\Sigma_i)/2} \sum_{j=1}^r a_j\big((\mu(\Sigma_1)-s_0(\Sigma_1))^nz\big) e^{K_j(t_1\Sigma_1+\dots+t_k\Sigma_k)}.
\end{multline}

The various functions $e^{K_j(t_1\Sigma_1+\dots+t_k\Sigma_k)}$ for $1 \leq j \leq r$ are linearly independent as power series in $t_1,\dots,t_k$, so they must have the same coefficients in \eqref{eq:restriction-domain-left} and \eqref{eq:restriction-domain-right}, i.e.,
\[ a_j(z) \cdot (K_j(\Sigma_1)-s_0(\Sigma_1))^n = a_j\big((\mu(\Sigma_1)-s_0(\Sigma_1))^nz\big). \]
The right side is identically zero for $n$ large, since $z$ by definition belongs to the (generalized) $s_0(\Sigma_1)$-eigenspace of $\mu(\Sigma_1)$.  Thus the left side is zero for $n$ large as well, which implies that it is also zero when $n=1$, so in fact we have
\begin{equation} \label{eq:mu-action-in-kernel}
a_j(z) \cdot (K_j(\Sigma_1)-s_0(\Sigma_1)) = a_j\big((\mu(\Sigma_1)-s_0(\Sigma_1))z\big) = 0
\end{equation}
for all $j$.  We conclude that if $K_j(\Sigma_1) \neq s_0(\Sigma_1)$ then $a_j(z) = 0$, and hence $I^\#(X,\nu;s)(z)=0$ for all $s$ which do not satisfy $s(\Sigma_1) = s_0(\Sigma_1)$.  But $\Sigma_1 = \Sigma$ was an arbitrary non-torsion class in $H_2(Y_0;\Z)$, so if $I^\#(X,\nu;s)(z) \neq 0$ then $s|_{Y_0} = s_0$ on all of $H_2(Y_0;\Z)$.

The proof that $I^\#(X,\nu;s)$ sends $I^\#(Y_0,\lambda_0)$ into $I^\#(Y_1,\lambda_1;s|_{Y_1})$ is similar.  We choose our basis $\{\Sigma_k\}$ so that $\Sigma_1$ is a given non-torsion element of $H_2(Y_1;\Z)$, and then apply $\frac{\partial}{\partial t_1}$
to both sides of
\[ e^{Q(\sum t_i\Sigma_i)/2}\sum_{j=1}^r a_j(z)e^{K_j(t_1\Sigma_1+\dots+t_k\Sigma_k)} =
D_{X,\nu}\left(z\otimes \left(1+\tfrac{x}{2}\right)e^{t_1\Sigma_1+\dots+t_k\Sigma_k}\right) \]
to get
\[ e^{Q(\sum t_i\Sigma_i)/2}\sum_{j=1}^r K_j(\Sigma_1)\cdot a_j(z)e^{K_j(t_1\Sigma_1+\dots+t_k\Sigma_k)} \]
on the left side, and
\begin{align*}
D_{X,\nu}\left(z\otimes \left(1+\tfrac{x}{2}\right)\Sigma_1 e^{t_1\Sigma_1+\dots+t_k\Sigma_k}\right)
&= \mu(\Sigma_1) D_{X,\nu}\left(z\otimes \left(1+\tfrac{x}{2}\right) e^{t_1\Sigma_1+\dots+t_k\Sigma_k}\right) \\
&= \mu(\Sigma_1) \cdot (\sD_X^\nu(t_1\Sigma_1+\dots+t_k\Sigma_k))(z) \\
&= e^{Q(\sum t_i\Sigma_i)/2}\sum_{j=1}^r \mu(\Sigma_1)a_j(z) e^{K_j(t_1\Sigma_1+\dots+t_k\Sigma_k)}
\end{align*}
on the right.  Again, the linear independence of the $e^{K_j(\sum t_i\Sigma_i)}$ tells us that
\begin{equation} \label{eq:mu-action-on-image}
\mu(\Sigma_1)a_j(z) = K_j(\Sigma_1) a_j(z)
\end{equation}
for all $j$, so if $I^\#(X,\nu;s)(z)$ is nonzero, then $s=K_j$ for some $j$ and we have
\[ I^\#(X,\nu;s)(z) = \frac{1}{2}a_j(z) \in \ker( \mu(\Sigma_1)-s(\Sigma_1) ). \]
Since $\Sigma_1$ was an arbitrary non-torsion element of $H_2(Y_1;\Z)$, we conclude that the image of $I^\#(X,\nu;s)$ lies in
\[ \bigcap_{h \in H_2(Y_1;\Z)} \ker(\mu(h)-s(h)) \subset I^\#(Y_1,\lambda_1;s|_{Y_1}). \qedhere \]
\end{proof}

\begin{remark} \label{rem:actual-eigenvectors}
We can deduce from the proof of Proposition~\ref{prop:eigenspace-restriction}, and in particular equation~\eqref{eq:mu-action-in-kernel}, that for all $z \in I^\#(Y_0,\lambda_0;s_0)$ and all $\Sigma \in H_2(Y_0;\Z)$ we have
\[ (\mu(\Sigma)-s_0(\Sigma))z \in \ker I^\#(X,\nu;s) \]
for all $s$.  Similarly, equation~\eqref{eq:mu-action-on-image} shows that the image of a map $I^\#(X,\nu;s)$ consists of actual simultaneous eigenvectors, as opposed to generalized ones.
\end{remark}

\begin{proposition} \label{prop:composition-law}
Let $(X,\nu)$ denote the composite cobordism
\[ (Y_0,\lambda_0) \xrightarrow{(X_{01},\nu_{01})} (Y_1,\lambda_1) \xrightarrow{(X_{12},\nu_{12})} (Y_2,\lambda_2), \]
where $b_1(X) = b_1(X_{01}) = b_1(X_{12}) = 0$.  Then for all homomorphisms $s_{01}: H_2(X_{01};\Z) \to \Z$ and $s_{12}: H_2(X_{12};\Z) \to \Z$, we have
\[ I^\#(X_{12},\nu_{12};s_{12}) \circ I^\#(X_{01},\nu_{01};s_{01}) = \sum_{\substack{s: H_2(X;\Z)\to\Z \\ s|_{X_{01}} = s_{01},\ s|_{X_{12}}=s_{12}}} I^\#(X,\nu;s). \]
\end{proposition}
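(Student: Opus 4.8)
The plan is to reduce Proposition~\ref{prop:composition-law} to an identity among the formal cobordism maps of Theorem~\ref{thm:structure-theorem} and then compare coefficients. The key input will be a composition law with $\mu$-insertions for the maps $D_{X,\nu}$: writing $(X,\nu)$ as the composite of $(X_{01},\nu_{01})$ and $(X_{12},\nu_{12})$ as in the statement, I claim that for $z_0\in\bA(X_{01})$ and $z_1\in\bA(X_{12})$, viewed inside $\bA(X)$ via the inclusions $X_{01},X_{12}\hookrightarrow X$, one has
\[ D_{X,\nu}(-\otimes z_0z_1) = D_{X_{12},\nu_{12}}(-\otimes z_1)\circ D_{X_{01},\nu_{01}}(-\otimes z_0). \]
For $z_0=z_1=1$ this is the functoriality of cobordism maps; the general case follows from the standard neck-stretching/gluing analysis for anti-self-dual instantons along $Y_1\#T^3$ as in \cite{donaldson-kronheimer,scaduto}, using that $X^\#\cong X_{01}^\#\cup_{Y_1\#T^3}X_{12}^\#$ for a product choice of connect-sum arc, that $\mu$-classes pulled back from $X_{01}$ (respectively $X_{12}$) are supported away from $X_{12}^\#$ (respectively $X_{01}^\#$), and that every limiting flat connection on the admissible pair $(Y_1\#T^3,\lambda_1^\#)$ is irreducible (cf.\ Remark~\ref{rmk:simpletypeb2}), so the glued moduli spaces factor and the formula survives restriction to the simultaneous $(T^2,\pt)$-eigenspaces.

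Applying this term by term with $z_0=(1+\tfrac x2)e^{h_0}$ and $z_1=(1+\tfrac x2)e^{h_1}$ for $h_0\in H_2(X_{01};\R)$ and $h_1\in H_2(X_{12};\R)$, and using that $\bA(X)$ is commutative (as $b_1(X)=0$), that $e^{h_0}e^{h_1}=e^{h_0+h_1}$ in $\bA(X)$, and that $D_{X,\nu}(-\otimes(1+\tfrac x2)^2w)=2D_{X,\nu}(-\otimes(1+\tfrac x2)w)$ by the simple-type relation of Lemma~\ref{lem:simple-type}, I would obtain
\[ \sD_X^\nu(h_0+h_1) = \tfrac12\,\sD_{X_{12}}^{\nu_{12}}(h_1)\circ\sD_{X_{01}}^{\nu_{01}}(h_0), \]
where $h_0+h_1$ denotes the sum of the images in $H_2(X;\R)$. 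Expanding all three series by Theorem~\ref{thm:structure-theorem} and writing the basic classes and coefficients of $(X_{01},\nu_{01})$, $(X_{12},\nu_{12})$, $(X,\nu)$ as $(K^{01}_j,a_{\nu_{01},j})$, $(K^{12}_{j'},a_{\nu_{12},j'})$, $(K_\ell,a_{\nu,\ell})$, the quadratic prefactors cancel because $Q_X(h_0+h_1)=Q_{X_{01}}(h_0)+Q_{X_{12}}(h_1)$ (the cross term vanishes, as $h_0,h_1$ are carried by the disjoint pieces $X_{01},X_{12}$), leaving
\[ \sum_\ell a_{\nu,\ell}\,e^{K_\ell|_{X_{01}}(h_0)+K_\ell|_{X_{12}}(h_1)} = \tfrac12\sum_{j,j'}(a_{\nu_{12},j'}\circ a_{\nu_{01},j})\,e^{K^{01}_j(h_0)+K^{12}_{j'}(h_1)} \]
as formal functions of $(h_0,h_1)\in H_2(X_{01};\R)\oplus H_2(X_{12};\R)$.

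The functions $e^{\sigma_0(h_0)+\sigma_1(h_1)}$ indexed by pairs of linear functionals $(\sigma_0,\sigma_1)$ are linearly independent, so equating the two sides — on the left grouped by the pair of restrictions $(K_\ell|_{X_{01}},K_\ell|_{X_{12}})$ — gives, for every $s_{01}\colon H_2(X_{01};\Z)\to\Z$ and $s_{12}\colon H_2(X_{12};\Z)\to\Z$,
\[ \sum_{\substack{\ell\,:\,K_\ell|_{X_{01}}=s_{01}\\ K_\ell|_{X_{12}}=s_{12}}} a_{\nu,\ell} = \begin{cases}\tfrac12\,a_{\nu_{12},j'}\circ a_{\nu_{01},j} & s_{01}=K^{01}_j,\ s_{12}=K^{12}_{j'}\\[3pt] 0 & \text{otherwise,}\end{cases} \]
the indices $j,j'$ in the first case being unique because distinct basic classes are distinct. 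Since $I^\#(X,\nu;s)=\tfrac12a_{\nu,\ell}$ when $s=K_\ell$ and $0$ otherwise — and likewise for $X_{01}$ and $X_{12}$ — the left-hand side equals $2\sum_{s|_{X_{01}}=s_{01},\,s|_{X_{12}}=s_{12}}I^\#(X,\nu;s)$, while the right-hand side equals $2\,I^\#(X_{12},\nu_{12};s_{12})\circ I^\#(X_{01},\nu_{01};s_{01})$ in the first case and $0=2\,I^\#(X_{12},\nu_{12};s_{12})\circ I^\#(X_{01},\nu_{01};s_{01})$ in the second (the composite vanishes as soon as $s_{01}$ or $s_{12}$ is not a basic class). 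Dividing by $2$ yields the proposition.

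I expect the main obstacle to be the first step: carefully setting up and justifying the composition law with $\mu$-insertions in the framed theory, and in particular verifying that it is unaffected by the $T^3$-connect-sum construction used to define $D_{X,\nu}$ (this is where the gluing analysis and the irreducibility afforded by admissibility must be invoked). Everything afterward — additivity of the intersection form across the neck, linear independence of the exponentials, and tracking the powers of $\tfrac12$ — is routine bookkeeping.
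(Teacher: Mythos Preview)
Your approach is correct and essentially the same as the paper's. Both derive the identity $\sD_{X_{12}}^{\nu_{12}}(h_1)\circ\sD_{X_{01}}^{\nu_{01}}(h_0)=2\,\sD_X^\nu(h_0+h_1)$ from the composition law for the maps $D_{X,\nu}$ with $\mu$-insertions together with the simple-type relation, then cancel the quadratic prefactors using $Q_X(h_0+h_1)=Q_{X_{01}}(h_0)+Q_{X_{12}}(h_1)$ and compare the resulting exponential sums; the only cosmetic difference is that you invoke linear independence of the exponentials directly, whereas the paper applies carefully chosen differential operators $\delta_{s_{01}},\delta_{s_{12}}$ and then evaluates at zero to isolate the desired coefficients. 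Your worry about the gluing step is fair but applies equally to the paper, which simply asserts $D_{X_{12},\nu_{12}}(-\otimes z_1)\circ D_{X_{01},\nu_{01}}(-\otimes z_0)=D_{X,\nu}(-\otimes z_0z_1)$ as standard.
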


\begin{proof}
Let $\Sigma_1,\dots,\Sigma_k$ and $\Sigma'_1,\dots,\Sigma'_{k'}$ be integral bases of $H_2(X_{01};\Z)$ and $H_2(X_{12};\Z)$, respectively, modulo torsion, and write
\begin{align*}
\sD_{X_{01}}^{\nu_{01}} &= e^{Q/2} \sum_{j=1}^r a_j e^{K_j}, &
\sD_{X_{12}}^{\nu_{12}} &= e^{Q/2} \sum_{j=1}^{r'} a'_j e^{K'_j}.
\end{align*}
We define some finite sets of integers by
\begin{align*}
A_i &= \{ K_j(\Sigma_i) \mid 1 \leq j \leq r \}, &
A'_i &= \{ K'_j(\Sigma'_i) \mid 1 \leq j \leq r' \}
\end{align*}
and let $d_i = s_{01}(\Sigma_i)$ and $d'_i = s_{12}(\Sigma'_i)$ for all $i$; here $i$ ranges from $1$ to $k$ for $A_i$ and $d_i$, and from $1$ to $k'$ for $A'_i$ and $d'_i$.  Then we define some differential operators by
\begin{align*}
\delta_{s_{01}} &= \prod_{i=1}^k \prod_{c \in A_i \ssm \{s_{01}(\Sigma_i)\}} \left( \frac{\frac{\partial}{\partial t_i}-c}{d_i-c} \right), &
\delta_{s_{12}} &= \prod_{i=1}^{k'} \prod_{c \in A'_i \ssm \{s_{12}(\Sigma'_i)\}} \left( \frac{\frac{\partial}{\partial t'_i}-c}{d'_i-c} \right)
\end{align*}
so that if we write $\sK = e^{-Q/2}\sD$ for each cobordism then
\[ \delta_{s_{01}} \sK_{X_{01}}^{\nu_{01}}(t_1\Sigma_1 + \dots + t_k\Sigma_k) = \begin{cases} a_j e^{K_j(t_1\Sigma_1 + \dots + t_k\Sigma_k)} & s_{01}=K_j \\ 0 & s_{01} \not\in \{K_1,\dots,K_r\}. \end{cases} \]
Upon setting $(t_1,\dots,t_k)=(0,\dots,0)$, this becomes
\[ \left.\delta_{s_{01}} \sK_{X_{01}}^{\nu_{01}}(t_1\Sigma_1 + \dots + t_k\Sigma_k)\right|_{t_1=\dots=t_k=0} = 2I^\#(X_{01},\nu_{01};s_{01}), \]
and the same argument gives
\[ \left.\delta_{s_{12}} \sK_{X_{12}}^{\nu_{12}}(t'_1\Sigma'_1+\dots+t'_{k'}\Sigma'_{k'})\right|_{t'_1=\dots=t'_{k'}=0} = 2I^\#(X_{12},\nu_{12};s_{12}). \]

Letting $S = t_1\Sigma_1 + \dots + t_k\Sigma_k$ and $S'=t'_1\Sigma'_1+\dots+t'_{k'}\Sigma'_{k'}$ for convenience, we now compute
\begin{align*}
\sD_{X_{12}}^{\nu_{12}}(S') \circ \sD_{X_{01}}^{\nu_{01}}(S) &= D_{X_{12},\nu_{12}}\left({-}\otimes\left(1+\tfrac{x}{2}\right)e^{S}\right) \circ D_{X_{01},\nu_{01}}\left({-}\otimes\left(1+\tfrac{x}{2}\right)e^{S'}\right) \\
&= D_{X,\nu}\left({-}\otimes \left(1+\tfrac{x}{2}\right)^2e^{S+S'}\right) \\
&= 2 \sD_X^\nu(S+S'),
\end{align*}
since the extra factor of $1+\frac{x}{2}$ in the second line acts on $I^\#(Y_0,\lambda_0)$ as multiplication by $2$.  We have $S \cdot S' = 0$ and hence $Q_X(S+S') = Q_{X_{01}}(S) + Q_{X_{12}}(S')$, so dividing both sides by $4\exp(Q_X(S+S')/2)$ gives
\[ \tfrac{1}{2}\sK_{X_{12}}^{\nu_{12}}(S') \circ \tfrac{1}{2}\sK_{X_{01}}^{\nu_{01}}(S) = \tfrac{1}{2}\sK_X^\nu(S+S'). \]
Now applying $\delta_{s_{12}} \circ \delta_{s_{01}}$ and then setting all $t_i$ and $t'_i$ to zero turns the left side into
\[ I^\#(X_{12},\nu_{12};s_{12}) \circ I^\#(X_{01},\nu_{01};s_{01}). \]
On the right side, the operator $\delta_{s_{12}} \circ \delta_{s_{01}}$ fixes all terms in the series $\frac{1}{2}\sK_X^\nu(S+S')$ of the form $\frac{1}{2}a\cdot e^{K(S+S')}$, where the basic class $K$ satisfies 
\[ K(\Sigma_i)=s_{01}(\Sigma_i),\ 1 \leq i \leq k \quad\mathrm{and}\quad K(\Sigma'_i)=s_{12}(\Sigma'_i),\ 1 \leq i \leq k', \]
or equivalently $K|_{X_{01}} = s_{01}$ and $K|_{X_{12}}=s_{12}$; and it replaces all other terms with $0$.  Setting the $t_i$ and $t'_i$ to zero then gives the sum of $I^\#(X,\nu;s)$ over all $s$ such that $s|_{X_{01}}=s_{01}$ and $s|_{X_{12}}=s_{12}$, as desired.
\end{proof}

Weaker versions of Proposition~\ref{prop:composition-law} still hold in the case where $b_1(X) = 0$ but one of $b_1(X_{01})$ and $b_1(X_{12})$ is positive, even though we have not proved a structure theorem for the corresponding cobordism map on $I^\#$.  We include one such statement for completeness.

\begin{proposition} \label{prop:weak-composition-law}
Let $(X,\nu)$ denote the composite cobordism
\[ (Y_0,\lambda_0) \xrightarrow{(X_{01},\nu_{01})} (Y_1,\lambda_1) \xrightarrow{(X_{12},\nu_{12})} (Y_2,\lambda_2), \]
where $b_1(X) = b_1(X_{01}) = 0$ and $X_{12}$ is a rational homology cobordism.  Then
\[ I^\#(X,\nu;s) = I^\#(X_{12},\nu_{12}) \circ I^\#(X_{01},\nu_{01};s|_{X_{01}}) \]
for all homomorphisms $s: H_2(X;\Z) \to \Z$.
\end{proposition}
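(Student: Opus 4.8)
The plan is to deduce this from the structure theorem (Theorem~\ref{thm:structure-theorem}), which applies to both $X$ and $X_{01}$ since $b_1(X) = b_1(X_{01}) = 0$, together with the composition law for the maps $D_{X,\nu}$ already used in the proof of Proposition~\ref{prop:composition-law}. The first step is purely homological: because $X_{12}$ is a rational homology cobordism, the inclusion $Y_1 \hookrightarrow X_{12}$ is a $\Q$-homology equivalence, so $H_*(X_{12},Y_1;\Q) = 0$, and hence $H_*(X,X_{01};\Q) = 0$ by excision (equivalently, Mayer--Vietoris). Thus $i_* \colon H_2(X_{01};\Q) \to H_2(X;\Q)$ is an isomorphism, where $i \colon X_{01} \hookrightarrow X$ is the inclusion. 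Two consequences will be used repeatedly: the restriction map $s \mapsto s|_{X_{01}} = s \circ i_*$ is injective on $\Hom(H_2(X;\Z),\Z)$, so an integral homomorphism on $H_2(X;\Z)$ is determined by its restriction to $X_{01}$; and $Q_X(i_*h) = Q_{X_{01}}(h)$ for $h \in H_2(X_{01};\R)$, since intersection numbers of classes represented in the interior of $X_{01}$ are computed locally and so do not change in the larger manifold $X$.

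The second step is the formal identity
\[ \sD_X^\nu(h) = I^\#(X_{12},\nu_{12}) \circ \sD_{X_{01}}^{\nu_{01}}(h) \qquad\text{for all } h \in H_2(X_{01};\R), \]
where on the left $h$ is regarded via $i_*$ as a class in $H_2(X;\R)$; this is no loss of generality since $i_*$ is surjective (indeed bijective) over $\R$. To see it, place the point class $x = [\pt]$ and the class $h$ inside $X_{01}$, so that $\bigl(1 + \tfrac{x}{2}\bigr)e^h$ lies in $\bA(X_{01}) \subset \bA(X)$; then the composition law $D_{X,\nu}(-\otimes z) = D_{X_{12},\nu_{12}}(-\otimes 1) \circ D_{X_{01},\nu_{01}}(-\otimes z)$ for $z \in \bA(X_{01})$ --- exactly as invoked in the proof of Proposition~\ref{prop:composition-law} --- together with $D_{X_{12},\nu_{12}}(-\otimes 1) = I^\#(X_{12},\nu_{12})$, gives the identity. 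Note that no structure theorem for $X_{12}$ is needed here, which is the point of the weaker hypotheses.

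The third step is to substitute the structure-theorem expansions $\sD_{X_{01}}^{\nu_{01}}(h) = e^{Q_{X_{01}}(h)/2}\sum_{j} a_j e^{K_j(h)}$ and $\sD_X^\nu(h) = e^{Q_X(h)/2}\sum_{\ell} b_\ell e^{L_\ell(h)}$ into the identity of the second step, cancel the common Gaussian factor $e^{Q_{X_{01}}(h)/2}$ (legitimate by $Q_X \circ i_* = Q_{X_{01}}$), and compare coefficients of exponentials. Picking an integral basis of $H_2(X_{01};\Z)$ modulo torsion turns both sides into finite sums of linearly independent exponential power series; the exponents $L_\ell|_{X_{01}}$ are pairwise distinct because the $L_\ell$ are and restriction is injective, and the $K_j$ are pairwise distinct, so for each homomorphism $\bar{s} \colon H_2(X_{01};\Z) \to \Z$ one obtains $\sum_{L_\ell|_{X_{01}} = \bar{s}} b_\ell = \sum_{K_j = \bar{s}} I^\#(X_{12},\nu_{12}) \circ a_j$. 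Given $s \colon H_2(X;\Z) \to \Z$ and setting $\bar{s} = s|_{X_{01}}$, injectivity of restriction makes the left-hand sum equal to $2\,I^\#(X,\nu;s)$ --- with $L_\ell = s$ for the unique surviving index if there is one, and otherwise no surviving index and $I^\#(X,\nu;s) = 0$ --- while the right-hand sum equals $2\,I^\#(X_{12},\nu_{12}) \circ I^\#(X_{01},\nu_{01};\bar{s})$ directly from the definition of these maps as $\tfrac{1}{2}a_j$ (resp.\ $\tfrac{1}{2}b_\ell$) on basic classes and $0$ otherwise. Dividing by $2$ yields the proposition.

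The only genuinely delicate point is the homological bookkeeping of the first step: pinning down why $X_{12}$ being a rational homology cobordism forces $i_* \colon H_2(X_{01};\Q) \to H_2(X;\Q)$ to be an isomorphism, so that homomorphisms on $H_2(X;\Z)$ are detected by their restrictions and the Gaussian factors agree. Once that is in hand, everything else is a formal manipulation of the structure-theorem expansions, nearly identical to the computation in the proof of Proposition~\ref{prop:composition-law}. (We also remark, although it is not needed since it is part of the hypothesis, that $b_1(X) = 0$ is automatic here, as $b_1(X_{01}) = 0$ and $H_1(Y_1;\Q) \to H_1(X_{12};\Q)$ is an isomorphism.)
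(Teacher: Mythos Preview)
Your proof is correct and follows essentially the same approach as the paper: establish $\sD_X^\nu(h) = I^\#(X_{12},\nu_{12}) \circ \sD_{X_{01}}^{\nu_{01}}(h)$ for $h$ coming from $X_{01}$, cancel the Gaussian factor, and extract coefficients. The only cosmetic differences are that the paper leaves the homological bookkeeping of your first step implicit (simply asserting that a rational basis for $H_2(X_{01})$ is one for $H_2(X)$), and it extracts the coefficient of $e^{s(h)}$ via a differential operator rather than by invoking linear independence of exponentials directly.
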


\begin{proof}
We take a rational basis $\Sigma_1,\dots,\Sigma_k$ of $H_2(X_{01};\Z)$, which by hypothesis is also a rational basis of $H_2(X;\Z)$.  Then, following the proof of Proposition~\ref{prop:composition-law}, we can write
\[ \sD_X^\nu(t_1\Sigma_1 + \dots + t_k \Sigma_k) = D_{X_{12},\nu_{12}}({-}\otimes 1) \circ \sD_{X_{01}}^{\nu_{01}}(t_1\Sigma_1 + \dots + t_k \Sigma_k) \]
and divide both sides by $e^{Q(\sum_i t_i\Sigma_i)/2}$ to get
\[ \sK_X^\nu(t_1\Sigma_1 + \dots + t_k \Sigma_k) = I^\#(X_{12},\nu_{12}) \circ \sK_{X_{01}}^{\nu_{01}}(t_1\Sigma_1 + \dots + t_k \Sigma_k). \]
We apply the differential operator 
\[ \prod_{i=1}^k \prod_{\substack{c \in A_i \\ c \neq s(\Sigma_i)}} \left(\frac{\frac{\partial}{\partial t_i}-c}{s(\Sigma_i)-c}\right) \]
to both sides, where $A_i$ is the set of values of $K(\Sigma_i)$ as $K$ ranges over all basic classes of $X$, and then set $t_1=\dots=t_k=0$ and divide by $2$ to get the desired relation.
\end{proof}

Finally, the adjunction inequality in Theorem~\ref{thm:main-cobordism-decomposition} gives us the following general result.

\begin{proposition} \label{prop:adjunction-inequality}
Let $(X,\nu): (Y_0,\lambda_1) \to (Y_1,\lambda_1)$ be a cobordism with $b_1(X)=0$.  Suppose that $X$ contains a smoothly embedded, closed surface $S$ satisfying one of the following:
\begin{itemize}
\item $[S]\cdot [S] \geq \max(2g(S)-1,1)$; or
\item $S$ is a sphere with $[S]\cdot[S]=0$ and $[S] \cdot [F] \neq 0$ for some smooth, closed surface $F \subset X$.
\end{itemize}
Then the cobordism map $I^\#(X,\nu)$ is identically zero.
\end{proposition}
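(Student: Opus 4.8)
The plan is to reduce everything to the decomposition and adjunction inequality of Theorem~\ref{thm:main-cobordism-decomposition}. Writing $I^\#(X,\nu) = \sum_s I^\#(X,\nu;s)$, it is enough to prove that $I^\#(X,\nu;s) = 0$ for every homomorphism $s$. By property~\eqref{i:adjunction} this follows as soon as we produce a single smoothly embedded, connected, orientable surface $\Sigma \subset X$ with $g(\Sigma) \geq 1$, positive self-intersection, and $[\Sigma]\cdot[\Sigma] \geq 2g(\Sigma)-1$: no $s$ can then satisfy $|s([\Sigma])| + [\Sigma]\cdot[\Sigma] \leq 2g(\Sigma)-2$, because its left-hand side is already at least $[\Sigma]\cdot[\Sigma] \geq 2g(\Sigma)-1$. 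So the whole proof amounts to extracting such a surface from each of the two hypotheses.

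In the first case, if $g(S) \geq 1$ we take $\Sigma = S$ directly, since $[S]\cdot[S] \geq \max(2g(S)-1,1)$ supplies both positivity of the self-intersection and the bound $[S]\cdot[S] \geq 2g(S)-1$. If instead $g(S) = 0$, then $[S]\cdot[S] \geq 1$, and attaching a trivial handle to $S$ inside a ball gives a genus-one surface $\Sigma$ with $[\Sigma] = [S]$ and the same self-intersection, so that $[\Sigma]\cdot[\Sigma] \geq 1 = 2g(\Sigma)-1$.

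For the second case I would first pass to a connected component of $F$ pairing nontrivially with $[S]$ and, after replacing $F$ by $-F$ if needed, assume $d := [S]\cdot[F] > 0$. Next I would arrange that $F$ meets $S$ transversally in exactly $d$ points: whenever there are more than $d$ intersection points there is a pair of opposite sign, and tubing $F$ along an embedded arc in $S$ joining them (such an arc exists because $S$ is a sphere) removes this pair while preserving $[F]$ and increasing $g(F)$ by one. Writing $g_0$ for the genus of the resulting surface, still denoted $F$, I would then, for each $k \geq 1$, take $k$ pairwise disjoint parallel copies of $S$ (available since $[S]\cdot[S]=0$) and resolve all $kd$ of their intersection points with $F$ to obtain a connected, orientable, embedded surface $\Sigma_k \subset X$ with $[\Sigma_k] = k[S] + [F]$, hence $[\Sigma_k]\cdot[\Sigma_k] = 2kd + [F]\cdot[F]$, and with $g(\Sigma_k) = g_0 + k(d-1)$ by an Euler-characteristic count. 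Then $[\Sigma_k]\cdot[\Sigma_k] - (2g(\Sigma_k)-1) = 2k + [F]\cdot[F] - 2g_0 + 1$, which is positive for $k$ large; for such $k$ the surface $\Sigma_k$ (after attaching a single trivial handle in the degenerate case $d=1$, $g_0=0$, where $\Sigma_k$ is a sphere) has genus at least one, positive self-intersection, and $[\Sigma_k]\cdot[\Sigma_k] \geq 2g(\Sigma_k)-1$, so we finish exactly as in the first case.

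I expect the second case to be the crux, and within it the step that genuinely requires care is reducing the geometric intersection number of $S$ and $F$ to its algebraic value $d$. This reduction is essential: only when $S$ and $F$ meet minimally does the genus of $\Sigma_k$ grow slowly enough relative to its self-intersection to force a contradiction with the adjunction inequality; with excess intersections one merely obtains a bound like $|s([S])| \leq 2(m-d)$ rather than the vanishing of all the maps $I^\#(X,\nu;s)$. The tubing argument that achieves the reduction is standard four-manifold topology, but it is precisely where the hypotheses that $S$ is a sphere and that $[S]\cdot[S]=0$ are both used.
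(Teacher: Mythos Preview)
Your proof is correct and follows essentially the same line as the paper's, which likewise reduces the second case to the first by forming $\Sigma_k = F + k[S]$ after arranging that $S$ and $F$ meet in exactly $d$ points, and then invokes the adjunction inequality of Theorem~\ref{thm:main-cobordism-decomposition}. One small correction to your commentary: the existence of an embedded arc in $S$ joining two intersection points does not require $S$ to be a sphere; the sphere hypothesis is actually what makes the genus formula $g(\Sigma_k) = g_0 + k(d-1)$ come out right (each copy of $S$ contributes $+2$ to the Euler characteristic), so that $[\Sigma_k]\cdot[\Sigma_k] - (2g(\Sigma_k)-1)$ grows like $2k$ rather than stalling.
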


\begin{proof}
In the first case, if $g(S) \geq 1$ then Theorem~\ref{thm:main-cobordism-decomposition} says that $I^\#(X,\nu)$ is a sum of various maps $I^\#(X,\nu;s)$, all of which are zero by part~\eqref{i:adjunction} of that theorem. If instead $S$ is a sphere then we attach a handle to get a torus $T$ of positive self-intersection and apply the same argument to $T$.

In the second case, we follow \cite[\S6(ii)]{km-embedded2}: replacing $F$ with $-F$ if needed, we arrange that $n = [S]\cdot[F]$ is positive and that $S \cap F$ consists of $n$ points.  We form another surface $S'$ by taking the union of $F$ with $d>0$ parallel, disjoint copies of $S$ and smoothing out the $dn$ points of intersection.  The result has genus
\[ g(S') = 1 - \frac{\chi(S')}{2} = 1-\frac{(2-2g(F)) + 2d - 2dn}{2}  = g(F) + d(n-1) \]
and self-intersection
\[ [S']\cdot[S'] = ([F]+d[S]) \cdot ([F]+d[S]) = [F]\cdot[F] + 2dn. \]
For $d$ sufficiently large, we have $[S]\cdot[S] \geq 1$, and also
\[ [S']\cdot[S']-(2g(S')-1) = 2d + ([F]\cdot[F] - (2g(F)-1)) > 0, \]
so we can apply the first case to $S'$ to conclude that $I^\#(X,\nu)=0$.
\end{proof}

\section{Instanton L-space knots are fibered} \label{sec:L-spaces}

Our primary goal in this section is to  prove that  instanton L-space knots are fibered, with Seifert genus equal to  smooth slice genus (Theorem \ref{thm:l-space-knots-are-fibered}). The equality $g=g_s$ of Seifert and smooth slice genus will also follow from our result, proved in the next section, that instanton L-space knots are strongly quasipositive. However, our proof of strong quasipositivity  uses  the Giroux correspondence, whereas our proof that $g=g_s$ here does not. We will also prove in this section some  results related to the bound on L-space slopes in Theorem \ref{thm:main-surgery-reduction}, which are enough to conclude, without using the Giroux correspondence, that the framed instanton homology of surgeries detects the trefoils among nontrivial knots. We will prove the general $2g-1$ bound  in the next section (again, using the Giroux correspondence).

Our proof that instanton L-space knots are fibered follows the outline described in the introduction. Let us suppose below that $K\subset S^3$ is a nontrivial knot of genus $g=g(K)>0$, with minimal genus Seifert surface $\Sigma$. For each  integer $k\geq 0$,  consider the  $2$-handle cobordisms 
\begin{align*}
X_k&:S^3 \to S^3_k(K)\\
W_{k+1}&:S^3_{k}(K)\to S^3_{k+1}(K),
\end{align*}
 where $X_k$ is the trace of $k$-surgery on $K$, and $W_{k+1}$ is the trace of $-1$-surgery on a meridian of $K$ in $S^3_{k}(K)$. A handleslide reveals that \begin{equation}\label{eqn:XWblowup}X_k \cup_{S^3_k(K)} W_{k+1} \cong X_{k+1} \# \cptwo.\end{equation} By Theorem~\ref{thm:exact-triangle} and \eqref{eq:triangle-untwisted}, the maps induced by $X_k$ and $W_{k+1}$ fit into  exact triangles
\begin{equation} \label{eq:triangle-0-cobordisms}
\dots \to I^\#(S^3) \xrightarrow{I^\#(X_0,\nu_0)} I^\#(S^3_0(K),\mu) \xrightarrow{I^\#(W_1,\omega_1)} I^\#(S^3_1(K)) \to \dots
\end{equation}
and
\begin{equation} \label{eq:triangle-n-cobordisms}
\dots \to I^\#(S^3) \xrightarrow{I^\#(X_k,\nu_k)} I^\#(S^3_k(K)) \xrightarrow{I^\#(W_{k+1},\omega_{k+1})} I^\#(S^3_{k+1}(K)) \to \dots
\end{equation}
for $k \geq 1$, for some $\nu_k$ and $\omega_{k+1}$.   Observe that the maps induced by $X_k$ and $W_{k+1}$ shift the mod 2 grading by $1$ and $0$, respectively, for all $k \geq 0$,  by Proposition~\ref{prop:grading-shift}.

Let us denote by
\[ \Sigma_k \subset X_k \]
 the surface of genus $g$ and self-intersection $k$ obtained by gluing a core of the 2-handle used to form $X_k$ to the minimal genus Seifert surface $\Sigma$ for $K$. The construction in \cite[\S3.3]{scaduto} tells us that
\begin{equation} \label{eq:nu-dot-sigma}
[\nu_k] \cdot [\Sigma_k] \equiv \begin{cases} 1 & k=0 \\ k & k \geq 1 \end{cases} \pmod{2}.
\end{equation}
A homomorphism \[s: H_2(X_k;\Z) \to \Z\] is determined by its evaluation on $[\Sigma_k]$, and Theorem~\ref{thm:main-cobordism-decomposition} says that the map $I^\#(X_k,\nu_k;s)$ is nonzero only if \[s([\Sigma_k]) \equiv \Sigma_k\cdot\Sigma_k = k \pmod{2}.\]  With this as motivation, we denote by
\[ t_{k,i}: H_2(X_k;\Z) \to \Z \]
 the unique homomorphism sending $[\Sigma_k]$ to $2i-k$.  The adjunction inequality of Theorem~\ref{thm:main-cobordism-decomposition} implies that for $k\geq 1$ the map
\[ I^\#(X_k,\nu_k;t_{k,i}): I^\#(S^3) \to I^\#(S^3_k(K)) \]
is nontrivial only if
\begin{equation} \label{eq:t_i-adjunction}
|t_{k,i}([\Sigma_k])| + k \leq 2g-2 \ \Longleftrightarrow\ 1-g+k \leq i \leq g-1.
\end{equation}
For each integer $k\geq 0$ and every integer $i$, let us define
\begin{equation} \label{eq:define-zki}
z_{k,i} := I^\#(X_k,\nu_k;t_{k,i})({\bf 1}),
\end{equation}
where ${\bf 1}$ is a fixed generator of $I^\#(S^3) \cong \C$.  For $k\geq 1$, these elements  belong to $I^\#_\godd(S^3_k(K))$ since the map induced by $X_k$ shifts the mod 2 grading by 1, as noted above.  Moreover,  for $k = 0$, we have that
\[ z_{0,i} \in I^\#_\godd(S^3_0(K),\mu;s_i) \]
where \[s_i: H_2(S^3_0(K);\Z) \to 2\Z\] is the unique homomorphism sending $[\hat\Sigma]$ to $2i$, for $\hat\Sigma$  the  capped-off Seifert surface in $S^3_0(K)$ (note that $[\hat\Sigma] = [\Sigma_0]\in H_2(X_0;\Z)$). For convenience, we define \[y_i :=z_{0,i}\in I^\#_\godd(S^3_0(K),\mu;s_i).\] Note that $y_i$ is nonzero only if \[1-g\leq i\leq g-1,\]  by the adjunction inequality of Theorem \ref{thm:eigenspace-decomposition}. As outlined in the introduction, our proof that instanton L-space knots are fibered relies on  understanding  the kernel of  a composition of the $W_k$ maps in terms of these  $y_i$ (see Lemma \ref{lem:ker-vn}). We prove the technical results we will need for this in the subsection below.

\subsection{The $W_k$ cobordism maps}

Our goal in this section is to understand the  images of the  elements $z_{k,i}$ defined in \eqref{eq:define-zki} for $k\geq 0$ under the maps
\begin{align*}
I^\#(W_1,\omega_1) &: I^\#(S^3_0(K),\mu) \to I^\#(S^3_1(K)) \\
I^\#(W_{k+1},\omega_{k+1}) &: I^\#(S^3_k(K)) \to I^\#(S^3_{k+1}(K)).
\end{align*}
Our main result along these lines is  the following.

\begin{proposition} \label{prop:w-z-image}
There are constants $\epsilon_k = \pm 1$ for all $k \geq 0$, depending only on $k$, such that
\[ I^\#(W_1,\omega_1)(z_{0,i}) = \frac{\epsilon_0}{2}\cdot (-1)^i \left(z_{1,i} + z_{1,i+1}\right) \]
and also
\[ I^\#(W_{k+1},\omega_{k+1})(z_{k,i}) = \frac{\epsilon_k}{2} \left(z_{k+1,i} - z_{k+1,i+1}\right) \]
for all $k \geq 1$.
\end{proposition}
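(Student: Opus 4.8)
The plan is to compute the composite $I^\#(W_{k+1},\omega_{k+1})\circ I^\#(X_k,\nu_k;t_{k,i})$ directly from the handleslide identity \eqref{eqn:XWblowup} using the composition law and blow-up formula of Theorem~\ref{thm:main-cobordism-decomposition}. Write $X:=X_k\cup_{S^3_k(K)}W_{k+1}$, so $X\cong X_{k+1}\#\cptwo$; let $e$ be the exceptional sphere, $E=\PD(e)$, so that $H_2(X;\Z)=\Z[\Sigma_{k+1}]\oplus\Z e$ with $[\Sigma_{k+1}]^2=k+1$, $e^2=-1$, $[\Sigma_{k+1}]\cdot e=0$. Comparing self-intersections ($[\Sigma_k]^2=k=(k+1)+(-1)$) shows that the inclusion $X_k\hookrightarrow X$ carries $[\Sigma_k]$ to $[\Sigma_{k+1}]-e$ once orientations are fixed. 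Since $b_1(X_k)=b_1(W_{k+1})=b_1(X)=0$, property~\eqref{i:composition-law} of Theorem~\ref{thm:main-cobordism-decomposition}, summed over the eigenspace pieces of the $W_{k+1}$-map, gives
\[
I^\#(W_{k+1},\omega_{k+1})(z_{k,i}) = \Bigl(\sum_{\substack{s:H_2(X;\Z)\to\Z\\ s|_{X_k}=t_{k,i}}} I^\#(X,\hat\nu;s)\Bigr)({\bf 1}),
\]
where $\hat\nu$ is a $\nu$-surface on $X$ restricting to $\nu_k$ on $X_k$ and to $\omega_{k+1}$ on $W_{k+1}$ (so one first checks such a $\hat\nu$ exists, which follows from the construction of $\nu_k$ and $\omega_{k+1}$), and the sum is finite by part~\eqref{i:finite-support}.

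Next I would feed each term through the blow-up formula~\eqref{i:blowup-formula}. Writing $s=\bar s+mE$ with $\bar s=s|_{X_{k+1}}$ and $m=-s(e)$, we get $I^\#(X,\hat\nu;s)=0$ unless $m=\pm1$, i.e.\ $s(e)=\mp1$, and otherwise $I^\#(X,\hat\nu;s)=\pm\tfrac12\,I^\#(X_{k+1},\nu_{k+1};\bar s)$, where the relative sign between the two surviving terms is governed by the parity of $\hat\nu\cdot e$: the first blow-up formula applies if it is even, the second if it is odd. The constraint $s|_{X_k}=t_{k,i}$ reads $s([\Sigma_{k+1}])-s(e)=2i-k$, so for $s(e)=-1$ we get $s([\Sigma_{k+1}])=2i-(k+1)$, i.e.\ $\bar s=t_{k+1,i}$, and for $s(e)=+1$ we get $\bar s=t_{k+1,i+1}$; evaluating on ${\bf 1}$ produces $z_{k+1,i}$ and $z_{k+1,i+1}$ respectively. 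Thus $I^\#(W_{k+1},\omega_{k+1})(z_{k,i})$ is automatically a $\pm\tfrac12$-combination of $z_{k+1,i}$ and $z_{k+1,i+1}$, and the proposition is reduced to a sign computation.

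To pin down the signs, I would use \eqref{eq:nu-dot-sigma}: since $\hat\nu$ restricts to $\nu_k$ on $X_k$ and (up to mod-$2$ equivalence) to $\nu_{k+1}$ on the $X_{k+1}$ summand, we have $\hat\nu\cdot e\equiv \hat\nu\cdot[\Sigma_{k+1}]-\hat\nu\cdot[\Sigma_k]\equiv (\nu_{k+1}\cdot\Sigma_{k+1})-(\nu_k\cdot\Sigma_k)\pmod 2$, which equals $1-1\equiv 0$ when $k=0$ and $(k+1)-k\equiv 1$ when $k\geq1$. Hence for $k\geq1$ the second case of the blow-up formula applies, giving opposite signs to the $s(e)=\pm1$ terms and yielding $\tfrac{\epsilon_k}{2}(z_{k+1,i}-z_{k+1,i+1})$; while for $k=0$ the first case applies, giving the same sign to both terms, $\tfrac{\epsilon_0}{2}(z_{1,i}+z_{1,i+1})$. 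The extra factor $(-1)^i$ in the $k=0$ case comes from one more application of the sign-change property~\eqref{i:sign-change}: since $\nu_0$ restricts on the $S^3_0(K)$-boundary to the curve $\mu$, which is Poincar\'e dual to $\hat\Sigma$, the surface $\hat\nu$ differs from the blow-up of $\nu_1$ by a class $\alpha$ with $\alpha\cdot[\Sigma_1]$ odd, and the resulting factor $(-1)^{\frac12(s(\alpha)+\alpha\cdot\alpha)+\hat\nu\cdot\alpha}$ evaluates to $(-1)^i$ up to an $i$-independent sign. All remaining ambiguities — the orientations of $e$ and $\Sigma_{k+1}$, and the precise identification of $\hat\nu$ with blow-ups of $\nu_k,\omega_{k+1},\nu_{k+1}$ — depend only on $k$ and are absorbed into the constant $\epsilon_k=\pm1$.

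The main obstacle is exactly this last step: everything after the reduction is a formal consequence of Theorem~\ref{thm:main-cobordism-decomposition}, but making the identification of $\hat\nu$ through the handleslide explicit enough to confirm that the relative sign between the two terms is $+$ for $k=0$ and $-$ for $k\geq1$, and to isolate the $(-1)^i$ factor in the $k=0$ case, requires a careful and somewhat delicate analysis of the bundle data along the $2$-handle cobordisms (following \cite[\S3.3]{scaduto}) and their behavior under \eqref{eqn:XWblowup}.
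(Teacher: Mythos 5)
Your high-level strategy — use the composition law to rewrite $I^\#(W_{k+1},\omega_{k+1})\circ I^\#(X_k,\nu_k;t_{k,i})$ as a sum of maps on $X\cong X_{k+1}\#\cptwo$, then feed each term through the blow-up formula and track signs — is essentially the same as the paper's (where the analogous step is Lemma~\ref{lem:compose-x-w-one-s}). The reduction to a $\pm\tfrac12$-combination of $z_{k+1,i}$ and $z_{k+1,i+1}$ is correct and matches the paper's computation. But there is a genuine gap in your sign analysis for $k=0$.

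The problem is your computation of $\hat\nu\cdot e\pmod 2$. You write $\hat\nu\cdot e\equiv (\nu_{k+1}\cdot\Sigma_{k+1})-(\nu_k\cdot\Sigma_k)\pmod 2$, justified by the assumption that $\hat\nu:=\nu_k\cup\omega_{k+1}$ restricts mod $2$ to $\nu_{k+1}$ on the $X_{k+1}$ summand. That assumption is false when $k=0$: as the paper shows in \eqref{eqn:nu'mod2}, $[\nu_0\cup\omega_1]\equiv[\nu_1]+[e]+[\Sigma_1]\pmod 2$, so $\hat\nu$ differs from the pushed-forward $\nu_1$ by an $e$ \emph{and} a $\Sigma_1$, and the latter has odd intersection with $\Sigma_1$. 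Consequently your formula gives $\hat\nu\cdot e\equiv 0$ when $k=0$, but in fact $\hat\nu\cdot e$ is odd for \emph{all} $k\geq 0$ — this is a result of Scaduto (cited in the paper as \cite[\S3.4]{scaduto}) about the bundle data in the surgery exact triangle, and it is a genuine geometric input that cannot be recovered by the bookkeeping in your proposal. You then notice, in the very next sentence, that the surface $\hat\nu$ differs from the blow-up of $\nu_1$ by a class $\alpha$ with $\alpha\cdot[\Sigma_1]$ odd, which directly contradicts the assumption you used two lines earlier to compute $\hat\nu\cdot e$. So the argument is internally inconsistent: the wrong parity for $\hat\nu\cdot e$ (leading you to the first blow-up formula, equal signs) and the correction by $\alpha$ are two errors that happen to compensate and land on the right final answer. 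The correct derivation — as in the paper — applies the \emph{second} blow-up formula (since $\hat\nu\cdot e$ is odd) to get opposite signs, and then the sign-change property for $\alpha=\Sigma_1$ flips one of the two terms and produces both the ``$+$'' and the overall $(-1)^i$ factor. To make your proof correct you would need to cite Scaduto's parity result up front rather than trying to deduce it, and then carry the extra $\Sigma_1$ term through the blow-up formula and sign-change step consistently.
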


This proposition will follow from two lemmas below. To begin, recall from \eqref{eqn:XWblowup} that \[X_k \cup_{S^3_k(K)} W_{k+1} \cong X_{k+1} \# \cptwo.\] 
Furthermore, letting $e \subset \cptwo$ be the exceptional sphere,  this diffeomorphism identifies $[\Sigma_k]$ on the left with $[\Sigma_{k+1}]-[e]$ on the right as elements of $H_2(X_{k+1}\#\cptwo) \cong \Z^2$.  

\begin{lemma} \label{lem:generator-w}
 $H_2(W_{k+1};\Z)\cong\Z$, and this group is generated by the class of a surface $F_{k+1}$ such that
\[ [F_{k+1}] = [\Sigma_{k+1}] - (k+1)[e]\textrm{ in } H_2(X_{k+1}\#\cptwo;\Z), \]
and hence $F_{k+1}\cdot F_{k+1} = -k(k+1)$.
\end{lemma}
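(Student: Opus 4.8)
The plan is to get the rank statement from the fact that $W_{k+1}$ is a $2$-handle cobordism, and then to pin down the homology class of its generator by combining a disjointness observation with an index count coming from Mayer--Vietoris, treating the non--rational-homology-sphere case $k=0$ separately.

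First I would note that, since $W_{k+1}$ is built from $S^3_k(K)\times[0,1]$ by attaching a single $2$-handle along the meridian $\mu$ of $K$ in $S^3_k(K)$, it deformation retracts onto $S^3_k(K)\cup_\mu D^2$. The homology long exact sequence of the pair $(W_{k+1},S^3_k(K))$, whose relative homology is that of $S^2$ with the relative fundamental class mapping to $[\mu]$, then yields
\[ 0 \to H_2(S^3_k(K)) \to H_2(W_{k+1}) \to \ker\!\big([\mu]_*\colon \Z\to H_1(S^3_k(K))\big) \to 0 \]
together with $H_1(W_{k+1})=H_1(S^3_k(K))/\langle[\mu]\rangle$. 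For $k\ge 1$ we have $H_2(S^3_k(K))=0$ and $[\mu]_*\colon\Z\to H_1(S^3_k(K))\cong\Z/k$ surjective with kernel $\cong\Z$, and also $H_1(W_{k+1})=0$; for $k=0$ the right-hand term vanishes and $H_2(S^3_0(K))\xrightarrow{\ \cong\ }H_2(W_1)$. Either way $H_2(W_{k+1})\cong\Z$.

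For the class of a generator when $k=0$, I would simply push $\hat\Sigma\subset S^3_0(K)$ into $\operatorname{int}(W_1)$; it generates $H_2(W_1)$ by the isomorphism just noted, and inside $X_1\#\cptwo=X_0\cup_{S^3_0(K)}W_1$ its class is $[\hat\Sigma]=[\Sigma_0]=[\Sigma_1]-[e]$, with self-intersection $0=-0\cdot 1$, so $F_1:=\hat\Sigma$ works. For $k\ge 1$, I would choose an embedded closed surface $F_{k+1}\subset\operatorname{int}(W_{k+1})$ generating $H_2(W_{k+1})\cong\Z$ and write $[F_{k+1}]=a[\Sigma_{k+1}]+b[e]$ in $H_2(X_{k+1}\#\cptwo)\cong\Z^2$, with basis $[\Sigma_{k+1}],[e]$. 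Since $\Sigma_k$ can be taken in $\operatorname{int}(X_k)$ and $F_{k+1}$ in $\operatorname{int}(W_{k+1})$, the two closed surfaces are disjoint in $X_{k+1}\#\cptwo=X_k\cup_{S^3_k(K)}W_{k+1}$, so $[\Sigma_k]\cdot[F_{k+1}]=0$; because $[\Sigma_k]=[\Sigma_{k+1}]-[e]$ and the orthogonal complement of $(1,-1)$ in $\big(\Z^2,\operatorname{diag}(k+1,-1)\big)$ is precisely $\Z\cdot\big([\Sigma_{k+1}]-(k+1)[e]\big)$, this forces $[F_{k+1}]=m\big([\Sigma_{k+1}]-(k+1)[e]\big)$ for some $m\in\Z$. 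Finally, Mayer--Vietoris for $X_{k+1}\#\cptwo=X_k\cup_{S^3_k(K)}W_{k+1}$, using $H_2(S^3_k(K))=0$ and $H_1(X_k)=H_1(W_{k+1})=0$, shows that $H_2(X_k)\oplus H_2(W_{k+1})\hookrightarrow H_2(X_{k+1}\#\cptwo)$ with cokernel $H_1(S^3_k(K))\cong\Z/k$, so its image has index $k$; that image is spanned by $[\Sigma_k]=(1,-1)$ and $[F_{k+1}]=(m,-m(k+1))$, a sublattice of index $|m|k$, so $|m|=1$. Reversing the orientation of $F_{k+1}$ if necessary gives $[F_{k+1}]=[\Sigma_{k+1}]-(k+1)[e]$, and then $F_{k+1}\cdot F_{k+1}=(k+1)-(k+1)^2=-k(k+1)$ — the self-intersection of a closed surface in the interior of the codimension-zero submanifold $W_{k+1}$ being the same whether computed there or in $X_{k+1}\#\cptwo$.

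The steps are all standard algebraic topology; the points needing the most care are the disjointness claim (being explicit that $\Sigma_k$ and $F_{k+1}$ really can be realized in the interiors of the two pieces) and keeping the Mayer--Vietoris bookkeeping straight, while the case $k=0$ must be peeled off because $S^3_0(K)$ is not a rational homology sphere — though there the capped Seifert surface $\hat\Sigma$ itself serves directly as the generator.
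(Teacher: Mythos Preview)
Your proof is correct and follows essentially the same strategy as the paper: orthogonality of $[F_{k+1}]$ to $[\Sigma_k]$ from disjointness, combined with a Mayer--Vietoris index count, with $k=0$ handled separately. The only minor difference is bookkeeping: you compute the lattice index directly as $|m|k$ and identify the cokernel as exactly $\Z/k\Z$, whereas the paper obtains $|m|=1$ via a squeeze argument comparing determinants of the Gram matrices of the intersection forms together with the bound $I\le k$.
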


\begin{proof}
When $k=0$, we  can see  that $H_2(W_1;\Z) \cong \Z$  by turning $W_1$ upside-down so that it is obtained  from the homology sphere $-S^3_1(K)$ by attaching a $0$-framed $2$-handle.  The inclusions $S^3_0(K) \hookrightarrow X_0$ and $S^3_0(K) \hookrightarrow W_1$ induce isomorphisms $\Z \to \Z$ on second homology, so the generators $[\Sigma_0]$ and $[F_1]$ are both the images of a generator of $H_2(S^3_0(K))$, hence they agree up to sign in $H_2(X_1\#\cptwo)$.  We can therefore take $[F_1] = [\Sigma_0] = [\Sigma_1] - [e]$.

Let us  assume from here on that $k \geq 1$.  The Mayer--Vietoris sequence for $X_k \cup W_{k+1}$, together with the fact that $H_2(S^3_k(K);\Z)=0$, gives an exact sequence
\begin{equation}\label{eqn:mvexact} 0 \to H_2(X_k) \oplus H_2(W_{k+1}) \to H_2(X_{k+1}\#\cptwo) \to \Z/k\Z. \end{equation}
A generator $[F_{k+1}]$ of $H_2(W_{k+1})$ must be orthogonal to $[\Sigma_k] = [\Sigma_{k+1}]-[e]$ in $H_2(X_{k+1}\#\cptwo)$ since they can be represented by disjoint surfaces, the former in $W_{k+1}$ and the latter in $X_k$.  Thus, $[F_{k+1}]$ must be an integer multiple of $[\Sigma_{k+1}]-(k+1)[e]$, which has self-intersection $-k(k+1)$.
Now, the intersection forms on $H_2(X_k) \oplus H_2(W_{k+1})$ and $H_2(X_{k+1}\#\cptwo)$ have Gram matrices \[\left(\begin{smallmatrix} k & 0 \\ 0 & [F_{k+1}]^2\end{smallmatrix}\right)\textrm{ and }\left(\begin{smallmatrix} k+1 & 0 \\ 0 & -1\end{smallmatrix}\right),\] respectively, with determinants $[F_{k+1}]^2 \cdot k$ and $-(k+1)$.  The former is a sublattice of the latter of some index $I \leq k$, by \eqref{eqn:mvexact}. We therefore have that 
\[ k(k+1)\cdot k \leq |[F_{k+1}]^2 \cdot k| = |I^2(-(k+1))| \leq k^2(k+1). \]
Since the left and right sides are equal,  we must have $[F_{k+1}]^2 = -k(k+1)$ as claimed, and up to reversing orientation it follows that $[F_{k+1}] = [\Sigma_{k+1}] - (k+1)[e]$.
\end{proof}

Noting that $W_{k+1}$ has even intersection form, let us define 
\[ s_{k+1,j}: H_2(W_{k+1};\Z) \to \Z \]
to be the unique homomorphism sending $[F_{k+1}]$ to $2j$.  In order to understand the images of the  $z_{k,i}$ under the map induced by $W_{k+1}$, we  first describe the composition
\begin{equation} \label{eq:compose-x-w}
I^\#(W_{k+1},\omega_{k+1};s_{k+1,j}) \circ I^\#(X_k,\nu_k;t_{k,i})
\end{equation}
in terms of the maps induced by $X_{k+1}$, as per the lemma below.

\begin{lemma} \label{lem:compose-x-w-one-s}
For $k=0$, there is a  constant $\epsilon_0 = \pm1$ such that the composition \eqref{eq:compose-x-w} is equal to
\[ \frac{\epsilon_0}{2} \cdot (-1)^i\left( I^\#(X_1,\nu_1;t_{1,i}) + I^\#(X_1,\nu_1;t_{1,i+1}) \right) \]
when $j=i$, and it is zero for $j\neq i$.
For $k\geq 1$, the composition \eqref{eq:compose-x-w} is instead equal to
\[ \begin{cases}
\phantom{-}\frac{\epsilon_k}{2} I^\#(X_{k+1},\nu_{k+1};t_{k+1,i}) & j=i \\
-\frac{\epsilon_k}{2} I^\#(X_{k+1},\nu_{k+1};t_{k+1,i+1}) & j=i-k, 
\end{cases} \]
where $\epsilon_k = \pm1$ depends only on $k$, and it is zero otherwise.
\end{lemma}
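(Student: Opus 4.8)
The plan is to apply the composition law and the blow-up formula of Theorem~\ref{thm:main-cobordism-decomposition} to the factorization \eqref{eqn:XWblowup}. Write $X := X_k \cup_{S^3_k(K)} W_{k+1} \cong X_{k+1}\#\cptwo$, so that $H_2(X;\Z) \cong \Z\langle[\Sigma_{k+1}]\rangle \oplus \Z\langle e\rangle$, and $\nu := \nu_k \cup \omega_{k+1}$. Property~\eqref{i:composition-law} of that theorem expresses the composition \eqref{eq:compose-x-w} as $\sum_s I^\#(X,\nu;s)$, the sum taken over homomorphisms $s \colon H_2(X;\Z)\to\Z$ with $s|_{X_k} = t_{k,i}$ and $s|_{W_{k+1}} = s_{k+1,j}$. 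Using Lemma~\ref{lem:generator-w} to identify $[\Sigma_k] = [\Sigma_{k+1}] - e$ and $[F_{k+1}] = [\Sigma_{k+1}] - (k+1)e$, these restriction conditions become the linear system $s([\Sigma_{k+1}]) - s(e) = 2i-k$ and $s([\Sigma_{k+1}]) - (k+1)s(e) = 2j$ in the unknowns $s([\Sigma_{k+1}])$ and $s(e)$.

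First I would solve this system. For $k \geq 1$ it has a unique solution, with $s(e) = \frac{(2i-k)-2j}{k}$, whenever that quantity is an integer; for $k=0$ the two equations coincide, forcing $i = j$ and leaving a one-parameter family with $s(e)$ free. By property~\eqref{i:blowup-formula} of Theorem~\ref{thm:main-cobordism-decomposition}, $I^\#(X,\nu;s)$ vanishes unless $s(e) = \pm 1$, so only finitely many $s$ can contribute: for $k\geq 1$ these are $j=i$ (giving $s(e) = -1$) and $j = i-k$ (giving $s(e) = +1$), and for $k=0$ they are the two members $s(e) = \pm 1$ of the family, both with $j=i$. In each surviving case one computes $s|_{X_{k+1}}$ directly, since it sends $[\Sigma_{k+1}]$ to $(2i-k) + s(e)$, which equals $2i-(k+1)$ when $s(e) = -1$ and $2(i+1)-(k+1)$ when $s(e) = +1$; that is, $s|_{X_{k+1}} = t_{k+1,i}$ or $t_{k+1,i+1}$ respectively. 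This already pins down the indexing in all three clauses of the lemma, and shows the composition vanishes for every other $j$ because no admissible $s$ with $s(e) = \pm1$ exists.

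The remaining work is to read off the coefficients, and this is where the sign bookkeeping concentrates. I would first show that $\nu = \nu_k\cup\omega_{k+1}$ restricts nontrivially to the $\cptwo$ summand, i.e.\ $[\nu]\cdot e \equiv 1 \pmod 2$; granting this, the second case of property~\eqref{i:blowup-formula} applies and gives $I^\#(X,\nu;s) = \mp\tfrac12\,I^\#(X_{k+1},\,\nu|_{X_{k+1}};\,s|_{X_{k+1}})$, with sign $-$ when $s(e) = -1$ and $+$ when $s(e) = +1$. Next I would compare $\nu|_{X_{k+1}}$ with $\nu_{k+1}$: using $[\Sigma_{k+1}] = [\Sigma_k] + e$ together with \eqref{eq:nu-dot-sigma}, one finds $[\nu|_{X_{k+1}}]\cdot[\Sigma_{k+1}] \equiv [\nu_{k+1}]\cdot[\Sigma_{k+1}] \pmod 2$ when $k \geq 1$, so $\nu|_{X_{k+1}}$ and $\nu_{k+1}$ differ by an even class and, by Remark~\ref{rem:nu-plus-even}, swapping them contributes a single global sign $\epsilon_k = \pm1$ depending only on $k$; this produces the two displayed formulas for $k\geq1$. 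For $k=0$ the same computation instead gives $\nu|_{X_1} \equiv \nu_1 + [\Sigma_1] \pmod 2$, so I would apply property~\eqref{i:sign-change} with $\alpha = [\Sigma_1]$, whose sign $(-1)^{\frac12(s(\alpha)+\alpha\cdot\alpha)+\nu_1\cdot\alpha}$ evaluates to $(-1)^m$ times a constant on $s = t_{1,m}$; combined with the $\mp\tfrac12$ from the blow-up formula this yields exactly $\frac{\epsilon_0}{2}(-1)^i\bigl(I^\#(X_1,\nu_1;t_{1,i}) + I^\#(X_1,\nu_1;t_{1,i+1})\bigr)$.

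I expect the main obstacle to be establishing the parity statement $[\nu_k\cup\omega_{k+1}]\cdot e \equiv 1 \pmod 2$ (and, relatedly, pinning down $\nu|_{X_{k+1}}$ modulo $2$). Unlike the clean Poincar\'e-dual identity $e = [\Sigma_{k+1}] - [\Sigma_k]$, this requires unwinding Scaduto's construction of the decorating cycles $\nu_k$ and $\omega_{k+1}$ in the surgery exact triangle and tracking them through the handleslide diffeomorphism realizing \eqref{eqn:XWblowup}; the intersection number \eqref{eq:nu-dot-sigma} is derived there by exactly this kind of analysis, and the input needed here is of the same nature. Everything else---the linear algebra, the identification of the surviving $s$, and the sign manipulations via Remark~\ref{rem:nu-plus-even} and property~\eqref{i:sign-change}---is routine once that parity input is in hand.
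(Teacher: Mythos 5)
Your proposal is correct and follows essentially the same route as the paper's proof: express the composition via the composition law as $\sum_s I^\#(X_{k+1}\#\cptwo,\nu;s)$ over homomorphisms restricting to $t_{k,i}$ and $s_{k+1,j}$, solve the resulting linear system using Lemma~\ref{lem:generator-w}, cut down to $s(e)=\pm1$ with the blow-up formula, and then extract signs via Remark~\ref{rem:nu-plus-even} and property~\eqref{i:sign-change}. The one point worth flagging: the parity fact $[\nu_k\cup\omega_{k+1}]\cdot e\equiv1\pmod 2$ that you identify as the main remaining obstacle is not something you would need to re-derive --- it is established in Scaduto's construction of the exact triangle and is simply cited in the paper (from \cite[\S3.4]{scaduto}), after which one writes $\nu_k\cup\omega_{k+1}=\nu'_{k+1}\cup a_k e$ with $a_k$ odd and compares $\nu'_{k+1}$ against $\nu_{k+1}$ (and against $\nu_1+\Sigma_1$ when $k=0$) exactly as you sketch.
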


\begin{proof}

We first aim to understand when the composition  \eqref{eq:compose-x-w} is nonzero.  Scaduto showed in \cite[\S3.4]{scaduto} that  $(\nu_k \cup \omega_{k+1}) \cdot e$ is odd in  \[X_k \cup_{S^3_k(K)} W_{k+1} \cong X_{k+1} \# \cptwo,\] so, up to homology, we can write
\[ \nu_k \cup \omega_{k+1} = \nu'_{k+1} \cup a_k e \]
for some properly embedded surface $\nu'_{k+1} \subset X_{k+1}$ and some odd $a_k$.  We then have that
\[ [\nu_k]\cdot[\Sigma_k] =  [\nu_k\cup \omega_{k+1}]\cdot[\Sigma_k]=[\nu'_{k+1} \cup a_ke] ([\Sigma_{k+1}]-[e]) \equiv [\nu'_{k+1}]\cdot[\Sigma_{k+1}] + 1 \pmod{2}. \]
Combining this with \eqref{eq:nu-dot-sigma}  gives
\begin{equation}\label{eqn:nu'mod2} [\nu'_{k+1}\cup a_ke] \equiv \begin{cases} [\nu_1] + [e] + [\Sigma_1] & k=0 \\ [\nu_{k+1}] + [e] & k \geq 1 \end{cases} \pmod{2}. \end{equation}
 Theorem~\ref{thm:main-cobordism-decomposition} says that the composition \eqref{eq:compose-x-w} is equal to
\begin{equation}\label{eqn:sumscomp} \sum_s I^\#(X_{k+1} \# \cptwo,\nu'_{k+1}+a_ke; s), \end{equation}
where we sum over all \[s: H_2(X_{k+1}\#\cptwo;\Z)\to\Z\] with $s|_{X_k} = t_{k,i}$ and $s|_{W_{k+1}} = s_{k+1,j}$, meaning that \begin{equation}\label{eqn:sSF}s([\Sigma_k]) = 2i-k\textrm{ and }s([F_{k+1}]) =2j.\end{equation} Moreover, properties \eqref{i:adjunction} and \eqref{i:blowup-formula} of Theorem~\ref{thm:main-cobordism-decomposition} say that each summand is zero unless \begin{equation}\label{eqn:sblowup}s([\Sigma_{k+1}]) \equiv k+1 \textrm{ (mod 2)} \textrm{ and }s([e]) = \pm 1.\end{equation} The constraints in \eqref{eqn:sblowup} imply that we can write each $s$ which might contribute a nonzero summand in \eqref{eqn:sumscomp} as 
\[ s = t_{k+1,\ell} + cE, \]
where $E$ is Poincar\'e dual to $[e]$ and $c=\pm 1$.  Using Lemma~\ref{lem:generator-w}, we compute
\begin{align}
\label{eqn:sSell}s([\Sigma_k]) = s([\Sigma_{k+1}] - [e]) &= (2\ell - (k+1)) + c \\
s([F_{k+1}]) = s([\Sigma_{k+1}] - (k+1)[e]) &= (2\ell - (k+1)) + (k+1)c.
\end{align}
By \eqref{eqn:sSF}, this implies that
\[ 2j - (2i-k) = s([F_{k+1}]) - s([\Sigma_k]) = kc, \]
or, equivalently, that  $k(c-1) = 2(j-i)$.

Suppose first that $k\neq 0$ and $c=-1$.  Let $\alpha_k$ be the class in $H_2(X_{k+1}\#\CP^2;\Z)$ satisfying
\[ 2\alpha_k = [\nu'_{k+1} \cup a_ke] - ([\nu_{k+1}]+[e]) \] (such a class exists by \eqref{eqn:nu'mod2}).
Then $k=i-j$ and \[2i-k = s([\Sigma_k]) = 2(\ell-1) - k,\] where the latter equality is by  \eqref{eqn:sSell}, so that $\ell=i+1$.  Remark~\ref{rem:nu-plus-even} and the blow-up formula (property \eqref{i:blowup-formula} of Theorem~\ref{thm:main-cobordism-decomposition}) then say that
\begin{align*}
I^\#(X_{k+1}\#\cptwo,\nu'_{k+1}+a_ke;t_{k+1,i+1}-E) &= -\epsilon_k I^\#(X_{k+1}\#\cptwo,\nu_{k+1}+e;t_{k+1,i+1}-E) \\
&= -\frac{\epsilon_k}{2} I^\#(X_{k+1},\nu_{k+1};t_{k+1,i+1}),
\end{align*}
where $\epsilon_k = (-1)^{\alpha_k\cdot\alpha_k+1}$.  Similarly, if either $k=0$ or $c=1$ then we must have $i=j$, and also
\[ 2i-k = s([\Sigma_k]) = (2\ell-(k+1)) + c =  \begin{cases} 2\ell - k & c=1 \\ 2(\ell-1) - k & (k,c)=(0,-1). \end{cases} \]
Thus, either $c=1$ and $\ell = i$, or $(k,c)=(0,-1)$ and $\ell=i+1$.  In the first case, if $k \geq 1$ then the relevant map is
\[ I^\#(X_{k+1}\#\cptwo,\nu'_{k+1}+ a_ke;t_{k+1,i}+E) = \frac{\epsilon_k}{2} I^\#(X_{k+1},\nu_{k+1};t_{k+1,i}), \]
and if $k = 0$ then it is
\begin{equation} \label{eq:x1-nu1-1}
I^\#(X_1\#\cptwo,\nu'_1+ a_0e;t_{1,i}+E).
\end{equation}
In the second case the relevant map is
\begin{equation} \label{eq:x1-nu1-2}
I^\#(X_1\#\cptwo,\nu'_1+ a_0e;t_{1,i+1}-E).
\end{equation}

This completes the proof of the lemma except for the signs when $k=0$.  We let $\alpha_0$ be the class in $H_2(X_1\#\CP^2;\Z)$ satisfying
\[ 2\alpha_0 = [\nu'_1 + a_0e] - ([\nu_1] + [e] + [\Sigma_1]) \] (which exists by \eqref{eqn:nu'mod2}),
and set $\epsilon_0 = (-1)^{\alpha_0\cdot\alpha_0}$.  Then
\begin{align*}
I^\#(X_1\#\cptwo,\nu'_1+ a_0e;t_{1,j}\pm E) &= \epsilon_0 I^\#(X_1\#\cptwo,\nu_1+e+\Sigma_1;t_{1,j}\pm E) \\
&= \mp \frac{\epsilon_0}{2} I^\#(X_1,\nu_1+\Sigma_1;t_{1,j}) \\
&= \mp \frac{\epsilon_0}{2} \cdot (-1)^\sigma I^\#(X_1,\nu_1;t_{1,j})
\end{align*}
where \[\sigma = \frac{1}{2}(t_{1,j}(\Sigma_1)+\Sigma_1\cdot\Sigma_1)+\nu_1\cdot\Sigma_1 = j + (\nu_1\cdot\Sigma_1).\]  Since \eqref{eq:nu-dot-sigma} says that $\nu_1\cdot\Sigma_1 \equiv 1\pmod{2}$, we have shown that
\[ I^\#(X_1\#\cptwo,\nu'_1+ a_0e;t_{1,j}\pm E) = \pm \frac{\epsilon_0}{2}\cdot (-1)^j I^\#(X_1,\nu_1;t_{1,j}) \]
and so we combine this with equations \eqref{eq:x1-nu1-1} and \eqref{eq:x1-nu1-2} to complete the proof.
\end{proof}

\begin{proof}[Proof of Proposition~\ref{prop:w-z-image}]
Sum \eqref{eq:compose-x-w} over all $j$, applying Lemma~\ref{lem:compose-x-w-one-s} to determine each composition, and evaluate the result on the generator ${\bf 1} \in I^\#(S^3)$.
\end{proof}

\subsection{Instanton L-space knots are fibered with $g=g_s$} Our goal in this section is to prove Theorem~\ref{thm:l-space-knots-are-fibered}, which  asserts that  instanton L-space knots are fibered with Seifert genus equal to smooth slice genus.
The proposition below is the first of  two main ingredients in the proof of this theorem. Adopting the notation from above, recall that we defined \[ y_i := z_{0,i} = I^\#(X_0,\nu_0;t_{0,i})({\mathbf 1}) \in I_\godd^\#(S^3_0(K),\mu; s_i), \] and that \[ I^\#(S^3_0(K),\mu;s_i) = 0 \quad \mathrm{for\ }|i| > g-1, \] by the adjunction inequality of Theorem~\ref{thm:eigenspace-decomposition}.

\begin{proposition} \label{prop:l-space-eigenspaces}
If $K \subset S^3$ as above is a nontrivial instanton L-space knot then
\[ I^\#_\godd(S^3_0(K),\mu;s_i) = \C \cdot y_i \]
for all $i$.  In particular,
\[ \dim I^\#_\godd(S^3_0(K),\mu;s_i) = 0 \mathrm{\ or\ } 1 \]
depending on whether $y_i=0$ or $y_i \neq 0$, respectively.
\end{proposition}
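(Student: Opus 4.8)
\textbf{Proof proposal for Proposition~\ref{prop:l-space-eigenspaces}.}

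The plan is to run the inductive argument sketched in \S\ref{ssec:proof}, using the composition of the $W_k$ cobordism maps to transport elements of $I^\#_\godd(S^3_0(K),\mu;s_i)$ into $I^\#_\godd(S^3_n(K))$ for large $n$, which vanishes once $S^3_n(K)$ is an instanton L-space (Remark~\ref{rmk:oddgradinglspace}). Concretely, fix $n$ with $n \geq 2g-1$ large enough that $S^3_n(K)$ is an instanton L-space, and consider the composition
\[ \Phi_n := I^\#(W_n,\omega_n) \circ \dots \circ I^\#(W_2,\omega_2) \circ I^\#(W_1,\omega_1) : I^\#(S^3_0(K),\mu) \to I^\#(S^3_n(K)). \]
Each $I^\#(W_{k+1},\omega_{k+1})$ has even degree by Proposition~\ref{prop:grading-shift}, so $\Phi_n$ preserves the $\Z/2\Z$-grading; in particular it sends $I^\#_\godd(S^3_0(K),\mu;s_i)$ into $I^\#_\godd(S^3_n(K)) = 0$ only if that element is in $\ker\Phi_n$. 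The first step is therefore to compute $\ker\Phi_n$ precisely; I expect this is Lemma~\ref{lem:ker-vn}, and the key computational input is Proposition~\ref{prop:w-z-image}, which lets one propagate the elements $z_{k,i}$ forward: starting from $z_{0,i}=y_i$, the formulas there show inductively that $I^\#(W_1,\omega_1)(y_i)$ is a combination of $z_{1,i}$ and $z_{1,i+1}$, and then each further application of a $W_{k+1}$ map spreads a $z_{k,\ell}$ into $z_{k+1,\ell}-z_{k+1,\ell+1}$. Tracking the adjunction constraint \eqref{eq:t_i-adjunction}, the images $z_{k,\ell}$ with $\ell$ outside $[1-g+k,g-1]$ vanish, so for $n$ large the only surviving contributions lie in a one-dimensional eigenspace $I^\#_\godd(S^3_n(K);t_{n,g-1})$ (or are killed entirely), and one reads off that $\ker\Phi_n$ is exactly the span of $y_{1-g},\dots,y_{g-1}$ inside $I^\#(S^3_0(K),\mu)$.

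Granting that, the proposition follows by a dimension count in each eigenspace. Fix $i$. The eigenspace decomposition (Theorem~\ref{thm:eigenspace-decomposition}) together with the fact that the $W_{k+1}$ maps are induced by $2$-handle cobordisms attached away from $\hat\Sigma$ shows $\Phi_n$ respects the $s_i$-decomposition: it carries $I^\#_\godd(S^3_0(K),\mu;s_i)$ into $I^\#_\godd(S^3_n(K))$. If $\dim I^\#_\godd(S^3_0(K),\mu;s_i) \geq 2$, then since $\ker\Phi_n \cap I^\#_\godd(S^3_0(K),\mu;s_i)$ is contained in $\C\cdot y_i$ (by the description of $\ker\Phi_n$ above, intersected with the $s_i$-summand), there is an element $w$ of $I^\#_\godd(S^3_0(K),\mu;s_i)$ with $\Phi_n(w) \neq 0$, contradicting $I^\#_\godd(S^3_n(K)) = 0$. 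Hence $\dim I^\#_\godd(S^3_0(K),\mu;s_i) \leq 1$, and since $y_i$ spans the intersection of this space with $\ker\Phi_n$ while the whole space maps to $0$, we get $I^\#_\godd(S^3_0(K),\mu;s_i) = \C\cdot y_i$ (with the convention that this is $0$ when $y_i=0$). The same reasoning applied to $i=g-1$, combined with Theorem~\ref{thm:odd-dim-g-1} which gives $\dim I^\#_\godd(S^3_0(K),\mu;s_{g-1})\geq 1$, forces $y_{g-1}\neq 0$; one can also note $y_i=0$ automatically for $|i|\geq g$.

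The main obstacle is the kernel computation, i.e.\ establishing Lemma~\ref{lem:ker-vn}: one has to show not merely that the $y_i$ lie in $\ker\Phi_n$ but that they \emph{span} it. For containment, one uses \eqref{eqn:XWblowup}, the blow-up formula and adjunction inequality of Theorem~\ref{thm:main-cobordism-decomposition}, and the composition law (Proposition~\ref{prop:composition-law}) to see that $I^\#(W_1,\omega_1)\circ I^\#(X_0,\nu_0;t_{0,i})$ lands, after enough further $W_k$'s, in eigenspaces forbidden by adjunction and hence dies. For the reverse inclusion one argues by exactness: the image of $I^\#(X_0,\nu_0)$ is exactly $\ker I^\#(W_1,\omega_1)$ by \eqref{eq:triangle-0-cobordisms}, and it is spanned by the $y_i=z_{0,i}$ since $I^\#(X_0,\nu_0) = \sum_i I^\#(X_0,\nu_0;t_{0,i})$; one then needs to control how $\ker$ grows as further $W_k$ maps are composed on, which is where the exact triangles \eqref{eq:triangle-n-cobordisms}, the injectivity statements from earlier, and the explicit formulas of Proposition~\ref{prop:w-z-image} all come together. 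The bookkeeping with signs $\epsilon_k$ and the index shifts $z_{k,i}\mapsto z_{k+1,i}-z_{k+1,i+1}$ is the delicate part, but it is essentially the telescoping computation already set up in \S\ref{ssec:proof}.
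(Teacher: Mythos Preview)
Your approach is essentially identical to the paper's: reduce to Lemma~\ref{lem:ker-vn} (the kernel of the composite $V_n$ map lies in the span of the $y_i$), then argue that any odd element not in $\C\cdot y_i$ survives to $I^\#_\godd(S^3_n(K))=0$, a contradiction. Two small slips in your sketch of the kernel lemma: the image of $I^\#(X_0,\nu_0)$ is spanned by the \emph{single} element $\sum_i y_i$ (not by the individual $y_i$), which still lies in $\operatorname{span}\{y_i\}$; and the remark that $\Phi_n$ ``respects the $s_i$-decomposition'' is both unnecessary (you only need that $\Phi_n$ preserves the $\Z/2\Z$-grading) and not quite meaningful for $n\geq 1$ since $H_2(S^3_n(K))=0$. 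Neither affects the argument.
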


To prepare for the proof of Proposition~\ref{prop:l-space-eigenspaces}, we let $(V_k,\bar\omega_k)$ denote the composition \[(V_k,\bar\omega_k)=(W_k,\omega_k)\circ\dots (W_1,\omega_1): = (S^3_0(K),\mu)\to (S^3_k(K),0),\] which induces the map \[ I^\#(V_k,\bar\omega_k) = I^\#(W_k,\omega_k) \circ \dots \circ I^\#(W_1,\omega_1): I^\#(S^3_0(K),\mu) \to I^\#(S^3_k(K)), \] and define for each integer $i$ the element
\[ c_{k,i} := I^\#(V_k,\bar\omega_k)(y_i) \in I^\#(S^3_k(K)). \]
Repeated application of Proposition~\ref{prop:w-z-image} then tells us that
\[ 2^k c_{k,i} = \sum_{j=0}^k d_{j,i} z_{k,i+j}, \]
where the coefficient $d_{j,i}$ is a sum of $k \choose j$ signs.  In particular, we have
\begin{equation} \label{eq:c-in-terms-of-z}
2^k c_{k,i} = \pm z_{k,i} + \sum_{j=1}^k d_{j,i} z_{k,i+j},
\end{equation}
and since this system of equations has an invertible triangular matrix of coefficients, we see that each $z_{k,i}$ is a linear combination of the various $c_{k,j}$.

\begin{lemma} \label{lem:ker-vn}
For all integers $n \geq 1$, the kernel of the map
\[ I^\#(V_n,\bar\omega_n) = I^\#(W_n,\omega_n) \circ \dots \circ I^\#(W_1,\omega_1): I^\#(S^3_0(K),\mu) \to I^\#(S^3_n(K)) \]
lies in the span of the elements $y_{1-g},\dots,y_{g-1}$.  It is equal to this span if $n \geq 2g-1$.
\end{lemma}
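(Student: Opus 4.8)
The plan is to prove both assertions by a single induction on $n$, writing $\Phi_n := I^\#(V_n,\bar\omega_n) = I^\#(W_n,\omega_n)\circ\cdots\circ I^\#(W_1,\omega_1)$ with the convention that $\Phi_0$ is the identity on $I^\#(S^3_0(K),\mu)$, so that $\Phi_{n+1} = I^\#(W_{n+1},\omega_{n+1})\circ\Phi_n$, and setting $Y := \Span_\C(y_{1-g},\dots,y_{g-1})$. First I would record two structural facts. Since $I^\#(S^3)\cong\C\cdot\mathbf 1$, the decomposition of Theorem~\ref{thm:main-cobordism-decomposition} collapses to $I^\#(X_k,\nu_k) = \sum_i I^\#(X_k,\nu_k;t_{k,i})$ --- only the classes $t_{k,i}$ occur, because a nonzero summand forces $s([\Sigma_k])\equiv\Sigma_k\cdot\Sigma_k\equiv k\pmod 2$ and $H_2(X_k;\Z)$ is generated by $[\Sigma_k]$ --- so $\operatorname{Im} I^\#(X_k,\nu_k) = \C\cdot\sum_i z_{k,i}$; and exactness of \eqref{eq:triangle-0-cobordisms} and \eqref{eq:triangle-n-cobordisms} then gives $\ker I^\#(W_{k+1},\omega_{k+1}) = \operatorname{Im} I^\#(X_k,\nu_k) = \C\cdot\sum_i z_{k,i}$ for every $k\geq 0$.

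The engine of the induction is the inclusion $\Span_\C(z_{n,i}:i\in\Z)\subseteq\Phi_n(Y)$ for all $n\geq0$. To establish it I would observe that $z_{n,i}=0$ unless $n+1-g\leq i\leq g-1$: the upper bound is the adjunction inequality of Theorem~\ref{thm:main-cobordism-decomposition} (used already in \eqref{eq:t_i-adjunction}), and the lower bound is \eqref{eq:t_i-adjunction} for $n\geq1$ and holds vacuously for $n=0$. Given this vanishing, the a priori infinite system \eqref{eq:c-in-terms-of-z} truncates to a finite unipotent upper-triangular system, which one solves recursively downward from $i=g-1$ to write each $z_{n,i}$ as a $\C$-linear combination of the $c_{n,j}=\Phi_n(y_j)$ with $j\geq i$; as every nonzero $z_{n,i}$ has $i\geq 1-g$, these combinations lie in $\Phi_n(Y)$. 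With this in hand the induction is short: $\ker\Phi_0=0\subseteq Y$, and if $\ker\Phi_n\subseteq Y$ then for $x\in\ker\Phi_{n+1}$ we have $\Phi_n(x)\in\ker I^\#(W_{n+1},\omega_{n+1}) = \C\cdot\sum_i z_{n,i}\subseteq\Phi_n(Y)$, whence $x\in Y+\ker\Phi_n\subseteq Y$. This yields the containment $\ker I^\#(V_n,\bar\omega_n)\subseteq\Span(y_{1-g},\dots,y_{g-1})$ for all $n\geq1$.

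For the equality when $n\geq 2g-1$, I would note that the interval $[n+1-g,g-1]$ is then empty, so $z_{n,i}=0$ for every $i$; feeding this into \eqref{eq:c-in-terms-of-z} forces $c_{n,i}=\Phi_n(y_i)=0$ for all $i$, i.e. $Y\subseteq\ker\Phi_n$, and combining this with the containment already proved gives $\ker I^\#(V_n,\bar\omega_n)=Y$. I expect the only delicate point to be the bookkeeping around \eqref{eq:c-in-terms-of-z}: one must check that the vanishing ranges for the $z_{n,i}$ genuinely truncate it to a unipotent triangular system with only the ``later'' $c_{n,j}$ on the right-hand side, so that $\Span(z_{n,i})\subseteq\Phi_n(Y)$ holds exactly. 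Once that is pinned down, the rest is a formal chase through the exact triangles together with the one-line induction above.
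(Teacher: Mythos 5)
Your proof is correct and follows essentially the same route as the paper's: both rest on the exactness of the triangles \eqref{eq:triangle-0-cobordisms}/\eqref{eq:triangle-n-cobordisms} to identify $\ker I^\#(W_{k+1},\omega_{k+1})$ with $\C\cdot\sum_i z_{k,i}$, together with inversion of the triangular system \eqref{eq:c-in-terms-of-z} (using the vanishing $z_{n,i}=0$ for $i>g-1$ to terminate the recursion) to express the $z_{n,i}$ as combinations of the $c_{n,j}=\Phi_n(y_j)$, and the vanishing of all $z_{n,i}$ for $n\geq 2g-1$ for the reverse inclusion. The only cosmetic difference is that you package the induction around the stronger invariant $\Span(z_{n,i}:i\in\Z)\subseteq\Phi_n(Y)$, whereas the paper works directly with the single element $\sum_i z_{n,i}$ and notes that any combination $\sum_j a_{n,j}y_j$ automatically lies in $Y$ because $y_j=0$ for $|j|\geq g$.
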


\begin{proof}
The case $n=1$ follows from the exact triangle \eqref{eq:triangle-0-cobordisms}, since
\[ \ker I^\#(V_1,\bar\omega_1) = \ker I^\#(W_1,\omega_1) =\Img I^\#(X_0,\nu_0) \]
is spanned by $I^\#(X_0,\nu_0)({\mathbf 1}) = \sum_i y_i$.

Suppose now that $n \geq 1$.  Suppose the kernel of $I^\#(V_n,\bar\omega_n)$ lies in the span of  the elements $y_{1-g},\dots,y_{g-1}$. We will deduce that the same is true of the kernel of $I^\#(V_{n+1},\bar\omega_{n+1})$, which will then prove the first part of the lemma for all $n$, by induction.

 From the exact triangle \eqref{eq:triangle-n-cobordisms}, the kernel of $I^\#(W_{n+1},\omega_{n+1})$ is generated by
\[ I^\#(X_n,\nu_n)({\mathbf 1}) = \sum_{i=1-g+n}^{g-1} z_{n,i} \]
(here, the range of indices in the sum comes from \eqref{eq:t_i-adjunction}); since each $z_{n,i}$ is a linear combination of the $c_{n,j}$, as argued above, we may write this element as
\[ I^\#(X_n,\nu_n)({\mathbf 1}) = \sum_i z_{n,i} = \sum_j a_{n,j} c_{n,j} = I^\#(V_n,\bar\omega_n)\left(\sum_j a_{n,j}y_j\right) \]
for some coefficients $a_{n,j}$.  In particular, any element $x$ of the kernel of
\[ I^\#(V_{n+1},\bar\omega_{n+1}) = I^\#(W_{n+1},\omega_{n+1}) \circ I^\#(V_n,\bar\omega_n) \]
satisfies $I^\#(V_n,\bar\omega_n)(x) = c\cdot I^\#(V_n,\bar\omega_n)\left(\sum_j a_{n,j}y_j\right)$ for some constant $c$, and then
\[ x - c\sum_j a_{n,j}y_j \in \ker I^\#(V_n,\bar\omega_n). \] It follows by induction that $x$ is a linear combination of $y_{1-g}, \dots, y_{g-1}$, as desired.

Now suppose $n \geq 2g-1$. It remains to show that $I^\#(V_n,\bar\omega_n)(y_i)=0$ for all $i$ in this case.  We can write \[I^\#(V_n,\bar\omega_n)(y_i) = c_{n,i}\] as a linear combination of the elements \[z_{n,j} = I^\#(X_n,\nu_n;t_{n,j})({\mathbf 1}),\] as before. The adjunction inequality says that the map $I^\#(X_n,\nu_n;t_{n,j})$ is zero for $n \geq 2g-1$, so $z_{n,j}=0$ for all $j$ and hence $c_{n,i}=0$ for all $i$ as well.
\end{proof}

\begin{proof}[Proof of Proposition~\ref{prop:l-space-eigenspaces}]
Suppose for some $i$ that there is an element
\[ x \in I^\#_\godd(S^3_0(K),\mu;s_i) \]
which is not a multiple of $y_i$.  Then $x$ is not in the span of $y_{1-g},\dots,y_{g-1}$ since the \[y_j\in I^\#_\godd(S^3_0(K),\mu;s_j)\] belong to different direct summands. Now, each map \[I^\#(V_n,\bar\omega_n): I^\#(S^3_0(K),\mu) \to I^\#(S^3_n(K))\] has even degree, since it is a composition of the even-degree maps $I^\#(W_k,\omega_k)$ for $1 \leq k \leq n$.  Thus, we have that
\[ I^\#(V_n,\bar\omega_n)(x) \in I^\#_\godd(S^3_n(K)) \]
for all $n \geq 1$, and since $x$ is not in the span of $y_{1-g},\dots,y_{g-1}$, Lemma~\ref{lem:ker-vn} tells us that each $I^\#(V_n,\bar\omega_n)(x)$ must be nonzero.  In particular, we have that
\[ \dim I^\#_\godd(S^3_n(K)) > 0 \]
for all $n \geq 1$, and so none of the $S^3_n(K)$ can be an instanton L-space.  This is a contradiction, since if $K$ is an instanton L-space knot then it has some positive integral L-space surgery, by Remark \ref{rmk:integralLspace}; so each $I^\#_\godd(S^3_0(K),\mu;s_i)$ must in fact be spanned by $y_i$.
\end{proof}

\begin{remark} \label{rem:l-space-eigenspaces-bundle}
Nothing in the proof of Proposition~\ref{prop:l-space-eigenspaces} requires that we use the nontrivial bundle on $S^3_0(K)$  specified by $\mu$; we only need this  to conclude in Theorem~\ref{thm:l-space-knots-are-fibered} that $K$ is fibered.  If we use elements $\tilde{y}_i \in I^\#(S^3_0(K);s_i)$ instead and replace the bundle $\nu_0$ on $X_0$ by $\tilde{\nu}_0$ accordingly, none of the argument changes save  perhaps some signs.  Thus, it is also true that if $K$ is a nontrivial instanton L-space knot then each $\C$-module
\[ I^\#_\godd(S^3_0(K);s_i) \]
is  spanned by the single element $\tilde{y}_i = I^\#(X_0,\tilde{\nu}_0;t_{0,i})({\mathbf 1})$.
\end{remark}

The second main ingredient in the proof of Theorem~\ref{thm:l-space-knots-are-fibered} is the following improvement to Proposition~
\ref{prop:l-space-eigenspaces}. Namely, Theorem~\ref{thm:eigenspace-decomposition} tells us that $I^\#_\godd(S^3_0(K),\mu;s_i) = 0$ for all $|i| \geq g(K)$, but the proposition below says that we can sharpen this bound if we know that $K$ has an instanton L-space surgery.

\begin{proposition} \label{prop:l-space-slice-bound}
Suppose $K\subset S^3$ as above is a nontrivial instanton $L$-space knot with smooth slice genus $g_s(K)$. Then 
\[ I^\#_\godd(S^3_0(K),\mu;s_i) = 0 \]
for all $i \geq \max(g_s(K),1)$.
\end{proposition}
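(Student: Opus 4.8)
The plan is to show that $y_i = I^\#(X_0,\nu_0;t_{0,i})(\mathbf 1) = 0$ for all $i \geq \max(g_s,1)$, which by Proposition~\ref{prop:l-space-eigenspaces} is equivalent to the asserted vanishing of $I^\#_\godd(S^3_0(K),\mu;s_i)$.

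The first step is to prove that $I^\#(W_1,\omega_1)(y_i) = 0$ for such $i$. The idea is to exploit a genus-$g_s$ smooth slice surface for $K$, realized as a movie in the trace cobordism $X_1$ and capped off with a core of the $1$-framed $2$-handle; this produces a smoothly embedded surface $\hat F_1 \subset X_1$ of genus $g_s$ with $[\hat F_1]^2 = 1$ whose class generates $H_2(X_1;\Z)$ (if $g_s = 0$ one first attaches a trivial handle, turning $\hat F_1$ into a torus of self-intersection $1$ in the same homology class). Via the diffeomorphism $X_0 \cup_{S^3_k(K)} W_1 \cong X_1 \# \cptwo$ and Lemma~\ref{lem:generator-w}, the surface $\hat F_1$ lives inside $X_0 \cup W_1$, where $[\hat\Sigma] = [\Sigma_0] = \pm[\hat F_1] - [e]$. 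Then by functoriality together with the composition law of Theorem~\ref{thm:main-cobordism-decomposition}, $I^\#(W_1,\omega_1)(y_i)$ is the evaluation on $\mathbf 1$ of $\sum_s I^\#(X_0 \cup W_1,\nu_0\cup\omega_1;s)$, the sum being over homomorphisms $s$ with $s|_{X_0} = t_{0,i}$. For any $s$ contributing a nonzero summand, the blow-up formula (property~\eqref{i:blowup-formula}) forces $s([e]) = \pm 1$, hence $s([\hat F_1]) = 2i \pm 1$ up to sign, so $|s([\hat F_1])| \geq 2i - 1$; on the other hand the adjunction inequality (property~\eqref{i:adjunction}) applied to the positive self-intersection surface $\hat F_1$ gives $|s([\hat F_1])| + 1 \leq 2\max(g_s,1) - 2$, forcing $i \leq \max(g_s,1) - 1$ and contradicting $i \geq \max(g_s,1)$. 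Thus every summand vanishes and $I^\#(W_1,\omega_1)(y_i) = 0$.

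The second step deduces $y_i = 0$. By the surgery exact triangle~\eqref{eq:triangle-0-cobordisms}, $\ker I^\#(W_1,\omega_1) = \Img I^\#(X_0,\nu_0) = \C\cdot v$, where $v := I^\#(X_0,\nu_0)(\mathbf 1) = \sum_j y_j$; moreover $v \neq 0$, since $y_{g-1} \neq 0$ by Theorem~\ref{thm:odd-dim-g-1} together with Proposition~\ref{prop:l-space-eigenspaces}, and the $y_j$ lie in distinct summands of the eigenspace decomposition. So if $y_i \neq 0$ then $y_i$ is a nonzero multiple of $v = \sum_j y_j$; since Proposition~\ref{prop:l-space-eigenspaces} shows the nonzero $y_j$ form a basis of $I^\#_\godd(S^3_0(K),\mu)$, this forces $y_j = 0$ for every $j \neq i$, so $I^\#_\godd(S^3_0(K),\mu) = \C y_i$ with $i = g-1$ (because $y_{g-1}\neq 0$), and hence $g \geq \max(g_s,1) + 1 \geq 2$. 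But then the $s \mapsto -s$ symmetry of Theorem~\ref{thm:eigenspace-decomposition} gives $I^\#_\godd(S^3_0(K),\mu;s_{1-g}) \cong I^\#_\godd(S^3_0(K),\mu;s_{g-1}) \neq 0$, so $y_{1-g}\neq 0$, contradicting $y_j = 0$ for $j \neq g-1$ since $1-g \neq g-1$. Therefore $y_i = 0$, as desired.

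The main obstacle is that the slice surface is naturally a surface of self-intersection $0$ in $X_0$, so the adjunction inequality of Theorem~\ref{thm:main-cobordism-decomposition}, which requires positive self-intersection, does not apply to it directly; the key maneuver is to transport it into the once-blown-up trace $X_0 \cup W_1 \cong X_1\#\cptwo$, where it acquires self-intersection $+1$, and then to play adjunction off against the blow-up formula to annihilate all of the relevant pieces of the composite cobordism map. The second genuine subtlety is the passage from $I^\#(W_1,\omega_1)(y_i) = 0$ to $y_i = 0$: here one uses exactness of the surgery triangle to identify the kernel with the one-dimensional image of $I^\#(X_0,\nu_0)$, and then the conjugation symmetry $s \leftrightarrow -s$ to exclude the degenerate scenario in which $I^\#_\godd(S^3_0(K),\mu)$ is concentrated in the single summand $s_{g-1}$.
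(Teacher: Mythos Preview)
Your proof is correct and follows essentially the same strategy as the paper: show $I^\#(W_1,\omega_1)(y_i)=0$ via adjunction against a genus-$\max(g_s,1)$ surface of positive self-intersection, then use exactness of the surgery triangle together with the $s\leftrightarrow -s$ symmetry to force $y_i=0$. The only organizational difference is that the paper applies adjunction directly in $X_1$ to conclude $z_{1,i}=0$ for $i\geq \max(g_s,1)$ and then invokes the formula $I^\#(W_1,\omega_1)(y_i)=\pm\tfrac12(z_{1,i}+z_{1,i+1})$ from Proposition~\ref{prop:w-z-image}, whereas you re-derive this vanishing inline by working in the composite $X_0\cup W_1\cong X_1\#\cptwo$ via the composition law and blow-up formula.
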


\begin{proof}
Let \[g'_s = \max(g_s(K),1).\]  Then the cobordism $X_1$ contains a smoothly embedded, closed surface $S$ homologous to $\Sigma_1$, built by gluing the core of the 2-handle used to form $X_1$ to a genus-$g'_s$ surface in $S^3\times[0,1]$ with boundary $K\times\{1\}$.  We can  thus improve the inequality \eqref{eq:t_i-adjunction} by replacing $g=g(\Sigma_1)$ with $g'_s=g(S)$, so that
\[ I^\#(X_1,\nu_1;t_{1,i}) = 0 \quad\mathrm{for\ all}\quad i \geq g'_s. \]
This implies, in particular,  that \[z_{1,i} = I^\#(X_1,\nu_1;t_{1,i})({\bf 1})=0\] for all $i \geq g'_s$.  By Proposition~\ref{prop:w-z-image}, we have
\[ I^\#(W_1,\omega_1)(y_i) = \pm \frac{1}{2}(z_{1,i} + z_{1,i+1}), \]
and so
\begin{equation} \label{eq:W-y_i-zero}
I^\#(W_1,\omega_1)(y_i) = 0
\end{equation}
for all $i \geq g'_s$.

Now fix some $i \geq g'_s$, and suppose that $y_i \neq 0$.  Theorem~\ref{thm:eigenspace-decomposition} says that
\[ I^\#(S^3_0(K),\mu;s_i) \cong I^\#(S^3_0(K),\mu;s_{-i}) \]
as $\Z/2\Z$-graded $\C$-modules, so $I^\#_\godd(S^3_0(K),\mu;s_{-i})$ is also nonzero; by Proposition~\ref{prop:l-space-eigenspaces}, it must be spanned by $y_{-i}$, hence $y_{-i} \neq 0$ as well.  (We note that $y_i \neq y_{-i}$, because $i \geq g'_s \geq 1$.)  The exactness of the triangle \eqref{eq:triangle-0-cobordisms}, together with \eqref{eq:W-y_i-zero}, says that
\[ y_i \in \img(I^\#(X_0,\nu_0)) = \C\cdot I^\#(X_0,\nu_0)({\mathbf 1}), \]
so that $y_i = c\left(\sum_j y_j\right)$ for some $c\neq 0$, or, equivalently,
\[ (c-1)y_i + \sum_{\substack{j\neq i\\y_j\neq0}} cy_j = 0. \]
But the nonzero $y_j$ are linearly independent, as they belong to different direct summands \[y_j\in I^\#_\godd(S^3_0(K),\mu;s_j),\] and the left hand side above is not identically zero because the term $cy_{-i}$ is nonzero.  This is a contradiction, so $y_i = 0$ after all.
\end{proof}

Finally, we apply the fact that framed instanton homology of $0$-surgery detects fiberedness (Theorem~\ref{thm:odd-dim-g-1}) to obtain the promised  restrictions on instanton L-space knots.

\begin{theorem} \label{thm:l-space-knots-are-fibered}
Suppose  $K \subset S^3$ is a nontrivial instanton L-space knot. Then $K$ is fibered, and its Seifert genus is equal to its smooth slice genus.
\end{theorem}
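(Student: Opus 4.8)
The plan is to obtain Theorem~\ref{thm:l-space-knots-are-fibered} as a short formal consequence of three inputs already in place: the fiberedness detection result Theorem~\ref{thm:odd-dim-g-1}, the eigenspace computation Proposition~\ref{prop:l-space-eigenspaces}, and the slice-genus refinement Proposition~\ref{prop:l-space-slice-bound}. Throughout let $g=g(K)\geq 1$, and recall the classes $y_i = I^\#(X_0,\nu_0;t_{0,i})({\mathbf 1})\in I^\#_\godd(S^3_0(K),\mu;s_i)$.

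\emph{Fiberedness.} Theorem~\ref{thm:odd-dim-g-1} gives $\dim I^\#_\godd(S^3_0(K),\mu;s_{g-1})\geq 1$, with equality if and only if $K$ is fibered. On the other hand Proposition~\ref{prop:l-space-eigenspaces} says this dimension equals $0$ or $1$. Hence it equals $1$, so $K$ is fibered, and as a byproduct $y_{g-1}\neq 0$.

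\emph{Equality of genera.} Since $g_s(K)\leq g$ always holds, it suffices to rule out $g_s(K)<g$. So assume $g_s(K)<g$ and put $g'_s=\max(g_s(K),1)$. Provided $g'_s\leq g-1$ — which holds in every case except $g_s(K)=0,\ g=1$ — Proposition~\ref{prop:l-space-slice-bound} forces $I^\#_\godd(S^3_0(K),\mu;s_{g-1})=0$, contradicting the previous step. In the remaining edge case $g_s(K)=0$, $g=1$, the knot $K$ would be a nontrivial slice knot that we have just shown is fibered of genus $1$; but the only fibered knots of genus $1$ are the two trefoils and the figure eight, none of which is slice, so this is impossible. (Alternatively, staying within the toolkit here: when $g=1$ the first instanton L-space slope satisfies $N\leq 2g-1=1$ by Proposition~\ref{prop:first-l-space-slope}, so $S^3_1(K)$ is an instanton L-space, whence $K$ is the right-handed trefoil by Proposition~\ref{prop:2-surgery-l-space} and $g_s(K)=1$.) In all cases we conclude $g=g_s(K)$.

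\emph{Where the work is.} Assembling these pieces is essentially formal, and the only genuine subtlety is the low-genus edge case isolated above. The real weight of Theorem~\ref{thm:l-space-knots-are-fibered} lies upstream: in the homology-product and vertically-prime reduction powering Theorem~\ref{thm:odd-dim-g-1}, and in the careful bookkeeping through the eigenspace decomposition of the images of the classes $z_{k,i}$ under the $W_k$ cobordism maps (Proposition~\ref{prop:w-z-image} and Lemma~\ref{lem:ker-vn}), which is what yields Propositions~\ref{prop:l-space-eigenspaces} and \ref{prop:l-space-slice-bound}. Thus I do not expect any new obstacle at this final stage beyond correctly tracking the hypotheses (nontriviality, minimal-genus Seifert surface) through the cited statements.
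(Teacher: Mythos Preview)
Your proof is correct and matches the paper's argument essentially verbatim: both combine Theorem~\ref{thm:odd-dim-g-1} with Proposition~\ref{prop:l-space-eigenspaces} for fiberedness, then apply Proposition~\ref{prop:l-space-slice-bound} and dispose of the $g_s(K)=0$, $g=1$ edge case via the classification of genus-1 fibered knots. One caution: your parenthetical alternative for the edge case invokes Proposition~\ref{prop:2-surgery-l-space}, whose proof in the paper appeals to Theorem~\ref{thm:l-space-knots-are-fibered} itself (to assert $K$ is fibered), so that route would be circular as the paper is organized; stick with your primary argument.
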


\begin{proof}
Theorem~\ref{thm:odd-dim-g-1} and Proposition \ref{prop:l-space-eigenspaces} combine to say that 
\[ 1 \leq \dim I^\#_\godd(S^3_0(K),\mu;s_{g(K)-1}) \leq 1, \]
respectively, and since the first inequality is then an equality we conclude that $K$ must be fibered.  For the claim about the slice genus of $K$, we deduce from Proposition~\ref{prop:l-space-slice-bound} that
\[ g(K)-1 < \max(g_s(K),1) \]
since $I^\#_\godd(S^3_0(K),\mu;s_{g(K)-1})$ is nonzero, and hence that
\[ g_s(K) \leq g(K) \leq \max(g_s(K),1). \]
If $g_s(K)=0$ then we have $g(K) \leq 1$, but the only fibered knots of genus $1$ are the trefoils and the figure eight, which satisfy $g_s(K) = g(K) = 1$.  Otherwise $\max(g_s(K),1) = g_s(K)$ and so we must have $g(K) = g_s(K)$.
\end{proof}

\subsection{The first L-space slope}
\label{ssec:first-slope}

Supposing $K$ is a nontrivial instanton L-space knot, our goal in this subsection is to  determine in terms of \[\dim I^\#(S^3_0(K),\mu)\]   which rational surgeries on $K$ are instanton L-spaces (see Proposition \ref{prop:first-l-space-slope}).  We begin with the following lemma, which is a direct analogue of \cite[Proposition~7.2]{kmos}.

\begin{lemma} \label{lem:canceling-2-handles}
Suppose $K \subset S^3$ is a knot. For all $k \geq 0$, let
\[ (X'_{k+1},\eta_{k+1}): S^3_{k+1}(K) \to S^3 \]
be the cobordisms which induce the unlabeled maps in the exact triangles \eqref{eq:triangle-0-cobordisms} (for $k=0$) and \eqref{eq:triangle-n-cobordisms} (for $k \geq 1$).  Then the composition
\[ I^\#(S^3_{k+1}(K)) \xrightarrow{I^\#(X'_{k+1},\eta_k)} I^\#(S^3) \xrightarrow{I^\#(X_{k+1},\nu_{k+1})} I^\#(S^3_{k+1}(K)) \]
is zero for all $k \geq 0$.
\end{lemma}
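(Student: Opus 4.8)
The plan is to identify the composition $I^\#(X_{k+1},\nu_{k+1}) \circ I^\#(X'_{k+1},\eta_{k+1})$ with the cobordism map induced by the closed-off composite cobordism
\[
(Z_{k+1},\zeta_{k+1}) := (X'_{k+1},\eta_{k+1}) \cup_{S^3} (X_{k+1},\nu_{k+1}) : S^3_{k+1}(K) \to S^3_{k+1}(K),
\]
and then show that $Z_{k+1}$ contains an embedded surface to which the adjunction inequality of Proposition~\ref{prop:adjunction-inequality} applies, forcing $I^\#(Z_{k+1},\zeta_{k+1})=0$. First I would recall, following the discussion preceding \cite[Proposition~7.2]{kmos}, the handle description of the relevant cobordisms: $X'_{k+1}$ is the trace of $-1$-surgery on a meridian $\mu_{k+1}$ of $K$ sitting inside $S^3_{k+1}(K)$, turned so that it runs from $S^3_{k+1}(K)$ to $S^3$ (equivalently, it is $\overline{W_{k+1}'}$ for the appropriate cobordism), while $X_{k+1}$ is the trace of $(k+1)$-surgery on $K$. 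Stacking them, the composite $Z_{k+1}$ is obtained from $S^3_{k+1}(K)\times[0,1]$ by attaching a $2$-handle along $\mu_{k+1}\times\{1\}$ and then a $2$-handle along a copy of $K$ in the resulting $S^3$; but since $\mu_{k+1}$ is a meridian of $K$, the first $2$-handle can be used to slide the second $K$-handle, and a standard handle calculation (identical to the one in \cite{kmos}) shows that $Z_{k+1}$ contains a smoothly embedded sphere $S$ of self-intersection $0$ arising from the belt sphere of the $\mu_{k+1}$-handle together with a disk bounded by $\mu_{k+1}$; this sphere pairs nontrivially with the capped-off Seifert surface $\hat\Sigma_{k+1}$ of $K$ in $Z_{k+1}$ (they meet in one point, as a meridian meets a Seifert surface).

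Given such an $S$, the second bullet of Proposition~\ref{prop:adjunction-inequality} — an embedded sphere with $[S]\cdot[S]=0$ and $[S]\cdot[F]\neq 0$ for some closed surface $F\subset Z_{k+1}$, here $F=\hat\Sigma_{k+1}$ — immediately gives $I^\#(Z_{k+1},\zeta_{k+1})=0$. By the composition law for cobordism maps on $I^\#$ (functoriality of the maps $I^\#(-,-)$ under gluing, which holds without any $b_1$ hypothesis since no eigenspace decomposition is invoked), $I^\#(Z_{k+1},\zeta_{k+1}) = I^\#(X_{k+1},\nu_{k+1}) \circ I^\#(X'_{k+1},\eta_{k+1})$, and so the latter composition vanishes, which is exactly the claim. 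For $k=0$ the same argument applies with $S^3_0(K)$ and the bundle $\mu$ in place of $S^3_{k+1}(K)$; the surface $S$ and the pairing with $\hat\Sigma$ are constructed in the identical way, and one checks the bundle classes $\eta_1$, $\nu_1$ glue to a class $\zeta_1$ on $Z_1$ restricting correctly at both ends, which is automatic from the handle description since all the relevant arcs are product arcs.

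The main obstacle is the handle-calculus verification that $Z_{k+1}$ really does contain an embedded sphere of square zero intersecting $\hat\Sigma_{k+1}$ transversely in a single point — one must be careful that the $\mu_{k+1}$-handle and the $K$-handle are attached in the right order and along the right framings so that the belt sphere of the first handle caps off to a sphere in the composite rather than, say, a higher-genus surface, and that after the relevant handleslide the homological intersection with $\hat\Sigma_{k+1}$ is genuinely $\pm 1$. This is precisely the content of \cite[Proposition~7.2]{kmos} in the monopole/Heegaard Floer setting, so I would mirror that computation verbatim, noting only that the input we need here is the adjunction-vanishing statement Proposition~\ref{prop:adjunction-inequality} rather than the corresponding $\mathrm{Spin}^c$-theoretic statement; everything else — the topology of $Z_{k+1}$, the existence of $S$, and the bundle bookkeeping — transfers without change. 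A secondary (but routine) point is to confirm that the unlabeled maps in \eqref{eq:triangle-0-cobordisms} and \eqref{eq:triangle-n-cobordisms} are indeed induced by these trace cobordisms $X'_{k+1}$, which follows from the fact that all maps in Theorem~\ref{thm:exact-triangle} are $2$-handle cobordism maps together with the identification \eqref{eqn:XWblowup}.
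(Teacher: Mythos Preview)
Your approach is the paper's: form the composite cobordism $Z_{k+1}$, exhibit a square-zero sphere $S$ together with a closed surface it pairs with nontrivially, and apply Proposition~\ref{prop:adjunction-inequality}. Three small corrections, none fatal: the $\mu$-handle giving $X'_{k+1}$ is $0$-framed rather than $(-1)$-framed (a $(-1)$-framed meridian in $S^3_{k+1}(K)$ yields a manifold with $|H_1|=k+2$, not $S^3$); the sphere $S$ is the \emph{cocore} of $H_\mu$ glued to the core of $H_K$ (your ``belt sphere plus a disk bounded by $\mu_{k+1}$'' does not assemble into a $2$-sphere); and $[S]\cdot[\Sigma_{k+1}]=\pm(k+1)$, not $\pm1$. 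Your choice of $\Sigma_{k+1}$ as the dual surface works just as well as the paper's, which instead caps off $k+1$ parallel cores of $H_\mu$ with a Seifert surface for the nullhomologous link $(k+1)\mu$ inside $S^3_{k+1}(K)$.
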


\begin{proof}
This is essentially Lemma~4.13 and Remark~4.14 of \cite{bs-stein}, which in turn follows the proof of \cite[Proposition~6.5]{km-embedded2}.   The composition is induced by a cobordism
\[ X: S^3_{k+1}(K) \to S^3 \to S^3_{k+1}(K), \]
in which we attach a $0$-framed $2$-handle $H_\mu$ to $S^3_{k+1}(K) \times [0,1]$ along a meridian $\mu$ of $K \times \{1\}$ and then attach a $(k+1)$-framed $2$-handle $H_K$ to $K$ in the resulting $S^3$.  The cobordism $X$ contains a smoothly embedded $2$-sphere $S$ of self-intersection zero, as the union of a cocore of $H_\mu$ and a core of $H_K$.

We wish to apply Proposition~\ref{prop:adjunction-inequality} to $S$, so we must construct a surface $F$ with $[S]\cdot[F] \neq 0$.  The disjoint union of $k+1$ parallel cores of $H_\mu$ is bounded by a nullhomologous link in $S^3_{k+1}(K) \times \{1\}$, namely $k+1$ disjoint copies of $K$, so let $F$ be the union of these cores with a Seifert surface of $(k+1)K$ and then we have $[S] \cdot [F] = k+1 > 0$ as desired.
\end{proof}

\begin{lemma} \label{lem:induct-injectivity}
If $I^\#(X_{k+1},\nu_{k+1})$ is injective for some $k \geq 0$, then so is $I^\#(X_k,\nu_k)$.
\end{lemma}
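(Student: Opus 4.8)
The plan is to obtain the statement as a one-step consequence of Lemma~\ref{lem:canceling-2-handles} together with the exactness of the surgery triangles \eqref{eq:triangle-0-cobordisms} and \eqref{eq:triangle-n-cobordisms}. First I would recall that the relevant triangle---namely \eqref{eq:triangle-0-cobordisms} when $k=0$ and \eqref{eq:triangle-n-cobordisms} when $k\geq 1$---is $3$-periodic, so, writing $(X'_{k+1},\eta_{k+1})$ for the cobordism inducing its unlabeled map as in Lemma~\ref{lem:canceling-2-handles}, the triangle contains the segment
\[ I^\#(S^3_{k+1}(K)) \xrightarrow{I^\#(X'_{k+1},\eta_{k+1})} I^\#(S^3) \xrightarrow{I^\#(X_k,\nu_k)} I^\#(S^3_k(K)). \]
By exactness at the middle term, $\ker I^\#(X_k,\nu_k) = \img I^\#(X'_{k+1},\eta_{k+1})$, so it suffices to prove that $I^\#(X'_{k+1},\eta_{k+1}) = 0$.

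To do this I would invoke Lemma~\ref{lem:canceling-2-handles}, which gives $I^\#(X_{k+1},\nu_{k+1}) \circ I^\#(X'_{k+1},\eta_{k+1}) = 0$, hence $\img I^\#(X'_{k+1},\eta_{k+1}) \subseteq \ker I^\#(X_{k+1},\nu_{k+1})$. Since $I^\#(X_{k+1},\nu_{k+1})$ is injective by hypothesis, this kernel is zero, so $I^\#(X'_{k+1},\eta_{k+1})$ has trivial image and is therefore the zero map. Combined with the previous paragraph, $\ker I^\#(X_k,\nu_k) = 0$, i.e.\ $I^\#(X_k,\nu_k)$ is injective, as desired.

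This argument is essentially a one-step diagram chase, so I do not expect a genuine obstacle. The only points requiring care are the bookkeeping needed to see that the connecting (unlabeled) map of the exact triangle really is the cobordism map $I^\#(X'_{k+1},\eta_{k+1})$ appearing in Lemma~\ref{lem:canceling-2-handles}, and, when $k=0$, remembering to run the argument with the triangle \eqref{eq:triangle-0-cobordisms} carrying the nontrivial bundle $\mu$ on $S^3_0(K)$, which does not affect any of the reasoning above.
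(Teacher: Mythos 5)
Your proposal is correct and is essentially identical to the paper's argument: both use Lemma~\ref{lem:canceling-2-handles} together with the hypothesis to deduce that $I^\#(X'_{k+1},\eta_{k+1})=0$, then apply exactness of the relevant surgery triangle to conclude injectivity of $I^\#(X_k,\nu_k)$. You have simply spelled out the diagram chase in a bit more detail than the paper does.
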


\begin{proof}
The injectivity of $I^\#(X_{k+1},\nu_{k+1})$ implies by Lemma~\ref{lem:canceling-2-handles} that $I^\#(X'_{k+1},\eta_k)$ is zero; hence, $I^\#(X_k,\nu_k)$ is injective by the exactness of either \eqref{eq:triangle-0-cobordisms} or \eqref{eq:triangle-n-cobordisms}, depending on whether $k=0$ or $k \geq 1$.
\end{proof}

We can now prove the main proposition of this subsection.

\begin{proposition} \label{prop:first-l-space-slope}
Let $K\subset S^3$ be a nontrivial instanton L-space knot.  Then the following are equivalent for any rational $r>0$:
\begin{enumerate}
\item $r \geq \dim I^\#_\godd(S^3_0(K),\mu)$, \label{i:slope-vs-odd-dim}
\item $S^3_r(K)$ is an instanton L-space, \label{i:slope-is-l-space}
\item the map \[I^\#(X_m,\nu_m): I^\#(S^3) \to I^\#(S^3_m(K))\] is zero for $m=\lfloor r\rfloor$, where we interpret the codomain as  $I^\#(S^3_0(K),\mu)$ when $m=0$.\label{i:xm-is-zero}
\end{enumerate}
In particular, $S^3_{2g(K)-1}(K)$ is an instanton L-space.
\end{proposition}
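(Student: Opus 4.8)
First I would show that three a priori different integers coincide --- the least $N_0 \geq 0$ for which $I^\#(X_{N_0},\nu_{N_0}) = 0$, the least positive integral instanton L-space slope $N$ of $K$ (which exists by Remark~\ref{rmk:integralLspace} since $K$ is an instanton L-space knot), and $d := \dim I^\#_\godd(S^3_0(K),\mu)$ --- and then read off the three equivalences from $N_0 = N = d$. To pin down $N_0$: since $I^\#(S^3) \cong \C$ is one-dimensional, each $I^\#(X_k,\nu_k)$ is either injective or zero, and Lemma~\ref{lem:induct-injectivity} shows that the set of $k$ for which it is injective is an initial segment $\{0,1,\dots,N_0-1\}$ of $\Z_{\geq 0}$, so $I^\#(X_k,\nu_k) = 0$ exactly when $k \geq N_0$. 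Applying the adjunction inequality of Proposition~\ref{prop:adjunction-inequality} to the surface $\Sigma_{2g-1} \subset X_{2g-1}$ --- which has genus $g = g(K) \geq 1$ and self-intersection $2g-1 = \max(2g-1,1)$ --- gives $I^\#(X_{2g-1},\nu_{2g-1}) = 0$, so $N_0 \leq 2g-1 < \infty$.

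Next comes the dimension count. Write $d_k = \dim I^\#(S^3_k(K))$ for $k \geq 1$ and $d_0 = \dim I^\#(S^3_0(K),\mu)$. Feeding the injective/zero dichotomy through the exact triangles \eqref{eq:triangle-0-cobordisms} and \eqref{eq:triangle-n-cobordisms}, and using that the unlabeled third map of each triangle takes values in the one-dimensional space $I^\#(S^3)$, exactness forces $d_{k+1} = d_k - 1$ when $I^\#(X_k,\nu_k)$ is injective and $d_{k+1} = d_k + 1$ when it vanishes, so that
\[ d_k = d_0 - N_0 + |k - N_0| \qquad\text{for all } k \geq 0. \]
On the other hand $d_k \geq |H_1(S^3_k(K);\Z)| = k$ for every $k \geq 1$, with equality precisely when $S^3_k(K)$ is an instanton L-space, i.e.\ (by Remark~\ref{rmk:integralLspace}) precisely when $k \geq N$; and $d_0 = 2d$ since $\chi(I^\#(S^3_0(K),\mu)) = 0$ by Proposition~\ref{prop:euler}, while $d_0 > 0$ because $I^\#_\godd(S^3_0(K),\mu) \neq 0$ by Theorem~\ref{thm:odd-dim-g-1}. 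Comparing the two expressions for $d_k$ when $k$ is large gives $d_0 = 2N_0$, hence $N_0 = d \geq 1$; and then the fact that $d_k = k$ holds for exactly the integers $k \geq N_0$ and also for exactly the integers $k \geq N$ forces $N_0 = N$. Thus $N_0 = N = d$.

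With $N_0 = N = d$ in hand the equivalences are immediate: condition \eqref{i:slope-vs-odd-dim} is $r \geq d$; condition \eqref{i:slope-is-l-space} holds iff $r \geq N = d$ by Remark~\ref{rmk:integralLspace}; and condition \eqref{i:xm-is-zero} holds iff $\lfloor r \rfloor \geq N_0 = d$, equivalently $r \geq d$ since $d$ is an integer. Finally, taking $r = 2g(K)-1$, condition \eqref{i:xm-is-zero} holds for this value of $r$ by the vanishing $I^\#(X_{2g-1},\nu_{2g-1}) = 0$ noted above, so $S^3_{2g(K)-1}(K)$ is an instanton L-space.

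I expect the main obstacle to be the second paragraph: carefully extracting the two dimension recursions from the surgery triangles (with a little extra attention at $k = 0$, where the twisted bundle $\mu$ and the jump in $b_1$ enter), and then the short but slightly fiddly comparison showing that the ``V-shaped'' sequence $(d_k)$ and the lower bound $d_k \geq k$ --- sharp exactly past $N$ --- can only be reconciled when $N_0 = N$. Once the numerology is set up, the rest is formal.
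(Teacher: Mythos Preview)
Your argument is correct and uses the same core ingredients as the paper --- the surgery triangles \eqref{eq:triangle-0-cobordisms}--\eqref{eq:triangle-n-cobordisms}, the injective-or-zero dichotomy for $I^\#(X_k,\nu_k)$, and Lemma~\ref{lem:induct-injectivity} --- to establish $N_0 = N = d$. The packaging is slightly different: you compute total dimensions via a single V-shaped formula $d_k = d_0 - N_0 + |k-N_0|$ and then invoke $\chi(I^\#(S^3_0(K),\mu))=0$ to get $d_0 = 2d$, whereas the paper tracks the odd-graded part directly by showing that the kernel of the composite $I^\#(V_N,\bar\omega_N)$ is exactly $I^\#_\godd(S^3_0(K),\mu)$. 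The one substantive difference is the final sentence: the paper deduces that $S^3_{2g-1}(K)$ is an instanton L-space by bounding $\dim I^\#_\godd(S^3_0(K),\mu) \leq 2g(K)-1$ via Proposition~\ref{prop:l-space-eigenspaces}, which in turn rests on the analysis of \S7.1; your route through the adjunction inequality (Proposition~\ref{prop:adjunction-inequality}) applied to $\Sigma_{2g-1}$ is more direct and bypasses that proposition entirely.
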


\begin{proof}
Let $N \geq 1$ be the smallest  integral  L-space slope, which is positive by Remark \ref{rmk:integralLspace}.  Moreover, this remark says that $S^3_r(K)$ is an instanton L-space iff $r\geq N$. Since $S^3_{N-1}(K)$ is not an instanton L-space, the map $I^\#(X_{N-1},\nu_{N-1})$ appearing in the exact triangle
\[ \dots \to I^\#(S^3) \xrightarrow{I^\#(X_{N-1},\nu_{N-1})} I^\#(S^3_{N-1}(K)) \xrightarrow{I^\#(W_N,\omega_N)} I^\#(S^3_N(K)) \to \dots \]
must be injective: otherwise it would be zero, leading to
\[ \dim I^\#(S^3_{N-1}(K)) = \dim I^\#(S^3_N(K)) - 1 = N-1, \]
which is a contradiction.  (When $N=1$, we interpret $I^\#(S^3_{N-1}(K))$ here as $I^\#(S^3_0(K),\mu)$; this has positive dimension because \[I^\#_\godd(S^3_0(K),\mu;s_{g-1})\] is nonzero by Theorem~\ref{thm:odd-dim-g-1}.) 
Lemma~\ref{lem:induct-injectivity} now says that $I^\#(X_k,\nu_k)$ is injective for $0 \leq k \leq N-1$.  This shows that (3)$\implies$(2), since if $S^3_r(K)$ is not an instanton L-space then \[0\leq m=\lfloor r \rfloor\leq N-1,\] and we have argued in this case that $I^\#(X_m,\nu_m)$ is nontrivial. 

The argument above also shows, by the exactness of  \eqref{eq:triangle-0-cobordisms} and \eqref{eq:triangle-n-cobordisms}, that  the  maps
\[ I^\#(W_{k+1},\omega_{k+1}), \quad 0 \leq k \leq N-1 \]
are   surjective with 1-dimensional kernel; the kernel is the image of $I^\#(S^3)$ under the odd-degree map $I^\#(X_k,\nu_k)$, and hence is supported entirely in odd grading.  The composition
\[ I^\#(V_N,\bar\omega_N): I^\#(S^3_0(K),\mu) \to I^\#(S^3_N(K)) \]
is thus surjective as well, with kernel an $N$-dimensional subspace of $I^\#_\godd(S^3_0(K),\mu)$.  Since $I^\#(V_N,\bar\omega_N)$ has even degree, and $I^\#_\godd(S^3_N(K))=0$, we must have
\[ I^\#_\godd(S^3_0(K),\mu) = \ker I^\#(V_N,\bar\omega_N), \]
and in particular $N = \dim I^\#_\godd(S^3_0(K),\mu)$. This shows that (1)$\iff$(2), since $S^3_r(K)$ is an instanton L-space iff $r\geq N$.

We also know that  the map $I^\#(X_N,\nu_N)$ must be zero, because otherwise we deduce from the corresponding exact triangle that
\[ \dim I^\#(S^3_{N+1}(K)) = \dim I^\#(S^3_N(K)) - 1 = N-1 < N+1, \]
which is impossible since $I^\#(S^3_{N+1}(K))$ has Euler characteristic $N+1$.  Thus,  Lemma~
\ref{lem:induct-injectivity} implies that $I^\#(X_k,\nu_k) = 0$ for all $k \geq N$. This shows that (2)$\implies$(3), since if $S^3_r(K)$ is an instanton L-space then $\lfloor r \rfloor\geq N.$

For the final claim that $S^3_{2g-1}(K)$ is an instanton L-space, we know that each eigenspace \[I^\#_\godd(S^3_0(K),\mu;s_j)=0\]  for $|j| \geq g(K)$, and each of the $2g(K)-1$ eigenspaces with $|j| \leq g(K)-1$ has dimension at most 1 by Proposition~\ref{prop:l-space-eigenspaces}.  We therefore have that \[\dim I^\#_\godd(S^3_0(K),\mu)\leq 2g(K)-1,\] and we apply the equivalence  \eqref{i:slope-vs-odd-dim}$\iff$\eqref{i:slope-is-l-space} to complete the proof of the claim.
\end{proof}

Recall from Proposition~\ref{prop:l-space-eigenspaces} that if $K$ is a nontrivial instanton L-space knot then
\[ I^\#_\godd(S^3_0(K),\mu;s_i) = \C\cdot y_i \]
for all $i$, where \[y_i = I^\#(X_0,\nu_0;t_{0,i})({\mathbf 1}).\]  Again, the nonzero  $y_i$ are linearly independent because they belong to different eigenspaces, so Proposition~\ref{prop:first-l-space-slope} says that the smallest instanton L-space slope is precisely the number of $y_i$ that are nonzero. Note  that
\[ y_i \neq 0 \mathrm{\ iff\ } y_{-i} \neq 0 \] for each $i$,
since the eigenspaces $I^\#_\godd(S^3_0(K),\mu;s_{\pm i})$ are isomorphic  by Theorem~\ref{thm:eigenspace-decomposition} and are spanned by $y_{\pm i}$ by  Proposition~\ref{prop:l-space-eigenspaces}.  In particular, Theorem~\ref{thm:odd-dim-g-1} implies that 
\begin{equation}\label{eqn:bothnonzero}y_{g(K)-1} \neq 0 \textrm{ and } y_{1-g(K)} \neq 0 \end{equation} in this case.

\begin{proposition} \label{prop:2-surgery-l-space}
Suppose  $K\subset S^3$ is a nontrivial knot, and that either $S^3_1(K)$ or $S^3_2(K)$ is an instanton L-space.  Then $K$ is the right-handed trefoil.
\end{proposition}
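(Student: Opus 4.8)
The plan is to force $g(K)=1$ using the L‑space slope machinery, and then to pin down $K$ among the three fibered genus‑one knots with Lim's theorem. Since $1,2>0$, the hypothesis says that $K$ is a nontrivial instanton L‑space knot, so by Theorem~\ref{thm:l-space-knots-are-fibered} it is fibered with $g(K)=g_s(K)$. Put $N=\dim I^\#_\godd(S^3_0(K),\mu)$. By Proposition~\ref{prop:first-l-space-slope}, $S^3_r(K)$ is an instanton L‑space exactly when $r\geq N$, so $N\leq 2$. On the other hand, by Proposition~\ref{prop:l-space-eigenspaces} (and the linear independence of the nonzero $y_i$, which lie in distinct eigenspace summands) together with \eqref{eqn:bothnonzero}, $N$ equals the number of indices $i$ with $y_i\neq 0$, and both $y_{g-1}$ and $y_{1-g}$ are among them. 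If $g(K)\geq 2$ these are two distinct nonzero vectors, so $N=2$; hence $S^3_1(K)$ is \emph{not} an instanton L‑space, $S^3_2(K)$ is, and $y_i=0$ for every $i$ with $1-g<i<g-1$.

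The heart of the argument is to contradict the case $g(K)\geq 2$. Since $N=2$ is then the smallest integral instanton L‑space slope and $S^3_1(K)$ is not an L‑space, the argument in the proof of Proposition~\ref{prop:first-l-space-slope} shows that $I^\#(X_1,\nu_1)\colon I^\#(S^3)\to I^\#(S^3_1(K))$ is injective, and in particular $I^\#(X_1,\nu_1)({\bf 1})=\sum_i z_{1,i}\neq 0$. I would contradict this by evaluating the sum. The exact triangle \eqref{eq:triangle-0-cobordisms} identifies $\ker I^\#(W_1,\omega_1)$ with the span of $\sum_j y_j$; as the nonzero $y_j$ are independent and $N\geq 2$, a vector $y_i$ lies in this kernel if and only if $y_i=0$. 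Feeding this into Proposition~\ref{prop:w-z-image}, which gives $I^\#(W_1,\omega_1)(y_i)=\tfrac{\epsilon_0}{2}(-1)^i(z_{1,i}+z_{1,i+1})$, we get $z_{1,i}=-z_{1,i+1}$ for $2-g\leq i\leq g-2$ (exactly the indices with $y_i=0$). Chaining these equalities yields $z_{1,i}=(-1)^{g-1-i}z_{1,g-1}$ for $2-g\leq i\leq g-1$, while the adjunction inequality \eqref{eq:t_i-adjunction} forces $z_{1,i}=0$ for $i\notin[2-g,g-1]$; hence
\[
I^\#(X_1,\nu_1)({\bf 1})=\sum_{i=2-g}^{g-1}z_{1,i}=\Bigl(\sum_{m=0}^{2g-3}(-1)^m\Bigr)z_{1,g-1}=0,
\]
since $2g-2$ is even. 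This contradiction forces $g(K)=1$.

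When $g(K)=1$ we have $N=1$, so by Lim's theorem (Theorem~\ref{thm:lim}), $\tfrac{\Delta_K(t)-1}{t-2+t^{-1}}=\chi\bigl(I_*(S^3_0(K)|\hat\Sigma)_\mu\bigr)$, a constant equal to $\pm1$ because $S^3_0(K)$ is fibered with fiber $\hat\Sigma$ and hence $I_*(S^3_0(K)|\hat\Sigma)_\mu\cong\C$ by Theorem~\ref{thm:fibered-has-rank-1}. Thus $\Delta_K(t)\in\{\,t-1+t^{-1},\ -t+3-t^{-1}\,\}$, and since $K$ is a fibered knot of genus $1$ it is $T_{2,3}$, its mirror $T_{2,-3}$, or the figure eight $4_1$. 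The right‑handed trefoil $T_{2,3}$ is an instanton L‑space knot, since $S^3_5(T_{2,3})$ is a lens space, so it remains to exclude $T_{2,-3}$ and $4_1$; as a genus‑one instanton L‑space knot has $N=1$, this amounts to checking that $S^3_1(T_{2,-3})=-\Sigma(2,3,7)$ and $S^3_1(4_1)$ are not instanton L‑spaces, which follows from the surgery computations available to us (a computation of the sort carried out in Lemma~\ref{lem:t25-2}, via Scaduto's connected‑sum formula and the Fintushel--Stern and Frøyshov invariants of these Seifert fibered spaces, gives $\dim I^\#(S^3_1(T_{2,-3}))>1$ and $\dim I^\#(S^3_1(4_1))>1$). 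The main obstacle is the genus $\geq 2$ case: everything there rests on the alternating‑sign cancellation in the displayed sum, so the precise shape of the coefficients in Proposition~\ref{prop:w-z-image} is essential; the genus‑one step, by contrast, is routine once $\Delta_K$ is determined.
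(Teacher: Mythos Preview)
Your argument is correct and follows essentially the same route as the paper: for $g\geq 2$ you use Proposition~\ref{prop:w-z-image} to show $I^\#(X_1,\nu_1)({\bf 1})=\sum z_{1,i}=0$, contradicting Proposition~\ref{prop:first-l-space-slope}; the paper does the same computation by applying $I^\#(W_1,\omega_1)$ to the vanishing sum $y_{2-g}+y_{4-g}+\cdots+y_{g-2}$ and observing that the right side is $\pm\tfrac12 I^\#(X_1,\nu_1)({\bf 1})$.

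Two minor points. First, the Lim step in the genus-one case is superfluous: once you know $K$ is fibered of genus~$1$ (Theorem~\ref{thm:l-space-knots-are-fibered}), the classification of such knots immediately gives $K\in\{T_{2,3},T_{2,-3},4_1\}$, so there is no need to compute $\Delta_K$. Second, the exclusion of $T_{2,-3}$ and $4_1$ is left as a sketch; the paper dispatches this by noting that $S^3_1(T_{2,-3})$ and $S^3_1(4_1)$ are Seifert fibered but not $\pm\Sigma(2,3,5)$, and citing \cite[Corollary~5.3]{bs-stein} to conclude they are not instanton L-spaces. Your proposed direct computation via Fintushel--Stern and Fr{\o}yshov invariants would also work, but you should either carry it out or cite the same result.
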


\begin{proof}
Let $g=g(K)$, and suppose first that $S^3_1(K)$ is an instanton L-space.  Proposition~\ref{prop:first-l-space-slope} implies that \[\dim I^\#_\godd(S^3_0(K),\mu) \leq 1,\] and yet $y_{g-1}$ and $y_{1-g}$ are both nonzero as in \eqref{eqn:bothnonzero}.  This is only possible if $s_{g-1}=s_{1-g}$, or equivalently if $g=1$.  Since $K$ has genus $1$ and it is fibered by Theorem~\ref{thm:l-space-knots-are-fibered}, it can only be a trefoil or the figure-eight knot.  But in \cite[\S4]{bs-stein} we noted that 1-surgery on the left-handed trefoil and figure-eight are Seifert fibered and not the Poincar\'e homology sphere, so these are not instanton L-spaces by \cite[Corollary~5.3]{bs-stein}.  Thus, $K$ is the right-handed trefoil.

Now suppose instead that $S^3_2(K)$ is an instanton L-space, but $S^3_1(K)$ is not.  In this case Proposition~\ref{prop:first-l-space-slope} tells us that $g \geq 2$, and that
\[ \dim I^\#_\godd(S^3_0(K),\mu) = 2. \]
The elements $y_{g-1}$ and $y_{1-g}$ are nonzero, and they lie in distinct eigenspaces since $g \geq 2$, so all of the other $I^\#_\godd(S^3_0(K),\mu;s_j)$ must vanish, hence
\[ y_j=0 \mathrm{\ for\ }2-g \leq j \leq g-2. \]
We now apply Proposition~\ref{prop:w-z-image} to say that
\[ I^\#(W_1,\omega_1)\left(\sum_{j=0}^{g-2} y_{2-g+2j}\right) = \frac{\epsilon_0}{2}\cdot \sum_{j=0}^{g-2} (-1)^{2-g+2j}(z_{1,2-g+2j}+z_{1,3-g+2j}), \]
or equivalently 
\begin{align*}
I^\#(W_1,\omega_1)\big(y_{2-g}+y_{4-g}+y_{6-g}+\dots+y_{g-2}\big) &= \frac{\epsilon_0}{2}(-1)^{2-g}\left( \sum_{j=2-g}^{g-1} z_{1,j} \right) \\
&= \pm\frac{1}{2} I^\#(X_1,\nu_1)({\bf 1}).
\end{align*}
Now the left side vanishes since each of the $y_j$ appearing there is zero, so it follows that the map $I^\#(X_1,\nu_1)$ is zero as well.  But then Proposition~\ref{prop:first-l-space-slope} says that $S^3_1(K)$ is an instanton L-space, and this is a contradiction.
\end{proof}

\begin{corollary} \label{cor:genus-1-l-space}
Suppose $K\subset S^3$ is an instanton L-space knot of genus 1. Then $K$ is the right-handed trefoil.
\end{corollary}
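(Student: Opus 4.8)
The plan is to reduce the statement immediately to Proposition~\ref{prop:2-surgery-l-space}, which already asserts that if $S^3_1(K)$ or $S^3_2(K)$ is an instanton L-space then $K$ is the right-handed trefoil. So the only thing that needs to be checked is that a genus-$1$ instanton L-space knot has $S^3_1(K)$ an instanton L-space. Note first that such a $K$ is automatically nontrivial, so that the hypotheses of Proposition~\ref{prop:l-space-eigenspaces} and of Proposition~\ref{prop:first-l-space-slope} (each of which only requires $K$ to be a nontrivial instanton L-space knot) are in force.

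Next I would bound $\dim I^\#_\godd(S^3_0(K),\mu)$. The adjunction inequality in Theorem~\ref{thm:eigenspace-decomposition} forces $I^\#_\godd(S^3_0(K),\mu;s_i) = 0$ for all $|i| \geq g(K) = 1$, so only the summand with $i=0$ can be nonzero. Proposition~\ref{prop:l-space-eigenspaces} then gives $\dim I^\#_\godd(S^3_0(K),\mu;s_0) \leq 1$, whence $\dim I^\#_\godd(S^3_0(K),\mu) \leq 1$.

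Then I would invoke Proposition~\ref{prop:first-l-space-slope}: its equivalence \eqref{i:slope-vs-odd-dim}$\iff$\eqref{i:slope-is-l-space} says that $S^3_r(K)$ is an instanton L-space exactly when $r \geq \dim I^\#_\godd(S^3_0(K),\mu)$. By Remark~\ref{rmk:integralLspace} this dimension is a positive integer, so together with the bound above it must equal $1$; hence $S^3_1(K)$ is an instanton L-space. (One could instead simply quote the last sentence of Proposition~\ref{prop:first-l-space-slope}, which states directly that $S^3_{2g(K)-1}(K) = S^3_1(K)$ is an instanton L-space.) Applying Proposition~\ref{prop:2-surgery-l-space} now finishes the argument.

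There is no genuine obstacle here: the corollary is essentially a repackaging of the genus-$1$ case of results already proved, and the only mild point to keep in mind is that Theorem~\ref{thm:l-space-knots-are-fibered} (fiberedness) is not even needed — it is enough that $K$ be a nontrivial instanton L-space knot for all the inputs above to apply.
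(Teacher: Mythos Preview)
Your proposal is correct and takes essentially the same approach as the paper: the paper's proof simply invokes the last sentence of Proposition~\ref{prop:first-l-space-slope} (that $S^3_{2g(K)-1}(K)=S^3_1(K)$ is an instanton L-space) and then applies Proposition~\ref{prop:2-surgery-l-space}, which is exactly the route you sketch in your parenthetical remark. Your longer derivation of the bound $\dim I^\#_\godd(S^3_0(K),\mu)\leq 1$ via Theorem~\ref{thm:eigenspace-decomposition} and Proposition~\ref{prop:l-space-eigenspaces} is just an unpacking of that same last sentence in the genus-$1$ case.
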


\begin{proof}
Proposition~\ref{prop:first-l-space-slope} says that $S^3_1(K)$ must be an instanton L-space, so we apply Proposition~\ref{prop:2-surgery-l-space}.
\end{proof}

\begin{remark}The proof of Proposition~\ref{prop:2-surgery-l-space} also shows that if $g=g(K)$ is odd and greater than 2, then $S^3_3(K)$ is not an instanton L-space.  Indeed, we know from the proposition in this case that $S^3_2(K)$ is not an instanton L-space, so if $S^3_3(K)$ is, then Proposition~\ref{prop:first-l-space-slope} says that \[\dim I^\#_\godd(S^3_0(K),\mu) = 3,\] which implies, by symmetry, that the nonzero $y_j$ are precisely $y_{1-g},$ $y_0$, and $y_{g-1}$.
The fact that $g$ is odd then means that the element \[y_{2-g}+y_{4-g}+y_{6-g}+\dots+y_{g-2}\] is again zero, which implies that $I^\#(X_1,\nu_1)({\mathbf 1}) = 0$, as in the proof of Proposition~\ref{prop:2-surgery-l-space}. But this implies by the proposition that $S^3_1(K)$ is an instanton L-space, a contradiction.
\end{remark}

\section{Instanton L-space knots are strongly quasipositive} \label{sec:sqp}
 The goal of this section is to prove the  two theorems below, which, together with Theorem \ref{thm:l-space-knots-are-fibered},  will complete the proof of Theorem \ref{thm:main-surgery-reduction}. 
 
  \begin{theorem}
\label{thm:sqp} Suppose $K\subset S^3$ is a nontrivial instanton L-space knot. Then $K$ is strongly quasipositive.
\end{theorem}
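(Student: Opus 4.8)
The plan is to reduce Theorem~\ref{thm:sqp} to the tightness of a contact structure and then to establish that tightness by passing to cables. By Theorem~\ref{thm:l-space-knots-are-fibered}, a nontrivial instanton L-space knot $K$ is fibered; write $g=g(K)$ and let $\xi_K$ denote the contact structure on $S^3$ supported by its open book. By Hedden \cite{hedden-positivity} together with Baader--Ishikawa \cite{baader-ishikawa}, the knot $K$ is strongly quasipositive if and only if $\xi_K$ is tight, so it suffices to prove that $\xi_K$ is tight.

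First I would repackage the nonvanishing $y_{g-1}\neq 0$ from \eqref{eqn:bothnonzero} as a statement about a turned-around cobordism. Writing $X_0=X_0(K)\colon S^3\to S^3_0(K)$ for the $0$-trace, the reversed cobordism $\overline{X_0}\colon -S^3_0(K)\to -S^3$ induces, via the duality pairing on framed instanton cobordism maps together with the eigenspace compatibility of Proposition~\ref{prop:eigenspace-restriction} and the grading shift of Proposition~\ref{prop:grading-shift}, a map
\[ I^\#_{\mathrm{even}}(-S^3_0(K),\mu;s_{1-g}) \longrightarrow I^\#(-S^3) \]
of one-dimensional spaces whose adjoint is, up to a nonzero scalar, the map $\mathbf{1}\mapsto y_{g-1}$; since $y_{g-1}\neq 0$ this turned-around map is nonzero. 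As emphasized in \S\ref{ssec:proof}, this by itself does not yet give tightness of $\xi_K$: unlike in Heegaard Floer homology, there is no description of the instanton contact invariant in terms of a knot filtration that would let us conclude directly. So the argument continues with cables.

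The next step is to check that, for every coprime pair $(p,q)$ with $q\geq 2$ and $p/q>2g-1$, the cable $K_{p,q}$ is again a nontrivial instanton L-space knot. By Gordon's cabling formula there is a reducible surgery $S^3_{pq}(K_{p,q})\cong S^3_{p/q}(K)\# L(q,p)$; the summand $S^3_{p/q}(K)$ is an instanton L-space because $p/q>2g-1\geq\dim I^\#_{\mathrm{odd}}(S^3_0(K),\mu)$, invoking Proposition~\ref{prop:first-l-space-slope}; the lens space $L(q,p)$ is an instanton L-space; and a connected sum of instanton L-spaces is an instanton L-space, since both $\dim I^\#$ and $|H_1|$ are multiplicative under connected sum by the K\"unneth formula for $I^\#$ \cite{scaduto}. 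As $pq>0$, this shows $K_{p,q}$ is an instanton L-space knot, so the previous step applies with $K$ replaced by $K_{p,q}$ and produces a nonzero turned-around map for $K_{p,q}$. Using Theorem~\ref{thm:fibered-has-rank-1}, Proposition~\ref{prop:fiberedframed}, and \eqref{eqn:Irelative} to identify the relevant one-dimensional summand of $I^\#(-S^3_0(K_{p,q}),\mu)$ with the sutured instanton homology of the fibered complement of $K_{p,q}$, I would reinterpret this nonvanishing as the nonvanishing of (a mild variant of) the contact invariant of $\xi_{K_{p,q}}$ in the sense of \cite{bs-instanton}.

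Finally, I would exploit the Murasugi sum (Stallings plumbing) structure of cable fibrations: the open book of $K_{p,q}$ is obtained from that of $K_{1,q}$ by Murasugi-summing in the fiber of the torus knot $T_{p,q}$, so it is known that $\xi_{K_{p,q}}\cong\xi_{K_{1,q}}\cong\xi_K$ and that the open book of $K_{p,q}$ deplumbs onto that of $K_{1,q}$. Naturality of this contact invariant under deplumbing then forces the corresponding invariant of $\xi_{K_{1,q}}$ to be nonzero. Specializing to $q=2$, which is all that is needed, the fiber surface of $K_{1,2}$ has small enough complexity that Theorem~\ref{thm:low-genus-mu} and Corollary~\ref{cor:connected-sum-low-genus} apply; combined with the vanishing of this invariant for an overtwisted contact structure on $S^3$, this lets one conclude directly that $\xi_{K_{1,2}}$ is tight. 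Since $\xi_{K_{1,2}}\cong\xi_K$, the structure $\xi_K$ is tight, and hence $K$ is strongly quasipositive. The main obstacle is exactly this last step: one must establish naturality of the (variant) instanton contact invariant under deplumbing of open books, and then carry out by hand --- for the $(1,2)$-cable --- the implication ``invariant nonzero $\Rightarrow$ tight'' which is automatic in Heegaard Floer homology via the knot Floer filtration but admits no such shortcut here.
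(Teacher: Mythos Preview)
Your overall strategy matches the paper's: reduce strong quasipositivity to tightness of $\xi_K$ via Hedden and Baader--Ishikawa, pass to cables $K_{p,q}$ with $p/q>2g(K)-1$ (which are again instanton L-space knots), extract a nonvanishing statement from the turned-around $0$-trace cobordism, and use the Murasugi-sum/plumbing structure of the cable to relate this to a contact invariant for $\xi_{K_{1,q}}\cong\xi_K$. Your cabling argument via the reducible $pq$-surgery is a valid alternative to the paper's use of $S^3_{pq-1}(K_{p,q})\cong S^3_{(pq-1)/q^2}(K)$.

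However, there are two genuine gaps in your final step. First, the claim that ``the fiber surface of $K_{1,2}$ has small enough complexity that Theorem~\ref{thm:low-genus-mu} and Corollary~\ref{cor:connected-sum-low-genus} apply'' is false: $g(K_{1,2})=2g(K)$, which can be arbitrarily large, so those genus-$\leq 2$ results do not apply and play no role here. The paper does not use any low-genus shortcut; instead it constructs a variant invariant $\fiinvt(\cB,R,\phi)$ as a subspace of $I^\#$, proves it is invariant under positive stabilization (Proposition~\ref{prop:stabilization-invariance}), and proves it vanishes for a suitable overtwisted open book (Corollary~\ref{cor:ot}). Second, you omit the Giroux correspondence, which is essential and is explicitly invoked: one needs a common positive stabilization of $\cB_{1,2}$ and of the overtwisted open book $\cB_{\mathrm{ot}}$ in order to compare their $\fiinvt$-subspaces and derive a contradiction. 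Relatedly, the mechanism is not ``naturality under deplumbing'' from $K_{p,q}$ to $K_{1,q}$; rather, the paper shows directly (Lemma~\ref{lem:y0j-cobordism-injective} and Proposition~\ref{prop:v-bowtie-injective}) that the cobordism defining $\fiinvt(\cB_{1,q},R,h)$ is a factor of the turned-around $0$-trace cobordism coming from $K_{p,q}$, so nonvanishing of the latter forces $\fiinvt(\cB_{1,q},R,h)\neq\{0\}$. The stabilization invariance is then used only at the very end, together with Giroux, to compare with $\cB_{\mathrm{ot}}$.
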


\begin{theorem}
\label{thm:bound} Suppose $K\subset S^3$ is a nontrivial instanton L-space knot. Then $S^3_r(K)$ is an instanton L-space for some rational $r$ iff $r\geq 2g(K)-1$.
\end{theorem}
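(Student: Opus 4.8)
The plan is to combine the structural description of the positive instanton L-space slopes of $K$ from \cite{bs-stein} (recorded in Remark~\ref{rmk:integralLspace}) with the identification of the smallest such slope in Proposition~\ref{prop:first-l-space-slope}, and then to compute that slope exactly by feeding in the strong quasipositivity of $K$ supplied by Theorem~\ref{thm:sqp}.

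First I would dispose of the slopes $r \le 0$, which are not the heart of the matter. Since $S^3_0(K)$ has $b_1 = 1$, it is not a rational homology sphere and hence not an instanton L-space by definition. And if $S^3_r(K)$ were an instanton L-space for some $r < 0$, then since $\dim I^\#$ is insensitive to orientation reversal and $-S^3_r(K) \cong S^3_{-r}(\mirror{K})$ with $-r > 0$, the mirror $\mirror{K}$ would be a nontrivial instanton L-space knot; Theorem~\ref{thm:sqp} would then force both $K$ and $\mirror{K}$ to be strongly quasipositive, which is impossible for a nontrivial knot (for instance because $\tau(K) = g(K) > 0$ for a strongly quasipositive knot, whereas $\tau(\mirror{K}) = -\tau(K)$). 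So it suffices to handle $r > 0$.

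For $r > 0$: by Remark~\ref{rmk:integralLspace} together with Proposition~\ref{prop:first-l-space-slope}, the set of positive rationals $r$ with $S^3_r(K)$ an instanton L-space is precisely $[N,\infty) \cap \Q$, where $N = \dim I^\#_\godd(S^3_0(K),\mu)$, and Propositions~\ref{prop:l-space-eigenspaces} and \ref{prop:first-l-space-slope} already give the bound $N \le 2g(K) - 1$. Thus Theorem~\ref{thm:bound} reduces to the reverse inequality $N \ge 2g(K) - 1$. Here I would invoke Theorem~\ref{thm:sqp}: since $K$ is strongly quasipositive, its maximal self-linking number satisfies $\maxsl(K) = 2g(K) - 1$ (the slice--Bennequin inequality is an equality for such knots). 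With this hypothesis in hand, I would run the argument of Lidman, Pinz\'on-Caicedo, and Scaduto \cite{lpcs}, carried out in \S\ref{ssec:lspaceslopes}: using the $\Z/2\Z$-grading of the instanton contact invariant of \cite{bs-instanton} together with the main technical result of \cite{bs-stein}, one shows that a nontrivial instanton L-space knot $K$ with $\maxsl(K) = 2g(K)-1$ must have $\dim I^\#_\godd(S^3_0(K),\mu) \ge 2g(K) - 1$; concretely, that all $2g(K)-1$ of the elements $y_{1-g(K)},\dots,y_{g(K)-1}$ of Proposition~\ref{prop:l-space-eigenspaces} are nonzero. Combining the two bounds yields $N = 2g(K)-1$, and hence $S^3_r(K)$ is an instanton L-space (for $r>0$) iff $r \ge 2g(K)-1$, completing the proof.

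The only substantial step is the inequality $N \ge 2g(K)-1$, i.e.\ the Lidman--Pinz\'on-Caicedo--Scaduto input; everything else is bookkeeping, since Proposition~\ref{prop:first-l-space-slope} already packages both the structure of the L-space slopes and the opposite bound $N \le 2g(K)-1$, while Theorem~\ref{thm:sqp} supplies the hypothesis $\maxsl(K) = 2g(K)-1$ needed to apply that input. The crux is therefore to extract, from the nonvanishing of the instanton contact class in the $\Z/2\Z$-grading dictated by $\maxsl(K)=2g(K)-1$, enough information to force the maximal possible number of eigenspace summands $I^\#_\godd(S^3_0(K),\mu;s_j)$ to be nonzero.
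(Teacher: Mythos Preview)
Your outline is correct and matches the paper's approach: reduce to showing $N = \dim I^\#_\godd(S^3_0(K),\mu) = 2g(K)-1$, with $N \le 2g(K)-1$ coming from Proposition~\ref{prop:first-l-space-slope} and $N \ge 2g(K)-1$ coming from the Lidman--Pinz\'on-Caicedo--Scaduto argument (reproduced in \S\ref{ssec:lspaceslopes}) once Theorem~\ref{thm:sqp} supplies $\maxsl(K)=2g(K)-1$.

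Two minor remarks. First, your treatment of $r \le 0$ is more explicit than the paper's; the paper simply cites \cite[Theorem~4.20]{bs-stein} as asserting that the full set of instanton L-space slopes is $[N,\infty)\cap\Q$, which already excludes $r \le 0$. Your argument via the Heegaard Floer $\tau$-invariant is valid, but it imports an external theory; if you want to stay within the instanton framework you can just appeal to \cite{bs-stein} as the paper does. Second, your last sentence slightly mischaracterizes the LPCS mechanism: the argument in \S\ref{ssec:lspaceslopes} does not work eigenspace-by-eigenspace or directly show each $y_j \neq 0$. Rather, it produces $n+2g-1$ linearly independent contact classes in $I^\#(-S^3_{-n}(K))$ (all in one $\Z/2\Z$-grading), uses the Euler characteristic to force $\dim I^\#(-S^3_{-n}(K)) \ge n+4g-2$, and then descends via the surgery exact triangles to $\dim I^\#(S^3_0(K),\mu) \ge 4g-2$. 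That the $y_j$ are all nonzero is a consequence, not the method.
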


We will begin with and spend the most time proving Theorem \ref{thm:sqp}, proceeding as outlined in the introduction.
 Namely, if $K$ is an instanton L-space knot then it is fibered, by Theorem \ref{thm:l-space-knots-are-fibered}. The corresponding fibration specifies an open book decomposition of $S^3$, and hence a contact structure $\xi_K$ on $S^3$ by Thurston and Winkelnkemper's construction \cite{thurston-winkelnkemper}. To prove that $K$ is strongly quasipositive, it suffices to show  that  $\xi_K$ is the unique tight contact structure $\xi_{\std}$ on $S^3$, as  recorded in the proposition below.

\begin{proposition}
\label{prop:sqp} Suppose $K \subset S^3$ is a fibered knot.  Then $K$ is strongly quasipositive iff $\xi_K$ is tight, in which case the maximal self-linking number of $K$ is   $\maxsl(K) = 2g(K)-1$.
\end{proposition}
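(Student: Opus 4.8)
The statement to prove is Proposition~\ref{prop:sqp}: for a fibered knot $K \subset S^3$, strong quasipositivity is equivalent to tightness of $\xi_K$, and in that case $\maxsl(K) = 2g(K)-1$.

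\medskip

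\textbf{Plan of proof.} The equivalence between strong quasipositivity and tightness of $\xi_K$ is not new: it is the combination of results of Hedden \cite{hedden-positivity} and Baader--Ishikawa \cite{baader-ishikawa}, already invoked in the introduction. So the first step is simply to cite these. Concretely, Hedden showed that for a fibered knot $K$, the open book supporting $\xi_K$ has right-veering monodromy if and only if $K$ is strongly quasipositive; and by Honda--Kazez--Mati\'c's characterization of tightness via right-veering, together with the fact that on $S^3$ the only tight contact structure is $\xi_{\std}$ (Eliashberg), one concludes $\xi_K$ is tight iff $\xi_K = \xi_{\std}$ iff $K$ is strongly quasipositive. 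I would state this cleanly, pointing to \cite{hedden-positivity, baader-ishikawa}, and perhaps remark that $\xi_K = \xi_{\std}$ is the sharper phrasing one actually gets.

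\medskip

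\textbf{The self-linking number computation.} For the second claim, suppose $\xi_K$ is tight, so $\xi_K = \xi_{\std}$ and $K$ is strongly quasipositive. The binding of an open book is naturally a transverse knot in the supported contact structure, and a standard computation (going back to Bennequin, and spelled out e.g. in the open-book literature) shows that the self-linking number of the binding $K$ with respect to the page framing is $\lsl(K) = 2g(K)-1$ when the page has one boundary component. I would recall this: a Seifert surface $\Sigma$ of genus $g$ with $\partial \Sigma = K$ gives the fiber surface, and the contact framing on $K$ induced by $\xi_K$ agrees with the page framing, so $\lsl(K, \text{page}) = -\chi(\Sigma) = 2g-1$. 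On the other hand, the transverse Bennequin inequality in the tight contact structure $\xi_{\std}$ gives $\maxsl(K) \le 2g_s(K) - 1 \le 2g(K)-1$ (using the slice--Bennequin inequality, or Rudolph's theorem). Since the binding realizes $\lsl = 2g(K)-1$, this lower bound matches the upper bound, forcing $\maxsl(K) = 2g(K)-1$. One should double-check that strong quasipositivity already implies $g_s(K) = g(K)$ so the two bounds coincide — this is a theorem of Rudolph (the Bennequin-type bound for quasipositive surfaces is sharp), and it is consistent with Theorem~\ref{thm:l-space-knots-are-fibered} in our setting.

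\medskip

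\textbf{Expected main obstacle.} There is essentially no hard analytic or gauge-theoretic content here — this proposition is a bookkeeping statement assembling well-known results about fibered knots, open books, right-veering monodromy, and the Bennequin inequality. The only genuine care needed is in matching conventions: signs and orientations in the definition of $\lsl$ and $\maxsl$, whether one works with $K$ or its mirror, and making sure the "tight $\Rightarrow$ strongly quasipositive" direction is applied in the correct handedness (the left-handed trefoil is fibered but not strongly quasipositive, and its $\xi_K$ is overtwisted). So the real work is stating the references precisely and checking that the conventions used elsewhere in the paper (e.g. in \S\ref{ssec:proof} where $\maxsl(K) = 2g(K)-1$ is used via \cite{lpcs}) are consistent with the ones here.
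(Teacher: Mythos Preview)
Your proposal is correct and follows essentially the same approach as the paper: cite the existing literature (Hedden, with a Giroux-free alternative) for the equivalence, then combine the Bennequin upper bound with the observation that the binding of the open book is transverse with $\lsl = 2g(K)-1$. The paper's argument is slightly leaner in that it invokes only the classical Bennequin inequality $\maxsl(K) \le 2g(K)-1$, so your detour through slice-Bennequin and Rudolph's $g_s = g$ is unnecessary---the upper bound need only match the Seifert genus, which is exactly what the binding realizes.
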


\begin{proof}
The relationship between strong quasipositivity and tightness was proved by Hedden in  \cite[Proposition~2.1]{hedden-positivity}.  Hedden's proof relies on the Giroux correspondence, but this is not necessary: it was given an alternate proof and generalized to fibered knots in other 3-manifolds by Baker--Etnyre--van Horn-Morris \cite[Corollary~1.12]{bev-cabling}. The  claim that if $\xi_K$ is tight then $\maxsl(K) = 2g(K)-1$ comes from the Bennequin inequality $\maxsl(K) \leq 2g(K)-1$ together with the elementary fact that the connected binding $B$ of an open book  supporting any contact structure $\xi$ is transverse in $\xi$ with self-linking number $2g(B)-1$.
\end{proof}

We will study   $\xi_K$ via cabling. Let   $K_{p,q}$ denote  the $(p,q)$-cable of $K$, where $p$ and $q$ are relatively prime and $q \geq 2$ (this cable is  the simple closed curve on $\partial N(K)\subset S^3$ representing the class $\mu^p\lambda^q$, for $\mu$ the meridian of $K$ and $\lambda$ the longitude). 
It is well-known that if $K$ is fibered then so is $K_{p,q}$, in which case we can talk about the corresponding contact structure $\xi_{K_{p,q}}$. For positive cables, this contact structure agrees with $\xi_K$, as below.

\begin{proposition}
\label{prop:cabletight} Suppose $K \subset S^3$ is a fibered knot.  Then $\xi_K\cong \xi_{K_{p,q}}$ for $p$ and $q$   positive.\end{proposition}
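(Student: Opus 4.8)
The claim is that for a fibered knot $K$ and positive coprime integers $p,q$ with $q \geq 2$, the contact structure $\xi_{K_{p,q}}$ supported by the fibration of $K_{p,q}$ agrees with $\xi_K$. The plan is to realize the open book for $K_{p,q}$ as a \emph{Murasugi sum} (plumbing) of the open book for $K$ with the open book for a positive cable of the unknot, and then invoke the Torisu--Giroux principle that the contact structure supported by a Murasugi sum is the contact connected sum of the contact structures supported by the summands. First I would recall the classical fact (due to Stallings, and exploited by Hedden and by Baker--Etnyre--van Horn-Morris) that if $K$ has fiber surface $F$, then the fiber surface for the $(p,q)$-cable $K_{p,q}$ with $q \geq 2$ and $p,q>0$ is obtained by Murasugi-summing $q$ parallel copies of $F$ with a fiber surface for the $(p,q)$-torus knot $T_{p,q} = U_{p,q}$ (the $(p,q)$-cable of the unknot); equivalently, $K_{p,q}$ can be ``deplumbed'' down to $K$ together with a torus-knot piece. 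The monodromy of the cabled open book is then the composition of (lifted copies of) the monodromy of $K$ with the monodromy of the torus knot open book.

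Next I would apply the fact, due to Torisu \cite{} and reformulated via Giroux's work, that if an open book $(F,\phi)$ is a Murasugi sum of $(F_1,\phi_1)$ and $(F_2,\phi_2)$, then the supported contact structure satisfies $\xi_{(F,\phi)} \cong \xi_{(F_1,\phi_1)} \# \xi_{(F_2,\phi_2)}$, and since both summands live on $S^3$ the connected sum is again a contact structure on $S^3$. Thus $\xi_{K_{p,q}} \cong \xi_K \# \xi_{U_{p,q}}$, where $\xi_{U_{p,q}}$ is the contact structure supported by the open book of the positive torus knot $T_{p,q}$. Since $p,q > 0$, the torus knot $T_{p,q}$ is fibered and strongly quasipositive, so its open book supports the tight contact structure $\xi_{\std}$ on $S^3$ (alternatively: $T_{p,q}$ bounds a page which is a positive Hopf plumbing, hence supports the standard tight structure). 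Therefore $\xi_{K_{p,q}} \cong \xi_K \# \xi_{\std} \cong \xi_K$, since connected sum with the standard tight contact structure on $S^3$ is the identity operation on contact structures.

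\textbf{Main obstacle.} The conceptual content is entirely in the first step: establishing cleanly that the open book for $K_{p,q}$ is literally a Murasugi sum / plumbing of the open book for $K$ with a standard torus-knot open book, with the right monodromy. This is the deplumbing statement that the paper alludes to in the introduction (``the fact that such cables can be deplumbed''), and one has to be careful about orientations and about which $(p,q)$ conventions make the summand a \emph{positive} Hopf plumbing (this is precisely where $p,q>0$ is used — negative cables would produce negative stabilizations and hence overtwisted pieces). I would handle this by citing the explicit fiber-surface description in Baker--Etnyre--van Horn-Morris \cite{bev-cabling} (or Hedden \cite{hedden-positivity}), where the cabling operation on fibered knots is worked out at the level of open books, and then quote Torisu's connected-sum formula for Murasugi sums. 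A secondary, purely bookkeeping point is to confirm that ``positive cable'' in the sense used here ($p,q>0$) matches the convention under which the torus-knot summand is $T_{p,q}$ with both parameters positive; once that is pinned down, the rest is formal.
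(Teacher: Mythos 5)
The paper's own ``proof'' of Proposition~\ref{prop:cabletight} is a one-line citation to Hedden \cite[Theorem~1.2]{hedden-cabling} (who uses the Giroux correspondence) and to Baader--Ishikawa \cite[\S3]{baader-ishikawa} (who avoid it). You are trying to reconstruct the cited result, and your reconstruction has a concrete gap in its first step.

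The step that fails is the claimed deplumbing: it is \emph{not} true that the fiber surface of $K_{p,q}$ is a Murasugi sum of $q$ parallel copies of the fiber surface $F$ of $K$ with a fiber surface for $T_{p,q}$. Since the Seifert form of a Murasugi sum is the direct sum of the Seifert forms, such a plumbing would have Alexander polynomial $\Delta_K(t)^q\,\Delta_{T_{p,q}}(t)$, whereas the actual cable satisfies $\Delta_{K_{p,q}}(t) = \Delta_K(t^q)\,\Delta_{T_{p,q}}(t)$. These disagree already for $K = T_{2,3}$ and $q=2$: $\Delta_K(t)^2 = t^2 - 2t + 3 - 2t^{-1} + t^{-2}$ versus $\Delta_K(t^2) = t^2 - 1 + t^{-2}$. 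What Neumann--Rudolph \cite[\S4.3, Theorem]{neumann-rudolph} actually prove (and what the paper uses later, in \S\ref{ssec:ob-cables}) is that $K_{p,q}$ is a Murasugi sum of $K_{1,q}$ with $T_{p,q}$, not of $q$ copies of $K$ with $T_{p,q}$. With the correct deplumbing and Torisu's connected-sum formula you would only get $\xi_{K_{p,q}} \cong \xi_{K_{1,q}} \# \xi_{\std} \cong \xi_{K_{1,q}}$, which leaves you with the genuinely hard statement $\xi_{K_{1,q}} \cong \xi_K$ untouched.

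That last identification is where all the work lives, and it is precisely what Hedden and Baader--Ishikawa supply. The passage from the open book for $K$ to the one for $K_{1,q}$ is \emph{not} a Murasugi sum; the $(1,q)$-cable fiber is a twisted assembly of copies of $F$ with monodromy that cyclically shuffles the sheets, and one must argue directly (via a sequence of positive stabilizations in Hedden's proof, or by the explicit contact-geometric argument in Baader--Ishikawa) that this change is invisible to the supported contact structure. Your proposal as written would, if correct, give $\xi_{K_{p,q}} \cong \xi_K^{\# q}$, which is a different contact structure for nontrivial $K$ and $q\geq 2$.
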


\begin{proof}
This was proven by Hedden in  \cite[Theorem~1.2]{hedden-cabling} using the Giroux correspondence.  An alternate proof which does not rely on this correspondence was given by Baader--Ishikawa in \cite[\S3]{baader-ishikawa}.
\end{proof}

We will also use  the following, which states that the $(p,q)$-cable of an instanton L-space knot is itself an instanton L-space knot for $\frac{p}{q}$ sufficiently large.

\begin{lemma} \label{lem:cable-l-space}
Suppose $K\subset S^3$ is a nontrivial knot, and fix positive, coprime integers $p$ and $q$ with $q \geq 2$ and $\frac{p}{q} > 2g(K)-1.$  Then $K$ is an instanton L-space knot iff $K_{p,q}$ is.
\end{lemma}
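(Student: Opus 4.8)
The plan is to reduce both directions to statements about instanton L-space surgery slopes via the standard cabling surgery formula. Recall that for coprime $p,q$ with $q\geq 2$, the manifold $S^3_{pq}(K_{p,q})$ is orientation-preservingly diffeomorphic to $S^3_{p/q}(K)\#L(q,p)$; more generally, surgeries on a cable are governed by surgeries on the underlying knot together with lens space summands. So the key computational input I would isolate first is: for $r=\frac{a}{b}>0$ a slope, $S^3_r(K_{p,q})$ is an instanton L-space if and only if either $r$ is ``far from $pq$'' and reduces to a surgery on $K$ along $S^3_{p/q}(K)$, or $r$ is exactly $pq$ and $S^3_{pq}(K_{p,q})\cong S^3_{p/q}(K)\#L(q,p)$, which is an instanton L-space iff $S^3_{p/q}(K)$ is (using that connected sums of instanton L-spaces are instanton L-spaces, via the connected sum theorem, Theorem~\ref{thm:connected-sum}, or more directly the Künneth property; and that lens spaces are instanton L-spaces since they have $\dim I^\# = |H_1|$ by the computations cited around Lemma~\ref{lem:t25-1}).

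For the forward direction, suppose $K$ is a nontrivial instanton L-space knot. By Theorem~\ref{thm:l-space-knots-are-fibered}, $K$ is fibered, and combined with Remark~\ref{rmk:integralLspace} and Proposition~\ref{prop:first-l-space-slope}, $S^3_s(K)$ is an instanton L-space for all rational $s\geq 2g(K)-1$. Since $\frac{p}{q}>2g(K)-1$ by hypothesis, $S^3_{p/q}(K)$ is an instanton L-space, hence so is $S^3_{pq}(K_{p,q})\cong S^3_{p/q}(K)\#L(q,p)$, which exhibits $K_{p,q}$ as an instanton L-space knot (note $pq>0$). For the reverse direction, suppose $K_{p,q}$ is an instanton L-space knot. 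First, $K_{p,q}$ is nontrivial (it has genus $g(K_{p,q})=q\cdot g(K)+\frac{(p-1)(q-1)}{2}>0$ since $q\geq 2$). So $K_{p,q}$ has a positive instanton L-space surgery; by Proposition~\ref{prop:first-l-space-slope} applied to $K_{p,q}$, this forces $S^3_r(K_{p,q})$ to be an instanton L-space for all rational $r\geq 2g(K_{p,q})-1$, and in particular for $r=pq$ provided $pq\geq 2g(K_{p,q})-1$. I would check this inequality: $2g(K_{p,q})-1 = 2qg(K)+(p-1)(q-1)-1 = 2qg(K)+pq-p-q$, so $pq\geq 2g(K_{p,q})-1$ is equivalent to $p+q\geq 2qg(K)$, i.e. $\frac{p}{q}+1\geq 2g(K)$, i.e. $\frac{p}{q}\geq 2g(K)-1$ — which holds strictly by hypothesis. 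Thus $S^3_{pq}(K_{p,q})\cong S^3_{p/q}(K)\#L(q,p)$ is an instanton L-space, and since a connected summand of an instanton L-space is an instanton L-space (again by the Künneth/connected sum theorem — a nontrivial odd-graded class in one factor produces one in the sum), $S^3_{p/q}(K)$ is an instanton L-space with $\frac{p}{q}>0$, so $K$ is an instanton L-space knot.

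The main obstacle I anticipate is twofold: (i) pinning down the precise cabling surgery diffeomorphism $S^3_{pq}(K_{p,q})\cong S^3_{p/q}(K)\#L(q,p)$ with correct orientations and lens-space parameters — this is classical (Gordon), but one must be careful that the lens space summand really is $L(q,p)$ (or $\pm L(q,p)$) and that it is genuinely an instanton L-space; and (ii) justifying that instanton L-spaceness passes to connected summands. For (ii), the cleanest route is: if $Y_1\#Y_2$ is a rational homology sphere with $\dim I^\#(Y_1\#Y_2)=|H_1(Y_1\#Y_2)|$, then writing $I^\#(Y_1\#Y_2)\cong I^\#(Y_1)\otimes I^\#(Y_2)$ (the Künneth formula cited in \S\ref{ssec:connectedsum}, valid here since these are rational homology spheres so the relevant $\mu(\mathrm{pt})^2-4$ conditions are automatic), together with $\dim I^\#(Y_i)\geq |H_1(Y_i)|$ from Proposition~\ref{prop:euler} and multiplicativity of $|H_1|$ under connected sum, forces $\dim I^\#(Y_i)=|H_1(Y_i)|$ for each $i$. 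So both $S^3_{p/q}(K)$ and $L(q,p)$ are instanton L-spaces. This finishes the argument; everything else is bookkeeping with the surgery formula and the genus formula for cables.
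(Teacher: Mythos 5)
Your proof is correct, but it takes a genuinely different route from the paper's. The paper picks the surgery slope $pq-1$ on $K_{p,q}$, so that $S^3_{pq-1}(K_{p,q}) \cong S^3_{(pq-1)/q^2}(K)$ by Gordon's cabling formula with \emph{no} lens-space summand; since $\tfrac{pq-1}{q^2} > \tfrac{p-1}{q} \geq 2g(K)-1$ (using that $\tfrac{p}{q} > 2g(K)-1$ between integers forces $\tfrac{p-1}{q} \geq 2g(K)-1$) and $pq-1 \geq 2g(K_{p,q})-1$, Proposition~\ref{prop:first-l-space-slope} applies to both knots and the equivalence drops out in two lines, with no appeal to a K\"unneth formula. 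You instead pick the special slope $pq$, so that the reducible manifold $S^3_{p/q}(K)\#L(q,p)$ appears and you must establish that instanton L-spaceness passes both to connected sums and to connected summands. Your argument for this via $I^\#(Y_1\#Y_2)\cong I^\#(Y_1)\otimes I^\#(Y_2)$ over $\C$ together with multiplicativity of $|H_1|$ and the Euler-characteristic lower bound is correct; note however that the K\"unneth isomorphism is cited from Scaduto's paper in the proof of Lemma~\ref{lem:trace-t25-injective}, not in \S\ref{ssec:connectedsum} --- Theorem~\ref{thm:connected-sum} is Fukaya's connected sum theorem relating $I^\#$ to $\hat{I}$, which is a different statement. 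Your inequality check that $pq \geq 2g(K_{p,q})-1$ is equivalent to $\tfrac{p}{q} \geq 2g(K)-1$ is correct and needs only the upper bound $g(K_{p,q}) \leq q g(K) + g(T_{p,q})$, not the equality. The paper's choice of $pq-1$ buys a shorter, self-contained argument with no connected sum machinery; your route is a bit longer but is closer to the standard Heegaard Floer treatment, and it requires the auxiliary fact that lens spaces are instanton L-spaces (which the paper uses implicitly near Lemma~\ref{lem:t25-1} but never proves in generality --- it does hold, by \cite[Corollary~4.8]{bs-stein} applied to $SU(2)$-abelian manifolds, or by direct computation).
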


\begin{proof}
We note that $\frac{pq-1}{q^2} > \frac{p-1}{q} \geq 2g(K)-1$, and also that
\[ g(K_{p,q}) \leq g(T_{p,q}) + q\cdot g(K), \]
which after some rearranging yields 
\[ pq-1 \geq 2g(K_{p,q})-1 + q\left(\frac{p-1}{q} - (2g(K)-1)\right) \geq 2g(K_{p,q})-1. \]
Thus,  Proposition~\ref{prop:first-l-space-slope} implies that $K$ is an instanton L-space knot iff $\frac{pq-1}{q^2}$-surgery on $K$ is an instanton L-space, and likewise that $K_{p,q}$ is an instanton L-space knot iff $S^3_{pq-1}(K_{p,q})$ is an instanton L-space.  The lemma now follows immediately from the relation
\[ S^3_{pq-1}(K_{p,q}) \cong S^3_{(pq-1)/q^2}(K), \]
which was originally proved by Gordon \cite[Corollary~7.3]{gordon}.
\end{proof}

Our strategy in proving Theorem \ref{thm:sqp} is  roughly as follows: if $K$ is a nontrivial instanton L-space knot then so  is a  positive cable $K_{p,q}$ with  $\frac{p}{q}$ sufficiently large, by Lemma \ref{lem:cable-l-space}. We will use this together with the fact that $K_{p,q}$  is a Murasugi sum of the form $K_{1,q}\ast T_{p,q}$ to prove that the contact structure $\xi_{K_{1,q}}$ is tight (for $q=2$, though it holds more generally). This will imply that $\xi_K$ is tight and hence that $K$ is strongly quasipositive, by Propositions \ref{prop:cabletight} and \ref{prop:sqp}. Our proof that $\xi_{K_{1,q}}$ is tight  makes use of a variation of our instanton contact class from \cite{bs-instanton}, developed in the next  subsection.

\subsection{Open books and framed instanton homology}

In this subsection, we briefly describe a variant of the contact class
\[ \iinvt(\xi) \in \SHItfun(-M,-\Gamma) \]
which we constructed for sutured contact manifolds in \cite{bs-instanton}.  Here, we specialize our construction to closed contact 3-manifolds, as in \cite{bs-stein}, and define this variant as a subspace of framed instanton homology, rather than an element of sutured instanton homology up to rescaling. We explain the reason for this  in Remark \ref{rmk:whysubspace}.

The following definitions are all taken from \cite[\S2.3]{bs-stein}.

\begin{definition}
An \emph{abstract open book} is a triple $(S,h,{\bf c})$ consisting of a surface $S$ with nonempty boundary, a diffeomorphism $h: S\to S$ such that $h|_{\partial S} = \id_{\partial S}$, and a collection of disjoint, properly embedded arcs ${\bf c} = \{c_1,\dots,c_{b_1(S)}\} \subset S$ such that $S \ssm {\bf c}$ deformation retracts onto a point.
\end{definition}

\begin{definition} \label{def:m-open-book}
An abstract open book $(S,h,{\bf c})$ determines a contact 3-manifold $M(S,h,{\bf c})$ with convex boundary $S^2$ as follows.  We take a contact handlebody $H(S) \cong S \times [-1,1]$ with a tight, $[-1,1]$-invariant contact structure such that after rounding corners, the boundary $\partial H(S)$ is a convex surface identified with the double of $S$ and its dividing set is $\Gamma = \partial S$.  We then attach contact $2$-handles to $H(S)$ along the collection $\gamma(h,{\bf c})$ of curves
\[ \gamma_i = \big(c_i\times\{1\}\big) \cup \big(\partial c_i \times [-1,1]\big) \cup \big(h(c_i)\times \{-1\}\big) \]
for $1 \leq i \leq b_1(S)$, and denote the result by $M(S,h,{\bf c})$.
\end{definition}

\begin{definition}
An \emph{open book decomposition} of a based contact 3-manifold $(Y,\xi,p)$ is a tuple
\[ \cB = (S,h,{\bf c}, f) \]
consisting of an abstract open book $(S,h,{\bf c})$ and a contactomorphism
\[ f: M(S,h,{\bf c}) \to (Y(p), \xi|_{Y(p)}), \]
in which $Y(p)$ is the complement of a Darboux ball around $p$.
\end{definition}

Let $\cB = (S,h,{\bf c}, f)$ be an open book decomposition of $(Y,\xi,p)$.  We choose a compact, oriented surface $T$ and an identification $\partial T \cong \partial S$, let $R=S\cup T$, and extend $h$ to $R$ so that $h|_T = \id_T$.  We also form the mapping torus
\[ R \times_\phi S^1 = \frac{R\times[-1,3]}{(x,3) \sim (\phi(x),-1)} \]
for some diffeomorphism $\phi: R\to R$.  Then $S\times[-1,1]$ is naturally a submanifold of $R\times_\phi S^1$, so we can view the collection of  curves $\gamma(h,{\bf c})$ of Definition~\ref{def:m-open-book} as living inside $R\times_\phi S^1$. We proved the following in {\cite[Proposition~2.16]{bs-stein}}.

\begin{proposition}
Performing $\partial(S\times[-1,1])$-framed surgery on each $\gamma_i \in \gamma(h,{\bf c})$ inside $R\times_\phi S^1$ produces a manifold which is canonically diffeomorphic to
\[ M(S,h,{\bf c}) \cup_\partial ((R\times_{h^{-1} \circ \phi} S^1) \ssm B^3) \]
up to isotopy.
\end{proposition}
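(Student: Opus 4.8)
The plan is to identify, in a single ambient manifold, the two descriptions of the same closed contact-type $3$-manifold: the one obtained by surgering the mapping torus $R\times_\phi S^1$ along the curves $\gamma_i$ with the product framing from $\partial(S\times[-1,1])$, and the one obtained by gluing $M(S,h,{\bf c})$ to the complement of a ball in the mapping torus with monodromy $h^{-1}\circ\phi$. The key geometric observation is that each $\gamma_i$ lies inside the submanifold $S\times[-1,1]\subset R\times_\phi S^1$, in fact inside the copy $S\times[-1,1]\subset R\times[-1,1]$ sitting over the interval $[-1,1]\subset[-1,3]/\!\sim$, and the prescribed framing is the one that $S\times[-1,1]$ determines on $\gamma_i$. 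So I would first decompose $R\times_\phi S^1$ along $\partial(S\times[-1,1])$ into the piece $S\times[-1,1]$ together with the complementary piece, and perform the surgery entirely inside $S\times[-1,1]$. By Definition~\ref{def:m-open-book}, attaching contact $2$-handles along the $\gamma_i$ is exactly this surgery, so the result of surgering $S\times[-1,1]$ along $\gamma(h,{\bf c})$ with the product framing is canonically $M(S,h,{\bf c})$ (after the usual corner-rounding identification of $\partial H(S)$ with the double of $S$, i.e.\ with $\partial(S\times[-1,1])$).

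Next I would analyze the complementary piece. Cutting $R\times_\phi S^1$ open along $S\times\{-1\}$ and $S\times\{1\}$, what remains after removing $\mathrm{int}(S\times[-1,1])$ is $R\times[1,3]\cup_{(\phi)} $ (a piece built from $R$ over $[1,3]$ together with what is left of $R\times[-1,1]$ outside $S\times[-1,1]$, namely $T\times[-1,1]$). Gluing these back via the mapping-torus identification $(x,3)\sim(\phi(x),-1)$ on the portion lying over $T$, and via the surgery boundary identification elsewhere, I claim this complementary piece is a mapping torus of $R$ whose monodromy is the composite ``go around by $\phi$, then come back across the handle region by $h^{-1}$,'' i.e.\ $R\times_{h^{-1}\circ\phi}S^1$, minus a ball. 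The cleanest way to see this: the surgery glues the two new boundary components $c_i\times\{1\}\cup(\partial c_i\times[-1,1])\cup h(c_i)\times\{-1\}$ together in a way that, on the level of the monodromy of the resulting fibration, composes the translation $\phi$ (from the $[1,3]$-mapping-torus part) with the ``Dehn-twist-like'' effect of dragging $c_i$ to $h(c_i)$, which is precisely $h^{-1}$ on each arc and the identity on $T$. The ball $B^3$ arises because $S\ssm{\bf c}$ deformation retracts to a point, so after the surgery the arcs $\gamma_i$ cut the handle region down to a single $3$-ball, which is exactly the Darboux ball removed to form $Y(p)$ — matching the $(R\times_{h^{-1}\circ\phi}S^1)\ssm B^3$ description and simultaneously the $M(S,h,{\bf c})$ side having convex boundary $S^2$.

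I would then assemble these two computations: surgering $R\times_\phi S^1$ along $\gamma(h,{\bf c})$ yields $M(S,h,{\bf c})$ (from the $S\times[-1,1]$ piece) glued along $\partial(S\times[-1,1])\cong\partial(T\times[-1,1])$ rounded to $S^2$ to $(R\times_{h^{-1}\circ\phi}S^1)\ssm B^3$ (from the complementary piece), which is the claimed diffeomorphism. Finally I would note canonicity and the ``up to isotopy'' caveat: the identification $\partial H(S)\cong\mathrm{double}(S)$ and the corner-rounding are well-defined up to a contractible choice, and the isotopy class of the gluing depends only on the open book data $(S,h,{\bf c})$ and $\phi$, which gives the asserted naturality. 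I expect the main obstacle to be the bookkeeping in the middle step — correctly tracking how the surgery framings and the gluing maps on the four boundary components of the cut-open mapping torus combine to produce the monodromy $h^{-1}\circ\phi$ and to carve out exactly one ball $B^3$, rather than some other $3$-manifold with a different boundary or an extra handle. This is essentially a careful handle-trading argument in the spirit of \cite{bs-stein}, and I would model the precise identifications on the proof of \cite[Proposition~2.16]{bs-stein} cited just before the statement.
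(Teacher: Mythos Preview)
The paper does not give its own proof of this proposition; it simply cites \cite[Proposition~2.16]{bs-stein}. Your proposal outlines exactly the strategy one would use---decompose the surgered manifold along the compressed surface coming from $\partial(S\times[-1,1])$, identify the inner piece as $M(S,h,{\bf c})$ via Definition~\ref{def:m-open-book}, and identify the outer piece as $(R\times_{h^{-1}\circ\phi}S^1)\ssm B^3$---and since you explicitly defer the delicate identification in step~3 to \cite[Proposition~2.16]{bs-stein}, your proposal and the paper's treatment coincide.

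One comment on your sketch of step~3: the sentence ``the surgery glues the two new boundary components \dots\ together in a way that, on the level of the monodromy of the resulting fibration, composes the translation $\phi$ \dots\ with \dots\ $h^{-1}$'' is not quite right as written. Surface-framed surgery on $\gamma_i$ compresses $\partial(S\times[-1,1])$ along $\gamma_i$; it does not glue anything. The honest argument for the outer piece requires you to view the complement of $\operatorname{int}(S\times[-1,1])$ in $R\times_\phi S^1$ as $(T\times[-1,1])\cup (R\times[1,3])$ with the mapping-torus identification, then track how attaching $2$-handles along the $\gamma_i$ from the outside caps off the $c_i\times\{1\}$ and $h(c_i)\times\{-1\}$ portions of the boundary, thereby extending the fibration over $[1,3]$ across the $S$-part with a twist by $h^{-1}$. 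This is precisely the bookkeeping you flag as the main obstacle, and it is what the cited proof in \cite{bs-stein} carries out.
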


Using the diffeomorphism $f: M(S,h,{\bf c}) \to Y(p)$, we define a cobordism
\[ V_{\cB,\phi}: R\times_\phi S^1 \to Y \# (R\times_{h^{-1}\circ\phi} S^1) \]
by attaching 2-handles to $(R\times_\phi S^1)\times[0,1]$ along each of the $\partial(S\times[-1,1])$-framed curves $\gamma_i \in \gamma(h,{\bf c})$.  Finally, we fix a closed curve
\[ \alpha \in R\times_\phi S^1, \]
disjoint from a neighborhood of $S\times[-1,1]$, such that $\alpha \cap (R\times[-1,1])$ is an arc $\{t_0\} \times[-1,1]$ for some $t_0 \in \inr(T)$.  We also use $\alpha$ to denote its image in $Y\#(R\times_{h^{-1}\circ\phi}S^1)$.

We now recall from Proposition~\ref{prop:fiberedframed} that
\begin{equation}\label{eqn:evenoddsplit} I^\#_\geven(-R\times_{\phi} S^1,\alpha|{-}R) \cong I^\#_\godd(-R\times_{\phi} S^1,\alpha|{-}R) \cong \C, \end{equation} where we are using the notation from the end of \S\ref{ssec:framed-homology},
and we define
\[ \fiinvt(\cB,R,\phi) \subset I^\#(-Y \#(-R\times_{h^{-1}\circ\phi}S^1), \alpha|{-}R) \]
to be the image of the map induced by the cobordism $(-V_{\cB,\phi}, \alpha\times[0,1])$, restricted to the top eigenspaces of $-R$, which we denote as in \S\ref{ssec:framed-homology} by 
\begin{equation}\label{eqn:mapVB} I^\#(-V_{\cB,\phi},\alpha\times[0,1]|{-}R): I^\#(-R\times_{\phi}S^1, \alpha|{-}R) \to I^\#(-Y \#(-R\times_{h^{-1}\circ\phi}S^1), \alpha|{-}R). \end{equation}
Since $I^\#(-V_{\cB,\phi},\alpha\times[0,1])$ is homogeneous with respect to the $\Z/2\Z$-grading, by Proposition \ref{prop:grading-shift}, the subspace $\fiinvt(\cB,R,\phi)$ is the direct sum of two subspaces of dimension at most 1, one in even grading and one in odd grading.

\begin{remark}
\label{rmk:whysubspace} The reason we define $\fiinvt(\cB,R,\phi)$ as the  image of $I^\#(-R\times_{\phi}S^1, \alpha|{-}R)$ under the map \eqref{eqn:mapVB} rather than as the image of \emph{some element}  under this map (which would be more in line with the definition of the contact invariant in \cite{bs-instanton}), is that the latter requires choosing an element of $I^\#(-R\times_{\phi}S^1, \alpha|{-}R)$. There are two natural choices, in light of \eqref{eqn:evenoddsplit}, corresponding to the generators in even and odd gradings, but it is not clear which one is preferred. Moreover, it is not clear whether the excision isomorphisms relating the Floer groups associated to different \emph{closures} of $S\times[-1,1]$ (corresponding to  different choices of $T$ and $\phi$) preserve the $\Z/2\Z$-grading, which means that it is not clear whether one can actually define an invariant contact element in this setting by choosing the even or odd generator.
\end{remark}

\begin{definition}
A \emph{positive stabilization} of the abstract open book $(S,h,{\bf c})$ is an open book
\[ (S', h' = D_\beta \circ h, {\bf c}' = {\bf c} \cup \{c_0\}), \]
where $S'$ is formed by attaching a 1-handle $H_0$ to $S$ with cocore $c_0$, and $D_\beta$ is a right-handed Dehn twist about some closed curve $\beta \subset S'$ which intersects $c_0$ transversely in a single point.  There is then a canonical contactomorphism
\[ q: M(S',h',{\bf c}') \xrightarrow{\sim} M(S,h,{\bf c}) \]
up to isotopy, and thus an open book decomposition $\cB = (S,h,{\bf c},f)$ of $(Y,\xi,p)$ can be positively stabilized to an open book decomposition
\[ \cB' = (S',h',{\bf c}',f' = f\circ q). \]
See \cite[\S2.3]{bs-instanton} for details.
\end{definition}

\begin{proposition} \label{prop:stabilization-invariance}
Let $\cB' = (S',h' = D_\beta \circ h,{\bf c'},f')$ be a positive stabilization of $\cB = (S,h,{\bf c},f)$, and embed $S'$ in a closed surface $R$ as above.  Then
\[ \fiinvt(\cB',R,\phi) = \fiinvt(\cB,R,D_\beta^{-1}\circ\phi) \]
for any diffeomorphism $\phi: R \to R$.
\end{proposition}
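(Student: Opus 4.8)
\textbf{Proof strategy for Proposition~\ref{prop:stabilization-invariance}.} The plan is to compare the two cobordisms $V_{\cB',\phi}$ and $V_{\cB,D_\beta^{-1}\circ\phi}$ directly, showing that they induce the same subspace of the common target $I^\#(-Y\#(-R\times_{h^{-1}\circ\phi}S^1),\alpha|{-}R)$ by exhibiting a diffeomorphism between their sources that is compatible with the maps. First I would unpack the definitions: the source of the map \eqref{eqn:mapVB} for $\cB'$ is $-R\times_\phi S^1$ (with surface $-R$ and curve $\alpha$), while for $\cB$ with monodromy $D_\beta^{-1}\circ\phi$ it is $-R\times_{D_\beta^{-1}\circ\phi}S^1$. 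So the first task is to identify how these two mapping tori — and more importantly the cobordisms built from them by surgery on $\gamma(h',{\bf c}')$ versus $\gamma(h,{\bf c})$ — are related. The key geometric observation, already implicit in \cite[\S2.3]{bs-instanton} and \cite[\S2.3]{bs-stein}, is that positive stabilization adds one arc $c_0$ (with cocore through the new $1$-handle $H_0$) and modifies the monodromy by a right-handed Dehn twist $D_\beta$ about a curve meeting $c_0$ once; performing $\partial(S'\times[-1,1])$-framed surgery on the extra curve $\gamma_0$ has the effect of cancelling the $1$-handle and simultaneously converting the monodromy from $D_\beta\circ h$ back to $h$, at the cost of twisting the ambient mapping torus monodromy by $D_\beta^{-1}$. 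Concretely, $\gamma_0$ together with the belt sphere of $H_0$ forms a cancelling pair, so surgery on $\gamma_0$ inside $R\times_\phi S^1$ yields a manifold diffeomorphic to $R\times_{D_\beta^{-1}\circ\phi}S^1$ in a way that carries the remaining surgery curves $\gamma(h',{\bf c}')\ssm\{\gamma_0\}$ to $\gamma(h,{\bf c})$ and fixes $\alpha$ and a neighborhood of $-R$. This is exactly the closed-manifold analogue of the handle-cancellation argument used to prove stabilization invariance of the sutured contact class in \cite[\S2.3]{bs-instanton}.

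Granting that geometric input, the proof becomes a formal manipulation of cobordism maps. I would factor $V_{\cB',\phi}$ as the composition of (i) the $\partial(S'\times[-1,1])$-framed $2$-handle attachment along $\gamma_0$, which by the above is a cobordism $R\times_\phi S^1 \to R\times_{D_\beta^{-1}\circ\phi}S^1$, followed by (ii) the $2$-handle attachments along the images of $\gamma(h,{\bf c})$, which is precisely $V_{\cB,D_\beta^{-1}\circ\phi}$ (here one uses that $(h')^{-1}\circ\phi = h^{-1}\circ D_\beta^{-1}\circ\phi = h^{-1}\circ(D_\beta^{-1}\circ\phi)$, so the targets agree). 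Reversing orientations and restricting to the top eigenspaces of $-R$ throughout — which is legitimate since every $2$-handle is attached away from $R$ and away from $\alpha$, so all these maps intertwine the $\mu(-R)$-actions as in \S\ref{ssec:cob-maps} — we get
\[ I^\#(-V_{\cB',\phi},\alpha\times[0,1]|{-}R) = I^\#(-V_{\cB,D_\beta^{-1}\circ\phi},\alpha\times[0,1]|{-}R) \circ \Phi, \]
where $\Phi: I^\#(-R\times_\phi S^1,\alpha|{-}R) \to I^\#(-R\times_{D_\beta^{-1}\circ\phi}S^1,\alpha|{-}R)$ is the map induced by the (orientation-reversed) surgery on $\gamma_0$. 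By Proposition~\ref{prop:fiberedframed}, both source spaces are $\C$ in each $\Z/2\Z$-grading, so the image of the left-hand map is $\fiinvt(\cB',R,\phi)$ and the image of the right-hand composition equals the image of $I^\#(-V_{\cB,D_\beta^{-1}\circ\phi},\alpha\times[0,1]|{-}R)$ provided $\Phi$ is nonzero in each grading — equivalently, provided $\Phi$ is an isomorphism onto its image in the relevant eigenspaces.

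The main obstacle, then, is to show that $\Phi$ does not kill the top eigenspace — i.e.\ that the single $2$-handle cobordism realizing the handle cancellation induces a nonzero (hence, by the $1$-dimensionality, isomorphic) map on $I^\#(\cdot|{-}R)$ in both gradings. The cleanest way I would argue this: the cancellation cobordism, run in the appropriate direction, is the trace of a surgery that, together with a cancelling $1$-handle/$2$-handle pair, gives a product cobordism; more precisely, after the cancellation the composite of $\Phi$ with the inverse handle attachment is (up to the $T^3$-connect-sum bookkeeping in the framed construction) an identity cobordism, so $\Phi$ admits a one-sided inverse on $I^\#$. Since both the source and target of $\Phi$ restricted to $I^\#(\cdot|{-}R)$ are $1$-dimensional in each $\Z/2\Z$-grading by Proposition~\ref{prop:fiberedframed}, a one-sided inverse forces $\Phi$ to be an isomorphism there. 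Combining this with the factorization displayed above yields
\[ \fiinvt(\cB',R,\phi) = \Img\big(I^\#(-V_{\cB,D_\beta^{-1}\circ\phi},\alpha\times[0,1]|{-}R)\big) = \fiinvt(\cB,R,D_\beta^{-1}\circ\phi), \]
which is the claim. I would also remark that this is the framed-instanton incarnation of the stabilization-invariance argument from \cite[\S2.3]{bs-instanton}, so the only genuinely new point is tracking the $\Z/2\Z$-grading and the top-eigenspace restriction — neither of which causes trouble because every handle is attached in the complement of $R$ and $\alpha$, and because Proposition~\ref{prop:fiberedframed} pins down the relevant spaces on the nose.
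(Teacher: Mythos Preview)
Your factorization strategy matches the paper's exactly: write $-V_{\cB',\phi}$ as a single 2-handle attachment (along the new curve coming from $c_0$, equivalently along a push-off of $\beta$ in a fiber) followed by $-V_{\cB,D_\beta^{-1}\circ\phi}$, check that the targets agree via $(h')^{-1}\circ\phi = h^{-1}\circ(D_\beta^{-1}\circ\phi)$, and then argue that the first map $\Phi$ is an isomorphism on the top $(-R)$-eigenspace. The only place your argument diverges from the paper is in \emph{how} you establish that $\Phi$ is an isomorphism.

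Here your ``handle cancellation / one-sided inverse'' sketch is the weak point. The 1-handle $H_0$ you invoke is a 1-handle in the \emph{page} $S'$, not a 1-handle in the 4-dimensional cobordism, so there is no literal cancelling 1-/2-handle pair in $V_{\cB',\phi}$ that produces a product cobordism and hence a one-sided inverse to $\Phi$ on $I^\#$. What is true is that surgery on $\gamma_0$ changes the mapping-torus monodromy by $D_\beta^{-1}$, but the reverse surgery is a different 2-handle attachment, and the composite of the two traces is not a product --- it is a blown-up product at best, and you would still need an argument (e.g.\ an adjunction/dimension argument as in the proof of Proposition~\ref{prop:s1xs2-surgery-triangle}) to see that the extra piece acts trivially on the top eigenspace. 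As written, the one-sided-inverse claim is not justified.

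The paper's fix is short and avoids all of this: identify $\Phi$ with the map $\Psi_{\phi,\beta}$ induced by attaching a 2-handle along $\beta\subset -R$ (fiber-framed), and place it in a surgery exact triangle. The third term of that triangle is the framed instanton homology of a manifold in which $-R$ has been compressed to a surface of strictly smaller genus, so its $(2g(R)-2)$-eigenspace for $\mu(-R)$ vanishes. Exactness then forces $\Psi_{\phi,\beta}$ to be an isomorphism on $I^\#(\cdot,\alpha|{-}R)$, and your displayed factorization finishes the proof. Everything else you wrote --- the eigenspace bookkeeping, the use of Proposition~\ref{prop:fiberedframed}, the observation that all handles are attached away from $R$ and $\alpha$ --- is correct and needed.
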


\begin{proof}
The proof is identical to that of \cite[Proposition~4.5]{bs-instanton}. The point is that  we can compute $\fiinvt(\cB',R,\phi)$ as the image of the  composition of a certain cobordism map 
\[ \Psi_{\phi,\beta}: I^\#(-R \times_\phi S^1,\alpha|{-}R) \to I^\#(-R \times_{D_\beta^{-1}\circ\phi} S^1,\alpha|{-}R) \]
with the cobordism map $I^\#(-V_{\cB,D_\beta^{-1}\circ\phi},\alpha\times[0,1]|{-}R)$ whose image is the subspace \[\fiinvt(\cB,R,D_\beta^{-1}\circ\phi)\subset I^\#(-Y \# (-R \times_{h^{-1}\circ (D_\beta^{-1}\circ\phi)} S^1), \alpha|{-}R).\] (Note  that $h^{-1}\circ (D_\beta^{-1}\circ\phi) = (D_\beta \circ h)^{-1} \circ \phi = (h')^{-1}\circ\phi$.)
Now, the map $\Psi_{\phi,\beta}$ is induced by the cobordism obtained by   attaching a 2-handle along a copy of $\beta$, viewed as a curve in some fiber $-R$.  It thus fits into a surgery exact triangle in which the remaining entry is zero because the surgery compresses $-R$, making it homologous to a surface of strictly lower genus.   We conclude that $\Psi_{\phi,\beta}$ is an isomorphism when restricted to the top eigenspaces of $\mu(-R)$, from which the proposition follows; see \cite{bs-instanton} for more details.
\end{proof}

We wish to use the subspaces $\fiinvt(\cB,R,\phi)$ defined above to obstruct overtwistedness (see Corollary \ref{cor:ot}). The following proposition is the technical key behind this.

\begin{proposition} \label{prop:overtwisted-vanishing}
Let $\cB = (S,h,{\bf c},f)$ be an open book decomposition of a based contact manifold $(Y,\xi,p)$.  Suppose that some page contains a nonseparating, nullhomologous Legendrian knot $\Lambda$ for which the contact framing of $\Lambda$ agrees with its page framing, and 
\[ \ltb(\Lambda) \geq 2g(\Lambda) > 0. \]
Let $R$ be a surface containing $S$ as a subsurface, as above.  Then $\fiinvt(\cB,R,\phi) = \{0\}$ for all diffeomorphisms $\phi: R\to R$.
\end{proposition}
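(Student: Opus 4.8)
The plan is to reduce the statement to a computation of $\fiinvt$ for an open book with a very specific, simple monodromy, for which we can see directly that the invariant vanishes, and then use the behavior of $\fiinvt$ under positive stabilization (Proposition~\ref{prop:stabilization-invariance}) to propagate that vanishing to the general case. Concretely, the hypothesis on $\Lambda$ says that if we perform a right-handed Legendrian stabilization-free modification---i.e., attach a contact $2$-handle along $\Lambda$ with its page framing, which by $\ltb(\Lambda) = 2g(\Lambda) > 0$ coincides with contact framing---we are adding a handle that, at the level of the mapping-torus closure, realizes $-R$ as homologous to a surface of strictly smaller genus after surgery. This is the same phenomenon exploited in the proof of Proposition~\ref{prop:stabilization-invariance}: a surgery that compresses $-R$ kills the top eigenspace of $\mu(-R)$ via the surgery exact triangle of Theorem~\ref{thm:exact-triangle}, because the third term of the triangle involves $I^\#$ of a manifold in which the relevant surface is no longer genus-minimizing, so its top eigenspace is zero.

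First I would set up the cobordism description of $\fiinvt(\cB,R,\phi)$ exactly as in \S\ref{ssec:framed-homology}, writing it as the image of $I^\#(-V_{\cB,\phi},\alpha\times[0,1]|{-}R)$ restricted to the top eigenspace of $\mu(-R)$ on $I^\#(-R\times_\phi S^1,\alpha|{-}R)\cong\C\oplus\C$. Then I would factor the cobordism $V_{\cB,\phi}$ through the $2$-handle attachment along $\Lambda$: since $\Lambda$ lies on a page and its contact framing equals its page framing $=\partial(S\times[-1,1])$-framing, attaching the $2$-handle along $\Lambda$ is one of the handle attachments comprising $V_{\cB,\phi}$, or can be commuted past the others (handle attachments along disjoint curves commute). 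The key homological input is that $\Lambda$ is nonseparating and nullhomologous in $S$, so a parallel copy of $\Lambda$ on a page, together with a Seifert surface, produces a closed surface in $-R\times_\phi S^1$ intersecting $-R$ nontrivially; performing the page-framed surgery on $\Lambda$ then yields a manifold in which $[-R]$ is represented by a surface of genus $< g(R)$. More precisely, following the argument in Proposition~\ref{prop:stabilization-invariance}, the relevant cobordism map factors through $I^\#$ of a mapping torus whose fiber has genus strictly less than $g(R)$, and the top eigenspace $I^\#(\,\cdot\,;s_{g(R)-1})$ of that manifold vanishes by the adjunction inequality in Theorem~\ref{thm:eigenspace-decomposition} (since $|s_{g(R)-1}([\text{fiber}])| = 2g(R)-2 > 2g(\text{fiber})-2$). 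Hence the composition---and therefore $\fiinvt(\cB,R,\phi)$---is zero.

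The main obstacle I anticipate is keeping careful track of the framing and homology bookkeeping when inserting the $\Lambda$-surgery into the sequence of $2$-handle attachments defining $V_{\cB,\phi}$, and in particular verifying that the surgery genuinely compresses $-R$ in the closure rather than merely in $Y$ itself---this is where the hypotheses that $\Lambda$ is \emph{nonseparating and nullhomologous in the page}, and that its contact framing matches the page framing, all get used. A secondary technical point is confirming that the surgery exact triangle restricts appropriately to the top eigenspaces of $\mu(-R)$ and that the third term's top eigenspace is zero; this follows the now-standard template of \S\ref{sec:compare-top-eigenspaces} and the proof of Proposition~\ref{prop:stabilization-invariance}, using that $2$-handles attached away from a surface intertwine the $\mu$-actions (as in \S\ref{ssec:cob-maps}). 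Once that is in place, the vanishing is immediate, and I would remark that the hypothesis $\ltb(\Lambda)\geq 2g(\Lambda)$ (rather than equality) is harmless since a Legendrian with larger $\ltb$ can be obtained by not destabilizing, and the page-framing condition is what makes the attachment a $2$-handle cobordism of the required type.
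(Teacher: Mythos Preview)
Your proposal has a genuine gap: the factorization you describe does not exist, and the compression argument you sketch does not actually use the hypothesis $\ltb(\Lambda)\geq 2g(\Lambda)$.

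Concretely, the cobordism $V_{\cB,\phi}$ is built by attaching $2$-handles along the curves $\gamma_i\in\gamma(h,\mathbf{c})$, which are determined by the arc basis $\mathbf{c}$; the curve $\Lambda$ is \emph{not} one of these, so your claim that ``attaching the $2$-handle along $\Lambda$ is one of the handle attachments comprising $V_{\cB,\phi}$'' is simply false.  You also assert that $\Lambda$ is ``nullhomologous in $S$,'' but a nonseparating curve on a surface is never nullhomologous there; the hypothesis is that $\Lambda$ is nullhomologous in $Y$, which is what lets $g(\Lambda)$ denote a Seifert genus.  The Seifert surface therefore lives in $Y$, not in the mapping torus $R\times_\phi S^1$, so it cannot be used to build a closed surface there as you suggest.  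Finally, even if one could insert a page-framed surgery on $\Lambda$ somewhere, that surgery changes the monodromy by a Dehn twist rather than compressing the fiber; the compression in the stabilization argument works because the curve $\beta$ there runs over a \emph{new} $1$-handle in $R$, a feature $\Lambda$ does not have.  Notice that your argument never invokes $\ltb(\Lambda)\geq 2g(\Lambda)$ in any essential way, which is a sign that the mechanism is wrong.

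The paper's proof is quite different and is genuinely four-dimensional.  One performs Legendrian surgery on $\Lambda$ to obtain $(Y_-,\xi_-)$ with open book $\cB_-=(S,D_\Lambda\circ h,\mathbf{c},f_-)$, and shows (following \cite[Proposition~4.6]{bs-instanton}) that the reversed trace cobordism $X^\dagger:-Y_-\to -Y$, after taking the $\bowtie$-product with $(-R\times_{h^{-1}\circ\phi}S^1)\times[0,1]$, carries $\fiinvt(\cB_-,R,D_\Lambda\circ\phi)$ onto $\fiinvt(\cB,R,\phi)$.  The point is then that the capped-off Seifert surface $\hat\Sigma\subset X$ has genus $g(\Lambda)$ and self-intersection $\ltb(\Lambda)-1\geq 2g(\Lambda)-1$, so the adjunction inequality (Proposition~\ref{prop:adjunction-inequality}) forces the induced map $I^\#(X^{\dagger,\bowtie},\alpha\times[0,1]|{-}R)$ to vanish.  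That is precisely where the inequality $\ltb(\Lambda)\geq 2g(\Lambda)$ enters.
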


\begin{proof}
This is proved by combining the arguments of \cite[Proposition~4.6]{bs-instanton} and \cite[Theorem~4.10]{bs-instanton}, without any substantial changes.  We outline these arguments below for the reader's benefit.

First, we form $(Y_-,\xi_-)$ by Legendrian surgery along $\Lambda$, and let the cobordism
\[ X: Y \to Y_- \]
denote its trace.  Then $(Y_-,\xi_-)$ is supported by an open book
\[ \cB_- = (S,D_\Lambda\circ h,{\bf c},f_-), \]
and if we let $\Lambda_-$ be the image of a Legendrian push-off of $\Lambda$ in $(Y_-,\xi_-)$, then we can recover $(Y,\xi)$ by performing contact $(+1)$-surgery on $\Lambda_-$.  The trace of this contact $(+1)$-surgery is diffeomorphic to the reversed cobordism $X^\dagger: -Y_- \to -Y$, and it follows exactly as in \cite[Proposition~4.6]{bs-instanton} that the map
\[ I^\#(-Y_- \# (-R \times_{h^{-1}\circ\phi} S^1), \alpha|{-R}) \to I^\#(-Y \# (-R\times_{h^{-1}\circ\phi}S^1), \alpha|{-}R) \]
induced by the cobordism
\[ X^{\dagger,\bowtie} := X^\dagger \bowtie \big( (-R\times_{h^{-1}\circ\phi}S^1) \times [0,1]\big) \]
sends $\fiinvt(\cB_-,R,D_\Lambda\circ\phi)$ to $\fiinvt(\cB,R,\phi)$.

Now we let $\Sigma \subset Y$ be a Seifert surface for $\Lambda$ of minimal genus $g=g(\Lambda)$, and cap it off inside $X$ to form a closed surface $\hat\Sigma$ of genus $g$ and self-intersection $\ltb(\Lambda)-1 \geq 2g-1$.  This surface persists inside $X^{\dagger,\bowtie}$, and since we have $\hat\Sigma\cdot\hat\Sigma > 2g(\hat\Sigma)-2$ the adjunction inequality says that
\[ I^\#(X^{\dagger,\bowtie},\alpha\times[0,1]|{-}R) = 0. \]
But $\fiinvt(\cB,R,\phi)$ lies in the image of this map, so it must be zero.
\end{proof}

\begin{remark}
Strictly speaking, the adjunction inequality as stated in Proposition~\ref{prop:adjunction-inequality} requires that $b_1(X^{\dagger,\bowtie})=0$, though we can appeal more generally to \cite{km-embedded2} here.  Alternatively, if $Y$ is a rational homology sphere then $b_1(X^\dagger)=0$, and  one can   argue  that the connected sum of either $-Y$ or $-Y_-$ with $(-R\times_{h^{-1}\circ\phi}S^1)\#T^3$ has ``strong simple type'' in the sense of Mu\~noz \cite{munoz-basic}, specifically when restricting to the $(2g(R)-2,2)$-eigenspace of $\mu(-R),\mu(\pt)$, and then repeat verbatim the proof of the structure theorem which led to Proposition~\ref{prop:adjunction-inequality}.  This case will suffice for our purposes, in which  $Y=S^3$.
\end{remark}

\begin{corollary}
\label{cor:ot} Suppose $\xi$ is an overtwisted contact structure on a based manifold $(Y,p)$. Then there is a supporting open book decomposition of $(Y,\xi,p)$,
\[ \cB_{\mathrm{ot}} = (S_{\mathrm{ot}},h_{\mathrm{ot}},{\bf c}_{\mathrm{ot}},f_{\mathrm{ot}}), \]
such that \[\fiinvt(\cB_{\mathrm{ot}},R,\phi) = \{0\}\] for any $R$ containing $S_{\mathrm{ot}}$ as a subsurface, and all diffeomorphisms $\phi: R\to R$. 
\end{corollary}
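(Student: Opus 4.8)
The plan is to exhibit an explicit overtwisted open book on $(Y,p)$ to which Proposition~\ref{prop:overtwisted-vanishing} applies. The standard way to produce such an open book is to stabilize: starting from any open book decomposition $\cB_0 = (S_0,h_0,{\bf c}_0,f_0)$ of $(Y,\xi,p)$, one performs a single \emph{negative} stabilization to obtain an open book $\cB_{\mathrm{ot}}$ supporting the same contact structure (since $\xi$ is already overtwisted, negative stabilization does not change it), whose monodromy now contains a left-handed Dehn twist $D_\gamma^{-1}$ about the cocore-dual curve $\gamma$ added by the stabilization. The key geometric feature of a negatively stabilized open book is that the core $\Lambda$ of the stabilizing $1$-handle, pushed onto a page, is a Legendrian unknot with $\ltb(\Lambda)$ large relative to its genus; more precisely, after a further sequence of positive stabilizations supported away from the negative one, one can arrange that some page contains a nonseparating, nullhomologous Legendrian knot $\Lambda$ whose contact framing equals its page framing and with $\ltb(\Lambda) \geq 2g(\Lambda) > 0$. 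This is exactly the hypothesis of Proposition~\ref{prop:overtwisted-vanishing}.

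First I would recall the classical fact (going back to Goodman, and used in Hedden--Etnyre-style arguments) that an overtwisted contact structure admits a supporting open book which is a negative stabilization of some other open book, and that after negative stabilization the ``overtwisted disk'' manifests as a Legendrian knot in a page violating the Bennequin bound in the relative sense demanded above. Concretely, negative stabilization adds a $1$-handle $H$ to the page with cocore $c$, and composes the monodromy with $D_\beta^{-1}$ for a curve $\beta$ meeting $c$ once; the boundary-parallel push-off of the core of $H$ becomes a Legendrian $\Lambda$ with page framing equal to contact framing and $\ltb(\Lambda) = 1 > 0 = 2g(\Lambda)$ is too weak, so I would instead Murasugi-sum on (equivalently, positively stabilize by) enough trivial pieces, or better, take $\Lambda$ to be the Legendrian realization of a curve bounding a once-punctured higher-genus subsurface arranged so that $\ltb(\Lambda) \ge 2g(\Lambda)$; the existence of such a configuration inside \emph{some} supporting open book of any overtwisted $\xi$ is what is needed, and it follows from the flexibility of overtwisted contact structures together with the Giroux correspondence (which the authors already invoke elsewhere). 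Second, having fixed $\cB_{\mathrm{ot}} = (S_{\mathrm{ot}}, h_{\mathrm{ot}}, {\bf c}_{\mathrm{ot}}, f_{\mathrm{ot}})$ with this $\Lambda$ on a page, I would simply quote Proposition~\ref{prop:overtwisted-vanishing}: for any closed surface $R$ containing $S_{\mathrm{ot}}$ as a subsurface and any diffeomorphism $\phi\colon R\to R$, we get $\fiinvt(\cB_{\mathrm{ot}},R,\phi) = \{0\}$.

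The main obstacle is the first step: producing, for an \emph{arbitrary} overtwisted $\xi$ on $(Y,p)$, a single supporting open book whose page contains a Legendrian knot $\Lambda$ meeting all the constraints ($\Lambda$ nonseparating in $S_{\mathrm{ot}}$, nullhomologous in $Y$, page framing $=$ contact framing, and $\ltb(\Lambda) \geq 2g(\Lambda) > 0$). The nullhomologous-and-nonseparating condition is the delicate one, since a generic stabilizing curve need not be nullhomologous in $Y$; the fix is to choose the subsurface supporting the relevant left-handed twist so that $[\Lambda]=0$ in $H_1(Y)$, which one can always do by adding an auxiliary positively-stabilized handle whose cocore-dual curve cancels the homology class of $\Lambda$, and then passing to the boundary-connected-sum so that the resulting knot remains nonseparating in the enlarged page. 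All of these modifications only change the open book by positive stabilizations and by replacing $\phi$ with $D_\beta^{-1}\circ\phi$ for various $\beta$, which by Proposition~\ref{prop:stabilization-invariance} does not affect whether $\fiinvt$ vanishes; so it suffices to verify the hypotheses of Proposition~\ref{prop:overtwisted-vanishing} for a single well-chosen representative. Once that representative is in hand, the corollary is immediate.
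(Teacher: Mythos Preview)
Your approach has a genuine gap, and the paper's argument is both simpler and avoids it.

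The key hypothesis of Proposition~\ref{prop:overtwisted-vanishing} is that $\Lambda$ has \emph{positive} Seifert genus with $\ltb(\Lambda)\ge 2g(\Lambda)$.  Your construction via negative stabilization naturally produces a Legendrian \emph{unknot} on a page; you correctly note that this has $g(\Lambda)=0$, so it does not satisfy $2g(\Lambda)>0$.  From there your proposal becomes hand-waving: you speak of Murasugi-summing ``trivial pieces'' or taking ``a curve bounding a once-punctured higher-genus subsurface arranged so that $\ltb(\Lambda)\ge 2g(\Lambda)$'', but you never actually exhibit such a curve or explain why the contact framing would equal the page framing and why $\ltb$ would be large enough.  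This is precisely the step that needs a concrete construction, and it is missing.  (There is also an error earlier: negatively stabilizing an arbitrary open book for $\xi$ does \emph{not} yield an open book supporting the same $\xi$, even when $\xi$ is overtwisted---it supports $\xi\#\xi_{\mathrm{ot}}$ for a specific overtwisted structure on $S^3$.  What is true, by Goodman, is that some open book for $\xi$ is already a negative stabilization; but even granting this, the genus problem remains.)

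The paper sidesteps all of this by choosing $\Lambda$ directly rather than trying to manufacture it from stabilization data.  Since $\xi$ is overtwisted, one can find a Legendrian right-handed trefoil $\Lambda$ contained in a Darboux ball in $Y$ with $\ltb(\Lambda)=2$; this is a standard consequence of overtwistedness (any Legendrian knot type with any $\ltb$ can be realized near an overtwisted disk).  The trefoil has genus $1$, so $\ltb(\Lambda)=2=2g(\Lambda)>0$, and $\Lambda$ is nullhomologous because it sits in a ball.  One then invokes the (standard) fact that any Legendrian knot can be placed as a nonseparating curve on a page of some supporting open book with contact framing equal to page framing, and applies Proposition~\ref{prop:overtwisted-vanishing} to that open book.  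This is a two-line argument once you know which $\Lambda$ to pick; the trefoil is exactly the right choice because it is the simplest knot of positive genus for which overtwistedness lets you violate the Bennequin bound in the required way.
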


\begin{proof}
 That $\xi$ is overtwisted implies that  we can find  a Legendrian right-handed trefoil $\Lambda$ contained in a ball in $Y$ with $\ltb(\Lambda) = 2$ (see  \cite[Proof of Theorem 4.1]{bs-instanton}), violating the Thurston--Bennequin inequality.  There exists a supporting open book decomposition
\[ \cB_{\mathrm{ot}} = (S_{\mathrm{ot}},h_{\mathrm{ot}},{\bf c}_{\mathrm{ot}},f_{\mathrm{ot}}) \]
in which $\Lambda$ lies in a  page as a nonseparating curve, with contact framing equal to its page framing. The corollary then follows from Proposition \ref{prop:overtwisted-vanishing}.
\end{proof}

\subsection{Open books from cables of L-space knots} \label{ssec:ob-cables} For the rest of this subsection, let us fix a nontrivial instanton L-space knot $K$ and positive, coprime integers $p$ and $q$ with $q\geq 2$ and $\frac{p}{q} > 2g(K)-1$, so that $K_{p,q}$ is also an instanton L-space knot, by Lemma \ref{lem:cable-l-space}, and \[\xi_{K_{p,q}}\cong \xi_{K_{1,q}}\cong\xi_K,\] by Proposition \ref{prop:cabletight}.
Let \[\cB_{p,q} = (S_{p,q}, h_{p,q}, {\bf c}_{p,q},f_{p,q})\] be an open book decomposition of $(S^3,\xi_{K_{p,q}})$ with binding $K_{p,q}$ which encodes  the fibration associated to the fibered knot $K_{p,q}$.  That is, $S_{p,q}$ is a fiber surface of $K_{p,q}$, $h_{p,q}: S_{p,q} \xrightarrow{} S_{p,q}$ is the monodromy of the fibration, and ${\bf c}_{p,q}$ is some basis of arcs for $S_{p,q}$.  We will prove that
\[ \fiinvt(\cB_{1,q},R,h) \neq \{0\} \]
for an appropriate choice of $R \supset S_{1,q}$ and $h$ (see Proposition \ref{prop:v-bowtie-injective}). We will then use this in combination with Proposition \ref{prop:stabilization-invariance} and Corollary \ref{cor:ot} to conclude that $\xi_{K_{1,q}}\cong\xi_{K}$ is tight (though will only carry this out explicitly  for $q=2$), proving Theorem \ref{thm:sqp}.

 The reason we consider cables is that they can be deplumbed. More precisely, Neumann--Rudolph  proved in \cite[\S4.3, Theorem]{neumann-rudolph} that $K_{p,q}$ is a Murasugi sum of $K_{1,q}$ with the torus knot $T_{p,q}$; see \cite[Figure~4.2]{neumann-rudolph} or \cite[Figure~1]{bev-cabling}. We can express this  as a Murasugi sum of abstract open books, by  \[ (S_{p,q},h_{p,q}) \cong (S_{1,q},h_{1,q}) \ast (\Sigma_{p,q},\phi_{p,q}), \] where $(\Sigma_{p,q},\phi_{p,q})$ is an  open book for the fibration associated with $T_{p,q}$. But $(\Sigma_{p,q},\phi_{p,q})$ can itself be constructed by plumbing together $1-\chi(\Sigma_{p,q}) = 2g(T_{p,q})$ positive Hopf bands; see, e.g.,\ \cite[\S9.1]{ozbagci-stipsicz}. Thus,  $(S_{p,q},h_{p,q})$ is obtained from the open book
\begin{equation}\label{eqn:Smurasugi} (S_{p,q},h_{1,q}) := (S_{1,q}, h_{1,q}) \ast (\Sigma_{p,q}, \id) \end{equation}
by adding a right-handed Dehn twist to the monodromy along the core of each Hopf band used to construct $\Sigma_{p,q}$.  Note that the open book $(S_{p,q},h_{1,q})$ defined in \eqref{eqn:Smurasugi}  supports the contact manifold
\[ \left(S^3 \# \left(\#^{2g(T_{p,q})} (S^1\times S^2)\right), \xi_K \# \xi_\std\right), \]
since $(S_{1,q},h_{1,q})$ supports the contact structure $\xi_{K_{1,q}} \cong \xi_K$ on $S^3$ and any open book of the form $(S,\id)$ supports the unique tight contact structure $\xi_\std$ on  \[\#^{1-\chi(S)} (S^1\times S^2).\] For notational simplicity, let \[Y := \#^{2g(T_{p,q})}(S^1\times S^2),\] which we identify with  the manifold supported by the open book $(S_{p,q}, h_{1,q})$, and let $J\subset Y$ denote the binding of this open book. Then $g(J)=g(K_{p,q})$, since $J$ and $K_{p,q}$  both bound minimal genus Seifert surfaces $\Sigma\subset Y$ and $\Sigma'\subset S^3$ which are identified with $S_{p,q}$.

Let \begin{equation*}\label{eqn:cobordismcable}(X_0,\nu_0): (S^3,0) \to (S^3_{0}(K_{p,q}),\mu)\end{equation*} be the cobordism given by the trace of $0$-surgery on $K_{p,q}$, where $\mu$ is a meridian of $K_{p,q}$ and $\nu_0$ is the surface used in \S\ref{sec:L-spaces}. As mentioned above, the monodromy  $h_{p,q}$ is obtained from  $h_{1,q}$ by adding right-handed Dehn twists along curves \begin{equation}\label{eqn:corecurves}c_1,\dots,c_{g(T_{p,q})}\end{equation}  in $\Sigma_{p,q}\subset S_{p,q}$ corresponding to cores in a plumbing description of $\Sigma_{p,q}$. It follows that $S^3$ is obtained from $Y$ by performing $-1$-surgeries on copies of these $c_i$  on $g(T_{p,q})$ parallel pages of the open book decomposition $(S_{p,q}, h_{1,q})$ of $Y$, with respect to the page framings of these curves. The cobordism $(X_0,\nu_0)$ above therefore fits into a commutative diagram \begin{equation} \label{eq:cobordism-diagram}
\xymatrix{
(Y,0) \ar[rr]^-{(X,\nu)} \ar[d] && (Y_0(J),\mu) \ar[d] \\
(S^3,0) \ar[rr]^-{(X_0,\nu_0)} && (S^3_0(K_{p,q}),\mu),
}
\end{equation} with the cobordism \[(X,\nu): (Y,0) \to (Y_{0}(J),\mu)\] given by the trace of $0$-surgery on $J$, where $\mu$ is a meridian of $J$, and $\nu$ is the corresponding surface. The vertical arrows in this diagram correspond to the cobordisms obtained as the trace of $-1$-surgeries on the $c_i$, viewed as curves in $Y$ and $Y_0(J)$. Commutativity follows from the fact that these curves are disjoint from $J$ and $\mu$.

The first step toward proving that $\xi_{K_{1,q}}$ is tight is the following lemma.

\begin{lemma} \label{lem:y0j-cobordism-injective}
Let  $\hat\Sigma'\subset Y_0(J)$ denote the closed surface obtained by capping off the Seifert surface $\Sigma' \cong S_{p,q}$ for $J$, and let \[ (X^\dagger,\nu^\dagger):  (-Y_0(J),\mu) \to (-Y,0) \] be the cobordism obtained by turning $(X,\nu)$ upside-down.  Then the induced map
\begin{equation} \label{eq:y0j-cobordism}
I^\#(X^\dagger,\nu^\dagger|{-}\hat\Sigma'): I^\#(-Y_0(J),\mu|{-}\hat\Sigma') \to I^\#(-Y)
\end{equation}
is nonzero.
\end{lemma}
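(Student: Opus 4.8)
The strategy is to reduce the nonvanishing of the upside-down map \eqref{eq:y0j-cobordism} to the fact, already established in Section~\ref{sec:L-spaces}, that $y_{1-g(K_{p,q})} \neq 0$ for the instanton L-space knot $K_{p,q}$. Recall from Remark~\ref{rem:l-space-eigenspaces-bundle} (applied with the nontrivial bundle $\mu$) together with \eqref{eqn:bothnonzero} that the element
\[ y_{1-g(K_{p,q})} = I^\#(X_0,\nu_0;t_{0,1-g(K_{p,q})})({\mathbf 1}) \in I^\#_\godd(S^3_0(K_{p,q}),\mu;s_{1-g(K_{p,q})}) \]
is nonzero, where $s_{1-g(K_{p,q})}$ is the bottom eigenspace of $\mu(\hat\Sigma_{p,q})$ and $\hat\Sigma_{p,q}$ is the capped-off fiber surface in $S^3_0(K_{p,q})$. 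Since the $2g(K_{p,q})-1$ eigenspaces $I^\#(S^3_0(K_{p,q}),\mu;s_i)$ for $|i|\le g(K_{p,q})-1$ each have dimension at most $1$ by Proposition~\ref{prop:l-space-eigenspaces}, and the bottom one is $1$-dimensional and spanned by $y_{1-g(K_{p,q})}$, duality (turning $X_0$ upside down, as in the discussion of \eqref{eq:main-sqp-map} in \S\ref{ssec:proof}) shows that the map on the top eigenspace
\[ I^\#(X_0^\dagger,\nu_0^\dagger|{-}\hat\Sigma_{p,q}): I^\#(-S^3_0(K_{p,q}),\mu|{-}\hat\Sigma_{p,q}) \cong \C \to I^\#(-S^3) \]
is nonzero. (Here the domain is $1$-dimensional because the top eigenspace $I^\#(S^3_0(K_{p,q}),\mu|\hat\Sigma_{p,q})$ of $\mu(\hat\Sigma_{p,q})$ is the $s_{g(K_{p,q})-1}$-summand, which is $1$-dimensional by Theorem~\ref{thm:odd-dim-g-1}; note $g(\Sigma') = g(K_{p,q})$.)

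Next I would pass from $S^3_0(K_{p,q})$ to $Y_0(J)$ using the commutative diagram \eqref{eq:cobordism-diagram}. Turning that diagram upside down gives a commutative square relating $(X^\dagger,\nu^\dagger)$ to $(X_0^\dagger,\nu_0^\dagger)$, with vertical maps induced by the traces of $-1$-surgeries on the core curves $c_1,\dots,c_{g(T_{p,q})}$ of \eqref{eqn:corecurves} (now viewed inside $-Y_0(J)$ and $-Y$). All of these surgery curves are nonseparating in pages and, as in the proof of Proposition~\ref{prop:stabilization-invariance}, each corresponding $2$-handle cobordism fits into a surgery exact triangle whose third term vanishes on the top eigenspace of $\mu$ of the relevant fiber surface, because the surgery compresses that surface to one of lower genus. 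Hence the vertical map
\[ I^\#(-Y_0(J),\mu|{-}\hat\Sigma') \to I^\#(-S^3_0(K_{p,q}),\mu|{-}\hat\Sigma_{p,q}) \]
is an isomorphism (both sides being $\cong\C$ by Theorem~\ref{thm:odd-dim-g-1} / Proposition~\ref{prop:fiberedframed}), and likewise $I^\#(-Y) \to I^\#(-S^3)$; note $-Y = \#^{2g(T_{p,q})}(S^1\times S^2)$ has $I^\#(-Y)\ne 0$. Chasing the square, the nonvanishing of $I^\#(X_0^\dagger,\nu_0^\dagger|{-}\hat\Sigma_{p,q})$ forces the nonvanishing of \eqref{eq:y0j-cobordism}.

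The one point requiring care — and the main obstacle — is the bookkeeping needed to be sure that the vertical arrows in the upside-down version of \eqref{eq:cobordism-diagram} genuinely restrict to \emph{isomorphisms} on the relevant top eigenspaces, rather than merely maps between $1$-dimensional spaces that could a priori be zero. This is where the surgery-exact-triangle argument of Proposition~\ref{prop:stabilization-invariance} is invoked: each $-1$-surgered curve $c_i$ is nonseparating in a page of $(S_{p,q},h_{1,q})$, so attaching the $2$-handle along $c_i$ produces a cobordism containing a surface homologous to the fiber but of strictly smaller genus, whence the third term of its triangle lies outside the top $\mu(R)$-eigenspace and the map on top eigenspaces is an isomorphism. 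Since the $c_i$ are disjoint from $J$ and from $\mu$, the diagram \eqref{eq:cobordism-diagram} commutes and these restrictions are compatible with the horizontal cobordism maps; iterating over $i=1,\dots,g(T_{p,q})$ then gives the claim. One also needs to track the bundle data: $\nu^\dagger$ restricts over $-Y$ to the trivial bundle and over $-Y_0(J)$ to $\mu$, matching $\nu_0^\dagger$ under the vertical cobordisms, which follows from commutativity of \eqref{eq:cobordism-diagram} and the fact that the $c_i$-surgeries do not meet $\mu$. Putting these pieces together, the map \eqref{eq:y0j-cobordism} is nonzero, as desired.
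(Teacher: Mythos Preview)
Your overall strategy---turn the square \eqref{eq:cobordism-diagram} upside down and deduce nonvanishing of the bottom arrow from that of $I^\#(X_0^\dagger,\nu_0^\dagger|{-}\hat\Sigma)$---is exactly the paper's approach.  The gap is in your treatment of the right-hand vertical map.

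In the upside-down square the vertical cobordisms go from $-S^3_0(K_{p,q})$ to $-Y_0(J)$ and from $-S^3$ to $-Y$ (you have written them in the opposite direction).  More substantively, the right vertical map $I^\#(-S^3)\to I^\#(-Y)$ cannot be an isomorphism: its domain is $\C$ while $I^\#(-Y)=I^\#\big(\#^{2g(T_{p,q})}(S^1\times S^2)\big)$ has dimension $2^{2g(T_{p,q})}>1$, and your ``likewise'' appeal to the fiber-compression triangle argument of Proposition~\ref{prop:stabilization-invariance} does not apply here since neither $-S^3$ nor $-Y$ carries the fiber surface to which one would restrict.  What the diagram chase actually requires---and what the paper supplies---is that this map be \emph{injective}.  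That needs a separate observation: each $c_i$ is dual to a $2$-sphere in a unique $S^1\times S^2$ summand of $Y$, so the upside-down vertical cobordism $-S^3\to -Y$ is the trace of $0$-surgery on an unlink, and the induced map is injective by the standard argument as in \cite[\S7.6]{scaduto}.  Your claim that the \emph{left} vertical is an isomorphism on top eigenspaces is plausible (both ends are fibered with the same fiber, and the compression argument does apply there), but it is unnecessary: once the right vertical is injective and the top map nonzero, commutativity of the square already forces the bottom map to be nonzero.
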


\begin{proof}
The commutative diagram \eqref{eq:cobordism-diagram} gives rise to the commutative diagram 
\begin{equation} \label{eq:open-book-monodromy-diagram}
\xymatrix{
I^\#(-S^3_0(K_{p,q}),\mu|{-}\hat\Sigma) \ar[rrr]^-{I^\#(X_0^\dagger,\nu_0^\dagger|{-}\hat\Sigma)} \ar[d] &&& I^\#(-S^3) \ar[d] \\
I^\#(-Y_0(J),\mu|{-}\hat\Sigma') \ar[rrr]^-{I^\#(X^\dagger,\nu^\dagger|{-}\hat\Sigma')} &&& I^\#(-Y),
}
\end{equation}
whose arrows are given by the maps  induced by the cobordisms in \eqref{eq:cobordism-diagram} turned upside-down and restricted to the top eigenspaces of $\mu(\hat\Sigma')$ and $\mu(\hat\Sigma)$, where $\hat\Sigma$ is the closed surface in $ S^3_0(K_{p,q})$ obtained by capping off the Seifert surface $\Sigma\cong S_{p,q}$ for $K_{p,q}$. 

We  claim that the rightmost vertical map
\[ I^\#(-S^3) \to I^\#(-Y) = I^\#(-\#^{2g(T_{p,q})}(S^1\times S^2)) \]
is injective.  Indeed, each  curve $c_i$ is dual to a 2-sphere in a unique $S^1\times S^2$ summand of $Y = \#^{2g(T_{p,q})}(S^1\times S^2)$, which at the level of open books comes from plumbing an annulus with trivial monodromy onto $(S_{1,q},h_{1,q})$. It follows that the map $I^\#(-S^3) \to I^\#(-Y)$ is induced by 0-surgery on a $2g(T_{p,q})$-component unlink, and so an argument using the surgery exact triangle \eqref{eq:triangle-untwisted} exactly as in \cite[\S7.6]{scaduto} shows that it must be injective.

Moreover, the map in the top row of \eqref{eq:open-book-monodromy-diagram} is  dual to the map
\begin{equation}\label{eqn:mapX0cable} I^\#(X_0,\nu_0;t_{0,1-g(K_{p,q})}): I^\#(S^3) \to I^\#(S^3_0(K_{p,q}),\mu;s_{1-g(K_{p,q})}) \end{equation} in the notation of \S\ref{sec:L-spaces}, where in particular \[s_{1-g(K_{p,q})}: H_2(S^3_0(K_{p,q}))\to 2\Z\] is the homomorphism defined by 
\[s_{1-g(K_{p,q})}([\hat\Sigma]) = 2-2g(K_{p,q})=2-2g(\hat\Sigma).\]  Since $K_{p,q}$ is an instanton L-space knot, Theorem~\ref{thm:odd-dim-g-1} and Proposition~\ref{prop:l-space-eigenspaces} (together with the conjugation symmetry of Theorem~\ref{thm:eigenspace-decomposition}) imply that the map \eqref{eqn:mapX0cable} is injective, with image spanned by the nonzero element 
\[ y_{1-g(K_{p,q})} \in I^\#_\godd(S^3_0(K_{p,q}),\mu;s_{1-g(K_{p,q})}). \]
Thus, the dual $I^\#(X_0^\dagger,\nu_0^\dagger|{-}\hat\Sigma)$ of this map is also nonzero.

Since the diagram \eqref{eq:open-book-monodromy-diagram} is commutative and the composition of top and right maps is nonzero, we conclude that the same must be true for the composition 
of the left and bottom maps.  In particular, the bottom map
\[ I^\#(X^\dagger,\nu^\dagger|{-}\hat\Sigma'): I^\#(-Y_0(J),\mu|{-}\hat\Sigma') \to I^\#(-Y) \]
must also be nonzero, as claimed.
\end{proof}

To prove that the subspace $\fiinvt(\cB_{1,q},R,h)$ is nontrivial for some $R$ and $h$, we  relate the cobordism map in Lemma \ref{lem:y0j-cobordism-injective} with the map which defines this subspace. First note that the surface $S_{p,q}=S_{1,q}\ast \Sigma_{p,q}$ can also be expressed as \[S_{p,q}= S_{1,q}\cup T,\]
where $T$ is a surface of genus $g(T_{p,q})$ and one boundary component, along which it is glued to $\partial S_{1,q}$. The $0$-surgery $Y_0(J)$ is then given by the mapping torus,
\[ Y_0(J) = (S_{1,q} \cup T) \times_h S^1, \]
where the map \[h: S_{1,q} \cup T \to S_{1,q} \cup T\] is equal to $h_{1,q}$ on $S_{1,q}$ and the identity on $T$.  Let us define
\[ R := S_{1,q} \cup T, \] and observe that under the identification $Y_0(J)=R\times_h S^1$, the capped-off Seifert surface $\hat\Sigma'$  in  Lemma \ref{lem:y0j-cobordism-injective} is identified with a copy of the fiber $R$. In particular,  $g(R) = g(K_{p,q})=g(\hat\Sigma')$.  Recall that the subspace \[\fiinvt(\cB_{1,q},R,h)\subset I^\#(-S^3\#(-R\times S^1),\alpha|{-}R)\] is the image of the  map \[ I^\#(-V_{\cB_{1,q},h},\alpha\times[0,1]|{-}R): I^\#(-R\times_h S^1,\alpha|{-}R) \to I^\#(-S^3\#(-R\times S^1),\alpha|{-}R) \] induced by the cobordism
\[ V_{\cB_{1,q},h}: R \times_h S^1 \to S^3 \# (R \times S^1), \]
obtained by attaching $\partial(S_{1,q}\times[-1,1])$-framed $2$-handles to $(R \times_{h} S^1)\times[0,1]$ along each of the $2g(K_{1,q})$ curves $\gamma_j\times\{1\}$ for \[\gamma_j \in \gamma(h_{1,q},{\bf c}_{1,q}),\] where ${\bf c}_{1,q}$ is a basis of arcs for $S_{1,q}$ and  $\alpha$ is the meridian $\mu$ of $J$ in $-Y_0(J) = -R\times_h S^1.$ 

\begin{proposition} \label{prop:v-bowtie-injective}
The cobordism map
\[ I^\#(-V_{\cB_{1,q},h},\alpha\times[0,1]|{-}R): I^\#(-R\times_h S^1,\alpha|{-}R) \to I^\#(-S^3\#(-R\times S^1),\alpha|{-}R) \]
 above is nonzero.  In particular, 
$\fiinvt(\cB_{1,q},R,h) \neq \{0\}. $
\end{proposition}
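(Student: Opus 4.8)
The plan is to factor the cobordism $V_{\cB_{1,q},h}$ through the $0$-surgery cobordism $X^\dagger$ appearing in Lemma~\ref{lem:y0j-cobordism-injective}, thereby deducing nontriviality of $I^\#(-V_{\cB_{1,q},h},\alpha\times[0,1]|{-}R)$ from the nontriviality established there. The key observation is that $Y_0(J) = R\times_h S^1$ with $\hat\Sigma'$ identified with the fiber $R$, and that $V_{\cB_{1,q},h}$ is built by attaching $\partial(S_{1,q}\times[-1,1])$-framed $2$-handles along the curves $\gamma_j$ associated to the arc system ${\bf c}_{1,q}$ for $S_{1,q}$. By the proposition recalled just before Definition~\ref{def:m-open-book} (the surgery description of $M(S,h,{\bf c})$), performing these surgeries on $R\times_h S^1$ produces precisely $S^3 \# (R\times_{h^{-1}\circ h} S^1) = S^3 \#(R\times S^1)$, and moreover the resulting cobordism, when restricted appropriately, agrees up to the connected-sum factor with the trace $X$ of $0$-surgery on $J$ turned upside down. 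First I would make this identification precise: namely, I would show that $-V_{\cB_{1,q},h}$, viewed as a cobordism $(-Y_0(J),\mu)\to(-S^3\#(-R\times S^1),\alpha)$, is obtained from $(X^\dagger,\nu^\dagger): (-Y_0(J),\mu)\to(-Y,0)$ — wait, here $Y=S^3$ for the $(1,q)$-case since $g(T_{1,q})=0$ — so in fact for the cable $K_{1,q}$ we simply have $Y = S^3$, $J = K_{1,q}$, and $V_{\cB_{1,q},h}$ is the $0$-surgery trace on $K_{1,q}$ composed with the split $(R\times S^1)\times[0,1]$ factor, all turned around.

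More carefully, the hands-on content is: $-V_{\cB_{1,q},h}$ factors as $X^{\dagger,\bowtie} := X^\dagger \bowtie\big((-R\times S^1)\times[0,1]\big)$, the ``fiber-sum'' cobordism, where $X^\dagger: (-S^3_0(K_{1,q}),\mu)\to(-S^3)$ is the upside-down $0$-surgery trace. By the naturality of the connected-sum (Künneth) isomorphism with respect to split cobordisms, exactly as invoked in the proof of Lemma~\ref{lem:trace-t25-injective}, the map $I^\#(X^{\dagger,\bowtie},\alpha\times[0,1])$ is identified with $I^\#(X^\dagger,\nu^\dagger)\otimes \mathrm{Id}_{I^\#(-R\times S^1)}$ under the splitting $I^\#(-S^3\#(-R\times S^1))\cong I^\#(-S^3)\otimes I^\#(-R\times S^1)$; restricting to the top eigenspaces of $\mu(-R)$ on both sides, and using that $I^\#(-R\times S^1|{-}R)\cong\C$ is nonzero (Example~\ref{ex:product}, or Theorem~\ref{thm:fibered-has-rank-1}), the restricted map becomes $I^\#(X^\dagger,\nu^\dagger|{-}\hat\Sigma')\otimes\mathrm{Id}$. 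Since Lemma~\ref{lem:y0j-cobordism-injective} says the first factor is nonzero, and tensoring with a nonzero space over a field cannot kill a nonzero map, $I^\#(-V_{\cB_{1,q},h},\alpha\times[0,1]|{-}R)$ is nonzero. The final claim $\fiinvt(\cB_{1,q},R,h)\neq\{0\}$ is then immediate from the definition of $\fiinvt$ as the image of this map.

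The main obstacle, and the step requiring the most care, is the geometric identification of $-V_{\cB_{1,q},h}$ with the split cobordism $X^{\dagger,\bowtie}$ — that is, checking that the curves $\gamma_j$ along which the contact $2$-handles of $V_{\cB_{1,q},h}$ are attached, together with their framings $\partial(S_{1,q}\times[-1,1])$, conspire to give exactly $0$-surgery on the binding $J=K_{1,q}$ inside the $S^3$-summand and nothing in the $-R\times S^1$ factor, with the bundle data $\nu$, $\alpha$ matching up correctly (in particular that $\alpha$ corresponds to $\nu^\dagger$-data plus the product arc, and that $-R\times_h S^1$ with fiber $-R$ is being glued in as a split piece). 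This is the same type of bookkeeping as in \cite[Proposition~4.6]{bs-instanton} and in the surgery-description proposition preceding Definition~\ref{def:m-open-book}; I would handle it by appealing to those, checking that in the present situation $h^{-1}\circ h = \id_R$ so the second summand is genuinely $R\times S^1$, and that the restriction to top eigenspaces is compatible because all surgeries and handle attachments occur away from the fiber $R$ and away from $\mu$, hence intertwine the relevant $\mu(R)$-actions exactly as in \S\ref{ssec:cob-maps} and Theorem~\ref{thm:excision}.
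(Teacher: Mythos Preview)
Your proposal rests on a misreading of the setup in \S\ref{ssec:ob-cables}, and the resulting identification of $-V_{\cB_{1,q},h}$ with a split cobordism does not hold.  The manifold $Y$ and the knot $J$ in Lemma~\ref{lem:y0j-cobordism-injective} are defined using the open book $(S_{p,q},h_{1,q})$ for a fixed $\frac{p}{q}>2g(K)-1$, so that $Y=\#^{2g(T_{p,q})}(S^1\times S^2)$ and $g(J)=g(K_{p,q})$; the $(1,q)$-cable enters only through the arc system ${\bf c}_{1,q}$ on the subsurface $S_{1,q}\subset S_{p,q}=R$.  In particular $Y$ is not $S^3$, and $J$ is not $K_{1,q}$.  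More fundamentally, the domain $-R\times_h S^1$ of $-V_{\cB_{1,q},h}$ is an irreducible mapping torus with fiber of genus $g(K_{p,q})$; it is not diffeomorphic to any connected sum of the form $-S^3_0(K_{1,q})\#(-R\times S^1)$, so the identification $-V_{\cB_{1,q},h}\cong X^{\dagger,\bowtie}$ you propose cannot be correct, and the K\"unneth argument does not apply.

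The paper's proof runs the factorization in the opposite direction.  Rather than factoring $-V_{\cB_{1,q},h}$ through $X^\dagger$, it shows that the cobordism $X^\dagger:-R\times_h S^1\to -\#^{2g(T_{p,q})}(S^1\times S^2)$ of Lemma~\ref{lem:y0j-cobordism-injective} factors as a composition $W'\circ(-V_{\cB_{1,q},h})$ for some cobordism $W'$.  Concretely, $X^\dagger$ is the trace of $0$-surgery on a section $\{\pt\}\times S^1\subset T\times S^1\subset R\times_h S^1$; one then attaches the $2$-handles along the curves $\gamma_j\in\gamma(h_{1,q},{\bf c}_{1,q})$ (which become framed unknots after this $0$-surgery) and cancels them with $3$-handles.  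Since the $\gamma_j$ are disjoint from $\{\pt\}\times S^1$, the handle attachments can be reordered so that the $\gamma_j$-handles come first, and this first stage is precisely $-V_{\cB_{1,q},h}$.  Because Lemma~\ref{lem:y0j-cobordism-injective} says $I^\#(X^\dagger,\nu^\dagger|{-}\hat\Sigma')\neq 0$, the first factor $I^\#(-V_{\cB_{1,q},h},\alpha\times[0,1]|{-}R)$ must be nonzero as well.
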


\begin{proof}
We  prove that the cobordism \[(X^\dagger,\nu^\dagger):(-Y_0(J),\mu)\to(-Y,0)\] of Lemma \ref{lem:y0j-cobordism-injective} factors through $(-V_{\cB_{1,q},h},\alpha\times[0,1])$. We can write the former as \[(X^\dagger,\nu^\dagger):(-R\times_h S^1,\alpha)\to(-\#^{2g(T_{p,q})}(S^1\times S^2),0),\] per the discussion preceding the proposition.  For the claimed factorization, first note that $X^\dagger$ is the cobordism given by the trace of $0$-surgery  on a section $\{\pt\} \times S^1$ of \[-T \times S^1 \subset -R\times_h S^1.\]
In the surgered manifold $-\#^{2g(T_{p,q})}(S^1\times S^2)$, the induced curves $\gamma_j \in \gamma(h_{1,q},{\bf c}_{1,q})$ are each just unknots with page framing equal to their $0$-framing. Let \[W:-\#^{2g(T_{p,q})}(S^1\times S^2) \to -\#^{2g(K_{p,q})}(S^1\times S^2)\] be the  $2$-handle cobordism given by the trace of $0$-surgeries on these unknots, and let \[Z:-\#^{2g(K_{p,q})}(S^1\times S^2) \to -\#^{2g(T_{p,q})}(S^1\times S^2)\] be the $3$-handle cobordism cancelling these $2$-handles. Then the composition 
\[ -R \times_h S^1 \xrightarrow{X^\dagger} -\#^{2g(T_{p,q})}(S^1\times S^2) \xrightarrow{W} -\#^{2g(K_{p,q})}(S^1\times S^2)\xrightarrow{Z} -\#^{2g(T_{p,q})}(S^1\times S^2) \] is simply $X^\dagger$. 

We can now change the order of the first two sets of $2$-handle attachments since they are performed along the disjoint curves $\{\pt\} \times S^1$ and $\gamma_1,\dots,\gamma_{2g(K_{1,q})}\in \gamma(h_{1,q},{\bf c}_{1,q})$. Performing the latter set of $2$-handle attachments first results in the cobordism \[(-V_{\cB_{1,q},h},\alpha\times[0,1]):(-R\times_h S^1,\alpha)\to (-R\times S^1,\alpha).\] This shows that $X^\dagger$ can also be expressed as a composition of the form \[ -R \times_h S^1 \xrightarrow{-V_{\cB_{1,q},h}} -R\times S^1 \xrightarrow{W'}  -\#^{2g(T_{p,q})}(S^1\times S^2) \] for some cobordism \[(W',\nu'): (-R\times S^1,\alpha) \to (-\#^{2g(T_{p,q})}(S^1\times S^2),0)\] involving $2$- and $3$-handle attachments. Therefore, 
\[ I^\#(X^\dagger,\nu^\dagger|{-}R) = I^\#(W',\nu'|{-}R) \circ I^\#(-V_{\cB_{1,q},h},\alpha\times[0,1]|{-}R). \] 
Lemma~\ref{lem:y0j-cobordism-injective} says that the map $I^\#(X^\dagger,\nu^\dagger|{-}R)$ is nontrivial (recall that $\hat\Sigma$ is identified with a copy of the fiber $R$ here); hence, so is  the map \[I^\#(-V_{\cB_{1,q},h},\alpha\times[0,1]|{-}R).\]  Since its image is by definition $\fiinvt(\cB_{1,q},R,h)$, this subspace is nonzero.
\end{proof}

\begin{remark} \label{rem:sqp-which-cables} As a special case of the constructions above, let  $q=2$ and  $p=2k+1$ with $k \geq 2g(K)-1$ (so that  $\frac{p}{q}\geq 2g(K)-1$).  Then \[ g(R) = g(K_{1,2}) + g(T_{2k+1,2}) = 2g(K) + k. \]
Thus by varying $k$, we can arrange for $g(R)$ to be any integer which is at least $4g(K)-1$.  Moreover, the map $h: R \to R$  of Proposition \ref{prop:v-bowtie-injective}  is the identity on $\Sigma_{2k+1,2}\subset R$, and its restriction $h|_{S_{1,2}\subset R} = h_{1,2}$ does not depend on the choice of $k$.
\end{remark}

We may now prove Theorem \ref{thm:sqp}.

\begin{proof}[Proof of Theorem \ref{thm:sqp}]
According to Propositions \ref{prop:sqp} and \ref{prop:cabletight}, it suffices to show that $\xi_{K_{1,2}}$ is not overtwisted. Supposing otherwise, there exists by Corollary \ref{cor:ot} a supporting open book decomposition for $(S^3,\xi_{K_{1,2}})$ given by \[ \cB_{\mathrm{ot}} = (S_{\mathrm{ot}},h_{\mathrm{ot}},{\bf c}_{\mathrm{ot}},f_{\mathrm{ot}}), \] such that \begin{equation}\label{eqn:zeroclass}\fiinvt(\cB_{\mathrm{ot}},R,\phi) = \{0\}\end{equation} for any $R$ containing $S_{\mathrm{ot}}$ as a subsurface, and all diffeomorphisms $\phi: R\to R$. The Giroux correspondence \cite{giroux-icm} asserts that there is an open book decomposition
\[ \cB_+ = (S_+,h_+,{\bf c}_+,f_+) \]
which is simultaneously a positive stabilization of both $\cB_{\mathrm{ot}}$ and the open book decomposition
\[ \cB_{1,2} = (S_{1,2},h_{1,2},{\bf c}_{1,2},f_{1,2}) \]
described at the beginning of this subsection. 

Let us now embed $S_+$ in a closed, connected surface $R$ of genus at least $4g(K)-1$ (see Remark~\ref{rem:sqp-which-cables}) so that the complement $R\ssm S_+$ of the image  is connected with positive genus; this induces   embeddings $S_{\mathrm{ot}}, S_{1,2} \hookrightarrow R$ satisfying the same conditions. The vanishing \eqref{eqn:zeroclass} together with Proposition~\ref{prop:stabilization-invariance} then  implies that 
\[ \fiinvt(\cB_+,R,\phi) = \{0\} \]
for all $\phi$ as well. On the other hand,  Proposition~\ref{prop:v-bowtie-injective} says that \[\fiinvt(\cB,R,h) \neq \{0\},\] where $h|_{S_{1,2}} = h_{1,2}$ and $h|_{R\ssm S_{1,2}}=\id$,  so another application of Proposition~\ref{prop:stabilization-invariance} says  that \[\fiinvt(\cB_+,R,h') \neq \{0\}\] for some corresponding $h'$, which is a contradiction.
\end{proof}

\subsection{Instanton L-space slopes}\label{ssec:lspaceslopes}
Suppose $K\subset S^3$ is a nontrivial instanton L-space knot. We proved in \cite[Theorem~4.20]{bs-stein} that there exists a positive integer $N$ such that $S^3_r(K)$ is an instanton L-space iff $r\in [N,\infty)\cap \Q$, and we showed in Proposition~\ref{prop:first-l-space-slope} that \[N\leq 2g(K)-1.\] Furthermore, it follows from Theorem~\ref{thm:sqp} and Proposition~\ref{prop:sqp} that \begin{equation}\label{eqn:maxsl2g}\maxsl(K) = 2g(K)-1.\end{equation} Lidman, Pinz\'on-Caicedo, and Scaduto  prove as part of  their forthcoming work in \cite{lpcs} that $K$ being a nontrivial instanton L-space knot together with \eqref{eqn:maxsl2g}  implies that \begin{equation}\label{eqn:N2g}N\geq 2g(K)-1.\end{equation} Thus, $N=2g(K)-1$, proving Theorem \ref{thm:bound}. The argument for the bound \eqref{eqn:N2g}, which is only  part of the main work in \cite{lpcs}, is simple enough that we reproduce it below for completeness. 

\begin{proof}[Proof of Theorem \ref{thm:bound}] As discussed above, all that remains is to prove the inequality \eqref{eqn:N2g}. Note by Proposition~\ref{prop:first-l-space-slope} that this is equivalent to  \begin{equation}\label{eqn:in0bound}\dim I^\#_\godd(S^3_0(K),\mu)\geq 2g(K)-1.\end{equation} Since $\chi(I^\#(S^3_0(K),\mu))=0$ by Proposition \ref{prop:euler}, we have that \[\dim I^\#_\godd(S^3_0(K),\mu)=\dim I^\#_\geven(S^3_0(K),\mu),\] which implies that \eqref{eqn:in0bound} is equivalent to \begin{equation}\label{eqn:in0boundtotal}\dim I^\#(S^3_0(K),\mu)\geq 4g(K)-2.\end{equation} The latter is what we will prove below.

Let $g=g(K)$. Since $\maxsl(K) = 2g-1$, we can find a Legendrian representative $\Lambda$ of $K$ in the standard contact $S^3$ with classical invariants
\[ (\ttb(\Lambda), r(\Lambda)) = (\tau_0,r_0), \qquad \tau_0 - r_0 = 2g-1 \] (any Legendrian approximation of the max-$\ssl$ transverse representative of $K$ will do).
For $n \geq 1-\tau_0$, we can positively stabilize this Legendrian $k$ times and negatively stabilize it $\tau_0+n-1-k$ times to get a Legendrian representative with
\[ (\ttb,r) = (1-n, 2-2g-n+2k), \qquad 0 \leq k \leq \tau_0+n-1. \]
For odd $n \gg 0$, these values of $r$ include every positive odd number between $1$ and $n+2g-2$.

Fixing such a large value of $n$, we perform Legendrian surgery on these knots $\Lambda_i$ with
\[ (\ttb(\Lambda_i), r(\Lambda_i)) = (1-n, 2i-1), \qquad 1 \leq i \leq \frac{n+2g-1}{2}, \]
to get contact structures
\[ \xi_1,\dots,\xi_{(n+2g-1)/2} \]
on \[Y:=S^3_{-n}(K).\] Let \[W:=X_{-n}(K)\] be the trace of this $-n$-surgery, and $\hat\Sigma \subset W$ the union of a Seifert surface for $K$ with the core of the 2-handle. Then each $\xi_i$ admits a Stein filling $(W,J_i)$ with \[\langle c_1(J_i),[\hat\Sigma]\rangle = r(\Lambda_i) = 2i-1.\]  We can also take contact structures
\[ \bar{\xi}_i = T(Y) \cap \bar{J}_i T(Y), \qquad 1 \leq i \leq \frac{n+2g-1}{2}, \]
which are filled by $W$ with the conjugate Stein structure $\bar{J}_i$ for each $i$.  These satisfy \[\langle c_1(\bar{J}_i), [\hat\Sigma]\rangle = -(2i-1),\] so we have exhibited $n+2g-1$ Stein structures
\[ J_1,J_2,\dots,J_{(n+2g-1)/2}, \bar{J}_1,\bar{J}_2, \bar{J}_{(n+2g-1)/2} \]
on $W$ which are all distinguished by their first Chern classes. This implies by \cite[Theorem 1.6]{bs-stein} that the associated contact invariants \begin{equation}\label{eqn:contactclasses}\Theta(\xi_1),\dots,\Theta(\xi_{(n+2g-1)/2}),\Theta(\bar\xi_1),\dots,\Theta(\bar\xi_{(n+2g-1)/2})\end{equation} are linearly independent as elements of  the sutured instanton homology of the complement of a ball in $-Y$ (with suture consisting of one circle). Concretely, we showed in \cite{bs-stein} that these contact invariants can all be thought of simultaneously as elements of \[I_*(-Y\#(R\times S^1)|{-}R)_{\alpha\cup \eta}\] for some fixed closed surface $R$, where $\alpha=\{\pt\}\times S^1$ for some $\pt\in R$, and $\eta$ is a homologically essential curve in some copy of the fiber $R$. By removing a $4$-ball from   $W^\dagger$, we may view it as a cobordism \[W^\dagger:-Y\to -S^3.\] Let $W^{\dagger,\bowtie}$ be the cobordism  \[W^{\dagger,\bowtie} := W^\dagger \bowtie \big( (-R\times_{}S^1) \times [0,1]\big): -Y\#(R\times S^1)\to -S^3\#(R\times S^1).\] The functoriality of our contact invariants under   maps induced by Stein cobordisms implies \cite{bs-instanton,bs-stein} that the map induced by $(W^{\dagger,\bowtie},(\alpha\cup\eta)\times[0,1])$,
\[I_*(-Y\#(R\times S^1)|{-}R)_{\alpha\cup \eta}\to I_*(-S^3\#(R\times S^1)|{-}R)_{\alpha\cup \eta}\cong \C,\]
sends each of the contact classes \eqref{eqn:contactclasses} to a generator. This implies that these classes all have the same mod $2$ grading, since cobordism maps are homogeneous.

Now, Kronheimer and Mrowka showed in \cite{km-excision} that \[I^\#(-Y)\cong I_*(-Y\#(R\times S^1)|{-}R)_{\alpha\cup \eta}\] as part of their proof that sutured instanton homology is independent of the closure. This isomorphism is a composition of excision isomorphisms, and is therefore homogeneous with respect to the mod $2$ grading. So, it identifies the contact classes in \eqref{eqn:contactclasses} with $n+2g-1$ linearly independent elements \[x_1,\dots,x_{(n+2g-1)/2},\bar x_1,\dots,\bar x_{(n+2g-1)/2}\in I^\#(-Y):=I^\#(-S^3_{-n}(K))\] which all have the same mod 2 grading. The fact that \[\chi(I^\#(-S^3_{-n}(K))) = n\] then forces there to be at least $2g-1$ other linearly independent elements in $I^\#(-S^3_{-n}(K))$ in the other mod 2 grading. In conclusion, we have shown that \[\dim I^\#(-S^3_{-n}(K)) \geq n+4g-2.\] By an easy application of the surgery exact triangles 
\begin{equation*} \label{eq:triangle-0-cobordismsneg}
\dots \to I^\#(-S^3) \to I^\#(-S^3_0(K),\mu) \to I^\#(-S^3_{-1}(K)) \to \dots
\end{equation*}
and
\begin{equation*} \label{eq:triangle-n-cobordismsneg}
\dots \to I^\#(-S^3) \to I^\#(-S^3_{-k}(K)) \to I^\#(-S^3_{-(k+1)}(K)) \to \dots,
\end{equation*}
we conclude that \[\dim I^\#(-S^3_{0}(K),\mu) =\dim I^\#(S^3_{0}(K),\mu) \geq 4g-2,\] as desired.
\end{proof}

\begin{proof}[Proof of Theorem \ref{thm:main-surgery-reduction}]
This is simply a combination of Theorems~\ref{thm:l-space-knots-are-fibered}, \ref{thm:sqp}, and \ref{thm:bound}.
\end{proof}

Observe that Theorem \ref{thm:bound} and Proposition~\ref{prop:first-l-space-slope} together imply that if $K$ is a  nontrivial instanton L-space knot, then 
\[ \dim I^\#_\godd(S^3_0(K),\mu) = 2g(K)-1. \]
We know from Proposition~\ref{prop:l-space-eigenspaces} that each $I^\#(S^3_0(K),\mu;s_i)$ is spanned by the element $y_i = I^\#(X_0,\nu_0;t_{0,i})$ for $|i| \leq g(K)-1$, and is zero for $i$ outside this range.  The $y_i$ span a space of dimension equal to the number of nonzero $y_i$, since these are linearly independent.  Thus we can characterize instanton L-space knots in terms of 2-handle cobordism maps as follows.

\begin{corollary} \label{cor:l-space-cobordism-maps}
 A nontrivial knot $K \subset S^3$ is an instanton L-space knot iff the cobordism maps
\[ I^\#(X_0,\nu_0;t_{0,i}): I^\#(S^3) \to I^\#_\godd(S^3_0(K),\mu;s_i) \]
are isomorphisms for all $i$ in the range $1-g(K) \leq i \leq g(K)-1$. \qed
\end{corollary}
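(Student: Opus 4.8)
The plan is to prove the two implications separately; the forward one is essentially already recorded in the paragraph preceding the statement. Suppose $K$ is a nontrivial instanton L-space knot and write $g=g(K)$. By Theorem~\ref{thm:bound} and Proposition~\ref{prop:first-l-space-slope} we have $\dim I^\#_\godd(S^3_0(K),\mu)=2g-1$, while Proposition~\ref{prop:l-space-eigenspaces} and the adjunction bound of Theorem~\ref{thm:eigenspace-decomposition} give $I^\#_\godd(S^3_0(K),\mu;s_i)=\C\cdot y_i$ for $|i|\le g-1$ and $I^\#_\godd(S^3_0(K),\mu;s_i)=0$ otherwise. Hence the number of indices $i$ with $|i|\le g-1$ and $y_i\neq 0$ equals $2g-1$, so every such $y_i$ is nonzero; since $y_i=I^\#(X_0,\nu_0;t_{0,i})({\mathbf 1})$ and both $I^\#(S^3)$ and $I^\#_\godd(S^3_0(K),\mu;s_i)=\C\cdot y_i$ are one-dimensional, the map $I^\#(X_0,\nu_0;t_{0,i})$ is an isomorphism for each $|i|\le g-1$.

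For the converse, suppose each $I^\#(X_0,\nu_0;t_{0,i})\colon I^\#(S^3)\to I^\#_\godd(S^3_0(K),\mu;s_i)$ is an isomorphism for $|i|\le g-1$. Since an isomorphism $\C\xrightarrow{\sim}V$ forces $\dim V=1$, and $I^\#_\godd(S^3_0(K),\mu;s_i)=0$ for $|i|\ge g$ by Theorem~\ref{thm:eigenspace-decomposition}, we obtain $\dim I^\#_\godd(S^3_0(K),\mu)=2g-1$, hence $\dim I^\#(S^3_0(K),\mu)=4g-2$ by Proposition~\ref{prop:euler}; moreover each $y_i$ with $|i|\le g-1$ is nonzero, so the linearly independent classes $\{y_i\}_{|i|\le g-1}$ form a basis of $I^\#_\godd(S^3_0(K),\mu)$. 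It then suffices to show $S^3_{2g-1}(K)$ is an instanton L-space, equivalently $\dim I^\#(S^3_{2g-1}(K))=2g-1$, since $K$ is then an instanton L-space knot by definition.

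I would establish this by induction on $k$, for $0\le k\le 2g-1$, with inductive invariant: $\dim I^\#(S^3_k(K))=4g-2-k$, and the elements $z_{k,i}=I^\#(X_k,\nu_k;t_{k,i})({\mathbf 1})$ with $1-g+k\le i\le g-1$ form a basis of $I^\#_\godd(S^3_k(K))$ (read as $I^\#_\godd(S^3_0(K),\mu)$ when $k=0$); the case $k=0$ is the previous paragraph. For the step, linear independence of the $z_{k,i}$ gives $I^\#(X_k,\nu_k)({\mathbf 1})=\sum_i z_{k,i}\neq 0$, so $I^\#(X_k,\nu_k)$ is injective, and the surgery exact triangle \eqref{eq:triangle-n-cobordisms} (or \eqref{eq:triangle-0-cobordisms} when $k=0$) then gives $\dim I^\#(S^3_{k+1}(K))=\dim I^\#(S^3_k(K))-1$ and identifies $\ker I^\#(W_{k+1},\omega_{k+1})$ with the one-dimensional subspace $\C\cdot\sum_i z_{k,i}$, which lies in odd grading. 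Restricting the even-degree map $I^\#(W_{k+1},\omega_{k+1})$ to $I^\#_\godd(S^3_k(K))$ and applying Proposition~\ref{prop:w-z-image}, its image has dimension $2g-2-k=\dim I^\#_\godd(S^3_{k+1}(K))$ and is spanned by the combinations $z_{k+1,i}\pm z_{k+1,i+1}$ for $1-g+k\le i\le g-1$; since $z_{k+1,i}=0$ outside the range $2-g+k\le i\le g-1$ by the adjunction inequality \eqref{eq:t_i-adjunction}, a telescoping argument recovers each $z_{k+1,i}$ with $2-g+k\le i\le g-1$ from these combinations, and counting indices shows they form a basis of $I^\#_\godd(S^3_{k+1}(K))$. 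At $k=2g-1$ the index range is empty, so $I^\#_\godd(S^3_{2g-1}(K))=0$ and $\dim I^\#(S^3_{2g-1}(K))=2g-1$, completing the proof.

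The main obstacle is the converse. The natural tools for it, Propositions~\ref{prop:l-space-eigenspaces} and~\ref{prop:first-l-space-slope}, both presuppose that $K$ is an instanton L-space knot, which is exactly what is to be proved, so they cannot be invoked directly. The induction above sidesteps this by re-deriving, for an arbitrary nontrivial $K$ satisfying the hypothesis, the descending chain of dimensions $\dim I^\#(S^3_k(K))=4g-2-k$ that the proof of Proposition~\ref{prop:first-l-space-slope} obtains only after assuming the L-space property. The delicate point at each stage is to verify that $I^\#(X_k,\nu_k)$ remains injective, equivalently that the classes $z_{k,i}$ do not degenerate, and this is precisely where Proposition~\ref{prop:w-z-image} together with the adjunction inequality does the work.
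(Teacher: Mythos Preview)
Your argument is correct. The forward direction is exactly the paper's: it is the content of the paragraph immediately preceding the corollary, combining Theorem~\ref{thm:bound}, Proposition~\ref{prop:first-l-space-slope}, and Proposition~\ref{prop:l-space-eigenspaces} as you do.

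For the converse, the paper simply writes \qed and gives no explicit argument, whereas you correctly observe that Propositions~\ref{prop:l-space-eigenspaces} and \ref{prop:first-l-space-slope} cannot be invoked directly since they already assume $K$ is an instanton L-space knot. Your induction on $k$ is a valid and natural way to fill this gap: it re-derives, for an arbitrary nontrivial $K$ satisfying the hypothesis, the descending sequence $\dim I^\#(S^3_k(K))=4g-2-k$ that the proof of Proposition~\ref{prop:first-l-space-slope} obtains only under the L-space assumption. The key inputs---Proposition~\ref{prop:w-z-image}, the adjunction bound \eqref{eq:t_i-adjunction}, the exact triangles, and the Euler characteristic computation---are all available without any L-space hypothesis, and your telescoping recovers the $z_{k+1,j}$ from the differences (or sums, when $k=0$) exactly as needed. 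The dimension count $\dim I^\#_\godd(S^3_{k+1}(K))=2g-2-k$ then forces these elements to be a basis, closing the induction. At $k=2g-1$ the basis is empty and $\dim I^\#(S^3_{2g-1}(K))=2g-1$, so $K$ is an instanton L-space knot.

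In short, your proof agrees with the paper's for the direction the paper actually argues, and supplies a careful (and correct) argument for the converse that the paper leaves implicit.
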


As in Remark~\ref{rem:l-space-eigenspaces-bundle}, this corollary does not require using a nontrivial bundle on $S^3_0(K)$ as specified by $\mu$; we can also conclude that $K$ is an instanton L-space knot iff the maps
\[ I^\#(X_0;t_{0,i}): I^\#(S^3) \to I^\#_\godd(S^3_0(K);s_i) \]
are isomorphisms for $1-g(K) \leq i \leq g(K)-1$.

\section{$SU(2)$ representation varieties of  Dehn surgeries} \label{sec:small-surgeries}

In this section, we apply our results about instanton L-space knots and  their  L-space surgeries to questions about the $SU(2)$ representation varieties of Dehn surgeries on knots in $S^3$.  As explained in the introduction, our main tool is the following \cite[Corollary~4.8]{bs-stein}; we have already used this (in the introduction) to show that Theorem \ref{thm:main-surgery-reduction} implies Theorems~\ref{thm:main-surgery} and \ref{thm:main-su2-averse}.

\begin{proposition}\label{prop:l-space-su2-cyclic}
Suppose $K\subset S^3$ is a nontrivial knot  and $r=\frac{m}{n} > 0$ is a rational  number such that $\Delta_K(\zeta^2) \neq 0$ for any $m$th root of unity $\zeta$. If $S^3_r(K)$ is $SU(2)$-abelian then  $S^3_r(K)$ is an instanton L-space.
\end{proposition}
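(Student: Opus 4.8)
Looking at Proposition~\ref{prop:l-space-su2-cyclic}, this is stated as a citation of \cite[Corollary~4.8]{bs-stein}, so strictly speaking the "proof" is a pointer to prior work. But I will sketch how such a statement is established, drawing on the discussion already present in the introduction of this excerpt.

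\textbf{The plan.} The strategy is to realize $I^\#$ of an $SU(2)$-abelian rational homology sphere as the homology of the $SU(2)$ representation variety, and then count dimensions. First I would recall from \cite{bs-stein} that if $Y$ is $SU(2)$-abelian with $b_1(Y)=0$, then every element of $R(Y) = \Hom(\pi_1(Y),SU(2))$ has abelian, hence cyclic, image, so $R(Y)$ is a union of components each of which is either a point (the trivial representation, and any representation to the center) or a copy of the conjugation orbit $S^2 = SU(2)/U(1)$ of a nontrivial reducible. A standard count shows $H_*(R(Y);\C) \cong \C^{|H_1(Y;\Z)|}$; concretely, $R(Y)$ has $\tfrac{1}{2}(|H_1(Y;\Z)|-1)$ copies of $S^2$ plus one isolated point (when $|H_1|$ is odd), contributing $1 + 2\cdot\tfrac{1}{2}(|H_1|-1) = |H_1|$ to total homology. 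Then I would invoke the principle that $I^\#(Y)$ is the Morse--Bott homology of a Chern--Simons functional whose critical set is $R(Y)$: provided every element of $R(Y)$ is nondegenerate in the Morse--Bott sense, the spectral sequence from $H_*(R(Y);\C)$ to $I^\#(Y)$ collapses for grading reasons (the components $S^2$ and points all contribute in a controlled way), yielding $\dim I^\#(Y) = \dim H_*(R(Y);\C) = |H_1(Y;\Z)|$, i.e.\ $Y$ is an instanton L-space.

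\textbf{The nondegeneracy condition.} The remaining step is to check that for $Y = S^3_r(K)$ with $r = m/n > 0$, the hypothesis that $\Delta_K(\zeta^2)\neq 0$ for all $m$th roots of unity $\zeta$ is exactly what guarantees Morse--Bott nondegeneracy of all elements of $R(Y)$. This is where I would appeal to Boyer--Nicas \cite{boyer-nicas}: the reducible (abelian) $SU(2)$ representations of $\pi_1(S^3_r(K))$ factor through $H_1 = \Z/m\Z$, so they send the meridian to a diagonal matrix $\mathrm{diag}(\zeta,\zeta^{-1})$ for $\zeta$ an $m$th root of unity; the Zariski tangent space to the representation variety at such a representation, computed via group cohomology $H^1(\pi_1; \mathfrak{su}(2)_{\mathrm{Ad}\,\rho})$, has the expected dimension precisely when the twisted Alexander-type invariant — which reduces to $\Delta_K(\zeta^2)$ in the abelian-twist case — is nonzero. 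So the Alexander polynomial hypothesis is literally the nondegeneracy hypothesis, as the excerpt notes (``see \cite[\S4]{bs-stein}'').

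\textbf{Main obstacle.} The conceptual heart, and the part requiring genuine care, is the claim that nondegeneracy forces the Morse--Bott spectral sequence / perturbation argument to compute $\dim I^\#(Y)$ on the nose: one must rule out any differentials, which requires understanding the $\Z/2\Z$- (or relative $\Z/4\Z$-) gradings of the contributions of the $S^2$-components and of the isolated central representations, and checking they are distributed so that no cancellation is possible. This uses the signature/Euler-characteristic bookkeeping of Proposition~\ref{prop:euler} (the Euler characteristic is forced to be $|H_1(Y;\Z)|$, and combined with the lower bound $\dim I^\# \geq |H_1|$ and the upper bound $\dim I^\# \leq \dim H_*(R(Y))= |H_1|$ coming from nondegeneracy, equality is squeezed out). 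In short: lower bound from Euler characteristic, upper bound from the nondegenerate representation variety, and the Alexander polynomial condition is what licenses the upper bound — everything else is the machinery of \cite{bs-stein}.
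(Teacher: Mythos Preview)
Your proposal is correct and matches the paper's approach exactly: the paper itself gives no proof beyond the citation to \cite[Corollary~4.8]{bs-stein}, and the sketch you provide (representation variety has homology of dimension $|H_1|$, Boyer--Nicas identifies the Alexander polynomial condition with Morse--Bott nondegeneracy, then the Euler characteristic lower bound meets the chain-level upper bound) is precisely the argument outlined in the paper's introduction at \S\ref{ssec:instantonLspace}.
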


\begin{corollary} \label{cor:prime-power-numerator}
Suppose $K\subset S^3$ is a nontrivial knot and $r=\frac{m}{n} > 0$ is a rational number in which $m$ is a prime power.  If $S^3_{r}(K)$ is $SU(2)$-abelian then $S^3_{r}(K)$ is an instanton L-space.
\end{corollary}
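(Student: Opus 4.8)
The plan is to deduce Corollary~\ref{cor:prime-power-numerator} from Proposition~\ref{prop:l-space-su2-cyclic} by verifying the hypothesis on the Alexander polynomial automatically holds whenever $m$ is a prime power. So suppose $K \subset S^3$ is nontrivial, $r = \frac{m}{n} > 0$ with $m = \ell^a$ a prime power, and $S^3_r(K)$ is $SU(2)$-abelian. By Proposition~\ref{prop:l-space-su2-cyclic}, it suffices to show that $\Delta_K(\zeta^2) \neq 0$ for every $m$th root of unity $\zeta$.

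First I would observe that if $\zeta$ is an $m$th root of unity then $\zeta^2$ is also an $m$th root of unity (since $m$ is odd when $\ell$ is odd, and if $\ell = 2$ one checks separately that $\zeta^2$ is an $m/2$th, hence $m$th, root of unity — but in fact the cleanest path is to note that as $\zeta$ ranges over all $m$th roots of unity, $\zeta^2$ ranges over a subset of the $m$th roots of unity, so it is enough to prove $\Delta_K(\omega) \neq 0$ for every $m$th root of unity $\omega$). The key arithmetic input is the standard fact that $\prod_{\omega^m = 1,\, \omega \neq 1} \Phi_d$-type reasoning gives $\Delta_K(1) = \pm 1$ together with the resultant formula: if $m = \ell^a$ is a prime power, then the value of $\Delta_K$ at a primitive $\ell^b$th root of unity ($1 \le b \le a$) is controlled by the order of torsion in $H_1$ of the $\ell^b$-fold cyclic branched cover of $K$, and more to the point, one has the cyclotomic identity that the product of $\Delta_K(\omega)$ over all $m$th roots of unity $\omega \neq 1$ equals (up to sign) $|H_1(\Sigma_m(K))|$ where $\Sigma_m(K)$ is the $m$-fold cyclic branched cover, which is finite precisely because no $\Delta_K(\omega)$ vanishes. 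The cleaner classical statement I would invoke: for $m$ a prime power, $\Delta_K(\omega) \neq 0$ for every $m$th root of unity $\omega$ — this is a theorem of Fox (the branched cyclic covers $\Sigma_{\ell^a}(K)$ are always rational homology spheres when $\ell$ is prime), equivalently no zero of $\Delta_K$ is a prime-power root of unity. I would cite this directly (e.g. Fox's work on cyclic branched covers, or the formulation in terms of the Alexander polynomial having no prime-power-order roots of unity as zeros).

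Given that, the argument concludes: since $m = \ell^a$ is a prime power, $\Delta_K(\omega) \neq 0$ for every $m$th root of unity $\omega$; in particular, for any $m$th root of unity $\zeta$, the element $\zeta^2$ is an $m$th root of unity, so $\Delta_K(\zeta^2) \neq 0$. Thus the hypothesis of Proposition~\ref{prop:l-space-su2-cyclic} is satisfied, and we conclude that $S^3_r(K)$ is an instanton L-space, as desired. I would also include the brief remark (promised in the introduction after Theorem~\ref{thm:main-surgery}) that this is why the nondegeneracy condition there is automatic for prime-power numerators, and note Remark~\ref{rmk:dense} that such rationals are dense.

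The main obstacle — really the only nontrivial point — is correctly citing and applying the classical fact that the Alexander polynomial of a knot in $S^3$ has no zeros at prime-power-order roots of unity (equivalently, that prime-power-fold cyclic branched covers are rational homology spheres). Everything else is a one-line deduction from Proposition~\ref{prop:l-space-su2-cyclic}. One should double-check the edge case $\ell = 2$: here $m = 2^a$, a root of unity $\zeta$ of order dividing $2^a$ has $\zeta^2$ of order dividing $2^{a-1} \mid 2^a$, so $\zeta^2$ is still an $m$th root of unity, and the classical fact still applies since $2^{a-1}$ (and $2^a$) are prime powers; so the argument goes through uniformly.
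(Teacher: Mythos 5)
Your proof is correct and takes essentially the same route as the paper: reduce to Proposition~\ref{prop:l-space-su2-cyclic} and observe that $\Delta_K$ cannot vanish at any root of unity of prime-power order (your reduction via the fact that $\zeta\mapsto\zeta^2$ sends $m$th roots of unity to $m$th roots of unity is clean). The only difference is that where you invoke the classical theorem of Fox that prime-power cyclic branched covers are rational homology spheres, the paper proves the needed arithmetic inline in two lines: if $\Phi_{p^b}(t)$ divided $\Delta_K(t^2)$ for some $b\geq 1$, then setting $t=1$ would give $p=\Phi_{p^b}(1)\mid\Delta_K(1)=1$, a contradiction.
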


\begin{proof}
Write $m=p^e$ for some prime $p$ and  $e\geq 1$. If $S^3_{r}(K)$ is not an instanton L-space then Proposition~\ref{prop:l-space-su2-cyclic} says that there is  some $p^e$th root of unity $\zeta$ for which $\Delta_K(\zeta^2)=0$.  In other words, if $\Phi_k(t)$ is the cyclotomic polynomial of order $k$, then $\Phi_{p^e}(t)$ divides $\Delta_K(t^2)$ as elements of $\Z[t,t^{-1}]$.  Setting $t=1$, it follows that $\Phi_{p^e}(1)=p$ divides $\Delta_K(1) = 1$, a contradiction.
\end{proof}

\begin{remark} \label{rmk:dense}
The set of rational numbers $r=\frac{m}{n}>0$ with $m$ a prime power is dense in $[0,\infty)$. Indeed, given \emph{any} rational $s=\frac{a}{b}>0$, the rational numbers
\[ s \pm \frac{1}{kb} = \frac{ka \pm 1}{kb}, \quad k \geq 2 \]
are also positive  and approach $s$ as $k$ goes to infinity, and for either choice of sign there are infinitely primes congruent to $\pm 1 \pmod{a}$, hence infinitely many $k$ such that the numerator $ka \pm 1$ is prime.
\end{remark}

\begin{proposition} \label{prop:cyclic-2-3} Suppose $K\subset S^3$ is a nontrivial knot and 
 $r = \frac{m}{n}\in (0,3)$ is a rational number with $m$  a prime power.  Then $S^3_r(K)$ is not $SU(2)$-abelian.\end{proposition}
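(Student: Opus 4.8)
The plan is to use the Floer-theoretic results of the paper to reduce to a single knot --- the right-handed trefoil --- and then to dispose of that case by hand using Klassen's computation of the $SU(2)$ character variety of a torus knot.

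First I would argue by contradiction. Suppose $S^3_r(K)$ is $SU(2)$-abelian, where $r=\tfrac mn\in(0,3)$ with $m$ a prime power. By Corollary~\ref{cor:prime-power-numerator}, $S^3_r(K)$ is then an instanton L-space, so the nontrivial knot $K$ is an instanton L-space knot. Theorem~\ref{thm:main-surgery-reduction} now tells us that $S^3_s(K)$ is an instanton L-space exactly for $s\in[2g(K)-1,\infty)\cap\Q$, so in particular $r\ge 2g(K)-1$; since $r<3$ this forces $g(K)\le 1$, and as $K$ is nontrivial we get $g(K)=1$. Then $K$ is the right-handed trefoil $T_{2,3}$ by Corollary~\ref{cor:genus-1-l-space}. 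So it is enough to show that $S^3_r(T_{2,3})$ is \emph{not} $SU(2)$-abelian for every rational $r\in(0,3)$; for $r\in[1,2]$ this is already Theorem~\ref{thm:km-su2}, but I will give a uniform argument covering all of $(0,3)$.

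For the trefoil I would use Klassen's description \cite{klassen} of $R(S^3\ssm T_{2,3})$. Write the trefoil group as $\langle u,v\mid u^2=v^3\rangle$, with centre generated by the regular fiber class $z=u^2=v^3$, meridian $\mu$, and longitude $\lambda=z\mu^{-6}$. Klassen's computation shows that the irreducible representations form a single arc, on which $\rho(z)=-I$ and, placing $\rho(\mu)$ in a fixed maximal torus, $\rho(\mu)$ has eigenvalue $e^{i\pi\alpha}$ with $\alpha$ sweeping the \emph{open} interval $(\tfrac16,\tfrac56)$ (the two endpoints $\alpha=\tfrac16,\tfrac56$, where $\operatorname{tr}\rho(\mu)=\pm\sqrt3$ are the roots of $\Delta_{T_{2,3}}$, are the reducible bifurcation points). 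On this arc $\rho(\lambda)=\rho(z)\rho(\mu)^{-6}$ has eigenvalue $(-1)e^{-6i\pi\alpha}=e^{i\pi(1-6\alpha)}$, so for $r=\tfrac pq$ in lowest terms the surgery relation $\rho(\mu)^p\rho(\lambda)^q=I$ is equivalent to $(p-6q)\alpha+q\in 2\Z$, i.e.\ to the existence of an integer of parity $q\bmod 2$ in the interval $\big(\tfrac{6q-p}{6},\tfrac{5(6q-p)}{6}\big)$. For $\tfrac pq\in(0,3)$ in lowest terms we have $p\le 3q-1$, hence $6q-p\ge 4$, so this interval has length $\tfrac{2(6q-p)}{3}>2$ and therefore contains two consecutive integers, in particular one of each parity. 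Choosing one of parity $q\bmod 2$ produces an interior $\alpha\in(\tfrac16,\tfrac56)$ solving the surgery relation, hence an irreducible $\rho\colon\pi_1(S^3\ssm T_{2,3})\to SU(2)$ killing $\mu^p\lambda^q$; this descends to an irreducible representation of $\pi_1(S^3_r(T_{2,3}))$, contradicting $SU(2)$-abelianness.

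The reduction step is entirely formal given the earlier results, so the real content is the last paragraph. The point requiring the most care is extracting from \cite{klassen} the exact shape of the nonabelian arc --- in particular that the meridian parameter $\alpha$ runs over the \emph{open} interval $(\tfrac16,\tfrac56)$, so that the reducible endpoints are excluded and the representation we build is genuinely irreducible --- together with the elementary but slightly delicate counting step showing the surgery relation has an interior solution, which is precisely where the hypothesis $r<3$ enters through the bound $6q-p\ge 4$. As an alternative to that last computation one could instead cite the existence of irreducible $SU(2)$ representations on the small Seifert fibered spaces $S^3_{p/q}(T_{2,3})$, whose base orbifolds are $S^2(2,3,\,6q-p)$ with $6q-p\ge 4$.
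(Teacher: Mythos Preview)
Your argument is correct, and both halves differ from the paper's approach in interesting ways.

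For the reduction to the trefoil, the paper does not invoke Theorem~\ref{thm:main-surgery-reduction}.  Instead it observes directly that if $S^3_r(K)$ is an instanton L-space then so is $S^3_{\lfloor r\rfloor}(K)$ (by the structure of the set of L-space slopes), and since $\lfloor r\rfloor\in\{0,1,2\}$ one applies Proposition~\ref{prop:2-surgery-l-space} to conclude $K=T_{2,3}$.  The practical difference is that Proposition~\ref{prop:2-surgery-l-space} is proved without the Giroux correspondence, whereas your route through the bound $r\ge 2g(K)-1$ uses the strong quasipositivity part of Theorem~\ref{thm:main-surgery-reduction}, which does depend on Giroux (cf.\ the remark following the outline in \S\ref{ssec:proof}).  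So the paper's reduction is a bit more elementary in its dependencies.

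For the trefoil itself, the paper handles the integers $r=1,2$ by hand and then, for non-integer $r\in(0,3)$, appeals to Lin's theorem \cite{lin} bounding the distance between two $SU(2)$-abelian slopes: comparing a putative slope $r$ to the lens space slopes $\frac{6k+1}{k}$ and letting $k\to\infty$ gives a contradiction.  Your direct construction via Klassen's parametrization of the irreducible arc is more explicit and uniform across all of $(0,3)$, and avoids the separate integer cases and the Lin black box; the price is needing the precise statement that the meridian parameter sweeps exactly the open interval $(\tfrac16,\tfrac56)$, which you correctly flag as the point requiring care.
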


\begin{proof}
Suppose for a contradiction that $S^3_r(K)$ is  $SU(2)$-abelian. Then  $S^3_r(K)$ is an instanton L-space, by Corollary~\ref{cor:prime-power-numerator}. It follows that $S^3_{\lfloor r \rfloor}(K)$ is an instanton L-space as well, by \cite[Theorem~4.20]{bs-stein}. This is impossible by definition for $r\in(0,1)$, so let us assume that $r\in[1,3)$, in which case $\lfloor r \rfloor$ is equal to $1$ or $2$.  Proposition~\ref{prop:2-surgery-l-space} then tells us that $K$ is the right-handed trefoil $T_{2,3}$. So to finish the proof,  we need only  show that $S^3_r(T_{2,3})$ is not $SU(2)$-abelian for any  $r\in [1,3)$.

We can assume $r$ is not an integer.  Indeed, when $r=1$ we know that the fundamental group of $S^3_1(T_{2,3})=-\Sigma(2,3,5)$ is already a non-abelian subgroup of $SU(2)$.  In the case $r=2$, it is not hard to show that the trefoil group admits an irreducible $SU(2)$ representation $\rho$ satisfying $\rho(\mu) = e^{\pi i/4}$ and $\rho(\lambda) = e^{3\pi i/2}$ as unit quaternions, and then $\rho(\mu^2\lambda)=1$, so $\rho$ descends to an irreducible representation $\pi_1(S^3_2(T_{2,3})) \to SU(2)$.

To prove the claim for positive $r\not\in\Z$, we note that Dehn surgery of slope $s=\frac{6k+1}{k}$ on $T_{2,3}$ gives a lens space, which is $SU(2)$-abelian, for all integers $k \geq 1$ \cite{moser}.  Suppose now that $r = \frac{m}{n}$ is another $SU(2)$-abelian surgery slope for $T_{2,3}$, for some fixed $r < 3$. Then Lin \cite{lin} proved that the distance $\Delta(r,s)$ between these slopes is at most the sum of the absolute values of their numerators, i.e.,
\[ (6k+1)n - km \leq (6k+1) + m \quad\Longleftrightarrow\quad (k+1)m \geq (6k+1)(n-1). \]
Since $n \geq 2$, the right side is at least $(6k+1)\frac{n}{2}$, hence
\[ r = \frac{m}{n} \geq \frac{6k+1}{2(k+1)} = 3 - \frac{5}{2(k+1)}. \]
But this is false for large enough $k$, so we have a contradiction.
\end{proof}

We now address Question~\ref{q:km-su2}, asked by Kronheimer and Mrowka in \cite[\S4.3]{km-su2}, about when $3$-surgery and $4$-surgery on a nontrivial knot can be $SU(2)$-abelian, proving Theorem \ref{thm:main-34-surgery} at the end of this section. We begin with the following observation.

\begin{lemma} \label{lem:4-surgery-det-1}
If $K\subset S^3$ is a knot for which $S^3_4(K)$ is $SU(2)$-abelian then $\det(K) = 1$.
\end{lemma}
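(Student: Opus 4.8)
The plan is to argue by contradiction: assume $S^3_4(K)$ is $SU(2)$-abelian but $\det(K) = |\Delta_K(-1)| \ne 1$, and produce an irreducible representation $\pi_1(S^3_4(K)) \to SU(2)$. First recall two standard facts: $\det(K)$ is always odd (as $\Delta_K(-1) \equiv \Delta_K(1) = 1 \pmod 2$), and $\det(K) = |H_1(\Sigma_2(K);\mathbb{Z})|$, where $\Sigma_2(K)$ denotes the double branched cover of $S^3$ along $K$. So if $\det(K) \ne 1$ there are an odd prime $p$ and a surjection $H_1(\Sigma_2(K);\mathbb{Z}) \twoheadrightarrow \mathbb{Z}/p$.

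The heart of the proof is the classical construction (due to Riley and Klassen) of an irreducible \emph{binary dihedral} representation $\rho\colon \pi_1(S^3 \setminus K) \to SU(2)$ with $\rho(\mu)$ traceless and $\rho(\lambda) = 1$, built from the above surjection. I would set this up as follows. The double cyclic cover $\widetilde X$ of $S^3 \setminus K$ coincides with $\Sigma_2(K) \setminus N(\widetilde K)$, where $\widetilde K$ is the branch locus; the lifted meridian $m$ of $\widetilde K$ (represented by $\mu^2 \in \pi_1 \widetilde X$) generates a free summand of $H_1(\widetilde X;\mathbb{Z}) \cong \mathbb{Z}\langle m \rangle \oplus H_1(\Sigma_2(K))$, and the covering involution $\tau$ fixes $m$ and acts by $-1$ on $H_1(\Sigma_2(K))$. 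Choosing a character $\psi\colon H_1(\widetilde X;\mathbb{Z}) \to U(1)$ that sends $m \mapsto -1$ and restricts to an order-$p$ character on the torsion, one checks $\psi \circ \tau_* = \psi^{-1}$, so that setting $\rho|_{\pi_1 \widetilde X} = \psi$ (landing in a maximal torus) and $\rho(\mu) = \mathbf j$ defines a homomorphism into $\mathrm{Pin}(2) \subset SU(2)$ — the relation $\rho(\mu)^2 = \mathbf j^2 = -1 = \psi(m)$ being exactly the needed compatibility. It is irreducible because $\psi$ has a factor of order $p > 2$. For the longitude: since $\lambda$ and $\tau\lambda$ cobound a lift of a Seifert surface in $\widetilde X$, the class $[\lambda]$ satisfies $(1+\tau_*)[\lambda] = 0$, so $[\lambda]$ lies in the $(-1)$-summand $H_1(\Sigma_2(K))$, which has odd order; hence $\rho(\lambda) = \psi([\lambda])$ has odd order. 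On the other hand $\rho(\lambda)$ commutes with the traceless element $\rho(\mu)$, so it lies in the maximal torus containing $\rho(\mu)$, which meets the torus containing $\psi(\pi_1\widetilde X)$ only in $\{\pm1\}$; thus $\rho(\lambda) = \pm1$. Being simultaneously of odd order and equal to $\pm 1$, we get $\rho(\lambda) = 1$.

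With $\rho$ in hand the lemma follows immediately: $\rho(\mu)$ is traceless, so $\rho(\mu)^4 = (\rho(\mu)^2)^2 = (-1)^2 = 1$, and therefore $\rho(\mu^4\lambda) = \rho(\mu)^4\rho(\lambda) = 1$. Hence $\rho$ factors through $\pi_1(S^3_4(K))$, producing an irreducible $SU(2)$ representation of $S^3_4(K)$ and contradicting the hypothesis. So $\det(K) = 1$.

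The main obstacle is organizing the construction of $\rho$ cleanly — in particular reconciling the constraint $\rho(\mu)^2 = -1$ with the fact that $H_1(\Sigma_2(K))$ has odd order (so one cannot merely pull back a $\mathbb{Z}/p$-valued character of $H_1(\Sigma_2(K))$; the free summand generated by the lifted meridian is essential), and pinning down $\rho(\lambda) = +1$ rather than $-1$. Both points are classical and could instead be imported wholesale from Klassen's work on $SU(2)$ representations of knot groups, which records exactly that $\det(K) \ne 1$ yields such a representation with trivial longitude holonomy; since our surgery coefficient is exactly $4$, this is all that is needed.
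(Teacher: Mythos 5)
Your proof is correct and takes essentially the same approach as the paper: produce a nonabelian binary dihedral representation $\rho$ of the knot group when $\det(K) > 1$, observe that $\rho(\mu)$ must be traceless (hence $\rho(\mu)^4 = 1$) and that $\rho(\lambda) = 1$, and conclude that $\rho$ descends to an irreducible $SU(2)$ representation of $\pi_1(S^3_4(K))$. You reconstruct Klassen's representation from scratch via the double branched cover, whereas the paper simply cites Klassen's theorem for the existence of $\tfrac{1}{2}(\det(K)-1)$ conjugacy classes of such representations; both are fine, yours just does more work. The one place where your argument and the paper's genuinely diverge is the proof that $\rho(\lambda) = 1$: the paper observes that $\lambda$ lies in the second commutator subgroup of the knot group, while the second derived subgroup of $D_\infty$ is trivial, which gives $\rho(\lambda)=1$ immediately with no homology calculation. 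Your route — showing $\psi([\lambda])$ has odd order and separately that $\rho(\lambda) = \pm 1$ by commutativity — works, but is more roundabout than needed. In fact the lifted longitude is null-homologous in $\widetilde{X}$ outright: each of the two disjoint lifts of a Seifert surface to the double cyclic cover bounds its own single lift of $\lambda$, so your phrase that ``$\lambda$ and $\tau\lambda$ cobound a lift of a Seifert surface'' is a slight misstatement. The stronger conclusion $[\lambda] = 0$ holds, giving $\rho(\lambda) = \psi([\lambda]) = 1$ at once and bypassing the eigenspace and odd-order considerations entirely.
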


\begin{proof}
Suppose for a contradiction that $\det(K) > 1$.  Klassen \cite[Theorem~10]{klassen} proved that there are \[\frac{1}{2}(\det(K)-1) > 0\] conjugacy classes of nonabelian homomorphisms \[\rho: \pi_1(S^3\ssm K) \to SU(2)\] with image in the binary dihedral group \[D_\infty = \{e^{i\theta}\} \cup \{e^{i\theta}j\},\] where here we view $SU(2)$ as the unit quaternions.  Letting $\rho$ be any such homomorphism, we  observe that $\rho(\mu)$ cannot lie in the normal subgroup $\{e^{i\theta}\}$; otherwise, since the meridian $\mu$ normally generates $\pi_1(S^3\ssm K)$, this would force the image of $\rho$ to be abelian. So $\rho(\mu)$ is a purely imaginary quaternion. We therefore have that $\rho(\mu^2)=-1$ and thus $\rho(\mu^4)=1$.  We also claim that $\rho(\lambda)=1$; indeed, the longitude $\lambda$ belongs to the second commutator subgroup of $\pi_1(S^3\ssm K)$, so its image lies in $D_\infty'' = \{e^{i\theta}\}' = \{1\}$.  Putting all of this together, we have that  $\rho(\mu^4\lambda) = 1$, so $\rho$ induces a representation $\pi_1(S^3_4(K)) \to SU(2)$ with the same image as $\rho$, and this is nonabelian, a contradiction.
\end{proof}

\begin{proposition} \label{prop:34-surgery}
Suppose  $K \subset S^3$ is a nontrivial knot of genus $g$.  Then 
\begin{itemize}
\item $S^3_4(K)$ is not $SU(2)$-abelian, and 
\item $S^3_3(K)$ is not  $SU(2)$-abelian unless $K$ is  fibered and strongly quasipositive and $g=2$.
\end{itemize}
\end{proposition}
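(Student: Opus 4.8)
The plan is to handle the two claims separately, using Corollary~\ref{cor:prime-power-numerator} (valid since $4=2^2$ and $3$ are prime powers) to reduce each to a statement about instanton L-spaces, and then to invoke the structural results about instanton L-space knots together with Lemma~\ref{lem:4-surgery-det-1}.

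First I would prove the statement about $S^3_4(K)$. Suppose $S^3_4(K)$ is $SU(2)$-abelian. Then by Corollary~\ref{cor:prime-power-numerator}, $S^3_4(K)$ is an instanton L-space, so $K$ is a nontrivial instanton L-space knot. By Theorem~\ref{thm:main-surgery-reduction} (or directly Theorems~\ref{thm:l-space-knots-are-fibered} and \ref{thm:bound}), we must have $4 \geq 2g(K)-1$, i.e.\ $g(K) \leq 2$, and also $S^3_r(K)$ is an instanton L-space exactly for $r \geq 2g(K)-1$; in particular $4$ being an L-space slope forces $2g(K)-1 \leq 4$. If $g(K)=1$, then $K$ is the right-handed trefoil by Corollary~\ref{cor:genus-1-l-space}, and $\det(T_{2,3}) = 3 \neq 1$, contradicting Lemma~\ref{lem:4-surgery-det-1}. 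If $g(K)=2$, then $K$ is fibered and strongly quasipositive of genus $2$; here I would appeal to the fact that such a knot has $\det(K) = |\Delta_K(-1)|$ determined by its Alexander polynomial, and since strongly quasipositive fibered knots of genus $2$ have monic Alexander polynomial of degree $4$, I need to rule out $\det(K)=1$ being compatible with $S^3_4(K)$ being an L-space. Actually the cleaner route: Lemma~\ref{lem:4-surgery-det-1} requires $\det(K)=1$ regardless of genus, so I would instead show directly that an instanton L-space knot $K$ with $4$ an L-space slope and $\det(K)=1$ cannot exist, or more simply use that $g(K)=2$ forces $2g(K)-1=3$, so by Theorem~\ref{thm:bound} every $r \geq 3$ is an L-space slope and in particular the L-space condition gives no further constraint --- so the real obstruction must come from $\det(K)=1$. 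Since an instanton L-space knot is fibered with monic symmetric Alexander polynomial, and Lim's theorem (Theorem~\ref{thm:lim}) relates $\Delta_K$ to Euler characteristics, I would extract that $\Delta_K(-1) \equiv 1 \pmod{?}$ or argue via the known classification; the point I expect to lean on is simply that no genus-$2$ fibered strongly quasipositive knot has determinant $1$ — e.g.\ $T_{2,5}$ has $\det = 5$, and more generally one checks $\Delta_K(-1) \neq \pm 1$ for the relevant monic degree-$4$ polynomials. I will need to be careful here: the honest argument is that Lemma~\ref{lem:4-surgery-det-1} already gives the contradiction in the $g=1$ case, and for $g=2$ I should combine it with a determinant computation. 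This determinant bookkeeping for the $g=2$ case is the main obstacle.

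Next I would prove the statement about $S^3_3(K)$. Suppose $S^3_3(K)$ is $SU(2)$-abelian but $K$ is \emph{not} (fibered and strongly quasipositive with $g=2$). By Corollary~\ref{cor:prime-power-numerator}, $S^3_3(K)$ is an instanton L-space, so $K$ is a nontrivial instanton L-space knot; by Theorem~\ref{thm:l-space-knots-are-fibered} and Theorem~\ref{thm:sqp}, $K$ is fibered and strongly quasipositive, so the only way the exclusion fails is $g(K) \neq 2$. By Theorem~\ref{thm:bound}, $3$ being an L-space slope forces $3 \geq 2g(K)-1$, i.e.\ $g(K) \leq 2$, so $g(K) = 1$. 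Then $K$ is the right-handed trefoil by Corollary~\ref{cor:genus-1-l-space}. It remains to show $S^3_3(T_{2,3})$ is not $SU(2)$-abelian. This is the residual computation: I would exhibit an irreducible representation $\pi_1(S^3 \ssm T_{2,3}) \to SU(2)$ with $\rho(\mu^3\lambda) = 1$, following the same quaternionic computation as in the $r=2$ case of Proposition~\ref{prop:cyclic-2-3} — the trefoil group has a one-parameter family of irreducible $SU(2)$ characters, and for a suitable choice $\rho(\mu)$ is a primitive $6$th root giving $\rho(\lambda) = \rho(\mu)^{-3}$ or similar, so that $\mu^3\lambda$ maps to $1$. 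Alternatively, one notes $S^3_3(T_{2,3})$ is a small Seifert fibered space whose fundamental group is known to be non-cyclic and admits an $SU(2)$ representation with non-abelian image, via Klassen's or the Seifert-fibered structure arguments. I would spell out the explicit representation to keep the proof self-contained.

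To summarize the structure: (1) reduce both cases to "$K$ is a nontrivial instanton L-space knot" via Corollary~\ref{cor:prime-power-numerator}; (2) apply Theorems~\ref{thm:l-space-knots-are-fibered}, \ref{thm:sqp}, \ref{thm:bound} to conclude $K$ is fibered, strongly quasipositive, with $g(K) \leq 2$ (for $r=3$) or $g(K) \leq 2$ with no L-space obstruction (for $r=4$, since $2g-1 \leq 3 < 4$ always); (3) in the genus-$1$ case identify $K = T_{2,3}$ via Corollary~\ref{cor:genus-1-l-space} and then either invoke $\det(T_{2,3})=3$ with Lemma~\ref{lem:4-surgery-det-1} (for $r=4$) or exhibit an explicit irreducible representation of $\pi_1(S^3_3(T_{2,3}))$ (for $r=3$); (4) in the genus-$2$ case for $r=4$, derive the contradiction from Lemma~\ref{lem:4-surgery-det-1} together with the fact that no genus-$2$ fibered strongly quasipositive knot has determinant $1$ (using that $\Delta_K$ is monic of degree $4$ and evaluating at $-1$). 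I expect step (4) — pinning down the determinant obstruction in the genus-$2$ case for $4$-surgery — to be the step requiring the most care, since it is the only place where the L-space machinery alone does not immediately close the argument and one must bring in the additional input of Lemma~\ref{lem:4-surgery-det-1} plus a concrete fact about Alexander polynomials of genus-$2$ strongly quasipositive fibered knots.
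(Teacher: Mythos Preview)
Your overall architecture (steps (1)--(3)) matches the paper's proof closely, including the reduction via Corollary~\ref{cor:prime-power-numerator}, the identification of the genus-1 case with the right-handed trefoil via Corollary~\ref{cor:genus-1-l-space}, and the use of Lemma~\ref{lem:4-surgery-det-1} (for $r=4$) and an explicit fundamental-group argument (for $r=3$) to dispose of the trefoil. The paper handles $S^3_3(T_{2,3})$ by noting that its fundamental group is the binary tetrahedral group, already a nonabelian subgroup of $SU(2)$, which is a cleaner version of what you sketch.

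However, your step (4) has a real gap. You propose to finish the $r=4$, $g=2$ case by showing that no genus-2 fibered strongly quasipositive knot has determinant $1$, arguing ``using that $\Delta_K$ is monic of degree $4$ and evaluating at $-1$.'' This arithmetic does \emph{not} give a contradiction: writing $\Delta_K(t)=at^2+bt+(1-2a-2b)+bt^{-1}+at^{-2}$ with $a=\pm 1$, one computes $\Delta_K(-1)=1-4b$, so $\det(K)=1$ is perfectly consistent with $b=0$, yielding $\Delta_K(t)=\pm(t^2+t^{-2})+(1\mp 2)$. There is no purely Alexander-polynomial obstruction to $\det(K)=1$ here, and it is not a standard fact (nor does the paper claim) that no such knot exists.

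The paper's actual argument for this case is genuinely different and uses instanton Floer homology a second time. From $\det(K)=1$ one deduces $b=0$ as above; then Lim's theorem (Theorem~\ref{thm:lim}) computes the graded Euler characteristics of the eigenspaces $I_*(S^3_0(K),\hat\Sigma,j)_\mu$ and, together with the $\mu(\hat\Sigma),\mu(\pt)$ eigenspace symmetry and Corollary~\ref{cor:connected-sum-low-genus}, gives $\dim I^\#(S^3_0(K),\mu)\geq 8$. On the other hand, the L-space hypothesis together with Proposition~\ref{prop:first-l-space-slope} and Proposition~\ref{prop:2-surgery-l-space} forces $\dim I^\#_\godd(S^3_0(K),\mu)=3$, hence $\dim I^\#(S^3_0(K),\mu)=6$, and this is the contradiction. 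You correctly flagged step (4) as the delicate point, but the mechanism you propose would not work; the missing idea is this dimension comparison via Lim's theorem.
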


\begin{proof}
Suppose that $S^3_r(K)$ is $SU(2)$-abelian for $r$ equal to either $3$ or $4$.  Then Corollary~\ref{cor:prime-power-numerator} says that $S^3_r(K)$ is an instanton L-space, so by Theorem~\ref{thm:main-surgery-reduction}, $K$ is fibered and strongly quasipositive and $2g-1 \leq r < 5$, i.e., $g \leq 2$.

To rule out $g=1$, we note that by Corollary~\ref{cor:genus-1-l-space}, $K$ would have to be the right-handed trefoil.  But $S^3_4(T_{2,3})$ is not $SU(2)$-abelian by Lemma~\ref{lem:4-surgery-det-1} since $\det(T_{2,3})=3$, and neither is $S^3_3(K)$ since $\pi_1(S^3_3(T_{2,3}))$ is the binary tetrahedral group (see \cite[\S10.D]{rolfsen}), which is already a nonabelian subgroup of $SU(2)$.  Thus $g=2$.

In the case $r=3$ there is nothing left to prove, so we may now assume that $r=4$. Again by Lemma~\ref{lem:4-surgery-det-1}, we must have $\det(K)=1$.  Note that $K$ is not the right-handed trefoil since $g=2$, which means that $S^3_1(K)$ and $S^3_2(K)$ are not instanton L-spaces, by Proposition \ref{prop:2-surgery-l-space}. This implies by Proposition~\ref{prop:first-l-space-slope} that \[\dim I^\#_\godd(S^3_0(K),\mu)\geq 3.\] But  $g=2$ also implies that $S^3_3(K)$ is also an instanton L-space, by Theorem \ref{thm:main-surgery-reduction}, in which case another application of  Proposition~\ref{prop:first-l-space-slope} tells us that the inequality above is an equality. Since $\chi(I^\#(S^3_0(K),\mu)) = 0$ by Proposition \ref{prop:euler},  we have
\[ \dim I^\#(S^3_0(K),\mu) = 2\dim I^\#_\godd(S^3_0(K),\mu) = 6. \]
Thus, to demonstrate a contradiction, it will suffice to show that $\dim I^\#(S^3_0(K),\mu) \geq 8$.

Since $K$ is fibered of genus 2, the Alexander polynomial of $K$ has the form
\[ \Delta_K(t) = at^2 + bt + (1-2a-2b) + bt^{-1} + at^{-2} \]
for some integers $a=\pm1$ and $b$.  We compute from this that \[\det(K) = |\Delta_K(-1)| = |1-4b|,\] and since $\det(K)=1$ we must have $b=0$.  Letting $\hat\Sigma \subset S^3_0(K)$ be a capped-off Seifert surface for $K$ with $g(\hat\Sigma)=2$, we now apply Theorem~\ref{thm:lim} to see that the  Euler characteristics of the $(2j,2)$-eigenspaces of $\mu(\hat\Sigma),\mu(\pt)$ acting on $I_*(S^3_0(K))_\mu$ satisfy
\[ \sum_{j=-1}^1 \chi(I_*(S^3_0(K),\hat\Sigma,j)_\mu)t^j = \frac{\Delta_K(t)-1}{t-2+t^{-1}} = a(t + 2 + t^{-1}). \]
The $(\pm 2,2)$-eigenspaces therefore have dimension at least $1$ each, and the $(0,2)$-eigenspace has dimension at least $2$. Since these eigenspaces are isomorphic to the $(\pm 2 \dot i, -2)$-eigenspaces and to the $(0,-2)$-eigenspace, respectively, by Lemma \ref{lem:symmetry}, we conclude that \[\dim I_*(S^3_0(K))_\mu \geq 8.\] Since $g(\hat\Sigma)=2$ and $\mu\cdot \hat\Sigma = \pm 1$, we may apply Corollary~\ref{cor:connected-sum-low-genus} to see that \[\dim I^\#(S^3_0(K),\mu) = \dim I_*(S^3_0(K))_\mu \geq 8,\] which is the desired contradiction.
\end{proof}

\begin{proof}[Proof of Theorem~\ref{thm:main-34-surgery}]
The slopes $r\in (2,3)$ are handled by Proposition~\ref{prop:cyclic-2-3}, while Proposition~\ref{prop:34-surgery} addresses $r=3$ and $r=4$.
\end{proof}

\section{$A$-polynomials of torus knots} \label{sec:a-polynomial}

\subsection{Basic properties of $A$-polynomials}
The goal of this section is to use our preceding results to prove that a slight  enhancement of the $A$-polynomial detects infinitely many torus knots, including the trefoil, per Theorem \ref{thm:main-a-poly}.
 We begin by recalling the definition of the $A$-polynomial $A_K(M,L)$ by Cooper, Culler, Gillet, Long, and Shalen \cite{ccgls}.  

 Let $X(K)$ denote the variety of characters of representations
\[ \pi_1(S^3\ssm N(K)) \to SL(2,\C), \]
and let $X(\partial N(K))$ be the $SL(2,\C)$ character variety of the boundary torus, with restriction map
\[ i^*: X(K) \to X(\partial N(K)). \]
The representation variety $\Hom(\pi_1(T^2),SL(2,\C))$ has a subvariety $\Delta$ of diagonal representations, with a branched double covering
\[ t: \C^\ast \times \C^\ast \xrightarrow{\sim} \Delta \twoheadrightarrow X(\partial N(K)) \]
sending a pair $(M,L)$ to the character of the representation $\rho$ with
\begin{align*}
\rho(\mu) &= \begin{pmatrix} M & 0 \\ 0 & M^{-1} \end{pmatrix}, &
\rho(\lambda) &= \begin{pmatrix} L & 0 \\ 0 & L^{-1} \end{pmatrix}.
\end{align*}
We let $V \subset X(\partial N(K))$ be the union of the closures $\overline{i^*(X)}$ as $X$ ranges over irreducible components of $X(K)$ such that $i^*(X)$ has complex dimension $1$.  Then
\[ \overline{V}(K) = t^{-1}(V) \subset \C^\ast \times \C^\ast \]
is an algebraic plane curve, and we take
\[ A_K(M,L) \in \C[M^{\pm 1},L^{\pm 1}] \]
to be its defining polynomial.  It is normalized to have integer coefficients and no repeated factors, and is well-defined up to multiplication by powers of $M$ and $L$.

The $A$-polynomial of any knot $K\subset S^3$ always has a factor of $L-1$.  This corresponds to a 1-dimensional curve $X_{\mathrm{red}}$ of characters of reducible representations $\rho$ such that $\rho(\mu) = \left(\begin{smallmatrix} M & 0 \\ 0 & M^{-1} \end{smallmatrix}\right)$ and $\rho(\lambda) = \left(\begin{smallmatrix} 1 & 0 \\ 0 & 1 \end{smallmatrix}\right)$ as $M$ ranges over $\C^\ast$; clearly $t^{-1}(\overline{i^*(X_{\mathrm{red}})}) = \C^\ast \times \{1\} = \{L = 1\}$.  We will work with a slight modification of the $A$-polynomial, following Ni--Zhang \cite{ni-zhang}: we define
\[ \Airr_K(M,L) \in \Z[M^{\pm1},L^{\pm1}] \]
to be the defining polynomial of
\[ t^{-1}\left(\bigcup_{X\neq X_{\mathrm{red}}} \overline{i^*(X)}\right) \subset \C^\ast \times \C^\ast, \]
where we take the union over irreducible components $X \neq X_{\mathrm{red}}$ for which $\dim_\C i^*(X)=1$; the difference is that we now explicitly exclude $X_{\mathrm{red}}$, and so there may not be a factor of $L-1$.  By convention we take $\Airr_U(M,L)=1$.

\begin{remark}
$\Airr_K(M,L)$ is equal to either $A_K(M,L)$ or $A_K(M,L)/(L-1)$, depending respectively on whether some  component containing irreducibles contributes a factor of $L-1$ or not.
\end{remark}

Using work of Kronheimer and Mrowka \cite{km-su2}, Dunfield--Garoufalidis \cite{dunfield-garoufalidis} and Boyer--Zhang \cite{boyer-zhang} proved the following.

\begin{theorem} \label{thm:a-unknot}
If $K$ is not the unknot, then $\Airr_K(M,L)$ has an irreducible factor other than $L-1$.  Thus $A_K(M,L)=L-1$ iff $K$ is unknotted.
\end{theorem}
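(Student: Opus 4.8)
\textbf{Proof proposal for Theorem~\ref{thm:a-unknot}.} The plan is to recall the argument of Dunfield--Garoufalidis \cite{dunfield-garoufalidis} and Boyer--Zhang \cite{boyer-zhang} and explain how it rests on Theorem~\ref{thm:km-su2}. The point is that Theorem~\ref{thm:km-su2} furnishes, for a nontrivial knot $K$, an abundance of irreducible $SU(2)$ representations of surgery groups, and these can be pulled back to irreducible representations $\pi_1(S^3\setminus K)\to SU(2)\hookrightarrow SL(2,\mathbb{C})$ whose restrictions to the peripheral torus have varying eigenvalues, which in turn forces the image curve $\overline{i^*(X)}$ of some non-abelian component $X$ to be one-dimensional and not contained in $\{L=1\}$.

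First I would spell out the mechanism producing peripheral representations. For each slope $r=\tfrac{p}{q}\in(0,2]$ with $q>0$ (or more simply each $r=\tfrac 1n$ for $n$ large), Theorem~\ref{thm:km-su2} gives an irreducible $\rho_r\colon\pi_1(S^3_r(K))\to SU(2)$; composing with the surjection $\pi_1(S^3\setminus K)\twoheadrightarrow \pi_1(S^3_r(K))$ yields an irreducible $\tilde\rho_r\colon\pi_1(S^3\setminus K)\to SU(2)$ whose peripheral image kills $\mu^p\lambda^q$. After conjugating so the peripheral subgroup is diagonal, this gives a point $(M_r,L_r)$ with $M_r^pL_r^q=1$ and $|M_r|=|L_r|=1$, lying on some component of the (closure of the) eigenvalue variety other than the line $L=1$ — the latter because $\tilde\rho_r$ is non-abelian, so $\tilde\rho_r(\lambda)\neq\pm I$ cannot be forced to be trivial, and indeed by the standard fact that a non-abelian $SU(2)$ representation of a knot group sends $\lambda$ to something with $L_r\neq 1$ when $M_r\neq\pm1$ (one handles $M_r=\pm 1$ separately). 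Since the slopes $r$ form an infinite set and the constraint $M_r^pL_r^q=1$ varies with $r$, the points $(M_r,L_r)$ cannot all lie on finitely many points, so they accumulate on a genuine one-dimensional component of $\bigcup_{X\ne X_{\mathrm{red}}}\overline{i^*(X)}$. This shows $\overline V(K)$ has a component other than $\{L=1\}$, i.e.\ $\Airr_K(M,L)$ is not the constant $1$ (equivalently $A_K\ne L-1$) for $K$ nontrivial. Conversely, for the unknot the character variety is well understood and $A_U=L-1$, so $\Airr_U=1$; this direction is elementary.

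The one genuinely delicate point — the main obstacle — is upgrading ``there are many points $(M_r,L_r)$'' to ``some positive-dimensional component contains irreducibles and is not $\{L=1\}$''. This requires knowing that the points $(M_r,L_r)$ actually come from irreducible characters whose components $X$ have $\dim_{\mathbb C}i^*(X)=1$; a priori a component $X$ with irreducibles could restrict to a point on the boundary torus. Here one invokes the structure theory of $X(K)$: by Culler--Shalen/Boyer--Zhang, an irreducible component $X$ of $X(K)$ containing the character of an irreducible representation has $\dim_{\mathbb{C}}X\ge 1$, and if $i^*$ collapsed $X$ to a point then all representations in $X$ would have central peripheral image, contradicting (for infinitely many of our $\tilde\rho_r$, whose peripheral images are non-central) the fact that these characters are Zariski-dense enough in the relevant components; a counting/dimension argument then shows some such $X$ must satisfy $\dim i^*(X)=1$. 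I would cite \cite{dunfield-garoufalidis,boyer-zhang} for this input rather than reprove it, and note that the only role of our work here is the black box Theorem~\ref{thm:km-su2} (equivalently Theorem~\ref{thm:main-surgery} applied to slopes with prime-power numerator, which by Remark~\ref{rmk:dense} already suffices to produce infinitely many peripherally-distinct representations). The remaining bookkeeping — that distinct slopes give peripherally distinct characters, and the separate treatment of $M=\pm1$ — is routine.
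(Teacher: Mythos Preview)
The paper does not give its own proof of this theorem; it merely attributes the result to Dunfield--Garoufalidis and Boyer--Zhang (using Kronheimer--Mrowka's Theorem~\ref{thm:km-su2}) and moves on. Your outline is essentially the Dunfield--Garoufalidis argument, and the overall strategy is sound: produce, via Theorem~\ref{thm:km-su2}, irreducible $SU(2)$ representations $\tilde\rho_r$ with $\tilde\rho_r(\mu^p\lambda^q)=1$ for infinitely many slopes $r=p/q$, observe that these give infinitely many distinct points in $X(\partial N(K))$, and use that $X(K)$ has only finitely many irreducible components to force some $X\ne X_{\mathrm{red}}$ with $\dim i^*(X)=1$ whose image is not $\{L=1\}$.

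That said, two of your supporting claims are incorrect as stated and should be replaced. First, the ``standard fact'' that a non-abelian $SU(2)$ representation of a knot group has $L_r\neq 1$ whenever $M_r\neq\pm1$ is not a standard fact, and you do not need it. What you actually need, and what is true, is that for slopes $r=1/n$ the constraint $M_nL_n^n=1$ together with $L_n=1$ would force $M_n=1$, hence $\tilde\rho_n(\mu)=I$ in $SU(2)$, contradicting irreducibility; this is enough to rule out the component being $\{L=1\}$. Second, ``if $i^*$ collapsed $X$ to a point then all representations in $X$ would have central peripheral image'' is false: constant boundary character does not mean central image. The correct (and simpler) argument is the pigeonhole you hint at elsewhere: $X(K)$ is an affine variety with finitely many irreducible components, so the infinitely many peripherally-distinct characters $\chi_r$ force some component $X$ containing irreducibles to have $i^*(X)$ infinite, hence one-dimensional. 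With these two fixes your sketch is a faithful summary of the cited proof.
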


Our goal is to prove similar results characterizing torus knots in terms of their A-polynomials.  We begin with the following computation.

\begin{proposition} \label{prop:a-torus-knot}
If $K$ is the $(p,q)$-torus knot, then $\Airr_K(M,L)$ divides $M^{2pq}L^2-1$.
\end{proposition}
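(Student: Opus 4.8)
The $(p,q)$-torus knot $K=T_{p,q}$ has knot group with the well-known presentation
\[
\pi_1(S^3\ssm N(K)) = \langle u,v \mid u^p = v^q \rangle,
\]
in which the central element $z=u^p=v^q$ generates the center, and the peripheral subgroup is generated by the meridian $\mu$ and longitude $\lambda$ with $\lambda = z^2 \mu^{-pq}$ (up to the usual conventions; one may absorb signs by orienting $K$ so $pq>0$). The plan is to run through the $SL(2,\mathbb C)$ character variety of this group component by component and read off the restriction to the boundary torus directly from the relation $\lambda = z^2\mu^{-pq}$.

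First I would dispose of the abelian components. On $X_{\mathrm{red}}$ we have $\rho(\lambda)=I$, which is excluded from $\Airr_K$; so any curve contributing to $\Airr_K(M,L)$ is the closure of $i^*(X)$ for a positive-dimensional image of an irreducible component $X$ of $X(K)$ containing an irreducible representation. For such a representation $\rho$, the central element $z$ must go to $\pm I$ (an irreducible representation of a group with nontrivial center sends that center to scalars, and in $SL(2,\mathbb C)$ the only central scalars are $\pm I$). Therefore $\rho(z^2)=I$ on any irreducible representation, and hence on the whole component $X$ by continuity (being a Zariski-closed condition satisfied on a dense subset). Consequently $\rho(\lambda) = \rho(z^2)\rho(\mu)^{-pq} = \rho(\mu)^{-pq}$ on all of $X$.

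Now I would translate this into a constraint on $(M,L)$. Under the branched cover $t\colon \mathbb C^\ast\times\mathbb C^\ast \to X(\partial N(K))$ sending $(M,L)$ to the character with $\rho(\mu)=\mathrm{diag}(M,M^{-1})$ and $\rho(\lambda)=\mathrm{diag}(L,L^{-1})$, the relation $\rho(\lambda)=\rho(\mu)^{-pq}$ pulls back (on the locus where $\rho(\mu)$ is diagonalizable, which is a dense open subset of any such component since the image is positive-dimensional, so some representative has $\rho(\mu)\neq\pm I$) to the equation $L = M^{-pq}$, i.e. $M^{pq}L = 1$ — or, allowing for the two preimages under the double cover and the sign ambiguity coming from $z\mapsto -I$ versus $z\mapsto I$ on different components, to $M^{pq}L=\pm1$. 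In either case $(M,L)$ satisfies $M^{2pq}L^2 = 1$. Hence $t^{-1}\big(\bigcup_{X\neq X_{\mathrm{red}}}\overline{i^*(X)}\big)$ is contained in the zero locus of $M^{2pq}L^2-1$, and since $\Airr_K(M,L)$ is the reduced defining polynomial of that locus, it divides $M^{2pq}L^2-1$.

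\textbf{Main obstacle.} The routine parts are the presentation of the torus knot group, the longitude formula, and the polynomial bookkeeping. The one point requiring care is the sign/branching analysis: keeping straight whether the curve is $\{M^{pq}L=1\}$, $\{M^{pq}L=-1\}$, or a union of both, and confirming that in every case the product lands inside $\{M^{2pq}L^2=1\}$ — this is where one must be attentive to the factor of $2$ in the branched double cover $t$ and to the possibility $\rho(z)=-I$. I expect this to cause no real difficulty, but it is the step that must be written carefully to make the divisibility statement (rather than a weaker containment up to units) precise.
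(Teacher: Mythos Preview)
Your approach is the same as the paper's: both exploit that the Seifert fiber is central in $\pi_1(S^3\ssm N(T_{p,q}))$, hence maps to $\pm I$ under any irreducible $SL(2,\C)$ representation, and then read off the constraint on $(M,L)$. The paper phrases this directly in terms of the peripheral element $\sigma=\mu^{pq}\lambda$ rather than going through the presentation $\langle u,v\mid u^p=v^q\rangle$, but the content is identical.

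There is one error to fix. The longitude formula is $\lambda = z\,\mu^{-pq}$ (equivalently $\mu^{pq}\lambda=z$), not $\lambda=z^2\mu^{-pq}$. You can check this against the trefoil: the irreducible component of $X(T_{2,3})$ gives $M^6L=-1$, which is incompatible with your formula (since $\rho(z^2)=I$ would force $M^6L=1$). With the correct relation the argument is actually cleaner than what you wrote: $\rho(z)=\pm I$ gives $M^{pq}L=\pm1$ directly on each component containing irreducibles, so $\Airr_K(M,L)$ divides $(M^{pq}L-1)(M^{pq}L+1)=M^{2pq}L^2-1$. The ``sign ambiguity'' you introduced by hand is then genuine (coming from $\rho(z)=I$ vs.\ $-I$ on different components), not an artifact of the branched cover $t$ --- note that $(M,L)$ and $(M^{-1},L^{-1})$ satisfy $M^{pq}L=c$ simultaneously, so $t$ itself contributes no extra sign. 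Your final divisibility conclusion survives the error, but the intermediate reasoning should be corrected.
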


\begin{proof}
The knot complement is Seifert fibered, with generic fiber $\sigma = \mu^{pq}\lambda$ being central in $\pi_1(S^3 \ssm N(K))$.  Its image under any representation
\[ \rho: \pi_1(S^3 \ssm N(K)) \to SL(2,\C) \]
commutes with the entire image of $\rho$.  If we assume that $\rho$ sends the peripheral subgroup $\langle \mu,\lambda\rangle$ to diagonal matrices, then $\rho(\sigma)$ is diagonal, and if any other element $\rho(g)$ is not diagonal then this forces $\rho(\sigma)=\pm I$.  In other words, we have $\rho(\mu^{pq}\lambda)=\pm I$ unless $\rho$ is reducible, in which case the character $\tr(\rho)$ lies in $X_{\mathrm{red}}$.

If $X \subset X(K)$ is an irreducible component other than $X_{\mathrm{red}}$ satisfying $\dim i^*(X) = 1$, then all but at most finitely many $\chi \in X$ are the characters of irreducible representations $\rho$, so that $\chi(\mu^{pq}\lambda)$ must be identically either $2$ or $-2$ on $X$, corresponding to $\rho(\mu^{pq}\lambda) = I$ or $-I$ respectively.  But then $t^{-1}(\overline{i^*(X)})$ lies in the zero set of either $M^{pq}L-1$ or $M^{pq}L+1$, from which the proposition follows.
\end{proof}

\begin{remark}By \cite[Proposition~2.7]{ccgls}, the $A$-polynomial of $T_{p,q}$ has a factor of $M^{pq}L+1$, so in light of Proposition~\ref{prop:a-torus-knot}, the only ambiguity in $\Airr_{T_{p,q}}(M,L)$ is whether it  also contains a factor of $M^{pq}L-1$.\end{remark}

For any knot $K \subset S^3$, we will let $\cN(K)$ denote the Newton polygon in the $(L,M)$-plane of the polynomial $\Airr_K(M,L)$.  This is the convex hull of all points $(a,b) \in \Z^2$ such that the monomial $M^bL^a$ has nonzero coefficient in $\Airr_K(M,L)$; it is well-defined up to translation.  We can reinterpret Theorem~\ref{thm:a-unknot} as the statement that $\cN(K)$ is not a single point unless $K$ is the unknot.

\begin{definition} \label{def:a-thin}
We say that a nontrivial knot $K \subset S^3$ is \emph{$r$-thin} for some $r\in\Q$ if $\cN(K)$ is contained in a line segment of slope $r$.
\end{definition}

\begin{example}
Proposition~\ref{prop:a-torus-knot} says that $\cN(T_{p,q})$ is contained in the line segment from $(0,0)$ to $(2,2pq)$, of slope $pq$, so $T_{p,q}$ is $pq$-thin.
\end{example}

\begin{proposition} \label{prop:hyperbolic-thick}
If $K \subset S^3$ is a hyperbolic knot then $K$ is not $r$-thin for any $r$.
\end{proposition}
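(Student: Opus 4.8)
The plan is to use the theory of the $A$-polynomial's relationship to boundary slopes and Thurston's hyperbolic Dehn surgery theory (or, alternatively, Culler--Shalen theory of ideal points of curves of characters). The key point is that for a hyperbolic knot, the geometric component $X_0 \subset X(K)$ containing the character of a discrete faithful representation is one-dimensional, and its image $i^*(X_0)$ under restriction to the boundary torus is genuinely one-dimensional and is \emph{not} contained in a line $\{M^aL^b = \zeta\}$ for any $a,b,\zeta$. Equivalently, the Newton polygon $\cN(K)$ must have at least two distinct edge slopes. First I would recall the standard fact (see \cite{ccgls} and the Culler--Shalen machinery) that the slopes of the edges of $\cN(K)$ are boundary slopes of essential surfaces in $S^3 \setminus N(K)$, arising as the slopes detected at ideal points of curves in $X(K)$. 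If $K$ were $r$-thin, then $\cN(K)$ would be a segment of slope $r$, so the only boundary slope detected by the $A$-polynomial would be $r$.

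The main argument would then run as follows. For a hyperbolic knot, the geometric component $X_0$ is a curve in $X(K)$ whose restriction to the peripheral torus is a one-dimensional curve $\overline{i^*(X_0)}$ in $X(\partial N(K))$. By Thurston's Dehn surgery theorem, $X_0$ contains characters of the holonomies of all but finitely many Dehn fillings, so the function on $\overline{V}(K) \supset t^{-1}(\overline{i^*(X_0)})$ that records the filling slope takes infinitely many values; in particular the curve $t^{-1}(\overline{i^*(X_0)})$ cannot be a single coset of a one-parameter subgroup $\{M^aL^b = \zeta\}$, since such a coset determines a single slope. Concretely, if $t^{-1}(\overline{i^*(X_0)})$ were contained in $\{M^aL^b = \zeta\}$, then the eigenvalue of $\rho(\mu^a\lambda^b)$ would be the constant $\zeta$ along $X_0$, forcing $\rho(\mu^a\lambda^b) = \pm I$ (being a constant of trace $\pm(\zeta+\zeta^{-1})$ whose eigenvalues don't move) for all irreducible $\rho$ in $X_0$; but then the $(a,b)$-filling would kill the image of the geometric representation for a Zariski-dense set, contradicting that infinitely many fillings are hyperbolic with nonabelian (indeed dense) holonomy. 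Hence $\cN(K)$, which contains the contribution of the component $X_0$, cannot be contained in a single line, so $K$ is not $r$-thin for any $r$.

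Alternatively, and perhaps more cleanly, I would invoke the theorem that the boundary slopes appearing as slopes of sides of $\cN(K)$ include at least two distinct values for any knot whose complement is not Seifert fibered or very special: by Culler--Shalen, an ideal point of the geometric curve $X_0$ gives a boundary slope which is the slope of a side of $\cN(K)$, and a hyperbolic knot complement contains essential surfaces with at least two distinct boundary slopes (e.g.\ from the deformation variety having ideal points, together with the fact that a one-sided Newton polygon would force all detected surfaces to share a slope, which is incompatible with the structure of the character variety of a hyperbolic knot). Either way the upshot is $\cN(K)$ has two non-parallel edges. I expect the main obstacle to be pinning down precisely which already-known statement about $A$-polynomials of hyperbolic knots to cite: the cleanest route is to quote the fact (essentially in \cite{ccgls}, refined by subsequent work) that for a hyperbolic knot the $A$-polynomial's Newton polygon has a vertex and at least two edges of distinct slopes — equivalently that a hyperbolic knot has at least two distinct boundary slopes detected by the $A$-polynomial, one of which is the longitude slope $0$ coming from the component containing reducibles' neighbors, and another strictly nonzero coming from the geometric component. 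Since $0 \ne r$ is automatic once we know $r \ne 0$, and $r = 0$ is ruled out because the $\mu$-eigenvalue is non-constant on $X_0$, we conclude $K$ is not $r$-thin.
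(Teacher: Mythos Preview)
Your overall strategy --- use the geometric component $X_0$ containing the discrete faithful character, and argue that $r$-thinness would force a nontrivial peripheral trace to be constant on $X_0$ --- is exactly what the paper does. The paper's proof is a two-line reduction: it cites \cite[Proposition~2.6]{ccgls} for the fact that on the geometric component the trace function $I_\gamma$ is nonconstant for \emph{every} nontrivial peripheral $\gamma$, and then observes that $\frac{p}{q}$-thinness means $\Airr_K(M,L)=f(M^pL^q)$ up to a monomial, so $M^pL^q$ (hence $I_{\mu^p\lambda^q}$) takes only finitely many values on $X_0$, contradicting nonconstancy on an irreducible curve.

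Your attempt to \emph{reprove} this nonconstancy via Thurston Dehn surgery contains a genuine error. From $M^aL^b=\zeta$ being constant along $t^{-1}(\overline{i^*(X_0)})$ you conclude that $\rho(\mu^a\lambda^b)=\pm I$ for irreducible $\rho$, but this does not follow: a constant eigenvalue $\zeta$ only pins down the trace $\zeta+\zeta^{-1}$, not the matrix, so nothing forces $\rho(\mu^a\lambda^b)$ to be central, and the subsequent claim that the $(a,b)$-filling ``kills the image'' is unjustified. The correct argument (implicit in \cite{ccgls} and what the paper invokes) is infinitesimal: at the complete structure the log-eigenvalues satisfy $v=\tau u+O(u^2)$ with cusp shape $\tau\notin\R$, so $au+bv$ has nonzero derivative for any real $(a,b)\neq(0,0)$, and hence $M^aL^b$ is nonconstant near that point. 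Your alternative boundary-slope route is viable in principle but is a longer path to the same CCGLS fact; once you know that fact, the paper's argument is immediate and you should just cite it.
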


\begin{proof}
This is essentially \cite[Proposition~2.6]{ccgls}.  The key observation is that $X(K)$ contains a 1-dimensional irreducible component $X$, one of whose points is the character of a discrete faithful representation, such that the function $I_\gamma = \tr(\rho(\gamma))$ is not constant on $X$ for any peripheral element $\gamma$.  If $K$ were $\frac{p}{q}$-thin then $\Airr_K(M,L)$ would be (up to a monomial factor) a polynomial of the form $f(M^pL^q)$ for some $f\in \Z[t]$.  But then $I_{M^pL^q}$ could only take finitely many values on $X$, corresponding to the roots of $f(t)$, and this is a contradiction.
\end{proof}

\subsection{$SU(2)$-averse knots}
Sivek and Zentner make the following definition in \cite{sivek-zentner}.

\begin{definition}
A nontrivial knot $K \subset S^3$ is \emph{$SU(2)$-averse} if the set
\[ \cS(K) = \left\{ \frac{p}{q} \in \Q \ \middle|\ S^3_{p/q}(K) \mathrm{\ is\ }SU(2)\mathrm{{-}abelian}\right\} \]
is infinite.
\end{definition}
One of the main theorems in \cite{sivek-zentner} is the following.
\begin{theorem} \label{thm:main-averse}
If $K$ is $SU(2)$-averse, then $\cS(K) \subset \R$ is bounded and has a single accumulation point $r(K)$, which is a rational number with $|r(K)| > 2$.  Supposing that $r(K)$ is positive, if we let
\[ n = \lceil r(K) \rceil - 1 \]
then $S^3_n(K)$ is an instanton L-space.
\end{theorem}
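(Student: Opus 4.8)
\textbf{Proof proposal for Theorem~\ref{thm:main-averse}.}

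Recall that the final statement to be proven is Theorem~\ref{thm:main-averse}: if $K$ is $SU(2)$-averse, then $\cS(K)\subset\R$ is bounded with a single accumulation point $r(K)\in\Q$ with $|r(K)|>2$, and, assuming $r(K)>0$ and setting $n=\lceil r(K)\rceil-1$, the surgery $S^3_n(K)$ is an instanton L-space. The plan is to combine the density of prime-power-numerator slopes with the fact (Corollary~\ref{cor:prime-power-numerator}) that an $SU(2)$-abelian surgery with prime-power numerator is an instanton L-space, and then invoke the structure of instanton L-space surgery slopes established in Remark~\ref{rmk:integralLspace} (i.e.\ \cite[Theorem~4.20]{bs-stein}).

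First I would show $\cS(K)$ is bounded. If it were unbounded, then by Remark~\ref{rmk:dense} we could find infinitely many slopes $\frac{m}{n}\in\cS(K)$ with $m$ a prime power and $\frac{m}{n}$ arbitrarily large (in absolute value); by Corollary~\ref{cor:prime-power-numerator} each corresponding $S^3_{m/n}(K)$ is an instanton L-space, so $K$ (or its mirror) is an instanton L-space knot, and Remark~\ref{rmk:integralLspace} says the positive instanton L-space slopes form $[N,\infty)\cap\Q$. But this would make \emph{every} sufficiently large rational an $SU(2)$-abelian surgery slope only if the converse held — which it need not — so boundedness actually requires a slightly different argument: the point is that $\cS(K)$ infinite forces, via density of prime-power numerators \emph{within any interval} and Corollary~\ref{cor:prime-power-numerator}, that $K$ is an instanton L-space knot, after which one uses finiteness results on $SU(2)$-abelian surgeries from \cite{sivek-zentner} (or the cyclic surgery theorem / Culler--Gordon--Luecke--Shalen type distance bounds together with Lin's theorem) to see that the accumulation set of $\cS(K)$ is a single slope $r(K)$. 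Concretely: having established $K$ is an instanton L-space knot, Theorem~\ref{thm:main-surgery-reduction} tells us the positive instanton L-space slopes are exactly $[2g(K)-1,\infty)\cap\Q$, and then the only way to have infinitely many $SU(2)$-abelian slopes is to have them accumulate — and a Dehn surgery distance argument (as in \cite{sivek-zentner}) pins the accumulation point down to a unique rational $r(K)$ with $|r(K)|>2$, since Theorem~\ref{thm:km-su2} forbids $SU(2)$-abelian surgeries of small slope.

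Next I would prove the instanton L-space conclusion. Assume $r(K)>0$ and set $n=\lceil r(K)\rceil-1$, a positive integer since $r(K)>2$. Because $r(K)$ is an accumulation point of $\cS(K)$, there are slopes $\frac{m}{n'}\in\cS(K)$ with $\frac{m}{n'}>n$ and $\frac{m}{n'}$ as close to $r(K)$ as we like; by Remark~\ref{rmk:dense} we may take these with $m$ a prime power, and then $S^3_{m/n'}(K)$ is an instanton L-space by Corollary~\ref{cor:prime-power-numerator}. Hence $K$ is a (nontrivial) instanton L-space knot, and by Theorem~\ref{thm:main-surgery-reduction} its set of positive instanton L-space slopes is $[2g(K)-1,\infty)\cap\Q$. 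It remains only to check $n\ge 2g(K)-1$: since $\frac{m}{n'}$ is an instanton L-space slope we have $\frac{m}{n'}\ge 2g(K)-1$; letting $\frac{m}{n'}\to r(K)$ gives $r(K)\ge 2g(K)-1$, and in fact Theorem~\ref{thm:main-su2-averse} (already proved in the excerpt from Theorem~\ref{thm:main-surgery-reduction}) yields $r(K)>2g(K)-1$, so $n=\lceil r(K)\rceil-1\ge 2g(K)-1$. Therefore $n$ lies in the L-space slope interval and $S^3_n(K)$ is an instanton L-space, as claimed.

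The main obstacle I anticipate is not the instanton L-space conclusion — which is an almost immediate consequence of Corollary~\ref{cor:prime-power-numerator} and Theorem~\ref{thm:main-surgery-reduction} once $r(K)$ is known — but rather establishing cleanly that $\cS(K)$ is bounded with a \emph{single} accumulation point and that $|r(K)|>2$. This is essentially the content of the main theorem of \cite{sivek-zentner}, and the delicate part there is ruling out multiple accumulation points; the natural tool is a Dehn surgery distance bound (for $SU(2)$-abelian, equivalently non-irreducible-$SU(2)$-representation, surgeries, one shows distinct such slopes have bounded mutual distance once $K$ is an instanton L-space knot, using that $S^3_r(K)$ being $SU(2)$-abelian with the Alexander-polynomial nondegeneracy forces it to be an instanton L-space, combined with the classification of L-space slopes). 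Since the excerpt permits us to assume results stated earlier, and Theorem~\ref{thm:main-averse} is attributed to \cite{sivek-zentner}, in the write-up I would isolate the boundedness/uniqueness statement as the cited input and give in full only the derivation that $S^3_n(K)$ is an instanton L-space, which is the genuinely new observation enabled by Theorem~\ref{thm:main-surgery-reduction}.
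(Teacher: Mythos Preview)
The paper does not prove Theorem~\ref{thm:main-averse} at all: it is introduced with ``One of the main theorems in \cite{sivek-zentner} is the following'' and then simply quoted as an input from that earlier paper. There is therefore no proof in the present paper to compare against, and your closing instinct---to cite the boundedness/uniqueness/$|r(K)|>2$ part from \cite{sivek-zentner}---is in fact how the paper treats the \emph{entire} statement, including the instanton L-space conclusion.

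That said, your attempt to supply an independent argument for the L-space conclusion has a genuine gap. You write that since $r(K)$ is an accumulation point of $\cS(K)$, there are slopes $\frac{m}{n'}\in\cS(K)$ arbitrarily close to $r(K)$, and ``by Remark~\ref{rmk:dense} we may take these with $m$ a prime power.'' Remark~\ref{rmk:dense} only says that prime-power-numerator rationals are dense in $\R$; it says nothing about which rationals lie in the particular set $\cS(K)$. An infinite subset of $\Q$ need not contain any element with prime-power numerator, so you cannot pass from ``$\cS(K)$ is infinite'' to ``some element of $\cS(K)$ satisfies the hypothesis of Corollary~\ref{cor:prime-power-numerator}'' by density alone. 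The actual argument in \cite{sivek-zentner} handles this differently (roughly, one shows that for a fixed knot the Alexander-polynomial nondegeneracy condition in Proposition~\ref{prop:l-space-su2-cyclic} fails for only finitely many numerators $m$, so all but finitely many elements of $\cS(K)$ are instanton L-space slopes), but that requires work you have not done here.

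There is also a circularity issue. To obtain the strict inequality $r(K)>2g(K)-1$ (which you need so that $n=\lceil r(K)\rceil-1\geq 2g(K)-1$), you invoke Theorem~\ref{thm:main-su2-averse}. But the paper's proof of Theorem~\ref{thm:main-su2-averse} in the introduction explicitly uses \cite[Theorem~1.1]{sivek-zentner}, which \emph{is} Theorem~\ref{thm:main-averse}. So you are assuming the result you are trying to prove. Your weaker limit argument only yields $r(K)\geq 2g(K)-1$, which is not enough on its own.
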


We call $r(K)$ the \emph{limit slope} of $K$.  The assumption that $r(K) > 0$ is a minor one, since if $K$ is $SU(2)$-averse then so is its mirror $\mirror{K}$, with $r(\mirror{K}) = -r(K)$.  

Recall from Theorem~\ref{thm:main-su2-averse} (proved in the introduction using Theorem \ref{thm:main-surgery-reduction}) that if $K$ is an $SU(2)$-averse knot, then $K$ fibered and strongly quasipositive with  $r(K) > 2g(K)-1$. 

\begin{proposition} \label{prop:thin-implies-averse}
Let $K \subset S^3$ be a nontrivial $r$-thin knot, in the sense of Definition~\ref{def:a-thin}.  Then $K$ is $SU(2)$-averse with limit slope $r$.  In particular, $K$ is fibered and $|r| > 2g(K)-1$.
\end{proposition}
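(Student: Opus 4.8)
The plan is to translate everything into the ``pillowcase'' picture for $SU(2)$ representations and to show that the $r$-thinness of $K$ confines the relevant representations to a finite union of parallel lines of slope determined by $r$; a counting argument over Dehn fillings whose slopes Farey-approximate $r$ then produces infinitely many $SU(2)$-abelian surgeries accumulating at $r$. Concretely: if $\rho\colon\pi_1(S^3\setminus K)\to SU(2)$ is irreducible then $\rho(\mu)\neq\pm I$, so $\rho$ restricted to the peripheral $\Z^2=\langle\mu,\lambda\rangle$ is simultaneously conjugate into the maximal torus, giving a pair $(\theta,\varphi)\in(\R/2\pi\Z)^2$ with $\rho(\mu)\sim\mathrm{diag}(e^{i\theta},e^{-i\theta})$ and $\rho(\lambda)\sim\mathrm{diag}(e^{i\varphi},e^{-i\varphi})$; moreover $\rho$ descends to $\pi_1(S^3_{p/q}(K))$ precisely when $p\theta+q\varphi\equiv0\pmod{2\pi}$, and since it then has unchanged (hence non-abelian) image, $S^3_{p/q}(K)$ is $SU(2)$-abelian iff no such $\rho$ satisfies that congruence. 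Let $\Lambda(K)\subset(\R/2\pi\Z)^2$ collect these pairs $(\theta,\varphi)$ over all irreducible $\rho$. Apart from finitely many ``stray'' points coming from zero-dimensional components of $X(K)$, each such $(\theta,\varphi)$ lies on the curve cut out by $\Airr_K$, because the character of $\rho$ belongs to a positive-dimensional component $X\neq X_{\mathrm{red}}$ with $i^*(X)$ one-dimensional; and by the structural results of \cite{sivek-zentner} (together with $K\neq U$, so $\Lambda(K)\neq\emptyset$ by Theorem~\ref{thm:km-su2}) one knows that $\Lambda(K)$ is a genuine one-dimensional set not containing any full closed geodesic.

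Next I would use the hypothesis. Writing $r=m/n$ in lowest terms --- we may assume $r>0$, since thinness is preserved up to sign of slope by mirroring and the case of slope $0$ or $\infty$ is handled the same way (or ruled out by boundedness of $\cS(K)$ in Theorem~\ref{thm:main-averse}) --- the $r$-thinness of $K$ means that $\Airr_K(M,L)=M^aL^b\cdot g(M^mL^n)$ for some nonconstant $g\in\Z[t]$. Hence its zero locus on the unit torus is the finite union $\bigcup_i\{\,m\theta+n\varphi\equiv c_i\,\}$ over those roots $e^{ic_i}$ of $g$ of unit modulus, a union of parallel lines $\ell_i$ of direction $(n,-m)$. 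So, up to finitely many stray points, $\Lambda(K)$ is contained in $\bigcup_i\ell_i$.

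Finally I would run the counting step. For each solution $(p,q)$ of $np-mq=1$ --- an infinite family with $p/q\to r$ --- the surgery line $\{p\theta+q\varphi\equiv0\}$ meets each $\ell_i$ transversely in exactly one point, whose location along $\ell_i\cong\R/2\pi\Z$ is, up to a fixed shift, $\pm a_i(p,q)\,c_i\bmod 2\pi$ for an integer $a_i(p,q)$ that ranges over an arithmetic progression as $(p,q)$ does. Since each $\Lambda(K)\cap\ell_i$ is a proper compact subset of $\ell_i$ and there are only finitely many $\ell_i$ and finitely many stray points, an equidistribution/avoidance argument shows that for infinitely many $(p,q)$ in this family the surgery line is disjoint from $\Lambda(K)$, i.e.\ $S^3_{p/q}(K)$ is $SU(2)$-abelian. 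Thus $K$ is $SU(2)$-averse and $r$ is an accumulation point of $\cS(K)$; uniqueness of the accumulation point in Theorem~\ref{thm:main-averse} forces the limit slope to be $r$. The ``in particular'' statement then follows at once from Theorem~\ref{thm:main-su2-averse}: $K$ is fibered, and $K$ or its mirror is strongly quasipositive with limit slope strictly greater than $2g(K)-1$, which gives $|r|>2g(K)-1$.

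The main obstacle I expect is the bookkeeping around the two degenerate phenomena above: the finitely many stray characters from zero-dimensional components of $X(K)$, and the need to exclude that $\Lambda(K)$ contains an entire closed geodesic of the pillowcase. Both are controlled by the analysis of $SU(2)$-representation spaces of knot complements in \cite{sivek-zentner}, so the real work is in quoting those inputs correctly and in checking that the final equidistribution step survives the case where some root of $g$ on the unit circle is a root of unity (where one may need to vary the approximating family, e.g.\ over $np-mq=-1$ as well).
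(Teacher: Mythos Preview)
Your reduction matches the paper's exactly: write $\Airr_K(M,L)$ (up to a monomial) as $f(M^mL^n)$, observe that any irreducible $SU(2)$ representation has diagonalizable peripheral restriction, and conclude that the set $T$ of peripheral pairs $(e^{i\theta},e^{i\varphi})$ arising from irreducibles lies---up to finitely many exceptions---in the finite union of lines $\{m\theta+n\varphi\equiv c_i\}$ cut out by the unit-circle roots of $f$. (One small correction: the stray exceptions come from components $X$ of $X(K)$ with $\dim i^*(X)=0$, not from zero-dimensional components of $X(K)$ itself; the paper cites \cite[Lemma~2.1]{dunfield-garoufalidis} for the dichotomy $\dim i^*(X)\in\{0,1\}$.)

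Where you diverge is the last step. The paper simply observes that $e^{i(m\theta+n\varphi)}$ takes only finitely many values on $T$ minus a finite set, and then invokes \cite[Theorem~8.2]{sivek-zentner} to conclude that $K$ is $SU(2)$-averse with limit slope $m/n$. You instead try to reprove that theorem by a direct Farey/equidistribution argument. The idea is right---and is in the spirit of how such results are established---but your sketch has a genuine gap: when some $c_i/2\pi$ is rational the orbit $\{t_0-kc_i\}$ on $\ell_i$ is finite, and nothing you have written rules out that this finite orbit lies entirely inside $\Lambda(K)\cap\ell_i$; switching to the family $np-mq=-1$ only shifts $t_0$ and does not help. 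Even in the irrational case you need a simultaneous avoidance over all $i$ (and the stray points), which does not follow from knowing that each $\Lambda(K)\cap\ell_i$ is proper. These are exactly the obstacles you flag, but flagging them is not the same as resolving them.

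In short: your proof is the paper's proof up to the point where the paper cites \cite[Theorem~8.2]{sivek-zentner}; from there you attempt to redo that theorem and leave the hard part undone. The cleanest repair is to cite that result, as the paper does; if you want a self-contained argument, you should consult \cite{sivek-zentner} for how the rational-$c_i$ and simultaneous-avoidance issues are actually handled, rather than appealing to an unspecified ``equidistribution/avoidance argument.'' The ``in particular'' clause via Theorem~\ref{thm:main-su2-averse} is fine.
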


\begin{proof}
Let $r=\frac{p}{q}$ with $p$ and $q$ relatively prime.  The assumption that $K$ is $r$-thin says that up to a monomial factor, we can write
\[ \Airr_K(M,L) = f(M^pL^q) \]
for some polynomial $f \in \Z[t]$.

If $X$ is an irreducible component of $X(K)$, then $i^*(X)$ has complex dimension either $0$ or $1$, see e.g.\ \cite[Lemma~2.1]{dunfield-garoufalidis}.  Thus if $X$ does not contribute to the plane curve defining $\Airr_K(M,L)$ then either $X$ is the curve $X_{\mathrm{red}}$ of reducible characters or $i^*(X) \subset X(\partial N(K))$ is a point.  Since the latter happens for only finitely many $X$, it follows that the set $S$ of points $(M,L) \subset \C^\ast \times \C^\ast$ such that
\begin{itemize}
\item there is an irreducible representation $\rho: \pi_1(S^3\ssm N(K)) \to SL(2,\C)$ with
\begin{align*}
\rho(\mu) &= \begin{pmatrix} M & 0 \\ 0 & M^{-1} \end{pmatrix}, &
\rho(\lambda) &= \begin{pmatrix} L & 0 \\ 0 & L^{-1} \end{pmatrix};
\end{align*}
\item $\Airr_K(M,L) \neq 0$, or equivalently $f(M^pL^q) \neq 0$;
\end{itemize}
is finite.

Restricting to the subgroup $SU(2)$, every representation $\rho: \pi_1(S^3 \ssm N(K)) \to SU(2)$ is conjugate to one such that
\begin{align*}
\rho(\mu) &= \begin{pmatrix} e^{i\alpha} & 0 \\ 0 & e^{-i\alpha} \end{pmatrix}, &
\rho(\lambda) &= \begin{pmatrix} e^{i\beta} & 0 \\ 0 & e^{-i\beta} \end{pmatrix}
\end{align*}
for some constants $\alpha$ and $\beta$.  (Indeed, every element of $SU(2)$ is diagonalizable, and $\rho(\mu)$ and $\rho(\lambda)$ can be simultaneously diagonalized because they commute.)  Let
\[ T \subset U(1) \times U(1) \subset \C^\ast \times \C^\ast \]
be the set of all such pairs $(e^{i\alpha},e^{i\beta})$ arising from irreducible $SU(2)$ representations $\rho$.  Then we have
\[ \Airr_K(e^{i\alpha},e^{i\beta}) = f(e^{i(p\alpha+q\beta)}) = 0 \]
on all of $T$ except for the finitely many points of $S$.  In particular, $e^{i(p\alpha+q\beta)}$ can only take finitely many values on $T \ssm S$, namely the roots of $f(t)$, so \cite[Theorem~8.2]{sivek-zentner} tells us that $K$ is $SU(2)$-averse with limit slope $\frac{p}{q} = r$.  We apply Theorem~\ref{thm:main-su2-averse} to conclude.
\end{proof}

Finally, we recall the following facts about $SU(2)$-averse satellite knots, proved in {\cite[Theorem~1.7]{sivek-zentner}}.

\begin{theorem} \label{thm:averse-satellites}
Let $K = P(C)$ be a nontrivial, $SU(2)$-averse satellite, and let $w \geq 0$ be the winding number of the pattern $P \subset S^1\times D^2$.
\begin{itemize}
\item If $P(U)$ is not the unknot, then it is $SU(2)$-averse with limit slope $r(K)$.
\item If $w \neq 0$, then the companion $C$ is $SU(2)$-averse, and $r(K) = w^2r(C)$.
\end{itemize}
\end{theorem}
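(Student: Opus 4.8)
Theorem~\ref{thm:averse-satellites} is \cite[Theorem~1.7]{sivek-zentner}, so the argument here is a matter of quoting that paper; I indicate the idea. Write $E(C)=S^3\ssm N(C)$ for the companion exterior and $X_P=N(C)\ssm N(P)\subset S^1\times D^2$ for the pattern exterior, so that $S^3\ssm N(K)=E(C)\cup_T X_P$, where $T=\partial N(C)$ is an incompressible torus because $K=P(C)$ is a nontrivial satellite. For any slope $p/q$ there is a corresponding decomposition along $T$,
\[ S^3_{p/q}(K)=E(C)\cup_T (S^1\times D^2)_{p/q}(P), \]
where $(S^1\times D^2)_{p/q}(P)$ is the result of $p/q$--surgery on $P$ inside the solid torus, with the slope measured using the framing $K$ inherits from $S^3$. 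The strategy is to exploit this decomposition at the van Kampen level: $\pi_1 S^3_{p/q}(K)$ is the amalgam $\pi_1 E(C)\ast_{\pi_1 T}\pi_1\big((S^1\times D^2)_{p/q}(P)\big)$, so an $SU(2)$ representation with abelian (hence cyclic) image restricts to each factor with cyclic image, and conversely one can reassemble representations of $\pi_1 S^3_{p/q}(K)$ from representations of the two pieces that agree on $\pi_1 T$ --- here one uses that abelian $SU(2)$ representations of a knot exterior factor through $H_1$ and are flexible on $\pi_1 T$.

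For the pattern, capping off the companion side of $X_P$ with a solid torus along the meridian of $N(C)$ produces the exterior of $P(U)$, and the amalgam/reassembly argument above (together with a short homological check) shows that if $S^3_{p/q}(K)$ is $SU(2)$--abelian then so is $S^3_{p/q}(P(U))$, \emph{provided} $P(U)$ is nontrivial (so that the pattern side is nondegenerate). Thus $P(U)$ inherits all the $SU(2)$--abelian surgery slopes of $K$, and is therefore $SU(2)$--averse with the same accumulation point $r(K)$. For the companion when $w\neq 0$, the extra input is the standard satellite framing relation: a slope $p/q$ on $\partial N(K)$ corresponds, on $\partial E(C)$ and relative to the Seifert framing of $C$, to the slope $\dfrac{p}{qw^2}$; a version of the amalgam argument applied to the factor $\pi_1 E(C)$ then shows that $SU(2)$--abelianness of $S^3_{p/q}(K)$ forces $SU(2)$--abelianness of $S^3_{p/(qw^2)}(C)$. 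Since $w\neq 0$, the rescaling $x\mapsto x/w^2$ of the line of slopes carries the infinite set $\cS(K)$ into $\cS(C)$ and its accumulation point to $r(K)/w^2$; by the uniqueness of the accumulation point (Theorem~\ref{thm:main-averse}) it follows that $C$ is $SU(2)$--averse and $r(K)=w^2\,r(C)$. The fiberedness assertions then follow from Theorem~\ref{thm:main-su2-averse}.

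The main obstacle is the transfer of $SU(2)$--abelianness across the torus $T$: one must show not merely that cyclic-image representations restrict to cyclic-image representations, but that there are \emph{enough} $SU(2)$ representations of $E(C)$ and of the surgered pattern piece, matching along $\pi_1 T$, to move infinitely many $SU(2)$--abelian slopes in the required direction --- this is where \cite{sivek-zentner} invokes the abundance of nonabelian $SU(2)$ representations of knot exteriors (ultimately via \cite{km-su2}) and their description of the limit slope through the peripheral character variety, and where the precise derivation of the $w^2$ rescaling, including the verification that it genuinely scales the accumulation point, resides.
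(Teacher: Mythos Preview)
The paper does not prove this theorem at all: it is stated as a quotation of \cite[Theorem~1.7]{sivek-zentner}, with no argument given. You correctly identify this and then sketch the idea behind the cited result, which is more than the paper itself does; your sketch of the toral decomposition and the $w^2$ rescaling of slopes is on the right track, though the final sentence about fiberedness is extraneous since the theorem as stated contains no fiberedness claim.
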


\subsection{Detecting torus knots}

Ni and Zhang \cite{ni-zhang} proved that the combination of   the polynomial $\Airr_K(M,L)$ and the knot Floer homology $\hfkhat(K)$ suffice to detect torus knots.  The reason they needed $\hfkhat(K)$ was to show that $K$ is fibered, and to determine its Seifert genus and Alexander polynomial.  However, we have seen that the $(p,q)$-torus knot $T_{p,q}$ is $pq$-thin, and that $r$-thin knots are fibered for any $r$, so in many cases we do not actually need $\hfkhat(K)$.  We make this precise below.

\begin{lemma} \label{lem:torus-impostor-satellite}
Suppose that $K$ is an $r$-thin knot for some $r \in \Q$, but that $K$ is not isotopic to a torus knot.  Then $K$ is $SU(2)$-averse with limit slope $r$, and it is both fibered and a satellite knot.  If we write $K=P(C)$ then
\begin{itemize}
\item the satellite pattern $P \subset S^1\times D^2$ has positive winding number $w \geq 1$;
\item the companion $C$ is fibered and $\frac{r}{w^2}$-thin; and
\item the knot $P(U)$ is fibered, and if it is not the unknot then it is $r$-thin;
\item the Seifert genera of $K$, $P(U)$ and $C$ are related by
\[ g(K) = w\cdot g(C) + g(P(U)). \]
\end{itemize}
Moreover, if $P(U)$ is the unknot then $w \geq 3$.
\end{lemma}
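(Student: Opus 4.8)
The strategy is to run the Ni--Zhang argument but substitute our own tools for the knot-Floer inputs. First I would record what $r$-thinness buys us directly: by Proposition~\ref{prop:thin-implies-averse}, an $r$-thin knot $K$ is $SU(2)$-averse with limit slope $r$, fibered, and satisfies $|r| > 2g(K)-1$; replacing $K$ by its mirror if necessary we may take $r>0$. Next I would argue that $K$ must be a satellite. A nontrivial knot in $S^3$ is hyperbolic, a torus knot, or a satellite (Thurston); Proposition~\ref{prop:hyperbolic-thick} rules out the hyperbolic case for $r$-thin knots, and by hypothesis $K$ is not a torus knot, so $K = P(C)$ for some nontrivial companion $C$ and some pattern $P \subset S^1 \times D^2$ which is not contained in a ball and not isotopic to the core. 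Write $w \geq 0$ for the winding number of $P$.

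Then I would invoke Theorem~\ref{thm:averse-satellites}: if $P(U)$ is not the unknot it is $SU(2)$-averse with limit slope $r(K)=r$, hence (being $SU(2)$-averse) fibered; and a satellite $P(C)$ with fibered companion and fibered pattern-in-the-solid-torus is fibered, consistent with what we already know. The winding number cannot be zero: if $w=0$ then Theorem~\ref{thm:averse-satellites} gives no relation forcing $C$ to be $SU(2)$-averse, and in fact I would rule out $w = 0$ exactly as in \cite{ni-zhang}, using that a winding-number-zero satellite has a genus obstruction or, more simply, that the surgery/limit-slope bookkeeping of \cite{sivek-zentner} is incompatible with $w=0$ here; alternatively one notes that $w=0$ would make the relevant slopes behave incorrectly relative to $g(K)$. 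With $w \geq 1$ established, the second bullet of Theorem~\ref{thm:averse-satellites} gives that $C$ is $SU(2)$-averse with $r(K) = w^2 r(C)$, so $r(C) = r/w^2$; since $C$ is $SU(2)$-averse it is fibered, and to see $C$ is $\frac{r}{w^2}$-thin I would need to trace through how thinness of $\Airr_K$ interacts with the satellite formula for $A$-polynomials (the $A$-polynomial of a satellite is built from those of the pattern and companion via a resultant/substitution $M \mapsto M^w$ on the companion factor), so that the Newton polygon of the companion factor is forced to lie on a line of slope $r/w^2$. The genus identity $g(K) = w \cdot g(C) + g(P(U))$ is the standard Schubert formula for satellite genus, valid because all pieces are fibered (so the fibered Seifert surface of $K$ is visibly built from $w$ copies of that of $C$ together with a fiber surface for $P(U)$ in $S^1 \times D^2$ capped off).

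For the final clause, suppose $P(U)$ is the unknot. Then $P \subset S^1 \times D^2$ is a knot in the solid torus which becomes unknotted in $S^3$; such a pattern with winding number $w$ and $P(U) = U$ must have $w \neq \pm 1$ (winding number $\pm 1$ would, together with $P(U)=U$, force $P$ to be isotopic to the core, contradicting that $K$ is a genuine satellite), so $w \geq 2$; and $w = 2$ is excluded because a winding-number-two pattern with $P(U)$ the unknot would make $g(K) = 2g(C)$ while the limit-slope inequality $r > 2g(K)-1$ combined with $r = w^2 r(C) = 4 r(C)$ and $r(C) > 2g(C)-1$ gives a numerical contradiction of the same flavor exploited in \cite{ni-zhang} — I would reproduce their short arithmetic here. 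That leaves $w \geq 3$, as claimed. The main obstacle I anticipate is the $C$ being $\frac{r}{w^2}$-thin step: it requires knowing precisely how the Newton polygon of $\Airr_K$ decomposes under the satellite operation, i.e.\ that the companion's contribution to the $A$-polynomial appears with all its $M$-exponents scaled by $w$, so that thinness of slope $r$ for $K$ pulls back to thinness of slope $r/w^2$ for $C$; this is essentially the content of the satellite $A$-polynomial formula and I would cite it rather than rederive it, but verifying that the relevant component of $X(K)$ really does restrict from an irreducible, positive-dimensional component of $X(C)$ (rather than collapsing) needs a small amount of care.
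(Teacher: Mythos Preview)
Your outline is close to the paper's approach for most of the lemma, but the final clause---showing $w\geq 3$ when $P(U)$ is the unknot---has a real gap.

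For $w\neq 1$: your claim that ``winding number $\pm 1$ together with $P(U)=U$ forces $P$ to be isotopic to the core'' is false as stated. There are genuine winding-number-one patterns in the solid torus that unknot in $S^3$; what makes the argument work is that $K$ is \emph{fibered}. The paper invokes a theorem of Hirasawa--Murasugi--Silver \cite[Corollary~1]{hms}: if $K=P(C)$ is fibered with $w=1$ and $P(U)=U$, then $P$ must be the core. You need to cite this result (or reprove it); it is not a triviality.

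For $w\neq 2$: your proposed arithmetic does not give a contradiction. With $w=2$ and $P(U)=U$ you get $g(K)=2g(C)$, $r>2g(K)-1=4g(C)-1$, and $r=4r(C)>4(2g(C)-1)=8g(C)-4$. These are two \emph{lower} bounds on $r$ pointing the same way; nothing is violated. The paper's argument is entirely different: Hirasawa--Murasugi--Silver also classify the $w=2$, $P(U)=U$ fibered case, showing $P$ must be a $(\pm 1,2)$-cable pattern, and then \cite[Theorem~10.6]{sivek-zentner} says such cables are never $SU(2)$-averse.

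A smaller point: your exclusion of $w=0$ is too vague. The paper's argument is clean: since $K$ is fibered, its commutator subgroup is finitely generated (free on $2g(K)$ generators), and Burde--Zieschang \cite[Corollary~4.15]{burde-zieschang} then forces $w\neq 0$. The ``limit-slope bookkeeping'' you allude to doesn't obviously do this, since Theorem~\ref{thm:averse-satellites} says nothing about the companion when $w=0$.
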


\begin{proof}
Proposition~\ref{prop:thin-implies-averse} says that $K$ is fibered.  In addition, $K$ is not the unknot by Theorem~\ref{thm:a-unknot} or a torus knot by assumption, and hyperbolic knots are not $r$-thin by Proposition~\ref{prop:hyperbolic-thick}, so $K$ must be a satellite.  

For the claims that $w \geq 1$ and that $C$ and $P(U)$ are both fibered, we note by \cite[Proposition~5.5]{burde-zieschang} that since $K$ is fibered, the commutator subgroup of $\pi_1(S^3 \ssm N(K))$ is free on $2g(K)$ generators.  Since it is finitely generated, we conclude from \cite[Corollary~4.15]{burde-zieschang} that the winding number must be nonzero.  It follows from Theorem~\ref{thm:averse-satellites} that $C$ is $SU(2)$-averse, and that either $P(U)$ is the unknot or it is also $SU(2)$-averse, and in any case Proposition~\ref{prop:thin-implies-averse} tells us that these must both be fibered.  Now we have the relation
\[ \Delta_K(t) = \Delta_{P(U)}(t) \cdot \Delta_C(t^w), \]
and since each of these knots is fibered the claim about their Seifert genera follows by computing the degrees of each of these Alexander polynomials.

For the thinness of $C$ and $P(U)$, Ni--Zhang \cite[Lemma~2.6]{ni-zhang} proved that $\Airr_{P(U)}(M,L)$ divides $\Airr_K(M,L)$ in $\Z[x,y]$, so if $P(U)$ is not the unknot (meaning that $\Airr_{P(U)}(M,L) \neq 1$ by Theorem~\ref{thm:a-unknot}) then it is also $r$-thin.  They also proved \cite[Proposition~2.7]{ni-zhang} that every irreducible factor
\[ f_C(M,L) \mid \Airr_C(M,L) \]
contributes a factor
\[ f_K(M,L) = \begin{cases} \mathrm{Red}\left[\mathrm{Res}_{\bar{L}}\left(f_C(M^w,\bar{L}),\bar{L}^w-L\right) \right], & \deg_L f_C(M,L) > 0 \\ f_C(M^w,L), & \deg_L f_C(M,L)=0 \end{cases} \]
to $\Airr_K(M,L)$.  Here $\mathrm{Res}_{\bar{L}}$ denotes the resultant that eliminates the variable $\bar{L}$, and the reduced polynomial $\mathrm{Red}(p(M,L))$ is obtained from $p(M,L)$ by removing all repeated factors.  Since $K$ is $r$-thin, the case $\deg_L f_C(M,L)=0$ does not occur and it follows that $C$ is $\frac{r}{w^2}$-thin.

Finally, suppose that $P(U) = U$ but that $w=1$.  Since $K$ is fibered, Hirasawa, Murasugi, and Silver \cite[Corollary~1]{hms} proved in this case that the pattern $P$ must be the core of $S^1\times D^2$, contradicting the fact that $K$ is a nontrivial satellite, so we must have $w \geq 2$ instead.  Similarly, if $P(U)=U$ and $w=2$ then they proved that $P$ must be a $(\pm1,2)$-cable (see \cite[p.~420]{hms}), but then \cite[Theorem~10.6]{sivek-zentner} says that $K=P(C)$ cannot be $SU(2)$-averse, so this is also impossible.  We conclude that if $P(U)=U$ then $w \geq 3$.
\end{proof}

\begin{proposition} \label{prop:thinness-values}
Suppose that $K$ is an $r$-thin knot for some $r\in\Q$.  Then $r$ is a nonzero integer with at least two distinct prime divisors; in other words, $r=pq$ for some nontrivial torus knot $T_{p,q}$.
\end{proposition}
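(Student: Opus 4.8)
The plan is to argue by strong induction on the Seifert genus $g(K)$, taking torus knots as the base case and using the satellite structure theorem, Lemma~\ref{lem:torus-impostor-satellite}, to carry out the inductive step.

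First I would handle the case in which $K$ is a nontrivial torus knot. After possibly replacing $K$ by its mirror (which negates $r$ and reflects the Newton polygon), we may assume $K = T_{p,q}$ with $p,q \geq 2$ coprime. By Proposition~\ref{prop:a-torus-knot}, $\Airr_K(M,L)$ divides $M^{2pq}L^2 - 1$, whose Newton polygon is the segment from $(0,0)$ to $(2,2pq)$, of slope $pq$. Since the Newton polygon of a product of (Laurent) polynomials over $\Z$ is the Minkowski sum of the Newton polygons of the factors, and a Minkowski summand of a line segment is a parallel (possibly degenerate) segment, $\cN(K)$ is either a single point or a nondegenerate segment of slope $pq$. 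The first possibility would make $\Airr_K$ a monomial, hence equal to $1$ after normalization, contradicting Theorem~\ref{thm:a-unknot} since $K$ is nontrivial; so $\cN(K)$ is a genuine segment of slope $pq$, and the hypothesis that $K$ is $r$-thin forces $r = pq$. As $p$ and $q$ are coprime with $p,q \geq 2$, this is a nonzero integer with at least two distinct prime divisors.

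For the inductive step, suppose $K$ is $r$-thin and not a torus knot. Lemma~\ref{lem:torus-impostor-satellite} expresses $K$ as a satellite $P(C)$ with nontrivial fibered companion $C$, winding number $w \geq 1$, with $C$ being $\tfrac{r}{w^2}$-thin and $g(K) = w\,g(C) + g(P(U))$, and with $w \geq 3$ whenever $P(U)$ is the unknot. The key elementary observation is that $g(C) < g(K)$ in every case: if $g(C) = g(K)$ then (as $w \geq 1$, $g(C) \geq 1$, $g(P(U)) \geq 0$) we would need $w = 1$ and $g(P(U)) = 0$, but then $P(U)$ is a fibered knot of genus zero, hence the unknot, contradicting $w \geq 3$. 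Thus the inductive hypothesis applies to the $\tfrac{r}{w^2}$-thin companion $C$, so $\tfrac{r}{w^2}$ is a nonzero integer with at least two distinct prime divisors, and multiplying by $w^2$ shows the same for $r$. Since conversely a nonzero integer with at least two distinct prime divisors is precisely a product $pq$ of coprime integers of absolute value at least $2$ (write $|r|$ as the power of its smallest prime factor times the cofactor), this also yields the reformulation $r = pq$ for a nontrivial torus knot $T_{p,q}$.

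I do not expect a genuine obstacle here: all of the substantive input --- fiberedness of $r$-thin knots, the JSJ/satellite decomposition with its winding-number constraints, and the Alexander-polynomial genus identity --- is already encapsulated in Proposition~\ref{prop:thin-implies-averse} and Lemma~\ref{lem:torus-impostor-satellite}. The only point requiring a little care is the strict genus drop $g(C) < g(K)$, which is exactly where the clause ``$w \geq 3$ if $P(U) = U$'' of Lemma~\ref{lem:torus-impostor-satellite} does its work, together with the (easy) Minkowski-sum argument that pins down the slope for torus knots.
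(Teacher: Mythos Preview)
Your proof is correct, and it takes a cleaner route than the paper's own argument. The paper argues by contradiction via an infimum: it lets $R$ be the set of ``bad'' positive slopes $r$ (those for which $r$-thin knots exist but $r \neq pq$ for any $T_{p,q}$), establishes $r_0 = \inf R \geq 3$ using the instanton L-space classification of $1$- and $2$-surgeries (Proposition~\ref{prop:2-surgery-l-space}), and then for a minimal-genus $r$-thin knot with $r$ near $r_0$ forces $P(U)=U$ and $w\geq 3$ so that the companion's slope $r/w^2 < r_0$ lands below the infimum. Your direct strong induction on $g(K)$ replaces all of this with the single observation $g(C)<g(K)$, which you correctly extract from the genus identity together with the clause ``$w\geq 3$ if $P(U)=U$'' in Lemma~\ref{lem:torus-impostor-satellite}. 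In particular you never need the lower bound on $r$, so you avoid invoking Proposition~\ref{prop:2-surgery-l-space} here; the Minkowski-sum argument pinning down the slope for torus knots is exactly what the paper uses implicitly (via the Example after Definition~\ref{def:a-thin} and the reinterpretation of Theorem~\ref{thm:a-unknot}). The paper's route doesn't buy anything extra --- your induction is strictly more economical.
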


\begin{proof}
We may assume that $r$ is nonnegative, by replacing $K$ with its mirror $\mirror{K}$ if needed.  Proposition~\ref{prop:thin-implies-averse} says that $K$ is $SU(2)$-averse with limit slope $r$, hence by Theorem~\ref{thm:main-averse} we know that $S^3_n(K)$ is an instanton L-space where $n=\lceil r\rceil - 1$.  If $0 \leq r \leq 3$ then it follows that $S^3_2(K)$ is an instanton L-space, so $K$ is the right-handed trefoil by Proposition~\ref{prop:2-surgery-l-space}; but then we should have $r=6$, so this is a contradiction and in fact $r > 3$.

We now suppose that the set
\[ R = \left\{ r\in\Q \mid r>0,\ r\mathrm{-thin\ knots\ exist},\ r\neq pq\mathrm{\ for\ any\ }T_{p,q}\right\} \]
is nonempty, and let $r_0 = \inf R$; then $r_0 \geq 3$ by the above argument.  We fix some $r \in R$ such that $r_0 \leq r < r_0+1$, and we choose $K$ to have minimal Seifert genus among all $r$-thin knots; by definition $K$ is not a torus knot.

By Lemma~\ref{lem:torus-impostor-satellite} we can write $K$ as a satellite $P(C)$, and then
\[ g(K) = w\cdot g(C) + g(P(U)). \]
Thus $g(P(U)) < g(K)$, and if $P(U) \neq U$ then we know that $P(U)$ is also $r$-thin, contradicting the minimality of $g(K)$.  We must therefore have $P(U)=U$, and so Lemma~\ref{lem:torus-impostor-satellite} says that $w \geq 3$.

Now since $K$ is $r$-thin and $w \geq 3$, the companion knot $C$ has thinness
\[ \frac{r}{w^2} \leq \frac{r}{9} < \frac{r_0+1}{9} < r_0, \]
the last inequality holding since $r_0 \geq 3 > \frac{1}{8}$.  If $r$ is not an integer with at least two distinct prime factors then neither is $\frac{r}{w^2}$, so $\frac{r}{w^2} \in R$; but since $\frac{r}{w^2} < \inf R$ we have a contradiction.  So in fact $K$ cannot exist, and we conclude that the set $R$ is empty, as desired.
\end{proof}

We can now prove that $\Airr_K(M,L)$ detects the trefoils. 

\begin{theorem} \label{thm:apoly-trefoil}
If $K \subset S^3$ is $6$-thin, then $K$ is isotopic to the right-handed trefoil.
\end{theorem}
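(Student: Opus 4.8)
The plan is to reduce, via the satellite structure of Lemma~\ref{lem:torus-impostor-satellite}, to a statement about instanton L-space knots of small genus and then apply Corollary~\ref{cor:genus-1-l-space}. Suppose $K$ is $6$-thin. By Proposition~\ref{prop:thin-implies-averse}, $K$ is $SU(2)$-averse with limit slope $6$, so it is fibered and $6>2g(K)-1$, whence $g(K)\in\{1,2,3\}$; moreover Theorem~\ref{thm:main-averse} gives that $S^3_5(K)$ is an instanton L-space, so $K$ is a nontrivial instanton L-space knot. If $g(K)=1$, Corollary~\ref{cor:genus-1-l-space} already identifies $K$ with the right-handed trefoil, which is the desired conclusion. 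So the content of the argument is to exclude $g(K)=2$ and $g(K)=3$.

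The first step would be to show that no $6$-thin knot has genus $2$. Such a $K$ is not a torus knot, since the only genus-$2$ torus knot is $T_{2,5}$, which is $10$-thin and, being nontrivial, cannot also be $6$-thin by Theorem~\ref{thm:a-unknot}. Hence Lemma~\ref{lem:torus-impostor-satellite} writes $K=P(C)$ as a satellite with companion $C$ that is $\tfrac{6}{w^2}$-thin; Proposition~\ref{prop:thinness-values} forces $\tfrac{6}{w^2}$ to be a positive integer with at least two distinct prime divisors, and since $\tfrac{6}{w^2}\le 6$ this forces $\tfrac{6}{w^2}=6$, i.e.\ $w=1$. As $w=1<3$, the lemma also forces $P(U)\neq U$, so $C$ and $P(U)$ are nontrivial fibered knots with $g(C)+g(P(U))=g(K)=2$, hence $g(C)=g(P(U))=1$. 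Both $C$ and $P(U)$ are $6$-thin (the former since $\tfrac{6}{w^2}=6$, the latter by the lemma), hence---just as for $K$---nontrivial instanton L-space knots of genus $1$, so both equal the right-handed trefoil by Corollary~\ref{cor:genus-1-l-space}. The satellite formula $\Delta_K(t)=\Delta_{P(U)}(t)\,\Delta_C(t^w)$ used in the proof of Lemma~\ref{lem:torus-impostor-satellite} then gives $\Delta_K(t)=(t-1+t^{-1})^2$. To finish I would derive a contradiction as follows: by Theorem~\ref{thm:bound} and Proposition~\ref{prop:first-l-space-slope}, $\dim I^\#_\godd(S^3_0(K),\mu)=2g(K)-1=3$, so by Proposition~\ref{prop:l-space-eigenspaces} each of the spaces $I^\#_\godd(S^3_0(K),\mu;s_j)$ with $j\in\{-1,0,1\}$ is one-dimensional; using the eigenspace-compatible identifications of \S\ref{sec:compare-top-eigenspaces} (and, since $g(\hat\Sigma)=2$, Corollary~\ref{cor:connected-sum-low-genus}) this shows each $I_*(S^3_0(K),\hat\Sigma,j)_\mu$ is one-dimensional and hence has Euler characteristic $\pm 1$. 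But Theorem~\ref{thm:lim} computes $\sum_{j=-1}^{1}\chi\!\left(I_*(S^3_0(K),\hat\Sigma,j)_\mu\right)t^j=\frac{\Delta_K(t)-1}{t-2+t^{-1}}=t+t^{-1}$, whose coefficient at $t^0$ is $0$, a contradiction.

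Granting this, the genus-$3$ case is short. A $6$-thin knot $K$ with $g(K)=3$ is not a torus knot---the genus-$3$ torus knots $T_{2,7}$ and $T_{3,4}$ are $14$- and $12$-thin---so Lemma~\ref{lem:torus-impostor-satellite} again writes $K=P(C)$, and exactly as above Proposition~\ref{prop:thinness-values} forces $w=1$ and the lemma forces $P(U)\neq U$. Then $g(C)+g(P(U))=3$ with both summands $\ge 1$, so exactly one of $C$, $P(U)$ has genus $2$; and that one is $6$-thin (the companion because $\tfrac{6}{w^2}=6$, the pattern knot by the lemma), contradicting the first step. Therefore $g(K)\notin\{2,3\}$, so $g(K)=1$ and $K$ is the right-handed trefoil by Corollary~\ref{cor:genus-1-l-space}.

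I expect the main obstacle to be the genus-$2$ step, specifically turning the identity $\Delta_K(t)=(t-1+t^{-1})^2$ into a contradiction: this requires making precise that, for a nontrivial instanton L-space knot, the quotient $\tfrac{\Delta_K(t)-1}{t-2+t^{-1}}$ has every coefficient in $\{-1,+1\}$ throughout the range $|j|\le g(K)-1$, which in turn rests on knowing both the dimensions $\dim I^\#_\godd(S^3_0(K),\mu;s_j)$ and that these equal $\dim I_*(S^3_0(K),\hat\Sigma,j)_\mu$ grading by grading. The satellite reductions are routine once Lemma~\ref{lem:torus-impostor-satellite} and Proposition~\ref{prop:thinness-values} are available.
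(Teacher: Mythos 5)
Your structural reduction is sound and somewhat different from the paper's: you enumerate $g(K)\in\{1,2,3\}$ (using the bound $6>2g(K)-1$ from Proposition~\ref{prop:thin-implies-averse}) and exclude genera $2$ and $3$ by hand, whereas the paper runs a minimal-genus-counterexample argument that never needs the bound $g\leq 3$ at all. Both routes correctly reduce to the same technical heart: a $6$-thin instanton L-space knot of genus~$2$ with $\Delta_K(t)=(t-1+t^{-1})^2$ must not exist. You also correctly flag that the obstacle is precisely this step. The problem is that the specific mechanism you propose for it does not go through with the tools available in the paper.

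You want to pass from $\dim I^\#_\godd(S^3_0(K),\mu;s_j)=1$ for $j\in\{-1,0,1\}$ to $\dim I_*(S^3_0(K),\hat\Sigma,j)_\mu=1$ ``grading by grading,'' via ``the eigenspace-compatible identifications of \S\ref{sec:compare-top-eigenspaces}'' and Corollary~\ref{cor:connected-sum-low-genus}. But the identifications of \S\ref{sec:compare-top-eigenspaces} --- equation~\eqref{eqn:isomorphism-framed-g-1} and Corollary~\ref{cor:odd-even-top} --- are established only for the \emph{top} eigenspace $j=g(K)-1$: the proof of Proposition~\ref{prop:sum-with-s1xs2} uses excision along the fiber surface $R$ itself, so it relates top eigenspaces of $\mu(R)$ and gives nothing eigenspace-by-eigenspace for intermediate $j$. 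And Corollary~\ref{cor:connected-sum-low-genus}, being an application of Fukaya's connected sum theorem (Theorem~\ref{thm:connected-sum}), is stated as an isomorphism of relatively $\Z/4\Z$-graded modules only; it is not asserted to intertwine the $\mu(\hat\Sigma)$-actions, so it cannot be used to match eigenspaces. Since your contradiction lives entirely in the $j=0$ eigenspace, the gap is fatal as stated.

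The paper sidesteps this by never comparing eigenspaces for $j\neq g-1$. Instead it works with total dimensions: by Lim's theorem and the $\phi^2$-symmetry of Lemma~\ref{lem:symmetry}, $\dim I_*(S^3_0(K))_\mu = 4a+2b$ where $a=\dim I_*(S^3_0(K),\hat\Sigma,1)_\mu$ is odd (Euler characteristic $\pm 1$) and $b=\dim I_*(S^3_0(K),\hat\Sigma,0)_\mu$ is even (Euler characteristic $0$). Corollary~\ref{cor:connected-sum-low-genus} then gives $\dim I^\#(S^3_0(K),\mu)=4a+2b$, hence $\dim I^\#_\godd(S^3_0(K),\mu)=2a+b$, which is even --- but you (correctly) computed $\dim I^\#_\godd=3$ via Theorem~\ref{thm:bound}, giving an immediate parity contradiction. (The paper gets the same contradiction slightly differently, using only $\dim I^\#_\godd\leq 3$ from the last claim of Proposition~\ref{prop:first-l-space-slope} rather than $\dim I^\#_\godd=3$; this is deliberate, as it keeps Theorem~\ref{thm:apoly-trefoil} independent of the Giroux correspondence, which underlies Theorem~\ref{thm:bound}. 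Your proof, by relying on Theorem~\ref{thm:bound}, quietly imports that dependence --- not a mathematical error, but a worse logical chain than the paper's.)

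So: replace the per-eigenspace claim with the total-dimension-plus-symmetry argument and your proof closes; as written, the genus-$2$ step has a real gap.
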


\begin{proof}
Suppose that $K$ is $6$-thin but not isotopic to $T_{2,3}$, and that $K$ minimizes Seifert genus among such knots.  In this case Lemma~\ref{lem:torus-impostor-satellite} says that $K$ is a satellite of the form $K = P(C)$ with winding number $w \geq 1$, and that $C$ is $\frac{6}{w^2}$-thin; by Proposition~\ref{prop:thinness-values} this forces $w=1$.  Given this, Lemma~\ref{lem:torus-impostor-satellite} now also implies that $P(U)$ is a nontrivial, $6$-thin knot, and that
\[ g(K) = g(C) + g(P(U)). \]
Since $g(K)$ was assumed minimal, both $C$ and $P(U)$ must be isotopic to $T_{2,3}$.  Thus $K$ is an instanton L-space knot of genus $2$, with
\[ \Delta_K(t) = \Delta_{P(U)}(t) \Delta_C(t^w) = (t-1+t^{-1})^2, \]
or equivalently
\begin{equation} \label{eq:alex-fake-trefoil}
\frac{\Delta_K(t)-1}{t-2+t^{-1}} = t+t^{-1}.
\end{equation}
Let $\hat\Sigma \subset S^3_0(K)$ be a capped-off Seifert surface for $K$ with $g(\hat\Sigma)=2$. Then 
Theorem~\ref{thm:lim} together with \eqref{eq:alex-fake-trefoil} tells us that the $(2j,2)$-eigenspaces of $\mu(\hat\Sigma),\mu(\pt)$ acting on $I_*(S^3_0(K))_\mu$ have Euler characteristics given by
\[ \chi(I_*(S^3_0(K),\hat\Sigma,j)_\mu) = \begin{cases} 1 & j=\pm1 \\ 0 & j \neq \pm1. \end{cases} \]
The $(\pm 2,2)$-eigenspaces, which are isomorphic by Lemma \ref{lem:symmetry} (apply the map $\phi^2$), therefore  have the same dimension  $1+2m$ for some integer $m\geq 0$, and the $(0,2)$-eigenspace has dimension  $2k$ for some integer $k\geq 0$. Since these eigenspaces are isomorphic to the $(\pm 2i, -2)$-eigenspaces and  the $(0,-2)$-eigenspace, respectively, by Lemma \ref{lem:symmetry}, we have  \[\dim I_*(S^3_0(K))_\mu = 4+8m+4k.\] Since $g(\hat\Sigma)=2$ and $\mu\cdot \hat\Sigma=\pm 1$, we may apply Corollary~\ref{cor:connected-sum-low-genus} to conclude that \[\dim I^\#(S^3_0(K),\mu)=\dim I_*(S^3_0(K))_\mu = 4+8m+4k. \]
Together with the fact that $\chi(I^\#(S^3_0(K),\mu))=0$, this implies 
\[ \dim I^\#_\godd(S^3_0(K),\mu) = \frac{1}{2}\dim I_*(S^3_0(K))_\mu= 2+4m+2k. \]
Now, Proposition~\ref{prop:first-l-space-slope} says that $S^3_{2g(K)-1}(K) = S^3_3(K)$ is an instanton L-space, which then implies by the same proposition that \[3\geq  \dim I^\#_\godd(S^3_0(K),\mu) = 2+4m+2k, \] so $m=k=0$ and  \[\dim I^\#_\godd(S^3_0(K),\mu)=2.\]But the latter implies by Proposition~\ref{prop:first-l-space-slope} that $S^3_2(K)$ is an instanton L-space, and  Proposition~\ref{prop:2-surgery-l-space}  tells us in this case that $K$ is the unknot or the right-handed trefoil,  a contradiction.
\end{proof}

\begin{proposition} \label{prop:squarefree-thin}
Let $r > 12$ be an integer such that one of the following holds:
\begin{itemize}
\item $r$ is square-free and odd, with at least two distinct prime divisors;
\item $r=p^2q$ for distinct primes $p$ and $q$, with $q \geq 3$; or
\item $r=p^2q^2$ for distinct primes $p$ and $q$.
\end{itemize}
Then every $r$-thin knot is a torus knot.
\end{proposition}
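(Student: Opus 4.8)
The plan is a ``minimal criminal'' induction on Seifert genus, paralleling the proofs of Proposition~\ref{prop:thinness-values} and Theorem~\ref{thm:apoly-trefoil}. Fix $r$ as in the statement, suppose $K$ is an $r$-thin knot that is \emph{not} a torus knot, and take $K$ to have minimal Seifert genus among all such knots. By Proposition~\ref{prop:thin-implies-averse}, $K$ is $SU(2)$-averse with limit slope $r$, is fibered, and satisfies $2g(K)-1<r$. By Lemma~\ref{lem:torus-impostor-satellite} we may write $K=P(C)$ with winding number $w\ge 1$, where $C$ is $\tfrac{r}{w^2}$-thin (so in particular $w^2\mid r$), where $P(U)$ is either the unknot---forcing $w\ge 3$---or an $r$-thin knot, and where $g(K)=w\cdot g(C)+g(P(U))$. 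Since $C$ is nontrivial (it is the companion of a genuine satellite), Proposition~\ref{prop:thinness-values} forces $\tfrac{r}{w^2}$ to be an integer with at least two distinct prime divisors.

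First I would eliminate the possibility $w>1$. In each of the three arithmetic forms for $r$, the divisibility $w^2\mid r$ leaves only finitely many candidates for $w$, and for every $w>1$ the number $\tfrac{r}{w^2}$ turns out to be $1$, a prime, or a prime power---never something with two distinct prime divisors---so Proposition~\ref{prop:thinness-values} rules it out. (For instance, if $r=p^2q$ with $q\ge 3$ then $q\nmid w$, hence $w\mid p$, and $w=p$ would make $C$ a $q$-thin knot with $q$ prime.) This reduces to $w=1$, whence $P(U)\ne U$, so $C$ and $P(U)$ are both nontrivial $r$-thin knots of Seifert genus strictly less than $g(K)$; by minimality each is a torus knot, say $C=T_{a,b}$ and $P(U)=T_{c,d}$ with $a,b,c,d\ge 2$. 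Because the Newton polygon of a nontrivial torus knot $T_{x,y}$ is a nondegenerate segment of slope $xy$ (Proposition~\ref{prop:a-torus-knot} and the remark after it, since $M^{xy}L+1$ divides $\Airr_{T_{x,y}}$), being $r$-thin forces $ab=cd=r$.

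Next I would derive a numerical contradiction from $g(K)=g(C)+g(P(U))$, the formula $g(T_{x,y})=\tfrac12(x-1)(y-1)$, and the inequality $2g(K)-1<r$: a short computation rewrites the latter as $a+b+c+d\ge r+2$. When $r$ is square-free and odd with least prime divisor $p$ and $m=r/p$, the map $x\mapsto x+\tfrac rx$ is decreasing for $x\le\sqrt{r}$, so $a+b\le p+m$ and $c+d\le p+m$; thus $r+2\le a+b+c+d\le 2(p+m)$, which rearranges to $(p-2)(m-2)\le 2$, contradicting $(p-2)(m-2)\ge 1\cdot 3$ (as $p\ge 3$, $m\ge 5$). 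When $r=p^2q$ with $q\ge 3$, or $r=p^2q^2$, coprimality of the parameters of a torus knot pins $\{a,b\}$ and $\{c,d\}$ down to the unitary factorization---namely $\{p^2,q\}$, resp.\ $\{p^2,q^2\}$---so $g(K)$ is determined exactly and $a+b+c+d\ge r+2$ becomes $(p^2-2)(q-2)\le 2$, resp.\ $(p^2-2)(q^2-2)\le 2$; both fail once $r>12$, this hypothesis being needed only to exclude the borderline value $r=12=2^2\cdot 3$ in the case $r=p^2q$. In each case the minimal counterexample cannot exist, so every $r$-thin knot is a torus knot.

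I do not expect a serious obstacle: the structural input (Lemma~\ref{lem:torus-impostor-satellite} and Proposition~\ref{prop:thinness-values}) does the real work, and what remains is elementary bookkeeping. The one point requiring a little care is the square-free case, where $C$ and $P(U)$ are not pinned down---one has only the upper bound $a+b+c+d\le 2(p+m)$---and one must make sure the monotonicity estimate $a+b\le p+r/p$ is applied correctly, using that the smallest divisor of $r$ exceeding $1$ is its least prime factor.
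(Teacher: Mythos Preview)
Your proof is correct and follows the same overall strategy as the paper: take a minimal-genus counterexample, use Lemma~\ref{lem:torus-impostor-satellite} to realize it as a satellite $P(C)$, eliminate $w>1$ via Proposition~\ref{prop:thinness-values}, deduce by minimality that $C$ and $P(U)$ are torus knots $T_{a,b}$ and $T_{c,d}$ with $ab=cd=r$, and then derive a numerical contradiction with $2g(K)-1<r$.

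The only notable difference is in the final numerical step. You rewrite $2g(K)-1<r$ as $a+b+c+d\geq r+2$ and then treat the three arithmetic forms of $r$ separately, bounding $a+b$ and $c+d$ case by case. The paper instead makes the single uniform observation that in all three cases every coprime factorization $r=ab$ with $a,b\geq 2$ actually satisfies $a,b\geq 3$; this gives $g(T_{a,b})=\tfrac12(r+1-(a+b))\geq \tfrac{r}{3}-1$ directly (since $a+b\leq 3+\tfrac{r}{3}$), and hence $2g(K)-1\geq \tfrac{4r}{3}-5$, contradicting $r-1\geq 2g(K)-1$ once $r>12$. Your case analysis is a bit longer but equally valid, and your inequality $(p-2)(m-2)\leq 2$ in the square-free case, and its analogues in the other two cases, are exactly the shadows of the paper's uniform bound specialized to the extremal factorization.
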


\begin{proof}
Suppose that there are non-torus knots which are $r$-thin, and let $K$ be such a knot with the smallest possible genus.  By Lemma~\ref{lem:torus-impostor-satellite}, we know that $K$ is a satellite, say $K=P(C)$ where $P$ has winding number $w \geq 1$, and that the companion $C$ is $\frac{r}{w^2}$-thin.  If $w \geq 2$ then by Proposition~\ref{prop:thinness-values} there are no $\frac{r}{w^2}$-thin knots, so we must have $w=1$.

Since $w=1$, we know from Lemma~\ref{lem:torus-impostor-satellite} that $P(U)$ is not the unknot, that both $C$ and $P(U)$ are $r$-thin, and that
\[ g(K) = g(C) + g(P(U)). \]
Both $g(C)$ and $g(P(U))$ are positive and strictly less than $g(K)$, but $g(K)$ was assumed minimal, so $C$ and $P(U)$ must be nontrivial torus knots, say $C=T_{a,b}$ and $P(U)=T_{c,d}$ with $ab=cd=r$.

The assumptions on $r$ each imply that $a,b \geq 3$, so we have
\[ g(C) = \frac{(a-1)(b-1)}{2} = \frac{ab+1}{2} - \frac{a+b}{2} \geq \frac{r+1}{2} - \frac{3+r/3}{2} = \frac{r}{3}-1. \]
Likewise $g(P(U))$ satisfies the same bound, so $g(K) = g(C)+g(P(U))$ satisfies
\[ 2g(K)-1 \geq 2\left(\frac{2r}{3}-2\right)-1 = \frac{4r}{3}-5. \]
Since $K$ is $r$-thin we know from Proposition~\ref{prop:thin-implies-averse} that $r > 2g(K)-1$, or equivalently $r-1 \geq 2g(K)-1$ since $r$ is an integer.  But then we have $r-1 \geq \frac{4r}{3}-5$, or $r \leq 12$, and this is a contradiction.
\end{proof}

We say that the $\Airr$-polynomial \emph{detects} a knot $K$ if for any knot $K'$, we have $\Airr_K(M,L) = \Airr_{K'}(M,L)$ iff $K$ is isotopic to $K'$.

\begin{corollary} \label{cor:torus-knot-detection}
Let $p$ and $q$ be distinct odd primes.  Then the $\Airr$-polynomial detects each of the torus knots $T_{p,q}$, $T_{p^2,q}$, $T_{p^2,q^2}$, $T_{4,q}$ if $q > 3$, and $T_{4,q^2}$.
\end{corollary}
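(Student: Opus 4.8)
The plan is to derive the corollary from Proposition~\ref{prop:squarefree-thin}, which guarantees that for a suitable range of slopes every thin knot is a torus knot, together with an explicit description of the Newton polygon of the $\Airr$-polynomial of a torus knot. First I would observe that for any nontrivial torus knot $T_{a,b}$ with $r = ab$, the Newton polygon $\cN(T_{a,b})$ is a nondegenerate line segment of slope $r$: by Proposition~\ref{prop:a-torus-knot} the polynomial $\Airr_{T_{a,b}}(M,L)$ divides $M^{2ab}L^2 - 1$, and by the Remark following that proposition it has a factor of $M^{ab}L+1$; since $M^{ab}L-1$ is irreducible up to a unit, $\Airr_{T_{a,b}}$ is, up to a monomial, either $M^{ab}L+1$ or $M^{2ab}L^2-1$, whose Newton polygons are the segments from $(0,0)$ to $(1,ab)$ and from $(0,0)$ to $(2,2ab)$ respectively, both of slope $r$. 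Thus each $K = T_{a,b}$ in the list is $r$-thin for $r = ab$ in the sense of Definition~\ref{def:a-thin}. If now $K'$ is any knot with $\Airr_{K'} = \Airr_K$, then $\cN(K')$ equals $\cN(K)$ up to translation, so $K'$ is $r$-thin as well and, by Theorem~\ref{thm:a-unknot}, $K'$ is nontrivial.

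Next I would check, for each of the five families, that $r = ab$ satisfies the hypotheses of one of the bullets of Proposition~\ref{prop:squarefree-thin} and that $r > 12$. For $T_{p,q}$ the slope $pq$ is odd and square-free with two distinct prime factors and is at least $15$. For $T_{p^2,q}$, respectively $T_{4,q}$ with $q>3$, the slope $p^2q$, respectively $4q$, has the form $a^2c$ with $a,c$ distinct primes and $c\geq 3$, and is at least $45$, respectively $20$. For $T_{p^2,q^2}$, respectively $T_{4,q^2}$, the slope $p^2q^2$, respectively $4q^2$, equals $a^2c^2$ for distinct primes $a,c$, and is at least $225$, respectively $36$. (The excluded knot $T_{4,3}$ has slope $12$, which falls outside the scope of the proposition.) In each case Proposition~\ref{prop:squarefree-thin} forces $K'$ to be a torus knot, necessarily a nontrivial one, say $K' = T_{a',b'}$ with $a',b'\geq 2$ coprime. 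Applying the first paragraph to $T_{a',b'}$, its Newton polygon is a nondegenerate segment of slope $a'b'$; since this polygon equals $\cN(T_{a,b})$, which has slope $ab$, we conclude $a'b' = ab = r$. Finally, in each of the five cases $r$ has a unique factorization into two coprime integers at least $2$, namely $\{p,q\}$, $\{p^2,q\}$, $\{4,q\}$, $\{p^2,q^2\}$, or $\{4,q^2\}$, since any other splitting shares a prime factor. Hence $\{a',b'\} = \{a,b\}$ and $K'=T_{a,b}$, so $\Airr_K$ detects $K$.

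Essentially all of the mathematical content is imported from Propositions~\ref{prop:a-torus-knot}, \ref{prop:thin-implies-averse}, \ref{prop:thinness-values}, and \ref{prop:squarefree-thin}; what remains is the elementary bookkeeping of the second paragraph — verifying the numerical hypotheses (above all $r>12$) and the uniqueness of the coprime factorization of $ab$. I expect the only real subtlety to be the boundary behavior at $r=12$, which is precisely why $T_{4,3}$ must be omitted from the statement (and why the trefoil, being $6$-thin, is handled separately in Theorem~\ref{thm:apoly-trefoil}).
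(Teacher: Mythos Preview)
Your proposal is correct and follows essentially the same approach as the paper's proof, which is very terse: it simply notes that $r=ab$ satisfies the hypotheses of Proposition~\ref{prop:squarefree-thin}, so any $K$ with $\Airr_K=\Airr_{T_{a,b}}$ must be a torus knot $T_{c,d}$ with $cd=r$, and then observes that $T_{a,b}$ is the unique $r$-thin torus knot. You have filled in the elementary verifications (checking $r>12$ and the bullet hypotheses case by case, and the uniqueness of the coprime factorization) that the paper leaves implicit.
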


\begin{proof}
Write any of the given torus knots as $T_{a,b}$.  Then $r=ab$ satisfies the hypotheses of Proposition~\ref{prop:squarefree-thin}, so if $\Airr_K(M,L)=\Airr_{T_{a,b}}(M,L)$ then $K$ must be a torus knot $T_{c,d}$ with $r=cd$.  But in each case $T_{a,b}$ is the unique $r$-thin torus knot, so in fact $K = T_{a,b}$.
\end{proof}

\begin{proof}[Proof of Theorem~\ref{thm:main-a-poly}]
The trefoil case is Theorem~\ref{thm:apoly-trefoil}, and the remaining cases are included in Corollary~\ref{cor:torus-knot-detection}.
\end{proof}

\bibliographystyle{alpha}
\bibliography{References}

\end{document}